\numberwithin{equation}{section}
\theoremstyle{plain}
\newtheorem{theorem}[subsubsection]{Theorem}
\newtheorem{lemma}[subsubsection]{Lemma}
\newtheorem{prop}[subsubsection]{Proposition}
\newtheorem{cor}[subsubsection]{Corollary}
\newtheorem{conj}[subsubsection]{Conjecture}
\newtheorem*{claim}{Claim}
\newtheorem{aprop}[subsection]{Proposition}
\theoremstyle{definition}
\newtheorem{defn}[subsubsection]{Definition}
\newtheorem{cons}[subsubsection]{Construction}
\newtheorem{remark}[subsubsection]{Remark}
\newtheorem{exam}[subsubsection]{Example}
\def\AA{\mathbb{A}}
\def\BB{\mathbb{B}}
\def\CC{\mathbb{C}}
\def\DD{\mathbb{D}}
\def\GG{\mathbb{G}}
\def\HH{\mathbb{H}}
\def\JJ{\mathbb{J}}
\def\PP{\mathbb{P}}
\def\QQ{\mathbb{Q}}
\def\RR{\mathbb{R}}
\def\TT{\mathbb{T}}
\def\WW{\mathbb{W}}
\def\ZZ{\mathbb{Z}}
\def\cc{\mathbbm{c}}
\def\ee{\mathbbm{e}}
\def\ff{\mathbbm{f}}
\def\gg{\mathbbm{g}}
\def\hh{\mathbbm{h}}
\def\rr{\mathbbm{r}}
\def\ss{\mathbbm{s}}
\def\tt{\mathbbm{t}}
\def\bchi{\mathbbm{\chi}}
\def\kkappa{\mathbbm{\kappa}}
\def\PPhi{\bbphi}
\def\calA{\mathcal{A}}
\def\calB{\mathcal{B}}
\def\calC{\mathcal{C}}
\def\calE{\mathcal{E}}
\def\calH{\mathcal{H}}
\def\calK{\mathcal{K}}
\def\calL{\mathcal{L}}
\def\calM{\mathcal{M}}
\def\calN{\mathcal{N}}
\def\calO{\mathcal{O}}
\def\calP{\mathcal{P}}
\def\calQ{\mathcal{Q}}
\def\calV{\mathcal{V}}
\def\bG{\mathbf{G}}
\def\bI{\mathbf{I}}
\def\bP{\mathbf{P}}
\def\bQ{\mathbf{Q}}
\def\bR{\mathbf{R}}
\def\bT{\mathbf{T}}
\newcommand{\fra}{\mathfrak{a}}
\newcommand{\frg}{\mathfrak{g}}
\newcommand{\frp}{\mathfrak{p}}
\newcommand{\frb}{\mathfrak{b}}
\newcommand{\frc}{\mathfrak{c}}
\newcommand{\frl}{\mathfrak{l}}
\newcommand{\frs}{\mathfrak{s}}
\newcommand{\frA}{\mathfrak{A}}
\newcommand{\frF}{\mathfrak{F}}
\newcommand\frL{\mathfrak{L}}
\newcommand\frM{\mathfrak{M}}
\newcommand{\tilL}{\widetilde{L}}
\newcommand{\tilW}{\widetilde{W}}
\newcommand{\tilS}{\widetilde{S}}
\newcommand{\tilw}{\widetilde{w}}
\newcommand{\tila}{\widetilde{a}}
\newcommand{\tilV}{\widetilde{V}}
\newcommand{\tilJ}{\widetilde{J}}
\newcommand{\tilQ}{\widetilde{Q}}
\newcommand{\tilG}{\widetilde{G}}
\newcommand{\tq}{\widetilde{q}}
\newcommand{\tSp}{\widetilde{\textup{Sp}}}
\newcommand\act{\textup{act}}
\newcommand\aff{\textup{aff}}
\newcommand\Ann{\textup{Ann}}
\newcommand{\can}{\textup{can}}
\newcommand{\codim}{\textup{codim}}
\newcommand{\coker}{\textup{coker}}
\newcommand\ev{\textup{ev}}
\newcommand\Gal{\textup{Gal}}
\newcommand{\Gr}{\textup{Gr}}
\newcommand{\gr}{\textup{gr}}
\newcommand\id{\textup{id}}
\newcommand\Image{\textup{Im}}
\newcommand{\Ind}{\textup{Ind}}
\newcommand\Irr{\textup{Irr}}
\newcommand\lcm{\textup{lcm}}
\newcommand\leng{\textup{leng}}
\newcommand\Lie{\textup{Lie}\ }
\newcommand\Out{\textup{Out}}
\newcommand{\Pic}{\textup{Pic}}
\newcommand{\prim}{\textup{prim}}
\newcommand\rank{\textup{rank}\ }
\newcommand{\red}{\textup{red}}
\newcommand{\reg}{\textup{reg}}
\newcommand{\Reg}{\textup{Reg}}
\newcommand{\Res}{\textup{Res}}
\newcommand\rs{\textup{rs}}
\newcommand\Span{\textup{Span}}
\newcommand\Spec{\textup{Spec}\ }
\newcommand\st{\textup{st}}
\newcommand\Sym{\textup{Sym}}
\newcommand{\tors}{\textup{tors}}
\newcommand{\Tr}{\textup{Tr}}
\newcommand\triv{\textup{triv}}
\newcommand{\val}{\textup{val}}
\newcommand\Aut{\textup{Aut}}
\newcommand\Hom{\textup{Hom}}
\newcommand\End{\textup{End}}
\newcommand{\Ext}{\textup{Ext}}
\newcommand\GL{\textup{GL}}
\newcommand\PGL{\textup{PGL}}
\newcommand\SL{\textup{SL}}
\def\sl{\mathfrak{sl}}
\newcommand\SU{\textup{SU}}
\newcommand\SO{\textup{SO}}
\newcommand\Sp{\textup{Sp}}
\newcommand{\ad}{\textup{ad}}
\newcommand{\Ad}{\textup{Ad}}
\def\sc{\textup{sc}}
\newcommand\xch{\mathbb{X}^*}
\newcommand\xcoch{\mathbb{X}_*}
\newcommand\incl{\hookrightarrow}
\newcommand\surj{\twoheadrightarrow}
\newcommand\bij{\leftrightarrow}
\newcommand{\isom}{\stackrel{\sim}{\to}}
\newcommand{\quash}[1]{}
\newcommand{\leftexp}[2]{{\vphantom{#2}}^{#1}{#2}}
\newcommand{\twtimes}[1]{\stackrel{#1}{\times}}
\newcommand{\jiao}[1]{\langle{#1}\rangle}
\newcommand{\nth}{^\textup{th}}
\newcommand{\pt}{\textup{pt}}
\newcommand{\KM}{\textup{KM}}  
\newcommand{\wt}{\widetilde}
\newcommand{\wh}{\widehat}
\newcommand{\ep}{\epsilon}
\newcommand{\vep}{\varepsilon}
\newcommand\mat[4]{\left(\begin{array}{cc} #1 & #2 \\ #3 & #4 \end{array}\right)}  
\newcommand{\SUodd}{\leftexp{2}{A_{2n}}}
\newcommand{\Cha}{Cherednik algebra }
\newcommand{\Chas}{Cherednik algebras }
\newcommand{\Hgr}{\mathfrak{H}^{\textup{gr}}}
\newcommand{\barHgr}{\overline{\mathfrak{H}}^{\textup{gr}}}
\newcommand{\rat}{\textup{rat}}
\newcommand{\Hrat}{\mathfrak{H}^{\textup{rat}}}
\newcommand{\sHrat}{\mathfrak{H}^{\textup{rat},\square}}
\newcommand{\pH}{\leftexp{p}{\textup{H}}}
\newcommand{\ptau}{\leftexp{p}{\tau}}
\newcommand{\upH}{\textup{H}}
\newcommand{\cohog}[2]{\upH^{#1}({#2})}     
\newcommand{\homog}[2]{\upH_{#1}({#2})}
\newcommand{\cohoc}[2]{\upH_{c}^{#1}({#2})}     
\newcommand{\eqnu}[2]{\upH^{#1}_{\Gnu}({#2})}
\newcommand{\eqnust}[2]{\upH^{#1}_{\Gnu}({#2})_{\st}}
\newcommand{\spcoh}[2]{\upH^{#1}_{\ep=1}({#2})}  
\newcommand{\Bun}{\textup{Bun}}
\newcommand{\Fl}{\textup{Fl}}
\newcommand{\fl}{f\ell}
\newcommand{\MHit}{\calM^{\textup{Hit}}}
\newcommand{\fHit}{f^{\textup{Hit}}}
\newcommand{\Aa}{\calA^{\textup{ell}}}
\newcommand{\fa}{f^{\textup{ell}}}
\newcommand{\fda}{f^{\vee,\textup{ell}}}
\newcommand{\h}{\heartsuit}
\newcommand{\Ah}{\calA^{\heartsuit}}
\newcommand{\dGG}{\GG^{\vee}}
\newcommand{\fst}{(f^{\textup{ell}}_{U,!}\QQ)_{\st}}
\newcommand{\fdst}{(f^{\vee,\textup{ell}}_{U,*}\QQ)_{\st}}
\newcommand{\faUQl}{f^{\textup{ell}}_{U,*}\QQ}
\newcommand{\fUQl}{f_{U,*}\QQ}
\newcommand{\Gm}{\GG_m}
\newcommand{\Gnu}{\GG_m(\nu)}
\newcommand{\rot}{\textup{rot}}
\newcommand{\dil}{\textup{dil}}
\newcommand{\cen}{\textup{cen}}
\newcommand{\Grot}{\Gm^{\rot}}
\newcommand{\Grote}{\Gm^{\rot,[e]}}
\newcommand{\hGrot}{\widehat{\GG}_m^{\rot}}
\newcommand{\hGm}{\widehat{\GG}_m}
\newcommand{\Gdil}{\Gm^{\dil}}
\newcommand{\Gcen}{\Gm^{\textup{cen}}}
\newcommand{\Gtwo}{\Grote\times\Gdil}
\newcommand{\Wa}{W_{\aff}}
\renewcommand\L{\Lambda}
\renewcommand\l{\lambda}
\renewcommand\c{\circ}
\newcommand{\rhh}{\dot{\hh}}
\newcommand{\cG}{\mathcal{G}}
\newcommand{\hZZ}{\widehat{\mathbb{Z}}}
\newcommand{\hO}{\widehat{\calO}}
\newcommand{\hK}{\widehat{K}}
\newcommand{\Hess}{\textup{Hess}}
\newcommand{\tHess}{\widetilde{\textup{Hess}}}
\newcommand{\barnu}{\overline{\nu}}
\newcommand{\barzeta}{\overline{\zeta}}
\newcommand{\barx}{\overline{x}}
\newcommand{\barbeta}{\overline{\beta}}
\newcommand{\tbP}{\widetilde{\bP}}
\newcommand{\tpi}{\widetilde{\pi}}
\newcommand{\tX}{\widetilde{X}}
\begin{document}
\title{Geometric representations of graded and rational Cherednik algebras}
\author{Alexei Oblomkov}
\email{oblomkov@math.umass.edu}
\address{Department of Mathematics, University of Massachusetts at Amherst}
\author{Zhiwei Yun}
\email{zwyun@stanford.edu}
\address{Department of Mathematics, Stanford University, 450 Serra Mall, Bldg 380, Stanford, CA 94305}
\date{}
\subjclass[2010]{Primary ; Secondary }
\keywords{Cherednik algebras; Hitchin fibration; affine Springer fibers}

\begin{abstract} We provide geometric constructions of modules over the graded Cherednik algebra $\Hgr_{\nu}$ and the rational Cherednik algebra $\Hrat_{\nu}$ attached to a simple algebraic group $\GG$ together with a pinned automorphism $\theta$. These modules are realized on the cohomology of affine Springer fibers (of finite type) that admit $\CC^*$-actions. In the rational Cherednik algebra case, the standard grading on these modules is derived from the perverse filtration
on the cohomology of affine Springer fibers coming from its global analog: Hitchin fibers. When $\theta$ is trivial, we show that our construction gives the irreducible finite-dimensional spherical modules $\frL_{\nu}(\triv)$ of $\Hgr_{\nu}$ and of $\Hrat_{\nu}$. We give a formula for the dimension of $\frL_{\nu}(\triv)$ and give a geometric interpretation of its Frobenius algebra structure. The rank two cases are studied in further details.

\end{abstract}

\maketitle

\tableofcontents

\section{Introduction}

\subsection{\Chas and its representations}
Double affine Hecke algebras were introduced by Cherednik to prove Macdonald conjectures \cite{Ch}. 
Etingof and Ginzburg \cite{EG} introduced the graded \Cha $\Hgr_{\nu}$ and rational \Cha $\Hrat_{\nu}$ which are 
the degenerations of the corresponding double affine Hecke algebras. Here $\nu\in\CC$ is the central charge. We will recall the definitions of these algebras in \S\ref{sss:intro Ch} and in \S\ref{s:grad alg def}-\ref{s:rat alg def} with more details.

The papers \cite{DO} and \cite{BEG} initiated the study of
the representation theory  of these algebras. In \cite{DO},  the category $\calO$ for $\Hrat_{\nu}$-modules was introduced. It consists of $\Hrat_{\nu}$-modules with locally nilpotent action of the subalgebra $\Sym(\fra)$. An irreducible representation $\tau$ of the finite Weyl group $W$ naturally gives rise to an $\Hrat_{\nu}$-modules $\frM_{\nu}(\tau)$ via induction from the subalgebra $\Sym(\fra^{*})\rtimes W$. It is shown in \cite{DO} that the simple quotients $\frL_{\nu}(\tau)$ of $\frM_{\nu}(\tau)$ exhaust all simple modules in the category $\calO$.
 

The study of representation theory of $\Hrat_{\nu}$ continued in \cite{BEG} where the classification of all finite-dimensional simple modules in type $A$ was accomplished.  Besides type $A$ the classification of finite-dimensional representations remains a challenge. In type $B$ it is known how many finite-dimensional $\Hrat_{\nu}$-modules there are (see \cite{SV}) and more generally a combinatorial  formula for the characters of the simple modules in the category $\calO$ is proved in \cite{Lo}, \cite{SRVV} and \cite{W} for the classical root systems. With an exception of $H_3$ (see \cite{B}), outside of the classical types neither classification of the finite-dimensional representations nor character formula for the simple modules are known. However, it is completely understood when the polynomial representation $\frM_{\nu}(\triv)$ of a Cherednik algebra has a finite dimensional quotient (see \cite{EtPoly} for an algebraic solution and \cite{VV} for a geometric one): this happens if and only if $\frL_{\nu}(\triv)$ is finite-dimensional, if and only if $\nu\in\QQ_{>0}$ and its denominator is a {\em regular elliptic number} of $W$ (see Definition \ref{def:slope}).

In this paper we do not attempt a classification of irreducible finite-dimensional modules but rather provide a geometric construction of some families of finite-dimensional representations of $\Hgr_\nu$ and $\Hrat_\nu$. In all the examples we checked, our construction seems to give all finite-dimensional irreducible modules of $\Hrat_{\nu}$. In particular, we obtain a geometric construction of the finite-dimensional simple module $\frL_{\nu}(\triv)$ and a geometric interpretation of the grading and the Frobenius structure on $\frL_{\nu}(\triv)$. Our construction also allows us to derive a formula for the dimension of $\frL_{\nu}(\triv)$. 


We remark that the geometric construction for simple modules of $\Hgr_{\nu}$ was first systematically carried out by Varagnolo and Vasserot in \cite{VV} using the equivariant $K$-theory of homogeneous affine Springer fibers. They then constructed simple modules of $\Hrat_{\nu}$ by a purely algebraic degeneration process. Our methods are related but different from those of \cite{VV}. We realize $\Hgr_{\nu}$-modules on the equivariant cohomology of homogeneous affine Springer fibers, which is technically simpler. Our realization of $\Hrat_{\nu}$-modules is also geometric: we construct a filtration (called the perverse filtration, see \S\ref{ss:perv fil}) on the cohomology of homogeneous affine Springer fibers coming from the global geometry of Hitchin fibration and construct an action of $\Hrat_{\nu}$ on the associated graded of the perverse filtration. Therefore the geometry of Hitchin fibration is a new ingredient in our construction compared to the approach in \cite{VV}. Our construction works thanks to the deep geometric results about Hitchin fibration proved by Ng\^o \cite{NgoFL} along the way of proving the fundamental lemma, and the global Springer theory developed by one of the authors (Z.Y.) in his thesis \cite{GS} and \cite{GSLD}.

\subsection{Main results}\label{ss:main results} All varieties are over $\CC$ in this paper. When talking about (equivariant) cohomology of varieties, we mean the singular cohomology of their underlying complex analytic spaces with $\QQ$-coefficients. 

To simplify notation, in this introduction we let $\GG$ be an almost simple, connected and simply-connected reductive group over $\CC$, and let $G=\GG\otimes_{\CC}F$ where $F=\CC((t))$ is the field of formal Laurent series in one variable $t$. In the main body of the paper we shall work with quasi-split groups over $F$ and drop the simple-connectedness condition. Let $\TT\subset\GG$ be a fixed maximal torus with Lie algebra $\tt$, Weyl group $\WW$ and root system $\Phi$. Let $\fra=\xcoch(\TT)\otimes\QQ$ and $\fra^{*}=\xch(\TT)\otimes\QQ$. 

We shall introduce one by one the algebraic and geometric objects that are involved in the statement of our main results.

\subsubsection{Homogeneous elements} Let $\frg$ be the Lie algebra of $G$ and let $\frc=\frg\sslash G$ be the GIT quotient, which is isomorphic to an affine space $\AA^{r}$ ($r$ is the rank of $\GG$). There is a canonical weighted action of $\Gm$ on $\frc$ induced from the dilation action on $\frg$. An element $a\in\frc(F)^{\rs}$ is homogeneous of slope $\nu=d/m$ (in lowest terms) if
\begin{equation*}
s^{d}\cdot a(t)=a(s^{m}t), \text{ for any } s\in\CC^{\times}.
\end{equation*}
Here we write coordinates of $a$ as formal Laurent series in $t$, and $s^{d}\cdot(-)$ is the weighted action of $s^{d}$. The slopes $\nu$ above are not arbitrary: their denominators $m$ are exactly the regular numbers of the Weyl group $\WW$ (i.e., orders of regular elements in $\WW$ in the sense of Springer \cite{Spr}). A rational number $\nu$ is called an elliptic slope if its denominator is an elliptic regular number of $\WW$, see Definition \ref{def:slope}. Let $\frc(F)^{\rs}_{\nu}$ be the set of homogeneous elements of slope $\nu$. This is an open subset of an affine space over $\CC$.

\subsubsection{Homogeneous affine Springer fibers}For any $\gamma\in\frg(F)^{\rs}$ one can define a closed subvariety $\Sp_{\gamma}$ of the affine flag variety $\Fl$ of $G$ called the {\em affine Springer fiber} (\cite{KL}). It classifies Iwahori subalgebras of $\frg$ containing $\gamma$. 

For $a\in\frc(F)^{\rs}_{\nu}$, we let $\Sp_{a}$ denote $\Sp_{\gamma}$ where $\gamma=\kappa(a)$, and $\kappa$ is the Kostant section $\kappa: \frc(F)\incl\frg(F)$. Because of the homogeneity of $a$, there is an action of a one-dimensional torus $\Gm(\nu)$ on the affine Springer fiber $\Sp_{a}$. When $\nu$ is elliptic, $\Sp_{a}$ is a projective scheme.

\subsubsection{Notation on $\Gnu$-equivariant cohomology}\label{sss:intro ep} For a variety $Y$ equipped with an action of the one-dimensional torus $\Gnu$, its equivariant cohomology $\eqnu{*}{Y}=\eqnu{*}{Y,\QQ}$ is a graded module over the graded polynomial ring $\eqnu{*}{\pt}\cong\QQ[\ep]$, where $\ep\in\eqnu{2}{\pt}\cong\xch(\Gnu)\otimes_{\ZZ}\QQ$ corresponds to $m$ times the canonical generator of $\xch(\Gnu)$.

We shall also consider localized and specialized equivariant cohomology. By localized equivariant cohomology we mean $\eqnu{*}{Y}[\ep^{-1}]:=\eqnu{*}{Y}\otimes_{\QQ[\ep]}\QQ[\ep,\ep^{-1}]$. When $Y$ is of finite type, this is a free $\QQ[\ep,\ep^{-1}]$-module of finite rank. The specialized equivariant cohomology is
\begin{equation*}
\spcoh{}{Y}:=\eqnu{*}{Y}/(\ep-1).
\end{equation*}
This is a vector space over $\QQ$ whose dimension is the same as the $\QQ[\ep,\ep^{-1}]$-rank of $\eqnu{*}{Y}[\ep^{-1}]$. There is a {\em cohomological filtration} on $\spcoh{}{Y}$ given by $\spcoh{\leq i}{Y}=\Image(\eqnu{\leq i}{Y}\to\spcoh{}{Y})$. 

Similarly, we define the localized and specialized equivariant cohomology with compact support $\upH^{*}_{c,\Gnu}(Y)[\ep^{-1}]$ and $\upH_{c,\ep=1}(Y)$.

\subsubsection{Symmetry on the cohomology of homogeneous affine Springer fibers} Let $a\in\frc(F)^{\rs}_{\nu}$. There is a diagonalizable group $S_{a}$ (a subgroup of the centralizer of $\gamma=\kappa(a)$ in $G$) acting on $\Sp_{a}$. 

When $\nu$ is elliptic, there is also an action of a braid group $B_{a}=\pi_{1}(\frc(F)^{\rs}_{\nu}, a)$ on $\spcoh{}{\Sp_{a}}$ coming from the monodromy of the family of affine Springer fibers over $\frc(F)^{\rs}_{\nu}$. Together there is an action of $S_{a}\rtimes B_{a}$ on $\spcoh{}{\Sp_{a}}$.

\subsubsection{Cherednik algebras}\label{sss:intro Ch}
We shall consider two versions of Cherednik algebras,  the graded (aka trigonometric) version $\Hgr_{\nu}$ and the rational version $\Hrat_{\nu}$. Both of them are free modules over $\QQ[\ep]$. For simplicity we only consider their specializations $\Hgr_{\nu,\ep=1}$ and $\Hrat_{\nu,\ep=1}$ (specializing $\ep$ to 1) in this introduction. The algebra $\Hgr_{\nu,\ep=1}$, as a vector space, can be written as a tensor product of subalgebras
\begin{equation*}
\Hgr_{\nu}=\Sym(\fra^{*})\otimes\QQ[\Wa].
\end{equation*}
where $\Wa=\xcoch(\TT)\rtimes \WW$ is the affine Weyl group (note that we assumed $\GG$ was simply-connected). The most essential commutation relation in $\Hgr_{\nu,\ep=1}$ is given by
\begin{equation*}
s_i\cdot \xi-\leftexp{s_i}{\xi}\cdot s_i=-\nu\jiao{\xi,\alpha_i^\vee}.
\end{equation*}
for each affine simple reflection $s_i\in\Wa$ (corresponding to the simple affine coroot $\alpha_{i}$) and $\xi\in\fra^{*}$. Here $\fra$ should be thought of as part of the ``Kac-Moody torus'' $\fra_{\KM}$, and the action of $\Wa$ on $\fra^{*}$ should be understood in this way.

The graded algebra $\Hrat_{\nu}$ is also a tensor product of subalgebras
\begin{equation*}
\Hrat_{\nu,\ep=1}=\Sym(\fra^{*})\otimes\Sym(\fra)\otimes\QQ[\WW]
\end{equation*}
with the most essential commutation relation given by
\begin{equation*}
[\eta, \xi]=\jiao{\xi,\eta}-\dfrac{\nu}{2}\left(\sum_{\alpha\in\Phi}\jiao{\xi,\alpha^\vee}\jiao{\alpha,\eta}r_{\alpha}\right), \forall \xi\in\fra^{*}, \eta\in\fra.
\end{equation*}

For the graded \Cha we prove
\begin{theorem} \label{thm:Hgr} Let $\nu>0$ be a slope and $a\in\frc(F)^{\rs}_{\nu}$. 
\begin{enumerate}
\item There is an action of $\Hgr_{\nu,\ep=1}$ on the compactly supported equivariant cohomology $\upH_{c,\ep=1}(\Sp_{a})$. 
\item If $\nu$ is elliptic, then $S_{a}\rtimes B_{a}$ acts on $\spcoh{}{\Sp_{a}}$ and the action commutes with the action of $\Hgr_{\nu,\ep=1}$. Moreover,  the image of the restriction map $\spcoh{}{\Fl}\to \spcoh{}{\Sp_{a}}$ is the invariant part $\spcoh{}{\Sp_{a}}^{S_{a}\rtimes B_{a}}$, realizing $\spcoh{}{\Sp_{a}}^{S_{a}\rtimes B_{a}}$ as a quotient of the polynomial representation of $\Hgr_{\nu,\ep=1}$.
\item If $\nu$ is elliptic, then $\spcoh{}{\Sp_{a}}^{S_{a}\rtimes B_{a}}$ is an irreducible $\Hgr_{\nu,\ep=1}$-module. 
\end{enumerate}
\end{theorem}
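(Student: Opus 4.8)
The plan is to prove the three parts in order, using Part (1) as the construction of the action, Part (2) to identify the invariant subspace with a quotient of the polynomial representation, and then deducing Part (3) by an argument internal to the representation theory of $\Hgr_{\nu,\ep=1}$. For Part (1), I would set up the action of the affine Weyl group $\Wa$ on $\upH^*_{c,\Gnu}(\Sp_a)$ using the correspondences in the affine flag variety (the affine analog of the Springer action), and the action of $\Sym(\fra^*)$ via cup product with equivariant Chern classes of the tautological line bundles on $\Fl$ pulled back to $\Sp_a$; the homogeneity of $a$ and the $\Gnu$-action are what make the grading compatible and produce the factor $\nu$ in the key commutation relation $s_i\xi - \leftexp{s_i}{\xi}s_i = -\nu\langle\xi,\alpha_i^\vee\rangle$. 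Verifying this one relation is the technical heart of Part (1); the remaining relations of $\Hgr_{\nu}$ are either standard (the Coxeter relations in $\Wa$, the polynomiality of $\Sym(\fra^*)$) or follow formally.

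For Part (2), the action of $S_a \rtimes B_a$ on $\spcoh{}{\Sp_a}$ is already given (it commutes with the $\Hgr_{\nu,\ep=1}$-action because $S_a$ acts by automorphisms of $\Sp_a$ over $\frc(F)^{\rs}_\nu$ and $B_a$ by monodromy, both of which are compatible with the correspondences and Chern classes defining the Cherednik action). The restriction map $\spcoh{}{\Fl} \to \spcoh{}{\Sp_a}$ is a map of $\Hgr_{\nu,\ep=1}$-modules whose source is (a specialization of) the polynomial representation $\Sym(\fra^*)$; its image obviously lands in the $S_a \rtimes B_a$-invariants. The content is the reverse inclusion: that every invariant class comes from $\Fl$. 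I would prove this by a localization-to-fixed-points argument — the $S_a$-fixed points of $\Sp_a$ are products of smaller (classical or affine) flag varieties, and comparing the Borel-type presentations shows the invariants are exactly the restricted classes — together with the fact, from Ng\^o's support theorem and global Springer theory, that the cohomology of $\Sp_a$ is "generated" in the appropriate sense by the fundamental classes of the fixed-point components.

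For Part (3), granting Part (2), I would argue as follows. The module $M := \spcoh{}{\Sp_a}^{S_a\rtimes B_a}$ is a quotient of the polynomial representation $\frM := \Sym(\fra^*)$ of $\Hgr_{\nu,\ep=1}$. Any nonzero quotient of $\frM$ that is finite-dimensional has a simple quotient isomorphic to $\frL_\nu(\triv)$ (the unique finite-dimensional simple spherical module, which exists precisely because $\nu$ is elliptic, hence $m$ is a regular elliptic number — see Definition \ref{def:slope} and the cited results of \cite{VV}, \cite{EtPoly}). So it suffices to show $\dim M = \dim \frL_\nu(\triv)$, equivalently that the kernel of $\frM \surj M$ is the maximal proper submodule. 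I expect the cleanest route is to compare Poincar\'e series: one computes $\dim M$ from the geometry (it is the Euler characteristic / total Betti number of the $S_a$-invariant part, which one can extract from Ng\^o's formula or from an explicit description of $\Sp_a$ via the homogeneous structure), and one computes $\dim\frL_\nu(\triv)$ from the known character formula; matching the two forces $M$ to be simple. Alternatively, one shows directly that $M$ has a unique minimal submodule by exhibiting a nondegenerate $\Hgr_{\nu,\ep=1}$-contravariant pairing on $M$ coming from Poincar\'e duality on $\Sp_a$ (the $\Gnu$-equivariant intersection pairing descends to $\spcoh{}{\Sp_a}$ when $\nu$ is elliptic since $\Sp_a$ is then projective), which identifies $M$ with its contravariant dual and hence shows the radical of the pairing — the maximal proper submodule — is zero. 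The main obstacle, I expect, is Part (2): pinning down that the $S_a\rtimes B_a$-invariants are \emph{exactly} the image from $\Fl$ and not something larger requires the full strength of the global geometry (the support theorem, freeness/purity of the relevant Hitchin complexes, and the comparison between the local affine Springer fiber and the global Hitchin fiber), rather than a soft equivariant-cohomology computation.
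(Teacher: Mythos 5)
Your Part~(1) matches the paper's approach (Theorem~\ref{th:localHgr}, Corollary~\ref{c:local Hgr}): Springer correspondences for the parahorics $\bP_i$, equivariant Chern classes of the line bundles $\calL(\xi)$, and a verification of (GC-3), with the factor $\nu$ coming from the embedding $\Gnu\incl \AA\times\Grote$ via $d\rho^\vee+m\partial$.

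For Part~(2) there is a genuine confusion. You propose to localize to the $S_a$-fixed points of $\Sp_a$ and invoke ``Ng\^o's support theorem and global Springer theory.'' Neither ingredient is what the paper uses here. The localization in Corollary~\ref{c:eq loc Sp} is to the \emph{$\Gnu$-fixed points}, which are the Hessenberg varieties $\Hess^{\tilw}_a\subset \fl^{\tilw}_\nu$ inside the finite-dimensional partial flag varieties of $L_\nu$ -- these are not $S_a$-fixed loci. The identification of the image of $\spcoh{}{\Fl}\to\spcoh{}{\Sp_a}$ with the $S_a\rtimes B_a$-invariants is Theorem~\ref{thm:surj}, whose proof is entirely local: one applies the decomposition theorem to the proper smooth family $\tpi^{\tilw}_\nu:\tHess^{\tilw}_\nu\to\frg(F)_\nu$, observes that a simple summand $K_i$ contributes to the image of the restriction map iff $K_i|_{\frg(F)^{\rs}_\nu}$ is a shifted constant sheaf, and then identifies constant summands with the $\pi_0(S_a)\rtimes B_a$-trivial summands using $[\frc(F)^{\rs}_\nu/S]\cong[\frg(F)^{\rs}_\nu/L_\nu]$ and connectedness of $L_\nu$. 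No global Hitchin geometry enters; that machinery is reserved for the rational \Cha statement (Theorem~\ref{thm:L(triv)}), not for Theorem~\ref{thm:Hgr}.

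For Part~(3), both of your proposed routes have gaps. Route (a), matching $\dim M$ with a ``known character formula'' for $\frL_\nu(\triv)$, is circular: outside classical types no such formula is known a priori (this is precisely one of the paper's outputs, Eq.~\eqref{intro dim}), and even knowing $\dim M$ geometrically does not by itself produce the comparison. Route (b), a $\Hgr_{\nu,\ep=1}$-contravariant pairing from Poincar\'e duality, founders on the fact that $\Hgr_{\nu,\ep=1}$ has no analog of the anti-involution $\frF$ used to define contravariance for $\Hrat_{\nu,\ep=1}$: the rational algebra has $\Sym(\fra)\otimes\Sym(\fra^*)$ which $\frF$ swaps, whereas the graded algebra pairs $\Sym(\fra^*)$ against the group algebra of a lattice, and there is no natural anti-automorphism exchanging these. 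The paper's Corollary~\ref{c:Hgr irr} instead goes through the rational algebra: one first shows $\Gr^C\spcoh{}{\Sp_a}^{\tilS_a\rtimes B_a}\cong\frL_\nu(\triv)$ as an $\Hrat_{\nu,\ep=1}$-module (Theorem~\ref{L(triv)}(2)), using the Frobenius algebra structure from Verdier duality on Hitchin fibers (Proposition~\ref{prop:Frob}) precisely because contravariance makes sense there, and then transfers to $\Hgr_{\nu,\ep=1}$ via the result of Varagnolo--Vasserot that the irreducible spherical modules of $\Hgr_{\nu,\ep=1}$ and $\Hrat_{\nu,\ep=1}$ have equal dimension, together with the graded analog of Corollary~\ref{quot}. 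Your instinct that Poincar\'e duality should yield nondegeneracy is sound, but it has to be implemented at the rational level (with the perverse filtration) and then pulled back, not directly on the graded module.
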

In the main body of the paper we prove (1) and (2) also  for quasi-split groups $G$. Part (1) above appears as Corollary \ref{c:local Hgr} to Theorem \ref{th:localHgr}; part (2) is proved in Theorem \ref{thm:surj}(2) and Lemma \ref{l:pol rep}; part (3) appears as Corollary \ref{c:Hgr irr}, which is a consequence of the next theorem.

For the rational \Cha we prove
\begin{theorem} \label{thm:L(triv)} Let $\nu>0$ be an elliptic slope and $a\in\frc(F)^{\rs}_{\nu}$. Then on $\eqnu{*}{\Sp_{a}}^{S_{a}}$ there is a geometrically defined filtration $P_{\leq i}\eqnu{*}{\Sp_{a}}^{S_{a}}$ (called the {\em perverse filtration}, coming from the global geometry of the Hitchin fibration), stable under the action of $B_{a}$,  such that
\begin{enumerate}
\item There is a graded action of $\Hrat_{\nu,\ep=1}$ on $\Gr^{P}_{*}\spcoh{}{\Sp_{a}}^{S_{a}}$  commuting with the action of $B_{a}$.
\item The $\Hrat_{\nu,\ep=1}$-module $\Gr^{P}_{*}\spcoh{}{\Sp_{a}}^{S_{a}\rtimes B_{a}}$ is isomorphic to the finite-dimensional irreducible spherical module $\frL_{\nu}(\triv)$. Moreover, $\Gr^{P}_{*}\spcoh{}{\Sp_{a}}^{S_{a}\rtimes B_{a}}$  carries a Frobenius algebra structure induced from the cup product.
\end{enumerate}
\end{theorem}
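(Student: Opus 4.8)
The plan is to reduce Theorem~\ref{thm:L(triv)} to its graded-Cherednik companion, Theorem~\ref{thm:Hgr}, via a degeneration of the graded algebra $\Hgr_{\nu,\ep=1}$ to the rational algebra $\Hrat_{\nu,\ep=1}$ that is compatible with the perverse filtration on the cohomology. First I would set up the perverse filtration $P_{\leq i}\eqnu{*}{\Sp_{a}}^{S_{a}}$ by realizing the homogeneous affine Springer fiber $\Sp_{a}$ as a fiber of a Hitchin fibration $\fHit\colon\MHit\to\calA$ (this is where the results of Ng\^o \cite{NgoFL} and of \cite{GS}, \cite{GSLD} enter): the decomposition theorem for $\fHit$ over a suitable elliptic locus produces the perverse filtration on the fiberwise cohomology, and its restriction to the point $a\in\frc(F)^{\rs}_{\nu}$ gives $P_{\leq i}$. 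Since $\Gnu$ acts on the whole family compatibly with $\fHit$ and $B_{a}$ is the local monodromy, both the $\Gnu$-equivariance and the $B_{a}$-action preserve $P_{\leq i}$; I would record that $\Gr^P_*$ commutes with passing from $\eqnu{*}{-}$ to $\spcoh{}{-}$ in this situation.

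For part (1), the idea is that the graded Cherednik action of Theorem~\ref{thm:Hgr}(1) on $\upH_{c,\ep=1}(\Sp_{a})$ (equivalently, by equivariant Poincar\'e duality on the projective scheme $\Sp_a$, on $\spcoh{}{\Sp_a}$) is filtered with respect to $P$: the generators of $\Hgr_{\nu,\ep=1}$ act with a prescribed perverse degree shift (the operators from $\Sym(\fra^{*})$ and from $\xcoch(\TT)$ raise the perverse degree, the finite Weyl group $\WW$ preserves it, etc.), so that the associated-graded operators assemble into an action of the Rees-type degeneration of $\Hgr_{\nu,\ep=1}$ along the filtration by the central character $\nu$. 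One then checks that this degeneration is exactly $\Hrat_{\nu,\ep=1}$: the essential commutation relation $s_i\xi-\leftexp{s_i}{\xi}s_i=-\nu\jiao{\xi,\alpha_i^\vee}$ of $\Hgr_{\nu,\ep=1}$, expanded near the affine node and passed to $\gr^P$, produces the rational relation $[\eta,\xi]=\jiao{\xi,\eta}-\tfrac{\nu}{2}\sum_{\alpha}\jiao{\xi,\alpha^\vee}\jiao{\alpha,\eta}r_\alpha$. The $B_{a}$-action commutes with the $\Hgr_{\nu,\ep=1}$-action by Theorem~\ref{thm:Hgr}(2), and since $B_{a}$ preserves $P$ it commutes with the graded action on $\Gr^P_*$.

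For part (2), I would take $S_{a}\rtimes B_{a}$-invariants. By Theorem~\ref{thm:Hgr}(2) the image of $\spcoh{}{\Fl}\to\spcoh{}{\Sp_a}$ is $\spcoh{}{\Sp_a}^{S_a\rtimes B_a}$ and is a quotient of the polynomial representation of $\Hgr_{\nu,\ep=1}$; the restriction map from $\Fl$ is strict for $P$ (the source carries the tautological perverse filtration of a point-fibered situation), so on associated graded we obtain a cyclic, hence by Theorem~\ref{thm:Hgr}(3) irreducible, $\Hrat_{\nu,\ep=1}$-module generated by the image of $1\in\upH^0$. An irreducible quotient of the polynomial representation $\frM_\nu(\triv)$ of the rational Cherednik algebra must be $\frL_\nu(\triv)$; finite-dimensionality follows because $\Sp_a$ is projective when $\nu$ is elliptic, so $\spcoh{}{\Sp_a}$, and a fortiori its invariants and its associated graded, are finite-dimensional. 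Finally the Frobenius algebra structure: equivariant Poincar\'e duality on the smooth-in-the-appropriate-sense projective $\Sp_a$ gives a nondegenerate pairing on $\spcoh{}{\Sp_a}$, the cup product makes $\spcoh{}{\Sp_a}^{S_a\rtimes B_a}$ a graded-commutative ring with a top-degree trace, and one checks this pairing is compatible with $P$ so it descends to $\Gr^P_*$, exhibiting $\Gr^P_*\spcoh{}{\Sp_a}^{S_a\rtimes B_a}=\frL_\nu(\triv)$ as a Frobenius algebra.

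The main obstacle is the \emph{strictness/compatibility of the graded Cherednik action with the perverse filtration}, i.e.\ showing that each generator of $\Hgr_{\nu,\ep=1}$ shifts $P$-degree by the predicted amount and that the associated-graded relations are precisely the rational ones rather than some larger quotient. This requires identifying, geometrically, the perverse degrees of the correspondences inducing the $\xcoch(\TT)$- and $\Sym(\fra^*)$-actions (via global Springer theory and the cup-product action of $\upH^*(\MHit)$), and then a careful bookkeeping of how the affine node relation degenerates; controlling the perverse filtration on the full family $\MHit\to\calA$ near the elliptic homogeneous point, rather than just on the single fiber, is the technical heart of the argument.
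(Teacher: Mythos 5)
Your overall setup is right: the perverse filtration comes from the truncations of $f^{\textup{ell}}_{U,*}\QQ$ via Ng\^o's support theorem, and Proposition~\ref{p:Hratmod} is exactly the bookkeeping device that turns the filtered $\Hgr_{\nu,\ep=1}$-action into an $\Hrat_{\nu,\ep=1}$-action on $\Gr^{P}_{*}$. You also correctly identify the technical heart: showing that translation by $\xcoch(\TT)$ acts unipotently (so $\l-1$ drops perverse degree by $1$) and that cup product with Chern classes raises perverse degree by only $1$ rather than $2$; both depend crucially on the support theorem. One ingredient you omit but the paper needs is the identification of the perverse filtration with the Chern filtration (Proposition~\ref{p:Ch=perv}) on the $B_a$-invariants, which is what links the geometric filtration to the quotient of $\frM_{\nu}(\triv)$.

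The serious gap is in your irreducibility argument for part (2). You write that the associated graded is ``cyclic, hence by Theorem~\ref{thm:Hgr}(3) irreducible.'' This is doubly wrong. First, cyclicity does not imply irreducibility (the polynomial representation is cyclic but not irreducible). Second, citing Theorem~\ref{thm:Hgr}(3) is \emph{circular}: the paper explicitly deduces Theorem~\ref{thm:Hgr}(3) as a corollary of the theorem you are trying to prove (see the sentence ``part (3) appears as Corollary~\ref{c:Hgr irr}, which is a consequence of the next theorem''). More generally, even if the $\Hgr_{\nu,\ep=1}$-module were known to be irreducible independently, passing to the associated graded under a filtration can destroy irreducibility, so one cannot import irreducibility this way. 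The paper's actual route to irreducibility is precisely the Frobenius-algebra structure you relegate to a closing remark: one shows the cup-product pairing on $\Gr^{P}_{*}\spcoh{}{\Sp_a}^{S_a\rtimes B_a}$ is a perfect \emph{contravariant} pairing (Lemma~\ref{l:contra pairing}), and then applies Etingof's characterization of $\frL_{\nu}(\triv)$ as the unique finite-dimensional quotient of $\frM_{\nu}(\triv)$ admitting such a pairing.

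Your source for the pairing is also off: you invoke ``equivariant Poincar\'e duality on the smooth-in-the-appropriate-sense projective $\Sp_a$,'' but $\Sp_a$ is in general singular (a union of Hessenberg strata), so Poincar\'e duality on $\Sp_a$ itself does not give a perfect pairing. In the paper the self-duality is produced globally from Verdier duality on the \emph{smooth} Deligne--Mumford stack $\calM^{\textup{ell}}_{U}$ and then localized to the $\Gnu$-fixed locus via Lemma~\ref{l:!*} (the $i^{!}\to i^{*}$ comparison becomes an isomorphism after inverting $\ep$). This global-to-local localization is essential; without it your Frobenius structure does not exist, and with it the irreducibility follows cleanly from the contravariant-pairing characterization.
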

In the main body of the paper, we also prove a weaker version of the above result when $G$ is quasi-split but non-split, and conjecture that the above result should hold in this generality, see Proposition \ref{p:Hrat GrC}, Theorem \ref{L(triv)} and Conjecture \ref{conj qsplit}. The conjecture in the quasi-split case is supported by examples in \S\ref{s:examples}. The perverse filtration is constructed in \S\ref{ss:perv fil}; part (1) above is proved in \S\ref{ss:Hrat local}; part (2) is proved in \S\ref{Frobenius}.

We also give a formula for the dimension of the irreducible spherical module $\frL_{\nu}(\triv)$. Let $W_\nu\subset \Wa$ be the finite subgroup of the affine Weyl group that stabilizes the point $\nu\rho^\vee$ in the standard apartment $\fra_{\RR}$ of $G$. Taking the linear part of $W_{\nu}$ identifies it with a  subgroup of the finite Weyl group $\WW$ of $\GG$ which acts
on $\fra^{*}$. Let $\mathcal{H}_{W_\nu}=\Sym(\fra^*)/\Sym(\fra^*)^{W_{\nu}}_+$ where $\Sym(\fra^*)^{W_{\nu}}_+$ is the ideal generated by the homogeneous $W_\nu$-invariant elements of positive degree.
To an element $\tilw\in\Wa$ we attach the element $\lambda^{\tilw}_{\nu}\in \mathcal{H}_{W_\nu}$ defined as
\begin{equation}
\lambda^{\tilw}_{\nu}:=\prod_{\substack{\alpha(\nu\rho^\vee)=\nu\\ \tilw^{-1}(\alpha)<0}}\bar{\alpha}
\end{equation}
where $\bar{\alpha}\in\fra^{*}$ is finite part of the affine root $\alpha$. Let $\Ann(\lambda^{\tilw}_{\nu})\subset\calH_{W_{\nu}}$ be the ideal annihilating $\l^{\tilw}_{\nu}$. 

\begin{theorem} Let $\nu>0$ be an elliptic slope. Then the dimension of the spherical irreducible module of $\Hrat_{\nu,\ep=1}$ is given by
\begin{equation}\label{intro dim}
\dim \frL_\nu(\triv)=\sum_{\tilw\in W_{\nu}\backslash\Wa} \dim \mathcal{H}_{W_\nu}/\Ann(\lambda^{\tilw}_{\nu}).
\end{equation}
\end{theorem}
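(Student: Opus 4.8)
The plan is to combine Theorem~\ref{thm:L(triv)} with a Bialynicki-Birula style computation of $\spcoh{}{\Sp_{a}}^{S_{a}}$, exploiting the $\Gnu$-action on $\Sp_{a}$. Since Theorem~\ref{thm:L(triv)} identifies $\frL_{\nu}(\triv)$ with $\Gr^{P}_{*}\spcoh{}{\Sp_{a}}^{S_{a}\rtimes B_{a}}$, and taking associated graded does not change dimension, it suffices to compute $\dim\spcoh{}{\Sp_{a}}^{S_{a}\rtimes B_{a}}$. The key geometric input is that $\Sp_{a}$ carries an action of the torus $\Gnu$ whose fixed points are controlled by the affine Weyl group: the $\Gnu$-fixed points of $\Fl$ are indexed by $\Wa$ (via the $\TT$-fixed points of the affine flag variety, once one checks that the relevant cocharacter $\nu\rho^\vee$ picks out the same fixed locus), and the fixed points of $\Sp_{a}\subset\Fl$ form a subset. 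First I would analyze the $\Gnu$-action near each fixed point $\tilw\in\Wa$: the attracting set is an affine space, and Bialynicki-Birula decomposition gives $\spcoh{}{\Sp_{a}}$ a basis indexed by these cells, with the cell attached to $\tilw$ contributing its cohomology. The point stabilizer $W_\nu$ of $\nu\rho^\vee$ acts on the fixed-point set $\Wa$ by left translation, which is why the indexing set becomes $W_{\nu}\backslash\Wa$ after passing to $S_a$-invariants (the diagonalizable group $S_a$ should be identified with a group containing the relevant part of this stabilizer action).

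Second I would identify the local contribution of the cell at $\tilw$. The tangent space to $\Fl$ at the fixed point $\tilw$ decomposes into $\Gnu$-weight spaces indexed by affine roots $\alpha$ with weight $\alpha(\nu\rho^\vee)$; the affine Springer fiber condition (containing $\gamma=\kappa(a)$) cuts this down, and the relevant combinatorics is exactly encoded by the roots $\alpha$ with $\alpha(\nu\rho^\vee)=\nu$ and $\tilw^{-1}(\alpha)<0$. These are precisely the roots appearing in the definition of $\lambda^{\tilw}_{\nu}$. The claim is that the contribution of the $\tilw$-cell to $\spcoh{}{\Sp_{a}}^{S_a}$, as a module, is $\mathcal{H}_{W_\nu}/\Ann(\lambda^{\tilw}_{\nu})$: the factor $\mathcal{H}_{W_\nu}$ arises because the $S_a$-invariant part of the equivariant cohomology of $\Fl$ localizes to the coinvariant algebra of $W_\nu$ (the equivariant cohomology of the partial affine flag variety $\Fl_{W_\nu}$, or a standard localization argument), and the ideal $\Ann(\lambda^{\tilw}_{\nu})$ records which part survives in $\Sp_{a}$ versus $\Fl$ — $\lambda^{\tilw}_{\nu}$ being the product of the relevant positive-weight tangent directions that are \emph{not} in $\Sp_{a}$, so that the $\tilw$-cell of $\Sp_a$ has cohomology given by the Poincar\'e dual / quotient. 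Summing over $\tilw\in W_{\nu}\backslash\Wa$ then yields formula~(\ref{intro dim}).

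The main obstacle is the precise bookkeeping in the second step: making rigorous the identification of the local contribution with $\mathcal{H}_{W_\nu}/\Ann(\lambda^{\tilw}_{\nu})$, which requires (a) understanding how $S_a$-invariants interact with the Bialynicki-Birula filtration — one needs the filtration to be $S_a$-stable and strict, so that invariants commute with associated graded, and (b) correctly matching the attracting-set geometry of $\Sp_a$ at $\tilw$ (an affine space whose dimension is the number of attracting directions, further intersected with the affine Springer condition) against the algebraic quotient $\mathcal{H}_{W_\nu}/\Ann(\lambda^{\tilw}_{\nu})$. This last matching is essentially the statement that the fundamental class of the $\tilw$-attracting cell in $\Sp_a$, pushed into $\spcoh{}{\Fl_{W_\nu}}\cong\mathcal{H}_{W_\nu}$-equivariantly, is $\lambda^{\tilw}_{\nu}$ up to a unit, and that the cohomology of the cell is the corresponding cyclic module; I expect this to follow from a careful application of the localization theorem together with the explicit description of $\Gnu$-weights on $T_{\tilw}\Fl$, but it is where the real work lies. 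A secondary subtlety is checking that passing from the $S_a\rtimes B_a$-invariants (as in Theorem~\ref{thm:L(triv)}) to just $S_a$-invariants in the fixed-point computation is harmless — i.e. that $B_a$ acts trivially on the relevant fixed-point basis, or that one can first compute $S_a$-invariants and the $B_a$-action is accounted for separately; this should be addressed by noting $B_a$ is a monodromy group acting through a finite quotient whose invariants on each graded piece can be read off from the $W_\nu$-structure.
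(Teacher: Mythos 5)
Your high-level plan — use the $\Gnu$-action, localize to fixed points, and match the local contribution at $\tilw$ to $\mathcal{H}_{W_\nu}/\Ann(\lambda^\tilw_\nu)$ — is indeed the paper's strategy, and your instinct that $\lambda^\tilw_\nu$ should appear as an Euler/Chern class of the normal geometry at $\tilw$ is also on target. But there is a basic geometric misconception at the start that the rest of the argument cannot be run as stated.

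You take the $\Gnu$-fixed locus of $\Fl$ (and of $\Sp_a$) to be a discrete set indexed by $\Wa$, so that the BB decomposition of $\Sp_a$ is by affine cells, one per $\tilw$. That is false in general. The torus $\Gnu$ is the one-parameter subgroup through $d\rho^\vee + m\partial$, and its centralizer in $G(F)$ is the Levi $L_\nu$ of the parahoric $\bP_\nu$; unless $\nu\rho^\vee$ lies in the interior of an alcove, $L_\nu\neq\AA$ and the fixed locus of $\Gnu$ on $\Fl$ is a disjoint union of partial flag varieties $\fl^\tilw_\nu = L_\nu/(L_\nu\cap\Ad(\tilw)\bI)$ indexed by $W_\nu\backslash\tilW$ (Lemma~\ref{l:fl fix}), not by $\Wa$. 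Correspondingly, the BB pieces of $\Sp_a$ are affine bundles over \emph{Hessenberg varieties} $\Hess^\tilw_a\subset\fl^\tilw_\nu$ (Theorem~\ref{p:paving}), not affine spaces. Your ``collapse'' from $\Wa$ to $W_\nu\backslash\Wa$ does not come from $S_a$-invariance; it is already built into the fixed-point stratification. This changes the nature of the local contribution at $\tilw$: one must compute $\cohog{*}{\Hess^\tilw_a}^{S_a\rtimes B_a}$, not the ``cohomology of an affine cell.''

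The paper does this computation in Theorem~\ref{thm:dim}(1) by a different mechanism than your localization/fundamental-class heuristic: the Hessenberg variety is the zero locus of a regular section of a vector bundle $\calE$ on $\fl^\tilw_\nu$ whose top Chern class is exactly $\lambda^\tilw_\nu$, and the restriction map $\cohog{*}{\fl^\tilw_\nu}\to\cohog{*}{\Hess^\tilw_a}^{\pi_0(S_a)\rtimes B_a}$ is shown to be \emph{surjective} by a decomposition-theorem argument over the family $\tpi^\tilw_{\nu}$ (Theorem~\ref{thm:surj}); combined with Poincar\'e duality on $\Hess^\tilw_a$ this identifies the invariant cohomology with $\mathcal{H}_{W_\nu}/\Ann(\lambda^\tilw_\nu)$. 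The surjectivity step is exactly what makes the $B_a$-invariants indispensable, and it is not something that can be dismissed by saying $B_a$ acts through a finite quotient; the non-$B_a$-invariant part of $\cohog{*}{\Hess^\tilw_a}$ is genuinely nonzero in examples (e.g.\ the genus-one curves appearing in \S\ref{s:examples}) and must be quotiented away. So the gap to fill is twofold: (a) replace the discrete fixed-point picture by the Hessenberg paving, and (b) replace the localization argument for the local contribution with the Chern-class/duality argument for the cohomology of the Hessenberg variety together with the monodromy-invariance surjectivity statement.
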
 
This theorem follows from the combination of Theorem \ref{thm:L(triv)}(2) and Theorem \ref{thm:dim}. In practice, the calculation of $\dim\frL_{\nu}(\triv)$ can be made more effective. If $\nu=d/m$ in lowest terms, we have $\frL_{d/m}(\triv)=d^{r}\frL_{1/m}(\triv)$  where $r$ is the rank of $\GG$. To calculate $\frL_{1/m}(\triv)$, the right side of \eqref{intro dim} only involves very few $\tilw$ lying in certain bounded {\em clans} (see \S\ref{sss:clans}) which are easy to determine. In \S\ref{s:examples}, we compute $\dim\frL_{\nu}(\triv)$ by hand for low rank groups, and use computer to generate tables of dimensions for exceptional groups (see \S\ref{ss:tables}).

\subsection{Organization of the paper}
The paper consists of three parts. 

The Algebra part provides basic setup of the groups and algebras involved in the main results. The materials in \S\ref{s:gp} and \S\ref{s:H} are standard in the situation of split $G$, but we set things up to treat the quasi-split cases uniformly. The key notion of homogeneous elements in a loop algebra is introduced in \S\ref{ss:homo local}, and we relate it to more familiar notions in Lie theory such as graded Lie algebras, regular elements in Weyl groups and Moy-Prasad filtration. 

The Geometry part studies the geometric and homological properties of homogeneous affine Springer fibers (\S\ref{s:Spr}) and homogeneous Hitchin fibers (\S\ref{s:Hit}). The main new result in \S\ref{s:Spr} is a formula for the dimension of the monodromy-invariant part of the cohomology of homogeneous affine Springer fibers (Theorem \ref{thm:dim}). Although our main results can be stated without referring to Hitchin fibers, the proof of  Theorem \ref{thm:L(triv)} uses the relation between affine Springer fibers and Hitchin fibers, and uses the global geometry of Hitchin fibration in an essential way. In \S\ref{s:Hit} we prove basic geometric properties of Hitchin moduli spaces over a {\em weighted projective line}, which is needed in order to establish a clean comparison result with homogeneous affine Springer fibers (Proposition \ref{p:prod}).

The Representations part then connects the Algebra part and the Geometry part together. In \S\ref{s:Hgr}, we construct representations of the graded \Cha $\Hgr_{\nu}$ on the equivariant cohomology of homogeneous affine Springer fibers and Hitchin fibers. The construction is a straightforward modification of what has been done in \cite{GS}. In \S\ref{s:Hrat}, we construct representations of the rational \Cha $\Hrat_{\nu}$ on the associated graded pieces of the equivariant cohomology of homogeneous affine Springer fibers and Hitchin fibers. The key step in the construction is to define a filtration on these cohomology groups such that when passing to the associated graded, the action of $\Hgr_{\nu}$ induces an action of $\Hrat_{\nu}$. We propose two filtrations, an algebraic one (called the {\em Chern filtration}, see \S\ref{ss:pol Hrat}) which works for quasi-split $G$ but is only defined on the quotient of the polynomial representation of $\Hrat$, and a geometric one (called the {\em perverse filtration}, see \S\ref{ss:perv fil}) using deep geometric input from Hitchin fibration (such as the support theorem proved by Ng\^{o} \cite{NgoFL}), which currently only works for split groups $G$. These filtrations coincide on spaces where they are both defined, and the final construction of the $\Hrat_{\nu}$-module structure and the proof of its properties (such as irreducibility) uses both of them.  We also prove a duality theorem (Corollary \ref{c:duality}) for the $\Hrat_{\nu}$-modules constructed from homogeneous affine Springer fibers for Langlands dual groups $\GG$ and $\dGG$.

In the final section \S\ref{s:examples}, we compute all examples where the $F$-rank of $G$ is at most two. The relevant Hessenberg varieties appear to have close relationship with classical projective geometry such as pencils of quadrics. The fact that the dimensions of their cohomology add up to the correct number (a sum of dimensions of irreducible $\Hrat_{\nu}$-modules people computed earlier) in each single example is a miracle to us.  Using computer we also give tables of dimensions of $\frL_{\nu}(\triv)$ for exceptional groups, and make conjectures about the dimension for certain classical groups.

\part{Algebra}

\section{Group-theoretic preliminaries}\label{s:gp}

In this section, we collect the basic definitions and properties of quasi-split reductive groups over $F=\CC((t))$. All results here are either well-known or variants of well-known ones. The reader is invited to come back to this section for clarification of notation.

\subsection{The group $\GG$} Let $\GG$ be an almost simple reductive group over $\CC$. We fix a pinning $\dagger=(\TT,\BB,\cdots)$ of $\GG$, where $\TT$ is a maximal torus of $\GG$ and $\BB$ is a Borel subgroup containing $\TT$. This determines a based root system $\PPhi\subset\xch(\TT)$ and a based coroot system $\PPhi^{\vee}\subset\xcoch(\TT)$. Let $\Aut^\dagger(\GG)$ be the group of pinned automorphisms of $\GG$, which is naturally isomorphic to $\Out(\GG)$. The Lie algebra of $\GG$ is denoted by $\gg$. For a $\CC$-algebra $R$, we sometimes denote $\gg\otimes_{k}R$ by $\gg(R)$.

Fix $\theta:\mu_e\incl\Out(\GG)\cong\Aut^\dagger(\GG)$ an injective homomorphism. We have $e=1$ or $e=2$ (type $A, D$ or $E_{6}$) or $e=3$ (type $D_{4}$).

Let $\HH$ be the neutral component of $\GG^{\mu_{e},\circ}$. Let $\AA$ be the neutral component of $\TT^{\mu_{e},\circ}$. Then $\AA$ is a maximal torus of $\HH$ and $\xcoch(\AA)=\xcoch(\TT)^{\mu_{e}}$.

Notation: we use boldfaced (or blackboard) letters to denote the absolute data attached to the group $\GG$ and the usual letters for the relative data attached to either $\HH$ or $(\GG,\theta)$. For example,
\begin{itemize}
\item $\rr=\rank\GG=\dim\TT$ while $r=\rank\HH=\dim\AA$;
\item The absolute root system $\PPhi=\Phi(\GG,\TT)$; the relative root system $\Phi=\Phi(\GG,\AA)$ (which is not necessarily reduced). 
\item The absolute Weyl group  $\WW=N_{\GG}(\TT)/\TT$ and the relative one $W=N_{\HH}(\AA)/\AA=\WW^{\mu_{e}}$;
\end{itemize}
We also introduce the group $\WW'=\WW\rtimes\mu_e$ where $\mu_e$ acts on $\WW$ via $\theta$.

\subsection{The group $G$ over $F$}
\subsubsection{The field $F$} Let $F=\CC((t))$, the field of formal Laurent series with coefficients in $\CC$. Let $\calO_{F}=\CC[[t]]$ be the valuation ring in $F$. For each integer $n\geq1$, let $F_n$ be the extension $\CC((t^{1/n}))$ of $F$. Then $F_\infty=\cup_{n\geq1}F_n$ is an algebraic closure of $F$, with $\Gal(F_\infty/F)$ is identified with the projective limit $\hZZ(1)=\varprojlim_{n}\mu_n$. 

Let $\nu\in\QQ$. The Galois action of $\hZZ(1)$ on $t^{\nu}\in F_{\infty}$ gives a character which we denote by $\zeta\mapsto\zeta^{\nu}$. This character only depends on the class of $\nu$ in $\QQ/\ZZ$. Concretely, if $\nu=a/b$ in lowest terms with $b>0$, then the corresponding character is $\hZZ(1)\xrightarrow{\textup{natural}}\mu_{b}\xrightarrow{[a]}\mu_{b}$.

\subsubsection{The quasi-split group $G$}\label{sss:gp} The homomorphism $\theta$ gives a descent datum of the constant group $\GG\otimes_{\CC}F_e$ from $F_e$ to $F$. We denote the resulting group scheme over $F$ by $G$. 
Explicitly, for any $F$-algebra $R$, $G(R)=\{g\in\GG(R)\otimes_{F}F_{e}:\zeta(g)=\theta(\zeta)(g)\textup{ for all }\zeta\in\mu_{e}\}$, where $\zeta(g)$ means the action on $F_{e}$ via the Galois action. In other words, $G=(\Res^{F_{e}}_{F}(\GG\otimes_{\CC}F_{e}))^{\mu_{e}}$, with $\mu_{e}$ acting simultaneously on $F_{e}$ through Galois action and on $\GG$ through $\theta$ {\em composed with the inversion on $\Out(\GG)$}. The torus $A:=\AA\otimes_{\CC}F$ of $G$ is a maximal split torus of $G$. 
The torus $\TT$ gives rise to a maximal torus $T=(\Res^{F_{e}}_{F}(\TT\otimes_{\CC}F_{e}))^{\mu_{e}}$ of $G$. 

\subsubsection{Integral models} We need an integral model of $G$ over $\calO_{F}$. First consider $\bG_{1}=(\Res^{\calO_{F_{e}}}_{\calO_{F}}(\GG\otimes_{\CC}\calO_{F_{e}}))^{\mu_{e}}$ (the action of $\mu_{e}$ is the same as in the case of $G$ above). This is a group scheme over $\Spec\calO_{F}$ whose generic fiber is isomorphic to $G$. The special fiber of $\bG_{1}$ has reductive quotient equal to $\GG^{\mu_{e}}$, which may or may not be connected. Let $\bG\subset \bG_{1}$ be the fiberwise neutral component, which is a smooth group scheme over $\calO_{F}$ with generic fiber $G$. In fact, $\bG$ is a {\em special parahoric subgroup} of $G$. We can similarly define an $\calO_{F}$-model $\bT$ for $T$. Namely, we first define $\bT_{1}=(\Res^{\calO_{F_{e}}}_{\calO_{F}}(\TT\otimes_{\CC}\calO_{F_{e}}))^{\mu_{e}}$, then take $\bT$ to be the fiberwise neutral component of $\bT_{1}$.

The Lie algebra of $\bG$ is denoted $\frg$. We have $\frg(\calO_{F})=\{X(t^{1/e})\in\gg\otimes F_{e}: \theta(\zeta)X(t^{1/e})=X(\zeta t^{1/e})\textup{ for all }\zeta\in\mu_{e}\}$. This is a Lie algebra over $\calO_{F}$. For an $\calO_{F}$-algebra $R$, we use $\frg(R)$ to mean $\frg\otimes_{\calO_{F}}R$. 

\subsection{Invariant quotient}\label{ss:inv} Let $\cc=\tt\sslash\WW=\gg\sslash\GG$ be the invariant quotient of $\gg$. There is a $\GG$-invariant morphism $\bchi:\gg\to\cc$. As an affine scheme over $\CC$, $\cc$ is isomorphic to an affine space with coordinate functions given by the fundamental invariants polynomials $f_1,\cdots, f_{\rr}$ on $\gg$. 

Define $\frc=(\Res^{\calO_{F_{e}}}_{\calO_{F}}(\cc\otimes_{\CC}\calO_{F_{e}}))^{\mu_{e}}$, where $\mu_{e}$ acts on both $\cc$ (via $\theta$) and on $\calO_{F_{e}}$ (via the inverse of the usual Galois action on $F_{e}$). We have a morphism $\chi: \frg\to\frc$ over $\calO_{F}$, which is induced from $\bchi\otimes\calO_{F_{e}}:\gg\otimes_{\CC}\calO_{F_{e}}\to\cc\otimes_{\CC}\calO_{F_{e}}$. 

It is convenient to choose the fundamental invariants $f_{1},\cdots, f_{\rr}$ so that each of them is an eigenvector under the pinned action of $\mu_{e}$. Let $\vep_{i}\in\{0,1,\cdots,e-1\}$ be the unique number such that $\mu_{e}$ acts on $f_{i}$ via the $\vep_{i}\nth$ power of the tautological character of $\mu_{e}$. Using the fundamental invariants, we may write
\begin{equation}\label{frc}
\frc(\calO_{F})=\bigoplus_{i=1}^{\rr}t^{\vep_{i}/e}\calO_{F}.
\end{equation}

We use $\frc_{F}$ to denote the generic fiber of $\frc$, which is the same as $(\Res^{F_{e}}_{F}(\cc\otimes_{\CC}F_{e}))^{\mu_{e}}$. Let $\cc^\rs\subset\cc$ be the complement of the discriminant divisor. Since $\cc^{\rs}$ is invariant under $\Out(\GG)$, we can define $\frc^{\rs}_{F}:=(\Res^{F_{e}}_{F}(\cc^{\rs}\otimes_{\CC}F_{e}))^{\mu_{e}}$, which is an open subscheme of $\frc_{F}$. We denote the $F$-points of $\frc^{\rs}_{F}$ by $\frc(F)^{\rs}$.

\subsubsection{Ellipticity}\label{sss:ell}
The composition $\tt\otimes F_{e}\to \cc\otimes F_{e}\to \frc_{F}$ is a branched $\WW'$-cover that is  \'etale over $\frc^{\rs}_{F}$. Let $a\in\frc(F)^{\rs}$, viewed as a morphism $\Spec F\to\frc^{\rs}_{F}$, then it induces a homomorphism $\Pi_{a}: \Gal(F_{\infty}/F)\cong\hZZ(1)\to \WW'$ (up to conjugacy). A point $a\in\frc(F)^{\rs}$ is called {\em elliptic} if $\tt^{\Pi_a(\hZZ(1))}=0$. Equivalently, if we fix a topological generator $\zeta\in\hZZ(1)$, then $a$ is elliptic if $\tt^{\Pi_a(\zeta)}=0$. 

\subsubsection{Kostant section}
Let $\ss\subset \gg$ be the Kostant section defined using the pinning of $\GG$. Recall if $(\ee,2\rho^{\vee}, \ff)$ is the principal $\mathfrak{sl}_{2}$-triple of $\gg$, then $\ss=\ee+\gg^{\ff}$. Since $\ee,\ff$ are invariant under pinned automorphisms of $\GG$, the isomorphism $\bchi|_{\ss}:\ss\to\cc$ is  $\mu_{e}$-equivariant. Let $\frs=(\Res^{\calO_{F_{e}}  }_{\calO_{F}}(\ss\otimes\calO_{F_{e}}))^{\mu_{e}}\subset\frg$. Then the characteristic map $\chi$ restricts to an isomorphism $\chi|_{\frs}:\frs\isom\frc$. The inverse of $\chi|_{\frs}$ is denoted by
\begin{equation*}
\kappa:\frc\isom\frs\subset\frg,
\end{equation*}
and is called the Kostant section for $\frg$.

\subsection{Regular centralizers}\label{ss:J}
We define regular elements in $\frg$ as the open subset $\frg^{\reg}=\frg\cap\gg^{\reg}(\calO_{F_{e}})$. This defines an open subscheme $\frg^{\reg}\subset\frg$ over $\calO_{F}$.

Let $I$ over $\frg$ be the universal centralizer group scheme. Consider the group scheme $I|_{\frs}$ over $\frs$ and view it as a group scheme over $\frc$ via $\chi|_{\frs}$. We denote this group scheme over $\frc$ by $J$, and call it the {\em regular centralizer group scheme of $\frg$}. 

\begin{lemma}\label{l:J} There is a canonical morphism $\iota:\chi^{*}J\to I$ which is the identity when restricted to $\frg^{\reg}$.
\end{lemma}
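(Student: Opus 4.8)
The plan is to adapt Ng\^o's construction of the canonical map from the regular centralizer to the universal centralizer \cite[\S2]{NgoFL} to the present $\calO_{F}$-linear, possibly twisted, situation. I will first build $\iota$ over the regular locus $\frg^{\reg}$, where it will in fact be an isomorphism, and then extend it over all of $\frg$ by a normality/codimension argument. (One could alternatively try to descend from the split situation over $\calO_{F_{e}}$, but the direct construction below avoids the subtleties of that ramified base change.)

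\emph{Construction over $\frg^{\reg}$.} Since $\frs\subset\frg^{\reg}$ and $\chi|_{\frs}:\frs\isom\frc$, the Kostant section $\kappa$ is a section of $\chi|_{\frg^{\reg}}$, and, \'etale-locally, every regular element lies in the $\bG$-orbit of the unique point of $\frs$ with the same image in $\frc$ (Kostant). Hence the action morphism $\bG\times\frs\to\frg^{\reg}$, $(g,x)\mapsto\Ad(g)x$, is smooth and surjective, and its fibre over $\gamma\in\frg^{\reg}$ is $\{(g,\kappa\chi(\gamma)):\Ad(g)\kappa\chi(\gamma)=\gamma\}$, which is simultaneously a left torsor under $I_{\gamma}$ and a right torsor under $I_{\kappa\chi(\gamma)}=J_{\chi(\gamma)}$. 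For $g$ in this fibre, $\Ad(g)$ carries $I_{\kappa\chi(\gamma)}$ isomorphically onto $I_{\gamma}$; since the centralizer of a regular element is a smooth commutative group scheme (of dimension $\rk$), and any two such $g$ differ by right multiplication by some $h\in I_{\kappa\chi(\gamma)}$, with $\Ad(h)$ acting trivially on the abelian group $I_{\kappa\chi(\gamma)}$, the resulting isomorphism $J_{\chi(\gamma)}=I_{\kappa\chi(\gamma)}\isom I_{\gamma}$ is independent of $g$. These isomorphisms glue to a canonical isomorphism of group schemes $\iota^{\reg}:(\chi^{*}J)|_{\frg^{\reg}}\isom I|_{\frg^{\reg}}$ over $\frg^{\reg}$, which equals the identity along $\frs$ (take $g=e$) under the tautological identification $(\chi^{*}J)|_{\frs}=I|_{\frs}$ coming from the very definition $J=I|_{\frs}$.

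\emph{Extension over $\frg$.} The total space of $\frg$, being a finite free $\calO_{F}$-module, is isomorphic to an affine space over $\calO_{F}$, hence a regular, in particular normal, scheme, and the irregular locus $\frg\setminus\frg^{\reg}$ has codimension $\geq 2$: this may be checked fibrewise over $\Spec\calO_{F}$, the irregular locus of the Lie algebra over $F$ having codimension $\geq 2$, and $\frg^{\reg}$ meeting the special fibre (e.g.\ $\frs$ does). Both $I$ and $\chi^{*}J$ are affine over $\frg$, and $\chi^{*}J\to\frg$ is smooth, being pulled back from the smooth morphism $J\to\frc$ furnished by the structure theory of regular centralizers; hence $\chi^{*}J$ is itself normal. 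Regarding a $\frg$-morphism $\chi^{*}J\to I$ as an $\calO_{\frg}$-algebra homomorphism in the opposite direction, the one defined over $\frg^{\reg}$ therefore extends uniquely across the codimension-$\geq 2$ closed complement (Hartogs for the normal scheme $\chi^{*}J$), producing $\iota:\chi^{*}J\to I$ over $\frg$. It is a homomorphism of group schemes over $\frg$ and restricts to $\iota^{\reg}$ — hence to the identity on $\frg^{\reg}$ — because both statements hold over the schematically dense open $\frg^{\reg}$ and $I$ is separated over $\frg$.

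\emph{Main difficulty.} The conceptual input (the torsor/conjugation argument over $\frg^{\reg}$) is classical; the real care lies in the extension step, namely verifying in this relative setting that $\frg$ and $\chi^{*}J$ are normal, that the irregular locus has codimension $\geq 2$ including on the special fibre of $\bG$ (whose reductive quotient is $\HH$), and that the twisted regular centralizer $J\to\frc$ is smooth. All of these follow from the constructions in \S\ref{ss:inv}--\ref{ss:J} together with \cite{NgoFL}, but they must be checked rather than merely quoted.
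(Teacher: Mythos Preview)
Your proof is correct, but the paper takes a genuinely different---and shorter---route. You redo Ng\^o's argument directly over $\calO_F$: build $\iota$ over $\frg^{\reg}$ via conjugation along the Kostant section, then extend by Hartogs. The paper instead does exactly what you set aside as having ``subtleties of ramified base change'': it works over $\calO_{F_e}$, where the group is split and Ng\^o's morphism $\iota':\chi'^{*}J'\to I'$ already exists, then applies Weil restriction and $\mu_e$-invariants to obtain $\iota_1:\chi^*J_1\to I_1$ between the ambient group schemes $J_1=(\Res J')^{\mu_e}$ and $I_1=(\Res I')^{\mu_e}$. Since $J$ and $I$ are cut out of $J_1$ and $I_1$ by the same neutral-component condition (preimage of $\bG\subset\bG_1$ under the structure maps to $\bG_1$), $\iota_1$ restricts to the desired $\iota$. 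No normality, no codimension estimate, no rerunning of Kostant.

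What each approach buys: the paper's descent is cleaner precisely because the ``subtleties'' you anticipated do not materialize---Weil restriction along $\calO_{F_e}/\calO_F$ followed by $\mu_e$-invariants is formal, and the compatibility with neutral components is a one-line check. Your direct approach is more self-contained but forces you to verify that the action map $\bG\times\frs\to\frg^{\reg}$ is smooth and surjective and that $J\to\frc$ is smooth, both of which require care on the special fibre where $\bG$ is not reductive (its reductive quotient is $\HH$, with a unipotent radical when $e>1$). You flag this honestly in your final paragraph, but it is the main cost of avoiding descent; in practice one would likely verify those facts by\ldots descending to $\calO_{F_e}$.
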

\begin{proof} Let $J'$ and $I'$ be the regular centralizer and the universal centralizer group schemes over $\gg\otimes_{\CC}\calO_{F_{e}}$. Let $\chi':\gg\otimes\calO_{F_{e}}\to\frc\otimes\calO_{F_{e}}$ be the invariant quotient map. Then it is shown in \cite{NgoHit} that there is a canonical morphism $\iota':\chi'^{*}J'\to I'$ which restricts to the identity on $\gg^{\reg}\otimes_{\CC}\calO_{F_{e}}$.

There are obvious maps $I\incl I_{1}:=\Res^{\gg\otimes\calO_{F_{e}}}_{\frg}(I')^{\mu_{e}}$ and $J\incl J_{1}:=\Res^{\cc\otimes\calO_{F_{e}}}_{\frc}(J')^{\mu_{e}}$. The morphism $\iota'$  induces $\iota_{1}:\chi^{*}J_{1}\to I_{1}$, which is the identity on $\frg^{\reg}$. Consider the composition
\begin{equation*}
\beta_{J}:\chi^{*}J_{1}\to I_{1}\xrightarrow{\beta_{I}}\bG_{1}.
\end{equation*}
By definition, $I=\beta_{I}^{-1}(\bG)\subset I_{1}$; $J=\beta_{J}^{-1}(\bG)\subset J_{1}$. Therefore $\iota_1$ restricts to a morphism $\iota:\chi^{*}J\to I$ which is the identity on $\frg^{\reg}$.
\end{proof}

There is an alternative description of $J$ in the style of \cite{DG}. Consider the group scheme $J^{\sharp}:=(\Res^{\tt\otimes\calO_{F_{e}}}_{\frc}((\TT\times\tt)\otimes\calO_{F_{e}}))^{\WW'}$. Then $J$ is an open subgroup of $J^{\sharp}$ given by removing components over the discriminant locus of $\frc$ as well as over the special fiber of $\frc$. We omit the details here.

We need a strengthening of Lemma \ref{l:J}. For each Iwahori $\bI$ (viewed as a group scheme over $\calO_{F}$) of $G(F)$, we can define the corresponding universal centralizer $I_{\bI}$ over $\Lie\bI$. It consists of pairs $(g,\gamma)\in\bI\times\Lie\bI$
such that $\Ad(g)\gamma=\gamma$. There is a natural inclusion $I_{\bI}\incl I|_{\Lie\bI}$. Let $\chi_{\bI}:\Lie\bI\incl\frg\to\frc$ be the restriction of $\chi$ to $\Lie\bI$. 

\begin{lemma}\label{l:J Iw} There is a canonical morphism $\iota_{\bI}:\chi^{*}_{\bI}J\to I_{\bI}$ which is such that the composition $\chi^{*}_{\bI}J\to I_{\bI}\incl I|_{\Lie\bI}$ is the restriction of $\iota$ to $\Lie\bI$.
\end{lemma}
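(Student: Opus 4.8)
The plan is to deduce Lemma \ref{l:J Iw} from Lemma \ref{l:J} by the same ``remove unwanted components'' strategy, now performed over $\Lie\bI$ rather than over $\frg^{\reg}$. First I would recall that $I_{\bI}$ is by construction a closed subgroup scheme of $I|_{\Lie\bI}$, cut out inside $I|_{\Lie\bI}$ by the condition that the centralizing element $g$ lie in $\bI$ (rather than merely in $\bG$, or in $G(F)$). So the content of the lemma is that the composite $\chi^{*}_{\bI}J\xrightarrow{\iota|_{\Lie\bI}} I|_{\Lie\bI}$, obtained by restricting the morphism $\iota$ of Lemma \ref{l:J} along the inclusion $\Lie\bI\incl\frg$, actually factors through the subgroup scheme $I_{\bI}$. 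Since $J$ is a smooth (indeed flat, affine) group scheme over $\frc$ and $\Lie\bI$ is integral, it suffices to check this factorization after restricting to the generic point of $\Lie\bI$, or equivalently over the open dense locus $\Lie\bI\cap\frg^{\rs}$ of regular semisimple elements; there $\iota$ is an isomorphism onto $I|_{\frg^{\rs}}$ and the centralizer of a regular semisimple $\gamma\in\Lie\bI$ is a maximal torus $T_{\gamma}\subset G(F)$ which, being contained in the Iwahori $\bI$ determined by $\gamma$ (this is exactly the defining property of affine Springer fibers), lands inside $\bI$. Hence over this dense open the image of $\iota$ lies in $I_{\bI}$, and by flatness of $\chi^{*}_{\bI}J$ and the fact that $I_{\bI}$ is a closed subscheme, the whole morphism factors as desired, producing $\iota_{\bI}:\chi^{*}_{\bI}J\to I_{\bI}$. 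Its compatibility with $\iota$ is then automatic from the construction.

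Alternatively — and this is probably the cleaner route to write up — I would use the Dennis--Gille style description of $J$ recalled in the paragraph before the lemma: $J$ is an open subgroup of $J^{\sharp}=(\Res^{\tt\otimes\calO_{F_{e}}}_{\frc}((\TT\times\tt)\otimes\calO_{F_{e}}))^{\WW'}$, and there is an \'etale-locally-defined isomorphism between $\chi^{*}J$ and the centralizer of $\gamma$ once one trivializes the $\WW'$-torsor $\tt\otimes F_{e}\to\frc_{F}$. Over $\frc^{\rs}_{F}$ the cameral cover is \'etale, the centralizer $I_{\gamma}$ is a torus, and the key local computation is: for $\gamma\in\Lie\bI$ regular semisimple, every element of the centralizer torus $T_{\gamma}(F)$ normalizes and in fact fixes the Iwahori $\bI$ (because $\bI$ is the unique Iwahori with $\gamma\in\Lie\bI$ when $\gamma$ is regular, or more robustly because $T_{\gamma}$ fixes the corresponding point of the building). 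This is the affine analogue of the elementary fact that in $\Lie\gg$, the centralizer of a regular element of a Borel lies in that Borel. One then extends from the regular semisimple locus to all of $\Lie\bI$ by the same flatness/closedness argument. The compatibility statement with $\iota$ is built into this construction since both morphisms are restrictions of the single $\iota_{1}$ appearing in the proof of Lemma \ref{l:J} (replacing $\bG_{1}$ there by the Iwahori-level analogue $\bI_{1}$).

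The main obstacle, and the point that needs genuine care rather than formal nonsense, is establishing that the centralizer lands in $\bI$ \emph{integrally}, i.e.\ as group schemes over $\Lie\bI$ and not merely on $F$-points or on the regular locus. One has to be careful that $I_{\bI}\incl I|_{\Lie\bI}$ is a closed immersion of flat $\calO_{F}$-schemes so that factorization through it can be checked on a dense open; this uses that $\bI$ is a smooth affine $\calO_{F}$-group scheme and $\Lie\bI$ is $\calO_{F}$-flat with integral special fiber — facts about parahoric group schemes that, while standard, should be cited (e.g.\ from Bruhat--Tits theory, or in this paper's setup from \S\ref{s:gp}). A secondary subtlety is that $J$ itself has been modified from $J^{\sharp}$ precisely by deleting components over the special fiber and the discriminant, so one must confirm that the components of $\chi^{*}_{\bI}J$ that survive are exactly the ones whose image under $\iota$ meets $\bI$; but this is guaranteed because $J$ is \emph{connected} fiberwise over $\frc$ by construction, so there is no room for a stray component to escape $\bI$ once the generic fiber is controlled. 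Granting these structural facts, the proof is a routine reduction to the regular semisimple locus plus a spreading-out argument, entirely parallel to the proof of Lemma \ref{l:J}.
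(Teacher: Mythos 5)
Your strategy --- restrict $\iota$ from Lemma \ref{l:J}, check factorization through $I_\bI$ over a dense open, spread out by closedness --- founders on the opening structural claim: $I_\bI\incl I|_{\Lie\bI}$ is \emph{not} a closed immersion. The inclusion $\bI\to\bG$ is a N\'eron blowup (dilatation) of $\bG$ along the Borel subgroup of its special fiber; it is an isomorphism on generic fibers and a closed immersion on special fibers, but as a morphism of $\calO_F$-schemes it is a birational, non-immersive monomorphism. (For $\GG=\SL_2$ it is the map $\Spec\calO_F[a,b,c,d]/(ad-bc-1)\to\Spec\calO_F[a,b,c',d]/(ad-tbc'-1)$ with $c\mapsto tc'$, which is not surjective on coordinate rings.) Consequently $I_\bI\to I|_{\Lie\bI}$ is only a monomorphism, and factoring a morphism through a non-closed monomorphism is not a closed condition that can be verified on a dense open and then spread out. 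Indeed, on the $F$-generic fibers one has $\bI_F=\bG_F=G$, so $I_{\bI}|_F=I|_{\Lie\bI}|_F$ and there is nothing to check there; the entire content of the lemma is integral, concentrated on the special fiber, which is exactly what your dense-open reduction discards. If $\bI\to\bG$ were a closed immersion, the lemma would be vacuous.

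There is a second, independent error in the proposed verification over the regular semisimple locus. The centralizer torus $T_\gamma(F)$ of a regular semisimple $\gamma\in\Lie\bI$ is not contained in $\bI$ --- it is an unbounded subgroup of $G(F)$ while $\bI$ is bounded --- and $\bI$ is not ``the Iwahori determined by $\gamma$'': the set of Iwahoris whose Lie algebra contains $\gamma$ is exactly $\Sp_\gamma$, which is typically positive-dimensional. The true and nontrivial statement is only that $J_{\chi(\gamma)}(\calO_F)$ --- a bounded open subgroup of $T_\gamma(F)$ under the canonical identification $J_{\chi(\gamma)}(F)\cong G_\gamma(F)$ --- lands in $\bI$, which is precisely what the lemma asserts and so cannot be presupposed. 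The paper's own proof reduces in one line to \cite[Lemma 2.3.1]{GS}, where the split-case morphism $\chi'^*_{\bI'}J'\to I'_{\bI'}$ is constructed directly. The correct way to carry out the strategy you gesture at is the universal property of the dilatation: since $\chi^*_\bI J$ is $\calO_F$-flat, a morphism from it to $\bG$ factors through $\bI$ if and only if its restriction to the special fiber factors through the Borel $\BB$, and one must then make a genuine Lie-algebra computation on the special fiber, not a formal density argument.
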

\begin{proof}
The argument for Lemma \ref{l:J} applies here with the following modifications. We only need to produce a canonical map $\iota'_{\bI'}:\chi'^{*}_{\bI'}J'\to I'_{\bI'}$. This follows from \cite[Lemma 2.3.1]{GS}.
\end{proof}

\subsection{The affine root system}

\subsubsection{The loop group and parahoric subgroups}\label{sss:loop group} The group $G(F)$ can  be viewed as a the $\CC$-points of a group ind-scheme $LG$ over $\CC$. For any $\CC$-algebra $R$, $LG(R)$ is defined to be $G(R((t)))$. We shall abuse the notation and write $G(F)$ for $LG$ if it is clear from the context that $G(F)$ denotes a group ind-scheme over $\CC$.  Parahoric subgroups $\bP$ of $G(F)$ are the $\calO_{F}$-points of certain smooth models of $G$ over $\calO_{F}$ (the Bruhat-Tits group schemes). Later we will also considered $\bP$ as group schemes (of infinite type) over $\CC$, whose $R$-points are $\bP(R[[t]])$.

\subsubsection{The Kac-Moody group} Attached to $G$ there is an affine Kac-Moody group which is a group ind-scheme over  $\CC$. First there is a central extension $1\to \Gcen\to G^{\cen}\to G(F)\to 1$, where $\Gcen$ is a one-dimensional torus. The central extension $G^{\cen}$ is constructed as in \cite[6.2.2]{GS}: a $\CC$-point of $\Gcen$ is a pair $(g,\tau)$ where $g\in G(F)$ and $\tau$ is a trivialization of the determinant line $\det(\Ad(g)\frg:\frg)$. The affine Kac-Moody group is the semidirect product $G_{\KM}=G^{\cen}\rtimes \Grote$,  where $\Grote$-acts as usual on $G(F)$ and as the identity on $\Gcen$. 

We have the torus $\AA_{\KM}:=\Gcen\times\AA\times\Grote\subset G_{\KM}$ (recall $\AA$ is the neutral component of $\TT^{\mu_{e}}$). Let $\fra:=\xcoch(\AA)\otimes_{\ZZ}\QQ$ and $\fra_{\KM}:=\xcoch(\AA_{\KM})\otimes_{\ZZ}\QQ$, and let $\fra^{*}$ and $\fra^{*}_{\KM}$ be the dual vector spaces.

Let $\delta/e$ and $\L_{\can}$ be the generators of $\xch(\Grote)$ and $\xch(\Gcen)$ respectively. Dually, let $e\partial$ and $K_{\can}$ be the generators of $\xcoch(\Grote)$ and $\xcoch(\Gcen)$ respectively. The imaginary roots of $G_{\KM}$ are $\frac{1}{e}\ZZ\delta-\{0\}$. \footnote{This is different from Kac's convention in \cite{Kac}, where he defines $\delta$ to be the generator of positive imaginary roots.}

\subsubsection{Affine simple roots}\label{sss:aff simple roots}  $\Delta=\{\alpha_{1},\cdots,\alpha_{r}\}$ are the simple roots of the reductive group $\HH$ with respect to $\AA$, which are in bijection with the $\mu_{e}$-orbits on the simple roots of $\GG$ with respect to $\TT$. Let $\beta\in\Phi$ be the highest weight of the action of $\AA$ on $\gg_{1}$ (the eigenspace of $\gg$ on which $\mu_{e}$ acts via the tautological character). When $e=1$, $\beta$ is the highest root of $\GG$. When $e>1$ and $G$ is not of type $\SUodd$,  $\beta$ is the dominant short root in the relative root system $\Phi$; when $G$ is of type $\SUodd$, $\beta$ is the longest dominant root in $\Phi$, which is twice a shortest root. We introduce the Dynkin labeling $\{a_{0}=1, a_{1}\cdots,a_{r}\}$ such that
\begin{equation*}
\beta=\sum_{i=1}^{r}a_{i}\alpha_{i}.
\end{equation*}
The {\em $\theta$-twisted Coxeter number} attached to $(\GG,\theta)$ is
\begin{equation*}
h_{\theta}=e\sum_{i=0}^{r}a_{i}.
\end{equation*}
Let
\begin{equation*}
\alpha_{0}=\delta/e-\beta.
\end{equation*}
Then $\Delta_{\aff}=\{\alpha_{0},\alpha_{1},\cdots,\alpha_{r}\}$ is the set of affine simple roots of the Kac-Moody group $G_{\KM}$. The set of affine roots of $G_{\KM}$ will be denoted by $\Phi_{\aff}$, which is a subset of $\xch(\Grote)\oplus\xch(\AA)$.
\footnote{When $G$ is of type $\SUodd$, our convention here is different from that in \cite{Kac}. Kac takes $\alpha_{0}$ to be the shortest node in the Dynkin diagram of type $\SUodd$ while we take $\alpha_{0}$ to be the longest one. In Kac's convention $a_{0}=2$ and $a_{0}^{\vee}=1$, while in our convention $a_{0}=1$ and $a_{0}^{\vee}=2$.}

\subsubsection{Affine simple coroots} Let $\beta^{\vee}\in\Phi^{\vee}$ be the coroot attached to $\beta$. Let $a_{0}^{\vee}=1$ if $\GG$ is not of type $\SUodd$ and $a_{0}^{\vee}=2$ otherwise. We introduce the dual labeling $\{a^{\vee}_{0}, a^{\vee}_{1}\cdots,a^{\vee}_{r}\}$ such that
\begin{equation}\label{dual label}
a_{0}^{\vee}\beta^{\vee}=\sum_{i=1}^{r}a^{\vee}_{i}\alpha_{i}^{\vee}.
\end{equation}
The {\em $\theta$-twisted dual Coxeter number} is defined as
\begin{equation}\label{dual Cox}
h^{\vee}_{\theta}=\sum_{i=0}^{r}a^{\vee}_{i}.
\end{equation}
Note that $h^{\vee}_{\theta}$ is always equal to the dual Coxeter number of $\GG$. Let 
\begin{equation}\label{define alpha0}
\alpha^{\vee}_{0}=\frac{2h^{\vee}_{\theta}}{a_{0}^{\vee}}K_{\can}-\beta^{\vee}.
\end{equation}
Then $\Delta^{\vee}_{\aff}=\{\alpha^{\vee}_{0}, \alpha^{\vee}_{1},\cdots,\}$ is the set of affine simple coroots of $G_{\KM}$. The set of affine coroots of $G_{\KM}$ will be denoted by $\Phi^{\vee}_{\aff}$, which is a subset of $\xcoch(\Gcen)\oplus\xcoch(\AA)\subset\fra_{\KM}$.

\subsubsection{The affine Weyl group} First we have the extended affine Weyl group $\tilW$ defined as follows. Recall the maximal torus $T\subset G$ is given an $\calO_{F}$-structure $\bT$. Then $\tilW:=N_{G}(T)/\bT$. Since $W=N_{G}(T)/T$, we have a canonical short exact sequence
\begin{equation}\label{tilW}
1\to \xcoch(\TT)_{\mu_{e}}\to \tilW\to W\to 1
\end{equation}
in which the first term is canonically isomorphic to $T(F)/\bT(\calO_{F})$. 

Let $\frA$ be the apartment in the building of $G$ attached to $T$. This is a torsor under $\fra\otimes_{\QQ}{\RR}$. The group $\tilW/\xcoch(\TT)_{\mu_{e},\tors}$ acts on $\frA$, and $\xcoch(\TT)_{\mu_{e}}/\xcoch(\TT)_{\mu_{e},\tors}$ acts as translations. This defines an embedding
\begin{equation*}
\iota: \xcoch(\TT)_{\mu_{e}}/\xcoch(\TT)_{\mu_{e},\tors}\incl\fra.
\end{equation*}

Each real affine root of $G_{\KM}$ is viewed as an affine function on $\frA$. The zero sets of the real affine roots gives a stratification of $\frA$ into facets. Each facet $\frF$ gives rise to a parahoric $\bP\subset 
G(F)$ containing $\bT$ and vice versa. The special parahoric $\bG$ determines a facet $\frF_{\bG}$ which is a point. We often use this point to identify $\frA$ with $\fra\otimes_{\QQ}{\RR}$. Since $W$ can also be identified with $N_{\bG}(\bT)/\bT$, we may identify $W$ with a reflection subgroup of $\tilW$ fixing the vertex $\frF_{\bG}$. This way we can write $\tilW=\xcoch(T)_{\mu_{e}}\rtimes W$.

The complement of the affine root hyperplanes is a disjoint union of alcoves, which are in bijection with Iwahori subgroups containing $\bT$. The choice of the standard Iwahori $\bI$ determines the standard alcove $\frF_{\bI}$ adjacent to $\frF_{\bG}$. The standard parahorics corresponds bijectively to the facets in the closure of $\frF_{\bI}$. The affine roots whose zero set is tangent to $\frF_{\bI}$ and whose gradient is pointing towards $\frF_{\bI}$ are called affine simple roots.

The affine Weyl group $\Wa$ attached to $G$ is the subgroup of $\tilW$ generated by reflections across the affine root hyperplanes. Equivalently, $\Wa$ is generated by the simple reflections corresponding to elements in $\Delta_{\aff}$. We have a similar short exact sequence
\begin{equation*}
1\to \L\to \Wa\to W\to 1.
\end{equation*}
for some sublattice $\L\subset\xcoch(\TT)_{\mu_{e}}$. Let $\Omega:=\xcoch(\TT)_{\mu_{e}}/\L$. When $e=1$, $\Omega$ is the fundamental group of $\GG$, and $\L$ is the coroot lattice of $\GG$. In general, we have an exact sequence
\begin{equation*}
1\to \Wa\to \tilW\to \Omega\to 1.
\end{equation*} 
Let $\Omega_{\bI}$ be the stabilizer of $\frF_{\bI}$ under $\tilW$, then $\Omega_{\bI}\to\Omega$ is an isomorphism via the projection $\tilW\surj\Omega$. Therefore we may write $\tilW=\Wa\rtimes\Omega_{\bI}$. We may identify $\Omega_{\bI}$ with $N_{G}(\bI)/\bI$.

\subsubsection{The invariant symmetric bilinear form} Let
\begin{eqnarray*}
\BB(\cdot,\cdot): \xcoch(\TT)\times\xcoch(\TT)\to\ZZ\\
(x,y)\mapsto\sum_{\alpha\in\PPhi}\jiao{\alpha,x}\jiao{\alpha,y}.
\end{eqnarray*}
be the Killing form on $\xcoch(\TT)$ (we are summing over all roots of $\GG$). Restricting to $\xcoch(\AA)$ we get a $W$-invariant symmetric bilinear form on $\xcoch(\AA)$ and hence on $\fra$. As an element of $\Sym^{2}(\fra)^{W}$, we have
\begin{equation*}
\BB=\sum_{\alpha\in\PPhi}\alpha\otimes\alpha.
\end{equation*}
We define a symmetric bilinear form $B_{\KM}$ on $\fra_{\KM}$ extending $\BB$:
\begin{equation}\label{Killing KM}
B_{\KM}=\BB+\delta\otimes\L_{\can}+\L_{\can}\otimes\delta.
\end{equation}
In other words, we have
\begin{eqnarray*}
B_{\KM}(\partial,\fra)=0; B_{\KM}(K_{\can},\fra)=0; B_{\KM}(\partial,\partial)=B_{\KM}(K_{\can},K_{\can})=0; \\
B_{\KM}(\partial,K_{\can})=B_{\KM}(K_{\can},\partial)=1; B_{\KM}|_{\fra}=\BB.
\end{eqnarray*}

\begin{lemma} The symmetric bilinear form $B_{\KM}$ on $\fra_{\KM}$ is $\tilW$-invariant.
\end{lemma}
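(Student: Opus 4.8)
The plan is to reduce $\tilW$-invariance of $B_{\KM}$ to the $\tilW$-invariance of $\BB$ on $\fra$, which holds because $\BB$ is a restriction of a $\WW'$-invariant form. Since $\tilW = \xcoch(T)_{\mu_e}\rtimes W$, it suffices to check invariance separately under $W$ and under the translation subgroup $\xcoch(T)_{\mu_e}$, and then combine. The $W$-invariance is nearly immediate: $W$ fixes $\delta$ and $\L_{\can}$ (these are characters of the central and rotational tori, on which $W$ acts trivially), and $\BB|_{\fra}$ is $W$-invariant by construction (it comes from a $\WW$-invariant form by restriction to the $\mu_e$-fixed points). So the only content is invariance under the affine translations.

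First I would recall the explicit action of a translation element $t_{x}\in\tilW$ (for $x\in\xcoch(T)_{\mu_e}$, viewed inside $\fra$) on $\fra_{\KM}$: this is the standard formula from the theory of affine Kac-Moody algebras (Kac, \cite{Kac}, or the conventions already fixed in \cite{GS}), namely $t_x(\xi) = \xi - B_{\KM}(\xi, x)\,\L_{\can}$ for $\xi\in\fra$, $t_x(\L_{\can}) = \L_{\can}$, and $t_x(\partial) = \partial + x - \tfrac{1}{2}\BB(x,x)\,\L_{\can}$ (up to normalization by $e$, which I would track carefully against the conventions of \S\ref{sss:aff simple roots}). Actually, since the statement is symmetric in stating $B_{\KM}$ rather than the coroot pairing, the cleanest route is to verify $B_{\KM}(t_x u, t_x v) = B_{\KM}(u,v)$ on a spanning set $\{\fra, \partial, K_{\can}\}$ of $\fra_{\KM}$ by direct substitution using \eqref{Killing KM}. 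There are essentially three cases to check: both arguments in $\fra$ (uses $B_{\KM}(\L_{\can},\fra)=0$ to kill the correction terms, reduces to $\BB$-invariance which is trivial since $W$-part is trivial for a pure translation); one argument $\partial$ and one in $\fra$; and the pair $(\partial,\partial)$. The last case is the one where the quadratic term $-\tfrac12\BB(x,x)\L_{\can}$ in $t_x(\partial)$ interacts with the linear terms, and one sees the cross terms cancel precisely because $B_{\KM}(\partial,K_{\can})=1$ and $B_{\KM}(\xi,x) = \BB(\xi,x)$ for $\xi,x\in\fra$.

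The main obstacle — really the only thing requiring care — is bookkeeping the normalization factors $e$ and the twist. Because $G$ is quasi-split and possibly non-split, the translation lattice is $\xcoch(T)_{\mu_e}$ rather than $\xcoch(\TT)$, the imaginary root generator is $\delta/e$, and $\alpha_0$ and $\alpha_0^{\vee}$ carry the nonstandard factors recorded in \eqref{define alpha0} and the footnote about type $\SUodd$. I would therefore phrase the translation action intrinsically: $t_x$ is the composition $w_0 w_x$ of affine reflections, or equivalently is characterized by acting trivially modulo $\L_{\can}$ on $\fra$-part and sending $\partial \mapsto \partial + x + c_x \L_{\can}$ for the unique constant $c_x$ making it an isometry — and then observe that "making it an isometry" is exactly the assertion to be proved, so instead I verify directly that the standard formula yields an isometry. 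One small additional check: the translation lattice must indeed preserve $B_{\KM}$-integrality/rationality, but since we have already tensored with $\QQ$ this is automatic. Thus the proof is a short computation on the three basis cases, with the substantive input being that $\BB|_{\fra}$ is $W$-invariant (hence $t_x$-invariant, $t_x$ being a pure translation) — a fact that follows from $\BB$ being the restriction of a $\WW'$-invariant form, as set up in \S\ref{ss:inv}.
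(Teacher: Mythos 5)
Your decomposition $\tilW=\xcoch(T)_{\mu_e}\rtimes W$ and the reduction to checking $W$-invariance (easy) plus translation-invariance (the content) is a legitimate and different strategy from the paper's. The paper instead invokes the well-known fact that a $\tilW$-invariant extension of $\BB$ exists and has the form $\BB+c(\delta\otimes\L_{\can}+\L_{\can}\otimes\delta)$ for a unique constant $c$, then pins down $c$ by $s_0$-invariance alone, and finally checks case by case (split, non-split not of type $\SUodd$, and $\SUodd$) that $c=1$ for the normalization \eqref{Killing KM}, the key input being the identity $\BB(\beta^\vee,\beta^\vee)=4eh^\vee_\theta/a_0^\vee$.

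However, your route has a genuine gap that you notice but do not resolve. The explicit formula you invoke for the translation action on $\fra_{\KM}$ — in particular the quadratic correction $t_x(\partial)=\partial+x-\tfrac12\BB(x,x)K_{\can}$, or dually $\leftexp{t_x}{\L_{\can}}=\L_{\can}-x^*-\tfrac12\BB(x,x)\delta$ — is not available to you at this point. In this paper's setup, $\tilW$ and its action on $\fra_{\KM}$ are defined geometrically via $N_G(T)/\bT$ and the determinant-line central extension, not by fiat from the abstract Kac--Moody formulas; and the quadratic coefficient $-\tfrac12\BB(x,x)$ in the translation formula is in fact what the paper \emph{derives} in Lemma \ref{l:tr action} \emph{using} the present lemma as input (the proof there reads: ``On the other hand, $B_{\KM}$ is invariant under $\tilW$... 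Comparing with $B_{\KM}$ we conclude that... $q(\l)=-\frac12\BB(\l,\l)$''). You acknowledge this circularity (``making it an isometry is exactly the assertion to be proved''), but ``verifying directly that the standard formula yields an isometry'' does not resolve it: it shows the asserted formula would preserve $B_{\KM}$, not that the geometrically defined translation action agrees with that formula. To repair this you would have to determine the $\L_{\can}$-term in $t_x(\partial)$ by some independent means — for instance by writing $t_x$ as a product of affine reflections and computing with the explicit coroots, including $\alpha_0^\vee = \frac{2h^\vee_\theta}{a_0^\vee}K_{\can}-\beta^\vee$ — at which point the computation collapses into the paper's $s_0$-invariance check. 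So your decomposition is valid, but the substantive verification you sketch circularly presupposes the translation formula; the paper's reflection-based pinning of $c$ is not an alternative you can replace by citing the translation formula, because the paper's logical order is the reverse.
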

\begin{proof} It is well-known that a $\tilW$-invariant extension of $\BB$ exists and takes the form $\BB+c(\delta\otimes\L_{\can}+\L_{\can}\otimes\delta)$ for some constant $c$. The $s_{0}$-invariance forces
\begin{equation*}
c=\frac{a_{0}^{\vee}\BB(\beta^{\vee},\beta^{\vee})}{4eh^{\vee}_{\theta}}.
\end{equation*}
We only need to show that $c=1$.
When $e=1$, $\beta$ the highest root of $\GG$ and we have the well-known formula $\BB(\beta^{\vee},\beta^{\vee})=4h^{\vee}$, hence $c=1$. When $e>1$ and $G$ is not of type $\SUodd$, $\beta^{\vee}$ is the sum of $e$ mutually orthogonal coroots $\gamma^{\vee}_{i}$ for $i=0,\cdots,e-1$ (which form an orbit under the action of $\mu_{e}$). Each $\BB(\gamma^{\vee}_{i},\gamma^{\vee}_{i})=4h^{\vee}_{\theta}$ (since $\GG$ is simply-laced). Hence $\BB(\beta^{\vee},\beta^{\vee})=4e h^{\vee}_{\theta}$ and $c=1$. Finally, when $G$ is of type $\SUodd$, $\beta^{\vee}$ is a coroot of $\GG$, hence $\BB(\beta^{\vee},\beta^{\vee})=4h^{\vee}_{\theta}$ and again $c=1$.
\end{proof}

\begin{lemma}\label{l:tr action} Suppose $\tilw\in\ker(\tilW\to W)$ with $\iota(\tilw)=\l\in\fra$. Then the action of $\tilw$ on $\fra^{*}_{\KM}$ is given by
\begin{eqnarray}
\label{tr delta}\leftexp{\tilw}{\delta}&=&\delta;\\
\label{tr xi}\leftexp{\tilw}{\xi}&=&\xi+\jiao{\xi,\l}\delta, \textup{ for }\xi\in\fra^{*}; \\
\label{tr L}\leftexp{\tilw}{\L_{\can}}&=&\L_{\can}-\l^{*}-\frac{1}{2}\BB(\l,\l)\delta.
\end{eqnarray}
where $\l^{*}\in\fra^{*}$ is defined by $\jiao{\l^{*},y}=\BB(\l,y)$. Dually, the action of $\tilw$ on $\fra_{\KM}$ is given by
\begin{eqnarray*}
\leftexp{\tilw}{K_{\can}}&=&K_{\can};\\
\leftexp{\tilw}{\eta}&=&\eta+\BB(\l,\eta)K_{\can}, \textup{ for }\eta\in\fra; \\
\leftexp{\tilw}{\partial}&=&\partial-\l-\frac{1}{2}\BB(\l,\l)K_{\can}.
\end{eqnarray*}
\end{lemma}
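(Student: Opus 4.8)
The plan is to prove \eqref{tr delta}, \eqref{tr xi}, \eqref{tr L} for the $\tilw$-action on $\fra^{*}_{\KM}$; the dual formulas on $\fra_{\KM}$ then follow formally. Indeed $\tilW$ acts on $\fra_{\KM}$ and on $\fra^{*}_{\KM}$ contragrediently, and by the preceding lemma the action on $\fra_{\KM}$ is an isometry of $B_{\KM}$; hence the action on $\fra^{*}_{\KM}$ is an isometry of the inverse form, and $y\mapsto B_{\KM}(y,-)$ is a $\tilW$-equivariant isomorphism $\fra_{\KM}\isom\fra^{*}_{\KM}$ carrying $K_{\can}\mapsto\delta$, $\partial\mapsto\L_{\can}$, and $\eta\mapsto\eta^{*}$ for $\eta\in\fra$ (in particular $\l\mapsto\l^{*}$). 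Under this isomorphism the three displayed formulas on $\fra_{\KM}$ are exactly the images of \eqref{tr delta}, \eqref{tr xi}, \eqref{tr L}, so it suffices to treat the $\fra^{*}_{\KM}$-formulas.

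For \eqref{tr delta}: $\delta/e=\sum_{i=0}^{r}a_{i}\alpha_{i}$ is an imaginary root of $G_{\KM}$, hence fixed by $\Wa$; and $\Omega_{\bI}$, stabilizing the standard alcove, permutes its walls and thus acts on the affine Dynkin diagram by automorphisms, which preserve the marks $a_{i}$ and so fix $\delta/e$ as well. Since $\tilW=\Wa\rtimes\Omega_{\bI}$, we get $\leftexp{\tilw}{\delta}=\delta$. For \eqref{tr xi}: lift $\tilw$ to $t^{\l}\in T(F)\subset N_{G}(T)$; modulo torsion and modulo an $\AA_{\KM}$-valued correction (needed to land in $N_{G_{\KM}}(\AA_{\KM})$) one may use $t^{\l}$ itself to compute the action on $\AA_{\KM}$-weights, since these do not affect weight spaces. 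Now $\Ad(t^{\l})$ scales a root vector of $\AA$-weight $\bar{\alpha}$ by $t^{\jiao{\bar{\alpha},\l}}\in F^{\times}$, and multiplication by $t$ raises the $\Grote$-grading by $\delta$, so conjugation by $\tilw$ adds $\jiao{\bar{\alpha},\l}\delta$ to the $\AA_{\KM}$-weight of such a vector. Subtracting $\leftexp{\tilw}{\delta}=\delta$ gives $\leftexp{\tilw}{\bar{\alpha}}=\bar{\alpha}+\jiao{\bar{\alpha},\l}\delta$, and since the finite parts of the affine roots span $\fra^{*}$, linearity yields \eqref{tr xi}.

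For \eqref{tr L}: since $\Gcen$ is central in $G_{\KM}$, $K_{\can}$ is $\tilW$-fixed, so pairing against $K_{\can}$ shows that the $\L_{\can}$-coefficient of $\leftexp{\tilw}{\L_{\can}}$ is $1$; write $\leftexp{\tilw}{\L_{\can}}=\L_{\can}+\xi_{0}+b\delta$ with $\xi_{0}\in\fra^{*}$, $b\in\QQ$. Now invoke the $\tilW$-invariance of $B_{\KM}$ from the preceding lemma, in the form of the induced form on $\fra^{*}_{\KM}$, for which $\fra^{*}\perp\QQ\delta$, $B_{\KM}(\delta,\delta)=B_{\KM}(\L_{\can},\L_{\can})=0$, and $B_{\KM}(\delta,\L_{\can})=1$. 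The identity $B_{\KM}(\leftexp{\tilw}{\L_{\can}},\leftexp{\tilw}{\xi})=B_{\KM}(\L_{\can},\xi)=0$ for $\xi\in\fra^{*}$, evaluated with \eqref{tr xi}, forces $\BB(\xi_{0},\xi)=-\jiao{\xi,\l}$ for all $\xi$, i.e. $\xi_{0}=-\l^{*}$; then $B_{\KM}(\leftexp{\tilw}{\L_{\can}},\leftexp{\tilw}{\L_{\can}})=B_{\KM}(\L_{\can},\L_{\can})=0$ forces $b=-\tfrac12\BB(\l,\l)$. This proves \eqref{tr L}.

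The delicate point is the interplay with the central extension $G^{\cen}$. A naive loop-group lift $t^{\l}$ of a translation lies in $G^{\cen}$, but the element of $N_{G_{\KM}}(\AA_{\KM})$ representing $\tilw$ differs from it by a correction in $\Gcen\times\Grote$ --- equivalently, by the $2$-cocycle defining $G^{\cen}$ --- and that correction is exactly what produces the quadratic terms $-\tfrac12\BB(\l,\l)K_{\can}$ in \eqref{tr L} and $\BB(\l,\eta)K_{\can}$ in its dual. The argument above avoids computing the correction directly by reading off $\xi_{0}$ and $b$ from the invariance of $B_{\KM}$ instead; in exchange one must be sure the preceding lemma has normalized $B_{\KM}$ (the constant $c=1$) compatibly with the chosen generators $\delta,K_{\can}$ and with the formula for $\alpha_{0}^{\vee}$ --- which is precisely what that lemma checks. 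The residual verifications --- that relative roots span $\fra^{*}$, that $\Ad(t^{\l})$ respects the $\AA_{\KM}$-weight grading, and the bilinear-algebra bookkeeping in the third step --- are routine.
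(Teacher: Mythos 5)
Your proof is correct and, for \eqref{tr delta} and \eqref{tr L}, is essentially the paper's argument (use of $\tilW$-invariance of $B_{\KM}$ to solve for the correction to $\L_{\can}$; dual formulas via the $\tilW$-equivariant isomorphism $\fra_{\KM}\isom\fra^{*}_{\KM}$ induced by $B_{\KM}$). The genuinely different step is \eqref{tr xi}: the paper defines the bilinear pairing $f(\tilw,\xi)=(\leftexp{\tilw}{\xi}-\xi)/\delta$, observes by formal reasoning that it must be proportional to $\jiao{\xi,\iota(\tilw)}$ by $W$-invariance, and pins down the constant $c=1$ by evaluating on the single element $\tilw=r_{\beta}s_{0}$; you instead compute directly in the loop group by lifting $\tilw$ to a representative $t^{\l}\in T(F)$, observing that $\Ad(t^{\l})$ multiplies each $\AA$-root space by a power of $t$ and that a factor of $t$ shifts the $\Grote$-weight by $\delta$. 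Your approach is more concrete and shows ``where $\delta$ comes from'' at the cost of having to argue that the choice of lift (modulo $\bT(\calO_{F})$, torsion, and the $\Gcen$-correction) is immaterial; the paper's is shorter and avoids the loop group entirely. Two small points of hygiene in your write-up: in the \eqref{tr L} step you are implicitly using the \emph{inverse} form $\BB^{-1}$ on $\fra^{*}$ (not $\BB$ itself), e.g.\ the identity $\BB^{-1}(\l^{*},\xi)=\jiao{\xi,\l}$ is what makes $\xi_{0}=-\l^{*}$, so the notation ``$\BB(\xi_{0},\xi)$'' should read ``$\BB^{-1}(\xi_{0},\xi)$''; and the remark that ``$\fra^{*}\perp\QQ\delta$'' should be supplemented by $\fra^{*}\perp\QQ\L_{\can}$, which you do use when expanding $B_{\KM}(\leftexp{\tilw}{\L_{\can}},\leftexp{\tilw}{\xi})$. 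Neither affects the validity of the argument.
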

\begin{proof}
We only give the proof of the first three equalities and the last three are obtained by duality. Equation \eqref{tr delta} is clear since $\delta$ is invariant under all $s_{i}$ and $\Omega$. For $\xi\in\fra^{*}$, we have $\leftexp{\tilw}{\xi}\in\xi+\QQ\delta$. We can define a pairing
\begin{eqnarray*}
f:\xcoch(\TT)_{\mu_{e}}\times\fra^{*}&\to&\QQ\\
(\tilw,\xi)&\mapsto&(\leftexp{\tilw}{\xi}-\xi)/\delta.
\end{eqnarray*}
It is easy to check that $f$ is bilinear and $W$-invariant ($W$ acts diagonally on $\xcoch(\TT)_{\mu_{e}}\times\fra^{*}$). Therefore $f=c\jiao{\xi,\iota(\tilw)}$ for some constant $c\in\QQ$. Taking the special element $\tilw=r_{\beta}s_{0}$ ($\beta$ is used to define $\alpha_{0}$, see \S\ref{sss:aff simple roots}). One easily calculates that $\iota(\tilw)=\frac{1}{e}\beta^{\vee}$ and $\leftexp{\tilw}{\xi}=\xi+\frac{1}{e}\jiao{\xi,\beta^{\vee}}\delta$. Therefore for this $\tilw$ we have $f(\tilw,\xi)=\jiao{\xi,\iota(\tilw)}$ and the constant $c=1$. This proves \eqref{tr xi}.

Finally, for $\iota(\tilw)=\l$, $\leftexp{\tilw}{\L_{\can}}$ takes the form $\L_{\can}+\varphi(\l)+q(\l)\delta$ for a linear function $\varphi: \xcoch(\TT)_{\mu_{e}}\to\fra^{*}$ and a quadratic function $q:\xcoch(\TT)_{\mu_{e}}\to\QQ$. 
On the other hand, $B_{\KM}$ is invariant under $\tilW$. Using proven formulas \eqref{tr delta} and \eqref{tr xi} one can calculate $\leftexp{\tilw}{B_{\KM}}$. Comparing with $B_{\KM}$ we conclude that $\varphi(\l)=-\sum_{\alpha\in\PPhi}\jiao{\alpha,\l}\alpha=-\l^{*}$ and $q(\l)=-\frac{1}{2}\BB(\l,\l)$. This proves \eqref{tr L}. The dual statements are immediate corollaries of what we already proved.
\end{proof}

\section{Homogeneous elements in the loop Lie algebra}
In this section, we shall systematically study homogeneous elements (or rather conjugacy classes) in the loop Lie algebra $\frg(F)$. Homogeneous conjugacy classes are those which are stable under a one-dimensional torus that is a mixture of  ``loop rotation'' and dilation. The main result is a classification theorem for homogeneous conjugacy classes in terms of any one of the three well-studied objects in representation theory: regular elements in a Weyl group; periodic gradings on $\gg$ and Moy-Prasad filtrations on $\frg(F)$. Out treatment here is strongly influenced by the work of Gross, Levy, Reeder and Yu (see \cite{GLRY} and \cite{RY}).

\subsection{Definition and basic properties}\label{ss:homo local}

\subsubsection{Two tori acting on $\cc(F_{\infty})$}
The one dimensional torus $\Gm^{\rot,[n]}$ acts on $F_n$ by scaling $t^{1/n}$ (the notation $\Gm^{\rot,[n]}$ is to emphasize the dependence on $n$; when $n=1$ we write $\Gm^{\rot,[1]}$ as $\Grot$).  Let $\hGrot=\varprojlim\Gm^{\rot,[n]}$ where  the transition maps are given by $[\ell]:\Gm^{\rot,[\ell n]}\to\Gm^{\rot,[n]}$. This can be viewed as the  universal cover of $\Grot$. We have a natural isomorphism $\xch(\hGrot)=\QQ$, with $\xch(\Gm^{\rot,[n]})$ identified with $\frac{1}{n}\ZZ\subset\QQ$ . The actions of $\Gm^{\rot,[n]}$ on $F_n$ passes to the limit to give an action of $\hGrot$ on $F_\infty$, and on $\gg(F_\infty)$. We have an exact sequence
\begin{equation*}
1\to\hZZ(1)\to\hGrot\to\Grot\to1.
\end{equation*}

On the other hand, the one-dimensional torus $\Gdil$ acts on $\gg(F_\infty)$ by dilation: $\Gdil\ni\l:X\mapsto \l X$ for $X\in \gg(F_\infty)$. The dilation action induces the dilation action of $\Gdil$ on $\cc(F_{\infty})$ with weights $d_1,\cdots, d_{\rr}$. 

Thus we get an action of $\hGrot\times\Gdil$ on both $\gg(F_\infty)$ and $\cc(F_{\infty})$ such that $\chi:\gg(F_\infty)\to\cc(F_\infty)$ is $\hGrot\times\Gdil$-equivariant. At finite level, we have an action of $\Grote\times\Gdil$ on $\gg(F_e)$ and $\cc(F_{e})$ (but not an action of $\Grot\times\Gdil$ on $\frg(F)$ or $\frc(F)$ if $e>1$). 

For a rational number $\nu\in\QQ$, we define a subtorus $\hGm(\nu)\subset\hGrot\times\Gdil$ as follows. The character group of $\hGm(\nu)$ is identified with the quotient of $\xch(\hGrot\times\Gdil)=\QQ\oplus\ZZ$ defined using the exact sequence
\begin{equation*}
0\to\ZZ\xrightarrow{(\nu,1)}\QQ\oplus\ZZ\to\xch(\hGm(\nu))\to0.
\end{equation*}
In particular, the image of $\hGm(\nu)$ in $\Gm^{\rot,[n]}\times\Gdil$ is the one-dimensional subtorus whose cocharacter lattice is a lattice in $\ZZ^{2}$ of slope $-n\nu$.

\begin{defn}\label{def:homo} Let $\nu\in\QQ$. 
\begin{enumerate}
\item An element $a\in \cc^\rs(F_\infty)$ is called {\em homogeneous of slope $\nu$} if $a$ is fixed under $\hGm(\nu)$. Let $\frc(F)^{\rs}_{\nu}$ denote the set of all homogeneous elements of slope $\nu$. 
\item A regular semisimple element $\gamma\in \gg(F_\infty)$ is called {\em homogeneous of slope $\nu$} if $\chi(\gamma)\in\cc^{\rs}(F_\infty)$ is.
\end{enumerate}
\end{defn}

We denote the Galois action of $\hZZ(1)$ on $\cc(F_{\infty})$ (without the twisting by $\theta$) by $\zeta: a\mapsto\zeta\cdot_{\Gal}a$. The dilation action of $s\in \Gm$ on $\frc$ will be denoted by $a\mapsto s\cdot_{\dil}a$.

\begin{lemma}\label{l:hom in Fs} Let $\nu\in\QQ$. 
\begin{enumerate}
\item If $a\in \cc^\rs(F_\infty)$ is homogeneous of slope $\nu$, then for any $\zeta\in\hZZ(1)$ we have
\begin{equation}\label{Gal vs dil}
\zeta\cdot_{\Gal}a=\zeta^{\nu}\cdot_{\dil}a.
\end{equation}
\item An element $a\in\cc^\rs(F_\infty)$ is homogeneous of slope $\nu$ if and only if $f_{i}(a)=c_it^{\nu d_i}$ for some $c_i\in \CC$, $i=1,\cdots, \rr$.
\item\label{Xtnu} A regular semisimple element $\gamma\in\gg(F_\infty)$ is homogeneous of slope $\nu$ if and only if it is $\GG(F_\infty)$-conjugate to an element of the form $Xt^\nu$ where $X\in\tt^\rs$.
\end{enumerate} 
\end{lemma}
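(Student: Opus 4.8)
The statement to prove is Lemma \ref{l:hom in Fs}, which has three parts relating homogeneity of slope $\nu$ to concrete descriptions.

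\textbf{Plan.} The strategy is to unwind the definition of the subtorus $\hGm(\nu)\subset\hGrot\times\Gdil$ together with the exact sequence $1\to\hZZ(1)\to\hGrot\to\Grot\to1$, and then translate the fixed-point condition into equations on the coordinate functions $f_i(a)$.

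\emph{Part (1).} The exact sequence tells us that $\hZZ(1)\subset\hGrot$, and I would first identify how $\hZZ(1)$ sits inside $\hGm(\nu)$. The character group computation $0\to\ZZ\xrightarrow{(\nu,1)}\QQ\oplus\ZZ\to\xch(\hGm(\nu))\to0$ says precisely that the image of $\hGrot$ in $\hGm(\nu)$ and the image of $\Gdil$ in $\hGm(\nu)$ are related: the cocharacter $(\nu,1)$ is killed, so in $\hGm(\nu)$ the loop-rotation direction and the dilation direction are identified up to the slope $\nu$. Now $\zeta\in\hZZ(1)$ maps to the element of $\hGrot$ that acts on $F_\infty$ as the Galois action $\zeta\cdot_{\Gal}$; under the identification inside $\hGm(\nu)$, this same element acts through $\Gdil$ as $\zeta^{\nu}\cdot_{\dil}$ (this is where the character $\zeta\mapsto\zeta^{\nu}$ defined in \S on the field $F$ enters). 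Since $a$ is fixed by all of $\hGm(\nu)$, in particular by the image of $\zeta$, and this image acts as $\zeta\cdot_{\Gal}$ composed with $(\zeta^{\nu})^{-1}\cdot_{\dil}$ (or the appropriate sign convention), we get $\zeta\cdot_{\Gal}a=\zeta^{\nu}\cdot_{\dil}a$. I would be careful to get the direction of the identification right by testing on a coordinate.

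\emph{Part (2).} The torus $\hGm(\nu)$ surjects onto the quotient torus whose cocharacter group is spanned by $e\partial$ direction; concretely a generic element of $\hGm(\nu)$ projects to $s\in\Gm^{\rot}$ acting by $t\mapsto s t$ and simultaneously to $s^{\nu}\in\Gdil$ (after passing to a cover where $s^{\nu}$ makes sense, i.e. to $\Gm^{\rot,[n]}$ with $m\mid n$). Since $f_i$ has dilation weight $d_i$ and loop-rotation weight counts powers of $t$, the function $a\mapsto f_i(a)$ is a Laurent series $\sum c_{i,j}t^j$, and the fixed-point condition $s^{d_i}f_i(a)(t)=f_i(a)(s^m t)$ forces $c_{i,j}=0$ unless $j=\nu d_i$ (using $\nu=d/m$, so $\nu d_i$ is the relevant rational exponent, possibly fractional, living in the correct piece of $\frc(\calO_F)$ per equation \eqref{frc}). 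This gives $f_i(a)=c_i t^{\nu d_i}$. The converse is immediate: if $f_i(a)=c_i t^{\nu d_i}$ for all $i$, the displayed scaling identity holds, so $a$ is fixed by $\hGm(\nu)$. I should also note the regular semisimple ($\rs$) condition is preserved throughout since $\cc^{\rs}$ is a torus-stable open.

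\emph{Part (3).} This is the part I expect to require the most care. One direction: if $\gamma$ is $\GG(F_\infty)$-conjugate to $Xt^\nu$ with $X\in\tt^{\rs}$, then $f_i(\gamma)=f_i(Xt^\nu)=f_i(X)t^{\nu d_i}$ since $f_i$ is homogeneous of degree $d_i$, so by part (2), $\chi(\gamma)$ is homogeneous of slope $\nu$, hence $\gamma$ is. Conversely, suppose $\gamma$ is regular semisimple and homogeneous of slope $\nu$. Over the algebraically closed field $F_\infty$, $\gamma$ is $\GG(F_\infty)$-conjugate to some element of $\tt(F_\infty)$; I want to show it can be taken in the form $Xt^\nu$ with $X\in\tt^{\rs}$ (constant, not a genuine Laurent series). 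The idea: use part (1). The homogeneity says the Galois action on $\chi(\gamma)$ agrees with a dilation. Pick $Y\in\tt(F_\infty)$ conjugate to $\gamma$; then $\chi(\gamma)=\bchi(Y)$, and homogeneity of $\chi(\gamma)$ under $\hGm(\nu)$ plus the fact that $\tt\to\cc$ is a finite $\WW$-cover lets me arrange (after further $\WW$-twisting, i.e. replacing $Y$ by $wY$) that $Y$ itself is an $\hGm(\nu)$-eigenvector of the right weight, which on $\tt(F_\infty)=\tt\otimes F_\infty$ forces $Y=Xt^\nu$ for a constant $X\in\tt$; regularity of $\gamma$ then gives $X\in\tt^{\rs}$. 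The main obstacle is making the descent-through-the-$\WW$-cover argument precise: I would either invoke that $\hGm(\nu)$ acts on the finite set of preimages and a fixed point upstairs exists after passing to the relevant cover $\Gm^{\rot,[m]}$, or argue directly that scaling $t^{1/m}$ permutes the coordinates of $Y$ compatibly with a torus element so that one eigencomponent survives. Once $Y=Xt^\nu$ is obtained, everything is elementary.

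\textbf{Hardest step.} The genuine content is Part (3)'s converse direction—extracting the normal form $Xt^\nu$ from abstract homogeneity—because it requires controlling how the $\WW$-cover $\tt(F_\infty)\to\cc(F_\infty)$ interacts with the $\hGm(\nu)$-action; Parts (1) and (2) are essentially bookkeeping with the character-group exact sequence and the weights $d_i$.
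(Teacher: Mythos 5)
Your Parts (1) and (2) match the paper, which dismisses them as direct calculation — unwind the character-group sequence, identify the weights, and read off the monomial form of $f_i(a)$; your bookkeeping is fine.

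For Part (3) you take a genuinely different route in the converse direction. The paper's argument is shorter: from Part (2), $\chi(\gamma) = (c_i t^{\nu d_i})_i$ with constants $c_i$ that cut out a point of $\cc^{\rs}(\CC)$ (the discriminant is weighted-homogeneous in the $f_i$'s, so nonvanishing of $\mathfrak{D}(a)$ is equivalent to nonvanishing of $\mathfrak{D}(c_i)$); pick $X\in\tt^{\rs}(\CC)$ with $\bchi(X) = (c_i)$, observe $\chi(Xt^\nu) = \chi(\gamma)$, and invoke the fact that two regular semisimple elements of $\gg(F_\infty)$ with the same invariants are $\GG(F_\infty)$-conjugate because $F_\infty$ is algebraically closed. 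No work with the $\WW$-cover is needed. Your route instead lifts to $Y\in\tt^{\rs}(F_\infty)$ and controls the $\hGm(\nu)$-action on the fiber of $\tt(F_\infty)\to\cc(F_\infty)$ over $\chi(\gamma)$. That can be made rigorous, and indeed your first proposed fix is the right one — but notice you do not need the ``$\WW$-twisting'' at all: $\hGm(\nu)$ (at finite level, the subtorus of $\Gm^{\rot,[m]}\times\Gdil$ of slope $-m\nu$) is connected and permutes the finite $\WW$-orbit of preimages, so it must fix \emph{every} preimage $Y$; the fixed-point condition $s^{-d}Y(st^{1/m}) = Y(t^{1/m})$ then forces $Y = Xt^\nu$ with $X$ constant. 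What the paper's route buys is that it never leaves $\cc$ and avoids thinking about the fiber action; what yours buys is a self-contained derivation that makes the normal form $Xt^\nu$ fall out of the eigenweight condition directly, which is conceptually closer to the map $\phi_1$ used later in Theorem~\ref{th:homog vs reg}. Either way, tighten the gap you flagged: replace ``after further $\WW$-twisting'' by the connectedness observation, since the latter already forces every lift to be fixed.
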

\begin{proof}
(1) and (2) are direct calculation. 

\eqref{Xtnu} The ``if'' direction is clear. Now suppose a regular semisimple element $\gamma\in\gg(F_\infty)$ is homogeneous of slope $\nu$. Then $\chi(\gamma)$ is as described in Part (1) and there exists $X\in\tt^\rs(\CC)$ such that $\chi(Xt^\nu)=a$. Both $Xt^\nu$ and $\gamma$ are regular semisimple elements in $\gg(F_\infty)$ with the same invariants, they are $\GG(F_\infty)$-conjugate to each other because $F_{\infty}$ is algebraically closed.
\end{proof}

\subsection{Homogeneous elements, principal gradings and regular elements in the Weyl group} In this section we shall give two ways to classify homogeneous elements in $\frc(F)^{\rs}$: one using principal gradings on the Lie algebra $\gg$ and the other using regular homomorphisms into the Weyl group $\WW'$.

\subsubsection{Periodic gradings on $\gg$}\label{sss:periodic} A periodic grading  on the Lie algebra $\gg$ is a homomorphism
\begin{equation*}
\Psi:\hZZ(1)\to\Aut(\gg)
\end{equation*}
that factors through some finite quotient. The group of characters of $\hZZ(1)$ is $\QQ/\ZZ$. We may decompose $\gg$ according to characters of $\hZZ(1)$
\begin{equation}\label{per grading}
\gg=\bigoplus_{\xi\in\QQ/\ZZ}\gg_{\xi}.
\end{equation}
The {\em order} of such a grading is the minimal positive integer $m$ such that $\Psi$ factors through $\mu_m$. The grading $\Psi$ is {\em adapted to $\theta$} if the composition $\mu_{m}\to\Aut(\gg)\to\Out(\gg)$ is the same as $\mu_{m}\xrightarrow{[m/e]}\mu_{e}\xrightarrow{\theta}\Out(\gg)$. Periodic gradings have been studied in depth by the Vinberg school.

\begin{defn}\label{def:principal} Let $\xi\in\QQ/\ZZ$. The {\em principal grading of slope $\xi$} is the following periodic grading adapted to $\theta$:
\begin{eqnarray}\label{Psi action}
\Psi_{\xi}:\hZZ(1)&\to&\GG^{\ad}\rtimes\mu_{e}\subset\Aut(\gg)\\
\notag\zeta&\mapsto&\zeta^{\xi\rho^{\vee}}\theta(\barzeta).
\end{eqnarray}
Here $\barzeta\in\mu_{e}$ is the image of $\zeta$ in $\mu_{e}$; $\zeta^{\xi\rho^{\vee}}$ is the image of $\zeta$ under the composition $\hZZ(1)\xrightarrow{\xi}\CC^{\times}\xrightarrow{\rho^{\vee}} \TT^{\ad}(\CC)$ (we have viewed $\QQ/\ZZ$ as the dual of $\hZZ(1)$).
\end{defn}

\subsubsection{Regular homomorphisms into $\WW'$} Reformulating the original definition of Springer slightly, we introduce the notion of regular homomorphisms into the group $\WW'$.

Let $\Pi:\hZZ(1)\to \WW'$ be a homomorphism over $\mu_e$, i.e., the composition $\hZZ(1)\xrightarrow{\Pi}\WW'\to\mu_{e}$ is the tautological projection. Composing with the reflection action of $\WW'$ on $\tt$, we get an action of  $\hZZ(1)$ on $\tt$. We thus get a decomposition of $\tt$ into eigenspaces according to characters of $\hZZ(1)$
\begin{equation}\label{t eigen}
\tt=\bigoplus_{\xi\in\QQ/\ZZ}\tt_{\xi}.
\end{equation}
The {\em order} of such a $\Pi$ is the minimal positive integer $m$ such that $\Pi$ factors through $\mu_m$. 

\begin{defn}[Springer]\label{reg hom} Let $\Pi:\hZZ(1)\to \WW'$ be a homomorphism over $\mu_e$. Then $\Pi$ is called {\em $\theta$-regular} if for some $\xi\in\QQ/\ZZ$, $\tt_\xi$ contains a regular element. The eigenvalues $\xi\in\QQ/\ZZ$ such that $\tt_{\xi}\cap\tt^{\rs}\neq\varnothing$ are called the {\em regular eigenvalues of $\Pi$}.
\end{defn}

Let $\sigma$ be a generator of $\mu_{e}$. Let $\Pi:\hZZ(1)\to \WW'$ be a $\theta$-regular homomorphism. Let $\zeta$ be a topological generator of $\hZZ(1)$, with $\Pi(\zeta)=w\sigma\in \WW\sigma\subset \WW'$. Then $w\sigma$ is a regular element in $\WW\sigma$ in the sense of Springer \cite[paragraph after Theorem 6.4]{Spr}.

\begin{theorem}\label{th:homog vs reg} Let $\nu\in\QQ$. We denote its image in $\QQ/\ZZ$ by $\barnu$. Then there are natural bijections between the following sets
\begin{equation}\label{three sets}
\frc(F)^{\rs}_{\nu}\bij \gg_{\barnu}^{\rs}/\GG^{\Psi_{\barnu},\circ} \bij \Reg(\WW')_{\barnu}/\WW.
\end{equation}
where
\begin{itemize}
\item $\frc(F)^{\rs}_{\nu}$ is the set of homogeneous elements of slope $\nu$ (see Definition \ref{def:homo});
\item $\gg_{\barnu}$ is the $\barnu$-piece of the principal grading $\Psi_{\barnu}$ of slope $\barnu$ (see Definition \ref{def:principal}) and $\GG^{\Psi_{\barnu},\circ}$ the neutral component of its centralizer in $\GG$; $\gg_{\barnu}^{\rs}=\gg_{\barnu}\cap\gg^{\rs}$. 
\item $\Reg(\WW')_{\barnu}$ is the set of pairs $(\Pi,X)$ where $\Pi:\hZZ(1)\to\WW'$ is a $\theta$-regular homomorphism (over $\mu_{e}$) and $X\in\tt_{\barnu}^{\rs}=\tt_{\barnu}\cap\tt^{\rs}$ (here $\tt_{\barnu}$ is the $\barnu$-piece of $\tt$ under the action of $\hZZ(1)$ via $\Pi$).The Weyl group $\WW$ acts on $\Reg(\WW')_{\barnu}$ by simultaneous conjugation.
\end{itemize}
Moreover, under the above bijections, the order of the principal grading $\Psi_{\barnu}$ in (2) and the order of the homomorphism $\Pi$ in (3) are both equal to $\lcm(m_{1},e)$, where $m_{1}$ is the denominator of $\nu$ in lowest terms. 
\end{theorem}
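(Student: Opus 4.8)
The goal is to establish the triple bijection \eqref{three sets} together with the statement about orders. The plan is to construct the maps going around the triangle and check they are mutually inverse, using Lemma \ref{l:hom in Fs} as the main computational tool. I would organize the argument around a fixed topological generator $\zeta\in\hZZ(1)$, so that a homomorphism $\Pi:\hZZ(1)\to\WW'$ over $\mu_e$ is recorded by the single element $w\sigma=\Pi(\zeta)\in\WW\sigma$, and a principal grading is similarly recorded by the automorphism $\Psi_{\barnu}(\zeta)$.

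\smallskip
\noindent\emph{Step 1: From homogeneous elements to graded pieces.} Given $a\in\frc(F)^{\rs}_{\nu}$, Lemma \ref{l:hom in Fs}(3) produces $\gamma$ that is $\GG(F_\infty)$-conjugate to $Xt^{\nu}$ with $X\in\tt^{\rs}$; I would check that the homogeneity of $a$ together with the descent datum $\theta$ forces, after conjugation, that $\gamma$ lies in $\frg(F)$ and that the associated cocharacter-plus-dilation action $\hGm(\nu)$ fixing $a$ translates, via Lemma \ref{l:hom in Fs}(1), into the statement that (a suitable representative of) $\gamma$ lies in the $\barnu$-eigenspace $\gg_{\barnu}$ for the principal grading $\Psi_{\barnu}$. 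Concretely, the Galois-versus-dilation identity $\zeta\cdot_{\Gal}a=\zeta^{\nu}\cdot_{\dil}a$ is exactly the condition that makes $Xt^{\nu}$ invariant under the twisted Galois action defining $\frg(F)$ \emph{and} homogeneous of the prescribed internal degree, i.e.\ an element of $\gg_{\barnu}^{\rs}$ after untwisting by $t^{-\barnu\rho^\vee}$. The ambiguity in the choice of $\gamma$ within its $\GG(F_\infty)$-conjugacy class, intersected with the requirement of living in $\gg_{\barnu}$, is precisely a $\GG^{\Psi_{\barnu},\circ}$-orbit (using that the centralizer of a regular semisimple element is a torus, so connectedness issues are controlled), giving a well-defined map to $\gg_{\barnu}^{\rs}/\GG^{\Psi_{\barnu},\circ}$. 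The inverse sends $X\in\gg_{\barnu}^{\rs}$ to $\chi$ of its image under the untwisting isomorphism $\gg\otimes F_e\cong\gg\otimes F_e$ sending $X$ to $t^{\barnu\rho^\vee}X$; homogeneity of the output is immediate from the grading.

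\smallskip
\noindent\emph{Step 2: From graded pieces to regular homomorphisms.} Given $X\in\gg_{\barnu}^{\rs}$, its centralizer is a maximal torus $\TT_X\subset\GG$, stable under $\Psi_{\barnu}(\zeta)$ since $\Psi_{\barnu}(\zeta)X$ is a scalar multiple of $X$. Choosing $g\in\GG$ with $g\TT_X g^{-1}=\TT$ transports $\Psi_{\barnu}(\zeta)$ to an element of $N_{\GG}(\TT)\rtimes\mu_e$, whose image $w\sigma\in\WW'$ defines $\Pi$, and transports $X$ to an element of $\tt$; the eigenvalue equation shows this element lies in $\tt_{\barnu}\cap\tt^{\rs}$, so $\Pi$ is $\theta$-regular with regular eigenvalue $\barnu$, and $(\Pi, gXg^{-1})$ is well-defined up to the residual $N_{\GG}(\TT)$-ambiguity, i.e.\ a $\WW$-orbit. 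The inverse reconstructs $X$ from $(\Pi,Y)$ with $Y\in\tt_{\barnu}^{\rs}$ by noting $Y\in\gg$ already carries the correct $\hZZ(1)$-action through $\Pi$, and identifying that action with $\Psi_{\barnu}(\zeta)$ after conjugating $\Pi(\zeta)$ into $\Psi_{\barnu}(\zeta)$; this last identification is where one invokes Springer's theory — a $\theta$-regular $\Pi(\zeta)=w\sigma$ with regular eigenvalue $\barnu$ is conjugate in $\WW'$ to (the image of) $\Psi_{\barnu}(\zeta)=\zeta^{\barnu\rho^\vee}\theta(\barzeta)$, because both are ``$\barnu$-regular'' elements realizing the same regular eigenvalue, and Springer's uniqueness result \cite{Spr} says such elements form a single $\WW$-conjugacy class. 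I would also need to check the composite map $\frc(F)^{\rs}_{\nu}\to\Reg(\WW')_{\barnu}/\WW$ agrees with the monodromy homomorphism $\Pi_a$ from \S\ref{sss:ell}, which is essentially a restatement of Lemma \ref{l:hom in Fs}(1).

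\smallskip
\noindent\emph{Step 3: The statement about orders.} Once the bijections are in place, the order of $\Psi_{\barnu}$ is the order of the automorphism $\zeta\mapsto\zeta^{\barnu\rho^\vee}\theta(\barzeta)$ of $\gg$; the $\theta$-part contributes order $e$ (as $\theta$ is injective on $\mu_e$) and the inner part $\zeta^{\barnu\rho^\vee}$ contributes order $m_1$ (the denominator of $\nu$, since $\rho^\vee$ together with the coroot lattice detects exactly this denominator modulo the action of $\mu_e$); interleaving these and using that they commute up to the semidirect product structure gives $\lcm(m_1,e)$. The order of $\Pi$ is the order of $w\sigma\in\WW'$, which equals the order of the induced action on $\tt$, hence the order of $\Psi_{\barnu}(\zeta)$ under the correspondence of Step 2 — so the two orders agree, and equal $\lcm(m_1,e)$. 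A small point to verify is that $m_1$, not some proper divisor, really appears: this follows because $\tt_{\barnu}$ contains a regular element, forcing the eigenvalue $\barnu$ to have full order on $\tt$.

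\smallskip
\noindent\emph{Main obstacle.} The genuinely non-formal input is Step 2's appeal to Springer's classification of regular elements in $\WW'$: one must match the \emph{a priori} abstract $\theta$-regular homomorphism $\Pi$ with the \emph{specific} principal grading $\Psi_{\barnu}$, i.e.\ prove that every $\theta$-regular $w\sigma$ with regular eigenvalue $\barnu$ is $\WW$-conjugate to the standard representative $\zeta^{\barnu\rho^\vee}\theta(\barzeta)$. This is exactly the content (suitably twisted by $\theta$) of Springer's theorem that regular elements of a given order form a single conjugacy class, adapted to the coset $\WW\sigma$; the twisted version is available in the Gross–Levy–Reeder–Yu framework cited in the introduction to this section \cite{GLRY},\cite{RY}, so the obstacle is really one of assembling the correct citations and checking the normalizations of $\rho^\vee$ and $\delta/e$ match, rather than proving something new. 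The bookkeeping of connected components (the difference between $\GG^{\Psi_{\barnu}}$ and $\GG^{\Psi_{\barnu},\circ}$, and between $N_{\GG}(\TT)$ and $\WW$) is the other place where care is needed, but it is controlled by the fact that all centralizers in sight are tori.
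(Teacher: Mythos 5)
Your overall architecture — construct maps around the triangle and check they are mutually inverse — is the same as the paper's, but you take a genuinely different route at two structural points, and one of your steps is noticeably vaguer than it needs to be.

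The most significant difference concerns the use of Springer's uniqueness theorem. You invoke the twisted version of Springer's classification (from \cite{Spr}, \cite{GLRY}) as an \emph{input} in Step 2, to match a given $\theta$-regular $\Pi$ with the standard principal grading $\Psi_{\barnu}$, and you correctly flag this as the ``main obstacle.'' The paper's proof is arranged to \emph{avoid} this input entirely: instead of inverting $\phi_2$ directly, it constructs a third map $\phi_3 : \Reg(\WW')_{\barnu}/\WW \to \frc(F)^{\rs}_{\nu}$ by the elementary formula $a = \chi(Xt^{\nu})$ and then checks the three cyclic composites are identities. The payoff is that Corollary \ref{c:order det w} — precisely the twisted Springer uniqueness you cite as external input — is \emph{derived} from Theorem \ref{th:homog vs reg}, giving a self-contained and novel proof of Springer's result. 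Your route is not circular (Springer's theorem has independent proofs), but you miss the structural point that the theorem was engineered so as \emph{not} to depend on it. If you want to route through $\phi_2^{-1}$ directly, you also need to check that the image of $\Psi_{\barnu}(\zeta)$ in $\WW'$ is itself $\theta$-regular with regular eigenvalue $\barnu$ (so that Springer's uniqueness applies to match it with $\Pi$), and you need to lift $\Pi$ from $\WW'$ to $N_{\GG^{\ad}\rtimes\mu_e}(\TT)$ in order to transport $Y$ back into $\gg_{\barnu}$ — neither of which arises in the paper's approach.

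Your Step 1 is where I would say there is a real gap. You describe the map $\frc(F)^{\rs}_{\nu}\to\gg_{\barnu}^{\rs}/\GG^{\Psi_{\barnu},\circ}$ by picking an arbitrary $\GG(F_\infty)$-conjugate $\gamma$ of $Xt^{\nu}$ and asserting that the residual ambiguity is ``precisely a $\GG^{\Psi_{\barnu},\circ}$-orbit.'' That assertion is the hard part, and your explanation (``centralizer of a regular semisimple element is a torus'') does not actually prove it: the issue is whether two regular semisimple elements of $\gg_{\barnu}$ with the same invariant polynomials lie in the same $\GG^{\Psi_{\barnu},\circ}$-orbit, which is a nontrivial theorem of Vinberg theory (Panyushev's Kostant section for $\theta$-groups, \cite{Pany}). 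The paper sidesteps the well-definedness issue entirely by using the equivariant Kostant section $\kappa:\frc\isom\frs$: one sets $\gamma=\kappa(a)\in\frg(F)$, which is a \emph{canonical} choice, shows via the homogeneity computation \eqref{gamma eigen} that $\gamma\in\gg_{\barnu}\otimes\CC[t^{1/e},t^{-1/e}]$, and specializes $t\mapsto 1$. The Vinberg-theoretic input (\cite[Theorem 3.5(ii)]{Pany}) then appears, but only in the comparatively minor check $\phi_1\circ\phi_3\circ\phi_2=\id$, not in the definition of $\phi_1$. Your phrase ``untwisting isomorphism $\gg\otimes F_e\cong\gg\otimes F_e$ sending $X$ to $t^{\barnu\rho^{\vee}}X$'' is not a well-posed description of a map; what you are reaching for is the Moy--Prasad identification $\frg(F)_{\nu}\cong\gg_{\barnu}$ of Lemma \ref{l:MP}(2), or better, the direct formula $a=\chi(Xt^{\nu})$.

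On the order statement your direct computation is fine in spirit (commutativity of the inner part $\zeta^{\barnu\rho^\vee}$ with the outer part $\theta(\barzeta)$ does give $\lcm(m_1,e)$ for $\Psi_{\barnu}$), but you gloss over why the order of $\Pi$ (an element of $\WW'$) equals the order of $\Psi_{\barnu}(\zeta)$ (an element of $\GG^{\ad}\rtimes\mu_e$): the kernel of the projection $N_{\GG^{\ad}\rtimes\mu_e}(\TT)\to\WW'$ is $\TT^{\ad}$, so one has to rule out an order drop when passing to the Weyl group. The paper's chain of divisibilities $m(\Psi)\mid\lcm(m_1,e)$, $m(\Pi)\mid m(\Psi)$, $\lcm(m_1,e)\mid m(\Pi)$ handles this cleanly; the crucial step $\lcm(m_1,e)\mid m(\Pi)$ uses that $\Pi$ has a regular eigenvector of eigenvalue $\barnu$, which forces $m_1\mid m(\Pi)$, together with $e\mid m(\Pi)$ because $\Pi$ is over $\mu_e$.
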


\begin{proof} We shall define maps 
\begin{equation*}
\xymatrix{ & \frc(F)^{\rs}_{\nu}\ar[dl]^{\phi_{1}} \\
\gg_{\barnu}^{\rs}/\GG^{\Psi_{\barnu},\circ}\ar[rr]^{\phi_{2}} & & \Reg(\WW')_{\barnu}/\WW\ar[ul]^{\phi_{3}} }
\end{equation*}
Then check their cyclic composition give identity maps.

The map $\phi_{1}$. To $a\in\frc(F)^{\rs}_{\nu}$ we will associate an element $Y\in\gg_{\barnu}$. Let $\gamma=\kappa(a)\in\frs(F)$. We write $\gamma$ as a formal Laurent series $\gamma(t^{1/e})$ in $t^{1/e}$ with coefficients in $\gg$.  Recall $s\cdot_{\dil} a$  denotes the  dilation action on $\cc$, which has positive weights $(d_{1},\cdots, d_{\rr})$. The Kostant section $\kkappa:\cc\isom\ss\subset\gg$ satisfies
\begin{equation}\label{Kos Gm}
\kkappa(s\cdot_{\dil} a)=s\Ad(s^{-\rho^{\vee}})\kkappa(a), \hspace{1cm} s\in \Gm, a\in \cc.
\end{equation}
We also denote the Galois action of $\zeta\in\hZZ(1)$ on $\gg(F_{\infty})$ (without $\theta$-twisting) by $\zeta\cdot_{\Gal}(-)$. Using \eqref{Gal vs dil}, we have
\begin{equation}\label{gamma md}
\zeta\cdot_{\Gal}\gamma=\kappa(\zeta\cdot_{\Gal}a)=\kappa(\zeta^{\barnu}\cdot_{\dil}a)=\zeta^{\barnu}\Ad(\zeta^{-\barnu\rho^{\vee}})\kappa(a)=\zeta^{\barnu}\Ad(\zeta^{-\barnu\rho^{\vee}})\gamma.
\end{equation}
Since $\gamma\in\frg$, we have $\zeta\cdot_{\Gal}\gamma=\theta(\barzeta)(\gamma)$, therefore
\begin{equation}\label{gamma eigen}
\Ad(\zeta^{\barnu\rho^{\vee}})\theta(\barzeta)\gamma=\zeta^{\barnu}\gamma.
\end{equation}
In other words, $\gamma\in\gg_{\barnu}\otimes F_{e}$. Homogeneity of $a$ implies that $\gamma$ lies in $\gg\otimes \CC[t^{1/e},t^{-1/e}]$, so that it makes sense to specialize $t$ to $1$. We assign the element $Y=\gamma(1)\in\gg_{\barnu}$ to $a$.

The map $\phi_{2}$. We define a more general map
\begin{equation}\label{more general grading}
\mbox{$\{(\Psi,Y):\Psi$ is a periodic grading adapted to $\theta$, $Y\in\gg_{\barnu}^{\rs}\}/\GG^{\ad}\to\Reg(\WW')_{\barnu}/\WW$.}
\end{equation}
Here the notation $\gg_{\barnu}$ means the $\barnu$-eigenspace with respect to the periodic grading $\Psi$.
Suppose we are given a pair $(\Psi,Y)$. Since $Y$ is an eigenvector under the action of $\hZZ(1)$ on $\gg$ via $\Psi_{\barnu}$,  the Cartan subalgebra $\gg_{Y}$ (centralizer of $Y$ in $\gg$) is normalized by $\hZZ(1)$. Choose $g\in\GG^{\ad}$ such that $\Ad(g)\gg_{Y}=\tt$,  we get a homomorphism
\begin{equation*}
\Pi_{g}:\mu_{m}\to N_{\GG^{\ad}\rtimes\mu_{e}}(\gg_{Y})\cong N_{\GG^{\ad}\rtimes\mu_{e}}(\tt)\surj\WW'.
\end{equation*}
Since $Y\in\gg_{\barnu}^{\rs}$, the element $X_{g}=\Ad(g)Y\in\tt$ then belongs to $\tt^{\rs}_{\barnu}$. Changing the choice of $g$ amounts to changing the pair $(\Pi_{g},X_{g})$ by $\WW$-conjugacy. If we change $(\Psi,Y)$ by $\GG^{\ad}$-conjugacy, the resulting $(\Pi,X)$ also changes by $\WW$-conjugacy. The map $\phi_{2}$ is obtained by applying this construction to the pair $(\Psi_{\barnu}, Y\in\gg_{\barnu}^{\rs})$.

The map $\phi_{3}$. Let $\Pi:\hZZ(1)\to \WW'$ be $\theta$-regular and let  $X\in\tt^{\rs}_{\barnu}$. We define $a=\chi(Xt^\nu)$ which is homogeneous of slope $\nu$ but {\em a priori} only an element in $\cc^{\rs}(F_{\infty})$. We need to check that $a\in\frc(F)^{\rs}$, i.e., for any $\zeta\in \hZZ(1)=\Gal(F_{\infty}/F)$,  $\zeta\cdot_{\Gal}(Xt^{\nu})$ is $\WW$-conjugate to $\theta(\barzeta)(X)t^{\nu}$ (recall $\barzeta$ is the image of $\zeta$ in $\mu_{e}$). Since $X\in\tt_{\barnu}$, we have $\Pi(\zeta)(X)=\zeta^{\barnu}X$, therefore
\begin{equation*}
\zeta\cdot_{\Gal}(Xt^{\nu})=\zeta^{\nu}Xt^{\nu}=\Pi(\zeta)(X)t^{\nu}=\Pi(\zeta\barzeta^{-1})\theta(\barzeta)(X)t^{\nu}.
\end{equation*}
Since the homomorphism $\Pi$ is over $\mu_{e}$, $\Pi(\zeta)\barzeta^{-1}\in\WW$, hence the right side above is $\WW$-conjugate to $\theta(\barzeta)(X)t^{\nu}$. This shows $a\in\frc(F)^{\rs}$.

The composition $\phi_{3}\circ\phi_{2}\circ\phi_{1}$ is the identity. This amounts to the fact that $\chi(\kappa(a(1))t^{\nu})=a$. By Lemma \ref{l:hom in Fs}, we have $a=(c_{i}t^{\nu d_{i}})_{i=1}^{\rr}$ in terms of fundamental invariants. Then $f_{i}(\kappa(a(1))t^{\nu})=t^{\nu d_{i}}f_{i}(\kappa(a(1)))=c_{i}t^{\nu d_{i}}$, as desired.

The composition $\phi_{1}\circ\phi_{3}\circ\phi_{2}$ is the identity. Start from $(\Psi_{\barnu},Y\in\gg_{\barnu}^{\rs})$. Note that $\ee\in\gg_{\barnu}$. It is shown in \cite[Theorem 3.5(ii)]{Pany} that there is an analog of Kostant section for the action of $\GG^{\Psi_{\barnu},\circ}$ on $\gg_{\barnu}$. The Kostant section in this situation is given by $\ee+\gg_{\barnu}\cap\gg^{\ff}$. Therefore, up to conjugation by $\GG^{\Psi_{\barnu},\circ}$, we may assume that $Y\in\ee+\gg_{\barnu}\cap\gg^{\ff}$. After applying the maps $\phi_{3}\circ \phi_{2}$, the resulting homogeneous element $a=\chi(Yt^{\nu})$. Applying $\phi_{1}$ again, we get the same principal grading $\Psi_{\barnu}$ and the element $\kappa(a(1))$ lying in the Kostant section. Therefore $\kappa(a(1))=Y$ because $Y$ already lies in the Kostant section.

The composition $\phi_{2}\circ\phi_{1}\circ\phi_{3}$ is the identity. We may describe the composition $\phi_{2}\circ\phi_{1}$ more directly. We take $\Pi=\Pi_{a}$. Also by Lemma \ref{l:hom in Fs}\eqref{Xtnu}, $a$ is conjugate to $Xt^{\nu}$ for some $X\in\tt^{\rs}$ well-defined up to $\WW$-conjugation. Since $a=\chi(Xt^{\nu})$ lies in $\frc(F)$, it is invariant under the Galois action of $\hZZ(1)$. The argument in the construction of the map $\phi_{3}$ then shows that $X\in\tt_{\barnu}$. This defines the map $a\mapsto(\Pi,X)$ which is inverse to the map $\phi_{3}$.

Now we have checked that the three sets in \eqref{three sets} are in natural bijection to each other.

Finally we calculate the order $m(\Psi)$ of $\Psi$. We show the following divisibility relations
\begin{equation*}
m(\Psi)\mid\lcm(m_{1},e), m(\Pi)\mid m(\Psi), \lcm(m_{1},e)\mid m(\Pi)
\end{equation*}
which then imply that $m(\Pi)=m(\Psi)=\lcm(m_{1},e)$.

The first relation $m(\Psi)\mid\lcm(m_{1},e)$ follows from \eqref{Psi action}, where we see that the inner part $\zeta\mapsto\zeta^{-\nu\rho^{\vee}}\in\GG^{\ad}$ has order divisible by $m_{1}$ and the outer part has order $e$ and they commute with each other.

The second relation $m(\Pi)\mid m(\Psi)$ follows directly from the construction of the map $\phi_{2}$.  

The last relation $\lcm(m_{1},e)\mid m(\Pi)$: $e\mid m(\Pi)$ because $\Pi$ is over $\mu_{e}$; $m_{1}\mid m(\Pi)$ because $\Pi$ has eigenvectors with eigenvalue $\barnu$. We have finished the proof of the theorem.
\end{proof}

\begin{defn}\label{def:slope} 
\begin{enumerate} 
\item (extending \cite[p.174]{Spr}) A natural number $m_{1}$ is a {\em $\theta$-regular number} (resp. {\em elliptic $\theta$-regular number}) for $\WW'$ if there is a $\theta$-regular (resp. elliptic $\theta$-regular) homomorphism $\Pi:\hZZ(1)\to \WW'$ which has a regular eigenvector of order $m_{1}$.
\item A rational number $\nu$ is a {\em $\theta$-admissible slope} (resp. {\em elliptic $\theta$-admissible slope}) if the denominator of $\nu$ (in lowest terms) is a $\theta$-regular number (resp. elliptic $\theta$-regular number).
\end{enumerate}  
When $\theta$ is clear from the context, we simply say ``(elliptic) regular numbers'' and ``(elliptic) slopes''.
\end{defn}

\begin{exam}
(1) The twisted Coxeter number $h_{\theta}$ of $(\GG,\theta)$  (see also \S\ref{sss:aff simple roots}) is an elliptic $\theta$-regular number because the twisted Coxeter conjugacy class is $\theta$-regular and elliptic of order $h_{\theta}$. 

(2) Let $e=1$ or $2$ be the order of  the opposition $\sigma\in\Out(\GG)$ ($\sigma$ acts as $-w_0$ under the reflection representation). Let $\theta:\mu_e\to\Out(\GG)$ denote the unique homomorphism sending $-1$ to $\sigma$ if $e=2$. Then 2 is an elliptic $\theta$-regular number because the element $\hZZ(1)\surj\mu_2\incl\WW'$ (sending $-1\in\mu_2$ to $w_{0}\sigma\in \WW'$) is elliptic and $\theta$-regular of order 2.
\end{exam}

As a consequence of Theorem \ref{th:homog vs reg}, we get the following property for $\theta$-regular homomorphisms. The statement (1) below was proved by Springer in \cite[Theorem 6.4(iv)]{Spr} by different methods, and the other statements are implicitly checked in \cite{Spr}.

\begin{cor}[Springer]\label{c:order det w} Recall that we assumed $\GG$ to be almost simple. \footnote{Statements (2) and (3) in this Corollary fail if $\GG$ is not almost simple and $e>1$: a regular homomorphism can have regular eigenvalues of different orders.}
\begin{enumerate}
\item If two $\theta$-regular homomorphisms into $\WW'$ over $\mu_{e}$ have regular eigenvalues of the same order, then they are $\WW$-conjugate to each other.
\item Let $\Pi:\hZZ(1)\to \WW'$ be a $\theta$-regular homomorphism. Then all regular eigenvalues of $\Pi$ have the same order.
\item Sending a $\theta$-regular homomorphism to the order of its regular eigenvalues (well-defined by (2) above) gives a bijection
\begin{equation}\label{RN to RH}
\mbox{\{$\theta$-regular homomorphisms into $\WW'\}/\WW\bij\{\theta$-regular numbers\}. }
\end{equation}
\end{enumerate}
\end{cor}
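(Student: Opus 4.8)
The plan is to deduce Corollary~\ref{c:order det w} from the natural bijections of Theorem~\ref{th:homog vs reg}, by exploiting the fact that the middle set $\gg_{\barnu}^{\rs}/\GG^{\Psi_{\barnu},\circ}$ depends on $\nu$ only through its image $\barnu\in\QQ/\ZZ$, i.e.\ only through the denominator $m_{1}$ of $\nu$. The key observation is that a $\theta$-regular homomorphism $\Pi:\hZZ(1)\to\WW'$ with a regular eigenvector of order $m_{1}$ arises, via the map $\phi_{2}$ of Theorem~\ref{th:homog vs reg}, from the principal grading $\Psi_{\barnu}$ where $\barnu$ is any class in $\QQ/\ZZ$ with denominator $m_{1}$ (say $\barnu=1/m_{1}$). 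More precisely, every such $\Pi$ should be $\WW$-conjugate to one of the form $\Pi_{a}$ for some $a\in\frc(F)^{\rs}_{\nu}$ with $\nu=1/m_{1}$; this is essentially the content of the composition $\phi_{2}\circ\phi_{1}\circ\phi_{3}=\id$ together with the surjectivity of $\phi_{2}$ onto $\Reg(\WW')_{\barnu}/\WW$ that follows from the bijections.

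First I would prove statement (2). Fix a $\theta$-regular $\Pi$ and let $\barnu,\barnu'\in\QQ/\ZZ$ be two regular eigenvalues, with regular eigenvectors $X\in\tt_{\barnu}^{\rs}$ and $X'\in\tt_{\barnu'}^{\rs}$; let $m_{1},m_{1}'$ be their orders. Lift $\barnu,\barnu'$ to rational numbers $\nu,\nu'$. Then $(\Pi,X)\in\Reg(\WW')_{\barnu}$ and $(\Pi,X')\in\Reg(\WW')_{\barnu'}$, and via $\phi_{3}$ these produce homogeneous elements $a=\chi(Xt^{\nu})\in\frc(F)^{\rs}_{\nu}$ and $a'=\chi(X't^{\nu'})\in\frc(F)^{\rs}_{\nu'}$. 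Tracing through $\phi_{2}\circ\phi_{1}$, which recovers $(\Pi_{a},X)$ from $a$, and using that $\phi_{2}\circ\phi_{1}\circ\phi_{3}=\id$, one gets $\Pi_{a}$ is $\WW$-conjugate to $\Pi$, and likewise $\Pi_{a'}$. By Theorem~\ref{th:homog vs reg}, the order of $\Pi_{a}$ equals $\lcm(m_{1},e)$ and the order of $\Pi_{a'}$ equals $\lcm(m_{1}',e)$. But $\Pi_{a}$ and $\Pi_{a'}$ are both $\WW$-conjugate to $\Pi$, hence to each other, so $\lcm(m_{1},e)=\lcm(m_{1}',e)$; and this is the order $m(\Pi)$. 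This does not yet give $m_{1}=m_{1}'$ on the nose, so here I would invoke the almost-simplicity hypothesis: the concrete list of regular numbers for the (twisted) Weyl groups of simple types shows that two regular numbers with the same value of $\lcm(-,e)$ must coincide —- alternatively, one argues that a regular eigenvector of order $m_{1}$ forces $m_{1}\mid m(\Pi)$ and $m(\Pi)\mid\lcm(m_{1},e)$, combined with $e\mid m(\Pi)$, and then in the almost-simple case the relation $m(\Pi)=\lcm(m_{1},e)$ pins down $m_{1}$ uniquely given $m(\Pi)$.

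Next, for statement (1): suppose $\Pi,\Pi'$ are $\theta$-regular with regular eigenvalues of the same order $m_{1}$ (well-defined by (2)). Pick $\nu=1/m_{1}$, so $\barnu=1/m_{1}$ has denominator $m_{1}$. Choose regular eigenvectors $X,X'$ for the eigenvalue $\barnu$ (using that by (2) all regular eigenvalues have order $m_{1}$, and one can adjust by an automorphism of $\hZZ(1)$ to realize the eigenvalue exactly as $\barnu$; this is where Springer's argument that the regular eigenvectors of a given order form a single $N_{\WW'}$-orbit up to the $\Gal$-twist is used). Then $(\Pi,X),(\Pi',X')\in\Reg(\WW')_{\barnu}$ map under $\phi_{3}$ to homogeneous elements $a,a'\in\frc(F)^{\rs}_{\nu}$. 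The bijection $\frc(F)^{\rs}_{\nu}\bij\gg_{\barnu}^{\rs}/\GG^{\Psi_{\barnu},\circ}$ shows both $a,a'$ come from the \emph{same} principal grading $\Psi_{\barnu}$, and the connectedness/irreducibility of $\gg_{\barnu}^{\rs}$ (more precisely, that $\GG^{\Psi_{\barnu},\circ}$ acts transitively on the regular semisimple elements of $\gg_{\barnu}$ with prescribed invariants up to the relevant finite ambiguity) lets us move $a$ to $a'$, hence $(\Pi,X)$ to $(\Pi',X')$ by $\WW$-conjugacy. In particular $\Pi$ and $\Pi'$ are $\WW$-conjugate. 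Statement (3) is then formal: (1) says the map ``$\Pi\mapsto$ order of its regular eigenvalues'' is injective on $\WW$-conjugacy classes, and it is surjective onto $\theta$-regular numbers by definition, so it is a bijection.

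The main obstacle I anticipate is the ``transitivity'' input needed twice above: in statement (1) I need that all homogeneous elements of a fixed slope $\nu$ which share the same underlying principal grading $\Psi_{\barnu}$ give rise to $\WW$-conjugate homomorphisms into $\WW'$ —- equivalently, that $\GG^{\Psi_{\barnu},\circ}$ acts transitively on the set of Cartan subalgebras $\gg_Y$ ($Y\in\gg_{\barnu}^{\rs}$) which are normalized by $\hZZ(1)$, up to the residual $\WW$-action. This is really Springer's theorem on conjugacy of regular elements in disguise, and the cleanest path is to cite it (Springer, \cite{Spr}, proof of Theorem~6.4) rather than reprove it; but one must be careful that the almost-simple hypothesis is genuinely used, since the footnote flags that (2) and (3) fail without it. I would therefore structure the proof so that the only place almost-simplicity enters is in passing from ``$\lcm(m_{1},e)=\lcm(m_{1}',e)$'' to ``$m_{1}=m_{1}'$'', citing the classification of regular numbers type-by-type for the finitely many relevant $(\GG,\theta)$.
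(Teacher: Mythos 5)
Your plan to prove (2) before (1) is where the proposal goes wrong. You propose to pass from $\lcm(m_{1},e)=\lcm(m_{1}',e)$ directly to $m_{1}=m_{1}'$ either by a case-by-case classification (``two regular numbers with the same value of $\lcm(-,e)$ must coincide'') or by a uniqueness argument (``$m(\Pi)=\lcm(m_{1},e)$ pins down $m_{1}$''). Both of these are \emph{false}: for $G={}^{2}A_{2}$ one has $e=2$, and both $1$ and $2$ are $\theta$-regular numbers (the element $\theta(-1)$ acts as $-w_{0}$ so $\tt^{\theta(\mu_{2})}=\tt^{-w_{0}}=\{(a,0,-a)\}$ contains regular elements, giving $m_{1}=1$; and $2$ is elliptic $\theta$-regular by the paper's Example), yet $\lcm(1,2)=\lcm(2,2)=2$. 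So the order $m_{1}$ of a regular eigenvalue is \emph{not} determined by $m(\Pi)=\lcm(m_{1},e)$, and the classification claim fails. The content of (2) is precisely the subtler assertion that no \emph{single} $\Pi$ realizes regular eigenvalues of two different orders; this cannot be deduced just from numerology about the set of regular numbers.

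The paper proves (1) first and then deduces (2) \emph{from} (1), and this order is essential. After reducing to the only arithmetically possible scenario ($e>1$, $\gcd(m_{1},e)=1$, $m_{2}=em_{1}$), the paper passes to $\Pi'=\Pi\circ[m_{1}]$, a $\theta$-regular homomorphism of order $e$ with regular eigenvalue $1$ and another regular eigenvalue of order $e$. Statement (1) then forces $\Pi'$ to be $\WW$-conjugate to $\theta$ itself, and a short case check (one case for $e=2$, one for $e=3$) shows $\theta$ has no nontrivial regular eigenvalue, giving the contradiction. Your proposal cannot run this argument because you have placed (2) before (1). If you reorder, your proof of (1) is essentially the paper's argument (Galois action of $\Aut(\CC/\QQ)$ to equalize the eigenvalues, then local constancy of $Y\mapsto\Pi$ on the connected set $\gg_{\xi}^{\rs}$), and the key input needed to finish (2) is a two-case calculation for $\theta$, not a classification of all regular numbers.
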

\begin{proof}
(1) Let $\Pi_{1},\Pi_{2}:\hZZ(1)\to\WW'$ be $\theta$-regular homomorphisms over $\mu_{e}$, with regular eigenvalues $\xi_{1}$ and $\xi_{2}$ respectively. Assume $\xi_{1}$ and $\xi_{2}$ have the same order (as elements in $\QQ/\ZZ$), and we would like to show that $\Pi_{1}$ and $\Pi_{2}$ are conjugate under $\WW$. Since the reflection representation of $\WW'$ on $\tt$ is defined over $\QQ$, all eigenspaces of $\tt$ under $\Pi_{1}$ whose eigenvalues have the same order are permuted by $\Aut(\CC/\QQ)$. Therefore we may assume $\xi_{1}=\xi_{2}$ and denote it by $\xi$.  Let $X_{i}$ be a regular eigenvector for $\Pi_{i}$ with eigenvalue $\xi$, $i=1,2$.

Applying the bijection $\gg^{\rs}_{\xi}/C_{\GG^{\ad}}(\Psi_{\xi})\bij\Reg(\WW')_{\xi}/\WW$ in Theorem \ref{th:homog vs reg}, $(\Pi_{i},X_{i})$ corresponds to $Y_{i}\in\gg_{\xi}^{\rs}$ (up to $C_{\GG^{\ad}}(\Psi_{\xi})$) for $i=1,2$. Applying the map $\phi_{2}$ in the proof of Theorem \ref{th:homog vs reg}, the construction $Y\mapsto\Pi$ is independent of the choice of $Y\in\gg_{\xi}^{\rs}$ (because $\Pi$ varies locally constantly with $Y$ and $\gg_{\xi}^{\rs}$ is connected), therefore $\Pi_{1}$  (which corresponds to $Y_{1}$) and $\Pi_{2}$ (which corresponds to $Y_{2}$) are $\WW$-conjugate. 

(2) Since $\GG$ is almost simple, $e=1,2$ or $3$. Suppose $\Pi$ has two regular eigenvalues of orders $m_{1}<m_{2}$. By Theorem \ref{th:homog vs reg}, we have $\lcm(m_{1},e)=\lcm(m_{2},e)$ (both equal to the order of $\Pi$). The only possibility is when $e>1$, $\gcd(m_{1},e)=1$ and $m_{2}=em_{1}$. Let $\Pi'$ be the composition $\hZZ(1)\xrightarrow{m_{1}}\hZZ(1)\xrightarrow{\Pi}\WW'$, then $\Pi'$ is $\theta$-regular of order $e$, with two different regular eigenvalues one of which is $1$. Note that $\theta: \mu_{e}\incl\WW'$ itself is a $\theta$-regular homomorphism of order $e$ with regular eigenvalue $1$. By (1) above, $\Pi'$ is $\WW$-conjugate to $\theta$. However, we shall check case-by-case that $\theta$ does not admit regular eigenvalues other than 1, hence getting a contradiction. When $e=2$, $\theta(-1)$ acts as $-w_{0}$ on $\tt$, where $w_{0}\in\WW$ is the longest element. Therefore the nontrivial eigenspace of $\theta$ on $\tt$ is $\tt^{w_{0}}$, hence does not contain regular elements. When $e=3$, $\GG$ is of type $D_{4}$, then one node in the Dynkin diagram is fixed by $\theta$, and a nontrivial eigenspace of $\theta$ on $\tt$ must be killed by the corresponding simple root, hence cannot contain any regular elements either.

(3) is an immediate consequence of (1) and (2).
\end{proof}

\subsubsection{Normal form of an admissible slope}\label{sss:normal}
Given a $\theta$-admissible slope $\nu$, we first write it as $\nu=d_{1}/m_{1}$ in lowest terms, where $m_{1}$ is a $\theta$-regular number. Using the map \eqref{RN to RH}, we get a $\theta$-regular homomorphism $\Pi:\mu_{m}\incl\WW'$ of order $m$ (up to conjugacy). We know from Theorem \ref{th:homog vs reg} that $m=\lcm(m_{1},e)$. Therefore we may write $\nu=d/m$ for some integer $d$. This is called the {\em normal form} of $\nu$.

\subsection{Homogeneous elements and Moy-Prasad filtration}
Let $\nu=d/m>0$ be a $\theta$-admissible slope in the normal form. 

\subsubsection{The torus $\Gnu$}\label{sss:Gnu}
Let $\Gm(\nu)$ be the one-dimensional subtorus defined by the homomorphism
\begin{eqnarray}\label{Gnu action with dil}
\Gnu&\to& \tilG^{\ad}:=(G^{\ad}(F)\rtimes\Grote)\times\Gdil\\
s&\mapsto &(s^{d\rho^{\vee}},s^m, s^{-d}).
\end{eqnarray}
Now $\tilG^{\ad}$ acts on $G(F)$ with $\Gdil$ acting trivially, we have an action of $\Gnu$ on $G(F)$. Explicitly it is given by
\begin{equation}\label{s act fl}
s\cdot_{\nu}g(t^{1/e}):=\Ad(s^{d\rho^{\vee}})(g(s^{m/e}t^{1/e})).
\end{equation}
The action of $\Gnu$ on $G(F)$ induces an action on its Lie algebra $\frg(F)$, and gives a decomposition of $\frg$ into weight spaces (which are $\CC$-vector spaces). For $x\in\frac{1}{m}\ZZ$, we let $\frg(F)_{x}$ be the weight space with weight $mx$ under $\Gnu$. We have
\begin{equation*}
\frg(F)=\widehat{\bigoplus}_{x\in\frac{1}{m}\ZZ}\frg(F)_{x}.
\end{equation*}
where $\widehat{\oplus}$ means $t$-adic completion. To identify these weight spaces, we need a bit of Moy-Prasad theory.

\subsubsection{Moy-Prasad filtration}\label{sss:MP} Let $\frA$ be the apartment for $G(F)$ corresponding to $T(F)=\TT(F_{e})^{\mu_{e}}$. The parahoric $\bG$ gives a special vertex  of $\frA$, which allows us to identify $\frA$ with $\xcoch(\TT)^{\mu_{e}}_{\RR}$. The point $\nu\rho^{\vee}\in\frA$ defines a {\em Moy-Prasad filtration} $\frg(F)_{\nu\rho^{\vee}, \geq x}\subset\frg(F)$ indexed by rational numbers $x\in\frac{1}{m}\ZZ$. The $\calO_{F}$-submodule $\frg(F)_{\nu\rho^{\vee}, \geq x}$ is the $t$-adic completion of the span of affine root spaces corresponding to $\alpha\in\Phi_{\aff}$ such that $\alpha(\nu\rho^{\vee})\geq x$. In particular, $\frg(F)_{\nu\rho^{\vee}, \geq 0}$ is the Lie algebra of the parahoric subgroup $\bP_{\nu}$ corresponding to the facet containing $\nu\rho^{\vee}$. Let $L_{\nu}$ be the Levi factor of $\bP_{\nu}$ that contains $\AA$. The root system of $L_{\nu}$ consists of those affine roots $\alpha$ such that $\alpha(\nu\rho^{\vee})=0$. Each $\frg(F)_{\nu\rho^{\vee}, \geq x}$ is an $L_{\nu}$-module.

Suppose $a\in\frc(F)_{\nu}$ and $\gamma=\kappa(a)$. We write $\gamma$ as a formal power series $\gamma(t^{1/e})$ in $t^{1/e}$ with coefficients in $\gg$. Using \eqref{Kos Gm}, we get
\begin{equation}\label{s act gamma}
\Ad(s^{d\rho^{\vee}})\gamma(s^{m/e}t^{1/e})=s^{d}\gamma(t^{1/e}).
\end{equation}
Therefore, $\gamma\in\frg(F)_{\nu}$. In other words, the Kostant section gives a section
\begin{equation*}
\kappa:\frc(F)_{\nu}\incl\frg(F)_{\nu}.
\end{equation*}

\begin{lemma}\label{l:MP}
\begin{enumerate}
\item For each $x\in\frac{1}{m}\ZZ_{\geq 0}$, we have
\begin{equation}\label{MP grading}
\frg(F)_{\nu\rho^{\vee},\geq x}=\wh{\bigoplus}_{x'\geq x}\frg(F)_{x'}
\end{equation}
\item\label{frg vs gg} Let $\gg=\oplus_{\xi\in\frac{1}{m}\ZZ/\ZZ}\gg_{\xi}$ be the decomposition corresponding to the principal periodic grading $\Psi_{\barnu}$ of slope $\barnu$ in \eqref{Psi action}. Then there is a canonical isomorphism $\frg(F)_{x}\cong\gg_{\barx}$, where $\barx=x \mod m\in\frac{1}{m}\ZZ/\ZZ$.
\item The fixed point subgroup $\tilL_{\nu}:=G(F)^{\Gnu}$ is a reductive group over $\CC$, and it contains $L_{\nu}$ as the neutral component.
\item\label{Lnu} The group $\tilL_{\nu}$ is canonically isomorphic to the centralizer $\GG^{\Psi_{\barnu}}$ of the principal grading $\Psi_{\barnu}$ (see \eqref{Psi action}) in $\GG$. 
\end{enumerate}
\end{lemma}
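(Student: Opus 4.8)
The plan is to prove the four parts in order, deducing the group-theoretic statements (3) and (4) from the linear statements (1) and (2) together with the classical device (going back to Kac, used in \cite{GLRY} and \cite{RY}) of untwisting a Moy--Prasad torus into a loop rotation. For (1) I would compute the $\Gnu$-weight on each affine root space and on $\frt(F)$ directly from \eqref{s act fl}: if $\alpha\in\Phi_{\aff}$ has finite part $\bar\alpha$, so that the corresponding root space in $\frg(F)$ is spanned by elements of the form $X_{\bar\alpha}t^{n/e}$, then $\Gnu$ scales such an element by $s^{d\jiao{\bar\alpha,\rho^{\vee}}+mn/e}$; since $m\nu=d$, this exponent equals $m\cdot\alpha(\nu\rho^{\vee})$, where $\alpha(\nu\rho^{\vee})=\jiao{\bar\alpha,\nu\rho^{\vee}}+n/e$ is the value of the affine function $\alpha$ at $\nu\rho^{\vee}\in\frA$. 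Hence each affine root space lies in $\frg(F)_{\alpha(\nu\rho^{\vee})}$, and since $\frg(F)_{\nu\rho^{\vee},\geq x}$ is by definition the completed span of the $\frg_{\alpha}$ with $\alpha(\nu\rho^{\vee})\geq x$, formula \eqref{MP grading} follows. For (2), an element of $\frg(F)_{x}$ decomposes into joint eigenvectors for $\Ad(\TT)$ (with eigenvalue the height $k=\jiao{\bar\alpha,\rho^{\vee}}$) and for the descent datum cutting $\frg(F)$ out of $\gg\otimes_{\CC}F_{e}$ (which fixes the residue $n\bmod e$ of the $t$-degree); the computation above shows that its value at $t=1$ is a joint eigenvector for $\Ad(\zeta^{\barnu\rho^{\vee}})$ and $\theta(\barzeta)$, hence lies in $\gg_{\barx}$ for the principal grading \eqref{Psi action}. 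Conversely, since $\nu=d/m$ is in lowest terms and $e\mid m$, the class $\barx$ determines $k$ mod $m$ and $n$ mod $e$, so ``value at $t=1$'' is a bijection $\frg(F)_{x}\isom\gg_{\barx}$; in particular $\frg(F)_{x}$ is finite-dimensional.

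For (4) I would conjugate by $t^{\nu\rho^{\vee}}$, regarded as an element of $T^{\ad}(F_{\infty})$ in the adjoint loop group. The key point is that $t^{\nu\rho^{\vee}}$ is a loop-rotation eigenvector whose eigenvalue $s^{d\rho^{\vee}}$ cancels the conjugation factor $\Ad(s^{d\rho^{\vee}})$ in \eqref{s act fl}: a short computation shows that $\Ad(t^{\nu\rho^{\vee}})$ intertwines the $\Gnu$-action on $\GG(F_{\infty})$ with pure loop rotation $g\mapsto g(s^{m/e}t^{1/e})$. Under this conjugation $G(F)\subset\GG(F_{\infty})$ becomes the fixed locus of the twisted descent datum $\zeta\mapsto\Ad(\zeta^{\barnu\rho^{\vee}})\theta(\barzeta)=\Psi_{\barnu}(\zeta)$ --- the extra twist by $\rho^{\vee}$ introduced by the conjugation is exactly what converts $\theta$-descent into $\Psi_{\barnu}$-descent, using that $\theta$ fixes $\rho^{\vee}$ --- and the $\Gnu$-invariants become the loop-rotation-invariant, hence constant, ones, i.e.\ the elements of $\GG$ fixed by $\Psi_{\barnu}$, that is $\GG^{\Psi_{\barnu}}$. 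The induced isomorphism on Lie algebras is the one from (2) with $x=0$, which makes the identification canonical.

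Part (3) then follows: by (4), $\tilL_{\nu}\cong\GG^{\Psi_{\barnu}}$ is reductive (possibly disconnected), being the fixed subgroup of a finite-order automorphism of the reductive group $\GG$. For the last assertion, $\Gnu$ acts trivially on the standard apartment $\frA$ (constant torus elements and loop rotations fix $\frA$ pointwise), so it normalizes the parahoric $\bP_{\nu}$ and its standard Levi $L_{\nu}$, and by the weight computation in (1) it acts through the trivial character on $\AA$ and on each root subgroup of $L_{\nu}$ (those indexed by $\alpha$ with $\alpha(\nu\rho^{\vee})=0$); hence $L_{\nu}\subseteq\tilL_{\nu}$, and since $\Lie\tilL_{\nu}=\frg(F)^{\Gnu}=\frg(F)_{0}=\Lie L_{\nu}$ by (1), $L_{\nu}$ is the neutral component of $\tilL_{\nu}$. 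The main obstacle is (4): one has to bookkeep the fractional powers of $t$ carefully and control how the $\theta$-twisted Galois descent interacts with conjugation by $t^{\nu\rho^{\vee}}$, and in particular verify that the constant elements produced this way exhaust $\GG^{\Psi_{\barnu}}$ --- equivalently that $\tilL_{\nu}$ is of finite type, which follows because $\Gnu$-invariance forces the $t$-support to collapse to a single degree after untwisting. Parts (1) and (2) are then routine computations, and (3) is formal.
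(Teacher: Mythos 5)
Your proposal is correct and follows essentially the same route as the paper: parts (1) and (2) are the same weight computation on affine root spaces combined with evaluation at $t^{1/e}=1$, and your untwisting by $\Ad(t^{\nu\rho^{\vee}})$ in (4) is just a repackaging of the paper's direct analysis of a $\mu_e$- and $\Gm$-equivariant map $g:\Gm\to\GG$ and the resulting conditions on $g(1)$, which both lead to the identification $\tilL_{\nu}\isom\GG^{\Psi_{\barnu}}$. The only organizational difference is that you prove (4) before (3) and deduce reductivity and finiteness of the component group of $\tilL_{\nu}$ from (4), whereas the paper establishes (3) independently by observing that $G(F)^{\Gnu}$ normalizes $\bP_{\nu}$ and so injects modulo $L_{\nu}$ into the finite group $\tilde{\bP}_{\nu}/\bP_{\nu}$; both orderings are valid.
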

\begin{proof}
(1) Direct calculation shows that $\Gnu$ acts on the affine root space corresponding to $\alpha\in\Phi_{\aff}\cup\{0\}$ by weight $m\alpha(\nu\rho^{\vee})$. Since each step of the Moy-Prasad filtration is a sum of affine root spaces, the statement follows.

(2) The space $\frg(F)_{x}$ is a sum of affine root spaces. For each $\alpha\in\Phi_{\aff}\cup\{0\}$, we identify the corresponding affine root space with $\gg_{\overline{\alpha}}$, where $\overline\alpha\in\Phi\cup\{0\}$ is the linear part of $\alpha$. These isomorphisms sum up to give $\frg(F)_{x}\isom\gg_{\barnu}$. 

(3) We have $\Lie L_{\nu}=\frg(F)_{0}=\Lie G(F)^{\Gnu}$ by identifying the affine roots. Therefore $L_{\nu}\subset G(F)^{\Gnu}$ as the neutral component. We must also show that $ G(F)^{\Gnu}$ only has finitely many components. In fact, $G(F)^{\Gnu}$ normalizes $\frp_{\nu\rho^{\vee},\geq0}$ by (1), hence it normalizes $\bP_{\nu}$. Let $\tbP_{\nu}$ be the normalizer of $\bP_{\nu}$ in $G(F)$, then $\tbP_{\nu}/\bP_{\nu}$ is can be identified with the stabilizer of the facet $\frF_{\bP_{\nu}}$ under $\tilW$, which is a finite group. Therefore  $G(F)^{\Gnu}/L_{\nu}\subset \tbP_{\nu}/\bP_{\nu}$ is a finite group, and $G(F)^{\Gnu}$ only has finitely many components.

(4) By embedding $\GG$ into a matrix group, it is easy to see that $\GG(F_{e})^{\Gnu}\subset\GG(\CC[t^{1/e},t^{-1/e}])$. Therefore we may view an element $g\in G(F)^{\Gnu}$ as a morphism of varieties $g:\Gm\to\GG$. The coordinate on the source $\Gm$ is $t^{1/e}$. The fact that $g\in G(F)=\GG(F_{e})^{\mu_{e}}$ says that $g$ is $\mu_{e}$-equivariant: with the multiplication action on the source $\Gm$ and the action via $\theta$ on the target. The $\Gnu$-equivariance of $g$ is same as saying that $g$ is $\Gm$-equivariant: $s\in\Gm$ acts on the source $\Gm$ by multiplication by $s^{m/e}$ and acts on the target $\GG$ by $\Ad(s^{-d\rho^{\vee}})$. Giving such a map $g$ is equivalent to knowing the value $g(1)\in\GG$ satisfying two conditions
\begin{itemize}
\item $g(\eta)=\theta(\eta)g(1), \forall\eta\in\mu_{e}$;
\item $g(\zeta^{m/e})=\Ad(\zeta^{-d\rho^{\vee}})g(1), \forall\zeta\in\mu_{m}$.
\end{itemize}
The two conditions are consistent if and only if $\theta(\zeta^{m/e})g(1)=\Ad(\zeta^{-d\rho^{\vee}})g(1)$, or in other words, $g(1)$ is invariant under the action of $\Psi_{\barnu}$ defined in \eqref{Psi action}. Therefore the evaluation at $t^{1/e}=1$ gives an isomorphism of reductive groups $\tilL_{\nu}=G(F)^{\Gnu}=\GG(\CC[t^{1/e},t^{-1/e}])^{\mu_{e},\Gnu}\isom\GG^{\Psi_{\barnu}}$.
\end{proof}

\begin{lemma}\label{l:identify nu} Let $\nu$ and $\nu'$ be $\theta$-admissible slopes with the same denominators in lowest terms. Then there are isomorphisms $\tilL_{\nu}\cong \tilL_{\nu'}$ and $\frg(F)_{\nu}\cong\frg(F)_{\nu'}$ compatible with the action of $\tilL_{\nu}$ and $\tilL_{\nu'}$. One can make these isomorphisms canonical except when $G$ is of type $\leftexp{3}{D}_{4}$ and the order of $\nu$ is divisible by $3$.
\end{lemma}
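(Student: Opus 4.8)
The plan is to reduce the whole statement to a comparison of the two principal gradings $\Psi_{\barnu}$ and $\Psi_{\barnu'}$. By Lemma~\ref{l:MP}\eqref{Lnu} and \eqref{frg vs gg} there are canonical identifications $\tilL_{\nu}\cong\GG^{\Psi_{\barnu}}$ and $\frg(F)_{\nu}\cong\gg_{\barnu}$ (the $\barnu$-isotypic summand of $\gg$ for the grading $\Psi_{\barnu}$), compatible with the respective actions, and the same for $\nu'$; so it is enough to produce compatible isomorphisms $\GG^{\Psi_{\barnu}}\cong\GG^{\Psi_{\barnu'}}$ and $\gg_{\barnu}\cong\gg_{\barnu'}$. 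Since $\nu$ and $\nu'$ share the same denominator $m_{1}$ in lowest terms, $\barnu$ and $\barnu'$ have the same order $m_{1}$ in $\QQ/\ZZ$, so $\barnu'=k\barnu$ for an integer $k$ with $\gcd(k,m_{1})=1$.

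The key point is that precomposing $\Psi_{\barnu}\colon\hZZ(1)\to\GG^{\ad}\rtimes\mu_{e}$ with the power map $[j]\colon\zeta\mapsto\zeta^{j}$ gives, by the explicit formula \eqref{Psi action} (which is a homomorphism precisely because $\theta$ fixes $\rho^{\vee}$), the homomorphism $\zeta\mapsto\zeta^{j\barnu\rho^{\vee}}\theta(\barzeta)^{j}$; this equals $\Psi_{\barnu'}$ as soon as $j\equiv1\pmod{e}$ and $j\barnu=\barnu'$ in $\QQ/\ZZ$, i.e.\ $j\equiv k\pmod{m_{1}}$. By the Chinese Remainder Theorem such a $j$ exists, and may be chosen coprime to $m=\lcm(m_{1},e)$, \emph{unless} some prime $p\mid\gcd(m_{1},e)$ has $k\not\equiv1\pmod{p}$; since $e\le3$ and $\gcd(k,m_{1})=1$, the only possibility is $e=3$ (type $\leftexp{3}{D}_{4}$), $3\mid m_{1}$, and $k\equiv-1\pmod{3}$. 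Outside that case I would fix such a $j$: then $[j]$ is an automorphism of $\mu_{m}$, so $\Psi_{\barnu'}=\Psi_{\barnu}\circ[j]$ has the \emph{same image} as $\Psi_{\barnu}$, hence $\GG^{\Psi_{\barnu'}}=\GG^{\Psi_{\barnu}}$ on the nose, and reindexing by $[j]$ carries $\gg_{\barnu}$ exactly onto $\gg_{\barnu'}$ because $j^{-1}\barnu'=\barnu$ in $\QQ/\ZZ$. Transporting these equalities through Lemma~\ref{l:MP} gives the desired \emph{canonical} isomorphisms; they do not depend on $j$ because any two admissible choices of $j$ agree modulo $m$.

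Finally, in the exceptional case $\leftexp{3}{D}_{4}$ with $3\mid m_{1}$, after applying a power map as above we may assume $\barnu'=-\barnu$. Now the $\mu_{3}$-factor cannot be matched by a power map, and my proposal is to use instead an outer automorphism $\tau\in\Out(\GG)\cong S_{3}$ inverting the triality $\sigma$ (a generator of $\theta(\mu_{3})$), so that $\tau\sigma\tau^{-1}=\sigma^{-1}$; as $\tau$ (viewed as a pinned automorphism) fixes $\rho^{\vee}$, conjugation by a lift of $\tau$ sends $\Psi_{\barnu}(\zeta)=\zeta^{\barnu\rho^{\vee}}\sigma$ to $\zeta^{\barnu\rho^{\vee}}\sigma^{-1}$, which is the inverse of $\Psi_{\barnu'}(\zeta)=\zeta^{-\barnu\rho^{\vee}}\sigma$. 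Since a centralizer is unchanged by inverting the element, conjugation by $\tau$ then identifies $\GG^{\Psi_{\barnu}}$ with $\GG^{\Psi_{\barnu'}}$ and, compatibly, $\gg_{\barnu}$ with $\gg_{\barnu'}$ — but this identification depends on the choice of $\tau$ among the three transpositions in $S_{3}$, which is exactly why canonicity fails here. (Its bare existence could also be deduced from the classification: by Theorem~\ref{th:homog vs reg} and Corollary~\ref{c:order det w} the $\theta$-regular homomorphism $\hZZ(1)\to\WW'$ attached to a homogeneous element of slope $\nu$ depends, up to $\WW$-conjugacy, only on $m_{1}$.) The main obstacle is this last case: one must check both that an isomorphism really does survive and that its dependence on $\tau$ genuinely prevents a canonical choice; everything else is bookkeeping with the power map and the Chinese Remainder Theorem.
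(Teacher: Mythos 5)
Your proposal is correct and follows essentially the same strategy as the paper: reduce via Lemma~\ref{l:MP} to comparing the principal gradings $\Psi_{\barnu}$ and $\Psi_{\barnu'}$, handle the generic cases by a prime-to-$m$ power map (yielding the same image, hence literal equality of centralizers and grading pieces), and invoke a pinned involution $\tau\in\Aut^\dagger(\GG)\cong S_3$ for the $\leftexp{3}{D}_4$ exceptional case. The only stylistic difference is in that last step: the paper precomposes $\Psi_{\barnu'}$ with $[d]$ and then has $\tau\Psi_{\barnu}\tau^{-1}=\Psi'$ directly, whereas you normalize to $\barnu'=-\barnu$ and pass through the observation that $\tau\Psi_{\barnu}\tau^{-1}$ is the pointwise inverse of $\Psi_{-\barnu}$, which works but adds one unnecessary twist.
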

\begin{proof} Let $\nu=d_{1}/m_{1}$ and $\nu'=d'_{1}/m_{1}$ in lowest terms. It suffices to treat the case $d'_{1}=1$. By Lemma \ref{l:MP}, it suffices to construct compatible isomorphisms between $\GG^{\Psi_{\barnu}}$ and $\GG^{\Psi_{\barnu'}}$, and between their grading pieces $\gg_{\barnu}$ and $\gg_{\barnu'}$. In fact, we will show that in most cases these isomorphisms are equalities as subgroups of $\GG$ and subspaces of $\gg$.

First assume either $e=1$ or $e\nmid m_{1}$. In this case $\GG^{\Psi_{\barnu},\circ}\subset\HH=\GG^{\mu_{e},\circ}$ and $\gg_{\barnu}\subset\Lie\HH$. Working with $\HH$ instead of $\GG$ we may assume $e=1$. We let $m=m_{1}$ and $d=d_{1}$. In this case the action $\Psi_{\barnu}:\mu_{m}\to\Aut(\GG)$ is simply the composition $\Psi_{\barnu'}\circ[d]$ (pre-composite with the $d$-th power map on $\mu_{m}$). Since $d$ is prime to $m$ we get $\GG^{\Psi_{\barnu}}=\GG^{\Psi_{\barnu'}}$ and $\gg_{\barnu}=\gg_{\barnu'}$.

Next assume $e=2$ and $2|m_{1}$. Again we write $m=m_{1}$ and $d=d_{1}$. In this case $d$ must be odd, and again $\Psi_{\barnu}=\Psi_{\barnu'}\circ[d]$. Therefore the same conclusion as above holds.

Finally we treat the case $e=3$ and $3|m_{1}$. Again we write $m=m_{1}$ and $d=d_{1}$. In this case $d$ must be prime to $3$. The above argument allows us to replace $\Psi_{\barnu'}$ by $\Psi':=\Psi_{\barnu'}\circ[d]$. In other words we are comparing the two gradings $\Psi_{\barnu}:\zeta\mapsto\Ad(\zeta^{d\rho^{\vee}})\theta(\zeta^{m/e})$ and $\Psi':\zeta\mapsto\Ad(\zeta^{d\rho^{\vee}})\theta(\zeta^{dm/e})$. When $d\equiv1\mod3$ then $\Psi'=\Psi_{\barnu}$ and the conclusion follows. If $d\equiv2\mod3$, then for a primitive $\zeta\in\mu_{m}$ with $\theta(\zeta^{m/e})=\sigma\in\Aut^{\dagger}(\GG)$, the two actions $\Psi_{\barnu}(\zeta)=\Ad(\zeta^{d\rho^{\vee}})\sigma$ and $\Psi'(\zeta)=\Ad(\zeta^{d\rho^{\vee}})\sigma^{2}$. However, the pinned automorphism group $\Aut^{\dagger}(\GG)$ of $\GG$ (of type $D_{4}$) is isomorphic to $S_{3}$. Choose an involution $\tau\in\Aut^{\dagger}(\GG)\cong S_{3}$. Then conjugation by $\tau$ interchanges $\sigma$ and $\sigma^{2}$, and the action of $\tau$ (as with all pinned automorphisms) commutes with $\Ad(\zeta^{d\rho^{\vee}})$. Therefore the automorphism $\tau$ of $\GG$ identifies $\GG^{\Psi_{\barnu}}$ with $\GG^{\Psi_{\barnu'}}=\GG^{\Psi'}$, and identifies $\gg_{\barnu}$ with $\gg_{\barnu'}$. The proof is complete.
\end{proof}

With the Moy-Prasad grading, we can describe the centralizer $G_{\gamma}$ more explicitly.

\begin{lemma}\label{l:local Neron} Let $G^{\flat}_{\gamma}$ be the N\'eron model of $G_{\gamma}$ that is of finite type over $\calO_{F}$. Then
\begin{enumerate}
\item $G^{\flat}_{\gamma}(\calO_{F})\subset \bP_{\nu}\cdot G(F)^{\Gnu}$, which is contained in the normalizer of $\bP_{\nu}$.
\item Let $\tilS_{a}:=\tilL_{\nu,\gamma}$ be the centralizer of $\gamma$ in $\tilL_{\nu}$. The inclusion $\tilS_{a}\incl G^{\flat}_{\gamma}(\calO_{F})$ identifies $\tilS_{a}$ with the Levi factor of the pro-algebraic group $G^{\flat}_{\gamma}(\calO_{F})$.
\item\label{Tmum} Let $\Pi:\mu_{m}\to\WW'$ be the $\theta$-regular homomorphism corresponding to the $\theta$-regular number $m$ by Corollary \ref{c:order det w}, then $\tilS_{a}\cong\TT^{\Pi(\mu_m)}$. 
\item\label{tilL} Let $S_{a}:=L_{\nu,\gamma}$ be the centralizer of $\gamma$ in $L_{\nu}$. Then $\tilS_{a}/S_{a}$ is canonically a subgroup of $\Omega$. Assume further that $\nu$ is elliptic. Then there is an exact sequence
\begin{equation*}
1\to S_{a}\to \tilS_{a}\to \Omega\to1.
\end{equation*}
\end{enumerate}
\end{lemma}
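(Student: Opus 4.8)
The plan is organized around the observation that, by \eqref{s act gamma}, $\gamma$ lies in $\frg(F)_{\nu}$ and hence is fixed by $\Gnu$; consequently $\Gnu$ preserves the centralizer torus $G_{\gamma}$, and by the N\'eron mapping property it acts on $G^{\flat}_{\gamma}$, hence on the connected pro-algebraic group $P:=G^{\flat}_{\gamma}(\calO_{F})$. For (1) I would show that this $\Gnu$-action is contracting. Its Lie algebra is an $\calO_{F}$-lattice in the Cartan subalgebra $\frg(F)_{\gamma}$; since $\gamma$ is homogeneous this lattice is $\Gnu$-stable, so $\frg(F)_{\gamma}$ and $\Lie P$ are $\Gnu$-graded, and by the structure theory of the finite-type N\'eron model of a tame torus $\Lie P$ is the maximal bounded subalgebra, which here is exactly the non-negative-weight part $\widehat{\bigoplus}_{x\ge 0}(\frg(F)_{\gamma})_{x}=\frg(F)_{\gamma}\cap\Lie\bP_{\nu}$ (using Lemma \ref{l:MP}(1)). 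A pro-algebraic group with a contracting $\Gm$-action splits as $P=P^{\Gnu}\ltimes P_{>0}$ with $P_{>0}$ connected pro-unipotent; as $\Lie P_{>0}=\widehat{\bigoplus}_{x>0}(\frg(F)_{\gamma})_{x}$ sits inside the Lie algebra of the pro-unipotent radical of $\bP_{\nu}$, exponentiation puts $P_{>0}\subset\bP_{\nu}$, while $P^{\Gnu}\subset G(F)^{\Gnu}=\tilL_{\nu}$ normalizes $\bP_{\nu}$ by the proof of Lemma \ref{l:MP}(3). Thus $P$ lies in $\bP_{\nu}\cdot G(F)^{\Gnu}$ and normalizes $\bP_{\nu}$, proving (1). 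For (2): a $\Gnu$-fixed element of $P$ lies in $G(F)^{\Gnu}=\tilL_{\nu}$ and centralizes $\gamma$, so lies in $\tilL_{\nu,\gamma}=\tilS_{a}$; conversely $\tilS_{a}\subset\tilL_{\nu}$ is diagonalizable over $\CC$, hence bounded and $\Gnu$-fixed, so $\tilS_{a}\subset P^{\Gnu}$. Therefore $P^{\Gnu}=\tilS_{a}$ is the Levi factor of $P=P^{\Gnu}\ltimes P_{>0}$.

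For (3) I would reuse the identifications from the proof of Theorem \ref{th:homog vs reg}. By Lemma \ref{l:MP}(4), evaluation at $t^{1/e}=1$ gives $\tilL_{\nu}\isom\GG^{\Psi_{\barnu}}$ taking $\gamma$ to $Y:=\gamma(1)\in\gg_{\barnu}^{\rs}$ (the element output by the map $\phi_{1}$), so $\tilS_{a}\cong(\GG^{\Psi_{\barnu}})_{Y}$. Writing $\vartheta:=\Psi_{\barnu}(\zeta_{0})$ for a generator $\zeta_{0}$ of $\mu_{m}$, one has $\GG^{\Psi_{\barnu}}=\GG^{\vartheta}$; since $Y$ is regular semisimple, $\GG_{Y}$ is a maximal torus, and since $\vartheta(Y)=\zeta_{0}^{\barnu}Y$ is a scalar multiple of $Y$, $\vartheta$ normalizes $\GG_{Y}$, whence $(\GG^{\vartheta})_{Y}=(\GG_{Y})^{\vartheta}$. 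Choosing $g$ with $\Ad(g)\gg_{Y}=\tt$ as in the construction of the map $\phi_{2}$ turns $\vartheta|_{\GG_{Y}}$ into the automorphism of $\TT$ induced by $\Pi_{g}(\zeta_{0})\in\WW'$, so $(\GG_{Y})^{\vartheta}\cong\TT^{\Pi_{g}(\mu_{m})}$; and $\Pi_{g}$, being $\theta$-regular with regular eigenvalue of the prescribed order, is $\WW$-conjugate to $\Pi$ by Corollary \ref{c:order det w}(1), so $\tilS_{a}\cong\TT^{\Pi(\mu_{m})}$.

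For (4), since $\tilS_{a}=\tilL_{\nu}\cap G_{\gamma}(F)$, $S_{a}=L_{\nu}\cap G_{\gamma}(F)$ and $\tilL_{\nu}\cap\bP_{\nu}=\bP_{\nu}^{\Gnu}=L_{\nu}$ (the fixed locus of the contracting action on $\bP_{\nu}=L_{\nu}\ltimes U_{\nu}$), the embedding $\tilL_{\nu}\hookrightarrow\tbP_{\nu}:=N_{G(F)}(\bP_{\nu})$ induces injections $\tilS_{a}/S_{a}\hookrightarrow\tilL_{\nu}/L_{\nu}\hookrightarrow\tbP_{\nu}/\bP_{\nu}$, and $\tbP_{\nu}/\bP_{\nu}$ is canonically the stabilizer in $\Omega\cong\Omega_{\bI}$ of the facet of $\nu\rho^{\vee}$; composing gives the canonical $\tilS_{a}/S_{a}\hookrightarrow\Omega$. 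The hard part will be the surjectivity when $\nu$ is elliptic. In that case $\tt^{\Pi(\mu_{m})}=0$, so by (3) $\tilS_{a}\cong\TT^{\Pi(\mu_{m})}$ is finite, $S_{a}$ is the analogous finite fixed-point group inside $L_{\nu}$, and it suffices to prove the order identity $|\tilS_{a}|=|S_{a}|\cdot|\Omega|$. I would establish this by writing $|\tilS_{a}|=|\det(1-\Pi(\zeta_{0})\mid\xcoch(\TT)\otimes\QQ)|$, doing the same for $|S_{a}|$ with the coweight lattice of $L_{\nu}$, and comparing via $\Omega=\xcoch(\TT)_{\mu_{e}}/\L$, using that for an elliptic $\Pi$ the affine-Weyl lattice $\L$ is determined by $\Pi$; an alternative is to work with the N\'eron model directly, identifying $\pi_{0}$ of the special fiber of $G^{\flat}_{\gamma}$ with the inertia-coinvariants of $\xcoch(G_{\gamma})$ and matching it with $\Omega$ via $G_{\gamma}\hookrightarrow G$ and ellipticity.
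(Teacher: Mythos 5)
Parts (1)--(3) of your proposal are essentially the argument in the paper, differing only in organization. For (1)--(2), the paper first shows $\Lie G^{\flat}_{\gamma}(\calO_{F})$ has non-negative $\Gnu$-weights by extending scalars to $F_{m}$ and conjugating $\gamma$ to $Xt^{\nu}$, then decomposes $G^{\flat}_{\gamma}(\calO_{F})$ into its pro-unipotent radical $G^{\flat,+}_{\gamma}\subset\bP_{\nu}$ and Levi factor $G^{\flat,\red}_{\gamma}$, and concludes $G^{\flat,\red}_{\gamma}\subset G(F)^{\Gnu}$ because the $\Gnu$-action on a diagonalizable group must be trivial; your contracting-$\Gm$-action splitting $P=P^{\Gnu}\ltimes P_{>0}$ packages the same information. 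Your proof of (3) is the paper's argument spelled out with an explicit conjugating element $g$ and explicit appeal to $\phi_1,\phi_2$.

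The gap is in part (4), precisely where you flag ``the hard part''. You correctly set up the injection $\tilS_{a}/S_{a}\hookrightarrow\Omega$, and you correctly reduce surjectivity (in the elliptic case) to the order identity $\#\tilS_{a}=\#S_{a}\cdot\#\Omega$. But your proposed computation of $\#S_{a}$ ``with the coweight lattice of $L_{\nu}$'' does not work as stated: $S_{a}=\tilS_{a}\cap L_{\nu}$ is not of the form $(\text{torus})^{\Pi(\mu_m)}$ for a sublattice of $\xcoch(\TT)$, so there is no determinant formula for $\#S_a$ directly. (Note also that $\det(1-w\mid\xcoch(\TT)\otimes\QQ)$ is lattice-independent, so the determinant alone can never detect the difference between $\tilS_a$ and $S_a$.) The paper's route is an isogeny argument: pass to the simply connected cover $\GG^{\sc}\to\GG$ with kernel $Z$. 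For $\GG^{\sc}$ one has $\Omega=1$, hence $\tilS^{\sc}_{a}=S^{\sc}_{a}$; by part (3) and ellipticity $\#\tilS^{\sc}_{a}=\#\tilS_{a}=|\det(1-w\mid\tt)|$ (the determinant is insensitive to the lattice, which is exactly the point); and $S^{\sc}_{a}/Z^{\mu_{e}}=S_{a}$. Hence $[\tilS_{a}:S_{a}]=[S^{\sc}_{a}:S_{a}]=\#Z^{\mu_{e}}=\#\Omega$. Your second alternative (component groups of N\'eron models via inertia coinvariants) is a plausible direction but remains a sketch; without the simply connected comparison the matching with $\Omega$ is not automatic.
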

\begin{proof}
(1) We first show that $\Lie G^{\flat}_{\gamma}(\calO_{F})\subset \frg(F)_{\nu\rho^{\vee}, \geq0}=\Lie\bP_{\nu}$. In other words, the weights of the $\Gnu$-action on $\Lie G^{\flat}_{\gamma}(\calO_{F})$ are non-negative. This can be checked after extending $F$ to $F_{m}$, with the action of $\Gnu$ on $\GG(F_{m})$ given by  $\Gnu\ni s: g(t^{1/m})\mapsto\Ad(s^{d\rho^{\vee}})g(st^{1/m})$. Inside $\frg(F_{m})$, we may conjugate $\gamma$ to $Xt^{\nu}$ ($X\in\tt^{\rs}$), and  $G^{\flat}_{\gamma}(\calO_{F_{m}})$ to $\TT(\calO_{F_{m}})$, for which is statement is obvious.

We may write $G^{\flat}_{\gamma}(\calO_{F})$ canonically as the $G^{\flat, +}_{\gamma}\cdot G^{\flat,\red}_{\gamma}$, where $G^{\flat, +}_{\gamma}$ is its the pro-unipotent radical and $G^{\flat,\red}_{\gamma}$ the Levi factor, which is a diagonalizable group over $\CC$. The above argument shows that the neutral component of $G^{\flat}_{\gamma}(\calO_{F})$ is contained in $\bP_{\nu}$, hence $G^{\flat,+}_{\gamma}\subset\bP_{\nu}$ and $G^{\flat, \red,\circ}_{\gamma}\subset L_{\nu}$. It remains to show $G^{\flat,\red}_{\gamma}\subset G(F)^{\Gnu}$. Now $G^{\flat,\red}_{\gamma}$ is a diagonalizable group on which $\Gnu$ acts,  the action must be trivial because the automorphism group of a diagonalizable group is discrete,  therefore $G^{\flat,\red}_{\gamma}\subset G(F)^{\Gnu}$.

(3) Recall how we assigned the $\theta$-regular homomorphism $\Pi:\mu_{m}\to\WW'$ to $\gamma(1)\in\gg_{\barnu}$ in the proof of Theorem \ref{th:homog vs reg}. Since $\gamma(1)\in\gg^{\rs}$, the centralizer $\TT'=C_{\GG}(\gamma(1))$ is a maximal torus and the principal grading $\Psi:\mu_{m}\to\GG^{\ad}\rtimes\mu_{e}$ normalizes it. Therefore $\Psi$ induces a homomorphism $\Pi:\mu_{m}\to N_{\GG^{\ad}\rtimes\mu_{e}}(\TT')/\TT'\cong\WW'$. By Lemma \ref{l:MP}\eqref{Lnu}, the group $\tilS_{a}=\tilL_{\nu,\gamma}$ is the centralizer in $\GG$ of both the grading $\Psi$ and the element $\gamma(1)$, therefore $\tilS_{a}=C_{\GG}(\gamma(1))^{\Psi}=\TT'^{\Pi(\mu_{m})}\cong\TT^{\Pi(\mu_{m})}$.

(2) We have already shown that $G^{\flat,\red}_{\gamma}\subset G(F)^{\Gnu}=\tilL_{\nu}$, therefore $G^{\flat,\red}_{\gamma}\subset \tilL_{\nu,\gamma}=\tilS_{a}$. Since $\tilS_{a}$ is also diagonalizable in (3), hence reductive, we have $G^{\flat,\red}_{\gamma}=\tilS_{a}$. 

(4) The quotient $\tilS_{a}/S_{a}$ is a subgroup of $\tilL_{\nu}/L_{\nu}=\wt\bP_{\nu}/\bP_{\nu}$, therefore a subgroup of $\Omega$. It remains to show that $\tilS_{a}/S_{a}=\Omega$. Since $\nu$ is elliptic, $\tilS_{a}\cong\TT^{\mu_{m}}$ is finite, therefore it suffices to show that $[\tilS_{a}:S_{a}]=\#\Omega$. Let $\GG^{\sc}\to\GG$ be the simply-connected cover with kernel $Z$. Define $S^{\sc}_{a}$ and $\tilS^{\sc}_{a}$ as the counterpart of $S_{a}$ and $\tilS_{a}$ for $\GG^{\sc}$, then in fact $S^{\sc}_{a}=\tilS^{\sc}_{a}$ since $\Omega$ is trivial for $\GG^{\sc}$. We also have $S^{\sc}_{a}/Z^{\mu_{e}}=S_{a}$. Let $w\in \WW'$ be the image of a generator of $\mu_{m}$ under $\Pi$, then $S^{\sc}_{a}\cong\TT^{\sc,w}$ and $\tilS_{a}\cong\TT^{w}$ by (3). Since $w$ acts elliptically on $\tt$, we have $S^{\sc}_{a}=\#\tilS_{a}=\det(1-w|\xcoch(\TT))$. Therefore $[\tilS_{a}:S_{a}]=[S^{\sc}_{a}:S_{a}]=\#Z^{\mu_{e}}$. Since $\#\Omega=\#Z^{\mu_{e}}$, we have $[\tilS_{a}:S_{a}]=\#\Omega$, hence $\tilS_{a}/S_{a}\cong\Omega$.
\end{proof}

\subsubsection{Cartan subspace and the little Weyl group}\label{sss:Cartan} 
Let $\nu$ be a $\theta$-admissible slope which corresponds to a $\theta$-regular homomorphism $\Pi:\mu_{m}\incl\WW'$. Let $\tt_{\barnu}\subset\tt$ be the eigenspace with eigenvalue $\barnu=\nu\mod\ZZ$ as in \eqref{t eigen}. We call $\tt_{\barnu}$ a {\em Cartan subspace} for the slope $\nu$. The subspace $\tt_{\barnu}\otimes t^{\nu}\subset \gg(F_{m})$ then consists of homogeneous elements of slope $\nu$ and the homomorphism $\Pi$ shows that their image under the invariant quotient map $\gg(F_{m})\to \cc(F_{m})$ in fact lies in $\frc(F)_{\nu}$. We therefore get a map $\tt^{\rs}_{\barnu}\to \frc(F)^{\rs}_{\nu}$, which is an \'etale cover with Galois group $W(\Pi, \barnu)$, the {\em little Weyl group} attached to $(\Pi,\barnu)$. This notion is equivalent to the notion of little Weyl groups attached to gradings on Lie algebras reviewed in \cite[\S6]{GLRY}. The group $\mu_{m}$ acts on $\WW$ via $\Pi$ and the conjugation action of $\WW'$ on $\WW$. The fixed point subgroup $\WW^{\Pi(\mu_{m})}$ acts on $\tt_{\barnu}$. Combining \cite[Lemma 19]{GLRY} and \cite[Theorem 4.7]{Pany} (see also \cite[Proposition 20]{GLRY}, applicable here since $\Pi$ arises from a principal grading according to Theorem \ref{th:homog vs reg}), we conclude that $W(\Pi,\barnu)=\WW^{\Pi(\mu_{m})}$.

\subsubsection{Family of regular centralizers over $\frc(F)^{\rs}_{\nu}$}\label{sss:Pnu} Let $\nu>0$ be a $\theta$-admissible slope so that $\frc(F)^{\rs}_{\nu}\subset\frc(\calO_{F})$. Recall we have the regular centralizer group scheme $J$ over $\frc$ defined in \S\ref{ss:J}. For each $a\in \frc(\calO_{F})$ let $J_{a}$ be the group scheme over $\calO_{F}$ defined by the pullback of $J$ along $a:\Spec\calO_{F}\to\frc$. We would like to define a commutative group scheme  $P_{\nu}$ over $\frc(F)^{\rs}_{\nu}$ whose fiber over $a$ is $P_{a}:=J_{a}(F)/J_{a}(\calO_{F})$. 

We use the notation from \S\ref{sss:Cartan}. We first work over $\tt^{\rs}_{\barnu}$. Every point in $\tt^{\rs}_{\barnu}$ gives a lifting $\tila: \Spec\calO_{F_{m}}\to\tt^{\rs}_{\barnu}$ of $a:\Spec\calO_{F}\to\frc$. The alternative construction of the regular centralizers given in \cite{DG} then shows that each $J_{\tila}$ is canonically isomorphic to the same group scheme $\wt\JJ$ over $\Spec\calO_{F_{m}}$ such that  it is a subgroup scheme of $\TT\otimes\calO_{F_{m}}$ with the same generic fiber. Moreover $\wt\JJ$ carries a descent datum from $\Spec\calO_{F_{m}}$ to $\Spec\calO_{F}$, or simply an action of $\mu_{m}$ that is compatible with its Galois action on $\calO_{F_{m}}$. We then have two group ind-schemes over $\tt^{\rs}_{\barnu}$, one is $\tt^{\rs}_{\barnu}\times\TT(F_{m})^{\Pi(\mu_{m})}$ and the other is its subgroup scheme $\tt^{\rs}_{\barnu}\times\wt\JJ(\calO_{F_{m}})^{\Pi(\mu_{m})}$. The desired group ind-scheme $P_{\nu}$, when pulled back to $\tt^{\rs}_{\barnu}$ should be the quotient of these two. Using the action of $\WW^{\Pi(\mu_m)}$ to descend the two group ind-schemes from $\tt^{\rs}_{\barnu}$ to $\frc(F)^{\rs}_{\nu}$, we get the group scheme over $\frc(F)^{\rs}_{\nu}$
\begin{equation*}
P_{\nu}=\tt^{\rs}_{\barnu}\twtimes{\WW^{\Pi(\mu_m)}}\left(\TT(F_{m})^{\Pi(\mu_m)}/\wt\JJ(\calO_{F_{m}})^{\Pi(\mu_m)}\right).
\end{equation*}

\subsubsection{The group schemes $\tilS$ and $S$}\label{sss:tilS} The results in Lemma \ref{l:local Neron} can be stated in families. The group scheme $J^{\flat}_{\nu}(\calO_{F}):=\tt^{\rs}_{\barnu}\twtimes{\WW^{\Pi(\mu_m)}}\TT(\calO_{F_{m}})^{\Pi(\mu_m)}$ over $\frc(F)^{\rs}_{\nu}$ is the family of $\calO_{F}$-points of the N\'eron models of the centralizers $G^{\flat}_{\gamma}$, for $\gamma=\kappa(a), a\in\frc(F)^{\rs}_{\nu}$. The group scheme $\tilS$ over $\frc(F)^{\rs}_{\nu}$, defined as the family of stabilizers under $\tilL_{\nu}$, is canonically isomorphic to the Levi factor of $J^{\flat}_{\nu}(\calO_{F})$. Therefore, Lemma \ref{l:local Neron}\eqref{Tmum} gives an isomorphism of group schemes over $\frc(F)^{\rs}_{\nu}$
\begin{equation*}
\tilS\cong \tt^{\rs}_{\barnu}\twtimes{\WW^{\Pi(\mu_m)}}\TT^{\Pi(\mu_m)}.
\end{equation*}
By Lemma \ref{l:local Neron}\eqref{tilL}, there is a canonical homomorphism of group schemes $\tilS\to \Omega\times \frc(F)^{\rs}_{\nu}$. Let $S\subset \tilS$ be the kernel of this homomorphism. Then $S$ is also a smooth affine group scheme over  $\frc(F)^{\rs}_{\nu}$ whose fiber over $a$ is the group $S_{a}$ defined in Lemma \ref{l:local Neron}\eqref{tilL}.


\section{Graded and rational \Chas}\label{s:H}
In this section, we recall the definition of two versions of Cherednik algebras, and collect some basic facts about their structure and representation theory.

\subsection{The graded \Cha}\label{s:grad alg def}We first recall the definition of $\Hgr=\Hgr(\GG,\theta)$. As a $\QQ$-vector space, 
\begin{equation*}
\Hgr:=\Sym(\xch(\Gdil)_{\QQ}\oplus\fra_{\KM}^{*})\otimes\QQ[\tilW].
\end{equation*}
Let $u$ be the generator of $\xch(\Gdil)$, then we may also write
\begin{equation*}
\Hgr=\QQ[u,\delta]\otimes\Sym(\fra^{*})\otimes\QQ[\L_{\can}]\otimes\QQ[\tilW].
\end{equation*}
The grading is given by
\begin{eqnarray*}
&& \deg(\tilw)=0, \forall\tilw\in\tilW \\
&& \deg(u)=\deg(\xi)=2, \forall\xi\in\fra_{\KM}^{*}
\end{eqnarray*}
The algebra structure of $\Hgr$ is determined by
\begin{enumerate}
\item[(GC-1)] $u$ is central;
\item[(GC-2)] $\QQ[\tilW]$ and $\Sym(\fra_{\KM}^{*})$ are subalgebras;
\item[(GC-3)] For each affine simple reflection $s_i\in\Wa$ (corresponding to the affine simple root $\alpha_{i}$) and $\xi\in\fra_{\KM}^{*}$,
\begin{equation*}
s_i\xi-\leftexp{s_i}{\xi}s_i=\jiao{\xi,\alpha_i^\vee}u.
\end{equation*}
\item[(GC-4)] For any $\omega\in\Omega_{\bI}$ and $\xi\in\fra_{\KM}^{*}$,
\begin{equation*}
\omega\xi=\leftexp{\omega}{\xi}\omega.
\end{equation*}
\end{enumerate}

\subsubsection{Central charge} We know $u$ is central. From the relations (GC-3) and (GC-4), we see that $\delta\in\fra_{\KM}^{*}$ is also central. 

Using the $\tilW$-invariance of $B_{\KM}\in\Sym^{2}(\fra_{\KM})^{\tilW}$, we see that $B_{\KM}$ is also a  central element in $\Hgr$.

Let 
\begin{equation*}
\Hgr_{\nu}=\Hgr/(u+\nu\delta).
\end{equation*}
This is the {\em graded \Cha with central charge $\nu$}. We shall write $\ep$ for the image of $\delta=-u/\nu$ in $\Hgr_{\nu}$.

\subsubsection{Specialize $\ep$ to 1} 
Let $\Hgr_{\nu,\ep=1}$ be the specialization $\Hgr_{\nu}/(\ep-1)$. This is the trigonometric \Cha in the literature. The grading on $\Hgr$ only induces a filtration after specialization: we define $\mathfrak{H}^{\gr, \leq i}_{\nu,\ep=1}\subset\Hgr_{\nu,\ep=1}$ to be the image of the degree $\leq i$ part of $\Hgr_{\nu}$, and call this the {\em cohomological filtration}.

\subsection{The rational \Cha}\label{s:rat alg def}

\subsubsection{Parameters}\label{sss:param} We shall define a function:
\begin{equation*}
c:\Phi/W\to\{1,2,3\}.
\end{equation*}
Here $\Phi$ is the relative root system $\Phi(\GG,\AA)$ on which $W$ acts by permutation. Recall $\PPhi=\Phi(\GG,\TT)$ is the absolute root system and we have a projection $\PPhi\to \Phi$. For $\alpha\in\Phi$, let $c_{\alpha}$ be the cardinality of the preimage of $\alpha$ under the projection $\PPhi\to \Phi$. 

We also give the value of $c$ explicitly in the various cases. If $e=1$, $c$ is the constant function $1$. If $e>1$, then the relative root system $\Phi$ is not necessarily reduced. In any case, we can speak about the longest roots (when $G$ is of type $\SUodd$, there are three root lengths). We have
\begin{equation*}
c_{\alpha}=\begin{cases}1 & \alpha \textup{ is longest;}\\ e & \textup{otherwise.}\end{cases}
\end{equation*}

\subsubsection{Rational \Cha}
The bigraded rational \Cha $\Hrat=\Hrat(\GG,\theta)$ is, as a vector space, a tensor product
\begin{equation*}
\Hrat=\QQ[\delta,u]\otimes\Sym(\fra)\otimes\Sym(\fra^{*})\otimes\QQ[W]\otimes\QQ[\L_{\can}].
\end{equation*}
The bigrading is given by
\begin{eqnarray*}
&&\deg(\delta)=\deg(u)=(2,0);\\
&&\deg(\L_{\can})=(2,2);\\
&&\deg(\eta)=(0,-1)\hspace{1cm}\forall\eta\in\fra;\\
&&\deg(\xi)=(2,1)\hspace{1cm}\forall\xi\in\fra^{*};\\
&&\deg(w)=(0,0)\hspace{1cm}\forall w\in W;
\end{eqnarray*}
The first and second component of the grading are called the {\em cohomological grading} and the {\em perverse grading}. The algebra structure is given by
\begin{enumerate}
\item[(RC-1)] $u$ and $\delta$ are central; $\L_{\can}$ commutes with $\fra^{*}$ and $W$;
\item[(RC-2)] $\Sym(\fra),\Sym(\fra^{*})$ and $\QQ[W]$ are subalgebras;
\item[(RC-3)] For $\eta\in\fra, \xi\in\fra^{*}$ and $w\in W$, we have $w\eta=\leftexp{w}{\eta}w, w\xi=\leftexp{w}{\xi}w$ ;
\item[(RC-4)] For $\eta\in\fra,\xi\in\fra^{*}$, we have
\begin{equation*}
[\eta,\xi]=\jiao{\xi,\eta}\delta+\dfrac{1}{2}\left(\sum_{\alpha\in\Phi}c_{\alpha}\jiao{\xi,\alpha^\vee}\jiao{\alpha,\eta}r_{\alpha}\right)u
\end{equation*}
where $r_\alpha\in W$ is the reflection associated to the root $\alpha$ and $c_{\alpha}$ is the constant defined in \S\ref{sss:param};
\item[(RC-5)] For $\eta\in\fra$, we have $[\eta,\L_{\can}]=-\eta^{*}\in\fra^{*}$,
where $\eta\mapsto\eta^{*}$ is the isomorphism $\fra\isom\fra^{*}$ induced by the Killing form $\BB$.
\end{enumerate}

\subsubsection{The $\sl_{2}$-triple}\label{sss:sl2} Let $\{\xi_{i}\}, \{\eta_{i}\}$ be dual orthonormal bases of $\fra^{*}$ and $\fra$ with respect to $\BB$, so that $\eta_{i}^{*}=\xi_{i}$. Consider the following triple $(\ee,\hh,\ff)$ in $\Hrat$
\begin{eqnarray*}
\ee&=&-\frac{1}{2}\sum_{i}\xi_{i}^{2}=-\frac{1}{2}\BB;\\
\hh&=&\frac{1}{2}\sum_{i}(\xi_{i}\eta_{i}+\eta_{i}\xi_{i});\\
\ff&=&\frac{1}{2}\sum_{i}\eta_{i}^{2}.
\end{eqnarray*}
Direct calculation also shows
\begin{equation}\label{hh commutator}
[\hh,\xi]=\delta \xi, [\hh,\eta]=-\delta\eta \textup{ for all }\xi\in\fra^{*}, \l\in\fra.
\end{equation}
From this one easily see that $\{\ee,\hh,\ff\}$ is an almost $\sl_{2}$-triple:
\begin{equation*}
[\hh,\ee]=2\delta\ee, \, [\hh,\ff]=-2\delta\ff, \, [\ee,\ff]=\delta\hh.
\end{equation*}
In other words, $(\ee/\delta,\hh/\delta,\ff/\delta)$ is an $\sl_{2}$-triple.  

\subsubsection{Central charge}The element $B_{\KM}$ defined in \eqref{Killing KM} can be viewed as an element in $\Hrat$ of bidegree (4,2). The $\tilW$-invariance of $B_{\KM}$ implies that it is a central element of $\Hrat$. Let
\begin{equation*}
\Hrat_{\nu}=\Hrat/(u+\nu\delta, B_{\KM}).
\end{equation*}
This is the {\em graded \Cha with central charge $\nu$}. We again use $\ep$ to denote the image of $\delta=-u/\nu$.

\subsubsection{Specializing $\ep$ to 1} Let $\Hrat_{\nu,\ep=1}$ be the specialization $\Hrat_{\nu}/(\ep-1)$. This is the rational \Cha in the literature. Now the perverse grading induces a grading on $\Hrat_{\nu,\ep=1}$ while the cohomological grading only induces a {\em cohomological filtration}: $\mathfrak{H}^{\rat,\leq n}_{\nu,\ep=1}=\Image(\oplus_{i\leq n, j\in\ZZ}\Hrat(i,j)\to \Hrat_{\nu,\ep=1})$. 


\subsection{Relation between $\Hgr$ and $\Hrat$}

\begin{prop}\label{p:Hratmod} Let $(M,F_{\leq i}M)$ be an object in a filtered $\QQ$-linear triangulated category $\calC$. Assume there is a graded action of $\Hgr$ on $M$ (i.e., a graded algebra homomorphism $\Hgr\to\End^{\bullet}_{\calC}(M)$) in such a way that
\begin{itemize}
\item $\delta,u$ and $\tilW$ preserve the filtration;
\item $\tilw-1$ sends $F_{\leq i}M$ to $F_{\leq i-1}M$ for all $\tilw\in\xcoch(\TT)_{\mu_{e}}=\ker(\tilW\to W)$;
\item $\xi$ sends $F_{\leq i}M$ to $(F_{\leq i+1}M)[2]$ for all $\xi\in\fra^{*}$;
\item $\Lambda_{\can}$ sends $F_{\leq i}M$ to $(F_{\leq i+2})M[2]$.
\end{itemize} 
Then there is a unique bigraded action of $\Hrat$ on $\Gr^{F}_{*}M$ such that
\begin{itemize}
\item The $\delta,u$ and $\WW$ actions on $\Gr^{F}_{i}M$ are induced from that of $\delta,u$ and $\WW$ viewed as elements in $\Hgr$;
\item For $\eta\in\xcoch(\TT)_{\mu_{e}}/\xcoch(\TT)_{\mu_{e},\tors}\subset\fra\subset\Hrat$, its action $\Gr^{F}_{i}M\to\Gr^{F}_{i-1}M$ is induced from $\tilw-1\in\Hgr$, where $\tilw$ is any lifting of $\eta$ to $\xcoch(\TT)_{\mu_{e}}$;
\item For $\xi\in\fra^{*}\subset\Hrat$, its effect $\Gr^{F}_{i}M\to(\Gr^{F}_{i+1}M)[2]$ is induced from $\xi\in\Hgr$;
\item The effect of $\Lambda_{\can}:\Gr^{F}_{i}M\to(\Gr^{F}_{i+2}M)[2]$ is induced from $\Lambda_{\can}\in\Hgr$;
\end{itemize}
\end{prop}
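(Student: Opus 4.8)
\emph{Strategy.} Uniqueness is immediate, since the four bulleted conditions prescribe the action of every algebra generator of $\Hrat$ on $\Gr^{F}_{*}M$; so the whole content is existence: one must construct the operators and verify the relations (RC-1)--(RC-5). I would proceed in three steps.

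\emph{Step 1: the operators.} On $\Gr^{F}_{*}M=\bigoplus_{i}\Gr^{F}_{i}M$ put the ``perverse'' grading given by the index $i$ and the ``cohomological'' grading inherited from $\calC$. Since $\delta,u$ and all of $\tilW$ preserve $F$, they induce operators $\bar\delta,\bar u$ and $\bar w$ ($w\in W$, viewed inside $\tilW$ via the special vertex) of perverse degree $0$; the conditions on $\xi\in\fra^{*}$ and on $\Lambda_{\can}$ make well defined induced maps $\bar\xi\colon\Gr^{F}_{i}M\to(\Gr^{F}_{i+1}M)[2]$ and $\bar\Lambda_{\can}\colon\Gr^{F}_{i}M\to(\Gr^{F}_{i+2}M)[2]$, of perverse degrees $1$ and $2$. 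For $\eta\in\fra$ one takes a lift $\tilw\in\xcoch(\TT)_{\mu_{e}}$ of a vector in $\xcoch(\TT)_{\mu_{e}}/\xcoch(\TT)_{\mu_{e},\tors}$; then $\tilw-1$ sends $F_{\leq i}M$ into $F_{\leq i-1}M$, and since it sends $F_{\leq i-1}M$ into $F_{\leq i-2}M$ it descends to $\Gr^{F}_{i}M\to\Gr^{F}_{i-1}M$. The cohomological shifts of all these operators match the first coordinates of the bidegrees in $\Hrat$.

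\emph{Step 2: well-definedness of $\bar\eta$.} The point is independence of the lift and $\QQ$-linearity. Two lifts of the same class differ by a torsion element $z\in\xcoch(\TT)_{\mu_{e},\tors}$ of some order $N$; for the idempotent $e_{z}=\tfrac1N\sum_{k<N}z^{k}\in\QQ[\tilW]$ one has $(z-1)e_{z}=0$, while $1-e_{z}=\tfrac1N\sum_{k<N}(1-z^{k})$ sends $F_{\leq i}M$ into $F_{\leq i-1}M$ by hypothesis; hence $z-1=(z-1)(1-e_{z})$ sends $F_{\leq i}M$ into $F_{\leq i-2}M$ and induces $0$ on $\Gr^{F}_{i}M\to\Gr^{F}_{i-1}M$. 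As $(\tilw_{1}\tilw_{2}-1)-(\tilw_{1}-1)-(\tilw_{2}-1)=(\tilw_{1}-1)(\tilw_{2}-1)$ also drops the filtration by $2$, the assignment $\tilw\mapsto\overline{\tilw-1}$ is additive, factors through $\xcoch(\TT)_{\mu_{e}}/\xcoch(\TT)_{\mu_{e},\tors}$, and extends uniquely $\QQ$-linearly to $\bar\eta\colon\Gr^{F}_{i}M\to\Gr^{F}_{i-1}M$ for all $\eta\in\fra$. All the closed relations verified below on the lattice then extend to $\fra$ by continuity.

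\emph{Step 3: the relations.} The recurring mechanism is that $\overline{ba}=\bar b\,\bar a$ for filtered operators, and that a summand of a filtered operator that raises the filtration strictly less than the others contributes $0$ to the associated graded; together with the facts that every element of $\tilW$ preserves $F$ and every translation $t_{\lambda}$ acts as the identity on $\Gr^{F}_{*}$ (because $t_{\lambda}-1$ lowers it), this reduces (RC-1), (RC-2), (RC-3) and (RC-5) to passing to $\Gr^{F}_{*}$ in the corresponding identities in $\Hgr$. For instance, for $w\in W$ and $\xi\in\fra^{*}$, iterating (GC-3) gives $w\xi=\leftexp{w}{\xi}\,w+u\cdot c$ with $c\in\QQ[\tilW]$ (the degree-$0$ part of $\Hgr$ is $\QQ[\tilW]$), where $w\xi$ and $\leftexp{w}{\xi}w$ raise the filtration by $1$ but $u\,c\,w$ preserves it, so the correction dies and one recovers $\bar w\,\bar\xi=\overline{\leftexp{w}{\xi}}\,\bar w$; and (RC-5) comes from $t_{\lambda}\Lambda_{\can}t_{\lambda}^{-1}=\Lambda_{\can}-\l^{*}-\tfrac12\BB(\l,\l)\delta+u\cdot c'$ (Lemma \ref{l:tr action}), since $[\,t_{\lambda}-1,\Lambda_{\can}\,]$ raises the filtration by $1$ while the $\delta$- and $u$-summands preserve it and hence vanish on $\Gr^{F}_{*}$, leaving $[\bar\eta,\bar\Lambda_{\can}]=-\overline{\eta^{*}}$. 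The one genuine obstacle is (RC-4): here the correction survives. For $\tilw=t_{\lambda}$ with $\iota(t_{\lambda})=\lambda$ lifting $\eta$, Lemma \ref{l:tr action} gives $\leftexp{t_{\lambda}}{\xi}=\xi+\langle\xi,\lambda\rangle\delta$, and iterating (GC-3)--(GC-4) to move $\xi$ past $t_{\lambda}$ yields in $\Hgr$
\begin{equation*}
[\,t_{\lambda}-1,\ \xi\,]=t_{\lambda}\xi-\xi t_{\lambda}=\big(\langle\xi,\lambda\rangle\,\delta+u\cdot c_{t_{\lambda},\xi}\big)\,t_{\lambda},\qquad c_{t_{\lambda},\xi}\in\QQ[\tilW],
\end{equation*}
again by a cohomological-degree count. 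Since $[\,t_{\lambda}-1,\xi\,]$, $\delta t_{\lambda}$ and $u\,c_{t_{\lambda},\xi}t_{\lambda}$ all preserve the filtration, passing to $\Gr^{F}_{*}$ (where $t_{\lambda}$ acts trivially) gives $[\bar\eta,\bar\xi]=\langle\xi,\eta\rangle\,\bar\delta+\overline{c_{t_{\lambda},\xi}}\cdot\bar u$ with $\overline{c_{t_{\lambda},\xi}}$ the image of $c_{t_{\lambda},\xi}$ under $\QQ[\tilW]\twoheadrightarrow\QQ[W]$, so it remains to establish the purely algebraic identity
\begin{equation*}
\overline{c_{t_{\lambda},\xi}}=\tfrac12\sum_{\alpha\in\Phi}c_{\alpha}\,\langle\xi,\alpha^{\vee}\rangle\,\langle\alpha,\eta\rangle\,r_{\alpha}\ \in\QQ[W].
\end{equation*}
Both sides are bilinear in $(\xi,\eta)$, so it suffices to treat $\lambda$ running over coroots (which span $\fra$ as $\GG$ is almost simple), where $t_{\lambda}$ has an explicit expression in the affine simple reflections and the iteration of (GC-3) telescopes --- this is exactly the computation underlying the trigonometric-to-rational specialization, cf.\ \cite{GS}. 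I expect this last bookkeeping to be the only nontrivial part; everything else is formal manipulation of filtered maps.
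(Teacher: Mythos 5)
Your proposal follows essentially the same route as the paper: define the induced operators, verify well-definedness of $\bar\eta$ via additivity and torsion-triviality, then check the relations by passing the $\Hgr$-commutation relations to the associated graded, with the whole weight concentrated on (RC-4). Your Step~2 torsion argument via the idempotent $e_z$ is in fact a nice alternative to the paper's remark that $\xcoch(\TT)_{\mu_e}$ acts unipotently on $M$ hence its torsion acts trivially; yours has the advantage of not implicitly appealing to boundedness of the filtration.

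However, there is a genuine gap precisely where the content of the proposition sits: the computation of the correction term $\overline{c_{t_\lambda,\xi}}$ in (RC-4). You propose to reduce to $\lambda$ a coroot and then cite \cite{GS} for the telescoping of (GC-3). But \cite{GS} treats only the split case $e=1$; the point of the present paper's setup is to accommodate quasi-split $G$, where the relative root system $\Phi$ can be non-reduced (type $\SUodd$) and where the parameters $c_\alpha$ in (RC-4) --- defined as fibre cardinalities of $\PPhi\to\Phi$ --- are not all $1$. The paper's proof therefore does not reduce to a single citation: after telescoping to get $\tilw\xi-\leftexp{\tilw}{\xi}\tilw=\tilw\sum_{\beta>0,\leftexp{\tilw}{\beta}<0}\jiao{\xi,\beta^\vee}r_\beta u$, it groups the positive real affine roots $\beta$ by finite part $\barbeta$, introduces the set $\Theta(\barbeta)$ of those contributing to the sum, and shows by a case analysis on root lengths that $\#\Theta(\barbeta)=c_{\barbeta}\jiao{\barbeta,\eta}$ when this is positive and $0$ otherwise, so that exactly one of $\pm\barbeta$ contributes and the factor $\tfrac12\sum_\alpha c_\alpha\jiao{\xi,\alpha^\vee}\jiao{\alpha,\eta}r_\alpha$ drops out. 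That counting argument --- which is where the $c_\alpha$ actually arise and where the three possible sets of $\barbeta$-admissible integers $n$ (depending on whether $\barbeta$ is longest, and whether $G$ is type $\SUodd$) must be handled --- is the part your proposal omits, and it is not in \cite{GS}. Until that is supplied, (RC-4) is not established in the generality claimed.

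A secondary point: your reduction to simple coroots is plausible but also not free. You assert that $\overline{c_{t_\lambda,\xi}}$ is bilinear in $(\xi,\eta)$; this follows from the additivity of $\bar\eta$ that you established (since $\overline{c_{t_\lambda,\xi}}\bar u=[\bar\eta,\bar\xi]-\jiao{\xi,\eta}\bar\delta$), but that should be said explicitly --- $c_{t_\lambda,\xi}\in\QQ[\tilW]$ itself is certainly not additive in $\lambda$. The paper sidesteps the issue entirely by carrying out the computation for arbitrary $\tilw$ in the lattice rather than reducing to a spanning set.
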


\begin{proof} We first check that the action of $\xcoch(\TT)_{\mu_{e}}/\xcoch(\TT)_{\mu_{e},\tors}$ on $\Gr^{F}_{*}M$ given above is well-defined, and extends to an action of $\Sym(\fra)$. From the assumptions we see that $\xcoch(\TT)_{\mu_{e}}$ acts on $M$ unipotently, therefore its torsion part acts trivially. From this we see that the action of $\tilw-1$ for $\tilw\in\xcoch(\TT)_{\mu_{e}}$ only depends on its image in $\xcoch(\TT)_{\mu_{e}}/\xcoch(\TT)_{\mu_{e},\tors}\subset\fra$. Moreover, for $\tilw_{1},\tilw_{2}\in \xcoch(\TT)_{\mu_{e}}$, we have $\tilw_{1}\tilw_{2}-1=(\tilw_{1}-1)(\tilw_{2}-1)+(\tilw_{1}-1)+(\tilw_{2}-1)$. Passing to associated graded, the induced map of $\tilw_{1}\tilw_{2}-1$ on $\Gr^{F}_{i}M\to \Gr^{F}_{i-1}M$ is the same as the one induced by $(\tilw_{1}-1)+(\tilw_{2}-1)$. Therefore the map $\xcoch(\TT)_{\mu_{e}}/\xcoch(\TT)_{\mu_{e},\tors}\to\Hom(\Gr^{F}_{i}M, \Gr^{F}_{i-1}M)$ is additive. We may extend this map to $\fra$ by linearity and get an action of $\Sym(\fra)$ on $\Gr^{F}_{*}M$.

With this checked, the action of $\Hrat$ is uniquely determined on the generators.  We need to check that the relations (RC-1)-(RC-5) hold. The relations (RC-1), (RC-2) and the first half of (RC-3) are obvious. 

Let $\tilw=s_{i_1}s_{i_2}\cdots s_{i_m}\omega$ be a reduced word for an element $\tilw\in\tilW$, where $s_{i_j}$ are affine simple reflections and $\omega\in\Omega_{\bI}$. By the relation (GC-3) for $\Hgr$, we have
\begin{eqnarray*}
\tilw\xi-\leftexp{\tilw}{\xi}\tilw&=&(s_{i_1}\cdots s_{i_m}\leftexp{\omega}{\xi}-\leftexp{s_{i_1}\cdots s_{i_m}\omega}{\xi})\omega u\\
&=&\sum_{j=1}^ms_{i_1}\cdots s_{i_{j-1}}\jiao{\alpha_{i_j}^\vee,\leftexp{s_{i_{j+1}}\cdots s_{i_m}\omega}{\xi}}s_{i_{j+1}}\cdots s_{i_m}\omega u\\
&=&\sum_{j=1}^{m}\jiao{\beta_j^\vee,\xi}s_{i_1}\cdots s_{i_{j-1}}s_{i_{j+1}}\cdots s_{i_m}\omega u. 
\end{eqnarray*}
where $\beta_j^\vee=\leftexp{\omega^{-1}s_{i_m}\cdots s_{i_{j+1}}}{\alpha_{i_j}^\vee}\in\Phi_{\aff}^\vee$ is an affine real coroot. 

At this point we can check the second half of (RC-3). For $\tilw=w\in W$, the right side above belongs to $\QQ[W]u$ as a map $M\to M[2]$. Passing to the associated graded we get the second half of (RC-3). 

Next we check (RC-4). For this we assume $\tilw\in\xcoch(\TT)_{\mu_{e}}$ and its image in $\fra$ is $\eta$. Let $\beta_j=\leftexp{\omega^{-1}s_{i_m}\cdots s_{i_{j+1}}}{\alpha_{i_j}}\in\Phi_{\aff}$. Then the reflection $r_{\beta_j}\in W_{\aff}$ has the form
\begin{eqnarray*}
r_{\beta_j}&=&(\omega^{-1}s_{i_m}\cdots s_{i_{j+1}})s_{i_j}(s_{i_{j+1}}\cdots s_{i_m}\omega)\\
&=&(\tilw^{-1}s_{i_1}\cdots s_{i_j})s_{i_j}(s_{i_{j+1}}\cdots s_{i_m}\omega)\\
&=&\tilw^{-1}s_{i_1}\cdots s_{i_{j-1}}s_{i_{j+1}}\cdots s_{i_m}\omega.
\end{eqnarray*}
Moreover, since $s_{i_1}s_{i_2}\cdots s_{i_m}\omega$ is reduced, $\{\beta_1,\cdots,\beta_m\}$ is the set of positive real affine roots in $\Phi_{\aff}$ such that $\leftexp{\tilw}{\beta}<0$. Therefore
\begin{equation}\label{eq:sumbeta}
\tilw\xi-\leftexp{\tilw}{\xi}\tilw=\tilw\sum_{\beta>0,\leftexp{\tilw}{\beta}<0}\jiao{\xi,\beta^\vee}r_{\beta}u.
\end{equation}
Let $\pi:\QQ[\tilW]\to\QQ[W]$ be induced from the projection $\tilW\to W$. Eventually, we only care about the effect of $\tilw\sum_{\beta>0,\leftexp{\tilw}{\beta}<0}\jiao{\beta^\vee,\xi}r_{\beta}$ on $\Gr^{F}_{*}M$, which only depends on its image under $\pi$. For $\xi\in\fra$, we have $\leftexp{\tilw}{\xi}=\xi+\jiao{\xi,\eta}\delta$ by Lemma \ref{l:tr action}. Therefore \eqref{eq:sumbeta} implies
\begin{eqnarray}\label{comm pre}
\pi([\tilw-1,\xi])&=&\pi((\leftexp{\tilw}{\xi}-\xi)\tilw)+\pi(\tilw\xi-\leftexp{\tilw}{\xi}\tilw)\\
\notag&=&\jiao{\xi,\eta}\delta+\sum_{\beta>0,\leftexp{\tilw}{\beta}<0}\jiao{\xi,\barbeta^\vee}r_{\barbeta}u.\end{eqnarray}
Here we have written every affine real root $\beta$ uniquely as $\beta=\barbeta+n\delta$ where $\barbeta\in\Phi$. The possible $n$'s are:
\begin{itemize}
\item if $\barbeta$ is not a longest root in $\Phi$, then $n\in\frac{1}{e}\ZZ$;
\item if $G$ is not of type $\SUodd$ and $\barbeta$ is a long root, then $n\in\ZZ$;
\item if $G$ is of type $\SUodd$ and $\barbeta$ is a longest root (twice a short root), then $n\in \frac{1}{2}+\ZZ$.
\end{itemize}
Such $n$ are called {\em $\barbeta$-admissible}.

Fix $\barbeta\in\Phi$, let
\begin{equation*}
\Theta(\barbeta):=\{\textup{affine real roots }\beta=\barbeta+n\delta\textup{ such that }\beta>0,\leftexp{\tilw}{\beta}<0\}.
\end{equation*}
Thus the summation on the right side of \eqref{comm pre} becomes
\begin{equation}\label{sum bar}
\sum_{\barbeta\in\Phi}\jiao{\xi,\barbeta^{\vee}}\#\Theta(\barbeta)r_{\barbeta}u
\end{equation}
If $\barbeta>0$, then $\Theta(\barbeta)$ is in bijection with $\barbeta$-admissible $n$ such  that $0\leq n<\jiao{\barbeta,\eta}$. If $\jiao{\barbeta,\eta}>0$, there are $c_{\barbeta}\jiao{\barbeta,\eta}$ such admissible $n$'s (note we always have $\jiao{\barbeta,\eta}\in\frac{1}{c_{\barbeta}}\ZZ$), hence $\#\Theta(\barbeta)=c_{\barbeta}\jiao{\barbeta,\eta}$. If $\jiao{\barbeta, \eta}\leq0$ then $\Theta(\barbeta)=\varnothing$. Similarly analysis applied to $\Theta(-\barbeta)$ shows that if $\jiao{\barbeta,\eta}>0$ then $\Theta(-\barbeta)=\varnothing$; if $\jiao{\barbeta,\eta}\leq0$, $\#\Theta(-\barbeta)=c_{-\barbeta}\jiao{-\barbeta,\eta}$. In summary, exactly one of the pair roots $\alpha\in\{\pm\barbeta\}$ contribute to the sum \eqref{sum bar}, and $\Theta(\alpha)$ is always equal to $c_{\alpha}\jiao{\alpha,\eta}$. Hence \eqref{sum bar} becomes
\begin{equation*}
\frac{1}{2}\sum_{\alpha\in\Phi}c_{\alpha}\jiao{\xi,\alpha^{\vee}}\jiao{\alpha,\eta}r_{\alpha}u.
\end{equation*}
Plugging into \eqref{comm pre} we get (RC-4).


Finally we check (RC-5). Again let $\tilw\in\xcoch(\TT)_{\mu_{e}}$ with image $\eta\in\fra$. For $\xi=\Lambda_{\can}$, we have $\leftexp{\tilw}{\Lambda_{\can}}=\Lambda_{\can}-\eta^{*}-\frac{1}{2}\BB(\eta,\eta)\delta$. Therefore,
\begin{eqnarray*}
[\tilw-1,\Lambda_{\can}]&=&(\tilw\Lambda_{\can}-\leftexp{\tilw}{\Lambda_{\can}}\tilw)+(\leftexp{\tilw}{\Lambda_{\can}}-\Lambda_{\can})\tilw\\
&\in&\QQ[\tilW]u+(-\eta^{*}-\frac{1}{2}\BB(\eta,\eta)\delta)(\tilw-1)-\eta^{*}-\frac{1}{2}\BB(\eta,\eta)\delta.
\end{eqnarray*}
Except for the term $-\eta^{*}$, the other terms on the right side above send $F_{\leq i}M$ to $F_{\leq i}M[2]$. Therefore the induced map $[\tilw-1,\Lambda_{\can}]:\Gr^{F}_{i}M\to\Gr^{F}_{i+1}M[2]$ is the same as $-\eta^{*}$, which verifies (RC-5). 
\end{proof}

\subsection{Algebraic representation theory of rational Cherednik algebras}

The theory of category $\calO$ for rational \Chas was developed in \cite{GGOR}. The category $\calO$ consists of $\Hrat_{\nu,\epsilon=1}$ modules with locally nilpotent action of
of $\Sym(\fra)$. The projective generators of the category are constructed as follows. For any irreducible representation $\tau\in \Irr(W)$, view it as a module over 
$\Sym(\fra)\rtimes W$ on which $\fra$ acts by zero. Define $\frM_\nu(\tau)$ to be the induction $\Ind^{\Hrat_{\nu,\epsilon=1}}_{\Sym(\fra)\rtimes W}\tau$. In particular $\frM_\nu(\triv)$ can be identified with the polynomial ring
$\Sym(\fra^*)$ as  vector spaces. The quotient of $\frM_\nu(\tau)$ by the sum of all proper submodules is   an irreducible representation $\frL_{\nu}(\tau)$, see \cite[Corollary 11.5]{ELec}. The kernel of the quotient map can be identified with the kernel of a natural pairing. Indeed, consider the anti-involution $\frF$ on $\Hrat_{\nu,\epsilon=1}$ determined by 
$$ \frF(x_i)=y_i,\quad \frF(y_i)=x_i,\quad \frF(w)=w^{-1},\quad w\in W.$$
A pairing $J: \frM\times \frM'\to\QQ$ is called {\em contravariant} if is satisfies
\begin{equation}\label{contra}
J_\tau(v h,u)=J_\tau(v,\frF(h)u), \textup{ for all } v\in \frM, u\in \frM' \textup{ and } h\in \Hrat_{\nu,\ep=1}.
\end{equation}

\begin{theorem}[{\cite[Lemma 11.6]{ELec}}] There is a unique bilinear contravariant pairing $J_\tau: \frM_\nu(\tau^*)\times \frM_\nu(\tau)\to \QQ$ that extends the natural pairing between $\tau$ and $\tau^*$. Moreover $\frL_{\nu}(\tau)=\frM_\nu(\tau)/\ker(J_\tau)$.
\end{theorem}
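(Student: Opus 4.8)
\emph{Setup.} The plan is to run the standard contravariant-form construction for standard modules, in the bigraded setting of \S\ref{s:rat alg def}. Write $\frM=\frM_\nu(\tau)$, $\frM'=\frM_\nu(\tau^*)$, and use the PBW decomposition $\frM\cong\Sym(\fra^*)\otimes\tau$ of $\Hrat_{\nu,\ep=1}$-modules: with respect to the perverse grading, $\frM$ lives in degrees $\geq 0$, the piece $\frM_{0}=1\otimes\tau$ generates $\frM$, and $\fra$ kills $\frM_{0}$ (and similarly for $\frM'$). A preliminary observation I would record first: after specializing $\delta=1$, $u=-\nu$, $B_{\KM}=0$, the Euler element $\hh$ of \S\ref{sss:sl2} lies in $\Hrat_{\nu,\ep=1}$ and acts on $\frM_{0}=\tau$ by a scalar $h_{\tau}$ --- on the lowest piece the term $\tfrac12\sum_i[\xi_i,\eta_i]$ inside $\hh$ reduces, via the relation (RC-4), to an element of the center of $\QQ[W]$ (a multiple of $\sum_{\alpha\in\Phi}c_{\alpha}r_{\alpha}$ plus a constant), hence acts as a scalar on the irreducible $\tau$ --- and then, since $[\hh,\xi]=\delta\xi=\xi$ for $\xi\in\fra^*$, $\hh$ acts on $\frM_n$ by the scalar $h_{\tau}+n$. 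So $\hh$ acts semisimply on $\frM$ with pairwise distinct eigenvalues on distinct graded pieces, whence \emph{every $\Hrat_{\nu,\ep=1}$-submodule of $\frM$ is graded}; this is the one structural input the argument really leans on.

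\emph{Uniqueness and existence.} For uniqueness, contravariance forces everything: for $u\in\frM_n$ write $u=\sum_j p_j\cdot(1\otimes u_j)$ with $p_j\in\Sym^n(\fra^*)$ and $u_j\in\tau$; then $J_\tau(v,u)=\sum_j J_\tau(\frF(p_j)v,\,1\otimes u_j)$, and $\frF(p_j)\in\Sym^n(\fra)$ lowers perverse degree by $n$, so iterating reduces every value of $J_\tau$ to its values on $\frM'_{0}\times\frM_{0}=\tau^*\times\tau$, which are prescribed to be the given pairing; the same computation shows $J_\tau(\frM'_a,\frM_b)=0$ unless $a=b$. For existence, equip the graded dual $\frM^\vee:=\bigoplus_n(\frM_n)^*$ with the $\Hrat_{\nu,\ep=1}$-action $(h\cdot f)(m)=f(\frF(h)m)$, which is legitimate because $\frF$ is an anti-involution; its lowest piece is $(\frM_0)^*=\tau^*$, on which $\fra$ acts by zero (its image $\frF(\fra)=\fra^*$ raises degree), so by the universal property of $\frM'=\Ind^{\Hrat_{\nu,\ep=1}}_{\Sym(\fra)\rtimes W}\tau^*$ the inclusion $\tau^*\hookrightarrow\frM^\vee$ extends uniquely to an $\Hrat_{\nu,\ep=1}$-linear map $\phi\colon\frM'\to\frM^\vee$. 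Then $J_\tau(v,u):=\phi(v)(u)$ is bilinear, restricts to the natural pairing on $\tau^*\times\tau$, and is contravariant since $J_\tau(hv,u)=(h\cdot\phi(v))(u)=\phi(v)(\frF(h)u)=J_\tau(v,\frF(h)u)$.

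\emph{Identifying the radical, and the main obstacle.} Put $R=\ker(J_\tau)=\{u\in\frM:J_\tau(v,u)=0\ \forall v\in\frM'\}$. For $h\in\Hrat_{\nu,\ep=1}$ and $u\in R$ we have $J_\tau(v,hu)=J_\tau(\frF(h)v,u)=0$, so $R$ is an $\Hrat_{\nu,\ep=1}$-submodule, and it is proper because $J_\tau$ is nondegenerate on $\frM_0$. Conversely let $N\subsetneq\frM$ be any proper submodule; it is graded by the preliminary observation, and $N\cap\frM_0=0$ --- otherwise $N\cap\frM_0$, being $W$-stable and nonzero, spans the irreducible $\tau$, forcing $N\supseteq\Hrat_{\nu,\ep=1}\cdot\frM_0=\frM$. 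Then for $v=hv_0\in\frM'$ with $v_0\in\tau^*$ and any $n\in N$, the grading-orthogonality from the uniqueness step gives $J_\tau(v,n)=J_\tau(v_0,\frF(h)n)=J_\tau(v_0,(\frF(h)n)_0)=0$, since $(\frF(h)n)_0\in N\cap\frM_0=0$; hence $N\subseteq R$. Thus $R$ is the unique maximal proper submodule of $\frM$ and $\frM/R\cong\frL_\nu(\tau)$. I expect the only step carrying genuine content to be the preliminary claim that proper submodules of $\frM$ are graded, i.e. that $\hh$ acts by the scalar $h_\tau$ on $\tau$ and hence semisimply with distinct eigenvalues on the $\frM_n$ --- a short computation with (RC-4)--(RC-5) and the specialization $\delta=1$, $u=-\nu$, $B_{\KM}=0$; everything else is the formal Verma-module bookkeeping of category $\calO$.
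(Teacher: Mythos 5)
The paper does not give its own proof of this theorem: it is quoted verbatim from \cite[Lemma 11.6]{ELec} and used as a black box. So there is no in-paper argument to compare against. Judged on its own, your proof is correct and is the standard contravariant-form argument for Verma-type modules, which is also (up to presentation) what the cited source runs. The structural input you isolate --- that $\hh$ acts on $\frM_n\cong\Sym^n(\fra^*)\otimes\tau$ by $h_\tau+n$ (by RC-4, since $\sum_\alpha c_\alpha r_\alpha$ is central in $\QQ[W]$, and then by $[\hh,\xi]=\ep\xi$), hence every $\Hrat_{\nu,\ep=1}$-submodule is graded --- is exactly the right lever; the existence via the $\frF$-twisted action on the graded dual and the universal property of induction, the uniqueness via degree orthogonality, and the identification of $\ker(J_\tau)$ as the unique maximal proper submodule are all standard and correctly executed. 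Two cosmetic remarks: on the lowest weight space the relevant commutator is $\tfrac12\sum_i[\eta_i,\xi_i]$, not $\tfrac12\sum_i[\xi_i,\eta_i]$ (you dropped a sign, immaterial for the conclusion that $\hh$ acts by a scalar on $\tau$); and the paper's displayed contravariance relation $J_\tau(vh,u)=J_\tau(v,\frF(h)u)$ in \eqref{contra} is a typo for $J_\tau(hv,u)=J_\tau(v,\frF(h)u)$, which is the convention you use and the one that makes sense for left modules.
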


Let $V[j]$ be an $\hh$-eigenspace corresponding eigenvalue $j$ where  $\hh$ is from the $\mathfrak{sl}_{2}$-triple $\{\ee, \hh, \ff\}$. If $V$ is a finite-dimensional then its graded dimension $\dim_q(V):=\Tr(q^{\hh}|V)$ is a Laurent polynomial in $q$. From the representation theory of $\mathfrak{sl}_{2}$ we see that $\dim_q(V)$ is palindromic: $\dim_q(V)=\dim_q(V)|_{q=1/q}$. In the case $\frL_{\nu}(\tau)$ is a finite dimensional representation, one can compute 
the action of $\hh$ on the lowest weight space and conclude that $\deg_q(\frL_{\nu}(\tau))=\frac{\nu}{2}(\sum_{\alpha\in \Phi} c_\alpha r_\alpha)|_{\tau}-\frac{1}{2}r$, see \cite[\S3.7]{EDAHA}. In particular, if $\frL_{\nu}(\triv)$ is finite-dimensional, then its graded dimension has the highest $q$-degree among all finite-dimensional irreducible representations of $\Hrat_{\nu,\ep=1}$. 

\begin{cor}[{\cite[Proposition 3.37]{EDAHA}}]\label{quot} Suppose $V$ is a finite-dimensional quotient of $\frM_\nu(\tau)$ then in the Grothendieck group  of the category $\calO$ we have a decomposition
$$ [V]=[\frL_{\nu}(\tau)]+\sum_{\sigma, \deg_q(\frL_{\nu}(\sigma))<D} m_{\tau,\sigma} [\frL_{\nu}(\sigma)],$$
where $m_{\tau,\sigma}\geq0$ and $D=\deg_q(\frL_{\nu}(\tau))$.
\end{cor}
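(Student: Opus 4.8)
The plan is to read the statement off the $\hh$-grading on standard modules together with the $\sl_2$-palindromy recorded in \S\ref{sss:sl2} (the triple $(\ee,\hh,\ff)$ of $\Hrat_{\nu,\ep=1}$). Write $V=\frM_\nu(\tau)/N$ with $N\subsetneq\frM_\nu(\tau)$ a submodule; since $\frL_\nu(\tau)$ is a quotient of the finite-dimensional $V$ it is itself finite-dimensional, so $D=\deg_q\frL_\nu(\tau)$ and the palindromy statements below are meaningful. From $[\hh,\xi]=\delta\xi$ (\S\ref{sss:sl2}, with $\delta$ specialized to $1$), $\hh$ acts diagonalizably on $\frM_\nu(\tau)=\Sym(\fra^{*})\otimes\tau$ with $\Sym^{k}(\fra^{*})\otimes\tau$ the eigenspace of eigenvalue $h_\tau+k$, where $h_\tau:=\hh|_\tau$; since $\tau$ is the lowest-weight space of $\frL_\nu(\tau)$, palindromy of $\dim_q\frL_\nu(\tau)$ gives $D=-h_\tau$. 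Hence every submodule and every subquotient of $\frM_\nu(\tau)$ is $\hh$-graded and concentrated in $\hh$-degrees in $h_\tau+\ZZ_{\geq0}$, and because $\tau$ generates $\frM_\nu(\tau)$ and is $W$-irreducible, a proper $N$ satisfies $N[h_\tau]=0$, so $V[h_\tau]=\tau$ (in particular $V\neq0$, which we assume).

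The next step is to take a composition series $0=V_0\subset\dots\subset V_N=V$ of the finite-dimensional module $V$ (finite-dimensional modules lie in category $\calO$), with graded subquotients $V_i/V_{i-1}\cong\frL_\nu(\sigma_i)$. Each $\frL_\nu(\sigma_i)$ is finite-dimensional and, being a nonzero graded quotient of $\frM_\nu(\sigma_i)=\Sym(\fra^{*})\otimes\sigma_i$, is concentrated in $\hh$-degrees $\geq h_{\sigma_i}$ with bottom piece $\frL_\nu(\sigma_i)[h_{\sigma_i}]\cong\sigma_i$ (the degree-$h_{\sigma_i}$ part generates $\frM_\nu(\sigma_i)$, so it survives in the simple quotient). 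Comparing degree-$h_\tau$ parts along the series gives, as $W$-modules, $\tau=V[h_\tau]=\bigoplus_{i:\ h_{\sigma_i}=h_\tau}\sigma_i$, so $W$-irreducibility of $\tau$ forces exactly one index $i$ with $h_{\sigma_i}=h_\tau$, and for that one $\sigma_i=\tau$; every other composition factor $\frL_\nu(\sigma)$ has $h_\sigma\in h_\tau+\ZZ_{>0}$. (That $\frL_\nu(\tau)$ does occur is also clear directly: $N$ lies in the unique maximal proper submodule $\ker(J_\tau)$, so $V\surj\frM_\nu(\tau)/\ker(J_\tau)=\frL_\nu(\tau)$.) Thus $[V]=[\frL_\nu(\tau)]+\sum_{\sigma\neq\tau}m_{\tau,\sigma}[\frL_\nu(\sigma)]$ with all $m_{\tau,\sigma}\geq0$.

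Finally, for each $\sigma$ with $m_{\tau,\sigma}>0$ the factor $\frL_\nu(\sigma)$ is finite-dimensional, so by the $\sl_2$-representation theory of \S\ref{sss:sl2} its graded dimension $\dim_q\frL_\nu(\sigma)$ is palindromic, with lowest $q$-exponent $h_\sigma$; hence $\deg_q\frL_\nu(\sigma)=-h_\sigma<-h_\tau=D$, which is exactly the constraint in the sum. The only real work is the bookkeeping of the middle paragraph---that submodules and composition-series subquotients inherit the intrinsic (shift-free) $\hh$-grading, that the bottom $\hh$-weight space of each $\frL_\nu(\sigma)$ equals $\sigma$, and that adding up these bottom weight spaces along a composition series yields simultaneously the multiplicity-one statement for $\frL_\nu(\tau)$ and the strict inequality $h_\sigma>h_\tau$ for all other factors. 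The structural inputs ($\frM_\nu(\tau)$ has a unique maximal proper submodule $\ker(J_\tau)$; the graded dimension of a finite-dimensional module is palindromic) are quoted from the discussion immediately preceding the statement.
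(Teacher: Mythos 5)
Your proof is correct, and since the paper delegates this statement entirely to the citation \cite[Proposition 3.37]{EDAHA} rather than reproducing an argument, it amounts to supplying the standard proof that underlies that reference: use the intrinsic $\hh$-grading (from the almost-$\sl_2$-triple of \S\ref{sss:sl2}) to see that every composition factor of $V$ sits in $\hh$-degrees $\geq h_\tau$, that the degree-$h_\tau$ layer of $V$ is exactly $\tau$ (hence $\frL_\nu(\tau)$ occurs with multiplicity one and every other factor has $h_\sigma > h_\tau$), and then convert $h_\sigma > h_\tau$ into $\deg_q \frL_\nu(\sigma) < D$ via the palindromy $\deg_q \frL_\nu(\sigma) = -h_\sigma$ for finite-dimensional modules. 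All the ingredients you invoke — uniqueness of the maximal proper submodule $\ker J_\tau$, palindromy of $\dim_q$, the formula $D = -h_\tau$ — are either stated in the paragraph preceding the corollary or follow from the computation $\hh|_\tau = \frac{r}{2} - \frac{\nu}{2}\sum_\alpha c_\alpha r_\alpha|_\tau$, which matches the paper's formula for $\deg_q \frL_\nu(\tau)$. The only thing worth flagging is that you (correctly) add the hypothesis $V \neq 0$; the corollary as stated silently assumes this, since it fails trivially for $V = 0$.
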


For an example of the situation when $m_{\tau,\sigma}\ne 0$ see  \cite[\S6.4]{BEG}.

\subsubsection{Frobenius algebra structure} The module $\frL_{\nu}(\triv)$ is quotient of the polynomial ring $\frM_{\nu}(\triv)\cong\Sym(\fra^{*})$ by a graded ideal, hence has a graded commutative ring structure. Suppose $\frL_{\nu}(\triv)$ is finite dimensional, then $N:=\deg_{q}(\frL_{\nu}(\triv))=\frac{\nu}{2}(\sum_{\alpha\in \Phi} c_\alpha)-\frac{1}{2}r=\frac{r}{2}(\nu h_{\theta}-1)$, and the $\hh$-eigenspace with the largest eigenvalue $N$ is one dimensional. Let $\ell: \frL_{\nu}(\triv)\to\frL_{\nu}(\triv)_{N}$ be the projection onto the top eigenspace of $\hh$. Choosing a basis of $\frL_{\nu}(\triv)_{N}$, we may view $\ell$ as a linear functional on $\frL_{\nu}(\triv)$.  By \cite[Theorem 11.12]{ELec} and \cite[Proposition 1.20]{BEG}, the functional $\ell$ provides a Frobenius algebra structure for this ring $\frL_{\nu}(\triv)$. Conversely, a finite dimensional quotient of $\frM_\nu(\triv)$ is a Frobenius algebra if and only if it is $\frL_{\nu}(\triv)$. In \S\ref{Frobenius} we will provide a geometric interpretation of the Frobenius algebra structure on $\frL_{\nu}(\triv)$.


The representations $\frL_{\nu}(\triv)$ are called spherical. In the case of Cherednik algebras with equal parameters (i.e., $\theta=1$), the classification of the finite-dimensional irreducible spherical 
representations is completed in \cite{VV}. In particular, it is shown in \cite{VV} that $\frL_{\nu}(\triv)$ is finite-dimensional if
and only if $\nu=d/m$, $(d,m)=1$ and $m$ is a regular elliptic number for $\WW$. See also \cite{EtPoly} for the treatment of the case of unequal parameters.


\part{Geometry}

\section{Homogeneous affine Springer fibers}\label{s:Spr}
In this section, we study the geometric and homological properties of homogeneous affine Springer fibers for the quasi-split group $G$.

\subsection{The affine flag variety} In this subsection we review some basic facts about the affine flag varieties. The choice of the Borel $\BB\subset \GG$ gives an Iwahori $\bI\subset G(F)$. A parahoric subgroup $\bP\subset G(F)$ is called standard if it contains $\bI$. Let $\bP$ be the a standard parahoric subgroup of $G(F)$. The affine partial flag variety of $G(F)$ of type $\bP$ is the fppf quotient $\Fl_{\bP}=G(F)/\bP$, where we view $G(F)$ as an ind-scheme over $\CC$ and $\bP$ as a group scheme of infinite type over $\CC$,  see \S\ref{sss:loop group}.  When $\bP=\bI$, $\Fl_{\bI}$ is the affine flag variety, and is simply denoted by $\Fl$.

\subsubsection{Line bundles on $\Fl$}\label{sss:line bundle}
Alternatively, we may write $\Fl$ as $G_{\KM}/\bI_{\KM}$, where $\bI_{\KM}=\Gcen\times\bI\rtimes\Grote$.  There is a surjection $\bI_{\KM}\surj \AA_{\KM}$, realizing $\AA_{\KM}$ as the reductive quotient of $\bI_{\KM}$. For each $\xi\in\xch(\AA_{\KM})$, viewed as a homomorphism $\bI_{\KM}\surj \AA_{\KM}\xrightarrow{\xi}\Gm$, we get a $G_{\KM}$-equivariant (via left action on $\Fl$) line bundle $\calL(\xi)=G_{\KM}\twtimes{\bI_{\KM},\xi}\AA^{1}$ over $\Fl$. This construction defines a homomorphism
\begin{equation}\label{define chern}
\xch(\AA_{\KM})\to\Pic_{G_{\KM}}(\Fl)
\end{equation}
sending $\xi$ to $\calL(\xi)$. 

\begin{remark}\label{r:L delta} When $\xi=\delta\in\fra_{\KM}^{*}$, the $\AA_{\KM}$-equivariant line bundle $\calL(\delta)$ over $\Fl$ is the $e\nth$ power of the pull back of the tautological line bundle via $[\AA_{\KM}\backslash \Fl]\to [\Grote\backslash\pt]$.
\end{remark}

\subsubsection{Determinant line bundles}\label{sss:det} We also have a class of line bundles on $\Fl$ given by the determinant construction. Each $\Fl_{\bP}$ carries a determinant line bundle $\det_{\bP}$ whose fiber at $g\bP$ is relative determinant $\det(\Ad(g)\Lie\bP:\Lie\bP)$ of the $\calO_{F}$-lattices $\Ad(g)\Lie\bP$ and $\Lie\bP$ in $\frg(F)$. The line bundle $\det_{\bP}$
carries a $\bP\rtimes\Grote$-equivariant structure (which acts on $\Fl_{\bP}$ by left translation and loop rotation). In fact, for $h\in\bP\rtimes\Grote$ and $g\bP\in\Fl_{\bP}$, the fiber of $\det_{\bP}$ at $hg\bP$ is $\det(\Ad(hg)\cdot\Lie\bP:\Lie\bP)=\det(\Ad(hg)\Lie\bP:\Ad(h)\Lie\bP)$ (because $\Ad(h)\Lie\bP=\Lie\bP$), the latter can be identified with $\det(\Ad(g)\Lie\bP:\Lie\bP)$ by the adjoint action of $h$. Let $\pi_{\bP}:\Fl\to \Fl_{\bP}$ be the projection. Since all $\bP$ contain $\bI$, the line bundles $\pi^{*}_{\bP}\det_{\bP}$ on $\Fl$ all carry $\bI\rtimes\Grote$-equivariant structures.

The line bundle $\det_{\bG}$ on $\Fl_{\bG}$ is what we used to define the central extension  $G(F)^{\cen}$. By construction, we have a $\bG\rtimes\Grote$-equivariant isomorphism of lines bundles on $\Fl$
\begin{equation*}
\pi_{\bG}^{*}{\det}_{\bG}\cong\calL(\L_{\can}).
\end{equation*}

For a parahoric $\bP$, $\pi_{\bP}^{*}\det_{\bP}\otimes\det^{-1}_{\bI}$ has fiber at $g\bI$ equal to $\det(\Ad(g)(\Lie\bP/\Lie\bI))\otimes\det(\Lie\bP/\Lie\bI))^{-1}$. Let $2\rho_{\bP}\in\fra^{*}_{\KM}$ be the sum of positive roots of $L_{\bP}$, then $\bI\rtimes\Grote$-equivariantly, we have
\begin{equation}\label{det eqn}
\pi_{\bP}^{*}{\det}_{\bP}\cong{\det}_{\bI}\otimes\calL(-2\rho_{\bP})\otimes\calO_{\bI\rtimes\Grote}(2\rho_{\bP}),
\end{equation}
where $\calO_{\bI\times\Grote}(2\rho_{\bP})$ means the trivial line bundle on which $\bI\rtimes\Grote$ acts via the character $2\rho_{\bP}$. Comparing \eqref{det eqn} for $\bP$ and for $\bG$ we get
\begin{equation}\label{comp GP}
\pi_{\bG}^{*}{\det}_{\bG}=\pi_{\bP}^{*}{\det}_{\bP}\otimes\calL(-2\rho_{\bG}+2\rho_{\bP})\otimes\calO_{\bI\rtimes\Grote}(-2\rho_{\bP}+2\rho_{\bG})
\end{equation}

\subsubsection{Curves on $\Fl$}
For each affine simple root $\alpha_{i}$, $i=0,\cdots,r$, we have the corresponding homomorphism $\SL_{2}\to G(F)$ (view $G(F)$ as the loop group) and hence an embedding $\PP^{1}\incl  \Fl$. The cycle class of this $\PP^{1}$ defines an element $C_{i}\in\homog{2}{\Fl,\ZZ_{\ell}}(-1)$. We thus get a homomorphism from the affine coroot lattice
\begin{equation}\label{define curve}
\ZZ\Phi^{\vee}_{\aff}=\Span_{\ZZ}\{\alpha^{\vee}_{0},\cdots,\alpha^{\vee}_{r}\}\to \homog{2}{\Fl,\ZZ}.
\end{equation}

\begin{prop}\label{p:pairing degree} The homomorphisms \eqref{define curve} and \eqref{define chern} preserve the pairing between $\xcoch(\AA_{\KM})$ and $\xch(\AA_{\KM})$ and the degree pairing between line bundles and curve classes up to a sign. In other words, for any $\xi\in\xch(\AA_{\KM})$ and $i=0,1,\cdots,r$, we have
\begin{equation}\label{pairing degree}
\jiao{\xi,\alpha_{i}^{\vee}}=-\deg(\calL(\xi)|C_{i}).
\end{equation} 
\end{prop}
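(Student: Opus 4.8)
The plan is to reduce the identity \eqref{pairing degree} to the standard degree computation on $\PP^{1}=\SL_{2}/B$. For each affine simple root $\alpha_{i}$ ($i=0,\dots,r$) I will use the homomorphism $\phi_{i}\colon\SL_{2}\to G(F)$ attached to $\alpha_{i}$ that was used to define the curve $C_{i}$: it carries the upper-triangular Borel $B\subset\SL_{2}$ into $\bI$ and induces a closed immersion $\PP^{1}=\SL_{2}/B\incl\Fl=G(F)/\bI$ onto a rational curve whose class is $C_{i}$. Since $\SL_{2}$ is simply connected, $\phi_{i}$ lifts (uniquely) to the affine Kac--Moody group $G_{\KM}$ with $B$ landing in $\bI_{\KM}$; the key point to record is that the composite of the diagonal one-parameter subgroup $\Gm\hookrightarrow\SL_{2}$, $t\mapsto\mathrm{diag}(t,t^{-1})$, with $\phi_{i}$ and with the projection $\bI_{\KM}\surj\AA_{\KM}$ is precisely the affine simple coroot $\alpha_{i}^{\vee}\colon\Gm\to\AA_{\KM}$ --- this holds by definition for $i\geq1$, where $\alpha_{i}^{\vee}\in\xcoch(\AA)$, and by the normalization \eqref{define alpha0} for $i=0$.

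Granting this, the computation is immediate. Pulling $\calL(\xi)=G_{\KM}\twtimes{\bI_{\KM},\xi}\AA^{1}$ back along $\phi_{i}$ produces the line bundle $\SL_{2}\twtimes{B,\chi}\AA^{1}$ on $\PP^{1}$, where $\chi$ is the character of $B$ obtained by composing $B\surj\Gm\xrightarrow{\alpha_{i}^{\vee}}\AA_{\KM}\xrightarrow{\xi}\Gm$; explicitly $\chi=\jiao{\xi,\alpha_{i}^{\vee}}\,\varpi$, with $\varpi$ the fundamental weight of $\SL_{2}$ normalized by $\jiao{\varpi,\alpha^{\vee}_{\SL_{2}}}=1$. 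With the associated-bundle convention of \eqref{define chern} one has $\SL_{2}\twtimes{B,\varpi}\AA^{1}\cong\calO_{\PP^{1}}(-1)$ (it is the tautological subbundle), hence $\deg\bigl(\SL_{2}\twtimes{B,n\varpi}\AA^{1}\bigr)=-n$, and therefore $\deg(\calL(\xi)|_{C_{i}})=-\jiao{\xi,\alpha_{i}^{\vee}}$, which is \eqref{pairing degree}.

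The \emph{main obstacle} is entirely contained in the case $i=0$: one must verify that the canonical lift of $\phi_{0}$ to $G_{\KM}$ sends the $\SL_{2}$-torus to a cocharacter of $\AA_{\KM}=\Gcen\times\AA\times\Grote$ whose $\Gcen$-component is exactly $\tfrac{2h^{\vee}_{\theta}}{a_{0}^{\vee}}K_{\can}$. This is the ``level'' bookkeeping for the central extension $G(F)^{\cen}$: concretely, it says that the determinant line bundle ${\det}_{\bG}$ of the adjoint representation, used to build $G(F)^{\cen}$, has level $2h^{\vee}_{\theta}$, which is exactly the normalization already pinned down in Section \ref{s:gp} (it is what forces the constant $c=1$ in the formula for $B_{\KM}$, and it is compatible with $\pi_{\bG}^{*}{\det}_{\bG}\cong\calL(\L_{\can})$ and the defining equation \eqref{define alpha0}).

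If one prefers to sidestep the $\SL_{2}$-lifting bookkeeping altogether, an alternative is to note that both sides of \eqref{pairing degree} are additive in $\xi\in\xch(\AA_{\KM})$ while $i\mapsto C_{i}$ is fixed, so it suffices to check the identity on three natural families of generators. For $\xi=\delta$ both sides vanish: $\jiao{\delta,\alpha_{i}^{\vee}}=0$ because $\alpha_{i}^{\vee}\in\xcoch(\Gcen)\oplus\xcoch(\AA)$ has no $\partial$-component, and $\calL(\delta)|_{C_{i}}$ is trivial by Remark \ref{r:L delta}. For $\xi\in\xch(\AA)$ and $i\geq1$, the curve $C_{i}$ lies in the finite flag variety $\bG/\bI$ and \eqref{pairing degree} is the classical statement for the flag variety of $\bG$. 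The remaining cases ($\xi\in\xch(\AA)$ with $i=0$, and $\xi=\L_{\can}$ with any $i$) are again reduced, via $\pi_{\bG}^{*}{\det}_{\bG}\cong\calL(\L_{\can})$ and \eqref{define alpha0}, to the level of the adjoint determinant bundle --- so in either approach the crux is that single normalization.
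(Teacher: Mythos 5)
Your main reduction is attractive and goes further than the paper's own argument in one respect: pulling $\calL(\xi)$ back along $\phi_{i}\colon\SL_{2}\to G_{\KM}$ turns the whole identity \eqref{pairing degree} into a single degree computation on $\PP^{1}$. But the crux you isolate — that the composite $\Gm\incl\SL_{2}\xrightarrow{\phi_{0}}G_{\KM}\surj\AA_{\KM}$ has $\Gcen$-component equal to $\tfrac{2h^{\vee}_{\theta}}{a_{0}^{\vee}}K_{\can}$ — is not, in fact, "already pinned down in Section \ref{s:gp}." Equation \eqref{define alpha0} is an \emph{abstract} definition of $\alpha_{0}^{\vee}\in\xcoch(\AA_{\KM})$, and the lemma on $\tilW$-invariance of $B_{\KM}$ that forces $c=1$ is a purely Lie-theoretic consistency check; neither of them computes the restriction of the determinant central extension $G^{\cen}$ to the $\SL_{2}$ attached to the affine simple root $\alpha_{0}$. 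That restriction is an honest geometric quantity — the level of $\det\bR\Gamma$ of the adjoint bundle on the rational curve $C_{0}$ — and establishing that it equals $\tfrac{2h^{\vee}_{\theta}}{a_{0}^{\vee}}$ is precisely what the present proposition asserts in the case $\xi=\L_{\can}$, $i=0$. So you have reduced the statement to itself rather than proved it.

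Where the paper succeeds is exactly where your proposal defers: it computes $\deg(\pi_{\bG}^{*}\det_{\bG}|C_{0})$ independently, via the comparison formula \eqref{comp GP} relating $\det_{\bG}$ to $\det_{\bP_{i}}$ and the fact that $\deg(\pi_{\bP_{i}}^{*}\det_{\bP_{i}}|C_{i})=0$ because $C_{i}$ is a fiber of the projection $\Fl\to\Fl_{\bP_{i}}$. This yields $\deg(\pi_{\bG}^{*}\det_{\bG}|C_{i})=-\jiao{\alpha_{i}-2\rho_{\bG},\alpha_{i}^{\vee}}$, which for $i=0$ evaluates to $-2h^{\vee}_{\theta}/a_{0}^{\vee}$ after unwinding $\jiao{\rho_{\bG},\beta^{\vee}}$. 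Your "alternative route" sketch at the end is essentially the paper's actual proof for $\xi\in\fra^{*}\oplus\QQ\delta$, but again stops short at $\L_{\can}$, deferring to the same unproven normalization. To repair the proposal you must actually perform a degree computation for $\calL(\L_{\can})|C_{0}$ — either via \eqref{comp GP} as the paper does, or by directly computing the determinant of cohomology of the family of $\cG$-torsors parametrized by $C_{0}\subset\Fl_{\bG}$ — rather than cite the desired answer as a normalization.
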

\begin{proof}  For $\xi=\delta$, both sides of \eqref{pairing degree} are zero (since $\calL(\delta)$ is a trivial line bundle). For $\xi\in\xch(\AA)$ and $\jiao{\xi,\alpha^{\vee}_{i}}=0$, the character $\xi:\AA\to\Gm$ extends to a character $L_{\bP_{i}}\to\Gm$, therefore $\calL(\xi)$ is the pullback of a line bundle on $\Fl_{\bP_{i}}$, and hence $\deg(\calL(\xi)|C_{i})=0$ because $C_{i}$ is a fiber of the projection $\Fl\to\Fl_{\bP_{i}}$. If $\xi=\alpha_{i}$, then $\jiao{\xi,\alpha^{\vee}_{i}}=2$ and $\deg(\calL(\alpha_{i})|C_{i})=-2$ by an easy calculation on the flag variety of the $\SL_{2}$ associated with the root $\alpha_{i}$. The above discussion shows that \eqref{pairing degree} holds whenever $\xi\in\xch(\AA)\oplus\xch(\Grote)$. 

It remains to show that \eqref{pairing degree} holds for $\xi=\L_{\can}$. Applying \eqref{comp GP} to $\bP=\bP_{i}$, we have 
\begin{eqnarray}\label{compute degree}
&&\deg(\pi_{\bG}^{*}{\det}_{\bG}|C_{i})=\deg(\pi_{\bP_{i}}^{*}{\det}_{\bP_{i}}|C_{i})+\deg(\calL(\alpha_{i}-2\rho_{\bG})|C_{i})\\
\notag
&=&-\jiao{\alpha_{i}-2\rho_{\bG},\alpha_{i}^{\vee}}=-2+2\jiao{\rho_{\bG},\alpha^{\vee}_{i}}.
\end{eqnarray}
Here we have used that $\deg(\pi_{\bP_{i}}^{*}{\det}_{\bP_{i}}|C_{i})=0$ because $C_{i}$ is a fiber of $\pi_{\bP_{i}}$, and we also use the proven identity \eqref{pairing degree} in the case $\xi=\alpha_{i}-2\rho_{\bG}\in\xch(\AA)\oplus\xch(\Grote)$. When $i\neq0$, $\jiao{\rho_{\bG},\alpha^{\vee}_{i}}=1$ hence the right side of \eqref{compute degree} is zero, which is also equal to $\jiao{\L_{\can},\alpha^{\vee}_{i}}$. For $i=0$, we have $\jiao{\rho_{\bG},\alpha^{\vee}_{0}}=-\jiao{\rho_{\bG},\beta^{\vee}}=-(h^{\vee}_{\theta}-a_{0}^{\vee})/a_{0}^{\vee}=1-h^{\vee}_{\theta}/a_{0}^{\vee}$ by the definition of $a_{0}^{\vee}$ and $h^{\vee}_{\theta}$ (see \eqref{dual label} and \eqref{dual Cox}). Therefore the right side of \eqref{compute degree} is $-2h^{\vee}_{\theta}/a_{0}^{\vee}$. On the other hand, the definition of $\alpha^{\vee}_{0}$ in \eqref{define alpha0} makes sure that $\jiao{\L_{\can},\alpha^{\vee}_{0}}=2h^{\vee}_{\theta}/a_{0}^{\vee}$, therefore \eqref{pairing degree} also holds for $\xi=\L_{\can}$ and $i=0$. The proof is now complete.
\end{proof}

By the construction in \S\ref{sss:line bundle}, each line bundle $\calL(\xi)$ carries a canonical $\AA_{\KM}$-equivariant structure. Let $r(\xi)\in\upH^{2}_{\AA_{\KM}}(\Fl)$ denote the  $\AA_{\KM}$-equivariant Chern class of $\calL(\xi)$. This extends to a graded ring homomorphism
\begin{equation*}
r: \Sym^{\bullet}(\fra^{*}_{\KM})\to \upH^{2\bullet}_{\AA_{\KM}}(\Fl).
\end{equation*}
On the other hand, we have another such ring homomorphism coming from the $\AA_{\KM}$-equivariant cohomology of a point
\begin{equation*}
\ell: \Sym^{\bullet}(\fra^{*}_{\KM})\cong \upH^{2\bullet}_{\AA_{\KM}}(\pt)\to \upH^{2\bullet}_{\AA_{\KM}}(\Fl).
\end{equation*}

The next result will be used in construction of the graded \Cha action on the cohomology of affine Springer fibers in \S\ref{ss:localHgr}.

\begin{lemma}\label{l:B}  We have $r(B_{\KM})=\ell(B_{\KM})$ in $\upH^{4}_{\AA_{\KM}}(\Fl)$.
\end{lemma}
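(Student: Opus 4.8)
The plan is to reduce the identity to a computation at the $\AA_{\KM}$-fixed points of $\Fl$, which are the standard points $p_{w}$ indexed by $\tilW$, and then invoke the $\tilW$-invariance of $B_{\KM}$ proved earlier (the Lemma just before Lemma \ref{l:tr action}). The point is that both $r$ and $\ell$ are graded ring homomorphisms, so it is enough to see that $r(B_{\KM})$ and $\ell(B_{\KM})$ have the same image under restriction to each $p_{w}$.

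First I would set up the injectivity of restriction to fixed points. Writing $\Fl=\varinjlim_{n}\Fl_{n}$ as an increasing union of finite-type (possibly singular) Schubert varieties, each $\Fl_{n}$ is $\AA_{\KM}$-stable and paved by the $\AA_{\KM}$-stable affine Schubert cells $\bI w\bI/\bI$; hence $\upH^{*}_{\AA_{\KM}}(\Fl_{n})$ is a free, in particular torsion-free, module over $\upH^{*}_{\AA_{\KM}}(\pt)=\Sym(\fra^{*}_{\KM})$, and the localization theorem makes $\upH^{*}_{\AA_{\KM}}(\Fl_{n})\to\upH^{*}_{\AA_{\KM}}(\Fl_{n}^{\AA_{\KM}})$ injective. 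Passing to the inverse limit (which is exact here, as the system stabilizes in each cohomological degree) gives an injection $\upH^{*}_{\AA_{\KM}}(\Fl)\hookrightarrow\prod_{w\in\tilW}\upH^{*}_{\AA_{\KM}}(\pt)$, once one checks that $\Fl^{\AA_{\KM}}=\{p_{w}:w\in\tilW\}$ (the loop-rotation factor $\Grote\subset\AA_{\KM}$ rigidifies the fixed locus to the standard points).

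Next comes the local computation. Restricting along $p_{w}\hookrightarrow\Fl$ and using that $\ell$ is pullback along $\Fl\to\pt$, one gets $\ell(B_{\KM})|_{p_{w}}=B_{\KM}$. For $r$, the $\AA_{\KM}$-equivariant Chern class of a line bundle restricted to a fixed point is the $\AA_{\KM}$-weight of its fiber there; for $\calL(\xi)=G_{\KM}\twtimes{\bI_{\KM},\xi}\AA^{1}$ this weight at $p_{w}=n_{w}\bI_{\KM}$ is $\leftexp{w}{\xi}$ (from $n_{w}^{-1}t\,n_{w}=w^{-1}(t)$ for $t\in\AA_{\KM}$). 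Since $r$ is a ring homomorphism and $B_{\KM}\in\Sym^{2}(\fra^{*}_{\KM})$ is a sum of products of two linear elements, this yields $r(B_{\KM})|_{p_{w}}=\leftexp{w}{B_{\KM}}$; note this conclusion does not depend on any sign convention in the weight formula, precisely because $B_{\KM}$ has even degree. By the $\tilW$-invariance of $B_{\KM}$ we have $\leftexp{w}{B_{\KM}}=B_{\KM}$, so $r(B_{\KM})|_{p_{w}}=\ell(B_{\KM})|_{p_{w}}$ for every $w$, and the injectivity from the previous step forces $r(B_{\KM})=\ell(B_{\KM})$ in $\upH^{4}_{\AA_{\KM}}(\Fl)$.

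The only real obstacle is the bookkeeping in the reduction step: making the localization/injectivity argument precise for the infinite-type ind-scheme $\Fl$ (equivariant formality of the singular Schubert varieties, absence of $\varprojlim^{1}$) and confirming the description of $\Fl^{\AA_{\KM}}$. Once that is in place, the remaining content is a one-line weight computation together with the already-proven $\tilW$-invariance of $B_{\KM}$.
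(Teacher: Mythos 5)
Your proposal is correct and follows the same route as the paper's proof: restrict to the $\AA_{\KM}$-fixed points $\tilw\bI$ (which the paper justifies in one word by ``equivariant formality,'' while you unpack it via the localization theorem on the exhaustion by Schubert varieties), compute $r(\xi)|_{\tilw\bI}=\ell(\leftexp{\tilw}{\xi})$, and invoke the $\tilW$-invariance of $B_{\KM}$. Your remark that the sign convention in the fixed-point weight is immaterial because $B_{\KM}$ is quadratic is a nice sanity check but not needed; otherwise the two arguments are essentially identical.
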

\begin{proof} 
By equivariant formality of $\Fl$, we can check this identity by restricting to the $\AA_{\KM}$-fixed points of $\Fl$, namely the points $\tilw\bI$ for $\tilw\in\tilW$. We have $r(\xi)|_{\tilw\bI}=\ell(\tilw\xi)\in\upH^{2}_{\AA_{\KM}}(\pt)$. Since $B_{\KM}$ is $\tilW$-invariant, we have $r(B_{\KM})|_{\tilw\bI}=\ell(\tilw B_{\KM})=\ell(B_{\KM})\in\upH^{4}_{\AA_{\KM}}(\pt)$. This means $r(B_{\KM})-\ell(B_{\KM})$ restricts to zero at every fixed point, hence $r(B_{\KM})=\ell(B_{\KM})$ in $\upH^{4}_{\AA_{\KM}}(\Fl)$.  
\end{proof}

\subsection{Affine Springer fibers} Kazhdan and Lusztig \cite{KL} defined the notion of the affine Springer fiber in the affine flag variety for split groups. Many basic results on affine Springer fibers can be easily generalized to quasi-split groups $G$. 

\begin{defn}[Kazhdan-Lusztig \cite{KL}] Let  $\gamma\in \frg(F)$ be regular semisimple. The {\em affine Springer fiber} of $\gamma$ of type $\bP$ is the reduced closed subvariety $\Sp_{\bP,\gamma}\subset\Fl_{\bP}$ consisting of cosets $g\bP$ such that $\Ad(g^{-1})\gamma\in\Lie\bP$.  When $\bP=\bI$, we drop $\bI$ from the notion and simply write $\Sp_\gamma$.
\end{defn}

We have a natural projection $\pi_{\bP,\gamma}: \Sp_{\gamma}\to\Sp_{\bP,\gamma}$. The fiber of $\pi_{\bP,\gamma}$ at $g\bP\in \Sp_{\bP,\gamma}$ is the usual Springer fiber of the image of $\Ad(g^{-1})\gamma\in \Lie\bP$ in $\frl_{\bP}=\Lie L_{\bP}$ for the group $L_{\bP}$. In particular, $\pi_{\bP,\gamma}$ is surjective.

When $G$ is split, it is shown in \cite{KL} that $\Sp_{\gamma}$ is a possibly infinite union of projective varieties of bounded dimension. Moreover, $\Sp_{\gamma}$ is of finite type if $\gamma$ is elliptic. The next lemma generalizes this to the quasi-split case and to general parahorics $\bP$. 

\subsubsection{The centralizer of $\gamma$} Let $G_{\gamma}$ be the centralizer of $\gamma$ in $G$. This is a torus over $F$ and $G_{\gamma}(F)$ acts on $\Sp_{\bP,\gamma}$ by its left translation action on $\Fl_{\bP}$. Let $A_{\gamma}\subset G_{\gamma}$ be the maximal subtorus that is split over $F$. Using the uniformizer $t$ of $F$, we have an embedding $\xcoch(A_{\gamma})\incl A_{\gamma}(F)\incl G_{\gamma}(F)$. We denote the image of this embedding by $\L_{\gamma}$. Then $\L_{\gamma}$ is a free abelian group of finite rank (equal to the split rank of $G_{\gamma}$) that acts on $\Sp_{\bP,\gamma}$.

\begin{lemma}\label{l:finite type} Let  $\gamma\in \frg(F)$ be regular semisimple, and let $\bP$ be a standard parahoric subgroup of $G(F)$. 
\begin{enumerate}
\item There exists a projective subscheme $Z\subset \Sp_{\bP,\gamma}$ such that $\Sp_{\bP,\gamma}=\L_{\gamma}\cdot Z$. In particular, $\Sp_{\bP,\gamma}$ is a possibly infinite union of projective schemes of bounded dimension.
\item If $\gamma\in\frg^\rs(F)$ is elliptic, then $\Sp_{\bP,\gamma}$ is a projective scheme.
\end{enumerate}
\end{lemma}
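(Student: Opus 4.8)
The plan is to reduce the whole statement to one boundedness estimate for $\Sp_{\gamma}$ going back to Kazhdan--Lusztig; everything else is formal. First, part (2) follows from part (1): if $\gamma\in\frg^{\rs}(F)$ is elliptic then its centralizer $G_{\gamma}$, a maximal torus of $G$, has vanishing $F$-split rank --- this is what ellipticity means, cf.\ \S\ref{sss:ell} --- so the maximal split subtorus $A_{\gamma}$ is trivial, $\L_{\gamma}=\xcoch(A_{\gamma})=0$, and the conclusion $\Sp_{\bP,\gamma}=\L_{\gamma}\cdot Z=Z$ of part (1) exhibits $\Sp_{\bP,\gamma}$ as a projective scheme. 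Next, for part (1) I would reduce to the case $\bP=\bI$. The projection $\pi_{\bP,\gamma}\colon\Sp_{\gamma}\to\Sp_{\bP,\gamma}$ is the restriction of the smooth projective fibration $\Fl\to\Fl_{\bP}$, hence proper; it is surjective; and it is $G_{\gamma}(F)$-equivariant, in particular $\L_{\gamma}$-equivariant. So if $\Sp_{\gamma}=\L_{\gamma}\cdot Z_{\bI}$ with $Z_{\bI}\subseteq\Sp_{\gamma}$ a projective closed subscheme, then $\Sp_{\bP,\gamma}=\pi_{\bP,\gamma}(\L_{\gamma}\cdot Z_{\bI})=\L_{\gamma}\cdot Z$ where $Z:=\pi_{\bP,\gamma}(Z_{\bI})$ is a projective closed subscheme of $\Sp_{\bP,\gamma}$ (image of a projective scheme under a proper morphism); since every translate $\lambda Z$ has dimension at most $\dim Z$, this gives the last assertion of (1) as well.

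For $\bP=\bI$, I claim it suffices to produce a quasi-compact open $U\subseteq\Fl$ with $\Sp_{\gamma}\subseteq\L_{\gamma}\cdot U$. Since $\Fl$ is an increasing union of finite-type projective closed subschemes, such a $U$ is contained in one of them, say $Y$; then $Z_{\bI}:=\Sp_{\gamma}\cap Y$ is a projective closed subscheme of $\Sp_{\gamma}$. For $x\in\Sp_{\gamma}$, write $x=\lambda u$ with $\lambda\in\L_{\gamma}$, $u\in U$; then $u=\lambda^{-1}x$ again lies in $\Sp_{\gamma}$, as $\L_{\gamma}\subseteq G_{\gamma}(F)$ preserves $\Sp_{\gamma}$, so $u\in Z_{\bI}$ and $x\in\L_{\gamma}\cdot Z_{\bI}$; hence $\Sp_{\gamma}=\L_{\gamma}\cdot Z_{\bI}$, and applying a finite-type projective subscheme as above also shows $Z_{\bI}$ is projective. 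The existence of such a $U$ --- boundedness of $\Sp_{\gamma}$ modulo $\L_{\gamma}$ --- is exactly where I would run the argument of \cite{KL}: choosing a faithful representation of $G$ over $F$ together with a chain of $\calO_{F}$-lattices stabilized by $\bI$ embeds $\Sp_{\gamma}$ into an affine Springer fiber for a general linear group, where the lattice-chain picture together with a bound on the valuation of the discriminant of $\gamma$ --- the one place where regular semisimplicity is genuinely used --- confines the $\gamma$-stable chains to finitely many $\L_{\gamma}$-translates of a quasi-compact region; equivalently one argues in the Bruhat--Tits building, using that the apartment of $G_{\gamma}$ is stable under $\L_{\gamma}$ with compact quotient and that $\Sp_{\gamma}$ lies over a bounded neighbourhood of it.

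I expect this boundedness estimate to be the only real obstacle: once it is in place the rest is bookkeeping, and since the Kazhdan--Lusztig argument --- being about lattices, or points of a building --- is insensitive to whether $G$ is split, its adaptation to the quasi-split group $G$ should be routine.
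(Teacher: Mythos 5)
Your reduction of the whole lemma to a single boundedness statement for $\Sp_{\gamma}$ is correct and matches the paper's structure: part (2) from part (1) via $\L_{\gamma}=0$ for elliptic $\gamma$, and the reduction to $\bP=\bI$ via surjectivity of $\pi_{\bP,\gamma}$ (the paper only uses surjectivity; your use of properness and $\L_\gamma$-equivariance to transport $Z$ is fine). Where you diverge is in how the key boundedness is established for the quasi-split group $G$. You propose to re-run the Kazhdan--Lusztig lattice-chain/building argument directly for $G$, asserting that the adaptation to the non-split case ``should be routine.'' The paper avoids having to verify this: it invokes \cite[Prop.\ 2.1]{KL} only for the \emph{split} group $\GG$ over the extension field $F_{e}$ (producing $Z'\subset\Sp'_{\gamma}$ with $\Sp'_{\gamma}=\L'_{\gamma}\cdot Z'$, $Z'$ enlarged to be $\mu_{e}$-stable), and then descends: the finite set $S=\{\l\in\L'_{\gamma}:\l Z'\cap Z'\neq\varnothing\}$ controls which $\L'_{\gamma}$-translates can contain $\mu_{e}$-fixed points, via the observation that a fixed point $\l\cdot z\in\Sp'^{\mu_{e}}_{\gamma}$ forces $\sigma(\l)-\l\in S$, i.e.\ $\l\in(\sigma-\id)^{-1}(S)$; combined with finiteness of the index of $\L_{\gamma}$ in $(\L'_{\gamma})^{\mu_{e}}$ this gives a bounded $Z$. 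This is the genuinely new idea in the lemma beyond KL, and it is absent from your proposal. Your route would presumably also work, but the hard step is precisely the one you leave at the level of plausibility, whereas the paper turns it into a purely group-theoretic finiteness statement by Galois descent. I would encourage you to either supply the quasi-split building argument in detail, or, more efficiently, to observe that $\Sp_{\gamma}$ sits inside $\Sp'^{\mu_{e}}_{\gamma}$ for the split group over $F_{e}$ and run the descent argument.
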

\begin{proof} We note that (2) follows from (1) since when $\gamma$ is elliptic, $A_{\gamma}$ is trivial hence $\L_{\gamma}=0$. 

To prove (1), it suffices to consider the case $\bP=\bI$ since the projection $\pi_{\bP,\gamma}: \Sp_{\gamma}\to\Sp_{\bP,\gamma}$ is surjective. Let $\Sp'_{\gamma}$ be the affine Springer fiber defined using $F_{e}$ in place of $F$. There is a natural $\mu_{e}$-action on $\Sp'_{\gamma}$, and $\Sp_{\gamma}$ is a closed subset of $\Sp'^{\mu_{e}}_{\gamma}$.  We use $G'_{\gamma}, A'_{\gamma}$ and $\L'_{\gamma}$ to denote the counterparts of $G_{\gamma}, A_{\gamma}$ and $\L_{\gamma}$ when the field $F$ is extended to $F_{e}$. Then $\mu_{e}$ naturally acts on $\L'_{\gamma}$, and we have a natural embedding $\L_{\gamma}\subset(\L'_{\gamma})^{\mu_{e}}$ with finite index.

We shall apply Kazhdan and Lusztig's results from \cite{KL} to $\Sp'_{\gamma}$ since $G$ is split over $F_{e}$.  By Kazhdan and Lusztig \cite[Prop 2.1]{KL}, there exists a projective subscheme $Z'\subset \Sp'_{\gamma}$ such that $\Sp'_{\gamma}=\L'_{\gamma}\cdot Z'$. By enlarging $Z'$ we may assume that $Z'$ is stable under $\mu_e$ and is still a projective scheme. 

Let $S\subset \L'_{\gamma}$ be the set of $\l\in \L'_{\gamma}$ such that $\l Z'\cap Z'\neq\varnothing$. Since $Z'$ is of finite type, $S$ is a finite set. We claim that $\Sp'^{\mu_{e}}_{G}\subset (\sigma-\id)^{-1}(S)\cdot Z'$. In fact, let $\l\cdot z$ be a $\mu_{e}$-fixed point of $\Sp'_{\gamma}$, where $\l\in\L'_{\gamma}$ and $z\in Z'$. Fix a generator $\sigma\in\mu_{e}$. Then $\sigma(\l)\cdot\sigma(z)=\l\cdot z$ which implies that $(\sigma(\l)-\l)\cdot \sigma(z)= z$ hence $\sigma(\l)-\l\in S$, i.e., $\l\in (\sigma-\id)^{-1}(S)$.

Finally, since $\L_{\gamma}\subset (\L'_{\gamma})^{\mu_{e}}$ has finite index, the quotient $(\sigma-\id)^{-1}(S)/\L_{\gamma}$ is a finite set. Choose coset representatives $\{\l_{1},\cdots,\l_{m}\}$ of $(\sigma-\id)^{-1}(S)/\L_{\gamma}$. Let $Z=(\cup_{i=1}^{m}\l_{i}\cdot Z')\cap\Sp_{\gamma}$, which is a closed subscheme of $\Fl$ of finite type hence projective. Since $\Sp'^{\mu_{e}}_{\gamma}\subset (\sigma-\id)^{-1}(S)\cdot Z'=\L_{\gamma}\cdot(\cup_{i=1}^{m} \l_{i}\cdot Z')$. we conclude that $\Sp_{\gamma}=\L_{\gamma}\cdot Z$.
\end{proof}

\subsubsection{Symmetry on affine Springer fibers}\label{ss:sym afs} Let $\gamma\in\frg^{\rs}(F)$ with $a=\chi(\gamma)\in\frc(\calO_{F})$. Let $a=\chi(\gamma)$ be viewed as a morphism $\Spec\calO_{F}\to\frc$. We define $J_a:=a^{*}J$ where $J$ is the regular centralizer group scheme defined in Section \ref{ss:J}. There is a canonical isomorphism $j_{\gamma}:J_{a,F}\isom G_{\gamma}$ over $F$. The group scheme $J_{a}$ gives a smooth model of $G_{\gamma}$ over $\calO_{F}$. Let $P_{a}=J_{a}(F)/J_{a}(\calO_{F})$, viewed as a fppf quotient in the category of group ind-schemes over $\CC$. As an ind-scheme over $\CC$, $P_{a}$ is the moduli space of $J_a$-torsors over $\Spec \calO_{F}$ together with a trivialization over $\Spec F$.

\begin{lemma}\label{l:Pa acts}
The action of $G_{\gamma}(F)$ on $\Sp_{\bP,\gamma}$ factors through $P_{a}$.
\end{lemma}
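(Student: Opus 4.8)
The plan is to show that $J_a(\calO_F)$, viewed inside $J_a(F)\cong G_\gamma(F)$ via $j_\gamma$, acts trivially on $\Sp_{\bP,\gamma}$; this is exactly the assertion. Since the projection $\pi_{\bP,\gamma}\colon\Sp_\gamma\to\Sp_{\bP,\gamma}$ is surjective and $G_\gamma(F)$-equivariant (being the restriction to $\Sp_\gamma$ of the $G(F)$-equivariant map $\Fl\to\Fl_\bP$), if $J_a(\calO_F)$ fixes $\Sp_\gamma$ pointwise then it fixes $\Sp_{\bP,\gamma}$ pointwise. So it suffices to treat the Iwahori case $\bP=\bI$.

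First I would fix a point $g\bI\in\Sp_\gamma$, so that $\gamma':=\Ad(g^{-1})\gamma\in\Lie\bI$, and record that $\chi_\bI(\gamma')=a$ in $\frc(\calO_F)$: over $F$ this follows from invariance of the characteristic map under the adjoint action of $G(F)$ on $\frg(F)$, and $\frc(\calO_F)\incl\frc(F)$. Restricting $\iota_\bI\colon\chi^*_\bI J\to I_\bI$ (Lemma~\ref{l:J Iw}) along the section $\gamma'$ then yields a homomorphism of $\calO_F$-group schemes $J_a\to(I_\bI)_{\gamma'}$, hence on $\calO_F$-points a map $J_a(\calO_F)\to(I_\bI)_{\gamma'}(\calO_F)\subseteq\bI$. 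Moreover, by Lemma~\ref{l:J Iw} the composition of this with $(I_\bI)_{\gamma'}\incl I|_{\Lie\bI}$ is the restriction of $\iota$ at $\gamma'$, whose generic fibre is the canonical isomorphism $j_{\gamma'}\colon J_{a,F}\isom G_{\gamma'}$ of Lemma~\ref{l:J} (using that $\gamma'_F$ is regular semisimple and that $\iota$ is the identity on $\frg^{\reg}$). Therefore $j_{\gamma'}\bigl(J_a(\calO_F)\bigr)\subseteq\bI$.

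It remains to compare $j_{\gamma'}$ with $j_\gamma$. Here I would invoke the canonicity of Ng\^o's construction underlying $\iota$ in Lemma~\ref{l:J}: it is equivariant for the conjugation action of $G(F)$, which acts on $\frg$, compatibly on $I$, and trivially on $\frc$ hence on $\chi^*J$ — equivalently, $\iota$ and its $G(F)$-conjugate both restrict to the identity on the dense open $\frg^{\reg}$, and $\chi^*J$ is flat over the integral scheme $\frg$, so they coincide. This gives $j_{\Ad(g^{-1})\gamma}=\Ad(g^{-1})\circ j_\gamma$, i.e.\ $j_{\gamma'}(h)=g^{-1}\,j_\gamma(h)\,g$ for all $h\in J_a(F)$. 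Combining with the previous step, for each $h\in J_a(\calO_F)$ the element $\widetilde h:=j_\gamma(h)\in G_\gamma(F)\subset G(F)$ satisfies $g^{-1}\widetilde h g=j_{\gamma'}(h)\in\bI$, hence $\widetilde h\cdot g\bI=g\,(g^{-1}\widetilde h g)\,\bI=g\bI$. Since $g\bI$ was an arbitrary point of $\Sp_\gamma$, the subgroup $J_a(\calO_F)$ acts trivially on $\Sp_\gamma$, so the $G_\gamma(F)\cong J_a(F)$-action on $\Sp_{\bP,\gamma}$ factors through $P_a=J_a(F)/J_a(\calO_F)$.

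I expect the main obstacle to be the bookkeeping in the last paragraph: making precise the mutual compatibility of the canonical isomorphism $j_\bullet$ of Lemma~\ref{l:J}, its Iwahori refinement $\iota_\bI$ of Lemma~\ref{l:J Iw}, and the conjugation action of $G(F)$, while tracing everything through the $\mu_e$-descent from $F_e$ to $F$. The reduction to $\bP=\bI$ and the final computation are then purely formal.
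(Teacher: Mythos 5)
Your proof is correct and follows essentially the same route as the paper's own argument (which is itself modeled on Ng\^o, \textit{loc.\ cit.}\ p.\ 42): reduce to the Iwahori case by surjectivity of $\pi_{\bP,\gamma}$, fix $g\bI\in\Sp_\gamma$ and set $\gamma'=\Ad(g^{-1})\gamma\in\Lie\bI$, use Lemma~\ref{l:J Iw} to conclude $j_{\gamma'}(J_a(\calO_F))\subseteq\bI$, and then transport back via $j_{\gamma'}=\Ad(g^{-1})\circ j_\gamma$. The one place where you expand on the paper is the justification of that last compatibility, which the paper simply asserts; your flatness argument gets the right conclusion over $F$, though the phrasing ``$G(F)$ acts on $\frg$, compatibly on $I$, \ldots\ and $\chi^*J$ is flat over the integral scheme $\frg$'' conflates the generic fibre $\frg_F$, on which $G(F)$ genuinely acts by conjugation, with the $\calO_F$-scheme $\frg$, on which it does not; the density-and-flatness argument should be run over $\frg_F^{\reg}\subset\frg_F$, which is all that is needed to identify $j_{\gamma'}$ with $\Ad(g^{-1})\circ j_\gamma$ as $F$-group isomorphisms.
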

\begin{proof} We first prove the statement for $\bP=\bI$. The proof is the same as the argument in \cite[p.42]{NgoFL}. Let us give the reason on the level of $\CC$-points. Let $j_{\gamma}: J_{a}(F)\isom G_{\gamma}(F)$ be the canonical isomorphism. To show that $J_{a}(\calO_{F})$ acts trivially on $\Sp_{\gamma}$, it suffices to show that for any $g\in G(F)$ such that $\Ad(g^{-1})\gamma\in\Lie\bI$, and any $x\in J_{a}(\calO_{F})$, we have $j_{\gamma}(x)g\bI=g\bI$, or $\Ad(g^{-1})j_{\gamma}(x)\in\bI$. Let $\delta=\Ad(g^{-1})\gamma\in\Lie\bI$. Then by Lemma \ref{l:J Iw}, the canonical isomorphism $j_{\delta}=\Ad(g^{-1})\circ j_{\gamma}:J_{a}(F)\isom G_{\delta}(F)=\Ad(g^{-1})G_{\gamma}(F)$ extends to $J_{a}\to I_{\bI,\delta}$ over $\calO_{F}$. Therefore, $j_{\delta}$ sends $J_{a}(\calO_{F})$ to $G_{\delta}(F)\cap \bI$, i.e., $j_{\delta}(x)\in\bI$, or $\Ad(g^{-1})j_{\gamma}(x)\in\bI$. 

For general $\bP$, the morphism $\Sp_{\gamma}\to\Sp_{\bP,\gamma}$ is surjective. Therefore, the fact that $J_{a}(\calO_{F})$ acts trivially on $\Sp_{\gamma}$ implies that it also acts trivially on $\Sp_{\bP,\gamma}$. 
\end{proof}

We may introduce an open subset $\Sp_{\bG,\gamma}^{\reg}$ consisting of those $g\bG\in\Sp_{\bG,\gamma}$ such that $\Ad(g)^{-1}(\gamma)\in\frg^{\reg}$. The following result can be proved in a similar way as \cite[Lemme 3.3.1]{NgoFL}, using Lemma \ref{l:J}.

\begin{lemma}\label{l:AFS reg} The open subset $\Sp_{\bG,\gamma}^{\reg}$ is a torsor under $P_{a}$. 
\end{lemma}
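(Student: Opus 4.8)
The plan is to follow the proof of \cite[Lemme 3.3.1]{NgoFL}, with Lemma \ref{l:J} (and, where a parahoric version is needed, Lemma \ref{l:J Iw}) playing the role of Ng\^o's input on regular centralizers --- this is the point that has to be adapted, since our $G$ is only quasi-split. By Lemma \ref{l:Pa acts} the group $G_{\gamma}(F)\cong J_{a}(F)$ acts on $\Sp_{\bG,\gamma}$ through $P_{a}$; this action preserves the open subset $\Sp^{\reg}_{\bG,\gamma}$, since $\Ad(z^{-1})\gamma=\gamma$ for $z\in G_{\gamma}(F)$, and $\Sp^{\reg}_{\bG,\gamma}$ is nonempty because $\gamma$ is $G(F)$-conjugate to $\kappa(a)$, which lies in $\frs(\calO_{F})\subset\frg^{\reg}(\calO_{F})$. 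Fixing a base point $g_{0}\bG\in\Sp^{\reg}_{\bG,\gamma}$ and setting $\delta_{0}=\Ad(g_{0}^{-1})\gamma\in\frg^{\reg}(\calO_{F})$, which satisfies $\chi(\delta_{0})=a$, it suffices to prove that the orbit map $\mathrm{act}\colon P_{a}\to\Sp^{\reg}_{\bG,\gamma}$, $xJ_{a}(\calO_{F})\mapsto j_{\gamma}(x)g_{0}\bG$, is an isomorphism of ind-schemes over $\CC$. The key observation is that pulling back the canonical morphism $\iota\colon\chi^{*}J\to I$ of Lemma \ref{l:J} along $\delta_{0}\colon\Spec\calO_{F}\to\frg^{\reg}$ yields a canonical isomorphism of smooth $\calO_{F}$-group schemes $J_{a}\isom I_{\delta_{0}}:=\delta_{0}^{*}I=\calZ_{\bG}(\delta_{0})$, which after inverting $t$ and conjugating by $g_{0}$ recovers the canonical isomorphism $j_{\gamma}\colon J_{a,F}\isom G_{\gamma}$.

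For \emph{freeness} I would repeat the computation of Lemma \ref{l:Pa acts}: if $j_{\gamma}(x)g_{0}\bG=g_{0}\bG$ with $x\in J_{a}(F)$, then $\Ad(g_{0}^{-1})j_{\gamma}(x)\in\bG(\calO_{F})$; but $\Ad(g_{0}^{-1})j_{\gamma}(x)$ is the image of $x$ under $J_{a,F}\isom I_{\delta_{0},F}=G_{\delta_{0}}$, so it lies in $G_{\delta_{0}}(F)\cap\bG(\calO_{F})=I_{\delta_{0}}(\calO_{F})$, whence $x\in J_{a}(\calO_{F})$. Thus $\mathrm{act}$ is a monomorphism. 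For \emph{transitivity}, given $g\bG\in\Sp^{\reg}_{\bG,\gamma}$ with $\delta=\Ad(g^{-1})\gamma\in\frg^{\reg}(\calO_{F})$, I would form the $\calO_{F}$-scheme $X=\{y\in\bG:\Ad(y)\delta_{0}=\delta\}$, which is a pseudotorsor under $I_{\delta_{0}}$ for right translation and has $h:=g_{0}^{-1}g$ as a point of $X(F)$. The content of Lemma \ref{l:J}, together with the fact that $\delta_{0}$ and $\delta$ are regular with the same characteristic $a$, is that $X$ is in fact an $I_{\delta_{0}}$-torsor over $\Spec\calO_{F}$ (the orbit map $\bG\to\chi^{-1}(a)^{\reg}$, $y\mapsto\Ad(y)\delta_{0}$, being faithfully flat). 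Transporting along $J_{a}\cong I_{\delta_{0}}$, the pair $(X,h)$ is a $J_{a}$-torsor on $\Spec\calO_{F}$ with a trivialization over $\Spec F$, hence a point $x\in P_{a}=J_{a}(F)/J_{a}(\calO_{F})$, and a direct check gives $j_{\gamma}(x)g_{0}\bG=g\bG$. So $\mathrm{act}$ is surjective on points.

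Finally, I would upgrade bijectivity on points to an isomorphism of ind-schemes, as in \cite{NgoFL}, by running the torsor construction above in families over an arbitrary test scheme (equivalently, observing that the monomorphism $\mathrm{act}$ is flat and surjective, hence an isomorphism). I expect the main obstacle to be the transitivity step --- more precisely, the verification that the conjugator scheme $X$ is a genuine torsor over $\Spec\calO_{F}$ and not merely over $\Spec F$. This is exactly where one uses that regular elements of $\frg$ with a common image in $\frc$ have a canonically common centralizer group scheme, and where one must make sure that the $\mu_{e}$-fixed-point constructions defining the quasi-split $G$ are respected; both of these are already packaged into Lemma \ref{l:J}, so no ingredient beyond \cite{NgoFL} is needed.
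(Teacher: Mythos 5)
Your proposal is correct and follows exactly the route the paper intends: the paper offers no argument beyond pointing to \cite[Lemme 3.3.1]{NgoFL} together with Lemma~\ref{l:J}, and your unwinding of that reference (freeness from the integral identification $J_a\cong I_{\delta_0}$ via $\iota$, transitivity by realizing the conjugator scheme $X$ as a $J_a$-torsor over $\Spec\calO_F$ and reading off a class in $P_a$) is the content of Ng\^o's argument adapted to the quasi-split setting, with the ``main obstacle'' you flag---that $\bG$ acts transitively on the regular locus of a fibre of $\chi$, i.e.\ that $[\frg^{\reg}/\bG]\to\frc$ is a $J$-gerbe---being precisely the point supplied by Ng\^o's theory via Lemma~\ref{l:J}. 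One small slip: with $\delta_0=\Ad(g_0^{-1})\gamma$ and $\delta=\Ad(g^{-1})\gamma$ the conjugator satisfying $\Ad(h)\delta_0=\delta$ is $h=g^{-1}g_0$, not $g_0^{-1}g$.
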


\subsection{Homogeneous affine Springer fibers}
Let $a\in\frc(F)^{\rs}_{\nu}$ be homogeneous of slope $\nu>0$, and let $\gamma=\kappa(a)\in\frg$. We will study the affine Springer fiber $\Sp_{\bP,\gamma}$. We denote $\Sp_{\bP,\gamma}$ by $\Sp_{\bP,a}$. We call such affine Springer fibers {\em homogeneous}. Applying the dimension formula \eqref{dim AFS general} to this situation, we have

\begin{cor}\label{c:AFS dim} 
\begin{enumerate}
\item The dimension of the homogeneous affine Springer fiber is given by
\begin{equation}\label{dim AFS}
\dim\Sp_{\bP,a}=\frac{1}{2}(\nu\#\PPhi-r+\dim\tt^{\Pi(\mu_{m})})
\end{equation}
where $\Pi:\mu_{m}\incl\WW'$ is the $\theta$-regular homomorphism with a regular eigenvalue $\barnu$ as in Theorem \ref{th:homog vs reg}. Recall $r=\rank\HH$ is the $F$-rank of $G$.

\item If $a$ is moreover elliptic, then
\begin{equation}\label{dim AFS ell}
\dim\Sp_{\bP, a}=(h_{\theta}\nu-1)r/2.
\end{equation}
Here we recall $h_{\theta}$ is the twisted Coxeter number of $(\GG,\theta)$.
\end{enumerate}
\end{cor}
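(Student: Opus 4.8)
The plan is to read off both formulas from the general dimension formula \eqref{dim AFS general}, which computes $\dim\Sp_{\bP,\gamma}$ for $\gamma\in\frg(F)^{\rs}$ as an explicit expression in the valuation $\val(\textup{disc}(a))$ of the discriminant of $a=\chi(\gamma)$, the $F$-rank $r$ of $G$, and the split rank $\dim A_{\gamma}$ of the centralizer $G_{\gamma}$. So for $\gamma=\kappa(a)$ with $a\in\frc(F)^{\rs}_{\nu}$, the only task is to evaluate $\val(\textup{disc}(a))$ and $\dim A_{\gamma}$ and substitute.

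First I would compute $\val(\textup{disc}(a))$. By Lemma \ref{l:hom in Fs}\eqref{Xtnu}, $\gamma$ is $\GG(F_{\infty})$-conjugate to $Xt^{\nu}$ for some $X\in\tt^{\rs}$. The discriminant is the conjugation-invariant function which on $\tt\otimes F_{\infty}$ equals $\prod_{\alpha\in\PPhi}\alpha$ up to a nonzero constant, so $\val(\textup{disc}(a))=\sum_{\alpha\in\PPhi}\val(\alpha(X)t^{\nu})=\nu\,\#\PPhi$, since $\alpha(X)\in\CC^{\times}$ for every $\alpha$ by regularity of $X$. (Alternatively, Lemma \ref{l:hom in Fs}(2) gives $f_{i}(a)=c_{i}t^{\nu d_{i}}$, and substituting into the weighted-homogeneous polynomial $\textup{disc}$ of weighted degree $\#\PPhi$ shows $\textup{disc}(a)$ is a scalar times $t^{\nu\#\PPhi}$.)

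Next I would identify $\dim A_{\gamma}$. The centralizer $G_{\gamma}$ is a maximal torus of $G$ over $F$ whose cocharacter lattice, as a $\Gal(F_{\infty}/F)=\hZZ(1)$-module, is $\xcoch(\TT)$ acted on through the monodromy homomorphism $\Pi_{a}$ of \S\ref{sss:ell}; by the construction of the bijections in Theorem \ref{th:homog vs reg} (compatibly with Lemma \ref{l:local Neron}\eqref{Tmum}), $\Pi_{a}$ is, up to $\WW$-conjugacy, the $\theta$-regular homomorphism $\Pi:\mu_{m}\incl\WW'$. Hence the split rank of $G_{\gamma}$ is $\dim(\xcoch(\TT)\otimes\QQ)^{\Pi(\mu_{m})}=\dim\tt^{\Pi(\mu_{m})}$. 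Feeding $\val(\textup{disc}(a))=\nu\,\#\PPhi$ and $\dim A_{\gamma}=\dim\tt^{\Pi(\mu_{m})}$ into \eqref{dim AFS general} yields \eqref{dim AFS}, which is part (1). For part (2), ellipticity of $a$ means precisely $\tt^{\Pi(\mu_{m})}=\tt^{\Pi_{a}(\hZZ(1))}=0$ by the definition in \S\ref{sss:ell}, so \eqref{dim AFS} collapses to $\dim\Sp_{\bP,a}=\tfrac12(\nu\,\#\PPhi-r)$, and the identity $\#\PPhi=r\,h_{\theta}$ turns this into \eqref{dim AFS ell}.

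The one step that deserves attention is the identity $\#\PPhi=r\,h_{\theta}$, the $\theta$-twisted analogue of the classical $\#\PPhi=\rr h$. I would establish it either by a short case check against the marks of the twisted affine Dynkin diagrams (recall $h_{\theta}=e\sum_{i=0}^{r}a_{i}$), or uniformly from the fact that an elliptic $\theta$-regular element of $\WW'$ of order $h_{\theta}$ acts on $\tt$ with eigenvalues occurring in complex-conjugate pairs whose exponents sum to $\tfrac12\#\PPhi$. The rest — the valuation computation and the identification of $\dim A_{\gamma}$ — is routine once one keeps track of the ramified extensions $F\subset F_{e}\subset F_{m}$ and the normalization of $\val$ built into \eqref{dim AFS general}; I expect no essential difficulty here, since the substantive input is entirely \eqref{dim AFS general}.
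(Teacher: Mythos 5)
Your proof is correct and follows the same route as the paper: plug $\gamma' = Xt^{\nu}$ into the general dimension formula \eqref{dim AFS general}, observe $\val(\alpha(\gamma'))=\nu$ for every $\alpha\in\PPhi$, identify the monodromy $\Pi_{a}$ with $\Pi$ up to conjugacy, and use $\tt^{\Pi(\mu_m)}=0$ plus $\#\PPhi=r h_{\theta}$ in the elliptic case. The paper simply asserts $\#\PPhi=r h_{\theta}$ without proof, so your offer to verify it is a fine addition.

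One caution on the ``uniform'' verification you sketch: if you pair the eigenvalues of an elliptic element of order $h_{\theta}$ on the $\rr$-dimensional space $\tt$ into complex-conjugate pairs, the exponents sum to $\tfrac12\rr h_{\theta}$, not $\tfrac12\#\PPhi$; taken literally your argument would yield $\#\PPhi=\rr h_{\theta}$, which fails whenever $e>1$ (e.g.\ $^{2}\!A_{2}$ has $\rr=2$, $h_{\theta}=6$, $\#\PPhi=6$). The correct uniform argument has to account for the twist: by Springer's theorem (as used in the proof of \eqref{dim Ma}), the exponents are $(d_i-1)+h_{\theta}\vep_i/e$, so $\sum_i k_i=\tfrac12\#\PPhi+\tfrac12 h_{\theta}(\rr-r)$, and equating this with $\tfrac12\rr h_{\theta}$ recovers $\#\PPhi=r h_{\theta}$. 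Your fallback — a case check on the marks of the twisted affine diagrams — works without issue, so the overall proof stands.
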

\begin{proof}
(1) By Lemma \ref{l:hom in Fs}\eqref{Xtnu}, we may choose $\gamma'$ in formula \eqref{dim AFS general} to be $Xt^{\nu}$ for some $X\in\tt^{\rs}$. Therefore $\val(\alpha(\gamma'))=\nu$ for any root $\alpha\in\PPhi$. Plugging this into \eqref{dim AFS general} we get \eqref{dim AFS}. Note that the homomorphism $\Pi_{a}$ is $\WW$-conjugate to $\Pi$.

(2) By ellipticity, $\tt^{\Pi(\mu_{m})}=0$. We also have $\#\PPhi=h_{\theta}r$ in the twisted situation.
\end{proof}

\subsubsection{Families of homogeneous affine Springer fibers}\label{sss:family Sp} We may put homogeneous affine Springer fibers of a given slope in families. Recall the Kostant section gives an embedding $\kappa:\frc(F)_{\nu}\incl \frg(F)_{\nu}$, which can also be viewed as a section of the GIT quotient of the $L_{\nu}$-action on $\frg(F)_{\nu}$. Let
\begin{equation*}
\tq_{\bP, \nu}:\tSp_{\bP, \nu}\to \frg(F)^{\rs}_{\nu} 
\end{equation*}
be the family of affine Springer fibers (in $\Fl_{\bP}$) over $\frg(F)^{\rs}_{\nu}$. This morphism is clearly $L_{\nu}$-equivariant. Let
\begin{equation*}
q_{\bP, \nu}:\Sp_{\bP, \nu}\to \frc(F)^{\rs}_{\nu}
\end{equation*}
be the restriction of $\tSp_{\bP,\nu}$ via the Kostant section $\kappa$. For $a\in\frc(F)^{\rs}_{\nu}$, $\Sp_{\bP,a}$ is simply the fiber of $q_{\nu}$ at $a$.

\subsubsection{Symmetry on homogeneous affine Springer fibers}\label{sss:sym homog AFS} Recall the one-dimensional torus $\Gnu\subset\tilG^{\ad}:=(G^{\ad}(F)\rtimes\Grote)\times\Gdil$ from \S\ref{sss:Gnu}. The group $(G^{\ad}(F)\rtimes\Grote)\times\Gdil$ acts on $\Fl_{\bP}$ (where $G^{\ad}(F)$ acts by left translation). By \eqref{s act gamma}, $\Gnu$ fixes $\kappa(a)$, and therefore acts on $\Sp_{\bP,a}$. In \S\ref{sss:Pnu} we have defined a group ind-scheme $P_{\nu}$ over $\frc(F)^{\rs}_{\nu}$ whose fibers are $P_{a}$ that acts on $\Sp_{\bP,a}$ by Lemma \ref{l:Pa acts}. The torus $\Gnu$ also acts on $P_{\nu}$. Therefore we have an action of $P_{\nu}\rtimes\Gnu$ on $\Sp_{\bP,\nu}$.

\subsubsection{Fixed points of the $\Gnu$-action on the affine flag variety} 
Recall from \S\ref{sss:MP} that $\nu$ determines a parahoric subgroup $\bP_{\nu}\subset G(F)$ with Levi factor $L_{\nu}$. Let $W_{\nu}\subset\tilW$ be the Weyl group of $L_{\nu}$ with respect to $\AA$. 
 
\begin{lemma}\label{l:fl fix} The fixed points of the $\Gnu$-action on $\Fl_{\bP}$ given in \eqref{s act fl} are exactly $\bigsqcup_{\tilw}L_{\nu}\tilw\bP/\bP$, where $\tilw$ runs over representatives of all double cosets $ W_{\nu}\backslash\tilW/W_{\bP}$.
\end{lemma}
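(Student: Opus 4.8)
The plan is to exploit the Iwahori--Bruhat cell decomposition of $\Fl_{\bP}$, show it is $\Gnu$-stable, and then compute the fixed locus one cell at a time, where it becomes a linear computation. Recall from \S\ref{sss:Gnu} that $\Gnu$ acts on $\Fl_{\bP}$ through its image $s\mapsto(s^{d\rho^{\vee}},s^{m})$ in $G^{\ad}(F)\rtimes\Grote$ (the $\Gdil$--factor acts trivially on $\Fl_{\bP}$), via the automorphism $s\cdot_{\nu}$ of $G(F)$ from \eqref{s act fl}. First I would record two elementary facts. \emph{(a)} The automorphism $s\cdot_{\nu}$ preserves the standard Iwahori $\bI$: conjugation by the constant $s^{d\rho^{\vee}}\in\AA^{\ad}(\CC)$ preserves $\bT$ and every affine root subgroup $U_{\beta}$, and loop rotation $t^{1/e}\mapsto s^{m/e}t^{1/e}$ likewise preserves $\bT(\calO_{F})$ and each $U_{\beta}$; in fact, as computed in the proof of Lemma \ref{l:MP}(1), $s\cdot_{\nu}$ scales $U_{\beta}$ by the character $s^{m\beta(\nu\rho^{\vee})}$. \emph{(b)} For every $\tilw\in\tilW$ the point $\tilw\bP$ is $\Gnu$--fixed: writing a representative of $\tilw$ as a constant element of $N_{\GG}(\TT)(\CC)$ times a translation $t^{\mu}$, loop rotation moves it within $\tilw\cdot\bT(\CC)$ and conjugation by $s^{d\rho^{\vee}}$ moves it within $\AA^{\ad}(\CC)\cdot\tilw$; in either case $(s\cdot_{\nu}\tilw)\bP=\tilw\bP$ because the discrepancy lies in $\TT^{\ad}(\CC)$, and $\tilw^{-1}\TT^{\ad}(\CC)\tilw\subset\bP$.

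\textbf{Reduction to a single cell.} From (a) and (b), each Iwahori--Bruhat cell $C_{\tilw}:=\bI\tilw\bP/\bP$, indexed by $\tilw\in\tilW/W_{\bP}$, is $\Gnu$--stable: for $u\in\bI$ we have $s\cdot_{\nu}(u\tilw\bP)=(s\cdot_{\nu}u)(s\cdot_{\nu}\tilw)\bP=(s\cdot_{\nu}u)\tilw\bP\in C_{\tilw}$, since $s\cdot_{\nu}u\in\bI$ and $(s\cdot_{\nu}\tilw)\bP=\tilw\bP$. Hence $\Fl_{\bP}^{\Gnu}=\bigsqcup_{\tilw\in\tilW/W_{\bP}}C_{\tilw}^{\Gnu}$. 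Now I would use the standard Kac--Moody / Bruhat--Tits description of $\Fl_{\bP}$ (as in \S\ref{sss:loop group}) to identify $C_{\tilw}\cong\prod_{\beta\in S_{\tilw}}U_{\beta}$ as a product of affine root subgroups over a finite set $S_{\tilw}\subset\Phi_{\aff}^{+}$, via $u\mapsto u\tilw\bP$. By (a) the $\Gnu$--action on $C_{\tilw}$ is linear with weight $m\beta(\nu\rho^{\vee})$ on the $U_{\beta}$--factor, so $C_{\tilw}^{\Gnu}=\prod_{\beta\in S_{\tilw},\ \beta(\nu\rho^{\vee})=0}U_{\beta}$. But by the very definition of $L_{\nu}$ in \S\ref{sss:MP}, the affine roots vanishing on $\nu\rho^{\vee}$ are exactly the roots of $L_{\nu}$; therefore $C_{\tilw}^{\Gnu}$ is precisely the open $(\bI\cap L_{\nu})$--Bruhat cell through $\tilw\bP$ of the partial flag variety $L_{\nu}\tilw\bP/\bP\cong L_{\nu}/(L_{\nu}\cap\tilw\bP\tilw^{-1})$.

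\textbf{Assembling and indexing.} Summing over $\tilw$ and using that $L_{\nu}$ is connected (so its Bruhat cells exhaust each orbit $L_{\nu}\tilw\bP/\bP$), we obtain $\Fl_{\bP}^{\Gnu}=\bigsqcup_{\tilw}L_{\nu}\tilw\bP/\bP$. It remains to see that the distinct orbits are parametrized by $W_{\nu}\backslash\tilW/W_{\bP}$: one has $L_{\nu}\tilw\bP/\bP=L_{\nu}\tilw'\bP/\bP$ if and only if $\tilw'\in W_{\nu}\tilw W_{\bP}$, which follows by comparing the $\AA$--fixed points on both sides, using that $W_{\nu}=N_{L_{\nu}}(\AA)/\AA$ lifts into $L_{\nu}$ and $W_{\bP}$ into $\bP$. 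This yields the stated description.

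\textbf{Main obstacle.} The technical crux is the structure theory of $\Fl_{\bP}$ in the ramified quasi-split setting used in the reduction step --- namely that each $C_{\tilw}$ is an affine space realized as a product of affine root subgroups, and that $C_{\tilw}^{\Gnu}$ is a Bruhat cell of $L_{\nu}$; I would quote this from the Kac--Moody / Bruhat--Tits theory rather than re-derive it. An alternative, slightly more abstract route avoids cells: a $\Gnu$--fixed point $g\bP$ produces the $\Gnu$--cocycle $s\mapsto g^{-1}(s\cdot_{\nu}g)\in\bP$, and modifying $g$ by $\bP$ puts this cocycle in a normal form concentrated on the $\tilw$--twist, reducing to $g\in\tilL_{\nu}\cdot\tilw\bP$ and then invoking Lemma \ref{l:MP}(3)--(4) for $G(F)^{\Gnu}=\tilL_{\nu}$; however I expect the cell argument above to be the cleaner of the two.
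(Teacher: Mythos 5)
Your proof is correct, but it follows a genuinely different route from the paper. You work with the fine Iwahori–Bruhat cells $C_{\tilw}=\bI\tilw\bP/\bP$ (indexed by $\tilW/W_{\bP}$), show each is $\Gnu$-stable, compute the fixed locus of each cell as a Bruhat cell of $L_{\nu}/(L_{\nu}\cap\tilw\bP\tilw^{-1})$, and then reassemble these into full $L_{\nu}$-orbits indexed by $W_{\nu}\backslash\tilW/W_{\bP}$. The paper instead starts from the coarser \emph{parahoric} Bruhat decomposition $\Fl_{\bP}=\bigsqcup_{\tilw\in W_{\nu}\backslash\tilW/W_{\bP}}\bP_{\nu}\tilw\bP/\bP$, which is already indexed by the desired double cosets, writes each piece as $\bP^{+}_{\nu,\tilw}\times L_{\nu}\tilw\bP/\bP$ with $\bP^{+}_{\nu,\tilw}$ a product of finitely many affine root subgroups of strictly positive $\Gnu$-weight (by the Moy–Prasad discussion in \S\ref{sss:MP}), and reads off the fixed locus in one step. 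The paper's choice of decomposition is adapted to $\bP_{\nu}$ from the outset, so there is no reassembly step and no separate bookkeeping of when two Iwahori cells land in the same $L_{\nu}$-orbit; your argument compensates by being more elementary per cell, at the cost of a final regrouping argument. Both rely on the same Bruhat–Tits input — the cell structure of $\Fl_{\bP}$ and the identification of the weight of $\Gnu$ on an affine root subgroup $U_{\beta}$ as $m\beta(\nu\rho^{\vee})$ — so the two proofs are of comparable depth; yours is slightly longer because it descends one level of granularity and then climbs back up.
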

\begin{proof} The affine Bruhat decomposition expresses $\Fl_{\bP}$ as a disjoint union of $\bP_{\nu}\tilw\bP/\bP$, where $\tilw$ runs over the double coset $W_{\nu}\backslash\tilW/W_{\bP}$. Each $\bP_{\nu}\tilw\bP$ can further be written as $\bP^{+}_{\nu, \tilw}\times L_{\nu}\tilw\bP/\bP$, where $\bP^{+}_{\nu, \tilw}\subset \bP^{+}_{\nu}=\ker(\bP_{\nu}\to L_{\nu})$ is the product of finitely many affine root subgroups. By the discussion in \S\ref{sss:MP}, the torus $\Gnu$ acts on affine root subgroups in $\bP^{+}_{\nu}$ with positive weights and fixes $L_{\nu}$ pointwise. Therefore the $\Gnu$-fixed points contained in $\bP_{\nu}\tilw\bP/\bP$ are exactly $L_{\nu}\tilw\bP/\bP$.
\end{proof}

\begin{remark} In the special case when $\nu\rho^{\vee}$ lies in the interior of an alcove, $\bP_{\nu}$ is an Iwahori subgroup and $L_{\nu}=\AA$. In this case, the $\Gnu$-fixed points on $\Fl_{\bP}$ are discrete.
\end{remark}

\subsection{Hessenberg paving} 
In \cite{GKM}, when $e=1$, it is shown that for a class of elements $\gamma\in\gg^{\rs}(F)$ called {\em equivalued}, the corresponding affine Springer fiber admits a paving by affine space bundles over Hessenberg varieties that are smooth and projective, and in particular they have pure cohomology. Homogeneous elements are equivalued, therefore they admit pavings by affine space bundles over Hessenberg varieties. We shall indicate briefly how the Hessenberg paving extends to the quasi-split case for homogeneous affine Springer fibers.

\subsubsection{Hessenberg varieties}\label{sss:Hess}
Let $L$ be a reductive group over $\CC$ and $V$ be a linear representation of $L$. Let $P\subset L$ be a parabolic subgroup and $V^{+}\subset V$ be a $P$-stable subspace. For $v\in V$, we define the {\em Hessenberg variety} $\Hess_{v}(L/P,V\supset V^{+})$ to be the following subvariety of the partial flag variety $L/P$:
\begin{equation*}
\Hess_{v}(L/P, V\supset V^{+})=\{gP\in L/P: g^{-1}v\in V^{+}\}.
\end{equation*}
Observe that  $\textup{Stab}_{L}(v)$ acts on $\Hess_{v}(L/P,V\supset V^{+})$. As $v$ varies in $V$, the Hessenberg varieties form a projective family
\begin{equation*}
\Hess(L/P, V\supset V^{+})\to V.
\end{equation*}

Let $\bP$ be a standard parahoric. For each $\tilw\in\tilW$, let $\frg(F)^{\tilw}_{\bP, \nu}:=\frg(F)_{\nu}\cap\Ad(\tilw)\Lie\bP$, which is stable under the parabolic $L_{\nu}\cap\Ad(\tilw)\bP$ of $L_{\nu}$. For $\gamma\in\frg(F)_{\nu}$, we may define the Hessenberg variety
\begin{equation*}
\Hess^{\tilw}_{\bP,\gamma}:=\Hess_{\gamma}(L_{\nu}/L_{\nu}\cap\Ad(\tilw)\bP, \frg(F)_{\nu}\supset\frg(F)^{\tilw}_{\bP,\nu})
\end{equation*}
as a closed subvariety of the partial flag variety $\fl^{\tilw}_{\bP,\nu}:=L_{\nu}/L_{\nu}\cap\Ad(\tilw)\bP$ of $L_{\nu}$. It only depends on the class of $\tilw$ in $W_{\nu}\backslash \tilW/ W_{\bP}$. 
As $\gamma$ varies in $\frg(F)_{\nu}$ the Hessenberg varieties form a projective family
\begin{equation}\label{whole Hess}
\tpi^{\tilw}_{\bP,\nu}: \tHess^{\tilw}_{\bP,\nu}\to\frg(F)_{\nu} 
\end{equation}
Similarly, let
\begin{equation*}
\pi^{\tilw}_{\bP, \nu}:\Hess^{\tilw}_{\bP, \nu}\to\frc(F)_{\nu}
\end{equation*}
be the pullback of the map \eqref{whole Hess} via the Kostant section $\kappa$. 

When $\bP=\bI$, we often drop the $\bP$ from subscripts. For $\gamma\in\frg(F)_{\nu}$, $\Hess^{\tilw}_{\bP,\gamma}$ is just the fiber of $\tpi^{\tilw}_{\bP,\nu}$; for $a\in\frc(F)_{\nu}$, we shall denote the fiber of $\pi^{\tilw}_{\nu}$ over $a$ by $\Hess^{\tilw}_{\bP, a}$. In the case $\bP=\bI$, $\Hess^{\tilw}_{a}$ is a closed subvariety of the flag variety $\fl^{\tilw}_{\nu}$ of $L_{\nu}$.

\begin{theorem}[Goresky-Kottwitz-MacPherson]\label{p:paving} Let $\nu>0$ be an admissible slope.
\begin{enumerate}
\item The $\Gnu$-fixed points of $\tSp_{\bP,\nu}$ is the disjoint union
\begin{equation*}
\tSp_{\bP,\nu}^{\Gnu}=\bigsqcup_{\tilw\in W_{\nu}\backslash \tilW/ W_{\bP}}\tHess^{\tilw}_{\bP,\nu}|_{\frg(F)^{\rs}_{\nu}}.
\end{equation*}
\item For $a\in\frc(F)^{\rs}_{\nu}$, the homogeneous affine Springer fiber $\Sp_{\bP,a}$ admits a pavement by intersecting with $\bP_{\nu}$-orbits indexed by $\tilw\in W_{\nu}\backslash \tilW/ W_{\bP}$, and each intersection $(\bP_{\nu}\tilw\bP/\bP)\cap\Sp_{\bP,a}$ is an affine space bundle over $\Hess^{\tilw}_{\bP,\nu, a}$ which contracts to $\Hess^{\tilw}_{\bP,\nu, a}$ under the $\Gnu$-action.

\item The projective morphism $\tpi^{\tilw}_{\bP, \nu}: \tHess^{\tilw}_{\bP,\nu}\to\frg(F)_{\nu}$ is smooth over $\frg(F)^{\rs}_{\nu}$.

\item\label{GKMpurity} The cohomology of $\Sp_{\bP,a}$ ($a\in\frc(F)^{\rs}_{\nu}$) is pure.
\end{enumerate}
\end{theorem}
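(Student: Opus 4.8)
The plan is to follow the strategy of Goresky--Kottwitz--MacPherson \cite{GKM} (who treated the split case $e=1$) and to indicate the modifications needed for quasi-split $G$.

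\emph{Part (1).} By Lemma~\ref{l:fl fix} the $\Gnu$-fixed locus of $\Fl_{\bP}$ is $\bigsqcup_{\tilw}L_{\nu}\tilw\bP/\bP$, the union over $\tilw\in W_{\nu}\backslash\tilW/W_{\bP}$. For $\gamma\in\frg(F)^{\rs}_{\nu}$, intersecting this locus with $\Sp_{\bP,\gamma}=\{g\bP:\Ad(g^{-1})\gamma\in\Lie\bP\}$ and writing $g=\ell\tilw$ with $\ell\in L_{\nu}$, the defining condition becomes $\Ad(\ell^{-1})\gamma\in\Ad(\tilw)\Lie\bP$. Since $\gamma\in\frg(F)_{\nu}$ and $L_{\nu}$ preserves $\frg(F)_{\nu}$, this is exactly $\Ad(\ell^{-1})\gamma\in\frg(F)_{\nu}\cap\Ad(\tilw)\Lie\bP=\frg(F)^{\tilw}_{\bP,\nu}$, i.e. $\ell\bP\in\Hess^{\tilw}_{\bP,\gamma}$. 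Running this over $\frg(F)^{\rs}_{\nu}$ identifies $\tSp_{\bP,\nu}^{\Gnu}$ with $\bigsqcup_{\tilw}\tHess^{\tilw}_{\bP,\nu}|_{\frg(F)^{\rs}_{\nu}}$.

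\emph{Part (2).} I would use the affine Bruhat decomposition $\Fl_{\bP}=\bigsqcup_{\tilw}\bP_{\nu}\tilw\bP/\bP$ together with the factorization $\bP_{\nu}\tilw\bP/\bP\cong\bP^{+}_{\nu,\tilw}\times L_{\nu}\tilw\bP/\bP$, where $\bP^{+}_{\nu,\tilw}$ is a product of finitely many affine root subgroups in the pro-unipotent radical of $\bP_{\nu}$. By \S\ref{sss:MP}, $\Gnu$ acts with strictly positive weights on $\bP^{+}_{\nu,\tilw}$ and trivially on $L_{\nu}\tilw\bP/\bP$, so each Bruhat cell is a $\Gnu$-equivariant affine-space bundle contracting onto $L_{\nu}\tilw\bP/\bP$. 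The substance is that intersecting with $\Sp_{\bP,a}$ preserves this shape: choosing unipotent coordinates on a fiber of $\bP^{+}_{\nu,\tilw}$, the condition $\Ad(g^{-1})\gamma\in\Lie\bP$ unwinds---because $a$ is homogeneous, hence equivalued with $\val(\alpha(\gamma))=\nu$ for every absolute root $\alpha$---into a system of \emph{affine-linear} equations in the coordinates. This is the GKM computation; it is insensitive to whether $G$ is split, and goes through after base change to $F_{e}$ followed by taking $\mu_{e}$-fixed points. Hence $(\bP_{\nu}\tilw\bP/\bP)\cap\Sp_{\bP,a}$ is an affine-space bundle over its $\Gnu$-fixed locus, which by Part (1) is $\Hess^{\tilw}_{\bP,\nu,a}$, and it contracts onto it.

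\emph{Part (3).} Smoothness of $\tpi^{\tilw}_{\bP,\nu}$ over $\frg(F)^{\rs}_{\nu}$ amounts to smoothness of the fibers $\Hess^{\tilw}_{\bP,\gamma}$ for $\gamma$ regular semisimple in $\frg(F)_{\nu}$, together with generic flatness over the reduced base. For the fibers I would apply the infinitesimal criterion to the incidence map $L_{\nu}\times^{L_{\nu}\cap\Ad(\tilw)\bP}\frg(F)^{\tilw}_{\bP,\nu}\to\frg(F)_{\nu}$: it is submersive at every point over a regular semisimple $\gamma$. This reduces, via the eigenspace decomposition of $\frg(F)_{\nu}$ under the torus-type group $\tilS_{a}=C_{L_{\nu}}(\gamma)$ (whose structure was pinned down in Lemma~\ref{l:local Neron} using the Vinberg/Panyushev theory of \S\ref{sss:Cartan}), to the same root-space count as in \cite[\S2]{GKM}; the quasi-split case only replaces $\gg$ by the graded piece $\gg_{\barnu}\cong\frg(F)_{\nu}$ and $\GG$ by $\GG^{\Psi_{\barnu},\circ}=L_{\nu}$. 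Smoothness and projectivity of $\Hess^{\tilw}_{\bP,\nu,a}$ follow.

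\emph{Part (4), and the main obstacle.} Granting (2) and (3), $\Sp_{\bP,a}$ is filtered by closed subvarieties whose successive locally closed strata are the $S_{\tilw}=(\bP_{\nu}\tilw\bP/\bP)\cap\Sp_{\bP,a}$, each an affine-space bundle of some rank $d_{\tilw}$ over the smooth projective variety $\Hess^{\tilw}_{\bP,\nu,a}$ (when $\nu$ is elliptic $\Sp_{\bP,a}$ is projective and this filtration is finite; in general one argues with compact supports and the exhaustion by projective pieces of Lemma~\ref{l:finite type}). Then $\upH^{k}_{c}(S_{\tilw})\cong\upH^{k-2d_{\tilw}}(\Hess^{\tilw}_{\bP,\nu,a})(-d_{\tilw})$ is pure of weight $k$, since $\Hess^{\tilw}_{\bP,\nu,a}$ is smooth projective. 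In the weight spectral sequence of the filtration every differential is then a morphism of mixed Hodge structures between groups pure of different weights, hence vanishes; the sequence degenerates at $E_{1}$ and $\upH^{k}_{c}(\Sp_{\bP,a})$ is pure of weight $k$, whence ordinary cohomology is pure by Poincar\'e--Verdier duality. The real work is Part (2)---verifying that the equations cutting out $\Sp_{\bP,a}$ inside each Bruhat cell are affine-linear in the unipotent coordinates, where homogeneity/equivaluedness is used essentially and where one must check that nothing breaks in descending the split computation over $F_{e}$ to its $\mu_{e}$-fixed points; Part (3) is the other place where a (milder) appeal to $\theta$-group theory is required.
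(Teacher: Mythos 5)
Your proposal follows the same route as the paper's proof: parts (1) and (2) derive from the affine Bruhat decomposition and the $\Gnu$-fixed-point description of Lemma~\ref{l:fl fix}, with the affine-linearity of the defining equations in the unipotent coordinates (the heart of the GKM computation for equivalued elements) cited rather than re-derived; part (3) is the transversality statement that the Hessenberg family is cut out by a section transversal to the zero section of the vector bundle $L_{\nu}\times^{L_{\nu}\cap\Ad(\tilw)\bP}\bigl(\frg(F)_{\nu}/\frg(F)^{\tilw}_{\bP,\nu}\bigr)$, which is what your incidence/submersion argument unwinds to; and part (4) is the standard weight-spectral-sequence degeneration from the paving. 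The paper is terser (deferring almost everything to \cite{GKM}), but the underlying argument is identical, and your extra care about the $\mu_{e}$-descent from $F_{e}$ and about the infinite stratification in the non-elliptic case only makes explicit what the paper leaves implicit.
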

\begin{proof}
(1) and (2) follows from Lemma \ref{l:fl fix}. (3) By definition, $\tHess^{\tilw}_{\bP, \nu}$ is the zero loci of the family of sections of the vector bundle $\calV:=L_{\nu}\twtimes{L_{\nu}\cap\Ad(\tilw)\bP}\frg(F)_{\nu}/\frg(F)^{\tilw}_{\nu}$ give by the elements in $\frg_{\barnu}\cong\frg(F)_{\nu}$. For $v\in \frg^{\rs}_{\barnu}$, the corresponding section is transversal to the zero section of the vector bundle $\calV$ and therefore $\tpi^{\tilw}_{\nu}$ is proper and smooth over $\frg(F)^{\rs}_{\nu}$. For details see \cite[\S2.5]{GKM}. The smoothness of the Hessenberg varieties in question is proved as in \cite{GKM}, using the fact that $\gamma(1)$ is a ``good vector'' in $\gg_{\barnu}$ under the $L_{\nu}$-action. (4) follows from (1) and (2). 
\end{proof}

\begin{exam} Suppose $\nu\rho^{\vee}$ lies in the interior of an alcove. Then $\Sp_{\bP,\gamma}$ admits a pavement by affine spaces (by intersecting with Schubert cells in $\Fl_{\bP}$). In particular, if the denominator of $\nu$ is equal to the $\theta$-twisted Coxeter number $h_{\theta}$, then $\Sp_{\bP,\gamma}$ admits a pavement by affine spaces.
\end{exam}

Next we draw some consequences from the existence of a Hessenberg paving on homogeneous affine Springer fibers. 

\begin{cor}\label{c:eq loc Sp} Let $\nu>0$ be elliptic and $a\in\frc(F)^{\rs}_{\nu}$. Then the restriction map
\begin{equation}\label{localize Sp}
\eqnu{*}{\Sp_{\bP,a}}\to\bigoplus_{\tilw\in W_{\nu}\backslash \tilW/W_{\bP}}\cohog{*}{\Hess^{\tilw}_{\bP,a}}\otimes\QQ[\ep]
\end{equation}
is an isomorphism after inverting $\ep$ (or equivalently specializing  $\ep$ to 1). Note that only finitely many terms on the right hand side are nonzero, by the ellipticity of $\gamma$.
\end{cor}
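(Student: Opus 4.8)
The statement is a standard equivariant localization argument, powered by the Hessenberg paving of Theorem~\ref{p:paving}. The plan is to combine three ingredients: (i) the $\Gnu$-equivariant formality of $\Sp_{\bP,a}$, which follows from the purity of its cohomology established in Theorem~\ref{p:paving}\eqref{GKMpurity}; (ii) the identification of the $\Gnu$-fixed locus of $\Sp_{\bP,a}$ with the disjoint union $\bigsqcup_{\tilw} \Hess^{\tilw}_{\bP,a}$ over $\tilw \in W_{\nu}\backslash\tilW/W_{\bP}$, which is Theorem~\ref{p:paving}(1)--(2); and (iii) the classical Atiyah--Bott/Borel localization theorem for torus actions, which says that the restriction map to the fixed locus becomes an isomorphism after inverting the equivariant parameter $\ep$. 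The only subtlety is that $\Sp_{\bP,a}$ is not literally a point-paved space and has positive-dimensional fixed components $\Hess^{\tilw}_{\bP,a}$, so we need the version of localization valid for that situation.

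First I would record that $\Sp_{\bP,a}$ is a projective scheme: this is Lemma~\ref{l:finite type}(2), using the ellipticity of $\nu$ (only finitely many $\tilw \in W_{\nu}\backslash\tilW/W_{\bP}$ contribute, by ellipticity, so the right-hand side of \eqref{localize Sp} is a finite sum, as stated). Next, by Theorem~\ref{p:paving}(2), $\Sp_{\bP,a}$ admits a filtrable decomposition into locally closed pieces $(\bP_{\nu}\tilw\bP/\bP)\cap\Sp_{\bP,a}$, each of which is an affine space bundle over the smooth projective Hessenberg variety $\Hess^{\tilw}_{\bP,a}$ (smoothness and projectivity coming from Theorem~\ref{p:paving}(3) restricted to $\frc(F)^{\rs}_{\nu}$). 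Each piece is $\Gnu$-stable and, under the $\Gnu$-action, the affine-space-bundle structure is the one contracting onto the fixed locus $\Hess^{\tilw}_{\bP,a}$. From this one gets, by the standard long exact sequences associated to the filtration (cf.\ \cite{GKM} \S2, or the general Bialynicki-Birula/attracting-cell argument), that $\eqnu{*}{\Sp_{\bP,a}}$ is a free $\QQ[\ep]$-module and the restriction map to $\bigoplus_{\tilw}\eqnu{*}{\Hess^{\tilw}_{\bP,a}}$ is injective with cokernel killed by a power of $\ep$ --- equivalently, the cohomology is equivariantly formal and the fixed-point restriction is an isomorphism after inverting $\ep$. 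Since each $\Hess^{\tilw}_{\bP,a}$ carries the trivial $\Gnu$-action, $\eqnu{*}{\Hess^{\tilw}_{\bP,a}} \cong \cohog{*}{\Hess^{\tilw}_{\bP,a}}\otimes\QQ[\ep]$, which gives the right-hand side of \eqref{localize Sp}.

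Concretely, the cleanest way to organize this is: (1) use purity (Theorem~\ref{p:paving}\eqref{GKMpurity}) plus the existence of the affine paving to deduce that the ordinary-cohomology Leray spectral sequence for $\Sp_{\bP,a}$ degenerates and that $\dim\cohog{*}{\Sp_{\bP,a}} = \sum_{\tilw}\dim\cohog{*}{\Hess^{\tilw}_{\bP,a}}$ (each affine-bundle piece contributes exactly the cohomology of its base, shifted); (2) run the same counting equivariantly, noting the $\Gnu$-equivariant cohomology of an affine-space bundle over $Y$ with the contracting action is $\eqnu{*}{Y}$ up to a shift, so $\eqnu{*}{\Sp_{\bP,a}}$ is a free $\QQ[\ep]$-module of the same total rank as $\bigoplus_{\tilw}\eqnu{*}{\Hess^{\tilw}_{\bP,a}}$; (3) invoke the localization theorem: the fixed-point restriction map $\eqnu{*}{\Sp_{\bP,a}}[\ep^{-1}] \to \eqnu{*}{(\Sp_{\bP,a})^{\Gnu}}[\ep^{-1}]$ is always an isomorphism for a torus action on a reasonable (e.g.\ finite-type, or ind-finite-type with the action having isolated-enough fixed locus) space; and (4) identify the target with the displayed direct sum using $(\Sp_{\bP,a})^{\Gnu} = \bigsqcup_{\tilw}\Hess^{\tilw}_{\bP,a}$ from Theorem~\ref{p:paving}(1). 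Steps (1)--(2) guarantee there is no $\ep$-torsion to worry about, so ``iso after inverting $\ep$'' and ``iso after setting $\ep = 1$'' agree, as the corollary asserts.

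The main obstacle, such as it is, is purely bookkeeping: making sure the affine-space-\emph{bundle} structure of each paving stratum (rather than a trivial affine-space factor) is compatible with the $\Gnu$-action in the precise sense needed --- i.e.\ that $\Gnu$ acts fiberwise linearly with positive weights so that the stratum $\Gnu$-equivariantly retracts onto $\Hess^{\tilw}_{\bP,a}$. This is exactly what Theorem~\ref{p:paving}(2) provides (``contracts to $\Hess^{\tilw}_{\bP,\nu,a}$ under the $\Gnu$-action''), so the argument goes through; everything downstream is the standard equivariant-formality-plus-localization package applied stratum by stratum and assembled via the long exact sequences of the filtration. No genuinely new input beyond Theorem~\ref{p:paving} is required.
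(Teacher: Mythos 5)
Your proposal is correct and follows the same route as the paper: ellipticity gives finite type via Lemma~\ref{l:finite type}, purity from Theorem~\ref{p:paving}\eqref{GKMpurity} gives $\Gnu$-equivariant formality, and then the localization theorem applied to the fixed locus $\bigsqcup_{\tilw}\Hess^{\tilw}_{\bP,a}$ identified in Theorem~\ref{p:paving}(1) yields the isomorphism after inverting $\ep$. The paper's argument is exactly this three-step chain stated tersely; your additional Bialynicki--Birula-style unwinding of the affine-space-bundle paving is a more explicit account of why formality and freeness over $\QQ[\ep]$ hold, but it is the same logical content rather than a different method.
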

\begin{proof}
Since $a$ is elliptic, $\Sp_{\bP,a}$ is of finite type by Lemma \ref{l:finite type}. Since the cohomology of $\Sp_{\bP, a}$ is pure by Theorem \ref{p:paving}(4), it is $\Gnu$-equivariantly formal, hence the localization theorem applies.
\end{proof}

\begin{cor}\label{c:Hess loc sys} Let $\nu>0$ be an admissible slope. 
\begin{enumerate}
\item The complexes $\tpi^{\tilw}_{\bP, \nu,*}\QQ$ and $\pi^{\tilw}_{\bP, \nu,*}\QQ$, when restricted to $\frg(F)^{\rs}_{\nu}$ and $\frc(F)^{\rs}_{\nu}$ respectively, are direct sums of shifted semisimple local systems.
\item Suppose $\nu$ is elliptic. Then the complexes $\tq_{\bP, \nu,*}\QQ$ and $q_{\bP, \nu,*}\QQ$ are direct sums of shifted semisimple local systems.
\end{enumerate}
 \end{cor}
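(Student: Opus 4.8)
The plan is to deduce everything from the decomposition theorem together with the Hessenberg paving of Theorem \ref{p:paving}. First I would prove (1). The morphisms $\tpi^{\tilw}_{\bP,\nu}$ and $\pi^{\tilw}_{\bP,\nu}$ are projective, and by Theorem \ref{p:paving}(3) the map $\tpi^{\tilw}_{\bP,\nu}$ is smooth when restricted to $\frg(F)^{\rs}_{\nu}$; since $\frc(F)^{\rs}_{\nu}$ maps into $\frg(F)^{\rs}_{\nu}$ via the Kostant section and this is a smooth (indeed locally closed) embedding, $\pi^{\tilw}_{\bP,\nu}|_{\frc(F)^{\rs}_{\nu}}$ is also smooth. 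Hence $\tpi^{\tilw}_{\bP,\nu,*}\QQ|_{\frg(F)^{\rs}_{\nu}}$ and $\pi^{\tilw}_{\bP,\nu,*}\QQ|_{\frc(F)^{\rs}_{\nu}}$ are complexes whose cohomology sheaves are local systems, namely the higher direct images $R^{i}(\tpi^{\tilw}_{\bP,\nu})_{*}\QQ$ restricted to the smooth locus. To see these complexes split as direct sums of shifted local systems, I would invoke the relative hard Lefschetz theorem / decomposition theorem for the smooth projective morphism obtained by restricting: a smooth proper morphism has $Rf_{*}\QQ$ isomorphic to $\bigoplus_{i} R^{i}f_{*}\QQ[-i]$ by Deligne's theorem, and each $R^{i}f_{*}\QQ$ is a semisimple local system because the fibers $\Hess^{\tilw}_{\bP,\nu,a}$ have pure cohomology (being smooth projective varieties, as in Theorem \ref{p:paving}(3)), so the monodromy representation is semisimple by the semisimplicity part of the decomposition theorem applied to the support $\frc(F)^{\rs}_{\nu}$. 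This gives (1).

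For (2), the key geometric input is Theorem \ref{p:paving}(1)--(2): the $\Gnu$-fixed locus of $\tSp_{\bP,\nu}|_{\frg(F)^{\rs}_{\nu}}$ is $\bigsqcup_{\tilw\in W_{\nu}\backslash\tilW/W_{\bP}}\tHess^{\tilw}_{\bP,\nu}|_{\frg(F)^{\rs}_{\nu}}$, and each affine Springer fiber $\Sp_{\bP,a}$ is paved by affine space bundles over the Hessenberg varieties $\Hess^{\tilw}_{\bP,\nu,a}$, whence (by Theorem \ref{p:paving}(4)) its cohomology is pure and $\Gnu$-equivariantly formal. When $\nu$ is elliptic, $\Sp_{\bP,a}$ is moreover of finite type by Lemma \ref{l:finite type}, so the affine pavement shows $\homog{*}{\Sp_{\bP,a}}$ is a finite-dimensional vector space whose Poincar\'e polynomial (hence whose graded dimension) is computed additively from those of the $\Hess^{\tilw}_{\bP,\nu,a}$ via the affine bundle structure. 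The point is that these dimensions are independent of $a\in\frc(F)^{\rs}_{\nu}$: indeed, the cells $(\bP_{\nu}\tilw\bP/\bP)\cap\Sp_{\bP,a}$ are affine bundles of a rank that varies locally constantly over $\frc(F)^{\rs}_{\nu}$ (the rank can be read off from the Moy-Prasad weights, which do not move in a connected family), and by (1) the cohomology of $\Hess^{\tilw}_{\bP,\nu,a}$ forms a local system over $\frc(F)^{\rs}_{\nu}$. Therefore $q_{\bP,\nu,*}\QQ$, restricted to $\frc(F)^{\rs}_{\nu}$, has locally constant cohomology sheaves and is a complex of local systems.

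It then remains to upgrade "complex of local systems" to "direct sum of shifted semisimple local systems". For this I would again apply the decomposition theorem to the projective morphism $q_{\bP,\nu}$ (resp. $\tq_{\bP,\nu}$) over the smooth base $\frc(F)^{\rs}_{\nu}$ (resp. $\frg(F)^{\rs}_{\nu}$): $Rq_{\bP,\nu,*}\QQ$ decomposes into a direct sum of shifted simple perverse sheaves, each of which is an IC sheaf of a semisimple local system on an irreducible closed subvariety of the base. The purity from Theorem \ref{p:paving}(4) forces every summand to have full support equal to the whole base (any summand supported on a proper subvariety would, by the local invariant cycle / support considerations together with the equidimensionality of the pavement, contradict the constancy of stalk cohomology ranks just established), so every summand is $\mathrm{IC}(\calL)[\dim]$ for a semisimple local system $\calL$ on $\frc(F)^{\rs}_{\nu}$, i.e. just a shifted semisimple local system since the base is smooth. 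The main obstacle I anticipate is precisely justifying that all supports are full: one has to combine the additivity of Betti numbers from the affine pavement (constant in $a$) with the semisimplicity statement for the $\Hess^{\tilw}$ pieces from part (1), and argue that no "extra" summands supported on the discriminant-type loci can appear; alternatively one can bypass this by noting that a complex with locally constant, pointwise-pure cohomology sheaves on a smooth base is automatically formal and splits (via the weight filtration / Deligne's criterion), which is the cleanest route and the one I would ultimately write up.
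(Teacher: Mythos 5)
Your part (1) is essentially the paper's proof: smoothness and properness of $\tpi^{\tilw}_{\bP,\nu}$ over $\frg(F)^{\rs}_{\nu}$ give the decomposition into shifted local systems by Deligne's theorem, pull back along the Kostant section (the paper phrases this as proper base change, you phrase it as smoothness being stable under base change --- both fine), and get semisimplicity from the decomposition theorem.

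Your part (2) takes a genuinely different route from the paper, and it has a real gap. You pass from ``the Betti numbers of $\Sp_{\bP,a}$ are locally constant in $a$'' directly to ``$q_{\bP,\nu,*}\QQ$ has locally constant cohomology sheaves.'' This implication is false in general: a constructible sheaf with constant stalk rank need not be a local system. (Think of a family where the monodromy on the nearby fiber is nontrivial but the special fiber's cohomology happens to have the same dimension --- the specialization map factors through monodromy invariants and cannot be an isomorphism.) Your ``cleanest route'' at the end --- invoking formality for a complex with locally constant, pointwise-pure cohomology sheaves --- presupposes the very local constancy you have not established; the worry you flag about full supports is adjacent to, but not the same as, this gap. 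The correct way to carry out your plan is to upgrade the single-fiber paving of Theorem \ref{p:paving}(2) to a stratification of the \emph{family} $\Sp_{\bP,\nu}\to\frc(F)^{\rs}_{\nu}$ by locally closed substacks, run the associated Postnikov-tower spectral sequence for $q_{\bP,\nu,!}\QQ$, observe that its $E_1$ page consists of shifted local systems (part (1) plus the affine-bundle structure over the Hessenberg families, and using $q_{\bP,\nu,!}=q_{\bP,\nu,*}$ by properness), note that the differentials are morphisms of local systems so all $E_r$ terms remain local systems, and use purity (Theorem \ref{p:paving}(4)) to force degeneration; then $\bR^i q_{\bP,\nu,*}\QQ$ is an iterated extension of local systems, hence a local system, and splitting plus semisimplicity follow from the decomposition theorem. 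This works but is more laborious than what the paper does. The paper instead applies equivariant localization (Corollary \ref{c:eq loc Sp}) to identify $\oplus_i\bR^i_{\Gnu}q_{\nu,*}\QQ$, after inverting $\ep$, with $\QQ[\ep,\ep^{-1}]$ tensored with the Hessenberg local systems from part (1), and then uses equivariant formality (from purity) to descend to the non-equivariant cohomology sheaves. The paper's localization route delivers local constancy in one stroke and avoids the stratified spectral-sequence bookkeeping.
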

\begin{proof} To save notation, we only prove the case $\bP=\bI$.

(1) By Theorem \ref{p:paving}(3), $\tpi^{\tilw}_{\nu,*}\QQ|_{\frg(F)^{\rs}_{\nu}}$ is a direct sum of shifted local systems. By proper base change, $\pi^{\tilw}_{\nu,*}\QQ$ is the pullback of $\tpi^{\tilw}_{\nu,*}\QQ$ to $\frc(F)_{\nu}$, hence also a direct sum of shifted local systems. The semisimplicity of these local systems follows from the decomposition theorem.

(2) We give the argument for $q_{\nu,*}\QQ$, and the argument for $\tq_{\nu,*}\QQ$ is the same. The embedding $\sqcup_{\tilw}\Hess^{\tilw}_{\nu}\incl\Sp_{\nu}$ induces a morphism $q_{\nu,*}\QQ\to \pi^{\tilw}_{\nu,*}\QQ|_{\frc(F)^{\rs}_{\nu}}$ in $D^{b}_{\Gnu}(\frc(F)^{\rs}_{\nu})$. Note that the $\Gnu$-action on $\frc(F)_{\nu}$ is trivial. Further push-forward to $D^{b}(\frc(F)^{\rs}_{\nu})$ and taking cohomology sheaves we obtain $\iota^{*}: \oplus_{i}\bR^{i}_{\Gnu}q_{\nu,*}\QQ\to \QQ[\ep]\otimes(\oplus_{i}\bR^{i}\pi^{\tilw}_{\nu,*}\QQ|_{\frc(F)^{\rs}_{\nu}})$ between sheaves in $\QQ[\ep]$-modules on $\frc(F)^{\rs}_{\nu}$. By Corollary \ref{c:eq loc Sp}, $\iota^{*}$ becomes an isomorphism after inverting $\ep$. By (1), both $\bR^{i}\pi^{\tilw}_{\nu,*}\QQ|_{\frc(F)^{\rs}_{\nu}}$ are local systems, therefore so are $\bR^{i}_{\Gnu}q_{\nu,*}\QQ$. By the purity of the fibers of $q_{\nu}$ from Theorem \ref{p:paving}(3), $\oplus_{i}\bR^{i}_{\Gnu}q_{\nu,*}\QQ$ is non-canonically isomorphic to $\QQ[\ep]\otimes(\oplus_{i}\bR^{i}q_{\nu,*}\QQ)$, therefore $\bR^{i}\tq_{\nu,*}\QQ$ is also a local system. Semisimplicity of local systems again follows from the decomposition theorem.
\end{proof}

\subsubsection{Symmetries on the cohomology of Hessenberg varieties}\label{sss:pi1} Let $a\in\frc(F)^{\rs}_{\nu}$. We consider two symmetries on $\cohog{i}{\Hess^{\tilw}_{\bP, a}}$.

First, recall that $\tilS_{a}$ (resp. $S_{a}$) is the stabilizer of $\gamma=\kappa(a)$ under $\tilL_{\nu}$ (resp. $L_{\nu}$). Since $S_{a}$ acts on $\Hess^{\tilw}_{\bP, a}$, $\pi_{0}(S_{a})$ acts on $\cohog{i}{\Hess^{\tilw}_{\bP, a}}$.

On the other hand, let 
\begin{equation*}
B_{a}:=\pi_{1}(\frc(F)_{\nu}^{\rs}, a).
\end{equation*}
Here $\pi_{1}$ means the topological fundamental group. The notation suggests that it is a braid group. In fact,  it is the braid group attached to the little Weyl group $\WW^{\Pi(\mu_m)}$ introduced in \S\ref{sss:Cartan}. By Lemma \ref{c:Hess loc sys}(1), $\cohog{i}{\Hess^{\tilw}_{\bP, a}}$ is the stalk of of a local system on $\frc(F)^{\rs}_{\nu}$. Therefore $\cohog{i}{\Hess^{\tilw}_{\bP, a}}$ also carries an action of $B_{a}$.

Now we combine the two actions. By the description of $\tilS$ given in \S\ref{sss:tilS}, the group of connected components $\pi_{0}(\tilS_{a})$ also form a local system of finite abelian groups over $\frc(F)^{\rs}_{\nu}$, which we denote by $\pi_{0}(\tilS/\frc(F)^{\rs}_{\nu})$. In particular, $\pi_{0}(\tilS_{a})$ carries an action of $B_{a}$. We form the semidirect product $\pi_{0}(\tilS_{a})\rtimes B_{a}$ using this action. Again by the description of $\tilS$ in \S\ref{sss:tilS}, we may identify $\tilS_{a}$ with $\TT^{\Pi(\mu_m)}$, and the action of $B_{a}$ on $\pi_{0}(\tilS_{a})$ factors through its quotient $B_{a}\surj \WW^{\Pi(\mu_m)}$ which acts on $\TT^{\Pi(\mu_{m})}$.  Similarly, by the description of $S$ given in \S\ref{sss:tilS},  $\pi_{0}(S/\frc(F)^{\rs}_{\nu})=\ker(\pi_{0}(\tilS/\frc(F)^{\rs}_{\nu})\to \Omega)$ is also a local system over $\frc(F)^{\rs}_{\nu}$, hence its fiber $\pi_{0}(S_{a})$ carries an action of $B_{a}$, and we have a subgroup $\pi_{0}(S_{a})\rtimes B_{a}$ of $\pi_{0}(\tilS_{a})\rtimes B_{a}$.




We summarize the above discussion as
\begin{lemma} Let For $a\in\frc(F)^{\rs}_{\nu}$.
\begin{enumerate}
\item For each $\tilw\in W_{\nu}\backslash \tilW/W_{\bP}$, there is a natural action of $\pi_{0}(S_{a})\rtimes B_{a}$ on the cohomology $\cohog{*}{\Hess^{\tilw}_{\bP, a}}$. 
\item If $\nu>0$ is elliptic, then $\pi_{0}(\tilS_{a})\rtimes B_{a}=\tilS_{a}\rtimes B_{a}$ also naturally acts on $\spcoh{}{\Sp_{\bP, a}}$.
\end{enumerate}
\end{lemma}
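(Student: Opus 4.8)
The plan is to lift both symmetries to the family over $X:=\frc(F)^{\rs}_{\nu}$ and then read off the assertion on the stalk at $a$.

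For (1): applying the observation of \S\ref{sss:Hess} that the stabilizer of a vector acts on the corresponding Hessenberg variety to the universal situation, the group scheme $S$ of \S\ref{sss:tilS} — which is the family of centralizers $L_{\nu,\kappa(a)}$ inside $L_{\nu}\times X$ — acts on $\Hess^{\tilw}_{\bP,\nu}$ over $X$ compatibly with $\pi^{\tilw}_{\bP,\nu}$. By Corollary \ref{c:Hess loc sys}(1) the cohomology sheaves $\calL_{i}:=\bR^{i}\pi^{\tilw}_{\bP,\nu,*}\QQ|_{X}$ are local systems, each carrying an action of the local system of finite abelian groups $\pi_{0}(S/X)$, since the identity component $S_{a}^{\circ}$ acts trivially on cohomology. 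Pulling everything back along the universal cover $\widetilde{X}\to X$, whose deck group is $B_{a}=\pi_{1}(X,a)$, the local system $\calL_{i}$ trivializes with fiber $V_{i}=\cohog{i}{\Hess^{\tilw}_{\bP,a}}$ carrying the monodromy $B_{a}$-action, while $\pi_{0}(S/X)$ pulls back to the constant group $\pi_{0}(S_{a})$; naturality of the pullback forces $b\,s\,b^{-1}$ to act as the monodromy translate $\leftexp{b}{s}$ for $b\in B_{a}$, $s\in\pi_{0}(S_{a})$. This is exactly the statement that these actions assemble into an action of $\pi_{0}(S_{a})\rtimes B_{a}$ on $\bigoplus_{i}V_{i}=\cohog{*}{\Hess^{\tilw}_{\bP,a}}$.

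For (2): assume $\nu$ elliptic, so $\Sp_{\bP,a}$ is projective (Lemma \ref{l:finite type}) with pure cohomology (Theorem \ref{p:paving}(4)), hence $\Gnu$-equivariantly formal, and $\tilS_{a}\cong\TT^{\Pi(\mu_{m})}$ is finite (Lemma \ref{l:local Neron}\eqref{Tmum}), so $\pi_{0}(\tilS_{a})=\tilS_{a}$. Since $\tilS_{a}\subset\tilL_{\nu}=G(F)^{\Gnu}$ centralizes $\gamma$, left translation by $\tilS_{a}$ preserves $\Sp_{\bP,a}$ and commutes with the $\Gnu$-action, so $\tilS_{a}$ acts on $\eqnu{*}{\Sp_{\bP,a}}$ and hence on $\spcoh{}{\Sp_{\bP,a}}$. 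Running the argument of Corollary \ref{c:Hess loc sys}(2) with the $\Gnu$-equivariant pushforward of $\QQ$ along $q_{\bP,\nu}$ and then specializing $\ep=1$ produces a complex on $X$ whose cohomology sheaves are local systems with stalk $\spcoh{}{\Sp_{\bP,a}}$ at $a$ (equivariant formality makes the specialization exact on the relevant free $\QQ[\ep]$-modules), giving the $B_{a}$-action; and the group scheme $\tilS$, which is finite \'etale over $X$ by ellipticity and acts fiberwise on $\Sp_{\bP,\nu}$ commuting with $\Gnu$, furnishes the local system of finite groups $\pi_{0}(\tilS/X)=\tilS$ acting on it. Passing to the stalk at $a$ exactly as in (1) yields the action of $\tilS_{a}\rtimes B_{a}$ on $\spcoh{}{\Sp_{\bP,a}}$.

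Most of this is routine family and monodromy bookkeeping; the point that genuinely needs attention is the semidirect-product compatibility $b\,s\,b^{-1}=\leftexp{b}{s}$, which I would deduce from the single underlying geometric family over $X$ — transporting a point of a fibre around a loop in $X$ drags along the element of $S$ (resp.\ $\tilS$) acting on it — together with the observation that $S$ (resp.\ $\tilS$) acts on the morphism $\pi^{\tilw}_{\bP,\nu}$ (resp.\ $q_{\bP,\nu}$) itself, so the action is defined already on the pushforward complex on $X$ and not merely on its cohomology sheaves.
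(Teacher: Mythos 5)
Your proof is correct and follows essentially the same route as the paper: you build both symmetries at the level of the family over $\frc(F)^{\rs}_{\nu}$, using the action of the group scheme $S$ (resp.\ $\tilS$) on the family together with the local-system structure from Corollary \ref{c:Hess loc sys}, and then read off the semidirect-product action on the stalk at $a$. The only difference is one of emphasis: the paper declares that the two actions ``together give'' a $\pi_{0}(S_{a})\rtimes B_{a}$-action by pointing to the local system $\pi_{0}(S/\frc(F)^{\rs}_{\nu})$ of \S\ref{sss:tilS}, whereas you spell out the compatibility $b\,s\,b^{-1}=\leftexp{b}{s}$ via monodromy transport — a worthwhile clarification of what the paper treats as evident.
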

\begin{proof}
(1) follows from the discussion in \S\ref{sss:pi1}.

(2) When $\nu$ is elliptic, $\tilS_{a}$ is a finite group hence $\pi_{0}(\tilS_{a})=\tilS_{a}$. Since $\tilS_{a}$ acts on $\Sp_{\bP,a}$, it also acts on $\spcoh{}{\Sp_{\bP, a}}$. On the other hand, by Corollary \ref{c:eq loc Sp}, $\spcoh{}{\Sp_{\bP, a}}$ is the direct sum of $\spcoh{}{\Hess^{\tilw}_{\bP, a}}$ for $\tilw\in W_{\nu}\backslash\tilW/W_{\bP}$. Therefore $B_{a}$ acts on $\spcoh{}{\Sp_{\bP, a}}$ by acting on each direct summand $\spcoh{}{\Hess^{\tilw}_{\bP, a}}$. It is easy to see that they together give an action of   $\tilS_{a}\rtimes B_{a}$ on $\spcoh{}{\Sp_{\bP, a}}$.
\end{proof}

\subsection{Cohomology of homogeneous affine Springer fibers} In this subsection we will give a combinatorial formula for computing part of the cohomology of homogeneous affine Springer fibers. We will use the notation introduced in \S\ref{sss:intro ep} on $\Gnu$-equivariant cohomology with the equivariant parameter $\ep$ specialized to 1.

\begin{theorem}\label{thm:surj}
Let $\nu>0$ be a $\theta$-admissible slope and $a\in\frc(F)_{\nu}^{\rs}$.
\begin{enumerate}
\item Let $\fl^{\tilw}_{\bP,\nu}$ be the partial flag variety $L_{\nu}/L_{\nu}\cap\Ad(\tilw)\bP$ of $L_{\nu}$. The image of the restriction map
\begin{equation}\label{rest fl to Hess}
\cohog{*}{\fl^{\tilw}_{\bP, \nu}}\to \cohog{*}{\Hess^{\tilw}_{\bP, a}}
\end{equation}
is exactly the $\pi_{0}(S_{a})\rtimes B_{a}$-invariant part of the target.
\item Suppose further that $\nu$ is elliptic, then the image of the restriction map
\begin{equation*}
\spcoh{}{\Fl_{\bP}}\to \spcoh{}{\Sp_{\bP, a}}
\end{equation*}
is exactly the $S_{a}\rtimes B_{a}$-invariant part of the target.
\end{enumerate}
\end{theorem}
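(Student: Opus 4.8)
The plan is to prove part (1) for the Hessenberg varieties and then deduce part (2) from it by equivariant localization along the Hessenberg paving. Throughout I will use that $\Hess^{\tilw}_{\bP,a}$ is smooth and projective --- hence its rational cohomology is pure of weight equal to the cohomological degree --- which follows from the smoothness of $\tpi^{\tilw}_{\bP,\nu}$ over $\frg(F)^{\rs}_{\nu}$ in Theorem~\ref{p:paving}(3).

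For part (1) the key observation is that the total space $\tHess^{\tilw}_{\bP,\nu}$ of the Hessenberg family over \emph{all} of $\frg(F)_{\nu}\cong\gg_{\barnu}$ is the tautological sub-vector-bundle $\{(v,gP): g^{-1}v\in\frg(F)^{\tilw}_{\bP,\nu}\}$ of the trivial bundle $\gg_{\barnu}\times\fl^{\tilw}_{\bP,\nu}$ over the smooth projective partial flag variety $\fl^{\tilw}_{\bP,\nu}$ of $L_{\nu}$; in particular $\cohog{*}{\fl^{\tilw}_{\bP,\nu}}\xrightarrow{\sim}\cohog{*}{\tHess^{\tilw}_{\bP,\nu}}$, which is pure. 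The restriction map \eqref{rest fl to Hess} then factors as
\[
\cohog{k}{\fl^{\tilw}_{\bP,\nu}}\;\xrightarrow{\sim}\;\cohog{k}{\tHess^{\tilw}_{\bP,\nu}}\;\longrightarrow\;\cohog{k}{\tHess^{\tilw}_{\bP,\nu}|_{\frg(F)^{\rs}_{\nu}}}\;\longrightarrow\;\cohog{k}{\Hess^{\tilw}_{\bP,a}}.
\]
Because the discriminant complement is of positive codimension, the cokernel of the middle arrow has weights $>k$, so $W_{k}\cohog{k}{\tHess^{\tilw}_{\bP,\nu}|_{\frg(F)^{\rs}_{\nu}}}$ is the image of the left arrow; since both the left term and $\cohog{k}{\Hess^{\tilw}_{\bP,a}}$ are pure of weight $k$, strictness of morphisms of mixed Hodge structures shows that the image of the whole composite equals the image of the last arrow alone. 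Now $\tpi^{\tilw}_{\bP,\nu}$ is smooth and proper over $\frg(F)^{\rs}_{\nu}$ (Theorem~\ref{p:paving}(3)), so Deligne's degeneration theorem identifies this image with the monodromy-invariant part $\cohog{k}{\Hess^{\tilw}_{\bP,a}}^{\pi_{1}(\frg(F)^{\rs}_{\nu},\kappa(a))}$. Finally, since the Hessenberg family over $\frg(F)^{\rs}_{\nu}$ is $L_{\nu}$-equivariant, $\frg(F)^{\rs}_{\nu}\to\frc(F)^{\rs}_{\nu}$ is the $L_{\nu}$-orbit map with generic stabilizer $S$, and $L_{\nu}$ is connected (so loops inside $L_{\nu}$ act trivially on cohomology), the monodromy factors through --- and surjects onto --- an extension of $B_{a}=\pi_{1}(\frc(F)^{\rs}_{\nu})$ by $\pi_{0}(S_{a})$, and unwinding the descriptions of $\tilS,S$ and their component-group local systems from \S\ref{sss:tilS}--\ref{sss:pi1} identifies the resulting action with the $\pi_{0}(S_{a})\rtimes B_{a}$-action of the lemma preceding the theorem. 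This gives (1).

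For part (2), by Theorem~\ref{p:paving}(1)--(2) the $\Gnu$-fixed components of $\Fl_{\bP}$ are the $\fl^{\tilw}_{\bP,\nu}$ and the closed $\Gnu$-stable subscheme $\Sp_{\bP,a}$ has pure cohomology (Theorem~\ref{p:paving}(4)) and is projective for elliptic $\nu$ (Lemma~\ref{l:finite type}); both spaces are therefore $\Gnu$-equivariantly formal, and the localization isomorphisms give $\spcoh{}{\Fl_{\bP}}\cong\bigoplus_{\tilw}\cohog{*}{\fl^{\tilw}_{\bP,\nu}}$ and $\spcoh{}{\Sp_{\bP,a}}\cong\bigoplus_{\tilw}\cohog{*}{\Hess^{\tilw}_{\bP,a}}$ (the latter is Corollary~\ref{c:eq loc Sp}). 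By functoriality of localization with respect to the $\Gnu$-equivariant closed embedding $\Sp_{\bP,a}\hookrightarrow\Fl_{\bP}$, which carries $\Hess^{\tilw}_{\bP,a}=\Sp_{\bP,a}\cap\fl^{\tilw}_{\bP,\nu}$ into $\fl^{\tilw}_{\bP,\nu}$, the restriction $\spcoh{}{\Fl_{\bP}}\to\spcoh{}{\Sp_{\bP,a}}$ becomes $\bigoplus_{\tilw}$ of the maps \eqref{rest fl to Hess}. Applying part (1) to each summand, the image is $\bigoplus_{\tilw}\cohog{*}{\Hess^{\tilw}_{\bP,a}}^{\pi_{0}(S_{a})\rtimes B_{a}}$, which equals $\spcoh{}{\Sp_{\bP,a}}^{S_{a}\rtimes B_{a}}$ because $S_{a}=\pi_{0}(S_{a})$ is finite when $\nu$ is elliptic.

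The step I expect to be the main obstacle is the last one in part (1): pinning down the monodromy group of the Hessenberg family. One must check that the $L_{\nu}$-equivariant family descends to the quotient $[\frg(F)^{\rs}_{\nu}/L_{\nu}]$, that the resulting fundamental group is the extension of $B_{a}$ by $\pi_{0}(S_{a})$ anticipated above, and --- crucially --- that it acts through \emph{exactly} the geometrically defined $\pi_{0}(S_{a})\rtimes B_{a}$-action, using the identification $\tilS\cong\tt^{\rs}_{\barnu}\times_{\WW^{\Pi(\mu_m)}}\TT^{\Pi(\mu_m)}$ and the exact sequence $1\to S\to\tilS\to\Omega\to1$ from \S\ref{sss:tilS}. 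A secondary technical point, needed in part (2), is to verify that $\Gnu$-equivariant localization behaves as claimed on the ind-scheme $\Fl_{\bP}$ and is compatible with the closed embedding of $\Sp_{\bP,a}$; this is where the Hessenberg paving of Theorem~\ref{p:paving} enters essentially.
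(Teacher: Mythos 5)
Your proof is correct, but it takes a genuinely different route from the paper's, so it is worth comparing. The paper proves (1) by applying the decomposition theorem directly to the \emph{proper} morphism $\tpi^{\tilw}_{\bP,\nu}$ over all of $\frg(F)_{\nu}$, obtaining $\tpi^{\tilw}_{\bP,\nu,*}\QQ=\bigoplus_i K_i$ with $K_i$ shifted simple perverse sheaves; after identifying $\cohog{*}{\fl^{\tilw}_{\bP,\nu}}\cong\cohog{*}{\frg(F)_{\nu},\bigoplus K_i}$ via the $\Gnu$-contraction, the restriction to the stalk $K_{i,\kappa(a)}$ is seen to be nonzero precisely when $K_i|_{\frg(F)^{\rs}_{\nu}}$ is a shifted constant sheaf, and then it is an isomorphism. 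This packages the entire question into a simplicity-of-summands analysis with no mixed Hodge theory. You instead work only over the regular semisimple locus, using Deligne's degeneration for the smooth proper family together with a weight/strictness argument to show that restricting from the full family and restricting from the $\rs$-locus have the same image in $\cohog{k}{\Hess^{\tilw}_{\bP,a}}$. This works --- the key inputs (purity of $\cohog{*}{\Hess^{\tilw}_{\bP,a}}$, weights of $\cohog{k}{\tHess|_{\rs}}$, strictness) are all valid --- but it trades a perverse-sheaf argument for a mixed Hodge structure argument, and it is a bit longer. The one step where the two arguments reconverge is the identification of the $\pi_1(\frg(F)^{\rs}_{\nu})$-action with the $\pi_0(S_a)\rtimes B_a$-action: you correctly flag this as the delicate point. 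The mechanism is exactly the one the paper uses: the family is $L_{\nu}$-equivariant, $L_{\nu}$ is connected (so $\pi_1(\frg(F)^{\rs}_{\nu})\surj\pi_1([\frg(F)^{\rs}_{\nu}/L_{\nu}])$), and the stack isomorphism $[\frc(F)^{\rs}_{\nu}/S]\cong[\frg(F)^{\rs}_{\nu}/L_{\nu}]$ identifies that fundamental group with the extension of $B_a$ by $\pi_0(S_a)$ acting as in the preceding lemma. For (2), both you and the paper deduce it from (1) via Corollary~\ref{c:eq loc Sp}; your statement $\spcoh{}{\Fl_{\bP}}\cong\bigoplus_{\tilw}\cohog{*}{\fl^{\tilw}_{\bP,\nu}}$ should be handled with care since $\Fl_{\bP}$ is only ind-finite-type (so the left side is a limit), but this does not affect the computation of the image because only finitely many $\Hess^{\tilw}_{\bP,a}$ are nonempty.
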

\begin{proof} Again we only prove the $\bP=\bI$ case to save notation. (2) is a consequence of (1) by Corollary \ref{c:eq loc Sp}. Therefore it suffices to prove (1).

(1) By the decomposition theorem, we may fix a decomposition $\tpi^{\tilw}_{\nu,*}\QQ=\oplus_{i}K_{i}$ where $K_{i}$ is a shifted simple perverse sheaf.  Also note that $\cohog{*}{\fl^{\tilw}_{\nu}}\cong\cohog{*}{\tHess^{\tilw}_{\nu}}\cong\cohog{*}{\frg(F)_{\nu}, \oplus K_{i}}$ since the family $\tHess^{\tilw}_{\nu}$ contracts to its central fiber. We may rewrite \eqref{rest fl to Hess} as
\begin{equation}\label{rewrite res}
\bigoplus_{i}\cohog{*}{\frg(F)_{\nu}, K_{i}}\to \bigoplus_{i} K_{i,\kappa(a)}.
\end{equation}
By Corollary \ref{c:Hess loc sys}, each $K_{i}|_{\frg(F)^{\rs}_{\nu}}$ is a shifted local system. Therefore the restriction map $\cohog{*}{\frg(F)_{\nu}, K_{i}}\to K_{i,\kappa(a)}$ is nonzero if and only if $K_{i}|_{\frg(F)^{\rs}_{\nu}}$ is a shifted constant sheaf, in which case it is an isomorphism.

Restricting to the Kostant section, $K_{i}$ gives rise to a shifted local system $K'_{i}$ on $\frc(F)^{\rs}_{\nu}$ equipped with an action of the group scheme $\pi_{0}(S/\frc(F)^{\rs}_{\nu})$ over $\frc(F)^{\rs}_{\nu}$, whose stalk at $a$ carries an action of $\pi_{0}(S_{a})\rtimes B_{a}$. The decomposition $\cohog{*}{\Hess^{\tilw}_{\nu,a}}\cong\oplus_{i}K'_{i,a}$ respects the $\pi_{0}(S_{a})\rtimes B_{a}$-action. The representation $K'_{i,a}$ is trivial if and only if $K'_{i}$ is a shifted constant sheaf on $\frc(F)^{\rs}_{\nu}$ with the trivial $\pi_{0}(S/\frc(F)^{\rs}_{\nu})$-action, if and only if $K_{i}|_{\frg(F)^{\rs}_{\nu}}$ is a shifted constant sheaf $L_{\nu}$-equivariantly because $[\frc(F)^{\rs}_{\nu}/S]\cong[\frg(F)^{\rs}_{\nu}/L_{\nu}]$. Since $L_{\nu}$ is connected, there is only one $L_{\nu}$-equivariant structure on the constant sheaf on $\frg(F)^{\rs}_{\nu}$, hence $K'_{i,a}$ is trivial as a $\pi_{0}(S_{a})\rtimes B_{a}$-module if and only if $K_{i}|_{\frg(F)^{\rs}_{\nu}}$ is a shifted constant sheaf. Combined with the argument in the previous paragraph, we see that the image of \eqref{rewrite res} can be identified with the sum of those $K'_{i,a}$ which are trivial $\pi_{0}(S_{a})\rtimes B_{a}$-modules, i.e., the $\pi_{0}(S_{a})\rtimes B_{a}$-invariants of $\cohog{*}{\Hess^{\tilw}_{\nu,a}}$.
\end{proof} 
  
\begin{remark}\label{r:Flc} The finite abelian group $\Omega$ acts on $\Fl$ and permutes its connected components simply transitively. Let $\Fl^{\c}$ be the neutral component of  $\Fl$. Then Theorem \ref{thm:surj}(2) implies that when $\nu>0$ is elliptic, there is a natural surjective map
\begin{equation*}
\spcoh{}{\Fl^{\c}_{\bP}}\cong\spcoh{}{\Fl_{\bP}}^{\Omega}\surj\spcoh{}{\Sp_{\bP, a}}^{\pi_0(\tilS_{a})\rtimes B_{a}}.
\end{equation*}
In fact, this follows from the exact sequence in Lemma \ref{l:local Neron}\eqref{tilL} and taking $\Omega$-invariants to the statement of Theorem \ref{thm:surj}(2).
\end{remark} 
  
In Section \ref{s:examples} we shall compute explicitly the action of $S_{a}\rtimes B_{a}$  on $\cohog{*}{\Hess^{\tilw}_{a}}$ in all the cases where $G$ has rank two and $\nu$ elliptic. In particular, we will see that the action of the braid group $B_{a}$ does not necessarily factor through the little Weyl group $\WW^{\Pi(\mu_{m})}$. The examples also show that the part of $\eqnu{*}{\Sp_{a}}^{S_{a}}$ that is not invariant under $B_a$ may contain odd degree classes.

\begin{exam}\label{ex:A2} We consider the case $G=\SL_{3}$ ($e=1$) and $\nu=d/3$ for a positive integer $d$ prime to 3. The regular homomorphism $\mu_{3}\incl\WW=S_{3}$ contains the Coxeter elements in the image, and $L_{\nu}=\TT$. The space $\frg(F)_{\nu}$ is spanned by the affine root spaces of $\{\alpha_{1},\alpha_{2},-\alpha_{1}-\alpha_{2}+d\delta\}$. The open subset $\frg(F)^{\rs}_{\nu}$ consists of those $\gamma$ with nonzero component in each of the three affine root spaces. For $a\in\frc(F)^{\rs}_{\nu}$, the variety $\Hess^{\tilw}_{a}$ is nonempty if and only if $\frg(F)_{\nu}\subset\Ad(\tilw)\Lie\bI$, in which case it is a point. Clearly $S_{a}\rtimes B_{a}$ acts trivially on $\cohog{*}{\Hess^{\tilw}_{a}}$. The condition that $\Hess^{\tilw}_{a}$ is nonempty is equivalent to the condition that the alcove of $\Ad(\tilw)\bI$ is contained in the triangle defined by
\begin{equation}\label{triangle}
\jiao{\alpha_{1},x}\geq0, \jiao{\alpha_{2},x}\geq0, \jiao{\alpha_{1}+\alpha_{2},x}\leq d.
\end{equation}
There are $d^{2}$ alcoves in this area. We conclude that
\begin{equation*}
\dim\cohog{*}{\Sp_{a}}=\dim\cohog{*}{\Sp_{a}}^{S_{a}\rtimes B_{a}}=d^{2}.
\end{equation*}
Below we will define the analog of the triangle \eqref{triangle} in general, and see that the homogeneity of $\dim\cohog{*}{\Sp_{a}}$ in $d$ is a general phenomenon.
\end{exam}

\subsubsection{Clans}\label{sss:clans} In notations of the \S\ref{sss:MP}, the building $\frA$ contains the point $\nu\rho^\vee$. The walls of the apartment $\frA$ that pass through $\nu\rho^{\vee}$ are the reflection hyperplanes of the Weyl group $W_{\nu}\subset \tilde{W}$ of the group $L_{\nu}$ (the Levi factor of the parahoric $\bP_{\nu}$ corresponding to the facet containing $\nu\rho^{\vee}$), and these correspond to the affine roots $\alpha$ such that $\alpha(\nu\rho^\vee)=0$. 

The affine roots that appear in the graded piece $\frg(F)_{\nu}$ are those satisfying the equation $\alpha(\nu\rho^\vee)=\nu$. The reflection hyperplanes of these affine roots are called {\em $\nu$-walls}, and they are oriented with a normal vector pointing to $\nu\rho^\vee$.

In \cite{VV} connected components of $\frA-\bigcup(\nu\textup{-walls})$ are called {\it clans}. A clan is a union of alcoves $\tilde{w}\Delta$ where $\Delta$ is the fundamental alcove. Each alcove $\tilw\Delta$ contributes a Hessenberg variety $\Hess^{\tilw}_{a}=\Hess_{a}(L_{\nu}/L_{\nu}\cap\Ad(\tilw)\bI, \frg(F)_{\nu}\supset\frg(F)^{\tilw}_{\nu})$. This variety only depends on the coset of $\tilw$ in $W_\nu\backslash \tilde{W}$.  Thus we only need to study clans in the dominant chamber of $W_\nu$ which is a cone with the vertex $\nu\rho^\vee$.

\begin{lemma}\label{l:same clan} If the alcoves $\tilw\Delta$ and $\tilw'\Delta$ are in the same clan, then there is canonical isomorphism $\Hess^{\tilw}_{\nu}\cong\Hess^{\tilw'}_{\nu}$ over $\frg(F)_{\nu}$. Moreover, if $\Hess^{\tilw}_{a}$ is nonempty for some $a\in\frc(F)^{\rs}_{\nu}$, then its codimension in the partial flag variety $\fl^{\tilw}_{\nu}$ is equal to the number of $\nu$-walls that separate the alcove $\tilw\Delta$ from $\nu\rho$.
\end{lemma}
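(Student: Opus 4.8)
The first claim — that $\Hess^{\tilw}_{\nu}\cong\Hess^{\tilw'}_{\nu}$ over $\frg(F)_{\nu}$ when $\tilw\Delta$ and $\tilw'\Delta$ lie in the same clan — should be reduced to the case where the alcoves $\tilw\Delta$ and $\tilw'\Delta$ share a common codimension-one face, since any two alcoves in a clan are connected by a gallery that never crosses a $\nu$-wall. So suppose $\tilw\Delta$ and $\tilw'\Delta$ are adjacent, separated by the reflection hyperplane $H_\beta$ of some affine root $\beta$ with $\beta(\nu\rho^\vee)\ne\nu$ (this is exactly the condition that $H_\beta$ is not a $\nu$-wall, and also $\beta(\nu\rho^\vee)\ne 0$ if we are in the interior of the dominant chamber of $W_\nu$, which we may arrange). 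Write $\tilw'=\tilw r_\beta'$ where $r_\beta'$ is the reflection fixing the common face. The point is to compare the two subspaces $\frg(F)^{\tilw}_{\bP,\nu}=\frg(F)_\nu\cap\Ad(\tilw)\Lie\bI$ and $\frg(F)^{\tilw'}_{\bP,\nu}=\frg(F)_\nu\cap\Ad(\tilw')\Lie\bI$ inside $\frg(F)_\nu$, together with the parabolics $L_\nu\cap\Ad(\tilw)\bI$ and $L_\nu\cap\Ad(\tilw')\bI$ of $L_\nu$. Since $\beta(\nu\rho^\vee)$ is either $>\nu$ or $<\nu$ and never equal to $\nu$ or $0$, the affine root space $\frg(F)_{\pm\beta}$ either does not appear in $\frg(F)_\nu$ at all, or appears on the same side of both $\Ad(\tilw)\Lie\bI$ and $\Ad(\tilw')\Lie\bI$; and the Levi $L_\nu$ does not see the root $\bar\beta$. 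A short root-space bookkeeping argument along the lines of \S\ref{sss:MP} then shows that $\frg(F)^{\tilw}_{\bP,\nu}$ and $\frg(F)^{\tilw'}_{\bP,\nu}$ coincide as subspaces of $\frg(F)_\nu$ under the canonical identification of the two partial flag varieties $L_\nu/L_\nu\cap\Ad(\tilw)\bI$ and $L_\nu/L_\nu\cap\Ad(\tilw')\bI$ induced by $r_\beta'\in W_\nu\backslash\{1\}$ — wait, more precisely the two parabolics are conjugate by an element of $L_\nu$, since crossing a non-$\nu$-wall that is also not a wall of $W_\nu$ does not change the parabolic type, and crossing a wall of $W_\nu$ is handled by the observation that the Hessenberg variety only depends on the coset in $W_\nu\backslash\tilW$. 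Carrying the family version over $\frg(F)_\nu$ through this identification gives the desired isomorphism. The whole argument is functorial in $a$, so it is an isomorphism of families.

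For the codimension statement: assume $\Hess^{\tilw}_{a}\ne\varnothing$ for some $a\in\frc(F)^{\rs}_{\nu}$, equivalently $\gamma=\kappa(a)$ (conjugate to $Xt^\nu$ with $X\in\tt^\rs$) has a point in $\Ad(\tilw)\Lie\bI$. By Theorem \ref{p:paving}(3) the Hessenberg variety $\Hess^{\tilw}_{a}$ is smooth, and it is the zero locus of the section of the vector bundle $\calV=L_\nu\twtimes{L_\nu\cap\Ad(\tilw)\bI}(\frg(F)_\nu/\frg(F)^{\tilw}_{\bP,\nu})$ determined by $\gamma(1)\in\frg_{\barnu}$; when this section is transversal, the codimension of $\Hess^{\tilw}_{a}$ in $\fl^{\tilw}_\nu$ equals $\rank\calV=\dim(\frg(F)_\nu/\frg(F)^{\tilw}_{\bP,\nu})$. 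Now $\frg(F)_\nu$ is the span of the affine root spaces $\frg(F)_\alpha$ with $\alpha(\nu\rho^\vee)=\nu$ (i.e.\ the $\nu$-walls), and $\frg(F)^{\tilw}_{\bP,\nu}=\frg(F)_\nu\cap\Ad(\tilw)\Lie\bI$ is the span of those among them lying in $\Ad(\tilw)\Lie\bI$, namely those $\alpha$ with $\alpha>0$ relative to the alcove $\tilw\Delta$. Hence $\dim(\frg(F)_\nu/\frg(F)^{\tilw}_{\bP,\nu})$ counts exactly the affine roots $\alpha$ with $\alpha(\nu\rho^\vee)=\nu$ and $\tilw^{-1}\alpha<0$, which is precisely the number of $\nu$-walls separating $\tilw\Delta$ from the fundamental alcove near $\nu\rho^\vee$ — that is, separating $\tilw\Delta$ from $\nu\rho^\vee$, since $\nu\rho^\vee$ sits on the positive side of every $\nu$-wall by the orientation convention. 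This matches the claim.

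The one point that needs genuine care — and which I expect to be the main obstacle — is the compatibility of the two parabolics $L_\nu\cap\Ad(\tilw)\bI$ and $L_\nu\cap\Ad(\tilw')\bI$ under wall-crossing in the first part: crossing a non-$\nu$-wall can still be crossing a wall of $W_\nu$, and then the two partial flag varieties $\fl^{\tilw}_{\bP,\nu}$ and $\fl^{\tilw'}_{\bP,\nu}$ are genuinely different (though abstractly isomorphic), and one must check that the section of $\calV$ cut out by $\gamma(1)$ is carried to the corresponding section under the isomorphism. The cleanest way around this is to invoke, at the outset, the remark in \S\ref{sss:clans} that $\Hess^{\tilw}_{a}$ depends only on the coset $\tilw\in W_\nu\backslash\tilW$, which lets us always choose the representative $\tilw$ of minimal length in its $W_\nu$-coset so that $L_\nu\cap\Ad(\tilw)\bI$ is the standard Borel of $L_\nu$; then all the Hessenberg varieties in a clan live in the \emph{same} flag variety $L_\nu/B_{L_\nu}$, and the isomorphisms become identities of subvarieties, reducing the whole first part to the elementary root-space computation above.
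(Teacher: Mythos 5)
Your proof is correct and takes essentially the same approach as the paper: for the first claim, the affine root spaces appearing in $\frg(F)^{\tilw}_{\nu}$ and in $L_{\nu}\cap\Ad(\tilw)\Lie\bI$ are the same for alcoves in the same clan (after reducing to the dominant $W_{\nu}$-chamber, which you correctly flag as the point needing care since the paper's stated equalities rely on the reduction to $W_{\nu}$-cosets made in \S\ref{sss:clans}); for the second claim, both use the transversality from Theorem \ref{p:paving}(3) so that the codimension is $\rank\calV = \dim\left(\frg(F)_{\nu}/\frg(F)^{\tilw}_{\nu}\right)$, counted by the affine roots $\alpha$ with $\alpha(\nu\rho^{\vee})=\nu$ and $\tilw^{-1}\alpha<0$, which is exactly the number of separating $\nu$-walls. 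The only thing to tighten in your ``cleanest'' fix is that replacing $\tilw,\tilw'$ by their minimal-length $W_{\nu}$-coset representatives conjugates the subspace $\frg(F)^{\tilw}_{\nu}$ by the corresponding elements of $W_{\nu}$, so one should note that this conjugation is compatible with the clan structure; but since $\frg(F)^{\tilw}_{\nu}$ is in fact constant across the whole clan (only $\nu$-walls change it), this is harmless.
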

\begin{proof}
Indeed, we have identities $L_{\nu}\cap\Ad(\tilw)\bI=L_{\nu}\cap\Ad(\tilw')\bI$ and $\frg(F)^{\tilw}_{\nu}=\frg(F)^{\tilw'}_{\nu}$ by identifying the affine root spaces appearing on both sides of the equalities.

Since $\Hess^{\tilw}_{a}$ is the zero locus of a generic section of the vector bundle $L_{\nu}\twtimes{L_{\nu}\cap\Ad(\tilw)\bI}(\frg(F)_{\nu}/\frg(F)^{\tilw}_{\nu})$, we have $\dim \Hess^{\tilw}_{a}=\dim(L_{\nu}/L_{\nu}\cap\Ad(\tilw)\bI)-\dim(\frg(F)_{\nu}/\frg(F)^{\tilw}_{\nu})$. Now observe that $\frg(F)_{\nu}/\frg(F)^{\tilw}_{\nu}$ is the direct sum of affine roots spaces of those $\alpha$ such that $\alpha(\nu\rho^\vee)=\nu$ and $\alpha(\tilw\Delta^{\circ})<0$ ($\Delta^{\circ}$ is the interior of $\Delta$).
\end{proof}

\begin{remark} The expected dimension of the $\Hess^{\tilw}_{a}$ can be calculated by the above lemma. If the expected dimension is negative then the corresponding Hessenberg 
variety is empty. The converse is not true: there are $\tilw$ such that $\dim(L_{\nu}/L_{\nu}\cap\Ad(\tilw)\bI)\geq\dim(\frg(F)_{\nu}/\frg(F)^{\tilw}_{\nu})$ yet $\Hess^{\tilw}_{a}$ is still empty for all $a\in\frc(F)^{\rs}_{\nu}$.
\end{remark}

\begin{cor} For $\nu>0$ elliptic and $a\in\frc(F)^{\rs}_{\nu}$, if $\Hess^{\tilw}_{a}$ is nonempty, then $\tilw\Delta$ lies in a bounded clan.
\end{cor}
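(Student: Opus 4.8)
\emph{Proof proposal.} The plan is to deduce boundedness of the clan from the finite‑typeness of the homogeneous affine Springer fiber in the elliptic case. First I would invoke Lemma~\ref{l:finite type}(2): since $\nu$ is elliptic, $\Sp_{a}=\Sp_{\bI,a}$ is a projective scheme, in particular of finite type, so it is contained in a finite union of Schubert varieties of $\Fl$ and therefore meets only finitely many $\bP_{\nu}$‑orbits. By Theorem~\ref{p:paving}(2) the intersections $(\bP_{\nu}\tilw\bI/\bI)\cap\Sp_{a}$, indexed by $\tilw\in W_{\nu}\backslash\tilW$, form a pavement of $\Sp_{a}$ in which the stratum attached to $\tilw$ is an affine‑space bundle over $\Hess^{\tilw}_{a}$; hence this stratum is non‑empty exactly when $\Hess^{\tilw}_{a}\neq\varnothing$. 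Combining the two observations, the set
\begin{equation*}
\Theta:=\{\,\tilw\in W_{\nu}\backslash\tilW:\ \Hess^{\tilw}_{a}\neq\varnothing\,\}
\end{equation*}
is finite (here $\Hess^{\tilw}_{a}$ depends only on the coset of $\tilw$, by Lemma~\ref{l:same clan}).

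Next I would transport non‑emptiness along a clan. Suppose $\tilw\in\tilW$ with $\Hess^{\tilw}_{a}\neq\varnothing$, and let $C$ be the clan containing the alcove $\tilw\Delta$. For any alcove $\tilw'\Delta\subset C$, Lemma~\ref{l:same clan} gives a canonical isomorphism $\Hess^{\tilw'}_{\nu}\cong\Hess^{\tilw}_{\nu}$ over $\frg(F)_{\nu}$, hence fiberwise $\Hess^{\tilw'}_{a}\cong\Hess^{\tilw}_{a}\neq\varnothing$, so the class of $\tilw'$ in $W_{\nu}\backslash\tilW$ lies in $\Theta$. Thus every alcove contained in $C$ arises from an element of $\tilW$ whose image in $W_{\nu}\backslash\tilW$ lies in $\Theta$. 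Since each left coset $W_{\nu}\tilw'$ accounts for the finite set of alcoves $\{\,w\tilw'\Delta:\ w\in W_{\nu}\,\}$, which has at most $|W_{\nu}|$ elements, the clan $C$ contains at most $|\Theta|\cdot|W_{\nu}|$ alcoves. A clan that is a union of finitely many alcoves is a bounded subset of the apartment $\frA$, which is the desired statement.

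The argument is short, and I do not expect a substantive obstacle beyond two pieces of routine bookkeeping that should be made explicit: (i) that $\Sp_{a}$ of finite type forces only finitely many non‑empty strata in the pavement — this rests on $\Sp_{a}$ being bounded inside the ind‑scheme $\Fl$ and $\bP_{\nu}$‑orbits being finite unions of Schubert cells; and (ii) the translation between "alcoves contained in a clan" and "cosets in $W_{\nu}\backslash\tilW$", using that the map $\tilw\mapsto\tilw\Delta$ has fibers that are cosets of the finite group $\Omega_{\bI}$ and that a fixed $W_{\nu}$‑coset meets only finitely many alcoves. Once these are pinned down, the chain "$\nu$ elliptic $\Rightarrow\Sp_{a}$ of finite type $\Rightarrow\Theta$ finite $\Rightarrow$ any clan meeting a non‑empty Hessenberg stratum has finitely many alcoves, hence is bounded" is immediate.
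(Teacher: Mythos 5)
Your argument is correct, and it is essentially the paper's argument with one cosmetic change. The paper extracts the finiteness of the set of nonempty Hessenberg varieties by observing that the $\QQ[\ep]$-module $\eqnu{*}{\Sp_a}$ is finitely generated (because $\Sp_a$ is projective by Lemma~\ref{l:finite type}(2)) and so only finitely many terms on the right of the localization isomorphism \eqref{localize Sp} (Corollary~\ref{c:eq loc Sp}) can be nonzero; you instead extract the same finiteness directly from the pavement in Theorem~\ref{p:paving}(2), noting that a finite-type scheme meets only finitely many of the $\bP_{\nu}$-orbit strata. Both routes then conclude identically via Lemma~\ref{l:same clan}: an unbounded clan would force infinitely many cosets $\tilw\in W_{\nu}\backslash\tilW$ with $\Hess^{\tilw}_{a}\neq\varnothing$, a contradiction. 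Your version is marginally more elementary in that it avoids equivariant localization; the paper's version is more uniform with the cohomological computations that surround this corollary. Your bookkeeping in step (ii) — each $W_{\nu}$-coset contributes at most $|W_{\nu}|$ alcoves, and $\tilw\mapsto\tilw\Delta$ has fibers of size $|\Omega_{\bI}|$ — is the right content and is correct, but could be trimmed: it suffices to observe that a fixed $W_{\nu}$-coset meets at most $|W_{\nu}|$ alcoves, so finitely many nonempty cosets give finitely many alcoves, and a clan with finitely many (closed) alcoves is bounded since alcoves are bounded.
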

\begin{proof}
Since $\Sp_{a}$ is of finite type when $\gamma$ is elliptic by Lemma \ref{l:finite type}, there can be only finitely many nonzero terms on the right side of \eqref{localize Sp}. If $\tilw\Delta$ lay in an unbounded clan, the same $\Hess^{\tilw}_{a}$ would be appear infinitely many times on the right side of \eqref{localize Sp} by Lemma \ref{l:same clan}, which is a contradiction.
\end{proof}

The following proposition reduces the dimension calculation to the case $\nu=1/m$.

\begin{prop}[see also {\cite[Proposition 3.4.1, Corollary 3.4.2]{VV}}]\label{p:reduce to 1/m} Let $\nu=d_{1}/m_{1}>0$ be an elliptic slope in lowest terms. For $a\in\frc(F)^{\rs}_{\nu}$ and $b\in\frc(F)^{\rs}_{1/m_{1}}$, we have an isomorphism $\tilS_{a}\rtimes B_{a}\cong \tilS_{b}\rtimes B_{b}$, and an isomorphism 
\begin{equation}\label{dr times}
\spcoh{}{\Sp_{a}}\cong \spcoh{}{\Sp_{b}}^{\oplus d^{r}_{1}}
\end{equation}
compatible with the actions of $\tilS_{a}\rtimes B_{a}\cong \tilS_{b}\rtimes B_{b}$. Here $r$ is the $F$-rank of the $G$. In particular, 
\begin{equation}\label{dim Sp dr}
\dim\spcoh{}{\Sp_{a}}^{S_{a}\rtimes B_{a}}=d^r_{1} \dim \spcoh{}{\Sp_{b}}^{S_{b}\rtimes B_{b}}.
\end{equation}
\end{prop}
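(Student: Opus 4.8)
The plan is to reduce everything to a statement about the geometry of homogeneous affine Springer fibers together with the torus $\Gnu$ acting on them, using the combinatorial structure of clans developed in \S\ref{sss:clans}. The key point is that by Corollary \ref{c:eq loc Sp}, after specializing $\ep=1$, the cohomology $\spcoh{}{\Sp_a}$ decomposes as a direct sum of $\cohog{*}{\Hess^{\tilw}_a}$ over $\tilw \in W_\nu \backslash \tilW$, and by Lemma \ref{l:same clan} the Hessenberg variety $\Hess^{\tilw}_a$ depends only on the clan containing $\tilw\Delta$. So the dimension comparison amounts to comparing the set of bounded clans in the apartment for slope $\nu = d_1/m_1$ with that for slope $1/m_1$.

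First I would observe that, by \S\ref{sss:MP}, the parahoric $\bP_\nu$ and its Levi $L_\nu$ depend only on the facet containing $\nu\rho^\vee$, and by Lemma \ref{l:identify nu} the pair $(\tilL_\nu, \frg(F)_\nu)$ (and hence $(L_\nu, \frg(F)_\nu)$) for slope $d_1/m_1$ is canonically identified with that for slope $1/m_1$ when $G$ is not of type $\leftexp{3}{D}_4$ with $3 \mid m_1$ (and with a non-canonical identification otherwise); this already gives the isomorphism $\tilS_a \rtimes B_a \cong \tilS_b \rtimes B_b$ once one notes that $\frc(F)^{\rs}_\nu$ and $\frc(F)^{\rs}_{1/m_1}$ have the same fundamental group and the same family of symmetry groups (both are governed by the little Weyl group $\WW^{\Pi(\mu_m)}$ of \S\ref{sss:Cartan}, which depends only on $m_1$). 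Next I would set up the scaling map: the $\nu$-walls for slope $\nu = d_1/m_1$ are the affine hyperplanes $\alpha(\nu\rho^\vee) = \nu$, which after dilating the apartment by $d_1$ about the point $\nu\rho^\vee$ correspond exactly to the $1/m_1$-walls. More precisely, the dilation $x \mapsto \nu\rho^\vee + d_1(x - \nu\rho^\vee)$ — or rather its inverse — carries the clan decomposition for $1/m_1$ to a refinement of the clan decomposition for $d_1/m_1$, exhibiting each bounded clan for slope $1/m_1$ as contributing a $d_1^r$-fold family (indexed by a coset representative lattice of size $d_1^r$, since the dominant cone of $W_\nu$ has dimension $r = \rank\HH$) of clans of slope $\nu$ carrying the same Hessenberg variety.

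Then I would assemble the isomorphism \eqref{dr times}: fix $b \in \frc(F)^{\rs}_{1/m_1}$ and $a \in \frc(F)^{\rs}_\nu$; using the identification of $\frg(F)_\nu$ with $\frg(F)_{1/m_1}$ and the fact that $\Hess^{\tilw}_a$ depends only on the clan of $\tilw\Delta$, each Hessenberg variety appearing in the clan decomposition for $\nu$ is isomorphic (compatibly with the $S_a \rtimes B_a$-action, which is cut out the same way from the $L_\nu$-equivariant structure) to one appearing for $1/m_1$, with multiplicity $d_1^r$. Summing over clans via Corollary \ref{c:eq loc Sp} gives \eqref{dr times}, and taking $(S_a \rtimes B_a)$-invariants — which commutes with the direct sum because the decomposition respects the action — yields \eqref{dim Sp dr}.

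The main obstacle I expect is making the multiplicity $d_1^r$ completely precise: one must check carefully that the dilation argument produces exactly $d_1^r$ copies, no more and no fewer, of each bounded clan — equivalently that the lattice $\L \subset \xcoch(\TT)_{\mu_e}$ by which $\Wa$ translates intersects the relevant cone in the right way, and that boundedness is preserved under the scaling (an unbounded clan stays unbounded, so only bounded clans contribute on both sides, consistent with finite type from Lemma \ref{l:finite type} and the Corollary preceding this Proposition). I would handle the $\leftexp{3}{D}_4$ exceptional case by invoking the non-canonical identification from Lemma \ref{l:identify nu}, which suffices since the statement only asserts existence of an isomorphism. The compatibility with the braid group action is essentially formal once one knows the Hessenberg families over $\frc(F)^{\rs}_\nu$ and $\frc(F)^{\rs}_{1/m_1}$ are pulled back from the same family over $[\frg(F)^{\rs}_\nu/L_\nu]$, as in the proof of Theorem \ref{thm:surj}.
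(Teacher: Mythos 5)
Your overall strategy — decompose $\spcoh{}{\Sp_a}$ via Corollary~\ref{c:eq loc Sp} into Hessenberg contributions, use Lemma~\ref{l:same clan} to reduce to clan bookkeeping, and invoke Lemma~\ref{l:identify nu} for the group-theoretic identification — is exactly the paper's approach, and the part about $\tilS_a\rtimes B_a \cong \tilS_b\rtimes B_b$ and the ${}^3\!D_4$ caveat is handled correctly. However, the middle step has a genuine geometric error.

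You describe the comparison via the dilation $x \mapsto \nu\rho^\vee + d_1(x-\nu\rho^\vee)$ (centered at $\nu\rho^\vee$) and claim it exhibits each bounded $1/m_1$-clan as contributing a $d_1^r$-fold \emph{family of clans} of slope $\nu$. Both parts are wrong. First, the relevant map is the dilation by $d_1$ centered at the \emph{special vertex $\frF_\bG$} (the origin), not at $\nu\rho^\vee$: a $\nu'$-wall for $\nu'=1/m_1$ has the form $\{\overline\alpha(x)=-n'\}$ where $\alpha'=\overline\alpha+n'\delta$ satisfies $\alpha'(\nu'\rho^\vee)=\nu'$, and the corresponding $\nu$-wall is $\{\overline\alpha(y)=-d_1 n'\}$; the map $x\mapsto d_1 x$ carries the former to the latter (and carries $\nu'\rho^\vee$ to $\nu\rho^\vee$, and the $W_{\nu'}$-walls to the $W_\nu$-walls). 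Dilation about $\nu\rho^\vee$ fixes $\nu\rho^\vee$, sends $\nu'\rho^\vee$ to $(2-d_1)\nu\rho^\vee\neq\nu\rho^\vee$, and does not carry the $\nu'$-wall system onto the $\nu$-wall system, so the clan decompositions do not correspond under your map. Second, the correct relation between clans is a \emph{bijection}, not a $d_1^r$-to-$1$ covering: the scaling carries each bounded $\nu'$-clan onto a single bounded $\nu$-clan, which however contains $d_1^r$ times as many alcoves. The factor $d_1^r$ that enters \eqref{dr times} comes from this alcove count ratio within corresponding clans, since the sum in Corollary~\ref{c:eq loc Sp} runs over cosets $W_\nu\backslash\tilW$ (i.e.\ alcoves in the dominant chamber), with each alcove in a clan contributing the same Hessenberg variety. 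Your ``refinement''/``coset lattice'' picture yields the same multiplicity by accident, but it is not what the geometry does, and as written the step would not survive scrutiny.
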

\begin{proof}
Let $\nu'=1/m_{1}$. Recall from Lemma \ref{l:identify nu} we have an isomorphism of pairs $(\tilL_{\nu}, \frg(F)_{\nu})\cong(\tilL_{\nu'}, \frg(F)_{\nu'})$. This gives an isomorphism $\frc(F)^{\rs}_{\nu}\cong\frc(F)^{\rs}_{\nu'}$ together with an isomorphism between their stabilizer group schemes $\tilS$. Therefore we get an isomorphism $\tilS_{a}\rtimes B_{a}\cong \tilS_{b}\rtimes B_{b}$, well-defined up to conjugation by either $B_{a}$ or $B_{b}$. 

The apartment $\frA$ together with $\nu$-walls is the $d_{1}$-fold scale (with the center at the origin corresponding to the special parahoric $\bG$) of the same apartment $\frA$ with $\nu'$-walls. In particular we have a bijection between the $\nu$-clans and the $\nu'$-clans. 

For $\tilw\Delta$ in a $\nu$-clan and $\tilw'\Delta$ in the corresponding $\nu'$-clan, the isomorphism $\frg(F)_{\nu}\cong\frg(F)_{\nu'}$ in Lemma \ref{l:identify nu} restricts to an isomorphism $\frg(F)^{\tilw}_{\nu}\cong\frg(F)^{\tilw'}_{\nu'}$. Also the isomorphism $L_{\nu}\cong L_{\nu'}$ there induces an isomorphism $\fl^{\tilw}_{\nu}\cong\fl^{\tilw'}_{\nu'}$. Therefore we have a $L_{\nu}\cong L_{\nu'}$-equivariant isomorphism $\Hess^{\tilw}_{\nu}\cong\Hess^{\tilw'}_{\nu'}$ over $\frg(F)_{\nu}\cong\frg(F)_{\nu'}$. This induces an isomorphism of $L_{\nu}\cong L_{\nu'}$-equivariant local systems $\bR^{i}\tpi^{\tilw}_{\nu,*}\QQ|_{\frg(F)^{\rs}_{\nu}}\cong\bR^{i}\tpi^{\tilw'}_{\nu',*}\QQ|_{\frg(F)^{\rs}_{\nu'}}$. Taking stalks at $a$ and $b$ we get a non-canonical isomorphism $\cohog{*}{\Hess^{\tilw}_{a}}\cong\cohog{*}{\Hess^{\tilw'}_{b}}$ equivariant under $S_{a}\rtimes B_{a}\cong S_{b}\rtimes B_{b}$. 

Applying Corollary \ref{c:eq loc Sp} to $\Sp_{a}$ and $\Sp_{b}$, the contribution of Hessenberg varieties to $\spcoh{}{\Sp_{a}}$ in a $\nu$-clan is $d^{r}$ times the contribution of Hessenberg varieties to $\spcoh{}{\Sp_{b}}$ in a $\nu'$-clan because the ratio between sizes of the clans is $d^{r}_{1}$. The isomorphism \eqref{dr times} follows.
\end{proof}

\subsubsection{Cohomology of Hessenberg varieties} When $\nu$ is elliptic,  the equality \eqref{dim Sp dr} reduced the calculation of $\dim\spcoh{}{\Sp_{a}}^{S_{a}\rtimes B_{a}}$ to the case where the slope is $\nu=1/m_{1}$ ($m_{1}$ is a regular elliptic number). Corollary \ref{c:eq loc Sp} reduces the calculation to the calculation of $\dim\cohog{*}{\Hess^{\tilw}_{a}}^{S_{a}\rtimes B_{a}}$ for $\tilw$ in various $1/m_{1}$-clans. We will give a formula for this dimension.

It is well-known that the cohomology ring of the flag variety $\fl^{\tilw}_{\nu}$ of $L_{\nu}$ is 
\begin{equation*}
\calH_{W_{\nu}}=\Sym(\fra^{*})/(\Sym(\fra^{*})^{W_{\nu}}_+)
\end{equation*}
where $(\Sym(\fra^*)^{W_{\nu}}_+)$ denotes the ideal generated by the positive degree $W_{\nu}$-invariants on $\Sym(\fra^{*})$. Let
\begin{equation*}
\lambda^{\tilw}_{\nu}=\prod_{\alpha(\nu\rho^{\vee})=\nu, \tilw^{-1}\alpha<0}\overline{\alpha}\in \Sym(\fra^{*})
\end{equation*}
where $\alpha\in\Phi_{\aff}$ runs over the affine roots satisfying the conditions specified under the product symbol, and $\overline{\alpha}\in\Phi$ denote the finite part of $\alpha$. 

More generally, fix a standard parahoric subgroup $\bP\subset G$ and let $W_{\bP}\subset \Wa$ be the Weyl group of its Levi factor. The group $L_{\nu}\cap\Ad(\tilw)\Lie\bP$ is a parabolic subgroup of $L_{\nu}$ that contains the maximal torus $\AA$. Let $W^{\tilw}_{\bP,\nu}\subset W_{\nu}$ be the Weyl group of the Levi factor of $L_{\nu}\cap\Ad(\tilw)\Lie\bP$. Note that $W^{\tilw}_{\nu,\bI}=\{1\}$ for all $\tilw$. The cohomology ring of the partial flag variety $\fl^{\tilw}_{\bP, \nu}=L_{\nu}/(L_{\nu}\cap\Ad(\tilw)\Lie\bP)$ of $L_{\nu}$ can be identified with $\calH^{W^{\tilw}_{\bP,\nu}}_{W_{\nu}}$. For $\tilw\in\tilW$, we define $\l^{\tilw}_{\bP,\nu}\in\Sym(\fra^{*})$ to be the product $\prod\overline{\alpha}$ where $\alpha$ runs over affine roots of such that $\alpha(\nu\rho^{\vee})=\nu$ and $\tilw^{-1}\alpha\in\Phi^{-}_{\aff}-\Phi(L_{\bP})$ (where $\Phi(L_{\bP})\subset\Phi_{\aff}$ is the root system of $L_{\bP}$ with respect to $\AA$). Such affine roots are permuted by $W^{\tilw}_{\bP,\nu}$, so that $\l^{\tilw}_{\bP,\nu}\in\Sym(\fra^{*})^{W^{\tilw}_{\bP,\nu}}$, and we can talk about the image of $\l^{\tilw}_{\bP,\nu}$ in $\calH^{W^{\tilw}_{\bP,\nu}}_{W_{\nu}}$.

\begin{theorem}\label{thm:dim} Let $\bP$ be a standard parahoric subgroup of $G$, and $\nu>0$ be a $\theta$-admissible slope (not necessarily elliptic), and let $a\in\frc(F)^{\rs}_{\nu}$.
\begin{enumerate}
\item The restriction map $\cohog{*}{\fl^{\tilw}_{\bP,\nu}}\cong\calH^{W^{\tilw}_{\bP,\nu}}_{W_{\nu}}\surj \cohog{*}{\Hess^{\tilw}_{\bP, a}}^{\pi_{0}(S_a)\rtimes B_a}$ induces an isomorphism of $\QQ$-algebras
\begin{equation*}
\cohog{*}{\Hess^{\tilw}_{\bP, a}}^{\pi_{0}(S_a)\rtimes B_a}=\calH^{W^{\tilw}_{\bP,\nu}}_{W_{\nu}}/\Ann(\lambda^{\tilw}_{\bP,\nu})
\end{equation*}
where $\Ann(\l^{\tilw}_{\bP,\nu})\subset\calH^{W^{\tilw}_{\bP,\nu}}_{W_{\nu}}$ is the ideal annihilating $\l^{\tilw}_{\bP,\nu}$.
\item If $\nu$ is elliptic, then there is a $\QQ$-algebra isomorphism
\begin{equation*}
\spcoh{}{\Sp_{\bP,a}}^{S_{a}\rtimes B_{a}}=\bigoplus_{\tilw\in W_{\nu}\backslash\tilW/W_{\bP}}\calH^{W^{\tilw}_{\bP,\nu}}_{W_{\nu}}/\Ann(\lambda^{\tilw}_{\bP,\nu}).
\end{equation*}
Here the sum is over double coset representatives (right multiplication by $W_{\bP}$ does not change the summand but left multiplication by $W_{\nu}$ changes it).
\item If $\nu=d_{1}/m_{1}$ in lowest terms and is elliptic, then
\begin{equation}\label{dim coho Sp}
\dim\spcoh{}{\Sp_{\bP,a}}^{S_{a}\rtimes B_{a}}=d^{r}_{1}\sum_{\tilw\in W_{1/m_{1}}\backslash\tilW/W_{\bP}}\dim(\lambda^{\tilw}_{\bP,1/m_{1}}\cdot\calH^{W^{\tilw}_{\bP,1/m_{1}}}_{W_{1/m_{1}}}).
\end{equation}
Here $\lambda^{\tilw}_{\bP,1/m_{1}}\cdot\calH^{W^{\tilw}_{\bP,1/m_{1}}}_{W_{1/m_{1}}}$ denotes the image of multiplication by $\lambda^{\tilw}_{\bP,1/m_{1}}$ on $\calH^{W^{\tilw}_{\bP,1/m_{1}}}_{W_{1/m_{1}}}$.
\end{enumerate} 
 \end{theorem}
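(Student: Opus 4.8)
The strategy is to combine the $\Gnu$-equivariant localization already in place (Corollary \ref{c:eq loc Sp}) with a purely representation-theoretic computation of the invariants of each Hessenberg cohomology $\cohog{*}{\Hess^{\tilw}_{\bP,a}}$. First I would prove part (1), from which (2) follows by feeding it into Corollary \ref{c:eq loc Sp}, and (3) follows from (2) together with Proposition \ref{p:reduce to 1/m} (the $d_1^r$-scaling). So the heart of the matter is part (1): identifying $\cohog{*}{\Hess^{\tilw}_{\bP,a}}^{\pi_0(S_a)\rtimes B_a}$ with $\calH^{W^{\tilw}_{\bP,\nu}}_{W_\nu}/\Ann(\lambda^{\tilw}_{\bP,\nu})$.

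For part (1), the input is Theorem \ref{thm:surj}(1): the restriction map $\cohog{*}{\fl^{\tilw}_{\bP,\nu}}\to\cohog{*}{\Hess^{\tilw}_{\bP,a}}$ has image exactly the $\pi_0(S_a)\rtimes B_a$-invariants, so the invariants are a quotient of $\calH^{W^{\tilw}_{\bP,\nu}}_{W_\nu}$. It remains to compute the kernel. The geometric picture is that $\Hess^{\tilw}_{\bP,a}$ is the zero locus of a transversal section of the vector bundle $\calV=L_\nu\twtimes{L_\nu\cap\Ad(\tilw)\bP}(\frg(F)_\nu/\frg(F)^{\tilw}_{\bP,\nu})$ over $\fl^{\tilw}_{\bP,\nu}$ (Theorem \ref{p:paving}(3) and Lemma \ref{l:same clan}). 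By the standard theory of zero loci of regular sections, the restriction map on cohomology is surjective and its kernel is the annihilator of the Euler class $e(\calV)$; for the regular-semisimple member $a$, the invariant part of $\cohog{*}{\Hess^{\tilw}_{\bP,a}}$ sees only the constant-sheaf summands, and the argument in the proof of Theorem \ref{thm:surj} shows this invariant quotient is precisely $\cohog{*}{\fl^{\tilw}_{\bP,\nu}}/\Ann(e(\calV))$. So the key identity to establish is that $e(\calV)$ equals (a scalar times) the image of $\lambda^{\tilw}_{\bP,\nu}$ in $\calH^{W^{\tilw}_{\bP,\nu}}_{W_\nu}$. This is where the explicit combinatorics enters: $\frg(F)_\nu/\frg(F)^{\tilw}_{\bP,\nu}$ is the direct sum of affine root spaces of those $\alpha$ with $\alpha(\nu\rho^\vee)=\nu$ and $\tilw^{-1}\alpha\in\Phi^-_{\aff}-\Phi(L_{\bP})$ (exactly the roots indexing the product defining $\lambda^{\tilw}_{\bP,\nu}$), the $L_\nu$-equivariant bundle on $L_\nu/(L_\nu\cap\Ad(\tilw)\bP)$ built from a one-dimensional weight-$\overline\alpha$ space has equivariant Chern root $\overline\alpha\in\fra^*\cong\upH^2_{\AA}(\pt)$, and under the Borel description $\cohog{*}{\fl^{\tilw}_{\bP,\nu}}=\calH^{W^{\tilw}_{\bP,\nu}}_{W_\nu}$ the Chern class of this line bundle is the image of $\overline\alpha$. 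Multiplying over all such $\alpha$ gives $e(\calV)=\lambda^{\tilw}_{\bP,\nu}$, as claimed, and the algebra structure is respected since the isomorphism is one of $\QQ$-algebras via cup product.

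For part (2), apply Corollary \ref{c:eq loc Sp}: $\spcoh{}{\Sp_{\bP,a}}\cong\bigoplus_{\tilw}\cohog{*}{\Hess^{\tilw}_{\bP,a}}$ compatibly with the $S_a\rtimes B_a$-action, and taking invariants commutes with the finite direct sum (finitely many terms by ellipticity and the bounded-clan corollary); the ring structure matches because the localization isomorphism is multiplicative. For part (3), combine (2) with Proposition \ref{p:reduce to 1/m}, which gives $\dim\spcoh{}{\Sp_{\bP,a}}^{S_a\rtimes B_a}=d_1^r\dim\spcoh{}{\Sp_{\bP,b}}^{S_b\rtimes B_b}$ for $b\in\frc(F)^{\rs}_{1/m_1}$; the clan bijection and the $L_\nu\cong L_{1/m_1}$-equivariant identification of the relevant Hessenberg bundles show the individual summands match, and $\dim\calH^{W^{\tilw}_{\bP,1/m_1}}_{W_{1/m_1}}/\Ann(\lambda^{\tilw}_{\bP,1/m_1})=\dim(\lambda^{\tilw}_{\bP,1/m_1}\cdot\calH^{W^{\tilw}_{\bP,1/m_1}}_{W_{1/m_1}})$ because annihilating and multiplying by an element of a Frobenius (Poincar\'e-duality) algebra are dual operations of the same rank.

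\textbf{Main obstacle.} The technical crux is the Euler-class identification $e(\calV)=\lambda^{\tilw}_{\bP,\nu}$ together with the care needed to ensure it holds in the \emph{non-equivariant} ring $\calH^{W^{\tilw}_{\bP,\nu}}_{W_\nu}$ (not just $\AA$-equivariantly) and that the map it induces is exactly the annihilator quotient — i.e., that no further relations are imposed. Concretely, one must verify that the $\pi_0(S_a)\rtimes B_a$-invariant part of $\cohog{*}{\Hess^{\tilw}_{\bP,a}}$ is the \emph{whole} image of $\cohog{*}{\fl^{\tilw}_{\bP,\nu}}$ (this is Theorem \ref{thm:surj}(1), already available) and that the kernel is generated solely by $\Ann(\lambda^{\tilw}_{\bP,\nu})$ — which requires knowing that for the transversal section the Gysin sequence degenerates and the cohomology of the zero locus is genuinely the Koszul-type quotient, a point that follows from purity (Theorem \ref{p:paving}(4)) but should be spelled out. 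Ellipticity is not needed for (1) but is essential in (2)--(3) to keep the direct sum finite; keeping track of which statements survive without it is the remaining bookkeeping.
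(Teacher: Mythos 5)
Your plan is essentially the paper's proof, and the three main ingredients you name — surjectivity from Theorem \ref{thm:surj}(1), identification of the Euler class with $\l^{\tilw}_{\bP,\nu}$ by computing $\AA$-weights on $\frg(F)_{\nu}/\frg(F)^{\tilw}_{\bP,\nu}$, and the reduction (2)$\Rightarrow$(3) via Corollary \ref{c:eq loc Sp} and Proposition \ref{p:reduce to 1/m} — are exactly the ones the paper uses. Two points deserve correction in how you formulate the middle step.

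First, the sentence ``by the standard theory of zero loci of regular sections, the restriction map on cohomology is surjective and its kernel is the annihilator of the Euler class'' is not accurate as stated: for a transversal section of a vector bundle, restriction $\cohog{*}{Y}\to\cohog{*}{Z}$ need \emph{not} be surjective, and there is no general principle that makes $\cohog{*}{Z}$ a ``Koszul-type quotient'' of $\cohog{*}{Y}$. (Indeed in these examples $\cohog{*}{\Hess^{\tilw}_{\bP,a}}$ is strictly larger than the invariant quotient — e.g.\ the genus-one Hessenberg curve in the $\leftexp{2}{A}_3$ case.) The surjectivity \emph{onto the invariants} is precisely Theorem \ref{thm:surj}(1), and that is the only surjectivity available.

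Second, your appeal to ``purity'' and ``degeneration of the Gysin sequence'' to get the kernel is pointing at the wrong mechanism. The clean argument, which the paper uses, is Poincar\'e (Verdier) duality on $Y$: the Gysin pushforward $\beta^k:\cohog{k}{Z}^{\pi_0(S_a)\rtimes B_a}\to\cohog{k+2n}{Y}$ is the adjoint of the restriction $\alpha^k:\cohog{k}{Y}\to\cohog{k}{Z}^{\pi_0(S_a)\rtimes B_a}$ under the perfect pairing on $\cohog{*}{Y}$. Surjectivity of $\alpha^k$ therefore forces injectivity of $\beta^k$, and since $\beta\circ\alpha=\cup\,c_n(\calE)$, the kernel of $\alpha$ is exactly $\Ann(c_n(\calE))$, with no further relations. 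Purity of $\cohog{*}{Z}$ plays no role here; only the duality on the ambient smooth proper $Y$ is used. With this substitution your proof becomes the paper's. (Also, your remark that annihilating and multiplying by $\l$ have equal-dimensional ``dual ranks'' in a Frobenius algebra is overkill for the last step of (3): $\calH/\Ann(\l)\cong\l\cdot\calH$ is just the first isomorphism theorem for multiplication by $\l$, no Poincar\'e duality needed.)
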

\begin{proof}
(1) Let $\calE=L_{\nu}\twtimes{L_{\nu}\cap\Ad(\tilw)\Lie\bP}(\frg(F)_{\nu}/\frg(F)^{\tilw}_{\bP, \nu})$ be the vector bundle over $Y=\fl^{\tilw}_{\bP,\nu}$. Let $n$ be its rank and $c_{n}(\calE)\in\cohog{2n}{\fl^{\tilw}_{\bP,\nu}}$ be its top Chern class. The Hessenberg variety $Z=\Hess^{\tilw}_{\bP,a}$ is the zero locus of a section of of $\calE$ which is transversal to the zero section. Let $i: Z\incl Y$ be the closed embedding. Then we have an isomorphism $i^{!}\QQ[2n]\cong\QQ$. The composition
\begin{equation*}
\QQ_{Y}\xrightarrow{\alpha} i_{*}\QQ_{Z}\isom i_{*}i^{!}\QQ_{Z}[2n]\xrightarrow{\beta} \QQ_{Y}[2n]
\end{equation*}
is given by the cup product with $c_{n}(\calE)$. Moreover, the natural maps $\alpha$ and $\beta$ are Verdier dual to each other. Taking cohomology, and taking $\pi_0(S_{a})\rtimes B_{a}$-invariants we get
\begin{equation}\label{cup cn}
\cup c_{n}(\calE): \cohog{k}{Y}\xrightarrow{\alpha^{k}}\cohog{k}{Z}^{\pi_0(S_{a})\rtimes B_{a}}\xrightarrow{\beta^{k}}\cohog{k+2n}{Y}.
\end{equation}
where $\beta^{k}$ and $\alpha^{k}$ are adjoints of each other under Poincar\'e duality. Since $\alpha^{k}$ is surjective by Theorem \ref{thm:surj}(1), $\beta^{k}$ must be injective by duality. Since the composition of the maps in \eqref{cup cn} is the cup product with $c_{n}(\calE)$, we conclude that
\begin{equation*}
\cohog{*}{Z}^{\pi_0(S_{a})\rtimes B_{a}}\cong \cohog{*}{Y}/\Ann(c_{n}(\calE))\isom \Image(c_{n}(\calE)\cup\cohog{*}{Y})\subset \cohog{*+2n}{Y}
\end{equation*}

To identify  $c_{n}(\calE)$ with $\l^{\tilw}_{\bP,\nu}$, we first consider the case $\bP=\bI$. In this case $\calE$ is a successive extension of $L_{\nu}$-equivariant line bundles $\calL(\overline{\alpha})$ on $\fl^{\tilw}_{\nu}$ attached to characters $L_{\nu}\cap\Ad(\tilw)\Lie\bI\surj\AA\xrightarrow{\overline{\alpha}}\Gm$ where $\overline{\alpha}$ runs over the weights of $\AA$ on $\frg(F)_{\nu}/\frg(F)^{\tilw}_{\nu}$ (counted with multiplicities). The nonzero weights of $\AA$  on $\frg(F)_{\nu}/\frg(F)^{\tilw}_{\nu}$ are given by the finite parts of affine roots $\alpha$ such that $\alpha(\nu\rho^{\vee})=\nu$ and $\tilw^{-1}\alpha<0$. Therefore $c_{n}(\calE)=\prod c_{1}(\calL(\overline{\alpha}))$ corresponds to the image of $\l^{\tilw}_{\nu}=\prod\overline{\alpha}\in\Sym(\fra^{*})$ in the quotient $\calH_{W_{\nu}}$. 

For general $\bP$, we pullback $\calE$ to a vector bundle $\calE'$ the flag variety $\fl^{\tilw}_{\nu}$ and use the above argument to show that $c_{n}(\calE')$ is the image of $\l^{\tilw}_{\bP,\nu}$ in $\calH_{W_{\nu}}$. Since the pullback map $\cohog{*}{\fl^{\tilw}_{\bP,\nu}}\to\cohog{*}{\fl^{\tilw}_{\nu}}$ is injective, $c_{n}(\calE)$ is also the image of $\l^{\tilw}_{\bP,\nu}$.


(2) follows from (1) and Corollary \ref{c:eq loc Sp}.

(3) follows from (2) and Proposition \ref{p:reduce to 1/m}, together with the fact that $\calH^{W^{\tilw}_{\bP,\nu}}_{W_{\nu}}/\Ann(\lambda^{\tilw}_{\bP,\nu})\cong\lambda^{\tilw}_{\bP,1/m_{1}}\cdot\calH^{W^{\tilw}_{\bP,1/m_{1}}}_{W_{1/m_{1}}}$.
 \end{proof}

By Theorem \ref{L(triv)} and Corollary \ref{c:Hgr irr} that we will prove later, the formula \eqref{dim coho Sp} (for $\bP=\bI$, and at least for $G$ split) also gives the dimension of the irreducible spherical representations of the graded and rational \Chas with central charge $\nu$.


\section{Homogeneous Hitchin fibers}\label{s:Hit}

In this section, we study the geometric properties of global analogs of homogeneous affine Springer fibers, namely homogeneous Hitchin fibers. The connection between affine Springer fibers and Hitchin fibers was discovered by B.C.Ng\^o \cite{NgoHit}. We will need a generalization of Hitchin moduli stacks, namely we need to consider the moduli stack of Higgs bundles over an orbifold curve (the weighted projective line) with structure group a quasi-split group scheme over the curve.

\subsection{Weighted projective line}\label{ss:wp}
Fix a natural number $m$ such that $e|m$.

Let $\Gm$ act on $\AA^2$ by weights $(m,1)$. Let $X$ be the quotient stack $[(\AA^2-\{(0,0)\})/\Gm]$. This is an orbifold (Deligne-Mumford stack) with coarse moduli space a weighted projective line $\PP(m,1)$. We denote a point in $X$ by its weighted homogeneous coordinates $[\xi,\eta]$, i.e., by exhibiting a preimage of it in $\AA^2-\{(0,0)\}$. The point $0=[0,1]\in X$ does not have nontrivial automorphisms the point $\infty=[1,0]\in X$ has automorphism group $\mu_{m}$. 

Let $X'$ be the similar quotient with weights $(m/e,1)$. We denote the weighted homogeneous coordinates of $X'$ by $[\xi',\eta']$.
Let $\pi:X'\to X$ be the natural morphism $[\xi',\eta']\mapsto[\xi'^e,\eta']$, which is a branched $\mu_e$-cover. The only branch point is $0$, whose preimage consists only one point $0'\in X'$. The preimage of $\infty$ in $X'$ is denoted by $\infty'$, which has automorphism $\mu_{m/e}$.  

The Picard group of $X$ is a free abelian group which we identify with $\frac{1}{m}{\ZZ}$. This defines a degree map
\begin{equation*}
\deg:\Pic(X)\isom\frac{1}{m}{\ZZ}.
\end{equation*}
whose inverse is denoted by $\nu\mapsto\calO_{X}(\nu)$.  For $\deg\calL\geq0$, the global sections of $\calL$ is identified with $\CC[\xi,\eta]_{m\deg\calL}$, homogeneous polynomials in $\xi,\eta$ of total weight $m\deg\calL$. Each line bundle $\calL$ on $X$ also admits a trivialization on $U=X-\{\infty\}$: it is given by the rational section $\eta^{m\deg\calL}$ of $\calL$ which is non-vanishing on $U$. Such a trivialization is unique up to a scalar.

Similarly, the Picard group of $X'$ is identified with $\frac{1}{m/e}\ZZ$ and we have $\deg\pi^{*}\calL=e\deg\calL$.

The one-dimensional torus $\Grot$ acts on $X$ via $\Grot\ni t:[\xi,\eta]\mapsto[t\xi,\eta]$. We will fix a $\Grot$-equivariant structure on each line bundles $\calL$ on $X$ such that the action of $\Grote$ on the stalk $\calL(0)$ is trivial. Concretely, if $\deg\calL>0$, the $\Grot$-action on $\Gamma(X,\calL)\cong \CC[\xi,\eta]_{m\deg\calL}$ is given by $\Grot\ni s: f(\xi,\eta)\mapsto f(s\xi,\eta)$.

For a closed point $x\in X(\CC)-\{\infty\}$ (which is not stacky), we use $\hO_x$ and $\hK_x$ to denote the completed local ring and its field of fractions at $x$. The notation $\hK_{\infty}$ still makes sense, and we often identify with $F$.

The orbifold $X'$ contains an open subscheme $U'=X'-\{\infty'\}\cong \AA^1$. On the other hand $V'=X'-\{0'\}$ is isomorphic to $\tilV'/\mu_{m/e}$ where $\tilV'\cong\AA^1$. Denote the preimage of  $\infty'$ in $\tilV'$ by $\wt{\infty'}$. Similarly, the orbifold $X$ is covered by  $U=X-\{\infty\}$ and $V=X-\{0\}=\tilV'/\mu_{m}$. 

To summarize, we have a diagram
\begin{equation*}
\xymatrix{ & & & \tilV'\ar[d]^{/\mu_{m/e}} & \wt{\infty'}\ar[d]\ar@{_{(}->}[l]\\
0'\ar@{^{(}->}[r]\ar[d] &  U'\ar@{^{(}->}[r]\ar[d] & X'\ar[d]& V'\ar@{_{(}->}[l]\ar[d]^{/\mu_{e}} & \infty'/\mu_{m/e}\ar[d]\ar@{_{(}->}[l]\\
0\ar@{^{(}->}[r] & U\ar@{^{(}->}[r] & X & V\ar@{_{(}->}[l] & \infty/\mu_{m}\ar[l]}
\end{equation*}

The dualizing sheaf $\omega_{X}$ is obtained by gluing the dualizing sheaves $\omega_{U}$ and $\omega_{V}$, the latter being the descent of the dualizing sheaf $\omega_{\tilV'}$. We may identify $\omega_{X}$ with the graded $\CC[\xi,\eta]$-submodule $M$ of $\CC[\xi,\eta]d\xi\oplus \CC[\xi,\eta]d\eta$ given by the kernel of the map $fd\xi+gd\eta\mapsto m\xi f+\eta g$. Therefore $M$ is the free $\CC[\xi,\eta]$-module generated by $\eta d\xi-m\xi d\eta$. Hence $\deg\omega_{X}=-1/m-1$.

\subsection{The moduli of $\cG$-torsors}
\subsubsection{The group scheme $\cG$} We define a group scheme $\cG$ over $X$ by gluing $\cG_{U}$ and $\cG_{V}$. The group scheme $\cG_{U}$ is defined as the fiberwise neutral component of $(\Res^{U'}_{U}(\GG\times U'))^{\mu_{e}}$, where $\mu_{e}$ acts on both $U'$ and on $\GG$ via $\theta$ composed with the inverse of $\Out(\GG)$. The group scheme $\cG_{V}=\GG\twtimes{\mu_e}V'$, where $\mu_{e}$ acts on $\GG$ via $\theta$ and on $\tilV'$ via multiplication on coordinates. Both $\cG_{U}|_{U\cap V}$ and $\cG_{V}|_{U\cap V}$ are canonically isomorphic to the group scheme $(\Res^{U'\cap V'}_{U\cap V}(\GG\times (U'\cap V')))^{\mu_{e}}$, therefore we can glue $\cG_{U}$ and $\cG_{V}$ together to get a group scheme $\cG$ over $X$. Upon choosing an isomorphism $\hO_{0}\cong \calO_{F}$, we may identify $\cG|_{\Spec\hO_{0}}$ with $\bG$, as group schemes over $\calO_{F}$.

Alternatively, we may view $\cG$ as glued from $\GG\times{V}$ and $\bG$ (viewed as a group scheme over $\calO_{F}=\hO_{0}$) along $\Spec\hK_{0}$. This point of view allows us to define a group scheme $\cG_{\bP}$ for any parahoric $\bP\subset G(F)$. In fact, $\cG_{\bP}$ is the group scheme over $X$ defined by gluing $\GG\times V$ and $\bP$ (viewed as a group scheme over $\hO_{0}=\calO_{F}$) along $\Spec\hK_{0}$.

The Lie algebra $\Lie\cG$ is the vector bundle over $X$ obtained by gluing $\frg$ (over $\hO_{0}=\calO_{F}$) and $\gg\twtimes{\mu_{e}}V'$. We may similarly define $\Lie\cG_{\bP}$.

We may also define a global version $\frc_{X}$ of $\frc$. This is obtained by gluing $\frc$ (over $\Spec\hO_{0}=\Spec\calO_{F}$) and $\frc_{V}:=\cc\twtimes{\mu_{e}}\tilV'$ together.

\subsubsection{The moduli of $\cG$-torsors over $X$} A $\cG$-torsor over $X$ is the following data $(\calE_U,\calE_V,\tau)$. Here $\calE_U$ is a $\cG_{U}$-torsor over $U$;  $\calE_V$ is a $\mu_{m}$-equivariant $\GG$-torsor over $\tilV'$ (the $\mu_{m}$-action on $\GG$ is given by the natural surjection $\mu_{m}\surj\mu_e$ followed by $\theta$ and the inversion of $\Out(\GG)$). Note that $\calE_{V}|_{U\cap V}$ descends to a $\cG_{U\cap V}$-torsor $\calE^{\flat}_{U\cap V}$over $U\cap V$. Finally $\tau$ is an isomorphism of $\cG_{U\cap V}$-torsors $\tau:\calE_U|_{U\cap V}\isom\calE^\flat_{U\cap V}$.

In particular, a vector bundle $\calV$ over $X$ is the data $(\calV_U,\calV_V,\tau)$ where $\calV_U$ is a vector bundle over $U$, $\calV_{V}$ is a $\mu_{m}$-equivariant vector bundle on  $\tilV'$ (which descends to $\calV^\flat_{U\cap V}$ over $U\cap V$) and $\tau$ is an isomorphism $\calV_U|_{U\cap V}\isom\calV^\flat_{U\cap V}$.

Alternatively, a $\cG$-torsor is also a triple $(\calE_{0},\calE_{V},\tau)$ where $\calE_{0}$ is a $\bG$-torsor over $\Spec\hO_{0}$, $\calE_{V}$ is a $\mu_{m}$-equivariant $\GG$-torsor over $\tilV'$ and $\tau$ is an isomorphism between the two over $\Spec \hK_{0}$.

For a parahoric $\bP\subset G(F)$, we may define the notion of a $\cG$-torsor over $X$ with $\bP$-level structure at $0$: this simply means a $\cG_{\bP}$-torsor over $X$. Concretely, it is a triple $(\calE_{0},\calE_{V},\tau)$ where $\calE_{0}$ is a $\bP$-torsor over $\Spec \hO_{0}$ and the rest of the data is the same as in the triple for a $\cG$-torsor.

We denote by $\Bun_{\cG}$ the moduli stack of $\cG$-torsors over $X$. For a parahoric $\bP\subset G(F)$, let $\Bun_{\bP}$ denote the moduli stack of $\cG$-torsor over $X$ with $\bP$-level structure at $0$; in particular, $\Bun_{\cG}=\Bun_{\bG}$.

\subsection{The Hitchin moduli stack} Fix a line bundle $\calL$ on $X$. Let $\calL'=\pi^*\calL$.

\subsubsection{$\cG$-Higgs bundles and their moduli} 
We define a {\em $\cG$-Higgs bundle over $X$ valued in $\calL$} to be a pair $(\calE,\varphi)$, where $\calE=(\calE_{U},\calE_{V},\tau)$ is a $\cG$-torsor over $X$ and $\varphi$ is a global section of the vector bundle $\Ad(\calE)\otimes\calL$ over $X$. Here $\Ad(\calE)=\calE\twtimes{\cG}\Lie\cG$ is the adjoint bundle of $\calE$. Let $\MHit$ be the moduli stack of $\cG$-Higgs bundles over $X$ valued in $\calL$. 

For a parahoric subgroup $\bP\subset G(F)$, we may define the notion of a $\cG$-Higgs bundle valued in $\calL$ with $\bP$-level structure at $0$. This is a pair $(\calE,\varphi)$ where $\calE$ is a $\cG_{\bP}$-torsor over $X$, and $\varphi\in\Gamma(X,\Ad_{\bP}(\calE)\otimes\calL)$, where $\Ad_{\bP}(\calE)=\calE\twtimes{\cG_{\bP}}\Lie\cG_{\bP}$ is the adjoint vector bundle for the $\cG_{\bP}$-torsor $\calE$. The moduli stack of $\cG$-Higgs bundles valued in $\calL$ with $\bP$-level structure at $0$ is denoted by $\calM_{\bP}$. In particular, for $\bP=\bG$, we have $\MHit=\calM_{\bG}$. When $\bP=\bI$, we often omit the subscript $\bI$ and write $\calM_{\bI}$ as $\calM$.

We may also replace the triple $(X,\cG,\calL)$ by $(X',\GG\times X',\calL')$, and define the moduli stack $\calM'_{\bP'}$ for $\calL'$-valued  $\GG$-Higgs bundles on $X'$ with $\bP'$-structure at $0'$. Here $\bP'$ can be any parahoric subgroup of $\GG(F_{e})$. Suppose we choose $\bP'$ to be a standard parahoric of $\GG(F_{e})$ that is invariant under the action of $\mu_{e}$ on $\GG(F_{e})$, then it corresponds to a unique standard parahoric $\bP\subset G(F)$. Moreover, $\calM'_{\bP'}$ carries a natural $\mu_{e}$-action. We have a natural morphism $\calM_{\bP}\to\calM'^{\mu_{e}}_{\bP'}$.


\subsubsection{Hitchin base}
Recall we have defined a global space of invariant polynomials $\frc_{X}$ over $X$. The weighted action of $\Gdil$ on $\cc$ induces an action on $\frc_{X}$. We can then twist $\frc_{X}$ by $\calL$ to obtain $\frc_{\calL}:=\rho(\calL)\twtimes{\Gdil}_{X}\frc_{X}$. Here $\rho(-)$ is the total space of the $\Gm$-torsor associated to a line bundle. The Hitchin base $\calA$ for the triple  $(X,\cG,\calL)$ is the scheme of sections of the fibration $\frc_{\calL}\to X$.

We may also understand $\calA$ using the curve $X'$ and global sections of $\calL'$ on it. Let $\calA'$ be the Hitchin base for the triple $(X',\GG,\calL')$. The fundamental invariants $f_{1},\cdots, f_{r}$ give an isomorphism 
\begin{equation}\label{A'}
\calA'=\bigoplus_{i=1}^{r}\cohog{0}{X',\calL'^{d_{i}}}=\bigoplus_{i=1}^{r}\CC[\xi',\eta]_{d_{i}m\deg\calL}.
\end{equation}
The fibration $\cc_{\calL'}\to X'$ is  $\mu_{e}$-equivariant, hence $\mu_{e}$ acts on the space of sections, i.e., $\calA'$.  The Hitchin base $\calA$ can be identified with the $\mu_{e}$-fixed subscheme of $\calA'$, consisting of $\mu_{e}$-equivariant sections $a:X'\to \cc_{\calL'}$. Using the description in \eqref{A'} and \eqref{frc}, we have 
\begin{equation*}
\calA=\bigoplus_{i=1}^{\rr}\xi'^{\vep_{i}}\CC[\xi,\eta]_{m(d_{i}\deg\calL-\vep_{i}/e)}.
\end{equation*}

\subsubsection{The cameral curve}\label{sss:cameral} Let $\Ah\subset\calA$ be the locus where $a:X\to\cc_{\calL}$ generically lies in the regular semisimple locus. 
Fix an element $a\in\Ah$, we can define a branched $\WW'$-cover $\pi_{a}:X'_{a}\to X$ called the {\em cameral curve}. In fact, $a$ determines a $\mu_{e}$-equivariant section $a':X'\to \cc_{\calL}$. Then $X'_{a}$ is defined as the Cartesian product
\begin{equation}\label{cam'}
\xymatrix{X'_{a}\ar[r]\ar[d]^{\pi'_{a}} & \tt_{\calL'}\ar[d]\\
X'\ar[r]^{a'} & \cc_{\calL'}}
\end{equation}
The morphism $\pi_{a}:X'_{a}\to X$ is the composition $\pi\circ\pi'_{a}$. The $\mu_{e}$-equivariance of $a'$ allows us to define a $\WW'$-action on $X'_{a}$ extending the $\WW$-action coming from the Cartesian diagram \eqref{cam'}, and $\pi_{a}$ is a branched $\WW'$-cover.

\subsubsection{The elliptic locus} For $a\in\Ah$, the restriction of the cameral curve to the generic point of $X$ gives a homomorphism $\Pi_{a}: \Gal(\overline{K}/K)\to\WW'$ defined up to conjugacy, where $K$ is the function field of $X$. The point $a$ is called {\em elliptic} if the $\Pi_{a}(\Gal(\overline{K}/K))$-invariants on $\tt$ is zero. The elliptic locus of $\Ah$ forms an open subscheme $\Aa$.

\subsubsection{The Hitchin fibration} For each parahoric $\bP$ of $G(F)$, we can define the Hitchin fibration
\begin{equation*}
f_{\bP}:\calM_{\bP}\to\calA.
\end{equation*}
To define this, we first define the Hitchin fibration $f'_{\bP'}:\calM'_{\bP'}\to\calA'$ where the level $\bP'$ is the $\mu_{e}$-invariant parahoric of $\GG(F_{e})$ corresponding to $\bP$. For $(\calE',\varphi')\in\calM'_{\bP'}$, $f'_{\bP'}$ is the collection of invariants $f_{i}(\varphi)\in\Gamma(X',\calL'^{d_{i}})$ for $i=1,\cdots, \rr$. This morphism is $\mu_{e}$-equivariant. Therefore we can define $f_{\bP}$ as the composition
\begin{equation*}
f_{\bP}:\calM_{\bP}\to\calM'^{\mu_{e}}_{\bP'}\xrightarrow{f'_{\bP'}}\calA'^{\mu_{e}}=\calA.
\end{equation*}
We denote the restriction of $f_{\bP}$ to $\Ah$ by $f_{\bP}$. 

\subsubsection{Picard stack action}\label{sss:Pic} 

For $a\in\calA(\CC)$, we can define the regular centralizer group scheme $J_a$ over $X$. In fact, we define $J_{a,V}$ over $V$ as the $\mu_m$-descent from the regular centralizer group scheme $\tilJ'_a$ over $\tilV'$ (with respect to the section $\tilV'\to V'\to \cc_{\calL'}$). In Section \ref{ss:J}, the local definition of the regular centralizer allows to define $J_{a}$ over $\Spec\hO_{0}$. Finally we glue $J_{a,V}$ and $J_{a,\Spec\hO_{0}}$ together along $\Spec\hK_{0}$. 

Alternatively, we may define a universal version $J_{X}\to \frc_{X}$ as follows. This is glued from $J_{0}$ over $\frc$ (over $\Spec\hO_{0}$) and $\JJ\twtimes{\mu_{e}}V'$ along $\Spec\hK_{0}$, here $\JJ$ is the universal regular centralizer over
$\cc$. Let $J_{\calL}=J_{X}\twtimes{\Gdil}_{X}\rho(\calL)$, then we have $J_{\calL}\to \frc_{\calL}$. For $a\in\calA(\CC)$ viewed as a section $X\to \frc_{\calL}$, $J_{a}$ may also be defined using the Cartesian diagram
\begin{equation}\label{alt Ja}
\xymatrix{J_{a}\ar[r]\ar[d] & J_{\calL}\ar[d]\\ X\ar[r]^{a} & \frc_{\calL}}
\end{equation}

Having defined $J_a$, we define $\calP_a$ to be the moduli stack of $J_a$-torsors over $X$. Here, a $J_{a}$-torsor means a triple $(\calQ_{U},\calQ_{V},\tau)$ where $\calQ_{U}$ is a $J_{a}|_{U}$-torsor over $U$; $\calQ_{V}$ is a $\mu_{m}$-equivariant $\tilJ'_{a}$-torsor over $\tilV'$, giving rise to a $J_{a}|_{U\cap V}$-torsor $\calQ^{\flat}_{U\cap V}$ over $U\cap V$ via descent; $\tau$ is an isomorphism of $J_{a}|_{U\cap V}$-torsors $\calQ_{U}|_{U\cap V}\isom\calQ^{\flat}_{U\cap V}$.

We also have a local analog of $\calP_a$. For $x\in X-\{\infty\}$, we define $P_{a,x}$ to be the moduli space of $J_a$-torsors over $\Spec \hO_{x}$ together with a trivialization over $\Spec \hK_x$. This is a group ind-scheme over $\CC$. We have $P_{a,x}(\CC)=J_a(\hK_x)/J_a(\hO_x)$. 

For $x=0$, let $a_{0}$ be the restriction of $a$ to $\Spec \hO_{0}=\Spec\calO_{F}$ (fixing a trivialization of $\calL$ over $\Spec\hO_{0}$). Then $P_{a,0}$ is the same as the $P_{a_{0}}$ defined in Section \ref{ss:J}.

For $x=\infty$, $P_{a,\infty}$ classifies $\mu_{m}$-equivariant $\tilJ'_{a}$-torsors over the formal neighborhood $\Spec \hO_{\wt{\infty}'}$ of $\wt{\infty}'$ together with a $\mu_{e}$-invariant trivialization over $\Spec \hK_{\wt{\infty}'}$. Identifying $\hO_{\wt{\infty}'}$ with $\calO_{F_{m}}$ (in a $\mu_{m}$-equivariant way), we have $P_{a,\infty}=(J_a(F_m)/J_a(\calO_{F_m}))^{\mu_m}$. Note that it is important that we take invariants after taking quotients.

Many properties of the usual Hitchin moduli stacks generalize to the orbifold setting.
\begin{prop}\label{p:smooth}Suppose $\deg\calL\geq0$. Then
\begin{enumerate} 
\item The moduli stack $\calM^{\h}_{\bP}=\calM_{\bP}|_{\Ah}$ is a smooth Artin stack. Its restriction over $\Aa$ is a Deligne-Mumford stack.
\item The morphism $f^{\textup{ell}}_{\bP}:\calM_\bP|_{\Aa}\to\Aa$ is proper.
\end{enumerate}
\end{prop}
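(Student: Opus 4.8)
The plan is to reduce both statements to the corresponding known facts for the usual Hitchin moduli stack on the smooth curve $X'$, using the finite cover $\pi: X' \to X$ and the $\mu_e$-equivariance built into the construction of $\calM_{\bP}$. Recall we have the morphism $\calM_{\bP} \to \calM'^{\mu_e}_{\bP'}$, where $\bP'$ is the $\mu_e$-invariant parahoric of $\GG(F_e)$ corresponding to $\bP$, and $\calA = \calA'^{\mu_e}$. So the first move is to establish the analogue of Proposition~\ref{p:smooth} for $\calM'_{\bP'} \to \calA'$, i.e.\ for parahoric Hitchin systems on the weighted projective line $X'$ with \emph{split} structure group $\GG$; this is essentially the orbifold version of the results in \cite{NgoHit} (smoothness of the Hitchin stack over the regular semisimple locus) together with the parahoric modifications from \cite{GS}. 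The stacky point $\infty'$ only contributes a finite automorphism group $\mu_{m/e}$ and does not affect local freeness/smoothness computations, while the level structure at $0'$ is handled exactly as in the thesis \cite{GS}.

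For part (1): smoothness of $\calM^{\h}_{\bP}$ over $\Ah$ follows from a deformation-theoretic argument. The tangent complex at a point $(\calE,\varphi)$ is computed by the hypercohomology of the two-term complex $\Ad_{\bP}(\calE) \xrightarrow{[\varphi,-]} \Ad_{\bP}(\calE)\otimes\calL$ on $X$, and the obstruction lives in $\HH^2$ of this complex. Over $\Ah$ the generic regular semisimplicity of $\varphi$ forces this $\HH^2$ to vanish: one checks this after pulling back along $\pi:X'\to X$ (where $\Ad_{\bP}(\calE)$ becomes a genuine adjoint bundle for a $\GG$-torsor with parahoric structure), using that $X'$ has the two affine charts $U'$ and $\tilV'$ and that Serre duality on the orbifold $X'$ identifies the obstruction with the dual of a space of horizontal sections, which is zero generically. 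For the Deligne--Mumford claim over $\Aa$: ellipticity of $a$ means $\Pi_a(\Gal(\bar K/K))$ has no invariants on $\tt$, which (as in \cite[\S4]{NgoHit}) forces the automorphism groups of Higgs bundles in the fiber to be finite, since the centralizer $J_a$ is an anisotropic-type group scheme whose torsor stack $\calP_a$ has finite stabilizers.

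For part (2): properness of $f^{\textup{ell}}_{\bP}$ over $\Aa$. I would use the valuative criterion, following the classical argument of Faltings/Ng\^o adapted to the orbifold and quasi-split setting. Given a Higgs bundle over the punctured disk over $\Aa$, one uses the affine Grassmannian / Beauville--Laszlo description and the ellipticity hypothesis to produce a canonical limit: the point is that ellipticity makes the relevant affine Springer fibers (which control the local behavior at points of $X$ where $a$ meets the discriminant) of \emph{finite type}, by the analogue of Lemma~\ref{l:finite type}(2), and this finiteness is exactly what upgrades the valuative criterion for existence of a limit. Equivalently, one can pass to $X'$, invoke properness of $f'^{\textup{ell}}_{\bP'}$ on the smooth curve $X'$ (known by \cite{NgoHit}, \cite{GS}), and then observe that $\calM_{\bP} \hookrightarrow \calM'^{\mu_e}_{\bP'}$ is a closed immersion over $\calA = \calA'^{\mu_e}$, hence $f^{\textup{ell}}_{\bP}$ is the restriction of a proper morphism to a closed substack, therefore proper.

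The main obstacle I anticipate is part (2), specifically making the reduction to $X'$ fully rigorous: one must check that $\calM_{\bP} \to \calM'^{\mu_e}_{\bP'}$ is genuinely a closed immersion (not merely a monomorphism onto a union of components), which requires comparing the two obstruction theories $\mu_e$-equivariantly and ruling out that fixed-point Higgs bundles on $X'$ can fail to descend to $\cG$-Higgs bundles on $X$ because of the distinction — emphasized in the excerpt after the definition of $P_{a,\infty}$ — between taking $\mu_m$-invariants before versus after passing to quotients. Handling the stacky points $0$ and $\infty$ correctly in this equivariant comparison, together with the subtlety that the parahoric $\bP'$ must be chosen $\mu_e$-stable and that its $\mu_e$-fixed part is the parahoric $\bP$ rather than something larger, is where the real care is needed; the rest is a routine transcription of \cite{NgoHit} and \cite{GS} to the orbifold curve.
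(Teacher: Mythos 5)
Your overall architecture is sound — deformation theory for smoothness, ellipticity for the Deligne--Mumford property, and reduction to a cover for properness — but there is a concrete error in part (2) that the paper avoids by making a different choice of cover. You propose to reduce properness to the ``smooth curve $X'$,'' but $X' = [(\AA^2\setminus\{0\})/\Gm]$ with weights $(m/e,1)$ is still an \emph{orbifold} whenever $m>e$: it has automorphism group $\mu_{m/e}$ at $\infty'$. So the properness results of Ng\^o and Faltings, which are stated for genuine projective curves, do not directly apply to $X'$. The paper instead pulls back along the full $\mu_m$-cover $p:\tX' = \PP^1 \to X$, $[\xi,\eta]\mapsto[\xi^m,\eta]$, whose source is a genuine smooth projective curve. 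The closed-immersion problem you flag as the ``main obstacle'' is then handled in the paper by a cohomological criterion: restricting the $\mu_m$-equivariant structure of $(\calE,\varphi)\in\wt\calM_{\bQ}$ to the preimage of $0$ produces a class $[\calE]_0\in\cohog{1}{\mu_m, L_{\bQ}}$, and $\calM_{\bP}$ is identified with the fiber over the trivial class of the resulting map $\wt\calM^{\mu_m}_{\bQ}\to\cohog{1}{\mu_m, L_{\bQ}}$, hence closed. Notice this is $\mu_m$-equivariance, not merely the $\mu_e$-equivariance in your $\calM_{\bP}\to\calM'^{\mu_e}_{\bP'}$; your version leaves the stacky point $\infty'$ untreated.

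For part (1), your sketch is also looser than the paper in a way that matters. The paper works directly on $X$ (not on $X'$): the obstruction space $\cohog{1}{X,\calK}$ is dualized to $\cohog{0}{X,\calK^{\vee}\otimes\omega_X}$, and the kernel sheaf $\calK'=\calH^0(\calK^\vee\otimes\calL)$ is identified as a subsheaf of $\Lie J^{\flat}_a$, which is then described via the \emph{normalized cameral curve} $X'^{\flat}_a$ as $\pi^{\flat}_{a,*}(\calO_{X'^{\flat}_a}\otimes\tt)^{\WW'}$; the vanishing then comes from the explicit degree inequality $\deg(\calL^{-1}\otimes\omega_X)<0$, which is precisely where the hypothesis $\deg\calL\geq 0$ enters. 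Your phrase ``Serre duality identifies the obstruction with the dual of a space of horizontal sections, which is zero generically'' does not substitute for this chain: the cameral curve $X'^{\flat}_a$ is not the same object as the $\mu_e$-cover $X'$ you reduce to, and without the degree bound one cannot conclude vanishing. Likewise, for the Deligne--Mumford claim the paper computes $\Aut(\calE,\varphi)\subset\cohog{0}{X,J^{\flat}_a}\cong \TT^{\WW^{\dagger}_a}$ via the cameral curve and uses ellipticity to get finiteness, rather than appealing to $\calP_a$ as you do — that route would be circular here since the finiteness of $\calP_a$-stabilizers is not independently established at this point.
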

\begin{proof} The give the proofs for the Hitchin moduli stack $\calM=\calM_{\bI}$ without Iwahori level structure. The general case is similar.

(1) A $\CC$-point of $\calM$ is a pair $(\calE,\varphi)$ where $\calE$ is a $\cG_{\bI}$-torsor and $\varphi\in\Gamma(X,\Ad_{\bI}(\calE)\otimes\calL)$. The tangent space of $\calM$ at $(\calE,\varphi)$ is $\cohog{0}{X,\calK}$ where $\calK$ is the two term complex $\Ad_{\bI}(\calE)\xrightarrow{[\varphi,-]}\Ad_{\bI}(\calE)\otimes\calL$ placed in degrees $-1$ and $0$. By a variant of Biswas and Ramanan \cite{BR}, the obstruction to infinitesimal deformation of the Higgs bundle $(\calE,\varphi)$ lies in $\cohog{1}{X,\calK}$. By duality, $\cohog{1}{X,\calK}$ is dual to $\cohog{0}{X,\calK^{\vee}\otimes\omega_{X}}$, where $\calK^{\vee}$ is the two term complex $\Ad_{\bI}(\calE)\otimes\calL^{-1}\xrightarrow{[\varphi,-]}\Ad_{\bI}(\calE)$, placed in degrees $0$ and $1$. Let $\calK'=\calH^{0}(\calK^{\vee}\otimes\calL)$ be the kernel of $\Ad_{\bI}(\calE)\xrightarrow{[\varphi,-]}\Ad_{\bI}(\calE)\otimes\calL$. Then $\cohog{0}{X,\calK^{\vee}\otimes\omega}=\cohog{0}{\calK'\otimes\calL^{-1}\otimes\omega}$. The argument of \cite[4.11.2]{NgoFL} shows that $\calK'$ is a subsheaf of $\Lie J^{\flat}_{a}$, where $a=f(\calE,\varphi)$ (argue over $U$ and $V$ separately).

We can still describe $\Lie J^{\flat}_{a}$ using the cameral curve (see \S\ref{sss:cameral}) as for usual Hitchin fibers. Let $X'^{\flat}_{a}$  be the normalization of the cameral curve $X'_{a}$, and let $\pi^{\flat}_{a}:X'^{\flat}_{a}\to X$ be the projection. Then $\Lie J^{\flat}_{a}=\pi^{\flat}_{a,*}(\calO_{X'^{\flat}_{a}}\otimes\tt)^{\WW'}$. Therefore $\cohog{0}{X,\calK^{\vee}\otimes\omega_{X}}\incl\cohog{0}{X,\Lie J^{\flat}_{a}\otimes\calL^{-1}\otimes\omega_{X}}=\cohog{0}{X'^{\flat}_{a},\tt\otimes\pi^{\flat,*}_{a}(\calL^{-1}\otimes\omega_{X})}^{\WW'}$. Since $\deg(\calL^{-1}\otimes\omega_{X})\leq\deg\omega_{X}<0$, we have $\cohog{0}{X'^{\flat}_{a},\tt\otimes\pi^{\flat,*}_{a}(\calL^{-1}\otimes\omega_{X})}=0$, hence the vanishing of the obstruction.

We have $\Aut(\calE,\varphi)\subset\cohog{0}{X,J^{\flat}_{a}}=\cohog{0}{X'^{\flat}_{a},\TT}^{\WW'}$. Choose a component $X^{\dagger}_{a}$ of $X'_{a}$ whose stabilizer we denote by $\WW^{\dagger}_{a}$. Then $\WW^{\dagger}_{a}$ is the image of $\Pi_{a}:\Gal(\overline{K}/K)\to\WW'$ up to conjugacy. We have $\cohog{0}{X'^{\flat}_{a},\TT}^{\WW'}=\TT^{\WW^{\dagger}_{a}}$. When $a$ is elliptic, $\tt^{\WW^{\dagger}_{a}}=\tt^{\Pi_{a}(\Gal(\overline{K}/K))}=0$, hence $\TT^{\WW^{\dagger}_{a}}$ is finite, and therefore $\Aut(\calE,\varphi)$ is finite (and certainly reduced), and $\calM|_{\Aa}$ is Deligne-Mumford. 

(2) Let $\tX'=\PP^{1}$ and let $p:\tX'\to X$ be the morphism given by $[\xi,\eta]\mapsto[\xi^{m},\eta]$. Any $G$-Higgs bundle on $X$ with $\bP$-level structure at $0$ pullbacks to a $\GG$-Higgs bundle on $\tX'$ with certain $\bQ$-level structure at $\wt{0}'=[0,1]\in\tX'$ (where $\bQ\subset\GG(F)$ is a parahoric stable under $\mu_{e}$ whose $\mu_{e}$-invariants give $\bP$ up to a finite index), equipped with a $\mu_{m}$-equivariant structure. Note that $\mu_{m}$ also acts on $\GG$ and $\bQ$ by pinned automorphisms via its quotient $\mu_{e}$. Let $\wt\calM_{\bQ}$ be the moduli stack of $\GG$-Higgs bundles (twisted by the line bundle $p^{*}\calL$) over $\tX'$ with a $\bQ$-levels structure at $\wt{0}'$. There is an action of $\mu_{m}$ on $\wt\calM_{\bQ}$. The pullback map gives a morphism $\calM_{\bP}\to \wt\calM^{\mu_{m}}_{\bQ}$ compatible with the isomorphism Hitchin bases $\calA\cong\wt\calA'^{\mu_{m}}$ (where $\wt{\calA}'$ is the Hitchin base for $\GG$-Higgs bundles over $\tX'$ with respect to the line bundle $p^{*}\calL$). Restricting the $\mu_{m}$-equivariant structure of a point $(\calE,\varphi)\in\wt\calM_{\bQ}$ to $\wt{0}'$, we get a cohomology class $[\calE]_{0}\in\cohog{1}{\mu_{m}, L_{\bQ}}$ (where $L_{\bQ}$ is the reductive quotient of $\bQ$ on which $\mu_{m}$ acts via the quotient $\mu_{e}$). A local calculation of shows that $(\calE,\varphi)$ comes from a Higgs bundle on $X$ with $\bP$-level structure via pullback if and only if $[\calE]_{0}$ is the trivial class. Therefore $\calM_{\bP}$ can be identified with the fiber of the trivial class under the map $\wt\calM^{\mu_{m}}_{\bQ}\to\cohog{1}{\mu_{m}, L_{\bQ}}$, hence a closed substack of $\wt\calM^{\mu_{m}}_{\bQ}$. For the morphism $\wt\calM^{\mu_{m}}_{\bQ}|_{\Aa}\to\Aa$, one can adapt the argument in \cite[II.4]{Faltings} to show it is proper. Because  $\calM_{\bP}|_{\Aa}$ is a closed substack of $\wt\calM^{\mu_{m}}_{\bQ}|_{\Aa}$, it is also proper over $\Aa$.
\end{proof}

\subsection{Homogeneous points in the Hitchin base} 
The $\Grote$-action on $X'$ induces an action on the parabolic Hitchin stack $\calM'=\calM(X',\GG,\calL')$ as well as the Hitchin base $\calA'$. Recall $\calL$ is equipped with the $\Grot$-equivariant structure, which induces a $\Grote$-equivariant structure on $\calL'$.  For any integer $n$, we may identify $\cohog{0}{X',\calL'^{\otimes n}}$ with $\CC[\xi',\eta]_{nm\deg\calL}$, with $t\in\Grote$ acting as $f(\xi',\eta)\mapsto f(t\xi',\eta)$. 

On the other hand, let $\Gdil$ be the one-dimensional torus which acts on $\calM'$ by dilation of the Higgs fields, and it also acts on $\calA'$ with weights $d_1,\cdots,d_{\rr}$.

Similarly, there is an action of $\Gtwo$ on both $\calM$ and $\calA$ such that the Hitchin fibration $f:\calM\to\calA$ is $\Gtwo$-equivariant. The action of $\Gtwo$ on $\calA=\calA'^{\mu_{e}}$ is the restriction of its action on $\calA'$.

Recall we defined a torus $\hGm(\nu)\subset\hGrot\times\Gdil$ in Section \ref{ss:homo local}, and its image $\Gnu$ in $\Gtwo$ (see \S\ref{sss:sym homog AFS}). 

\begin{defn}
A nonzero element $a\in\calA'$ is called {\em homogeneous of slope $\nu$}, if it is fixed by $\Gnu\subset\Gtwo$. 
\end{defn}

\begin{lemma}\label{l:a global} Let $\nu\in\QQ\cap[0,\deg\calL]$. A point $a\in\calA'^\h$ is homogeneous of slope $\nu$ if and only if it is of the form
\begin{equation*}
\sum_{i=1}^{\rr}c_{i}\xi'^{e\nu d_{i}}\eta^{(\deg\calL-\nu)md_{i}}\in\bigoplus_{i=1}^{\rr} \CC[\xi',\eta]_{d_{i}m\deg\calL}.
\end{equation*}
for some $c_{i}\in \CC$. Note that the $i$-th term is nonzero only if $e\nu d_i\in\ZZ$.

If $\nu<0$ or $\nu>\deg\calL$, then there is no homogeneous element in $\calA'$ of slope $\nu$.
\end{lemma}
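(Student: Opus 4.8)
The plan is to diagonalize the $\Gtwo=\Grote\times\Gdil$-action on the Hitchin base $\calA'$ and then read off the fixed locus of the one-dimensional subtorus $\Gnu\subset\Gtwo$. By \eqref{A'} the fundamental invariants identify $\calA'=\bigoplus_{i=1}^{\rr}\cohog{0}{X',\calL'^{d_i}}$ with $\bigoplus_{i=1}^{\rr}\CC[\xi',\eta]_{d_i m\deg\calL}$ (valid since $\deg\calL\ge0$), so that each summand has as a $\CC$-basis the monomials $\xi'^a\eta^b$ with $a,b\in\ZZ_{\ge0}$ and $\tfrac{m}{e}a+b=d_i m\deg\calL$ (recall $\tfrac me\in\ZZ$, and in this grading $\xi'$ has weight $m/e$, $\eta$ has weight $1$). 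With the $\Grot$-equivariant structure fixed in \S\ref{ss:wp}, $\Grote$ acts on sections by $f(\xi',\eta)\mapsto f(s\xi',\eta)$, while $\Gdil$ acts on the $i$-th summand by the $d_i$-th power of the dilation character; hence every monomial $\xi'^a\eta^b$ is a $\Gtwo$-eigenvector, and a fortiori a $\Gnu$-eigenvector. Consequently the $\Gnu$-fixed subspace of $\calA'$ is the span of those monomials on which $\Gnu$ acts trivially, and it suffices to identify these.

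I would then compute the relevant weights. The monomial $\xi'^a\eta^b$ in the $i$-th summand has $\Gdil$-weight $d_i$. For the rotation weight, the relation $\pi^*\xi=\xi'^e$ (equivalently, $\xi'$ being a function of $t^{1/e}$) means that under the identifications $\xch(\hGrot)=\QQ$ and $\xch(\Grote)=\tfrac1e\ZZ\subset\QQ$, scaling $\xi'$ contributes $\tfrac1e$ of the tautological character, so $\Grote$ acts on $\xi'^a\eta^b$ with weight $\tfrac{a}{e}\in\QQ$. Thus $\xi'^a\eta^b$ in the $i$-th summand carries the $\Gtwo$-weight $(\tfrac{a}{e},\,d_i)\in\xch(\hGrot)\oplus\xch(\Gdil)=\QQ\oplus\ZZ$. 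By the definition of $\hGm(\nu)$ in \S\ref{ss:homo local}, a character $(\chi_1,\chi_2)\in\QQ\oplus\ZZ$ restricts trivially to $\hGm(\nu)$ — equivalently to its image $\Gnu$ in $\Gtwo$ — precisely when $(\chi_1,\chi_2)\in\ZZ\cdot(\nu,1)$, i.e. $\chi_2\in\ZZ$ and $\chi_1=\chi_2\nu$. Applying this to $(\tfrac{a}{e},d_i)$ gives $\tfrac{a}{e}=d_i\nu$, i.e. $a=e\nu d_i$ (which in particular forces $e\nu d_i\in\ZZ$), and then the degree relation $\tfrac{m}{e}a+b=d_i m\deg\calL$ forces $b=m d_i(\deg\calL-\nu)$. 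So inside the $i$-th summand there is a (one-dimensional) $\Gnu$-fixed space iff $e\nu d_i\in\ZZ_{\ge0}$ and $m d_i(\deg\calL-\nu)\in\ZZ_{\ge0}$, in which case it is spanned by $\xi'^{e\nu d_i}\eta^{(\deg\calL-\nu)m d_i}$.

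For $\nu\in[0,\deg\calL]$ the two nonnegativity conditions $e\nu d_i\ge0$ and $m d_i(\deg\calL-\nu)\ge0$ hold automatically, so the $\Gnu$-fixed subspace of $\calA'$ is exactly $\bigoplus_{i:\,e\nu d_i\in\ZZ}\CC\cdot\xi'^{e\nu d_i}\eta^{(\deg\calL-\nu)m d_i}$. Since $\Gnu$ preserves each summand (it acts through $\Gdil$, which does, and through $\Grote$, which sends $\cohog{0}{X',\calL'^{d_i}}$ into itself), a $\Gnu$-fixed $a=\sum_i a_i\in\bigoplus_i\cohog{0}{X',\calL'^{d_i}}$ has every $a_i$ $\Gnu$-fixed, hence $a=\sum_i c_i\,\xi'^{e\nu d_i}\eta^{(\deg\calL-\nu)m d_i}$; this gives the forward direction of the first assertion, the point $a\in\calA'^\h$ being nonzero by definition of $\calA'^\h$. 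The converse is immediate: any element of that form is a sum of $\Gnu$-fixed vectors, hence $\Gnu$-fixed, and nonzero if it lies in $\calA'^\h$. Finally, if $\nu<0$ then $e\nu d_i<0$ for every $i$, and if $\nu>\deg\calL$ then $(\deg\calL-\nu)m d_i<0$ for every $i$; in either case no monomial $\xi'^a\eta^b$ with $a,b\ge0$ is $\Gnu$-fixed, so the $\Gnu$-fixed subspace of $\calA'$ is $\{0\}$ and there is no nonzero homogeneous element of slope $\nu$.

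The computation is elementary once the conventions are pinned down, and the only genuine obstacle is bookkeeping the normalizations consistently: fixing the $\Grot$-equivariant structure on $\calL$ (hence on $\calL'=\pi^*\calL$) as in \S\ref{ss:wp}, keeping track of the factor $\tfrac1e$ between $\xch(\Grote)=\tfrac1e\ZZ$ and the tautological weight of $\xi'$ introduced by the $\mu_e$-cover $\pi:X'\to X$, and matching the resulting $\Gtwo$-weights against the defining character-group sequence for $\hGm(\nu)$ (whose image in $\Gtwo$ is $\Gnu$), rather than against the slightly differently presented torus $\Gm(\nu)$ of \S\ref{sss:Gnu}. Once these are in place the identity $a=e\nu d_i$ drops out directly.
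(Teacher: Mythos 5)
Your proof is correct and follows essentially the same line as the paper's: the paper's entire argument is the one-sentence observation that $\Gnu$ fixes the monomial $\xi'^j\eta^\ell$ in the $i$-th summand iff $j/d_i = e\nu$, which is exactly the identity $a = e\nu d_i$ you derive from the $\Gtwo$-weight $(\tfrac{a}{e}, d_i)$ and the defining sequence for $\xch(\hGm(\nu))$. Your version simply fills in the bookkeeping (monomial eigenbasis, the $\tfrac1e$ normalization from $\xch(\Grote)=\tfrac1e\ZZ$, summand-wise reduction, and the nonnegativity constraints giving the $\nu\notin[0,\deg\calL]$ case) that the paper leaves implicit.
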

\begin{proof}
We only need to note that $\Gnu$ fixes the monomial $\xi'^{j}\eta^{\ell}$ in the $i\nth$ factor of $\calA'$ if and only if $j/d_i=e\nu$.
\end{proof}

Let $\Ah_{\nu}$ (resp. $\calA_{\nu}$) denote the subscheme of homogeneous elements in $\Ah$ (resp. $\calA$) of slope $\nu$. We denote the restriction of the Hitchin fibration $f_{\bP}$ to $\Ah_{\nu}$ by
\begin{equation*}
f_{\bP, \nu}: \calM^{\h}_{\bP, \nu}\to \Ah_{\nu}.
\end{equation*}

Fixing a $\Grot$-equivariant trivialization of $\calL$ over $U$, for any $a\in\calA$ we may view $a|_{\Spec \hK_{0}}$ as an element in $\frc(F)^{\rs}$ (upon identifying $F$ with $\hK_{0}$). This defines a map
\begin{equation}\label{e0}
e_{0}: \calA_{\nu}\to \frc(F)_{\nu}
\end{equation}
sending $\Ah_{\nu}$ to $\frc(F)^{\rs}_{\nu}$. In particular, if $\Ah_{\nu}\neq\varnothing$, then $\nu$ is a $\theta$-admissible slope (see Definition \ref{def:slope}).

\subsubsection{Symmetry on homogeneous Hitchin fibers}\label{sss:sym homog Hitchin} By construction, $J_{a}$ fits into the Cartesian diagram \eqref{alt Ja}. Note that the projection $J_{\calL}\to \frc_{\calL}$ is $\Grote\times\Gdil$-equivariant. Since $a$ is homogeneous of slope $\nu$, the section $X\to \frc_{\calL}$ is $\Gnu$-equivariant, where $\Gnu$ acts on $X$ via the projection $\Gnu\to\Grote\to \Grot$. Therefore $J_{a}$ also admits an action of $\Gnu$ making $J_{a}\to X$ equivariant under $\Gnu$. Therefore $\Gnu$ also acts on $\calP_{a}$. As $a$ varies in $\Ah_{\nu}$, we get a Picard stack $\calP_{\nu}$ over $\Ah_{\nu}$ whose fiber over $a$ is $\calP_{a}$. The group stack $\calP_{\nu}\rtimes\Gnu$ acts on the family $\calM^{\h}_{\bP,\nu}$.

\subsection{The case $\calL=\calO_{X}(\nu)$} 
Fix a positive $\theta$-admissible slope $\nu=d/m$ written in the normal form (see \S\ref{sss:normal}). We use $m$ to define the weighted projective line $X=\PP(m,1)$. From now on, we will concentrate on the special case where $\calL=\calO_X(\nu)$, and we fix a $\Grot$-equivariant trivialization of $\calL_{U}$. Let $\Pi:\mu_{m}\incl\WW'$ be the $\theta$-regular homomorphism of order $m$.

The following lemma follows directly from Lemma \eqref{l:a global}.
\begin{lemma}\label{l:e0} The map $e_{0}$ in \eqref{e0} is an isomorphism. Moreover, with one choice of an isomorphism $\Gnu\cong\Gm$, the action of $\Gnu$ on $\calA$ is contracting to $\calA_{\nu}$.
\end{lemma}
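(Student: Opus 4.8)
The plan is to make both sides of $e_0$ completely explicit and then observe that, after the obvious identifications, $e_0$ is the identity on coefficients. Since we have fixed $\calL=\calO_X(\nu)$ with $\deg\calL=\nu$, Lemma \ref{l:a global} specializes: every homogeneous element of slope $\nu$ in $\calA'$ is of the form $\sum_i c_i\,\xi'^{e\nu d_i}$, because the $\eta$-exponent $(\deg\calL-\nu)md_i$ vanishes. Passing to $\mu_e$-invariants ($\calA=\calA'^{\mu_e}$) keeps only the summands for which $\xi'^{e\nu d_i}$ is compatible with the $\vep_i$-twist on $f_i$, i.e.\ those $i$ with $\nu d_i-\vep_i/e\in\ZZ_{\geq 0}$; equivalently this is the subspace $\bigoplus_i\CC\cdot\xi'^{\vep_i}\xi^{\nu d_i-\vep_i/e}$ cut out inside the presentation $\calA=\bigoplus_i\xi'^{\vep_i}\CC[\xi,\eta]_{m(d_i\nu-\vep_i/e)}$ derived above. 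Thus $\calA_\nu$ is an affine space with coordinates $(c_i)$ ranging over the indices $i$ with $\nu d_i-\vep_i/e\in\ZZ_{\geq 0}$.

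Next I would restrict such a section to the formal disk $\Spec\hO_0$, using the trivialization of $\calL$ over $U$ fixed in \S\ref{ss:wp}. Near $0$ the weighted coordinate $\xi'^e=\xi$ restricts to a local uniformizer $t$, so the $i$-th fundamental invariant of $e_0\big(\sum_i c_i\xi'^{e\nu d_i}\big)$ becomes $c_i t^{\nu d_i}\in t^{\vep_i/e}\calO_F$, in accordance with \eqref{frc}. On the other hand, Lemma \ref{l:hom in Fs}(2), combined with the description $\frc(\calO_F)=\bigoplus_i t^{\vep_i/e}\calO_F$, says precisely that a homogeneous element of slope $\nu$ in $\frc(F)$ is one whose $i$-th fundamental invariant equals $c_i t^{\nu d_i}$ for some $c_i\in\CC$, which makes sense exactly when $\nu d_i-\vep_i/e\in\ZZ_{\geq 0}$. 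Hence $e_0$ is the map $(c_i)\mapsto(c_i)$ between two affine spaces over the same index set; it is algebraic with algebraic inverse, so it is an isomorphism. Conceptually: a homogeneous section of $\frc_{\calL}$ is determined by its germ at $0$, and every admissible germ extends to all of $X$ — the pure monomial $\xi'^{e\nu d_i}$ being a genuine global section exactly because $\deg\calL=\nu$, the boundary case of Lemma \ref{l:a global}.

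For the second assertion I would work directly from the presentation $\calA=\bigoplus_i\xi'^{\vep_i}\CC[\xi,\eta]_{m(d_i\nu-\vep_i/e)}$ together with the definition of $\Gnu\subset\Grote\times\Gdil$ from \S\ref{sss:Gnu}: $\Grote$ scales $\xi'$ and fixes $\eta$ (by the normalization of the $\Grot$-equivariant structure on $\calL$, trivial on the stalk $\calL(0)$), while $\Gdil$ acts on the $i$-th summand by the scalar weight $d_i$. Therefore $\Gnu$ acts linearly on the affine space $\calA$, scaling the monomial $\xi'^{j}\eta^{l}$ in the $i$-th summand by a weight that is an affine function of $(j,d_i)$; the fixed locus is $\{j=e\nu d_i,\ l=0\}=\calA_\nu$, and on all monomials of the relevant degree one has $j\le e\nu d_i$ with equality iff $l=0$, so every weight lies on one side of the fixed locus. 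Choosing the isomorphism $\Gnu\cong\Gm$ so that these weights are $\ge 0$, the resulting linear $\Gm$-action on $\calA$ is contracting onto its weight-zero subspace $\calA_\nu$, as claimed.

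All of this is routine; the two points that require attention are (i) verifying that the affine-space structure on $\calA_\nu$ coming from Lemma \ref{l:a global} and that on $\frc(F)_\nu$ coming from Lemma \ref{l:hom in Fs} share the same index set $\{i:\nu d_i-\vep_i/e\in\ZZ_{\geq 0}\}$ — this is exactly where $\deg\calL=\nu$ is used, guaranteeing that admissible germs at $0$ extend over the stacky point $\infty$ — and (ii) fixing the sign of the isomorphism $\Gnu\cong\Gm$ so that the contraction runs toward $\calA_\nu$ and not away from it. I expect (i), reconciling the $\mu_e$-equivariance constraint on $X'$ with the twisted-invariance constraint built into $\frc(\calO_F)$, to be the only place where one must be genuinely careful.
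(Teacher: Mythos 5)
Your proposal is correct and fills in, in the intended way, the details the paper leaves implicit when it says the lemma ``follows directly from Lemma \ref{l:a global}'': with $\deg\calL=\nu$ the $\eta$-exponent in Lemma \ref{l:a global} vanishes, after which matching the presentation $\calA=\bigoplus_i\xi'^{\vep_i}\CC[\xi,\eta]_{m(d_i\nu-\vep_i/e)}$ against $\frc(\calO_F)=\bigoplus_i t^{\vep_i/e}\calO_F$ shows $e_0$ identifies the two affine spaces coordinate-by-coordinate over the index set $\{i:\nu d_i-\vep_i/e\in\ZZ_{\geq0}\}$. Your weight bookkeeping for the contracting statement is also right: on a monomial $\xi'^j\eta^l$ of total weighted degree $d_im\nu$ in the $i$-th summand, the $\Gnu$-weight works out to a negative multiple of $l$ (equivalently of $e\nu d_i-j\ge 0$), so all weights lie on one side of zero, vanishing exactly on $\calA_\nu$, and the appropriate choice of $\Gnu\cong\Gm$ makes the action contracting.
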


\begin{prop} For $a\in\Ah_{\nu}$ we have
\begin{equation}\label{dim Ma}
\dim\calM_{a}=\dim\calP_{a}=\frac{1}{2}(\nu\#\Phi_{\GG}-r-\dim\tt^{\Pi(\mu_m)}).
\end{equation}
\end{prop}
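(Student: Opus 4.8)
The strategy is to compute $\dim\calM_a$ by deformation theory and $\dim\calP_a$ directly, then match both to the right-hand side using the cameral curve. First I would recall from the proof of Proposition \ref{p:smooth}(1) that the tangent complex to $\calM_{\bP}$ (here $\bP=\bG$, so no level structure) at $(\calE,\varphi)$ is the hypercohomology of $\calK=[\Ad(\calE)\xrightarrow{[\varphi,-]}\Ad(\calE)\otimes\calL]$, and that over $\Aa$ the obstruction space $\coho{1}{\calK}$ vanishes while $\Aut(\calE,\varphi)$ is finite. Hence $\dim\calM_a=\dim\coho{0}{\calK}=-\chi(X,\calK)=-\chi(X,\Ad(\calE))+\chi(X,\Ad(\calE)\otimes\calL)$. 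By Riemann-Roch on the orbifold $X=\PP(m,1)$ (equivalently, by pulling back along $p:\PP^1\to X$ as in the proof of properness, or by an orbifold Riemann-Roch), the difference $\chi(X,\Ad(\calE)\otimes\calL)-\chi(X,\Ad(\calE))$ equals $\dim\GG\cdot\deg\calL$ plus a correction coming from the stacky point $\infty$; since $\deg\calL=\nu$ and $\dim\GG=\#\PPhi+\rr$, the naive main term is $\nu(\#\PPhi+\rr)$, and I expect the stacky correction at $\infty$ together with the $\calP_a$-torsor structure to cut this down to the stated formula. The cleanest route is to use the identification (from the proof of Proposition \ref{p:smooth}) $\Lie J^{\flat}_a=\pi^{\flat}_{a,*}(\calO_{X'^{\flat}_a}\otimes\tt)^{\WW'}$ via the normalized cameral curve, so that $\dim\calP_a=\dim\coho{1}{\Lie J^{\flat}_a}$ (using that $\coho{0}{\Lie J^{\flat}_a}=\TT^{\WW^{\dagger}_a}$ is finite in the elliptic case, and more generally has dimension $\dim\tt^{\Pi(\mu_m)}$).

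The key computation is then $\dim\calP_a$. I would argue as follows: $\calP_a$ is a Picard stack whose connected component is a product of a finite group, a torus, and an abelian-variety-times-affine-space piece; its dimension is $\dim\coho{1}{X,\Lie J^{\flat}_a}-\dim\coho{0}{X,\Lie J^{\flat}_a}=-\chi(X,\Lie J^{\flat}_a)$ adjusted by $\dim\coho{0}$. Using $\Lie J^{\flat}_a=\pi^{\flat}_{a,*}(\tt\otimes\calO_{X'^{\flat}_a})^{\WW'}$ and the projection formula, $\chi(X,\Lie J^{\flat}_a)=\chi(X'^{\flat}_a,\tt\otimes\calO)^{\WW'}$-type expression, which by Riemann-Roch on the (stacky) curve $X'^{\flat}_a$ involves $\rr(1-g(X'^{\flat}_a))$ together with the $\WW'$-action data. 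The genus and ramification of the cameral curve over $X=\PP(m,1)$ are controlled by $\nu=d/m$ and the regular homomorphism $\Pi:\mu_m\hookrightarrow\WW'$: the ramification over $0$ is governed by $\Pi$ and over $\infty$ by the analogous local monodromy, and the total ramification divisor of $a\in\Ah_\nu$ over $X-\{0,\infty\}$ has degree essentially $\nu\#\PPhi$ (each root vanishes with the expected multiplicity, exactly as in the affine Springer fiber dimension computation, Corollary \ref{c:AFS dim}). Assembling these Euler-characteristic contributions should produce $\tfrac12(\nu\#\PPhi-r-\dim\tt^{\Pi(\mu_m)})$ for both $\dim\calM_a$ and $\dim\calP_a$.

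Alternatively — and this is probably the slicker proof — I would invoke the product formula comparison (Proposition \ref{p:prod}, referenced in the introduction) or the local-global compatibility of Ng\^o: there is a map from a product of local affine Springer fibers at $0$ and $\infty$ (modulo $\calP_a$) to the global Hitchin fiber $\calM_a$, and the homogeneity of $a$ plus the contracting $\Gnu$-action (Lemma \ref{l:e0}) forces the local contribution at points of $X-\{0,\infty\}$ to be trivial. Then $\dim\calM_a$ reduces to a local computation at $0$ (an affine Springer fiber for $G$ over $F$, which is $\Sp_{\bG,a}$ of dimension given by Corollary \ref{c:AFS dim}, i.e.\ $\tfrac12(\nu\#\PPhi-r+\dim\tt^{\Pi(\mu_m)})$ for the Iwahori version, and the analogous formula for $\bG$) together with the contribution at $\infty$ (a twisted affine Springer fiber governed by $P_{a,\infty}=(J_a(F_m)/J_a(\calO_{F_m}))^{\mu_m}$), and the signs of the $\dim\tt^{\Pi(\mu_m)}$ terms combine to give $-\dim\tt^{\Pi(\mu_m)}$ globally.

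\textbf{Main obstacle.} The delicate point will be the bookkeeping at the stacky point $\infty\in X$: getting the orbifold Riemann-Roch correction (or equivalently the $\mu_m$-equivariant Euler characteristic on $\tilV'\cong\AA^1$) exactly right, and in particular explaining why the sign in front of $\dim\tt^{\Pi(\mu_m)}$ flips from $+$ in the affine Springer fiber formula \eqref{dim AFS} to $-$ in \eqref{dim Ma}. I expect this to come from the contribution of $\coho{0}{X,\Lie J^{\flat}_a}$, whose dimension is $\dim\tt^{\Pi(\mu_m)}$ (the split part of the generic centralizer persists globally on $\PP(m,1)$, unlike on a curve of positive genus), entering $\dim\calP_a=\dim\coho{1}-\dim\coho{0}$ with a minus sign; verifying that $\dim\calM_a$ picks up the same minus sign requires the precise interplay between $\Aut(\calE,\varphi)$ and this $\coho{0}$. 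Once the Euler-characteristic inputs are pinned down, the rest is routine Riemann-Roch and the cameral curve formalism already set up in \S\ref{s:Hit}.
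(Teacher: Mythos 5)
Your proposal correctly identifies the overall shape (reduce $\dim\calM_a$ to $\dim\calP_a$ via the $\calP_a$-torsor $\calM^{\reg}_{\bG,a}$, then compute $\dim\calP_a=-\chi(X,\Lie J_a)$ by Riemann–Roch on the orbifold $X$), and your heuristic for the sign flip in front of $\dim\tt^{\Pi(\mu_m)}$ is sound: the split part of the generic centralizer does persist globally on the weighted projective line, contributing $H^0(X,\Lie J_a)$ with a minus sign. But the proposal leaves the heart of the computation genuinely open, and it misses the tool that closes it cleanly. You propose to compute $\chi(X,\Lie J_a)$ via $\Lie J^{\flat}_a=\pi^{\flat}_{a,*}(\tt\otimes\calO_{X'^{\flat}_a})^{\WW'}$ and the cameral curve; this would work, but it forces you to determine the genus and ramification of $X'^{\flat}_a$, precisely the delicate step you flag in your \textbf{Main obstacle}. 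The paper instead uses Ng\^o's vector-bundle description of the regular centralizer Lie algebra over the constant-group curve $X'$, namely $\Lie J'_a\cong\cc^{\vee}_{\calL'}\otimes\calL'$ (\cite[4.13.2]{NgoFL}), which is \emph{independent of $a$} and makes $\Lie J_a=(\pi_*\Lie J'_a)^{\mu_e}$ into an explicit direct sum of line bundles $\bigoplus_i\calO_X(-d_i\nu+\nu-\vep_i/e)$ by the projection formula. From there Riemann–Roch on $X$ reads $\chi(X,\calL'')=[\deg\calL'']+1$, and the sum collapses to the stated answer once you invoke Springer's theorem \cite[Theorem 6.4(v)]{Spr} to identify the eigenvalues of $\Pi(\mu_m)$ on $\tt$ as $\zeta\mapsto\zeta^{\nu(d_i-1)+\vep_i/e}$, together with the Killing-form perfect pairing $\tt_j\times\tt_{-j}\to\CC$ to evaluate $\sum_j (j/m)\dim\tt_j$. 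Without Ng\^o's formula for $\Lie J'_a$ and without Springer's eigenvalue theorem, the "routine Riemann–Roch and cameral curve formalism" you defer to will not discharge the obstacle you yourself identified; with them, there is no obstacle.

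Two smaller points. First, the third route you sketch (product formula with contributions from $0$ and $\infty$) cannot be used to \emph{prove} the dimension formula in the logical order of the paper: the product formula is Proposition \ref{p:prod}, which comes after this proposition, and the paper instead uses the dimension formula and Proposition \ref{p:prod} as a consistency check (see the remark following the proposition, where the mismatch $\dim\Sp_a-\dim\calM_a=\dim\tt^{\mu_m}$ is explained by $\calM_a$ being a union of copies of $[\Sp_a/\tilS_a]$ with $\tilS_a\cong\TT^{\mu_m}$). Second, your route (a) via $\chi(X,\calK)$ for the tangent complex is also viable but redundant: once you know $\dim\calP_a$ and that $\calM^{\reg}_{\bG,a}$ is a $\calP_a$-torsor, the equality $\dim\calM_a=\dim\calP_a$ is free, as the paper notes in its opening sentence.
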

\begin{proof} By considering the regular locus of $\calM_{\bG,a}$, which is a torsor under $\calP_{a}$, we may reduce to the calculation of $\dim\calP_{a}$.

We first work with the curve $X'$ instead of $X$, over which we consider the constant group $\GG\times X'$ and the regular centralizer group scheme $J'_{a}$ therein. In \cite[4.13.2]{NgoFL}, Ng\^{o} gives a formula for $\Lie J'_{a}$ (as a vector bundle over $X'$)
\begin{equation*}
\Lie J'_{a}\cong\cc^{\vee}_{\calL'}\otimes\calL'. 
\end{equation*}
Here $\calL'=\pi^{*}\calL$. By construction, we have $\Lie J_{a}=(\pi_{*}\Lie J'_{a})^{\mu_{e}}$. On the other hand we have $\dim\calP_{a}=-\chi(X,\Lie J_{a})$, therefore we have
\begin{equation*}
\dim\calP_{a}=-\chi(X,\pi_{*}(\cc^{\vee}_{\calL'}\otimes\calL')^{\mu_{e}}).
\end{equation*}
By projection formula,
\begin{equation*}
\pi_{*}(\cc^{\vee}_{\calL'}\otimes\calL')^{\mu_{e}}=(\cc^{\vee}_{\calL}\otimes\calL\otimes\pi_{*}\calO_{X'})^{\mu_{e}}=\bigoplus_{i=1}^{r}\calO_{X}(-d_{i}\nu+\nu-\vep_{i}/e)
\end{equation*}
Recall that $\vep_{i}\in\{0,1,\cdots,e-1\}$ is the unique number such that $\mu_{e}$ acts on the $i\nth$ fundamental invariant $f_{i}$ via the $\vep_{i}\nth$ power of the tautological character (see \S\ref{ss:inv}). The Riemann-Roch formula for $X$ says that for any line bundle $\calL''$ on $X$ we have
\begin{equation*}
\chi(X,\calL'')=[\deg\calL'']+1.
\end{equation*}
Then
\begin{eqnarray*}
&&\dim\calP_{a}=\sum_{i=1}^{r}-\chi(X,\calO_{X}(-d_{i}\nu+\nu-\frac{\vep_{i}}{e}))\\
&=&\sum_{i=1}^{r}-[-\nu(d_{i}-1)-\frac{\vep_{i}}{e}]-1=\sum_{i=1}^{r}[\nu(d_{i}-1)+\frac{\vep_{i}}{e}-\frac{1}{m}].
\end{eqnarray*}

The integers $\{d_{1}-1,\cdots,d_{\rr}-1\}$ are the exponents of the $\WW$-action on $\tt$. By \cite[Theorem 6.4(v)]{Spr}, the action of $\mu_{m}$ on $\tt$ via $\Pi$ decomposes $\tt$ into $\rr$ lines where $\mu_{m}$ acts through the characters $\zeta\mapsto\zeta^{\nu(d_{i}-1)+\vep_{i}/e}, i=1,\cdots,\rr$. Therefore if we write  $\tt=\oplus_{j\in\ZZ/m\ZZ}\tt_{j}$ according to this action of $\mu_{m}$. Therefore,
\begin{equation}\label{pre dim}
\sum_{i=1}^{r}(\nu(d_{i}-1)+\frac{\vep_{i}}{e})-[\nu(d_{i}-1)+\frac{\vep_{i}}{e}-\frac{1}{m}]=\sum_{j=1}^{m}\frac{j}{m}\dim\tt_{j}.
\end{equation}
Since the Killing form restricts to a perfect pairing between  $\tt_{j}$ and $\tt_{-j}$, we see that
\begin{equation}\label{tj}
\sum_{j=1}^{m}\frac{j}{m}\dim\tt_{j}=\frac{1}{2}\sum_{j\neq0}\dim\tt_{j}+\dim\tt_{0}=\frac{\dim\tt+\dim\tt^{\Pi(\mu_m)}}{2}.  
\end{equation}
On the other hand, applying the same reasoning as above to the $\mu_{e}$-action on $\tt$ via $\theta$ (in place of the $\mu_{m}$-action on $\tt$), we conclude that
\begin{equation}\label{unep}
\sum_{i=1}^{r}\frac{\vep_{i}}{e}=\frac{\dim\tt-\dim\tt^{\theta(\mu_{e})}}{2}.
\end{equation}
Plugging \eqref{tj} and \eqref{unep} into \eqref{pre dim}, we get
\begin{eqnarray*}
&&\dim\calP_{a}=\sum_{i=1}^{r}[\nu(d_{i}-1)+\frac{\vep_{i}}{e}-\frac{1}{m}]\\
&=&\nu\sum_{i=1}^{r}(d_{i}-1)+\frac{\dim\tt-\dim\tt^{\theta(\mu_{e})}}{2}-\frac{\dim\tt+\dim\tt^{\Pi(\mu_m)}}{2}=\frac{1}{2}(\nu\#\Phi_{\GG}-r-\dim\tt^{\Pi(\mu_m)}).
\end{eqnarray*}
\end{proof}

\begin{remark} Comparing \eqref{dim Ma} with \eqref{dim AFS}, we get
\begin{equation*}
\dim\Sp_{a}-\dim\calM_{a}=\dim\tt^{\mu_{m}}.
\end{equation*}
which is consistent with Proposition \ref{p:prod} below: $\calM_{a}$ is a disjoint union of copies of $[\Sp_{a}/\tilS_{a}]$ and $\tilS_{a}\cong\TT^{\mu_{m}}$.
\end{remark}

\subsection{Homogeneous Hitchin fibers and homogeneous affine Springer fibers}\label{ss:prod} In this subsection, we would like to link the family of homogeneous affine Springer fibers $\Sp_{\bP,\nu}$ and the family of homogeneous Hitchin fibers $\calM_{\bP,\nu}$ over $\Ah_{\nu}$, through a formula analogous to the product formula in \cite[Proposition 4.15.1]{NgoFL}.

\subsubsection{Kostant section over $V$} Let $a\in\Ah_{\nu}$. We would like to construct a $\cG_{V}$-Higgs bundle $(\calE_{V},\varphi_{V})$ with invariant $a_{V}$, the restriction of $a$ to $V$.  For this we need to make an extra assumption
\begin{equation}\label{assrho}
\mbox{The cocharacter $d\rho^{\vee}\in \xcoch(\AA^{\ad})$ lifts to $\xcoch(\AA)$.}
\end{equation}

Over $V$, $\calL$ represents a line bundle over $\tilV'$ with a $\mu_{m}$-equivariant structure. Such a $\mu_{m}$-equivariant line bundle is classified up to isomorphism by a class in $\cohog{1}{\mu_{m},\Gm}=\Hom(\mu_{m},\Gm)=\ZZ/m\ZZ$. The class of $\calL|_{V}$ is $d\mod m$, which means there is a $\mu_{m}$-equivariant isomorphism $\calL|_{\tilV'}\isom\tilV'\times\AA^{1}$ with $\mu_{m}$ acting on the $\AA^{1}$-factor via $d\nth$ power. 

The point $a_{V}$ can be viewed as a $\mu_{m}$-equivariant morphism $a_{V}:\tilV'\to\cc$, where $\mu_{m}$ acts on $\tilV'$ as usual, and $\zeta\in\mu_{m}$ acts on $c\in\cc$  by $\zeta:c\mapsto\zeta^{d}\cdot\theta(\zeta^{m/e})(c)$ (here $\cdot$ denotes the weighted action of $\Gm$ on $\cc$ with weights $\{d_{1},\cdots, d_{\rr}\}$). Consider the trivial $\cG_{V}$-torsor $\calE^{\triv}_{V}=\cG_{V}$ over $V$. A Higgs field on $\calE^{\triv}_{V}$ is a map $b:\tilV'\to \gg$ together with a 1-cocycle $\ep: \mu_{m}\to \GG$ (with $\mu_{m}$ acts on $\GG$ via $\zeta\mapsto\theta(\zeta^{m/e})$), such that for any $\zeta\in\mu_{m}$, 
\begin{equation}\label{conditionb}
b(\zeta v)=\zeta^{d}\Ad(\ep(\zeta))\theta(\zeta^{m/e})b(v). 
\end{equation}
We take $b$ to be the composition
\begin{equation*}
b:\tilV'\xrightarrow{\tila'_{V}}\cc\xrightarrow{\kappa}\ss\subset\gg.
\end{equation*}
Under the assumption \eqref{assrho}, let $\l\in\xcoch(\AA)$ lift $d\rho^{\vee}$, and we take $\ep(\zeta)=\zeta^{-\l}$. Then \eqref{conditionb} holds by \eqref{Kos Gm} and the fact that $\kappa$ commutes with pinned automorphisms. We denote the $\cG_{V}$-Higgs bundle corresponding to $b$ by $(\calE^{\triv}_{V},\varphi^{a}_{V})$.

We would like to equip $(\calE^{\triv}_{V},\varphi^{a}_{V})$ with a $\Gnu$-equivariant structure. By \eqref{Kos Gm}, we have
\begin{equation}\label{eq b}
\kappa(\tila'_{V}(s^{m/e}x))=\kappa(s^{d}\cdot \tila'_{V}(x))=s^{d}\Ad(s^{-d\rho^{\vee}})(\kappa(\tila'_{V}(x))),\hspace{1cm} s\in\Gm, x\in\tilV'.
\end{equation}
Fix a lifting $\l\in\xcoch(\AA)$ of $d\rho^{\vee}$, the above formula shows that left translation by $s^{\l}$ on the trivial $\cG_{V}$-torsor extends to an isomorphism of $\cG_{V}$-Higgs bundles $s^{*}(\calE^{\triv}_{V}, \varphi^{a}_{V})\isom(\calE^{\triv}_{V}, s^{d}\varphi^{a}_{V})$, hence giving  $(\calE^{\triv}_{V},\varphi^{a}_{V})$ a $\Gnu$-equivariant structure. 

The above construction clearly works in families as $a$ moves over the base $\Ah_{\nu}$. We therefore get a family of $\cG_{V}$-Higgs bundles $(\calE^{\triv}_{V}\times \Ah_{\nu}, \varphi_{V})$ over $\Ah_{\nu}$ carrying a $\Gnu$-equivariant structure.

\subsubsection{The local-to-global morphism}\label{sss:lg}  We would like to define a morphism
\begin{equation}\label{lg}
\beta_{\bP,\nu}:\Sp_{\bP, \nu}\to \calM_{\bP, \nu}
\end{equation}
over $\Ah_{\nu}$ respecting the symmetries on the affine Springer fibers and the Hitchin fibers. By Lemma \ref{l:e0}, a point $a\in\Ah_{\nu}$ corresponds to a point in $\frc(F)_{\nu}^{\rs}$ by restricting to the formal neighborhood of $0$, which we still denote by $a$. 
Every point $g\bP\in\Sp_{\bP, a}$ gives a $\bP$-Higgs bundle $(\calE_{0},\varphi_{0})$  over $\Spec \hO_{0}$:  $\calE_{0}=\bP$ is the trivial $\bP$-torsor and 
$\varphi_0=\Ad(g^{-1})(\kappa(a))\in \calE_{0}\twtimes{\bP} \Lie \bP$ and $Ad(g)$ is an isomorphism $(\calE_{0},\varphi_{0})|_{\Spec\hK_{0}}\isom (G, \kappa(a))$. 
Since the restriction of $(\calE^{\triv}_{V},\varphi^{a}_{V})$ on  $\Spec\hK_{0}$ is $(G, \kappa(a))$, $\Ad(g)$
 glues $(\calE_{0},\varphi_{0})$ with $(\calE^{\triv}_{V},\varphi^{a}_{V})$ along $\Spec\hK_{0}$ and we get a morphism $\beta_{\bP,a}:\Sp_{\bP, a}\to \calM_{\bP, a}$. This construction works in families over $\Ah_{\nu}$ and gives the desired morphism $\beta_{\bP,\nu}$.

In \S\ref{sss:sym homog AFS} we have defined an action of the group ind-scheme $P_{\nu}\rtimes\Gnu$ on $\Sp_{\bP,\nu}$ over $\frc(F)^{\rs}_{\nu}$. In \S\ref{sss:sym homog Hitchin} we have defined an action of $\calP_{\nu}\rtimes\Gnu$ on $\calM_{\bP, \nu}$ over $\Ah_{\nu}$. By the construction of $P_{\nu}$ and $\calP_{\nu}$, there is a $\Gnu$-equivariant homomorphism over $\frc(F)^{\rs}_{\nu}\cong\Ah_{\nu}$
\begin{equation}\label{Plg}
P_{\nu}\to\calP_{\nu}
\end{equation}
such that $\beta_{\bP,\nu}$ is equivariant with respect to the $P_{\nu}$-action on $\Sp_{\bP,\nu}$ and the $\calP_{\nu}$-action on $\calM^{\h}_{\bP,\nu}$ via \eqref{Plg}.  Moreover, the morphism $\beta_{\bP,\nu}$ is also equivariant under $\Gnu$ by the construction of the $\Gnu$-equivariant structure on $(\cG_{V}, \varphi_{V})$. In summary, the morphism $\beta_{\bP,\nu}$ is equivariant under the$P_{\nu}\rtimes\Gnu$-action on $\Sp_{\bP,\nu}$ and the $\calP_{\nu}\rtimes\Gnu$-action on $\calM^{\h}_{\bP,\nu}$.

Recall from Section \ref{sss:family Sp} the family of homogeneous affine Springer fibers $\Sp_{\bP, \nu}$ over $\frc(F)^{\rs}_{\nu}$ with the action of the group scheme $\tilS$ over $\frc(F)^{\rs}_{\nu}$. 

\begin{prop}\label{p:prod} Let $\nu>0$ be a $\theta$-admissible slope. Fix a lifting $\l\in\xcoch(\AA)$ of $d\rho^{\vee}$.
\begin{enumerate}
\item There is a surjective morphism $\calM^{\h}_{\bP,\nu}:=\calM_{\bP}|_{\Ah_{\nu}}\to\cohog{1}{\mu_m,\TT}$ whose fibers are homeomorphic to $[\Sp_{\bP,\nu}/\tilS]$.
\item If moreover $\nu$ is elliptic, then we have a homeomorphism over $\frc(F)^{\rs}_{\nu}\cong\Ah_{\nu}$
\begin{equation}\label{ell prod family}
[\Sp_{\bP,\nu}/\tilS]\cong\calM^{\h}_{\bP,\nu}
\end{equation}
which is fiberwise equivariant under the actions of $P_{\nu}\rtimes\Gnu$ and $\calP_{\nu}\rtimes\Gnu$. Moreover, $\calP_{\nu}$  has connected fibers.
\end{enumerate}
\end{prop}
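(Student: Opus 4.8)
The plan is to prove Proposition \ref{p:prod} by mimicking Ng\^o's product formula argument in \cite[Proposition 4.15.1]{NgoFL}, adapted to the orbifold curve $X=\PP(m,1)$ and the quasi-split group scheme $\cG$. The key geometric point is that the only ``interesting'' point of $X$ is $0$: away from $0$, every point of $X$ lies in the open substack $V$, and over $V$ we have constructed the explicit Kostant-section Higgs bundle $(\calE^{\triv}_{V},\varphi^{a}_{V})$ which trivializes the situation (this uses the extra assumption \eqref{assrho}, which is harmless since we may replace $\GG$ by a $z$-extension or work $\tilS$-equivariantly). So a $\cG_{\bP}$-Higgs bundle over $X$ with invariant $a\in\Ah_\nu$ is, up to the action of $\calP_a$, determined by its restriction to the formal disk $\Spec\hO_0$, together with a gluing datum over $\Spec\hK_0$; this is exactly the data parametrized by a point of $\Sp_{\bP,a}$ modulo the symmetries, via the local-to-global map $\beta_{\bP,\nu}$ of \S\ref{sss:lg}.

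First I would make precise the target $\cohog{1}{\mu_m,\TT}$: a $\cG$-Higgs bundle restricted to the formal neighborhood $\Spec\hO_{\wt{\infty}'}\cong\Spec\calO_{F_m}$ of $\infty$ carries a $\mu_m$-equivariant structure, and restricting this equivariant structure to the Higgs field over the central fiber (which after conjugation lies in a fixed maximal torus by homogeneity and ellipticity of the local invariant) produces a class in $\upH^1(\mu_m,\TT)$. This gives the morphism $\calM^\h_{\bP,\nu}\to\upH^1(\mu_m,\TT)$ of part (1). Then I would run the uniformization/product-formula argument: the groupoid of $\cG_{\bP}$-Higgs bundles with invariant $a$, with a fixed behavior at $\infty$, is equivalent to the quotient groupoid of pairs (a point of $\Sp_{\bP,a}$ giving the $\bP$-structure and gluing at $0$, modulo $J_a(\hO_0)$) over the $\calP_a$-action that changes the $J_a$-torsor globally. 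Concretely, following \cite[\S4.15]{NgoFL}, one shows $\calM_{\bP,a}$ (restricted to a fiber over $\upH^1(\mu_m,\TT)$) is the contracted product $\Sp_{\bP,a}\twtimes{P_a}\calP_a$, and since over the elliptic locus the ``global'' part of $\calP_a$ and the stabilizer group $\tilS_a$ account for exactly the difference, the fiber is homeomorphic to $[\Sp_{\bP,a}/\tilS_a]$. For part (2), ellipticity forces $\calP_a$ to be a proper Deligne-Mumford stack (Proposition \ref{p:smooth}), and I would check that $\upH^1(\mu_m,\TT)$ is actually trivial in the elliptic case — because $\tilS_a\cong\TT^{\Pi(\mu_m)}$ is finite (Lemma \ref{l:local Neron}\eqref{Tmum}), the map $\TT\to\TT$, $t\mapsto t\cdot\sigma(t)\cdots$ relevant to $\upH^1$ is surjective — so the morphism of part (1) has a single point in its image, giving the honest homeomorphism \eqref{ell prod family}. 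The equivariance under $P_\nu\rtimes\Gnu$ versus $\calP_\nu\rtimes\Gnu$ is built into the construction of $\beta_{\bP,\nu}$ in \S\ref{sss:lg}, so it only needs to be recorded. Connectedness of the fibers of $\calP_\nu$ follows from the description of $\calP_a$ via the cameral curve: $\pi_0(\calP_a)$ is computed by the cokernel of a map of character lattices which vanishes in the elliptic homogeneous case, just as in \cite{NgoFL}; alternatively it follows from \eqref{ell prod family} together with the exact sequence $1\to S_a\to\tilS_a\to\Omega\to1$ of Lemma \ref{l:local Neron}\eqref{tilL} and the fact that $P_a$ acts on $\Sp_{\bP,a}$ through its neutral-component quotient.

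The main obstacle I anticipate is the careful bookkeeping at the stacky point $\infty$: unlike the classical situation where one can take the curve to be $\PP^1$ with a ``trivial'' point at infinity, here $\infty$ has automorphism group $\mu_m$, and a $\cG$-torsor near $\infty$ is a $\mu_m$-equivariant $\GG$-torsor on the formal disk $\Spec\hO_{\wt\infty'}$. I would handle this by the pullback trick already used in the proof of Proposition \ref{p:smooth}(2): pull back along $p:\tX'=\PP^1\to X$, $[\xi,\eta]\mapsto[\xi^m,\eta]$, reducing to a $\mu_m$-equivariant Higgs bundle on $\PP^1$; then the point $\wt 0'$ over $0$ is an honest point, the point $\wt\infty'$ is honest, and the $\mu_m$-equivariance descends everything. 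The Kostant section over $V$ trivializes the equivariant structure over the $\infty$-disk up to a cocycle class in $\upH^1(\mu_m,\TT)$ (the obstruction to the equivariant structure being ``split''), and in the elliptic case this vanishes as above. Once this is set up, the rest is a direct transcription of Ng\^o's argument, with $\tilS$ playing the role of the automorphism group that makes the affine Springer fiber and the Hitchin fiber differ precisely by a finite gerbe/quotient. I would also remark that the homeomorphism \eqref{ell prod family} is not claimed to be algebraic — only a homeomorphism of topological stacks — which is all that is needed for the cohomological applications, and this is why we can afford the somewhat soft uniformization argument rather than an explicit algebraic isomorphism.
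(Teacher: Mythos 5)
Your overall strategy coincides with the paper's: glue the Kostant-section Higgs data over $V$ with the affine Springer data at $0$, apply the product formula of \cite{GS} to obtain a homeomorphism $\Sp_{\bP,\nu}\twtimes{P_\nu}\calP_\nu\isom\calM^{\h}_{\bP,\nu}$, and control the discrepancy between $P_\nu$ and $\calP_\nu$ via the four-term exact sequence $1\to\tilS\to P_\nu\to\calP_\nu\to\cohog{1}{\mu_m,\TT}\to1$, whose last arrow is read off from the $\mu_m$-equivariant structure of the $J_{a,V}$-torsor (equivalently of the underlying $\TT$-torsor) on $\tilV'\cong\AA^1$. Your description via the formal disk at $\infty$ is a compatible rephrasing of this construction. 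Two steps in your part (2), however, do not hold up as written.

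First, the argument for $\cohog{1}{\mu_m,\TT}=0$. You assert that ``the map $\TT\to\TT$, $t\mapsto t\cdot\sigma(t)\cdots$ relevant to $\upH^1$ is surjective.'' That map is the norm $N$, and in the elliptic case its image is actually \emph{trivial}: $N(\TT)$ is a connected subgroup of $\TT^{\Pi(\mu_m)}\cong\tilS_a$, which is finite, hence $N\equiv 1$. In any case, for the cyclic group generated by $\sigma$ one has $\cohog{1}{\mu_m,\TT}=\ker N/\Image(\sigma-1)$; surjectivity of $N$ would control the zeroth Tate group, not this one. The input actually needed is that $\sigma-1\colon\TT\to\TT$ is surjective, which does follow from ellipticity: $\xcoch(\TT)^{\mu_m}=0$ makes $\sigma-1$ injective with finite cokernel on $\xcoch(\TT)$, hence surjective with finite kernel on $\TT$. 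Second, connectedness of $\calP_a$. The cameral-curve computation of $\pi_0(\calP_a)$ ``as in Ng\^o'' gives \emph{finiteness} of $\pi_0(\calP_a)$ on the elliptic locus, not vanishing; vanishing is a further fact specific to the homogeneous situation and needs its own argument. Your fallback --- that ``$P_a$ acts on $\Sp_{\bP,a}$ through its neutral-component quotient'' --- is false: by Lemma \ref{l:AFS reg}, $\Sp^{\reg}_{\bG,a}$ is a $P_a$-torsor, and it can be disconnected (see Example \ref{ex:PGL2}), so $\pi_0(P_a)$ acts nontrivially. The paper's argument here is local and uses Lemma \ref{l:local Neron}: anisotropy of $J_a$ gives $J_a(F)=G^{\flat}_\gamma(\calO_F)$, whose Levi factor is $\tilS_a$, so $\tilS_a\to P_a$ surjects on component groups; together with $\calP_a\cong P_a/\tilS_a$ (once $\cohog{1}{\mu_m,\TT}=0$) this yields $\pi_0(\calP_a)=0$. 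Your proposal needs an argument of comparable precision to close this step.
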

\begin{proof} To save notation, we only treat the case $\bP=\bI$. The general case is the same.

(1)  The local-global morphism $\beta_{\bP,\nu}$ together with its equivariance with respect to $P_{\nu}$ gives a morphism of stacks
\begin{equation}\label{pre prod}
\Sp_{\nu}\twtimes{P_{\nu}}\calP_\nu\isom\calM^{\h}_{\nu}.
\end{equation}
We claim that this is a homeomorphism. To check this, we only need to work fiberwise , therefore we fix $a\in \Ah_{\nu}$. The product formula \cite[2.4.2]{GS} holds in the case where $X$ is a Deligne-Mumford curve. Note that the point $\infty$ does not contribute to the product formula: since $\nu=d/m$, by Lemma \ref{l:a global}, we see that $a_{V}$, viewed a $\mu_{e}$-equivariant map $V'\to \cc$, lies entirely in $\cc^{\rs}$.  The equivariance under $\Gnu$ is already observed in \S\ref{sss:lg}.

It remains to examine how far the morphism \eqref{Plg} is from being an isomorphism.  For $a\in\Ah_{\nu}$, we have the regular centralizer group scheme $J_{a}$ over $X$. Since $a$ is regular semisimple over $V$, $J_{a,V}$ is a torus over $V$. When $a$ varies over $\Ah_{\nu}$, the torus $J_{V}$ over $\Ah_{\nu}\times V$ is the automorphism group scheme of $(\cG_{V}, \varphi)$. 

The morphism \eqref{Plg} over $a$ is obtained by gluing a $J_a$-torsor over $\Spec\hO_0$ with the trivial $J_{a,V}$-torsor over $V$. As in \S\ref{sss:Cartan}, the morphism $V\to [\cc^{\rs}/\mu_{e}]$ can also be lifted to 
\begin{equation*}
\tilV'\to \tt^{\rs}_{\barnu}
\end{equation*}
where $\tt_{\barnu}\subset\tt$ is the Cartan space defined in \S\ref{sss:Cartan}. The regular centralizer $J_{a,V}$ can be calculated using this lifting as in \S\ref{sss:Pnu}, i.e.,
\begin{equation*}
J_{a,V}\cong \tilV'\twtimes{\mu_m}\TT. 
\end{equation*}
where $\mu_{m}$ acts on $\TT$ via the homomorphism $\Pi:\mu_{m}\incl\WW'$ and the action of $\WW'$ on $\TT$. In particular, $J_{a,V}$-torsors over $V$ are the same as $\mu_{m}$-equivariant $\TT$-torsors over $\tilV'$. Any $\TT$-torsor over $\tilV'\cong\AA^1$ is trivial, and the $\mu_m$-equivariant structure on the trivial $\TT$-torsor amounts to a class in $\cohog{1}{\mu_m,\TT}$ (obtained by looking at the $\mu_{m}$ action on the fiber of the torsor at $\wt{\infty'}$). This class gives the homomorphism $\calP_{\nu}\to\cohog{1}{\mu_m,\TT}$. If a $J_{a}$-torsor over $X$ has the trivial class in $\cohog{1}{\mu_{m},\TT}$, then it comes from gluing a $J_a$-torsor over $\Spec\hO_0$ with the trivial $J_{a,V}$-torsor, and hence in the image of $P_{a}$. The kernel of $P_{a}\to \calP_{a}$ is the automorphism group of such a $J_{a}$-torsor, which is $\TT^{\Pi(\mu_m)}=\tilS_{a}$. Therefore we have an exact sequence of stacks
\begin{equation}\label{Pa seq}
1\to \tilS\to P_{\nu}\to\calP_{\nu}\to\cohog{1}{\mu_m,\TT}\to1.
\end{equation}
where the first map is $\tilS_{a}\to G^{\flat}_{\gamma}(\calO_{F})/J_{a}(\calO_{F})\subset P_{a}$ (see Lemma \ref{l:local Neron}). This sequence together with \eqref{pre prod} gives the first statement of the proposition.

(2) When $a$ is elliptic, $\cohog{1}{\mu_m,\TT}=0$ by Tate-complex calculation (or part (2) of the proof of proposition~\ref{p:smooth}). Therefore the second statement follows.

Finally we need to show that $\calP_{a}$ is connected when $a$ is elliptic. In this case, $J_{a}$ is an anisotropic torus over $F$, hence $P_{a}$ is a quotient of $J_{a}(F)=G^{\flat,\red}_{\gamma}(\calO_{F})$ whose reductive quotient is $\tilS_{a}$ by Lemma \ref{l:local Neron}\eqref{tilL}. Therefore $\tilS_{a}\to P_{a,0}$ induces a surjection on component groups, hence $\calP_{a}$ is connected by the exact sequence \eqref{Pa seq}.
\end{proof}

\begin{exam}\label{ex:PGL2} We give an example when $\GG=\PGL_{2}$. We may take $\gamma=\left(\begin{array}{cc} 0 & 1 \\ t^{d} & 0\end{array}\right)$ where $d>0$ is an odd integer, and let $a=\chi(\gamma)$, which is homogeneous of slope $\nu=d/2$. The affine Grassmannian $\Fl_{\bG}=\GG(F)/\GG(\calO_{F})$
as well as the affine Springer fiber $\Sp_{\bG, a}$ have two components (that are permuted by $\Omega_{\bI}\cong\ZZ/2\ZZ$). The regular centralizer group scheme $J_{a}$ is simply the centralizer of $\gamma$ in $\bG=\GG(\calO_{F})$. We have $J_{a}(F)=\left\{\left(\begin{array}{cc} x & y \\ t^{d}y & x\end{array}\right);x,y\in F, x^{2}-t^{d}y^{2}\neq0\right\}/F^{\times}$. The inclusion $\tilS_{a}\cong\TT^{\mu_{2}}\cong\{\pm1\}\to J_{a}(F)$ sends $-1$ to the class of $\left(\begin{array}{cc} 0 & 1 \\ t^{d} & 0\end{array}\right)$ modulo scalar matrices (which is the unique element of order two in $J_{a}(F)$). The left multiplication action of $\tilS_{a}$ on $\Sp_{\bG,a}$ permutes the two components. Therefore $\Sp_{\bG,a}/\tilS_{a}$ can be identified with one component of $\Sp_{\bG,a}$. 

One the other hand, we shall describe the geometric structure of $\calM_{\bG,a}$, where $a=-\xi^{d}\in\Gamma(X,\calO(d/2)^{2})=\calA$. A point of $\calM_{\bG,a}$ is the quotient $M_{a}/\Pic(X)$. Here $M_{a}$ classifies equivalent classes of pairs $(\calV,\varphi)$ where $\calV$ is a vector bundle of rank two on $X$, $\varphi:\calV\to\calV(d/2)$ satisfying $\varphi^{2}=\xi^{d}\id$ as a map $\calV\to\calV(d)$. The action of $\calN\in\Pic(X)$ on $M_{a}$ sends $(\calV,\varphi)$ to $(\calN\otimes\calV,\id_{\calN}\otimes\varphi)$.

Any vector bundle on $X$ is a direct sum of line bundles. By tensoring with $\Pic(X)$, we may assume $\calV=\calO\oplus\calO(n/2)$ for $n\in\ZZ_{\geq 0}$. The map $\varphi:\calO\oplus\calO(n/2)\to \calO(d/2)\oplus\calO((n+d)/2)$ can be written as a matrix $\left(\begin{array}{cc} x & y \\ z & -x\end{array}\right)$, where $x\in \CC[\xi,\eta]_{d}, y\in \CC[\xi,\eta]_{d-n}, z\in \CC[\xi,\eta]_{d+n}$ and $x^{2}+yz=\xi^{d}$. This equation forces $n$ to be odd, for otherwise all $x,y,z$ will be divisible by $\eta$. Also $1\leq n\leq d$. Let $S_{n}$ be the space of such matrices. Once we fix $x$, there is a finite number of choices for $y$ and $z$ up to scalar, hence $\dim S_{n}=1+\dim \CC[\xi,\eta]_{d}=(d+3)/2$. Let $H_{n}=\Aut(\calO\oplus\calO(n/2))/\Gm$, which acts on $S_{n}$ by conjugation. Every matrix in $H_{n}$ can be represented uniquely by $\left(\begin{array}{cc} x & 0 \\ y & 1\end{array}\right)$ with $x\in \CC^{\times}, y\in \CC[\xi,\eta]_{n}$. Therefore $\dim H_{n}=(n+3)/2$. By our discussion above, $\calM_{a}$ admits a stratification $\sqcup_{1\leq n\leq d, n\textup{ odd}} S_{n}/H_{n}$ with $\dim S_{n}/H_{n}=(d-n)/2$. The top dimensional stratum is $S_{1}/H_{1}$, which has dimension $(d-1)/2$. One can check for small values of $d$ that $S_{1}$ is irreducible, therefore $\calM_{a}$ only has one top-dimensional component.
\end{exam}

\begin{remark} If we consider $\GG=\SL_{2}$ instead of $\PGL_{2}$, and take the same $a$ as in Example \ref{ex:PGL2}, the affine Springer fiber $\Sp_{\bG,a}$ is isomorphic to a connected component of its $\PGL_{2}$ counterpart. However, the Hitchin fiber $\calM_{\bG,a}$ in this case is empty! In fact, suppose $(\calV,\varphi)\in\calM_{\bG,a}$, then $\calV\cong\calO(-n/2)\oplus\calO(n/2)$ for some integer $n\geq0$, and $\varphi$ can be written as a matrix $\left(\begin{array}{cc} x & y \\ z & -x\end{array}\right)$, where $x\in \CC[\xi,\eta]_{d}, y\in \CC[\xi,\eta]_{d-2n}, z\in \CC[\xi,\eta]_{d+2n}$ and $x^{2}+yz=\xi^{d}$. Since $d$ is odd and $\xi$ has degree $2$, the degrees of $x^{2}$ and $yz$ in $\xi$ are both strictly smaller than $d$, and $x^{2}+yz=\xi^{d}$ has no solution. This does not contradict Proposition \ref{p:prod} because in this case the assumption \eqref{assrho} fails.  
\end{remark}

\part{Representations}


\section{Geometric modules of the graded \Cha}\label{s:Hgr}
In this section, we construct an action of the graded \Cha with central charge $\nu$ on the $\Gnu$-equivariant cohomology of both homogeneous affine Springer fibers and homogeneous Hitchin fibers.

Since affine Springer fibers and Hitchin fibers are only locally of finite type, we make some remarks on the definition of their (co)homology. Let $f:M\to S$ be a morphism of Deligne-Mumford stacks over $\CC$. Suppose $M$ can be written as a union of locally closed substacks $M=\cup_{i\in I}M_{i}$ (for some filtered set $I$) such that each $f_{i}: M_{i}\to S$ is of finite type. In this case $f_{!}\DD_{M/S}$ is defined to be an ind-object $\varinjlim_{i}f_{i,!}\DD_{M_{i}/S}$ of $D^{b}(S)$. If $M_{i}$ are open in $M$, we may define $f_{!}\QQ$ to be the ind-object $\varinjlim_{i}f_{i,!}\QQ$. When $S=\Spec\CC$ we denote $f_{!}\DD_{M}$ by $\homog{*}{M}$ and denote $f_{!}\QQ$ by $\cohoc{*}{M}$ (when defined), which are usual graded vector spaces.  Similar notation applies to the equivariant situation. For details, see \cite[Appendix A]{GSSph}. Affine Springer fibers and Hitchin fibration both can be covered by finite-type open substacks, and the above notion makes sense. In the following we shall work with compactly supported cohomology, although similar statements for homology also hold with the same proof.

\subsection{The $\Hgr$-action on the cohomology of homogeneous affine Springer fibers}\label{ss:localHgr} Recall the family of homogeneous affine Springer fibers $q_{\nu}: \Sp_{\nu}\to \frc(F)^{\rs}_{\nu}$. We will construct an action of $\Hgr_{\nu}$ on the ind-complex $q_{!}\QQ$. We will construct the actions of $\fra^{*}_{\KM}, s_{i}$ and $\Omega_{\bI}$ separately, and then check these actions satisfy the relations in $\Hgr_{\nu}$.

\begin{cons}\label{gr ep} The action of $\ep$ is via $m$ times the generator of $\xch(\Gnu)$, viewed as an element in $\eqnu{2}{\pt}$.
\end{cons}

\begin{cons}\label{gr Chern} We shall construct the action of $\fra_{\KM}^{*}$. In \S\ref{sss:line bundle} we have assigned for each $\xi\in\xch(\AA_{\KM})$ a line bundle $\calL(\xi)$ on $\Fl$ carrying a canonical $G_{\KM}$-equivariant structure.  By \eqref{s act fl}, the action of $\Gnu$ on $\Fl$ is obtained by restricting the left translation action of $G_{\KM}$ on $\Fl$ via the homomorphism $\Gnu\incl\AA\times\Grote\subset G_{\KM}$ given by the cocharacter $d\rho^{\vee}+m\partial$. Therefore, each $\calL(\xi)$ carries a canonical $\Gnu$-equivariant structure. The action of $\xi$ on $ q_{\nu,!}\QQ$ is given by the cup product with the equivariant Chern class $c^{\Gnu}_{1}(\calL(\xi)):  q_{\nu,!}\QQ\to  q_{\nu,!}\QQ[2](1)$.
\end{cons}

\begin{cons}\label{gr si} We shall construct the action of a simple reflection $s_{i}$ on $ q_{\nu,!}\QQ$. Before doing this we need some more notation. For a standard parahoric $\bP$, let $L_{\bP}$ be its Levi factor and let $\pi_{L_{\bP}}: \bP\to L_{\bP}$ be the projection. Let $\frl_{\bP}=\Lie L_{\bP}$. Let $\pi_{\frl_{\bP}}:\Lie\bP\to\frl_{\bP}$ be the projection map on the level of Lie algebras.  

There is an evaluation map $\ev_{\bP}:\Sp_{\bP,\gamma}\to[\frl_{\bP}/L_{\bP}]$ given by $g\bP\mapsto \pi_{\frl_{\bP}}(\Ad(g^{-1})\gamma)\mod L_{\bP}$. Let $\frb^{\bI}_{\bP}$ (resp. $B^{\bI}_{\bP}$) be the image of $\Lie\bI$ (resp. $\bI$) under the projection $\pi_{\frl_{\bP}}$ (resp. $\pi_{L_{\bP}}$), which is a Borel subalgebra of $\frl_{\bP}$ (resp . Borel subgroup of $L_{\bP}$). We also have an evaluation map $\ev^{\bI}_{\bP}:\Sp_{\gamma}\to[\frb^{\bI}_{\bP}/B^{\bI}_{\bP}]$ given by $g\bI\mapsto\pi_{\frl_{\bP}}(\Ad(g^{-1})\gamma)\mod B^{\bI}_{\bP}$. The two evaluation maps fit into a Cartesian diagram
\begin{equation}\label{ev Sp}
\xymatrix{\Sp_{\nu}\ar[r]^{\ev^{\bI}_{\bP}}\ar[d]^{\pi_{\bP,\nu}} & [\frb^{\bI}_{\bP}/B^{\bI}_{\bP}]\ar[d]^{p_{\frl_{\bP}}}\\
\Sp_{\bP,\nu}\ar[r]^{\ev_{\bP}} & [\frl_{\bP}/L_{\bP}]}
\end{equation}
The morphism $p_{\frl_{\bP}}$ is in fact the Grothendieck simultaneous resolution of $\frl_{\bP}$ (quotient by the adjoint action of $L_{\bP}$). Let $\bP_{i}$ be the standard parahoric whose Levi factor $L_{i}$ has root system $\pm\alpha_{i}$. Applying proper base change to the Cartesian diagram \eqref{ev Sp}, we get an isomorphism in the equivariant derived category $D^{b}_{\Gnu}(\Sp_{\bP_{i},\nu})$:
\begin{equation*}
\pi_{\bP,\nu,*}\QQ\cong \ev_{\bP}^{*}p_{\frl_i,*}\QQ.
\end{equation*}
By the Springer theory for the reductive Lie algebra $\frl_{i}$, the complex $p_{\frl_i,*}\QQ\in D^{b}_{L_i}(\frl_i)$ admits an action of an involution $s_{i}$. Therefore $ q_{\nu,!}\QQ\cong q_{\bP_{i},\nu,!}\ev_{\bP}^{*}\pi_{\bP,\nu,*}\QQ$ also admits an action of $s_{i}$.
\end{cons}

\begin{cons}\label{gr Omega} We construct the action of $\Omega_{\bI}$ on $\Sp_{\nu}$ commuting with the $\Gnu$-action, which then induces an action on $ q_{\nu,!}\QQ$. In fact, for each $\omega\in\Omega_{\bI}=N_{G}(\bI)/\bI$, pick a representative $\dot{\omega}\in G$. Then right multiplication by $\dot{\omega}: g\bI\mapsto g\dot{\omega}\bI$ is an automorphism on $\Fl$ which preserves the family $\Sp_{\nu}$ fiberwise. This automorphism is independent of the choice of $\dot{\omega}$ in the coset of $\omega$, and defines an action of $\Omega_{\bI}$ on $\Sp_{\nu}$.
\end{cons}

\begin{theorem}\label{th:localHgr}
Constructions \eqref{gr ep}--\eqref{gr Omega} define an action of $\Hgr_{\nu}$ on  $ q_{\nu,!}\QQ\in D^{b}_{\Gnu}(\frc(F)^{\rs}_{\nu})$. Moreover, the element $\ep\in\Hgr_{\nu}$ acts as $m$ times the equivariant parameter from $\eqnu{2}{\pt}$; the action of $B_{\KM}$ on $ q_{\nu,!}\QQ$ is through the multiplication by $\nu^{2}\BB(\rho^{\vee},\rho^{\vee})\ep^{2}\in\eqnu{4}{\pt}$.
\end{theorem}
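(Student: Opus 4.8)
## Proof proposal for Theorem \ref{th:localHgr}

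The plan is to verify that Constructions \eqref{gr ep}--\eqref{gr Omega} satisfy the defining relations (GC-1)--(GC-4) of $\Hgr_{\nu}$, working entirely in the equivariant derived category $D^{b}_{\Gnu}(\frc(F)^{\rs}_{\nu})$, and then to identify the action of the central elements $\ep$ and $B_{\KM}$. The overall strategy follows the template established in \cite{GS} for the global Springer action; the new feature here is that everything must be done $\Gnu$-equivariantly over the homogeneous base $\frc(F)^{\rs}_{\nu}$, but since $\Gnu$ acts trivially on this base the equivariant structure is essentially carried along the fibers and does not interfere with the combinatorics. First I would record that (GC-1) is immediate (Construction \eqref{gr ep} places $u = -\nu\ep$ in $\eqnu{*}{\pt}$, which is central because all operators are $\eqnu{*}{\pt}$-linear), and that the subalgebra relations in (GC-2) split into two halves: the polynomial half $\Sym(\fra^{*}_{\KM})$ is a subalgebra because Chern classes of line bundles commute under cup product, and the $\QQ[\tilW]$-half requires checking the braid relations among the $s_{i}$ together with the $\Omega_{\bI}$-commutation — the braid relations reduce, via the Cartesian square \eqref{ev Sp} and proper base change, to the corresponding statement for the Springer action on $p_{\frl_{\bP},*}\QQ$ for rank-one and rank-two Levi subalgebras $\frl_{\bP}$ of $L_{\nu}$, which is classical Springer theory for finite-type reductive Lie algebras. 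The $\Omega_{\bI}$-part of (GC-2) and all of (GC-4) follow because right multiplication by $\dot\omega$ on $\Fl$ carries the line bundle $\calL(\xi)$ to $\calL(\leftexp{\omega}{\xi})$ equivariantly, which is the source of $\omega\xi\omega^{-1} = \leftexp{\omega}{\xi}$.

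The heart of the matter is relation (GC-3): for each affine simple reflection $s_{i}$ and each $\xi\in\fra^{*}_{\KM}$ one must show $s_{i}\xi - \leftexp{s_{i}}{\xi}s_{i} = \jiao{\xi,\alpha_{i}^{\vee}}u$ as operators on $q_{\nu,!}\QQ$. My approach would be to localize the question to a neighborhood of a fiber of $\pi_{\bP_{i},\nu}$ and use the identification $q_{\nu,!}\QQ \cong q_{\bP_{i},\nu,!}\,\ev_{\bP_{i}}^{*}p_{\frl_{i},*}\QQ$. On the $\PP^{1}$-bundle side this reduces to an explicit $\SL_{2}$ (or $\PGL_{2}$) computation: one needs the relation between the Springer involution $s_{i}$ acting on the Grothendieck resolution of $\frl_{i}$ and cup product with $c_{1}^{\Gnu}$ of the tautological line bundles on the fibers of $p_{\frl_{i}}$, which are $\PP^{1}$'s, together with Proposition \ref{p:pairing degree} to convert the degree pairing $\deg(\calL(\xi)|C_{i})$ into the coroot pairing $\jiao{\xi,\alpha_{i}^{\vee}}$. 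The equivariant parameter enters precisely because the $\Gnu$-fixed points on each fiber $\PP^{1}$ are the two torus-fixed points, and the difference of the equivariant Chern classes of $\calL(\xi)$ at these two points is $\jiao{\xi,\alpha_{i}^{\vee}}$ times the equivariant parameter; this is exactly $\jiao{\xi,\alpha_{i}^{\vee}}u$ after the normalization in Construction \eqref{gr ep}. I expect this rank-one localization, and the bookkeeping needed to see that it is compatible with the equivariant structure pulled back along $\ev_{\bP_{i}}$, to be the main obstacle — not because any single step is deep, but because one must be careful that the $\Gnu$-equivariant structure on $\calL(\xi)$ (coming from the cocharacter $d\rho^{\vee}+m\partial$ into $G_{\KM}$, as in Construction \eqref{gr Chern}) restricts correctly to the $\PP^{1}$-fibers.

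Once the relations are in place, the final assertions are short. The statement that $\ep$ acts as $m$ times the generator of $\xch(\Gnu)$ in $\eqnu{2}{\pt}$ is a tautology from Construction \eqref{gr ep} and the normalization of $\ep$ as the image of $\delta = -u/\nu$. For the action of $B_{\KM}$: since $B_{\KM}\in\Sym^{2}(\fra_{\KM})^{\tilW}$ is central in $\Hgr_{\nu}$, its action on $q_{\nu,!}\QQ$ is given by cup product with $r(B_{\KM})\in\eqcoh{4}{\Fl}$ pulled back to $\Sp_{\nu}$, where $r$ is the characteristic-class map of \S\ref{sss:line bundle}. By Lemma \ref{l:B} we have $r(B_{\KM}) = \ell(B_{\KM})$, i.e.\ the class $r(B_{\KM})$ is pulled back from $\eqcoh{4}{\pt}$. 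It therefore suffices to evaluate $\ell(B_{\KM})$ after restricting the $\AA_{\KM}$-equivariant structure along $\Gnu\incl\AA_{\KM}$, which is given by the cocharacter $d\rho^{\vee}+m\partial$. Pairing $B_{\KM}$ with this cocharacter and using the formulas $B_{\KM}|_{\fra}=\BB$, $B_{\KM}(\partial,\fra)=0$, $B_{\KM}(\partial,\partial)=0$ from \eqref{Killing KM}, we get $B_{\KM}(d\rho^{\vee}+m\partial,\,d\rho^{\vee}+m\partial)=d^{2}\BB(\rho^{\vee},\rho^{\vee})$; since $\ep$ corresponds to $m$ times the generator of $\xch(\Gnu)$, this equals $\nu^{2}\BB(\rho^{\vee},\rho^{\vee})\ep^{2}$ (using $\nu=d/m$), which is the claimed formula. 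I would close by remarking that all constructions are manifestly natural in the parahoric $\bP$ and are compatible with the pushforward $\pi_{\bP,\nu,*}$, so the action descends to $q_{\bP,\nu,!}\QQ$ for every standard $\bP$, as needed for Corollary \ref{c:local Hgr}.
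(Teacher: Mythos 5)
Your overall plan matches the paper's proof: verify the relations (GC-1)--(GC-4), reduce (GC-3) to a rank-one computation via the Cartesian square \eqref{ev Sp}, invoke Proposition \ref{p:pairing degree} and Lemma \ref{l:B}, and compute the $B_{\KM}$-action by pairing with the cocharacter $d\rho^{\vee}+m\partial$. Your computation of the $B_{\KM}$-scalar is the same as the paper's, and correct. The $\ep$-statement is indeed tautological as you say.

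There is, however, a genuine gap in your treatment of (GC-3) for $\xi=\Lambda_{\can}$, precisely where you flag ``the bookkeeping'' as the main obstacle but do not resolve it. The rank-one localization you describe for $\xi\in\fra^{*}$ works because for such $\xi$ the line bundle $\calL(\xi)$ on $\Fl$ is, along the projection $\pi_{\bP_i}:\Fl\to\Fl_{\bP_i}$, the associated line bundle of a character of $B^{\bI}_{\bP_i}$; it therefore restricts cleanly to the Grothendieck resolution picture $[\frb^{\bI}_{\bP_i}/B^{\bI}_{\bP_i}]\to[\frl_{\bP_i}/L_i]$ and the required commutator identity is a fiber-by-fiber $\SL_2$ statement. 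The canonical class $\Lambda_{\can}$ is \emph{not} of this form: $\calL(\Lambda_{\can})$ is the pullback of the determinant line $\det_{\bG}$ from $\Fl_{\bG}$, not a line bundle associated with a character of $B^{\bI}_{\bP_i}$, and in general $\jiao{\Lambda_{\can},\alpha_i^\vee}\neq 0$ (e.g.\ for $i=0$ it is $2h^\vee_\theta/a_0^\vee$). So your $\PP^1$-fiber argument has no obvious grip on this line bundle: you cannot simply compare equivariant weights at two fixed points of a fiber unless you first know how $\calL(\Lambda_{\can})$ decomposes along $\ev^{\bI}_{\bP_i}$.

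The paper's device, which you omit, is the determinant line bundle comparison \eqref{comp GP}: it expresses $\pi_{\bG}^{*}\det_{\bG}$ as $\pi_{\bP_i}^{*}\det_{\bP_i}$ twisted by $\calL(-2\rho_{\bG}+2\rho_{\bP_i})$ and an equivariant shift. This lets one define $\Lambda_i:=\Lambda_{\bP_i}=\Lambda_{\can}+2\rho_{\bG}-2\rho_{\bP_i}+\jiao{\nu\rho^\vee+\partial,2\rho_{\bP_i}-2\rho_{\bG}}\ep$, for which $\calL(\Lambda_i)\cong\pi_{\bP_i}^{*}\det_{\bP_i}$ \emph{is} pulled back from $\Sp_{\bP_i,\nu}$. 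Then cup product with $c_1^{\Gnu}(\calL(\Lambda_i))$ automatically commutes with $s_i$ (since $s_i$ acts on the Springer sheaf $p_{\frl_i,*}\QQ$ over the base of the fiber square), and $\jiao{\Lambda_i,\alpha_i^\vee}=0$ by Proposition \ref{p:pairing degree}; both sides of (GC-3) vanish. Since $\Lambda_{\can}-\Lambda_i\in\fra^{*}\oplus\QQ\ep$, the general case follows by linearity from the already-verified cases. Without this reduction, your uniform ``$\PP^1$ localization'' sketch does not establish (GC-3) for $\Lambda_{\can}$. The remaining parts of your proposal are sound.
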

\begin{proof} First, we check (GC-2): the Constructions \eqref{gr si} and \eqref{gr Omega} give an action of $\tilW$ on $q_{\nu,!}\QQ$. This is done in the same way as in \cite{GSSph}, using the diagram \eqref{ev Sp}.

Next, we check (GC-3) for $\xi\in\fra^{*}$. Using the diagram \eqref{ev Sp} for $\bP=\bP_{i}$, we reduce to a calculation for  groups of semisimple rank one, and can be checked in the same way as in \cite{GS}.

Next, we check (GC-3) for $\xi=\L_{\can}$. The idea is similar to that of \cite[\S 6.5]{GS}. Recall from \S\ref{sss:det} that each $\Fl_{\bP}$ carries a determinant line bundle $\det_{\bP}$. In view of \eqref{comp GP}, we define
\begin{equation*}
\L_{\bP}=\L_{\can}+2\rho_{\bG}-2\rho_{\bP}+\jiao{\nu\rho^{\vee}+\partial, 2\rho_{\bP}-2\rho_{\bG}}\ep.
\end{equation*}
Then we have $\calL(\L_{\bP})\cong\pi_{\bP}^{*}\det_{\bP}$ as $\Gnu$-equivariant line bundles.
Since we have already checked that (GC-3) for $\xi\in\fra^{*}\oplus\QQ\ep$, by the linearity of the relation (GC-3), it suffices to check for each affine simple reflection $s_{i}$ and the particular element $\L_{i}:=\L_{\bP_{i}}$ that
\begin{equation}\label{siLi}
s_{i}\L_{i}-\leftexp{s_{i}}{\L_{i}}s_{i}=\jiao{\L_{i},\alpha_{i}^{\vee}}u.
\end{equation} 
By Proposition \ref{p:pairing degree},
\begin{equation*}
\jiao{\L_{i},\alpha_{i}^{\vee}}=-\deg(\calL(\L_{i})|C_{i})=-\deg(\pi_{\bP_{i}}^{*}{\det}_{\bP_{i}}|_{C_{i}})=0,
\end{equation*}
hence the right side of  \eqref{siLi} is zero. This also implies that $\leftexp{s_{i}}{\L_{i}}=\L_{i}$. On the other hand, from the construction of the $s_{i}$ action in \eqref{gr si}, cupping with any class pulled back from $\Sp_{\bP_{i},\nu}$ (and in particular $c_{1}^{\Gnu}(\det_{\bP_{i}})$)  commutes with the action of $s_{i}$ on $ q_{\nu,!}\QQ$. Therefore we have $s_{i}\L_{i}=\L_{i}s_{i}=\leftexp{s_{i}}{\L_{i}}$, and \eqref{siLi} is proved. This finishes the verification for (GC-3).

We check (GC-4). Using the construction of $\calL(\xi)$ in Construction \eqref{gr Chern}, the desired relation follows from the commutation relation between $N_{G_{\KM}}(\bI_{\KM})$ and $\AA_{\KM}$.

Finally, we show that $B_{\KM}\in\Sym^{2}(\fra^{*}_{\KM})^{\tilW}\subset \Hgr_{\nu}$ acts as $\nu^{2}\BB(\rho^{\vee},\rho^{\vee})\ep^{2}\in\eqnu{4}{\pt}$. By Lemma \ref{l:B}, we know that the action of $B_{\KM}$ on $q_{\nu,!}\QQ$ is the same as the cup product with the image of $B_{\KM}$ under the restriction map $\upH^{4}_{\AA_{\KM}}(\pt)\to\upH^{2}_{\Gnu}(\pt)$. By \eqref{s act fl}, the embedding $\Gnu\incl\AA_{\KM}$ is given by the cocharacter $d\rho^{\vee}+m\partial=m(\nu\rho^{\vee}+\partial)$. Therefore, the action of $B_{\KM}$ on $q_{\nu,!}\QQ$ is via $B_{\KM}(\nu\rho^{\vee}+\partial, \nu\rho^{\vee}+\partial)\ep^{2}=\nu^{2}\BB(\rho^{\vee},\rho^{\vee})\ep^{2}\in\eqnu{4}{\pt}$.
\end{proof}

Taking stalks at $a\in \frc(F)^{\rs}_{\nu}$, we get the following corollary.

\begin{cor}\label{c:local Hgr} For $a\in\frc(F)^{\rs}_{\nu}$, there is a graded action of $\Hgr_{\nu}/(B_{\KM}-\nu^{2}\BB(\rho^{\vee},\rho^{\vee})\ep^{2})$ on $\upH^{*}_{c,\Gnu}(\Sp_{a})$ that commutes with the action of $\pi_0(\tilS_{a})\rtimes B_{a}$. Specializing to $\ep=1$, there is an action of $\Hgr_{\nu,\ep=1}$ on $\upH_{c,\ep=1}(\Sp_{a})$, commuting with the action of  $\pi_0(\tilS_{a})\rtimes B_{a}$ and compatible with the cohomological filtrations on $\Hgr_{\nu,\ep=1}$ and $\upH_{c,\ep=1}(\Sp_{a})$.
\end{cor}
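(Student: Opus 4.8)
The statement follows from Theorem~\ref{th:localHgr} by passing to the stalk at $a$. Recall that loc.\ cit.\ equips the ind-complex $q_{\nu,!}\QQ\in D^{b}_{\Gnu}(\frc(F)^{\rs}_{\nu})$ with a graded action of $\Hgr_{\nu}$ on which $B_{\KM}$ acts as multiplication by the central element $\nu^{2}\BB(\rho^{\vee},\rho^{\vee})\ep^{2}\in\eqnu{4}{\pt}$, so that this action factors through $\Hgr_{\nu}/(B_{\KM}-\nu^{2}\BB(\rho^{\vee},\rho^{\vee})\ep^{2})$. Let $i_{a}\colon\{a\}\incl\frc(F)^{\rs}_{\nu}$ be the inclusion. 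Applying the functor $i_{a}^{*}$ and using base change for $q_{\nu,!}\QQ$ (passing to colimits over a finite-type open exhaustion, in the sense of \cite[Appendix A]{GSSph}), one identifies $i_{a}^{*}q_{\nu,!}\QQ\in D^{b}_{\Gnu}(\pt)$ with the complex whose $\Gnu$-equivariant hypercohomology is $\upH^{*}_{c,\Gnu}(\Sp_{a})$. Functoriality of $i_{a}^{*}$ transports the $\Hgr_{\nu}$-module structure to $\upH^{*}_{c,\Gnu}(\Sp_{a})$, and the action is graded: by Constructions~\ref{gr ep}--\ref{gr Omega} the element $\ep$ and each $\xi\in\fra^{*}_{\KM}$ act in cohomological degree $2$ while $\tilW$ acts in degree $0$.

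Next I would check that this action commutes with $\pi_{0}(\tilS_{a})\rtimes B_{a}$. The braid group $B_{a}=\pi_{1}(\frc(F)^{\rs}_{\nu},a)$ acts on $\upH^{*}_{c,\Gnu}(\Sp_{a})$ through the monodromy of the family of homogeneous affine Springer fibers over $\frc(F)^{\rs}_{\nu}$, the relevant cohomology sheaves being (shifted) local systems by Corollary~\ref{c:Hess loc sys}; since each of Constructions~\ref{gr ep}--\ref{gr Omega} is a morphism of complexes defined over the whole base $\frc(F)^{\rs}_{\nu}$, the endomorphisms it induces on stalks are automatically compatible with monodromy, so $B_{a}$ commutes with the $\Hgr_{\nu}$-action. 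For $\pi_{0}(\tilS_{a})$: the group $\tilS_{a}\subset G^{\flat}_{\gamma}(\calO_{F})\subset G_{\gamma}(F)$ acts on $\Sp_{a}$ via left translation on $\Fl$; cup product with $c_{1}^{\Gnu}(\calL(\xi))$ commutes with this because $\calL(\xi)$ carries a $G_{\KM}$-equivariant structure, $\Omega_{\bI}$ acts by right translation which commutes with left translation, and the Springer involutions $s_{i}$ are built from the left-$G_{\gamma}(F)$-equivariant evaluation maps in the Cartesian square~\eqref{ev Sp}. Hence the $\Hgr_{\nu}$-action commutes with $\tilS_{a}$, and therefore with $\pi_{0}(\tilS_{a})$ on cohomology.

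Finally, specializing $\ep$ to $1$ amounts to applying $-\otimes_{\QQ[\ep]}\QQ[\ep]/(\ep-1)$; since $\ep=-u/\nu$ with $u$ central in $\Hgr_{\nu}$, this produces a module structure over $\Hgr_{\nu}/(\ep-1)$, hence over its quotient $\Hgr_{\nu,\ep=1}$, on $\upH_{c,\ep=1}(\Sp_{a})$, still commuting with $\pi_{0}(\tilS_{a})\rtimes B_{a}$. Compatibility with the cohomological filtrations is then immediate from gradedness: an element of $\Hgr_{\nu}$ of degree $\leq d$ carries $\upH^{\leq i}_{c,\Gnu}(\Sp_{a})$ into $\upH^{\leq i+d}_{c,\Gnu}(\Sp_{a})$, and the cohomological filtration on $\upH_{c,\ep=1}(\Sp_{a})$ is by definition the image of $\upH^{\leq\bullet}_{c,\Gnu}(\Sp_{a})$, so $\mathfrak{H}^{\gr,\leq d}_{\nu,\ep=1}$ preserves it. The only points demanding care are the base-change identification of the stalk of the ind-complex $q_{\nu,!}\QQ$ with $\upH^{*}_{c,\Gnu}(\Sp_{a})$ and the well-definedness of the $B_{a}$-monodromy on this stalk; both are handled by the constructible-sheaf formalism for stacks locally of finite type together with Corollary~\ref{c:Hess loc sys}, and neither is a genuine obstacle — once Theorem~\ref{th:localHgr} is in hand this corollary is essentially formal.
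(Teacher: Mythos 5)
Your proof is correct and follows exactly the route the paper takes: the paper's own proof is just the one sentence preceding the corollary, ``Taking stalks at $a\in\frc(F)^{\rs}_{\nu}$, we get the following corollary,'' and your argument spells out why taking stalks works (proper base change along $i_a$ for the ind-complex $q_{\nu,!}\QQ$, the scalar action of $B_{\KM}$ giving the factorization through the quotient, and the fact that the $\Hgr_\nu$-module structure is defined at the sheaf level over the base so that it automatically commutes with monodromy and with the family of stabilizers). The extra care you take to check commutativity with $\tilS_a$ construction by construction is a fine alternative to the slightly slicker observation that $\tilS$ acts on $q_{\nu,!}\QQ$ as a group scheme over $\frc(F)^{\rs}_\nu$ and the constructions \ref{gr ep}--\ref{gr Omega} are all $\tilS$-equivariant over the base, but either way the content is the same.
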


\subsection{The polynomial representation of $\Hgr$} In this subsection, we shall show that the equivariant cohomology of the whole affine flag variety $\Fl$ gives a geometric model for the polynomial representation of $\Hgr$. 

The construction of the $\Hgr$-action on the equivariant cohomology of homogeneous affine Springer fibers in \S\ref{ss:localHgr} in fact works line by line for $\Fl$ as well. We thus get

\begin{prop}\label{p:Hgr Fl} Suppose $\nu>0$ is an elliptic slope. There is an action of $\Hgr_{\nu}/(B_{\KM}-\nu^{2}\BB(\rho^{\vee},\rho^{\vee})\ep^{2})$ on  $\eqnu{*}{\Fl}$ such that the restriction map $\eqnu{*}{\Fl}\to \eqnu{*}{\Sp_{a}}=\upH^{*}_{c,\Gnu}(\Sp_{a})$ is $\Hgr_{\nu}$-equivariant for any $a\in\frc(F)^{\rs}_{\nu}$.
\end{prop}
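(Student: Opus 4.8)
The plan is to observe that Constructions \eqref{gr ep}--\eqref{gr Omega}, which produce the $\Hgr_{\nu}$-action on $\upH^{*}_{c,\Gnu}(\Sp_{a})$, are assembled entirely from structures living on the ambient affine flag variety $\Fl$: the $\Gnu$-action of \eqref{s act fl}, the line bundles $\calL(\xi)$ with their canonical $\Gnu$-equivariant structures from \S\ref{sss:line bundle}, the right-translation action of $\Omega_{\bI}$, and, for each affine simple reflection $s_{i}$, the projection $\pi_{\bP_{i}}\colon\Fl\to\Fl_{\bP_{i}}$ together with the Springer involution on $p_{\frl_{i},*}\QQ$. The element $\gamma=\kappa(a)$ enters only through the evaluation map $\ev_{\bP}$ in \eqref{gr si}. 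So I would run the four constructions verbatim on $\Fl$, taking $\ev_{\bP}$ to be the constant map to $0\in[\frl_{i}/L_{i}]$ --- equivalently, the specialization ``$\gamma=0$'', for which $\Sp_{\gamma}=\Fl$: proper base change along \eqref{ev Sp} then gives $\pi_{\bP_{i},*}\QQ\cong(p_{\frl_{i},*}\QQ)|_{0}$, which carries the Springer involution $s_{i}$ and hence produces an $s_{i}$-action on $\eqnu{*}{\Fl}$, while the actions of $\ep$, of $\fra_{\KM}^{*}$ and of $\Omega_{\bI}$ are defined on $\eqnu{*}{\Fl}$ exactly as in \eqref{gr ep}, \eqref{gr Chern}, \eqref{gr Omega}.

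Next I would verify the relations (GC-1)--(GC-4) and the central character; this is word for word the proof of Theorem~\ref{th:localHgr}, which already manipulated objects living on $\Fl$. Concretely: (GC-2) reduces to semisimple rank one via \eqref{ev Sp} for $\bP_{i}$; (GC-3) for $\xi\in\fra^{*}$ is a rank-one computation as in \cite{GS}; (GC-3) for $\L_{\can}$ uses the determinant bundles $\det_{\bP_{i}}$ on $\Fl_{\bP_{i}}$, the identity $\jiao{\L_{\bP_{i}},\alpha_{i}^{\vee}}=-\deg(\pi_{\bP_{i}}^{*}\det_{\bP_{i}}|C_{i})=0$ from Proposition~\ref{p:pairing degree}, and the fact that $s_{i}$ commutes with cupping by classes pulled back from $\Fl_{\bP_{i}}$; and (GC-4) is the commutation relation between $N_{G_{\KM}}(\bI_{\KM})$ and $\AA_{\KM}$. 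For the central character, Lemma~\ref{l:B} gives that $B_{\KM}\in\Sym^{2}(\fra_{\KM}^{*})$ acts on $\eqnu{*}{\Fl}$ through $r(B_{\KM})=\ell(B_{\KM})$; restricting along the cocharacter $d\rho^{\vee}+m\partial=m(\nu\rho^{\vee}+\partial)$ defining $\Gnu\incl\AA_{\KM}$ shows it acts as the scalar $\nu^{2}\BB(\rho^{\vee},\rho^{\vee})\ep^{2}$, so the action descends to $\Hgr_{\nu}/(B_{\KM}-\nu^{2}\BB(\rho^{\vee},\rho^{\vee})\ep^{2})$.

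It then remains to see that the restriction map is $\Hgr_{\nu}$-linear. Since $\nu$ is elliptic, $\Sp_{a}$ is projective by Lemma~\ref{l:finite type}, so $\eqnu{*}{\Sp_{a}}=\upH^{*}_{c,\Gnu}(\Sp_{a})$ and $i_{a}^{*}\colon\eqnu{*}{\Fl}\to\eqnu{*}{\Sp_{a}}$ is just the restriction along the $\Gnu$-stable closed embedding $i_{a}\colon\Sp_{a}\incl\Fl$. It is $\QQ[\ep]$-linear; it carries $c_{1}^{\Gnu}(\calL(\xi))$ to $c_{1}^{\Gnu}(\calL(\xi)|_{\Sp_{a}})$ and commutes with cup products, hence is $\fra_{\KM}^{*}$-equivariant; and right translation by $\Omega_{\bI}$ preserves $\Sp_{a}$ and commutes with $i_{a}^{*}$. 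The one point that is not purely formal is equivariance under each $s_{i}$, and I expect this to be the main obstacle: the two sides realize $s_{i}$ by pulling back the Grothendieck--Springer involution along \emph{different} evaluation maps. I would circumvent it by realizing $s_{i}$ on both $\eqnu{*}{\Fl}$ and $\eqnu{*}{\Sp_{a}}$ through the Hecke correspondence $Z_{i}=\Fl\times_{\Fl_{\bP_{i}}}\Fl$ and its restriction $Z_{i}\cap(\Sp_{a}\times\Sp_{a})=\Sp_{a}\times_{\Sp_{\bP_{i},a}}\Sp_{a}$, as an explicit combination of the push--pull operator $q_{2,*}q_{1}^{*}$ along $Z_{i}$ with cup products of Chern classes; this description is manifestly compatible with $i_{a}^{*}$ by base change. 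What one must then invoke --- the only non-formal ingredient --- is the standard comparison identifying this Hecke-correspondence operator with the Grothendieck--Springer operator of \eqref{gr si} (as used in \cite{GS}). Taking stalks at $a$ then gives Proposition~\ref{p:Hgr Fl}; combined with Corollary~\ref{c:local Hgr} it realizes $\eqnu{*}{\Sp_{a}}$ as a quotient of the geometric polynomial representation $\eqnu{*}{\Fl}$.
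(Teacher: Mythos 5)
Your overall approach is the same as the paper's (which devotes a single sentence to this: ``the construction of \S\ref{ss:localHgr} works line by line for $\Fl$''): define the $\Hgr_\nu$-action on $\eqnu{*}{\Fl}$ by running Constructions \eqref{gr ep}--\eqref{gr Omega} with $\gamma=0$, verify the relations exactly as in Theorem~\ref{th:localHgr}, and then observe that the restriction map along $\Sp_a\incl\Fl$ commutes with the generators. You are right that the only non-formal step is $s_{i}$-equivariance of $i_{a}^{*}$: the actions of $\ep$, of Chern classes, and of $\Omega_{\bI}$ visibly commute with $i_a^*$.

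However, the resolution you propose for that step does not go through as stated. The base-change argument you invoke requires the square
$$\xymatrix{\Sp_{a}\times_{\Sp_{\bP_{i},a}}\Sp_{a}\ar[r]\ar[d] & Z_{i}\ar[d]^{q_{2}}\\ \Sp_{a}\ar[r]^{i_{a}} & \Fl}$$
to be Cartesian, but the actual fiber product $Z_{i}\times_{\Fl}\Sp_{a}$ equals $\Fl\times_{\Fl_{\bP_{i}}}\Sp_{a}$, whose first factor ranges over the entire $\PP^{1}$-fiber of $\pi_{\bP_{i}}$ rather than only over $\Sp_{a}$. So $Z_{i}\cap(\Sp_{a}\times\Sp_{a})$ is in general a proper closed subscheme of $Z_{i}\times_{\Fl}\Sp_{a}$, and the identity $i_{a}^{*}\circ q_{2,*}q_{1}^{*}=q'_{2,*}q_{1}'^{*}\circ i_{a}^{*}$ you rely on fails. (Relatedly, a Demazure-type formula $s_i=1-\alpha_i\partial_i$ with $\partial_i$ the push--pull along $\pi_{\bP_{i},a}$ cannot hold on $\eqnu{*}{\Sp_{a}}$ as stated, since $\pi_{\bP_{i},a}$ is not a $\PP^{1}$-fibration.)

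The clean way to prove $s_i$-equivariance of $i_{a}^{*}$ is to factor $i_{a}^{*}$ through $Y:=\pi_{\bP_{i}}^{-1}(\Sp_{\bP_{i},a})$, so that it is the composite of (i) restriction of $\pi_{\bP_{i},*}\QQ_{\Fl}$ along $\Sp_{\bP_{i},a}\incl\Fl_{\bP_{i}}$, which commutes with the $s_{i}$-automorphism by functoriality, and (ii) the unit map $\pi'_{*}\QQ_{Y}\to\pi_{\bP_{i},a,*}\QQ_{\Sp_{a}}$ for the closed embedding $\Sp_{a}\incl Y$. Step (ii) is where Springer theory enters: on $\frl_{i}$, the closed embedding $\wt{\frl}_{i}\incl\frl_{i}\times L_{i}/B_{i}^{\bI}$ induces a unit map $\pi_{1,*}\QQ_{\frl_{i}\times L_{i}/B_{i}^{\bI}}\to p_{\frl_{i},*}\QQ$ which is equivariant for the $\ZZ/2$-actions (the classical one on the constant $\cohog{*}{L_{i}/B_{i}^{\bI}}$-sheaf, and the Grothendieck--Springer one on the target) --- fiberwise this is the standard $W$-equivariance of the restriction $\cohog{*}{L_{i}/B_{i}^{\bI}}\to\cohog{*}{\calB_{x}}$ for the Grothendieck resolution. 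Pulling that unit map back along $\ev_{\bP}^{\gamma}$ and using that both $Y$ and $\Sp_{a}$ arise as pullbacks of the two sides of that embedding along $\ev_{\bP}^{\gamma}$ gives step (ii). So your proof has the right skeleton, but the ``manifestly compatible by base change'' claim is the one sentence that needs to be replaced by the Grothendieck--Springer restriction equivariance just described.
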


Let $\barHgr_{\nu}=\Hgr_{\nu}/(B_{\KM}-\nu^{2}\BB(\rho^{\vee},\rho^{\vee})\ep^{2})$. The connected components of $\Fl$ is indexed by $\Omega_{\bI}$. Let $\Fl^{\circ}$ be the ``neutral'' component (containing the coset $\bI$). The class $1\in\eqnu{0}{\Fl^{\circ}}$ is invariant under $\Wa$. We therefore get a natural map of $\Hgr_{\nu}$-modules
\begin{equation}\label{pol to fl}
\Ind^{\barHgr_{\nu}}_{\QQ[\ep]\otimes\QQ[\Wa]}(\QQ[\ep])\to\eqnu{*}{\Fl}.
\end{equation}
The induced module on the left side is called the {\em polynomial representation} of $\barHgr_{\nu}$.

\begin{lemma}\label{l:pol rep} The map \eqref{pol to fl} is an isomorphism after inverting $\ep$ (or equivalently, it is an isomorphism after specializing $\ep$ to 1).
\end{lemma}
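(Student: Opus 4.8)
The plan is to prove the localized statement by the Atiyah--Bott--Borel--Segal localization theorem, reducing it to the classical fact that the equivariant cohomology of a finite-type flag variety realizes the polynomial representation of the associated nil-Hecke algebra; this is the same strategy as in the split case treated in \cite{GS}, \cite{GSSph}. First I would record that $\Fl$ admits a paving by affine spaces (its Iwahori-orbit stratification; cf. the discussion around Theorem~\ref{p:paving}), so that $\cohog{\textup{odd}}{\Fl}=0$, $\Fl$ is $\Gnu$-equivariantly formal, and $\eqnu{*}{\Fl}$ is a free graded $\QQ[\ep]$-module on which restriction to the fixed locus becomes an isomorphism $\eqnu{*}{\Fl}[\ep^{-1}]\xrightarrow{\sim}\eqnu{*}{\Fl^{\Gnu}}[\ep^{-1}]$. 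By Lemma~\ref{l:fl fix}, $\Fl^{\Gnu}=\bigsqcup_{\tilw\in W_\nu\backslash\tilW}L_\nu\tilw\bI/\bI$ is a disjoint union of flag varieties of the reductive group $L_\nu$, indexed by $\nu$-clans and $\Omega$-components.

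Next I would unwind the $\barHgr_{\nu}[\ep^{-1}]$-linear map \eqref{pol to fl} after composing with this localization isomorphism. Both sides are modules over $\barHgr_{\nu}[\ep^{-1}]$ and the map is determined by the image of the cyclic generator $1\in\eqnu{0}{\Fl^{\circ}}$, so it suffices to understand the composite $\Ind^{\barHgr_{\nu}}_{\QQ[\ep]\otimes\QQ[\Wa]}(\QQ[\ep])[\ep^{-1}]\to\bigoplus_{\tilw}\eqnu{*}{L_\nu\tilw\bI/\bI}[\ep^{-1}]$. Here the Chern-class operators $\calL(\xi)$ of Construction~\ref{gr Chern} restrict on $\Fl^{\Gnu}$ to the tautological classes on each $\fl^{\tilw}_{\nu}=L_\nu/(L_\nu\cap\Ad(\tilw)\bI)$, the reflection operators $s_i$ of Construction~\ref{gr si} restrict either to the Weyl-group reflection of $L_\nu$ on a single summand or to the shuffle across a $\nu$-wall separating two summands, and the $\Omega_\bI$-translations permute the $\Omega$-components. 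Thus the composite is identified with a direct sum of copies of the localized polynomial-representation map for the reductive group $L_\nu$, which is an isomorphism by the standard identification of $\eqnu{*}{L_\nu/B_{L_\nu}}$ (localized) with the polynomial representation of the nil-Hecke algebra of $W_\nu$. From this I would conclude: injectivity of \eqref{pol to fl}$[\ep^{-1}]$ because the localization map is injective (equivariant formality) and the composite is injective; surjectivity because the localization map identifies the cyclic $\barHgr_{\nu}[\ep^{-1}]$-submodule generated by $1$ with all of $\eqnu{*}{\Fl^{\Gnu}}[\ep^{-1}]$. The ``equivalently'' clause for $\ep=1$ then follows from a routine argument with the $\QQ[\ep]$-module structures (kernel is torsion-free and killed by inverting $\ep$, hence zero; cokernel is $\ep$-power torsion, hence vanishes modulo $(\ep-1)$).

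The main obstacle I expect is the careful handling of the infinite-dimensionality of $\Fl$: making the localization isomorphism precise with infinitely many (though locally finite) fixed components, and verifying rigorously that the globally defined $\Hgr_{\nu}$-operators on $\eqnu{*}{\Fl}$ restrict on the fixed locus to exactly the expected $L_\nu$-operators on $\bigsqcup_{\tilw}\fl^{\tilw}_{\nu}$ with the correct indexing by $W_\nu\backslash\tilW$. A secondary technical point, already essentially present in the proof of Theorem~\ref{th:localHgr}, is the rank-one computation showing that the operators $s_i$ together with cup products and the inversion of $\ep$ generate the divided-difference operators needed to build the equivariant Schubert classes from $1$; this is exactly the content encoded in relation (GC-3), so it should go through with the same argument used there.
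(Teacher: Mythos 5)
Your argument takes a genuinely different route from the paper's. The paper does \emph{not} localize to $\Fl^{\Gnu}$; instead it writes down the known presentation of the big-torus-equivariant cohomology ring
$$\upH^{*}_{\AA\times\Grote}(\Fl)\cong\QQ[\delta]\otimes\Sym(\fra^{*})\otimes\Sym(\fra^{*})\otimes\QQ[e_{1},\ldots,e_{r}]/(\ell(f_{i})-r(f_{i})-\delta e_{i}),$$
observes that after inverting $\delta$ the auxiliary generators $e_i$ become redundant so that the left and right $\Sym(\fra^{*})$ factors alone give $\upH^{*}_{\AA\times\Grote}(\Fl)[\delta^{-1}]$, and then base-changes along $\upH^{*}_{\AA\times\Grote}(\pt)[\delta^{-1}]\to\eqnu{*}{\pt}[\ep^{-1}]$ (legitimate because $\Gnu$ projects nontrivially to $\Grot$). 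This sidesteps both obstacles you flag: no localization theorem for an ind-scheme is needed, and there is no need to analyze how the $\Hgr_{\nu}$-operators act on fixed-point components.

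There is a real gap in the middle of your argument. You claim the localized composite is ``a direct sum of copies of the localized polynomial-representation map for $L_\nu$,'' justified by ``the standard identification of $\eqnu{*}{L_\nu/B_{L_\nu}}$ (localized) with the polynomial representation of the nil-Hecke algebra of $W_\nu$.'' But by Lemma~\ref{l:MP}(3), $L_\nu\subset G(F)^{\Gnu}$, so $\Gnu$ acts \emph{trivially} on each fixed component $\fl^{\tilw}_{\nu}=L_\nu/(L_\nu\cap\Ad(\tilw)\bI)$. Hence $\eqnu{*}{\fl^{\tilw}_{\nu}}=\cohog{*}{\fl^{\tilw}_{\nu}}\otimes\QQ[\ep]$ with $\cohog{*}{\fl^{\tilw}_{\nu}}\cong\Sym(\fra^{*})/(\Sym(\fra^{*})^{W_\nu}_{+})$; this is a \emph{quotient} of $\Sym(\fra^{*})$ of dimension $|W_\nu|$, not the $W_\nu$-nil-Hecke polynomial representation $\Sym(\fra^{*})$, and inverting $\ep$ does not recover the polynomial representation because there is no nontrivial torus acting on $\fl^{\tilw}_{\nu}$ left to localize with. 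Consequently the map $\Sym(\fra^{*})[\ep^{-1}]\to\eqnu{*}{\fl^{\tilw}_{\nu}}[\ep^{-1}]$ is a noninjective surjection, and a direct sum of copies of it cannot be the claimed isomorphism. Making the localization argument rigorous requires instead a bookkeeping match between the basis of $\Sym(\fra^{*})\otimes\QQ[\Omega_{\bI}]$ and the collection $\{\cohog{*}{\fl^{\tilw}_{\nu}}\}_{\tilw\in W_\nu\backslash\tilW}$, compatible with the $s_i$-operators that \emph{cross} $\nu$-walls and thus move between fixed components; your sentence that they ``shuffle across a $\nu$-wall'' is the right idea but it is precisely this shuffling that prevents the localized map from splitting as a direct sum of $L_\nu$-local pieces. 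Your asserted injectivity and surjectivity both rest on this identification, so they remain unproven. Your flagged concern about ind-scheme localization is also real but is a separate, more standard technicality.
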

\begin{proof} We abuse notation to use $\fra$ to mean $\fra_{\QQ}$. Ignoring the $\Hgr_{\nu}$-module structure, the left side of \eqref{pol to fl} is isomorphic to $\QQ[\ep]\otimes\Sym(\fra^{*})\otimes\QQ[\Omega_{\bI}]$. Since $\Omega_{\bI}$ permutes the components of $\Fl$ simply-transitively, it suffices to show that $\QQ[\ep,\ep^{-1}]\otimes\Sym(\fra^{*})\to\eqnu{*}{\Fl^{\circ}}[\ep^{-1}]$ is an isomorphism, where $\Fl^{\circ}$ is the identity component of $\Fl$. This map is induced from the map $\fra^{*}\to\eqnu{2}{\Fl}$ by sending $\xi\in\xch(\AA)$ to the equivariant Chern class $c_{1}^{\Gnu}(\calL(\xi))$. Since we only consider one component of $\Fl$, we may assume $\GG$ is simply-connected and hence $\Fl$ is connected.

We first recall the ring structure of $\upH^{*}_{\AA\times\Grote}(\Fl)$. We have $\upH^{*}_{\AA\times\Grote}(\pt)\cong\QQ[\delta]\otimes\Sym(\fra^{*})$. The equivariant Chern classes of the line bundles $\calL(\xi)$ for $\xi\in\xch(\AA)$ give a map of $\upH^{*}_{\AA\times\Grote}(\pt)$-algebras 
\begin{equation*}
\upH^{*}_{\AA\times\Grote}(\pt)\otimes\Sym(\fra^{*})\cong\QQ[\delta]\otimes\Sym(\fra^{*})\otimes\Sym(\fra^{*})\to\upH^{*}_{\AA\times\Grote}(\Fl).
\end{equation*}
Let $f_{1},\cdots, f_{r}$ be the generators of the invariant ring $\Sym(\fra^{*})^{W}$, then $\ell(f_{i})-r(f_{i})$ are divisible by $\delta$ in $\upH^{*}_{\AA\times\Grote}(\Fl)$. (Here we follow the notation preceding Lemma \ref{l:B}  to distinguish the left and right copies of $\Sym(\fra^{*})$). Write $\ell(f_{i})-r(f_{i})=\delta e_{i}$ for a unique $e_{i}\in\upH^{2d_{i}-2}_{\AA\times\Grote}(\Fl)$. Then we have a $\upH^{*}_{\AA\times\Grote}(\pt)$-algebra isomorphism
\begin{equation*}
\upH^{*}_{\AA\times\Grote}(\Fl)\cong\QQ[\delta]\otimes\Sym(\fra^{*})\otimes\Sym(\fra^{*})\otimes\QQ[e_{1},\cdots,e_{r}]/(\ell(f_{i})-r(f_{i})-\delta e_{i}; i=1,\cdots,r). 
\end{equation*}
If we invert $\delta$, the generators $e_{i}$ become redundant, and we get an isomorphism of $\upH^{*}_{\AA\times\Grote}(\pt)[\delta^{-1}]$-module
\begin{equation}\label{arot isom}
\upH^{*}_{\AA\times\Grote}(\pt)[\delta^{-1}]\otimes\Sym(\fra^{*})\isom\upH^{*}_{\AA\times\Grote}(\Fl)[\delta^{-1}].
\end{equation}
Restricting to the subtorus $\Gnu\subset\AA\times\Grote$, we get an algebra homomorphism $\upH^{*}_{\AA\times\Grote}(\pt)[\delta^{-1}]\to\eqnu{*}{\pt}[\ep^{-1}]$ (because $\Gnu$ has a nontrivial projection to $\Grot$). Base change \eqref{arot isom} using this algebra homomorphism, we get an isomorphism
\begin{equation*}
\eqnu{*}{\pt}[\ep^{-1}]\otimes\Sym(\fra^{*})\isom\eqnu{*}{\Fl}[\ep^{-1}].
\end{equation*}
\end{proof}

\subsection{The global sheaf-theoretic action of $\Hgr$}\label{ss:Hgr}
In \cite[Construction 6.1.4]{GS}, we defined an action of the graded \Cha $\Hgr$ on the derived image of the constant sheaf along the parabolic Hitchin fibration. In this subsection, we will extend this construction to the situation where the curve is the weighted projective line $X$ and we shall keep track of the $\Gtwo$-equivariant structures.

We first describe the actions of the generators $u,\delta,\fra^{*}, \L_{\can}, s_{i}$ and $\Omega_{\bI}$ of $\Hgr$ act on $f_{!}\QQ$, viewed as an ind-object in the equivariant derived category $D^{b}_{\Gtwo}(\Ah)$.

\begin{cons}\label{gr global u delta} The equivariant cohomology $\upH^{*}_{\Gtwo}(\pt)$ acts on every object in the category $D^{b}_{\Gtwo}(\Ah)$. In particular, every element in $\xch(\Gtwo)\subset\upH^{2}_{\Gtwo}(\pt)$ gives a map $f_{!}\QQ\to f_{!}\QQ[2](1)$. This gives the action of $\delta\in\xch(\Grote)$ and $u\in\xch(\Gdil)$. 
\end{cons}

\begin{cons}\label{gr global Chern} The action of $\xi\in\xch(\AA_{\KM})$. Let $\Bun_{\bI}$ be the moduli stack of $\cG_{\bI}$-torsors over $X$. As in \cite{GS}, we may write $\Bun_{\bI}$ as a fppf quotient $\wh{\Bun}/\bI_{\KM}$, where $\wh{\Bun}$ classifies $(\calE,t,\tau,\alpha)$: $\calE$ is a $\cG$-torsor over $X$; $t$ is a coordinate at $0\in X$ which is a constant multiple of the canonical coordinate $\xi/\eta^{m}$; $\tau$ is a trivialization of $\calE|_{\Spec\hO_{0}}$ and $\alpha$ is a trivialization of $\det\bR\Gamma(X,\Ad(\calE))$. The moduli space $\wh{\Bun}$ admits a right action of $G_{\KM}$. Therefore it makes sense to quotient by $\bI_{\KM}$. Hence, for each $\xi\in\xch(\AA_{\KM})$, we have a $\Grote$-equivariant line bundle $\calL(\xi)=\wh{\Bun}\twtimes{\bI_{\KM},\xi}\AA^{1}$ over $\Bun_{\bI}$. Pulling back to $\calM$, we get a $\Gtwo$-equivariant line bundle over $\calM$ which we still denote by $\calL(\xi)$. The action of $\xi$ on $f_{!}\QQ$ is the cup product with the equivariant Chern class of $\calL_{\xi}$:
\begin{equation}\label{cup xi}
f_!(\cup c^{\Gtwo}_1(\calL_{\xi})):f_{!}\QQ\to f_{!}\QQ[2](1).
\end{equation}
\end{cons}

\begin{remark} The line bundle $\calL(\delta)$ constructed above is the trivial line bundle over $\calM$ on which $\Gtwo$ act via the character $\delta$. Therefore, the actions of $\delta$ constructed in Constructions \eqref{gr global u delta} and \eqref{gr global Chern} are the same.
\end{remark}

\begin{remark}
When $\xi=\L_{\can}\in\xch(\AA_{\KM})$, the fiber of the line bundle $\calL(\L_{\can})$ at $(\calE_{\bI},\varphi)\in\calM$ is the determinant line $\det\bR\Gamma(X,\Ad(\calE_{\bG}))$, where $\calE_{\bG}$ is the $\cG$-torsor induced from the $\cG_{\bI}$-torsor $\calE_{\bI}$. Since the cotangent complex of $\Bun_{\cG}$ at $\calE$ is given by the dual of $\bR\Gamma(X,\Ad(\calE))[1]$, we see that $\calL(\L_{\can})$ is the pull back of the canonical bundle $\omega_{\Bun_{\cG}}$ of $\Bun_{\cG}$.
\end{remark}

\begin{cons}\label{gr global si} The construction of the action of $s_{i}$ is based on the evaluation diagram (cf. \cite[Diagram (4.6)]{GS}). The maps $\ev^{\bI}_{\bP}$ and $\ev_{\bP}$ are the evaluation of Higgs fields at $0$.
\begin{equation}\label{evdiag}
\xymatrix{\calM\ar[r]^(.3){\ev^{\bI}_{\bP}}\ar[d]^{\pi^{\bI}_{\bP}} & [\frb^{\bI}_{\bP}/B^{\bI}_{\bP}]\ar[d]^{p_{\frl_{\bP}}}\\
\calM_{\bP}\ar[r]^(.3){\ev_{\bP}} & [\frl_{\bP}/L_{\bP}]}
\end{equation}
Here the evaluation maps uses the $\Grote$-equivariant trivialization of $\calL|_{\Spec\hO_{0}}$. The morphisms $\ev^{\bI}_{\bP}$ and $\ev_{\bP}$ are $\Gtwo$-equivariant (the action of $\Gdil$ on $\frl_{\bP}$ and $\frb^{\bI}_{\bP}$ are via the inverse of the scaling action). The rest of the construction is the same as Construction \eqref{gr si} for affine Springer fibers. The $\Gtwo$-equivariance of the diagram ensures that the $s_{i}$-action takes place in $D^{b}_{\Gtwo}(\Ah)$. 
\end{cons}

\begin{cons}\label{gr global Omega} There is an action of  $\Omega_{\bI}$ on $\calM$. In fact, writing $\Bun_{\bI}$ as $\wh{\Bun}/\bI_{\KM}$, we see that $\Omega_{\bI}=N_{G_{\KM}}(\bI_{\KM})$ acts from the right. This action preserves the formation of $\Gamma(X,\Ad_{\bI}(\calE))$, and hence lifts to an action of $\Omega_{\bI}$ on $\calM$. This action commutes with action of $\Gtwo$ and preserves the Hitchin fibration. Therefore this action induces an action of $\Omega_{\bI}$ on $f_{!}\QQ$ in the category $D^{b}_{\Gtwo}(\Ah)$. 
\end{cons}

\begin{theorem}\label{th:Hgr} 
\begin{enumerate}
\item Constructions \eqref{gr global u delta}--\eqref{gr global Omega} define an action of $\Hgr$ on the ind-complex $f_{!}\QQ$ in $D^{b}_{\Gtwo}(\Ah)$.
\item Restricting the above construction to $\Ah_{\nu}$ defines an action of $\Hgr$ on the ind-complexes $f_{\nu,!}\QQ$ in $D^{b}_{\Gnu}(\Ah_{\nu})$. This action factors through $\Hgr_{\nu}$.
\end{enumerate}
\end{theorem}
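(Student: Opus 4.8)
\textbf{Plan of proof for Theorem \ref{th:Hgr}.}

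The strategy mirrors the proof of Theorem \ref{th:localHgr}, with the curve $X$ in place of the formal disk and the Hitchin fibration $f$ in place of $q_{\nu}$; indeed the entire point is that Constructions \eqref{gr global u delta}--\eqref{gr global Omega} are modeled on Constructions \eqref{gr ep}--\eqref{gr Omega}, so each relation can be checked by the same argument. For part (1), I would proceed through the defining relations (GC-1)--(GC-4) in order. Relation (GC-1), that $u$ is central, is immediate since $u\in\xch(\Gdil)$ acts through the equivariant cohomology of a point, which is central in $D^b_{\Gtwo}(\Ah)$. The fact that $\Sym(\fra^*_{\KM})$ is a subalgebra and $\QQ[\tilW]$ is a subalgebra (the $\tilW$-part of (GC-2)) is proved exactly as in \cite{GSSph}: the $s_i$-action comes from the Springer action on $p_{\frl_i,*}\QQ$ via the Cartesian square \eqref{evdiag} for $\bP=\bP_i$, the braid relations among the $s_i$ and the relations between $s_i$ and $\Omega_{\bI}$ follow from the corresponding statements for parahorics of rank one together with the functoriality of the evaluation diagrams. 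Relation (GC-4), the commutation of $\Omega_{\bI}$ with $\Sym(\fra^*_{\KM})$, follows from the construction of $\calL(\xi)$ as $\wh{\Bun}\twtimes{\bI_{\KM},\xi}\AA^1$ together with the commutation relation between $N_{G_{\KM}}(\bI_{\KM})$ and $\AA_{\KM}$, just as in the proof of Theorem \ref{th:localHgr}.

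The substantive step is relation (GC-3), $s_i\xi-\leftexp{s_i}{\xi}s_i=\jiao{\xi,\alpha_i^\vee}u$, for $\xi\in\fra^*_{\KM}$. By linearity it suffices to treat $\xi\in\fra^*\oplus\QQ\delta$ and $\xi=\L_{\can}$ separately. For $\xi\in\fra^*\oplus\QQ\delta$ one uses the Cartesian diagram \eqref{evdiag} for $\bP=\bP_i$ to reduce to a computation on the Grothendieck resolution of the rank-one Levi $\frl_i$, where the verification is identical to that in \cite{GS}; the only new bookkeeping is keeping track of the $\Gdil$-weights, which is harmless because $\Gdil$ acts on $\frl_{\bP}$ and $\frb^{\bI}_{\bP}$ through the (inverse) scaling action and $\ev^{\bI}_{\bP}$, $\ev_{\bP}$ are $\Gtwo$-equivariant by construction. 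For $\xi=\L_{\can}$ I would imitate \cite[\S6.5]{GS}: using the comparison of determinant line bundles (the weighted-projective-line analog of \eqref{comp GP}) one writes $\calL(\L_{\bP_i})$ as the pullback of $\det_{\bP_i}$ along $\pi^{\bI}_{\bP_i}$, so that cupping with $c_1^{\Gtwo}(\calL(\L_{\bP_i}))$ commutes with $s_i$; then one checks $\jiao{\L_{\bP_i},\alpha_i^\vee}=0$ via the analog of Proposition \ref{p:pairing degree} (degree of $\det_{\bP_i}$ restricted to the $\PP^1$-fiber $C_i$ vanishes), and deduces \eqref{siLi} and hence (GC-3) for $\L_{\can}$ by the same linearity argument. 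The one genuinely new ingredient here is that all these line-bundle and curve-class identities must be verified on $\Bun_{\cG}$ over the orbifold curve $X=\PP(m,1)$ rather than on $\Fl$; this is the step I expect to require the most care, since one must be careful that $\wh{\Bun}$ carries the correct right $G_{\KM}$-action and that the Riemann--Roch / determinant computations go through with the stacky point $\infty\in X$ present. However, the point $\infty$ does not enter the evaluation maps (which are taken at $0$) and, by the same mechanism as in the proof of Proposition \ref{p:smooth}, its contribution to the relevant degree computations is controlled; so the argument reduces to the non-stacky case after a bounded pullback $p:\tX'=\PP^1\to X$.

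For part (2), restriction to $\Ah_{\nu}$ is functorial and $\Gnu\subset\Gtwo$, so the $\Hgr$-action on $f_{!}\QQ$ restricts to one on $f_{\nu,!}\QQ$ in $D^b_{\Gnu}(\Ah_{\nu})$. It remains to see that this factors through $\Hgr_{\nu}=\Hgr/(u+\nu\delta)$, i.e.\ that $u+\nu\delta$ acts as zero. This is where the homogeneity of the base is used: by definition $a\in\Ah_{\nu}$ is fixed by $\Gnu$, and the embedding $\Gnu\hookrightarrow\Gtwo=\Grote\times\Gdil$ is given, as recorded in \S\ref{sss:sym homog Hitchin} and \S\ref{sss:Gnu}, by $s\mapsto(s^m,s^{-d})$ (up to the $\rho^\vee$-twist which lives in $G^{\ad}(F)$ and does not affect the central torus). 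Hence the images of $\delta\in\xch(\Grote)$ and $u\in\xch(\Gdil)$ in $\xch(\Gnu)\otimes\QQ\cong\QQ\ep$ are $m\cdot(\text{generator})$ and $-d\cdot(\text{generator})$ respectively, whence $u+\nu\delta\mapsto(-d+\nu m)\ep=0$ since $\nu=d/m$. Both $u$ and $\delta$ act on $f_{\nu,!}\QQ$ through $\upH^*_{\Gnu}(\pt)$ (for $\delta$ this is the remark following Construction \eqref{gr global Chern} identifying $\calL(\delta)$ with the trivial $\Gtwo$-equivariant line bundle of character $\delta$), so $u+\nu\delta$ acts as multiplication by $0$, and the action descends to $\Hgr_{\nu}$. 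This completes the plan.
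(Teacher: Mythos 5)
Your proposal is correct and follows essentially the same route as the paper's own proof, which simply refers back to the argument of Theorem \ref{th:localHgr} with the one modification of replacing the local determinant line bundle $\det_{\bP}$ by the global one $\det_{\Bun_{\bP}}$ over $\Bun_{\bP}$ with fiber $\det\bR\Gamma(X,\Ad_{\bP}(\calE))$, and for part (2) observes that the embedding $\Gnu\hookrightarrow\Gtwo$ has cocharacter $(m/e,-d)$ so that $\delta/m$ and $-u/d$ both restrict to the canonical equivariant parameter. Your write-up fleshes out the reference to Theorem \ref{th:localHgr} and handles (2) the same way; the only small slip is writing the $\Grote$-component of the cocharacter as $s\mapsto s^m$ rather than $s\mapsto s^{m/e}$ (consistent with \eqref{s act fl} and the pairing with the generator $\delta/e$ of $\xch(\Grote)$), but this does not affect the conclusion $u+\nu\delta\mapsto 0$.
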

\begin{proof} The proof of (1) is almost the same as Theorem \ref{th:localHgr}. The only difference is that the determinant line bundle $\det_{\bP}$ in the proof there should be replaced by its global analogue: the line bundle $\det_{\Bun_{\bP}}$ over $\Bun_{\bP}$ whose fiber over the $\cG_{\bP}$-torsor $\calE$ is $\det\bR\Gamma(X,\Ad_{\bP}(\calE))$. For (2) we only need to check that both $\delta/m$ and $-u/d$ act as the canonical equivariant parameter in $\eqnu{2}{\pt}$. This is true because the embedding $\Gnu\incl\Gtwo$ is given by the cocharacter $(m/e,-d)$.
\end{proof}

\subsection{Local-global compatibility} We have constructed actions of $\Hgr_{\nu}$ on both the equivariant cohomology of homogeneous affine Springer fibers and homogeneous Hitchin fibers in the previous subsections. The two fibers are related by the local-global morphism \eqref{lg}, and the two actions are compatible in the following sense.

\begin{prop}\label{p:lgHgr} Let $\nu>0$ be an admissible slope and assume \eqref{assrho} holds. The morphism of ind-complexes over $\Ah_{\nu}\cong\frc(F)^{\rs}_{\nu}$
\begin{equation}\label{coho lg}
f_{\nu, !}\QQ\to   q_{\nu, !}\QQ
\end{equation}
induced by the local-global morphism $\beta_{\bI}$ in \eqref{lg} is a morphism of $\Hgr_{\nu}$-modules. If $\nu$ is elliptic, the morphism \eqref{coho lg} identifies $f_{\nu,!}\QQ$ with the $S$-invariants of  $q_{\nu, !}\QQ$. 
\end{prop}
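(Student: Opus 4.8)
The plan is to establish Proposition \ref{p:lgHgr} in two stages: first the compatibility of the two $\Hgr_{\nu}$-actions under the pullback map \eqref{coho lg}, then (under the ellipticity assumption) the identification of $f_{\nu,!}\QQ$ with the $S$-invariants of $q_{\nu,!}\QQ$. For the first stage, I would check compatibility generator by generator, using that both the local action (Constructions \eqref{gr ep}--\eqref{gr Omega}) and the global action (Constructions \eqref{gr global u delta}--\eqref{gr global Omega}) are built from the same geometric operations. The action of $\ep$ (resp. $u,\delta$) is through the equivariant parameter of $\Gnu$ on both sides, so it is automatic. For $\xi\in\fra^{*}_{\KM}$, including $\L_{\can}$, the action is cup product with the equivariant Chern class of a line bundle $\calL(\xi)$; the key point is that $\beta_{\bI,\nu}$ pulls back the global line bundle $\calL(\xi)$ on $\calM$ to the local line bundle $\calL(\xi)$ on $\Sp_{\bP,\nu}$ (which is itself pulled back from $\Fl$). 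This is visible from the construction of $\beta_{\bI,\nu}$ in \S\ref{sss:lg}: the Higgs bundle attached to $g\bI\in\Sp_{\bI,a}$ uses the trivial $\bI$-torsor twisted by $\Ad(g)$ at $0$, so the associated line bundles on $\calM$ restrict to the $G_{\KM}$-equivariant (hence $\Gnu$-equivariant) line bundles on $\Fl$ coming from $\xch(\AA_{\KM})$. For $\L_{\can}$ one uses that the determinant line bundle $\det_{\Bun_{\bP}}$ restricts, via $\beta_{\bI,\nu}$, to the local determinant line bundle $\det_{\bP}$ on the affine flag variety (up to the twist recorded in \eqref{comp GP}, which is the same in both settings). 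For the simple reflections $s_{i}$ and the group $\Omega_{\bI}$, compatibility follows because the evaluation diagrams \eqref{evdiag} and \eqref{ev Sp} fit into a commutative cube connected by $\beta_{\bI,\nu}$ — the evaluation of the Higgs field at $0$ is, by construction of $\beta_{\bI,\nu}$, exactly the local evaluation map $\ev^{\bI}_{\bP}$ on $\Sp_{\nu}$ — and the $\Omega_{\bI}$-action by right multiplication at $0$ is evidently intertwined.

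For the second stage, assume $\nu$ is elliptic. By Proposition \ref{p:prod}(2), there is a homeomorphism $[\Sp_{\bI,\nu}/\tilS]\cong\calM^{\h}_{\bI,\nu}$ over $\frc(F)^{\rs}_{\nu}\cong\Ah_{\nu}$, equivariant for the relevant symmetries; moreover $\beta_{\bI,\nu}:\Sp_{\bI,\nu}\to\calM^{\h}_{\bI,\nu}$ is the composition of the quotient map $\Sp_{\bI,\nu}\to[\Sp_{\bI,\nu}/\tilS]$ with this homeomorphism. Therefore $f_{\nu,!}\QQ\cong \rho_{!}\QQ$ where $\rho:[\Sp_{\bI,\nu}/\tilS]\to\frc(F)^{\rs}_{\nu}$, and the map \eqref{coho lg} is the natural map $\rho_{!}\QQ\to q_{\nu,!}\QQ$ coming from $\Sp_{\bI,\nu}\to[\Sp_{\bI,\nu}/\tilS]$. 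Since $\tilS$ acts on $\Sp_{\bI,\nu}$ with $\tilS/\frc(F)^{\rs}_{\nu}$ a smooth group scheme, and $S\subset\tilS$ is the subgroup cut out by the homomorphism to $\Omega$ (see \S\ref{sss:tilS}), I would argue that the $\tilS$-quotient cohomology is computed in two steps: first quotient by the neutral component, which acts trivially on cohomology (connected group actions induce trivial actions on cohomology), then take invariants under the component group $\pi_{0}(\tilS)$. Since $\tilS$ and $S$ have the same neutral component, $\rho_{!}\QQ$ is identified with the $\pi_{0}(S)$-invariants of $q_{\nu,!}\QQ$ precisely when we know that it is the $S$-invariants; and the distinction between $S$-invariants and $\tilS$-invariants is the $\Omega$-action, which permutes connected components of $\Fl$ and hence acts nontrivially — this is why the statement singles out $S$ rather than $\tilS$. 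Concretely: $\rho_{!}\QQ = (q_{\nu,!}\QQ)^{S}$ because over each $a$, $\spcoh{}{\Sp_{a}}^{S_{a}}$ is what $[\Sp_{a}/\tilS_{a}]$ contributes once one accounts for $\tilS_{a}/S_{a}\cong\Omega$ acting by component permutation, using the exact sequence in Lemma \ref{l:local Neron}\eqref{tilL}.

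The main obstacle, I expect, will be making the second-stage identification precise at the level of ind-complexes in the equivariant derived category $D^{b}_{\Gnu}(\frc(F)^{\rs}_{\nu})$ rather than just fiberwise on cohomology. One must be careful that Proposition \ref{p:prod}(2) only gives a homeomorphism, so I can only transport $\QQ$-sheaves and their pushforwards, not finer structure; but since we work with constant coefficients and compactly supported cohomology this suffices. The other delicate point is the claim that the neutral component $S^{\circ}=\tilS^{\circ}$ acts trivially on $q_{\nu,!}\QQ$: this should follow from the fact that $\tilS$ is a smooth group scheme over $\frc(F)^{\rs}_{\nu}$ whose fibers have connected component a torus, acting on a family with pure fibers (Theorem \ref{p:paving}(4)), so one can invoke equivariant formality or simply the homotopy invariance of the $S^{\circ}$-action. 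I would also need to verify that the $\Hgr_{\nu}$-action descends compatibly along $\Sp_{\bI,\nu}\to[\Sp_{\bI,\nu}/\tilS]$, which follows from the first stage together with the fact that the whole $\Hgr_{\nu}$-action commutes with $\tilS$ (Corollary \ref{c:local Hgr}, extended to families), so that $(q_{\nu,!}\QQ)^{S}$ is an $\Hgr_{\nu}$-submodule and the identification with $f_{\nu,!}\QQ$ is one of $\Hgr_{\nu}$-modules.
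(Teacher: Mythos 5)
Your treatment of the first claim (that \eqref{coho lg} is a morphism of $\Hgr_{\nu}$-modules) is essentially the paper's argument: check each generator, use the commutative evaluation square for the $s_{i}$-action, and use pullback of line bundles for the $\fra^{*}_{\KM}$-action. The one place where you are sketchier than the paper is the $\Gnu$-equivariance of the line bundle comparison. The paper constructs an explicit morphism $\beta:\Fl\to\Bun_{\bI}$ by mapping $G_{\KM}$ into $\wh{\Bun}$ at a chosen base point and then quotienting by $\bI_{\KM}$, shows that $\beta$ factors through $\AA\backslash\Fl$ (because the automorphism group of the trivial $\cG_{V}$-torsor contains $\AA$), and deduces $\Gnu$-equivariance of $\beta$ from the fact that the $\Gnu$-action on $\Fl$ is given by $d\rho^{\vee}+m\partial\in\AA\times\Grote$. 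This is the mechanism that makes the isomorphism $\beta^{*}\calL_{\Bun}(\xi)\cong\calL(\xi)$ $\Gnu$-equivariant, which is what you need for the equivariant Chern classes to match. You gesture at this but you would need to spell it out.

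Your second stage contains a genuine error. You correctly recall from Proposition \ref{p:prod}(2) that $\beta_{\bI,\nu}$ factors through a homeomorphism $[\Sp_{\bI,\nu}/\tilS]\cong\calM^{\h}_{\bI,\nu}$, but you then argue your way from this to the claim $f_{\nu,!}\QQ\cong(q_{\nu,!}\QQ)^{S}$, which does not follow. When $\nu$ is elliptic, $\tilS_{a}$ is a finite group, and for a finite group $H$ acting on a scheme $Y$ one has $\cohog{*}{[Y/H],\QQ}=\cohog{*}{Y,\QQ}^{H}$. Applied fiberwise, the product formula gives exactly $f_{\nu,!}\QQ\cong(q_{\nu,!}\QQ)^{\tilS}$ --- invariants under the \emph{whole} group scheme $\tilS$, not its subgroup scheme $S$. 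Your attempt to convert $\tilS$-invariants into $S$-invariants by ``accounting for'' the $\Omega\cong\tilS_{a}/S_{a}$ action is circular: the fact that $\Omega$ permutes components of $\Fl$ means precisely that it acts nontrivially on $\cohog{*}{\Sp_{a}}^{S_{a}}$, so in general $\cohog{*}{\Sp_{a}}^{\tilS_{a}}\subsetneq\cohog{*}{\Sp_{a}}^{S_{a}}$. In fact the paper itself later records the correct identification as $f_{\nu,!}\QQ\isom(q_{\nu,!}\QQ)^{\tilS}$ in \eqref{lg stable family}, so the ``$S$-invariants'' in the statement you are proving appears to be a slip for ``$\tilS$-invariants''; the paper's own proof of this proposition does not even address the invariants claim, treating it as an immediate consequence of Proposition \ref{p:prod}(2). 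You should state and prove the $\tilS$-invariants version directly rather than try to rescue the statement as written. Separately, the surrounding remarks you make about connected groups acting trivially on cohomology, equivariant formality, and purity are all beside the point here: once $\nu$ is elliptic the fibers of $\tilS$ are finite, so there is no neutral component to dispose of and no need for formality arguments.
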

\begin{proof} We only need to check that the actions of $\ep,\fra^{*}, \L_{\can}, s_{i}$ and $\Omega_{\bI}$ in both $\eqnu{*}{\calM_{a}}$ and $\eqnu{*}{\Sp_{\gamma}}$ are respected by the map \eqref{coho lg}.

The action of $\ep$ in both cases are via $m$ times the equivariant parameter in $\eqnu{2}{\pt}$. The action of $s_{i}$ is defined by the diagrams \eqref{ev Sp} and \eqref{evdiag}.  The required statement follows from the commutative of the diagram
\begin{equation*}
\xymatrix{\Sp_{\nu}\ar[r]^{\beta_{\bI}}\ar[d]^{\pi_{\bP}} & \calM_{\nu}\ar[r]^(.3){\ev^{\bI}_{\bP}}\ar[d]^{\pi_{\bP,\nu}} & [\frb^{\bI}_{\bP}/B^{\bI}_{\bP}]\ar[d]^{p_{\frl_{\bP}}}\\
\Sp_{\bP, \nu}\ar[r]^{\beta_{\bP}} & \calM_{\bP, \nu}\ar[r]^(.3){\ev_{\bP}} & [\frl_{\bP}/L_{\bP}]}
\end{equation*}
The rest of the argument requires a study of the local-global morphism
\begin{equation}\label{Bun lg}
\beta:\Fl\to\Bun_{\bI}
\end{equation}
which we define now. Recall we can write $\Bun_{\bI}=\wh{\Bun}/\bI_{\KM}$, and $\wh{\Bun}$ admits a right action by $G_{\KM}$. The trivial $\cG_{\bI}$-torsor together with the standard trivialization on $\Spec\hO_{0}$, the standard local coordinate at $0$ and a {\em choice} of a trivialization of $\det\bR\Gamma(X,\Lie\cG)$ gives a point $\pt\incl \wh{\Bun}$. Taking the $G_{\KM}$-orbit gives $G_{\KM}\to \wh{\Bun}$, and, passing to the quotient by $\bI_{\KM}$ we get $\beta:\Fl\to\Bun_{\bI}$. Concretely, $\beta(g\bI)$ is the $\cG$-torsor obtained by gluing the trivial $\bI$-torsor over $\Spec\hO_{0}$ and the trivial $\cG_{V}$-torsor over $V$ via the isomorphism $G(F)=\bI(F)\isom \cG_{V}(F)=G(F)$ given by left multiplication by $g$. From this description, we see that $\beta$ is invariant under the left translation by the automorphism group of the trivial $\cG_{V}$-torsor, namely $\cG(\calO_{V})=\GG(\calO_{\tilV'})^{\mu_{m}}$. Hence $\beta$ descends to a morphism
\begin{equation*}
\overline{\beta}:\cG(\calO_{V})\backslash\Fl\to\Bun_{\bI}
\end{equation*}

The morphism $\beta$ is clearly $\Grote$-equivariant. Since $\AA\subset\cG(\calO_{V})$, $\beta$ descends to $\AA\backslash\Fl\to\Bun_{\bI}$. Since the $\Gnu$-action on $\Fl$ is given by the embedding $d\rho^{\vee}+m\partial\in\AA\times\Grote$, we conclude that $\beta$ is in fact $\Gnu$-equivariant, where $\Gnu$ acts on $\Fl$ via \eqref{s act fl} and acts on $\Bun_{\bI}$ through its projection to $\Grote$. Since the construction of $\beta_{\bI,a}$ in Section \ref{sss:lg} is by gluing with the trivial $\cG$-torsor over $V$, we have a commutative diagram
\begin{equation*}
\xymatrix{\Sp_{\nu}\ar[d]\ar[r]^{\beta_{\bI}} & \calM_{\nu}\ar[d]\\ 
\Fl\ar[r]^{\beta} & \Bun_{\bI}}
\end{equation*}

For $\xi\in\xch(\AA_{\KM})$, the action of $\xi$ in both cases are via the Chern class $\calL(\xi)$ on $\Fl$ and $\calL_{\Bun}(\xi)$ on $\Bun_{\bI}$. By the definition of the morphism $\beta$, we have $\beta^{*}\calL_{\Bun}(\xi)\cong\calL(\xi)$ in a $\Gnu$-equivariant way (note that the definition of a base point $\pt\in\wh{\Bun}$ involves a choice of the determinant line $\det\bR\Gamma(X,\Lie\cG)$, which is unique up to a scalar, therefore the isomorphism $\beta^{*}\calL_{\Bun}(\L_{\can})\cong\calL(\L_{\can})$ depends on this choice, and is canonical only up to a scalar). This implies that \eqref{coho lg} respects the $\xi$-action.

Finally the morphism $\beta_{\bI}$ is equivariant under $\Omega_{\bI}$ by construction. Therefore the $\Omega_{\bI}$-action is also respected by \eqref{coho lg}.
\end{proof}

\begin{remark}\label{r:ellHgr} When $\nu$ is elliptic, by Proposition \ref{p:lgHgr} and Theorem \ref{th:localHgr}, $B_{\KM}$ acts on $ f_{\nu,!}\QQ$ as $\nu^{2}\BB(\rho^{\vee},\rho^{\vee})\ep^{2}$. We expect the same to be true without the assuming $\nu$ is elliptic.
\end{remark}


\section{Geometric modules of the rational \Cha}\label{s:Hrat}
In this section, we fix a $\theta$-admissible  {\em elliptic} slope $\nu>0$. We will construct an action of the rational \Cha $\Hrat_{\nu}$ on a modification of the $\Gnu$-equivariant cohomology of homogeneous affine Springer fibers. When $G$ is split, we will show that such a construction gives the irreducible spherical module $\frL_{\nu}(\triv)$ of $\Hrat_{\nu,\ep=1}$ using the geometry of homogeneous Hitchin fibers.


One notational change is that we will identify $\tilS_{a}$ with $\pi_{0}(\tilS_{a})$, because when $m$ is an elliptic number, $\tilS_{a}$ is itself a finite abelian group.

In \S\ref{ss:pol Hrat}-\S\ref{ss:Sp Hrat}, we work in the generality of quasi-split groups as in \S\ref{sss:gp}. From \S\ref{ss:perv fil} we will assume that $e=1$, i.e., $G$ is split over $F$. 

\subsection{The polynomial representation of $\Hrat$}\label{ss:pol Hrat} 
Let $\Fl^{\c}$ be the neutral component of the affine flag variety of $G$ (which is the same as the affine fiber variety of the simply-connected cover of $G$). Lemma \ref{l:pol rep} gives a canonical algebra isomorphism
\begin{equation}\label{pol inv ep}
\QQ[\ep,\ep^{-1}]\otimes_{\QQ}\Sym(\fra^{*})\isom\eqnu{*}{\Fl^{\c}}[\ep^{-1}]
\end{equation}
given be sending $\xi\in\xch(\AA)$ to the $\Gnu$-equivariant Chern classes of $\calL(\xi)$.

\begin{defn} The {\em Chern filtration} $C_{\leq i}\eqnu{*}{\Fl^{\c}}[\ep^{-1}]$ on $\eqnu{*}{\Fl^{\c}}[\ep^{-1}]$ is the image of polynomials of degree $\leq i$ (in $\fra^{*}$) under the map \eqref{pol inv ep}. The {\em Chern filtration} $C_{\leq i}\eqnu{*}{\Fl^{\c}}$ of $\eqnu{*}{\Fl^{\c}}$ is defined as the saturation of the Chern filtration on $\eqnu{*}{\Fl^{\c}}[\ep^{-1}]$, i.e., $C_{\leq i}\eqnu{*}{\Fl^{\c}}=\eqnu{*}{\Fl^{\c}}\cap C_{\leq i}\eqnu{*}{\Fl^{\c}}[\ep^{-1}]$.  
\end{defn}

\begin{prop}\label{p:Hrat Fl}
\begin{enumerate}
\item There is a bigraded action of $\Hrat_{\nu}$ on $\Gr^{C}_{*}\eqnu{*}{\Fl^{\c}}$. 
\item There is a map of $\Hrat_{\nu}$-modules
\begin{equation}\label{pol to fl st}
\Ind^{\Hrat_{\nu}}_{\QQ[\ep]\otimes\Sym(\fra)\otimes\QQ[W]}(\QQ[\ep])\to\Gr^{C}_{*}\eqnu{*}{\Fl^{\c}}.
\end{equation}
that is an isomorphism after inverting $\ep$.
\item\label{hh eigen Fl} Let $N=\frac{1}{2}r(h_{\theta}\nu-1)$, which is the dimension of $\Sp_{a}$ for any $a\in\frc(F)^{\rs}_{\nu}$ (see Corollary \ref{c:AFS dim}(2)). The operator $\hh$ in the almost $\sl_{2}$-triple defined in \S\ref{sss:sl2} acts by $(i-N)\ep$ on $\Gr^{C}_{i}\eqnu{*}{\Fl^{\c}}$.
\end{enumerate}
\end{prop}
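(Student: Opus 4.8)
The plan is to derive all three parts from the algebraic transfer principle of Proposition~\ref{p:Hratmod}, applied to the geometric $\Hgr$-action of Proposition~\ref{p:Hgr Fl} and the identification of the localized cohomology in Lemma~\ref{l:pol rep}. Since passing from $\GG$ to its simply-connected cover changes neither $\Hrat_{\nu}$ (which only involves $\fra$, $W$, $\Phi$ and the parameter $c$) nor the variety $\Fl^{\c}$ with its $\Gnu$-action, I would first reduce to the case where $\GG$ is simply-connected; then $\Fl^{\c}=\Fl$, the group $\Omega$ is trivial and $\tilW=\Wa$.

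For part (1): by Proposition~\ref{p:Hgr Fl} there is an action of $\barHgr_{\nu}=\Hgr_{\nu}/(B_{\KM}-\nu^{2}\BB(\rho^{\vee},\rho^{\vee})\ep^{2})$ on $M:=\eqnu{*}{\Fl^{\c}}$, regarded as a filtered graded $\QQ$-vector space for the Chern filtration $C_{\leq\bullet}$; by Lemma~\ref{l:pol rep} the localization $M[\ep^{-1}]$ is the polynomial representation, identified with $\QQ[\ep,\ep^{-1}]\otimes\Sym(\fra^{*})$ via $\xi\mapsto c_{1}^{\Gnu}(\calL(\xi))$, and under this identification $C_{\leq i}M[\ep^{-1}]$ is $\QQ[\ep,\ep^{-1}]\otimes\Sym^{\leq i}(\fra^{*})$. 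The main task is to check the filtration-shift conditions of Proposition~\ref{p:Hratmod} for $C_{\leq\bullet}$; since all the $\barHgr_{\nu}$-operators preserve the lattice $M\subset M[\ep^{-1}]$ and $C_{\leq\bullet}M$ is the saturation of $C_{\leq\bullet}M[\ep^{-1}]$, it is enough to verify them on the polynomial representation. There: $\delta=\ep$ and $u$ act by scalars in $\QQ[\ep,\ep^{-1}]$, hence preserve $C$; each affine simple reflection $s_{i}$ fixes $1$ (as noted in \S\ref{ss:pol Hrat}) and by (GC-3) satisfies $s_{i}(\xi f)=\leftexp{s_{i}}{\xi}(s_{i}f)+\jiao{\xi,\alpha_{i}^{\vee}}uf$ with $\leftexp{s_{i}}{\xi}$ of Chern degree $\leq1$, so $s_{i}$ and hence all of $\QQ[\Wa]$ preserve $C$; multiplication by $\xi\in\fra^{*}$ raises $C$ by one by construction; for $\Lambda_{\can}$ the identity $B_{\KM}=\BB+2\delta\Lambda_{\can}$ in $\Sym^{2}(\fra^{*}_{\KM})$, together with $\delta=\ep$ and $B_{\KM}=\nu^{2}\BB(\rho^{\vee},\rho^{\vee})\ep^{2}$ in $\barHgr_{\nu}$, shows that $\Lambda_{\can}$ acts by multiplication by $\tfrac12\ep^{-1}(\nu^{2}\BB(\rho^{\vee},\rho^{\vee})\ep^{2}-\BB)$, an operator of Chern degree $\leq2$, so it raises $C$ by two; and for a translation $\tilw\in\xcoch(\TT)_{\mu_{e}}$ with $\iota(\tilw)=\eta$, Lemma~\ref{l:tr action} gives $\leftexp{\tilw}{\xi}=\xi+\jiao{\xi,\eta}\ep$ and the formula $\tilw\xi-\leftexp{\tilw}{\xi}\tilw\in\QQ[\Wa]u$ from the proof of Proposition~\ref{p:Hratmod}, so an induction on Chern degree starting from $(\tilw-1)\cdot1=0$ shows $\tilw-1$ lowers $C$ by one. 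Proposition~\ref{p:Hratmod} then produces a bigraded $\Hrat$-action on $\Gr^{C}_{*}M$, and it factors through $\Hrat_{\nu}=\Hrat/(u+\nu\delta,B_{\KM})$ because $u+\nu\delta$ and $B_{\KM}$ act on $M$ by operators of Chern degree $0$ (respectively $0$ and $\nu^{2}\BB(\rho^{\vee},\rho^{\vee})\ep^{2}$), hence induce the zero operator of the prescribed positive Chern degree on $\Gr^{C}_{*}M$.

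For part (2): the unit $[1]\in\Gr^{C}_{0}M$ is $W$-invariant and is annihilated by $\fra$, since each $\eta\in\fra$ acts on $\Gr^{C}_{*}M$ via $\tilw-1$ for a lift $\tilw\in\xcoch(\TT)_{\mu_{e}}$ of $\eta$ and $(\tilw-1)\cdot1=0$ already in $M$; so there is a unique $\Hrat_{\nu}$-module map from $\Ind^{\Hrat_{\nu}}_{\QQ[\ep]\otimes\Sym(\fra)\otimes\QQ[W]}(\QQ[\ep])$ carrying the generator to $[1]$. After inverting $\ep$ the left-hand side is a free rank-one $\QQ[\ep,\ep^{-1}]\otimes\Sym(\fra^{*})$-module (a localization of the standard module, whose $\ep=1$ fibre is $\frM_{\nu}(\triv)\cong\Sym(\fra^{*})$), on which $\fra^{*}$ acts by multiplication and which is generated by the generator; the right-hand side is $\Gr^{C}_{*}M[\ep^{-1}]=\QQ[\ep,\ep^{-1}]\otimes\Sym(\fra^{*})$ with the same structure; and the map, being $\Sym(\fra^{*})$-linear and sending generator to $[1]$, is the identity, hence an isomorphism after inverting $\ep$. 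For part (3): $\hh$ has perverse degree $0$, so it preserves $\Gr^{C}_{i}M$, which embeds into $\Gr^{C}_{i}M[\ep^{-1}]=\QQ[\ep,\ep^{-1}]\otimes\Sym^{i}(\fra^{*})$; thus it suffices to prove the formula on the polynomial representation, where a direct computation with (RC-4), $r_{\alpha}\cdot1=1$, and $\sum_{\alpha\in\Phi}c_{\alpha}=\#\PPhi=h_{\theta}r$ gives $\hh\cdot1=-N\ep$, and the relation $[\hh,\xi]=\ep\xi$ from \eqref{hh commutator} propagates this by induction to $\hh=(i-N)\ep$ on $\Sym^{i}(\fra^{*})$.

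I expect the main obstacle to lie in part (1), namely the verification of the filtration-shift conditions: the translation case requires the inductive bookkeeping with the explicit commutator formula from the proof of Proposition~\ref{p:Hratmod}, and the $\Lambda_{\can}$ case rests on the key observation that $\Lambda_{\can}$ can be expressed through the central element $B_{\KM}$, so that, up to Chern degree $0$, it acts by multiplication by $-\tfrac12\ep^{-1}\BB$ with $\BB\in\Sym^{2}(\fra^{*})$; the remaining bookkeeping (matching of the two gradings, saturation, and the passage to $\Hrat_{\nu}$) is routine.
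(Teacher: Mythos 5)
Your proposal is correct and follows essentially the same route as the paper's proof: reduce to $\GG$ simply-connected so that $\Fl^{\c}=\Fl$ and $\tilW=\Wa$; verify the hypotheses of Proposition~\ref{p:Hratmod} for the Chern filtration by transporting to the polynomial model of Lemma~\ref{l:pol rep}, where the key points are that translations kill the unit class, that $s_i$ preserves Chern degree by (GC-3)/(GC-4), and that $\ep\Lambda_{\can}=\tfrac12(B_{\KM}-\BB)$ acts as an operator of Chern degree $\le 2$ since $B_{\KM}$ is a scalar; deduce (2) from the identification \eqref{pol inv ep}; and compute $\hh\cdot 1=-N\ep$ and propagate via \eqref{hh commutator} to get (3). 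The extra bookkeeping you include (e.g.\ the saturation step and the explicit identity $B_{\KM}=\BB+2\delta\Lambda_{\can}$) is consistent with what the paper leaves implicit.
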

\begin{proof} Since $\Fl^{\c}$ is the affine flag variety of the simply-connected cover of $G$, and that $\Hrat_{\nu}$ is invariant under isogeny of $G$, we may assume that $G$ is simply-connected.
 
(1) We already have an action of $\Hgr_{\nu}/(B_{\KM}-\nu^{2}\BB(\rho^{\vee},\rho^{\vee})\ep^{2})$ on $\eqnu{*}{\Fl}$ from Proposition \ref{p:Hgr Fl}. To define the $\Hrat_{\nu}$-action on $\Gr^{P}_{*}\eqnu{*}{\Fl}$, we only need to check the conditions in Proposition \ref{p:Hratmod} are satisfied.

Using (GC-4) and (GC-5) we see that for $\tilw\in\tilW$ with image $w\in W$ and $\xi\in\fra^{*}$, $\tilw \xi-\leftexp{w}{\xi}\tilw\in\QQ[u,\delta]$. By induction on $n$, we see that for $f(\xi)\in\Sym^{n}(\fra^{*})$, $\tilw f(\xi)- f(\leftexp{w}{\xi})\tilw\in\QQ[u,\delta]\otimes\Sym^{\leq n-1}(\fra^{*})$. In particular, for $\tilw$ in the lattice part of $\tilW$  (so $w=1$), the commutator $[\tilw, f(\xi)]$ lies in $\QQ[u,\delta]\otimes\Sym^{\leq n-1}(\fra^{*})$. Let $1\in \eqnu{0}{\Fl}$ be the unit class. Then $(\tilw-\id) \cdot f(\xi)\cdot1=\tilw f(\xi)\cdot1-f(\xi)\tilw\cdot1\in C_{\leq n-1}\eqnu{*}{\Fl}$. 

An element $\xi\in\fra^{*}$ sends $C_{\leq n}$ to $C_{\leq n+1}$ by definition. Finally the action of $\ep\L_{\can}$, a multiple of $B_{\KM}-B$, sends $C_{\leq n}$ to $C_{\leq n+2}$ (because $\deg B=2$ and $B_{\KM}$ acts as a multiple of $\ep^{2}$). Since the Chern filtration on $\eqnu{*}{\Fl}$ is given by saturating the degree filtration on $\eqnu{*}{\Fl}[\ep^{-1}]$, $\L_{\can}$ also sends $C_{\leq n}$ to $C_{\leq n+2}$. This checks all the condition in Proposition \ref{p:Hratmod} and finishes the proof.

(2) follows from the isomorphism \eqref{pol inv ep}.

(3) We only need to show that $\hh$ acts on $1\in\eqnu{0}{\Fl}\subset\Gr^{C}_{0}\eqnu{*}{\Fl}$ by multiplication by $-N\ep$. Let $\{\xi_{i}\}$ and $\{\eta_{i}\}$ be dual bases of $\fra^{*}$ and $\fra$ respectively. Since $\eta_{i}\cdot 1=0$, we have $\hh\cdot 1=\frac{1}{2}\sum_{i}\xi_{i}\eta_{i}\cdot 1+\eta_{i}\xi_{i}\cdot 1=\frac{1}{2}\sum_{i}[\eta_{i},\xi_{i}]\cdot 1=\frac{1}{2}\ep(\sum_{i}\jiao{\xi_{i},\eta_{i}}-\frac{\nu}{2}\sum_{\alpha\in\Phi}c_{\alpha}\sum_{i}\jiao{\xi_{i},\alpha^{\vee}}\jiao{\alpha,\eta_{i}})\cdot 1=\frac{1}{2}\ep(r-\nu\sum_{\alpha\in\Phi}c_{\alpha})\cdot 1$. Since $c_{\alpha}$ is defined as the cardinality of the preimages of $\PPhi\to\Phi$, the above is further equal to $\frac{1}{2}\ep(r-\nu\#\PPhi)\cdot 1=\frac{1}{2}\ep(r-\nu h_{\theta}r)\cdot 1=-N\ep\cdot 1$.
\end{proof}

\subsection{The $\Hrat$-action on the cohomology of homogeneous affine Springer fibers}\label{ss:Sp Hrat}
Let $a\in\frc(F)^{\rs}_{\nu}$. Recall from Theorem \ref{thm:surj} and Remark \ref{r:Flc} that we have the restriction map
\begin{equation*}
\iota^{*}_{a}: \eqnu{*}{\Fl^{\c}}\cong\eqnu{*}{\Fl}^{\Omega}\to \eqnu{*}{\Sp_{a}}^{\tilS_{a}\rtimes B_{a}}
\end{equation*}
which is surjective after inverting $\ep$.

\begin{defn}\label{def:Ch fil} The {\em Chern filtration} $C_{\leq i}\eqnu{*}{\Sp_{a}}^{\tilS_{a}\rtimes B_{a}}[\ep^{-1}]$ on $\eqnu{*}{\Sp_{a}}^{\tilS_{a}\rtimes B_{a}}[\ep^{-1}]$ is the image of  $C_{\leq i}\eqnu{*}{\Fl^{\c}}[\ep^{-1}]$ under $\iota^{*}_{a}$. The {\em Chern filtration} $C_{\leq i}\eqnu{*}{\Sp_{a}}^{\tilS_{a}\rtimes B_{a}}$ on $\eqnu{*}{\Sp_{a}}^{\tilS_{a}\rtimes B_{a}}$ is the saturation of the Chern filtration on $\eqnu{*}{\Sp_{a}}^{\tilS_{a}\rtimes B_{a}}[\ep^{-1}]$.
\end{defn}

The same argument of Proposition \ref{p:Hrat Fl}(1) gives the following result.
\begin{prop}\label{p:Hrat GrC} Let $a\in\frc(F)^{\rs}_{\nu}$. There is a bigraded action of $\Hrat_{\nu}$ on $\Gr^{C}_{*}\eqnu{*}{\Sp_{a}}^{\tilS_{a}\rtimes B_{a}}$ such that the map $\Gr^{C}_{*}\iota^{*}_{a}:\Gr^{C}_{*}\eqnu{*}{\Fl^{\c}}\to \Gr^{C}_{*}\eqnu{*}{\Sp_{a}}^{\tilS_{a}\rtimes B_{a}}
$ is a map of $\Hrat_{\nu}$-modules (which is surjective after inverting $\ep$).
\end{prop}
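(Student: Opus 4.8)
The plan is to deduce Proposition \ref{p:Hrat GrC} from Proposition \ref{p:Hrat Fl}(1) together with Theorem \ref{thm:surj}(2) and Remark \ref{r:Flc}, essentially by transporting the verification of the hypotheses of Proposition \ref{p:Hratmod} along the surjective (after inverting $\ep$) restriction map $\iota^*_a$. First I would record the starting data: by Corollary \ref{c:local Hgr} there is a graded action of $\Hgr_{\nu}/(B_{\KM}-\nu^2\BB(\rho^\vee,\rho^\vee)\ep^2)$ on $\eqnu{*}{\Sp_a}$ commuting with the action of $\tilS_a\rtimes B_a$, hence an action on the invariant subspace $M:=\eqnu{*}{\Sp_a}^{\tilS_a\rtimes B_a}$; and this action is compatible under $\iota^*_a$ with the corresponding action on $\eqnu{*}{\Fl^{\c}}$ coming from Proposition \ref{p:Hgr Fl}. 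The Chern filtration $C_{\leq i}M$ is, by Definition \ref{def:Ch fil}, the saturation of the image of $C_{\leq i}\eqnu{*}{\Fl^{\c}}[\ep^{-1}]$, so it is by construction a quotient filtration of the Chern filtration on $\eqnu{*}{\Fl^{\c}}$ after inverting $\ep$, and the saturation operation is harmless because $\eqnu{*}{\Sp_a}$ is a free (hence torsion-free) $\QQ[\ep]$-module by the equivariant formality of $\Sp_a$ (Theorem \ref{p:paving}(4), used via Corollary \ref{c:eq loc Sp}).

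Next I would verify the four bullet-point hypotheses of Proposition \ref{p:Hratmod} for $(M, C_{\leq i}M)$ with the $\Hgr_{\nu}$-action above. The key point is that $\iota^*_a$ is $\Hgr_{\nu}$-equivariant and sends $C_{\leq i}\eqnu{*}{\Fl^{\c}}$ onto a cofinal piece of $C_{\leq i}M$ after inverting $\ep$; since the conditions are already known for $\eqnu{*}{\Fl^{\c}}$ by the proof of Proposition \ref{p:Hrat Fl}(1), they descend: $\delta, u, \tilW$ preserve $C_{\leq i}M$ because they preserve the Chern filtration upstairs and commute with $\iota^*_a$; for $\tilw$ in the lattice part $\xcoch(\TT)_{\mu_e}$, the operator $\tilw-1$ lowers the Chern degree by one (this is the computation $(\tilw-\id)f(\xi)\cdot 1\in C_{\leq n-1}$ which takes place inside $\eqnu{*}{\Fl^{\c}}$ and is then pushed through $\iota^*_a$ onto the generating classes of $M$); $\xi\in\fra^*$ raises the Chern degree by one by definition of the Chern filtration; and $\L_{\can}$ raises it by two, using that $\ep\L_{\can}$ is a scalar multiple of $B_{\KM}-B$, that $B_{\KM}$ acts as a multiple of $\ep^2$ (here on $M$ this follows from Corollary \ref{c:local Hgr}, or from Lemma \ref{l:B}), and $B\in\Sym^2(\fra^*)$ raises the Chern degree by two, the conclusion extended from the localized filtration to the saturated one exactly as in Proposition \ref{p:Hrat Fl}(1). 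Then Proposition \ref{p:Hratmod} produces the unique bigraded action of $\Hrat_{\nu}$ on $\Gr^C_* M$.

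Finally I would check that $\Gr^C_*\iota^*_a$ is a morphism of $\Hrat_{\nu}$-modules. Both the source action (from Proposition \ref{p:Hrat Fl}(1)) and the target action (just constructed) are obtained by applying the uniqueness clause of Proposition \ref{p:Hratmod} to the same underlying $\Hgr_{\nu}$-action, intertwined by the $\Hgr_{\nu}$-equivariant map $\iota^*_a$; since $\iota^*_a$ is filtered for the Chern filtrations, passing to associated graded commutes with it, and the generators $\fra,\fra^*,\L_{\can},W,\delta,u$ of $\Hrat_{\nu}$ are each induced from operators that are already intertwined by $\iota^*_a$, so the intertwining property passes to the associated graded. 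Surjectivity of $\Gr^C_*\iota^*_a$ after inverting $\ep$ is immediate from Theorem \ref{thm:surj}(2) (surjectivity of $\iota^*_a[\ep^{-1}]$) together with the definition of $C_{\leq i}M$ as the image filtration. The only mild subtlety — and the step I expect to need the most care — is the saturation bookkeeping: one must make sure that lowering/raising the Chern degree, which is clean on the $\ep$-inverted filtration, survives saturation over $\QQ[\ep]$; this is handled exactly as in the proof of Proposition \ref{p:Hrat Fl}(1), using that multiplication by $\ep$ is injective on $\eqnu{*}{\Sp_a}$, so no genuinely new difficulty arises and the proof is essentially a citation of Proposition \ref{p:Hrat Fl}(1) with $\Fl^{\c}$ replaced by $\Sp_a$ and $\eqnu{*}{\Sp_a}^{\tilS_a\rtimes B_a}$ in place of $\eqnu{*}{\Fl^{\c}}$.
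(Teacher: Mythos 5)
Your proof is correct and follows essentially the same approach as the paper, which simply cites "the same argument as Proposition \ref{p:Hrat Fl}(1)"; you have supplied the details (transporting the filtration-shift verifications through the $\Hgr_{\nu}$-equivariant map $\iota^*_a$, handling saturation via torsion-freeness of $\eqnu{*}{\Sp_a}$, and invoking the uniqueness clause of Proposition \ref{p:Hratmod} for the module-map statement) exactly as intended.
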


The main result of this section is the following theorem.
\begin{theorem}\label{L(triv)} Assume $G$ is split (i.e., $e=1$) and let $a\in\frc(F)^{\rs}_{\nu}$.  
\begin{enumerate}
\item There is a geometrically defined filtration $P_{\leq i}\eqnu{*}{\Sp_{a}}^{\tilS_{a}}$ on $\eqnu{*}{\Sp_{a}}^{\tilS_{a}}$, stable under the $B_{a}$-action and extending the Chern filtration on $\eqnu{*}{\Sp_{a}}^{\tilS_{a}\rtimes B_{a}}$, such that the bigraded $\Hrat_{\nu}$-action on $\Gr^{C}_{*}\eqnu{*}{\Sp_{a}}^{\tilS_{a}\rtimes B_{a}}$ extends to $\Gr^{P}_{*}\eqnu{*}{\Sp_{a}}^{\tilS_{a}}$ and commutes with the $B_{a}$-action.

\item\label{L(triv)irr} Specializing $\ep$ to $1$, the $\Hrat_{\nu,\ep=1}$-module $\Gr^{C}_{*}\spcoh{}{\Sp_{a}}^{\tilS_{a}\rtimes B_{a}}$ is isomorphic to the irreducible finite-dimensional spherical module $\frL_{\nu}(\triv)$ of $\Hrat_{\nu,\ep=1}$. 
\end{enumerate}
\end{theorem}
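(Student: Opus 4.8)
\textbf{Proof plan for Theorem \ref{L(triv)}.}

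The plan is to build the perverse filtration $P_{\leq i}$ on $\eqnu{*}{\Sp_{a}}^{\tilS_{a}}$ by transporting, via the product formula (Proposition \ref{p:prod}), the perverse filtration attached to the Hitchin fibration $f_{\nu}:\calM^{\h}_{\bP,\nu}\to\Ah_{\nu}$ for the global curve $X=\PP(m,1)$. Concretely, since $\nu$ is elliptic, Proposition \ref{p:prod}(2) gives a homeomorphism $[\Sp_{\bI,\nu}/\tilS]\cong\calM^{\h}_{\bI,\nu}$ over $\frc(F)^{\rs}_{\nu}\cong\Ah_{\nu}$, so $\eqnu{*}{\Sp_{a}}^{\tilS_{a}}$ is identified with $\upH^{*}_{\Gnu}(\calM_{a})$. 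The complex $f_{\nu,!}\QQ$ on $\Ah_{\nu}$ is, by Ng\^{o}'s support theorem \cite{NgoFL} together with the $\Gnu$-contraction (Lemma \ref{l:e0}), controlled by its stalk at the central point, and the perverse (Leray) filtration coming from $\pH^{\leq i}(f_{\nu,!}\QQ)$ induces a filtration on the stalk cohomology $\upH^{*}_{\Gnu}(\calM_{a})$ which is $B_{a}$-stable (monodromy preserves the perverse truncation). This defines $P_{\leq i}$. The first task is to check that $P_{\leq i}$ extends the Chern filtration on the $B_{a}$-invariants: this is where the global sheaf-theoretic $\Hgr$-action of Theorem \ref{th:Hgr} and the local-global compatibility of Proposition \ref{p:lgHgr} enter — one shows that on $\eqnu{*}{\Fl^{\c}}$ the Chern filtration agrees with the perverse filtration pulled back along $\beta:\Fl\to\Bun_{\bI}$, using that $\calL(\xi)$ raises perverse degree by the right amount (Construction \ref{gr global Chern}, and the behavior of $\L_{\can}$ as a multiple of $\omega_{\Bun_{\cG}}$). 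Granting this, the four bulleted hypotheses of Proposition \ref{p:Hratmod} for the $\Hgr_{\nu}$-action on $f_{\nu,!}\QQ$ (with $F_{\leq i}$ the perverse filtration) are verified exactly as in Proposition \ref{p:Hrat Fl}(1): $\tilw-1$ drops perverse degree for $\tilw\in\xcoch(\TT)_{\mu_{e}}$, $\xi$ raises by one, $\L_{\can}$ by two. This yields the bigraded $\Hrat_{\nu}$-action on $\Gr^{P}_{*}$, commuting with $B_{a}$ because the whole construction is $B_{a}$-equivariant (the $\Hgr_{\nu}$-action on $q_{\nu,!}\QQ$ already commutes with $B_{a}$ by Corollary \ref{c:local Hgr}). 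This proves (1).

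For (2), I would first show $\Gr^{C}_{*}\spcoh{}{\Sp_{a}}^{\tilS_{a}\rtimes B_{a}}$ is a quotient of the polynomial representation $\frM_{\nu}(\triv)$: by Proposition \ref{p:Hrat GrC} and Proposition \ref{p:Hrat Fl}(2), the restriction map $\Gr^{C}_{*}\iota^{*}_{a}$ is a surjection of $\Hrat_{\nu,\ep=1}$-modules from $\Gr^{C}_{*}\spcoh{}{\Fl^{\c}}$, which is a quotient of $\frM_{\nu}(\triv)$, onto $\Gr^{C}_{*}\spcoh{}{\Sp_{a}}^{\tilS_{a}\rtimes B_{a}}$; hence the target is a finite-dimensional quotient of $\frM_{\nu}(\triv)$ (finite-dimensionality from ellipticity via Lemma \ref{l:finite type} and Corollary \ref{c:AFS dim}). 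By Corollary \ref{quot}, in the Grothendieck group $[\Gr^{C}_{*}\spcoh{}{\Sp_{a}}^{\tilS_{a}\rtimes B_{a}}]=[\frL_{\nu}(\triv)]+\sum m_{\sigma}[\frL_{\nu}(\sigma)]$ with $\deg_{q}\frL_{\nu}(\sigma)<N$. So it suffices to prove the module is \emph{irreducible}, equivalently (by the Frobenius criterion recalled after Corollary \ref{quot}, i.e. \cite[Prop.\ 1.20]{BEG}) that $\Gr^{C}_{*}\spcoh{}{\Sp_{a}}^{\tilS_{a}\rtimes B_{a}}$, with its cup-product ring structure inherited from $\spcoh{}{\Sp_{a}}$, is a Frobenius algebra. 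The geometric input is Poincaré duality on $\Sp_{a}$ (it is proper of dimension $N$ by Corollary \ref{c:AFS dim}(2), with pure cohomology by Theorem \ref{p:paving}(\ref{GKMpurity})) together with the hard Lefschetz / relative hard Lefschetz package for the Hitchin fibration that makes the perverse filtration self-dual; this forces $\Gr^{P}_{*}$ — hence $\Gr^{C}_{*}$ on the $B_{a}$-invariant part — to carry a perfect pairing in top degree $2N$, exhibiting $\ell:\Gr^{C}_{*}\spcoh{}{\Sp_{a}}^{\tilS_{a}\rtimes B_{a}}\to (\text{top piece})\cong\QQ$ as a Frobenius trace. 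One also matches $\hh$: by Proposition \ref{p:Hrat Fl}(\ref{hh eigen Fl}) $\hh$ acts on $\Gr^{C}_{i}$ by $(i-N)$, so the top perverse degree $2N$ carries $\hh$-eigenvalue $N=\deg_{q}\frL_{\nu}(\triv)$ and is one-dimensional, consistent with spherical type. Combining the Frobenius property with Corollary \ref{quot} kills all $m_{\sigma}$ and identifies the module with $\frL_{\nu}(\triv)$.

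The main obstacle I expect is the verification that the perverse filtration $P_{\leq i}$ on $\eqnu{*}{\Sp_{a}}^{\tilS_{a}}$ — defined globally through $f_{\nu}$ and the support theorem — genuinely restricts, on the $B_{a}$-invariant subspace, to the algebraically-defined Chern filtration, \emph{and} that it is compatible with the $\Hgr_{\nu}$-action in the precise graded sense demanded by Proposition \ref{p:Hratmod}. This requires: (i) identifying the perverse (Leray) grading of $f_{\nu,!}\QQ$ at the central stalk with the cohomological-degree grading on $\Gr^{C}_{*}$ up to the expected shift, which rests on the decomposition theorem and on the fact (Corollary \ref{c:Hess loc sys}) that all relevant summands over the elliptic locus are shifted local systems with the constant-sheaf ones precisely the $B_{a}$-invariant part; and (ii) checking the degree-shift behavior of the operators $\calL(\xi)$ and $\calL(\L_{\can})$ on perverse degree, which one reduces — as in \cite[\S6.5]{GS} — to the rank-one computation and to the determinant-line-bundle identity, but now on the weighted projective line. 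The self-duality of the perverse filtration (relative hard Lefschetz for $f_{\nu}$ over the elliptic locus, plus ordinary Poincaré duality on the proper fiber) is the other delicate point, since $X$ is a Deligne–Mumford curve; but the standard arguments go through because $\calM^{\h}_{\bI,\nu}|_{\Aa}$ is a smooth Deligne–Mumford stack proper over $\Aa$ (Proposition \ref{p:smooth}).
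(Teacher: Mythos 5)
Your overall strategy — building the perverse filtration from the global Hitchin fibration via Proposition \ref{p:prod}, then using a Frobenius algebra argument for irreducibility — matches the paper's, and your endgame for (2) (a Frobenius quotient of $\frM_\nu(\triv)$ must be $\frL_\nu(\triv)$) is a legitimate variant of the paper's contravariant-pairing argument via \cite[Lemma 11.6]{ELec}. The step you correctly flag as the "main obstacle," however — that $P_{\leq i}$ restricts on $B_a$-invariants to $C_{\leq i}$ — is where your route has a real gap. You propose to compare the Chern filtration on $\eqnu{*}{\Fl^c}$ with a "perverse filtration pulled back along $\beta:\Fl\to\Bun_\bI$," but no such perverse filtration exists: $\beta$ is a map between total spaces, not a proper fibration over a Hitchin base, and $\Fl$ is an ind-scheme with no perverse truncation available on $\eqnu{*}{\Fl^c}$. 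The paper's Proposition \ref{p:Ch=perv} sidesteps the issue entirely by an eigenvalue argument: the element $\rhh=\frac{1}{2}\sum_i\bigl(\xi_i(\tilw_i-1)+(\tilw_i-1)\xi_i\bigr)\in\Hgr_\nu$ lifts $\hh\in\Hrat_\nu$, and both $C_{\leq i}M$ and $P_{\leq i}M$ (for $M=\spcoh{}{\Sp_a}^{\tilS_a\rtimes B_a}$) are characterized as the sums of generalized $\rhh$-eigenspaces with eigenvalue $\leq i-N$ — for $C$ by Proposition \ref{p:Hrat Fl}(3), for $P$ because $\Gr^P_*M$ is already an $\Hrat_{\nu,\ep=1}$-module so $\hh$ acts by $i-N'$ on $\Gr^P_iM$, and one computes $N'=N$ from the class $1\in\Gr^P_0M$. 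This pins down both filtrations with no geometric comparison required; you should adopt this argument.

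Two further omissions. First, the $\Hrat$-action on the associated graded is built in the paper over the moving-point base $\Aa\times U$ (\S\ref{ss:Hrat}), not directly over $\Ah_\nu$: the support theorem (Theorem \ref{th:supp}) — needed so that $\l-\id$ lowers perverse degree by one (Construction \ref{rat lattice}) and $c_1(\calL(\xi))$ raises it by at most one (Construction \ref{rat Chern}, via \cite[\S 3.2]{GSLD}) — is a statement about the full family over $\Aa\times U$, and rests on the codimension estimate of Appendix \ref{s:codim} for the weighted projective line. The filtration on $\eqnu{*}{\Sp_a}^{\tilS_a}$ is then obtained by restricting along $i_\nu:\Ah_\nu\incl\Aa\times U$. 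Second, you should note the preliminary reduction to groups satisfying assumption \eqref{assrho} (replace $G$ by an isogenous group, e.g.\ adjoint), which is required for the product formula of Proposition \ref{p:prod}(2) to hold; neither $\Hrat_\nu$ nor $(q_{\nu,!}\QQ)^{\tilS}$ changes under isogeny, so this is harmless but must be stated.
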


We have an immediate corollary.
\begin{cor}\label{c:Hgr irr} Assume $G$ is split and simply-connected, and let $a\in\frc(F)^{\rs}_{\nu}$.  Then the $\Hgr_{\nu,\ep=1}$-module $\spcoh{}{\Sp_{a}}^{S_{a}\rtimes B_{a}}$ is also irreducible.
\end{cor}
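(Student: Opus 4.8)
The plan is to deduce this from Theorem~\ref{L(triv)}\eqref{L(triv)irr} by a standard associated-graded argument. First I would reduce to the setup of that theorem. Since $G$ is split and simply-connected, $\Omega$ is trivial, so Lemma~\ref{l:local Neron}\eqref{tilL} gives $S_{a}=\tilS_{a}$, and therefore the module in question is $M:=\spcoh{}{\Sp_{a}}^{S_{a}\rtimes B_{a}}=\spcoh{}{\Sp_{a}}^{\tilS_{a}\rtimes B_{a}}$. By Lemma~\ref{l:finite type} the ellipticity of $\nu$ makes $\Sp_{a}$ projective, so $M$ is finite-dimensional and $\spcoh{}{\Sp_{a}}=\upH_{c,\ep=1}(\Sp_{a})$; by Corollary~\ref{c:local Hgr} it is an $\Hgr_{\nu,\ep=1}$-module. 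It carries the Chern filtration $C_{\leq i}M$ of Definition~\ref{def:Ch fil} specialized at $\ep=1$, which is separated and exhaustive (the latter because $\Gr^{C}_{*}M$ is a quotient of the polynomial representation, Proposition~\ref{p:Hrat GrC}). Theorem~\ref{L(triv)}\eqref{L(triv)irr} identifies $\Gr^{C}_{*}M$, as an $\Hrat_{\nu,\ep=1}$-module, with the irreducible finite-dimensional spherical module $\frL_{\nu}(\triv)$.

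The core of the argument would then be as follows. Let $0\neq N\subseteq M$ be an $\Hgr_{\nu,\ep=1}$-submodule, and equip $N$ with the subspace filtration $C_{\leq i}N:=N\cap C_{\leq i}M$. The generators of $\Hgr_{\nu,\ep=1}$ shift the Chern filtration on $M$ in the way required by Proposition~\ref{p:Hratmod} (elements of $\tilW$ preserve it, $\fra^{*}$ raises it by one, $\Lambda_{\can}$ raises it by two, and $\tilw-1$ for $\tilw$ in the lattice part of $\tilW$ lowers it by one), and since $N$ is stable under all of them the same shifts hold for the subspace filtration on $N$. Hence $\Gr^{C}_{*}N$ inherits an $\Hrat_{\nu,\ep=1}$-action, and the natural map $\Gr^{C}_{*}N\to\Gr^{C}_{*}M$ is injective (automatic for the subspace filtration) and $\Hrat_{\nu,\ep=1}$-equivariant. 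Its image is nonzero because the filtration on $N$ is separated and exhaustive, so by irreducibility of $\Gr^{C}_{*}M\cong\frL_{\nu}(\triv)$ it equals $\Gr^{C}_{*}M$; comparing dimensions, $\dim N=\dim\Gr^{C}_{*}N=\dim\Gr^{C}_{*}M=\dim M$, whence $N=M$. This proves that $M$ is irreducible over $\Hgr_{\nu,\ep=1}$.

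I expect no serious obstacle: all the substance is in Theorem~\ref{L(triv)}, and what remains is bookkeeping. The one point deserving care is the claim that the associated graded of the subspace filtration on a submodule $N\subseteq M$ is an $\Hrat_{\nu,\ep=1}$-\emph{sub}module of $\Gr^{C}_{*}M$, not merely an abstract $\Hrat_{\nu,\ep=1}$-module; this rests on the functoriality of the $\Hrat$-structure on associated gradeds with respect to morphisms of filtered $\Hgr$-modules, which is exactly how Proposition~\ref{p:Hratmod} is stated and which the inclusion $N\hookrightarrow M$ respects. A secondary routine check is that specialization at $\ep=1$ is harmless: the Chern filtration was defined after inverting $\ep$ and then saturating, but as $\ep$ acts invertibly it descends to a genuine filtration on the $\ep=1$ specialization compatible with the $\Hgr_{\nu,\ep=1}$-action, so the statements of Proposition~\ref{p:Hrat GrC} and Theorem~\ref{L(triv)}\eqref{L(triv)irr} apply verbatim.
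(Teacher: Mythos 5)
Your proof is correct, and it takes a genuinely different route from the paper's. The paper argues by a dimension count: it cites \cite[Proposition 2.3.1(b)]{VV} for the equality $\dim\frL^{\gr}_{\nu}(\triv)=\dim\frL^{\rat}_{\nu}(\triv)$, combines this with Theorem \ref{L(triv)}(2) to see that $M:=\spcoh{}{\Sp_a}^{S_a\rtimes B_a}$ has the dimension of the $\Hgr_{\nu,\ep=1}$-spherical irreducible, notes that $M$ is a finite-dimensional quotient of the polynomial representation of $\Hgr_{\nu,\ep=1}$ (Theorem \ref{thm:surj}(2), Lemma \ref{l:pol rep}), and then invokes the $\Hgr$-analog of Corollary \ref{quot} (again cited from \cite[Proposition 2.3.1(a)]{VV}) to force $[M]=[\frL^{\gr}_{\nu}(\triv)]$. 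You instead run a direct filtered-module (Rees-type) argument: a nonzero $\Hgr_{\nu,\ep=1}$-submodule $N\subseteq M$ inherits the subspace Chern filtration, whose associated graded embeds $\Hrat_{\nu,\ep=1}$-equivariantly into $\Gr^C_* M\cong\frL_\nu(\triv)$ and is nonzero; irreducibility of $\frL_\nu(\triv)$ then forces $\Gr^C_* N=\Gr^C_* M$ and hence $N=M$. Your argument is more self-contained --- it does not appeal to \cite{VV} at all, and in fact reproves \cite[Prop.\ 2.3.1(b)]{VV} at this parameter as a byproduct. Conversely, the paper's route avoids having to check the functoriality of the associated-graded $\Hrat$-structure under submodule inclusions, which you correctly single out as the one nontrivial verification in your approach; it holds because all generating operators ($\tilW$, $\fra^*$, $\Lambda_{\can}$, $\tilw-1$ for lattice $\tilw$) act on $M$ by $\Hgr$-elements, hence preserve $N$, and the filtration shifts in Proposition \ref{p:Hratmod} are inherited by the subspace filtration.
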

\begin{proof}
By \cite[Proposition 2.3.1 (b)]{VV} the dimensions of the finite dimensional irreducible spherical modules for $\Hgr_{\nu,\ep=1}$ and $\Hrat_{\nu,\ep=1}$ are equal. Thus by Theorem~\ref{L(triv)}\eqref{L(triv)irr},  
$\spcoh{}{\Sp_{a}}^{S_{a}\rtimes B_{a}}$ has the same dimension as the spherical irreducible $\Hgr_{\nu,\ep=1}$-module. On the other hand $\Hgr_{\nu,\ep=1}$-module $\spcoh{}{\Sp_{a}}^{S_{a}\rtimes B_{a}}$  is a quotient of
the polynomial representation of $\Hgr_{\nu,\ep=1}$. Since the analog of  Corollary~\ref{quot} holds for $\Hgr_{\nu,\ep=1}$ by \cite[Proposition 2.3.1 (a)]{VV}, we conclude that $\spcoh{}{\Sp_{a}}^{S_{a}\rtimes B_{a}}$ is the irreducible spherical module for $\Hgr_{\nu,\ep=1}$.
\end{proof}

The proof of Theorem \ref{L(triv)} occupies \S\ref{ss:perv fil}-\S\ref{Frobenius}. It uses the geometry of Hitchin fibration, especially a variant of Ng\^o's support theorem in an essential way. Currently we are unable to generalize certain technical results needed in this global geometric argument to the quasi-split groups. However, the quasi-split examples in \S\ref{s:examples} suggest the following conjecture.

\begin{conj}\label{conj qsplit} The statements of Theorem \ref{L(triv)} hold for quasi-split groups $G$ (in the generality of \S\ref{sss:gp}).
\end{conj}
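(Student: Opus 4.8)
The plan is to run the proof of Theorem \ref{L(triv)} verbatim, substituting for each split-group input its quasi-split analog. Most of the geometric machinery is already available for quasi-split $G$: the Hitchin moduli stack $\calM_{\bP}\to\calA$ over the weighted projective line $X=\PP(m,1)$ with structure group $\cG$, and its smoothness and properness over the elliptic locus (\S\ref{s:Hit}, Proposition \ref{p:smooth}); the $\Hgr$-action on $f_{\nu,!}\QQ$ in $D^{b}_{\Gnu}(\Ah_{\nu})$ (Theorem \ref{th:Hgr}); the local--global morphism together with the product formula $[\Sp_{\bP,\nu}/\tilS]\cong\calM^{\h}_{\bP,\nu}$ (Proposition \ref{p:prod}); and the polynomial representation model $\eqnu{*}{\Fl^{\c}}$ with its Chern filtration (\S\ref{ss:pol Hrat}, Proposition \ref{p:Hrat GrC}). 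The last two require the harmless hypothesis \eqref{assrho} that $d\rho^{\vee}$ lifts from $\xcoch(\AA^{\ad})$ to $\xcoch(\AA)$, which can be checked case by case. What is genuinely missing are (i) a support theorem for the quasi-split Hitchin fibration $f\colon\calM^{\h}_{\bP}\to\Ah$, needed both to define the perverse filtration $P_{\leq\bullet}$ and to describe its associated graded, and (ii) the identification of $\Gr^{C}_{*}\spcoh{}{\Sp_{a}}^{\tilS_{a}\rtimes B_{a}}$ with $\frL_{\nu}(\triv)$. Once (i) is in hand, (ii) goes through exactly as in \S\ref{Frobenius}; thus the entire difficulty is concentrated in (i).

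To prove (i) I would descend to a constant group. Pulling the whole situation back along $\pi\colon X'\to X$ turns $\cG$ into the constant group $\GG$ over the orbifold $X'$, and pulling back further along the $\mu_{m}$-cover $\tX'=\PP^{1}\to X$ (as in the properness argument of Proposition \ref{p:smooth}) lands on the ordinary Hitchin fibration for the constant group $\GG$ over the smooth projective curve $\PP^{1}$, twisted by a positive line bundle. There one applies Ng\^{o}'s support theorem \cite{NgoFL} directly, and then recovers the statement over $X$ by taking $\mu_{m}$- and $\mu_{e}$-invariants of the decomposition, using that the support of a simple summand upstairs maps onto the support of its image downstairs. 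The real technical point is the codimension estimate underlying Ng\^{o}'s argument, bounding $\codim_{\Ah}(\Ah_{\geq\delta})$ below by the defect $\delta$ of the Picard stack $\calP_{a}$; in the quasi-split setting the arithmetic of $\WW'$-covers replaces that of $\WW$-covers, and one must control the component groups and Tate modules of the $\calP_{a}$ for quasi-split $\cG$ over the orbifold curve. Ellipticity and homogeneity of $a$ should make this tractable, since all cameral curves in the family $\Ah_{\nu}$ are abstractly isomorphic, so the relevant invariants are locally constant and can be computed from the single homomorphism $\Pi\colon\mu_{m}\incl\WW'$.

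Granting the support theorem, the perverse filtration is defined on $\cohog{*}{\calM_{\bP,a}}$ from the decomposition of $f_{*}\QQ$ over $\Ah$ and then transported to $\eqnu{*}{\Sp_{a}}^{\tilS_{a}}$ through the homeomorphism of Proposition \ref{p:prod}; it is automatically stable under $B_{a}=\pi_{1}(\frc(F)^{\rs}_{\nu},a)$ because it is a filtration of the sheaf $f_{\nu,!}\QQ$ over $\Ah_{\nu}\cong\frc(F)^{\rs}_{\nu}$ and $B_{a}$ acts through the monodromy of that family. That $P_{\leq\bullet}$ extends the Chern filtration of Definition \ref{def:Ch fil} is checked as in the split case by comparing both with the degree filtration on the polynomial representation via the surjection $\Gr^{C}_{*}\iota^{*}_{a}$. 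The $\Hgr_{\nu}$-operators $\xi\in\fra^{*}$, $\L_{\can}$, and the lattice elements $\tilw-1$ shift $P_{\leq\bullet}$ by the amounts required in Proposition \ref{p:Hratmod} --- a Lefschetz-type statement about cup product with the relative ample class and with the determinant line bundle $\calL(\L_{\can})$, provable on $\calM^{\h}_{\bP,\nu}$ just as over an ordinary curve --- so passing to $\Gr^{P}$ produces the graded $\Hrat_{\nu}$-action, and it commutes with $B_{a}$ since every operator is defined over $\Ah_{\nu}$.

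For (ii), the module $M:=\Gr^{C}_{*}\spcoh{}{\Sp_{a}}^{\tilS_{a}\rtimes B_{a}}$ is, by Proposition \ref{p:Hrat GrC}, a finite-dimensional quotient of the polynomial representation $\frM_{\nu}(\triv)=\Sym(\fra^{*})$, hence a graded commutative algebra. Poincar\'e duality on $\Sp_{a}$ --- whose cohomology is pure by Theorem \ref{p:paving}(4), equivalently on the smooth proper total space $\calM^{\h}_{\bP,\nu}$ --- supplies a perfect pairing concentrated in the top degree $N=\tfrac12 r(h_{\theta}\nu-1)=\dim\Sp_{a}$, compatible with the $\sl_{2}$-triple of \S\ref{sss:sl2} (using Proposition \ref{p:Hrat Fl}(3)); this exhibits $M$ as a Frobenius algebra quotient of $\frM_{\nu}(\triv)$. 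By the algebraic fact recalled after \cite[Proposition 1.20]{BEG}, a finite-dimensional Frobenius quotient of $\frM_{\nu}(\triv)$ is forced to be $\frL_{\nu}(\triv)$, which is in particular irreducible; this gives Theorem \ref{L(triv)}(2) and, via the comparison of dimensions in \cite[Proposition 2.3.1]{VV} as in Corollary \ref{c:Hgr irr}, the corresponding graded statement. The main obstacle throughout is item (i): establishing, for quasi-split $\cG$ over the weighted projective line, the defect stratification of the Picard stacks and the codimension bound that makes Ng\^{o}'s support theorem applicable. This is exactly the ingredient that the current global-geometric techniques do not yet supply.
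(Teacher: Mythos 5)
The statement you were asked to prove is a \emph{conjecture} in the paper; there is no proof in the paper to compare against. The authors explicitly say, in the sentence introducing Conjecture~\ref{conj qsplit}, that they ``are unable to generalize certain technical results needed in this global geometric argument to the quasi-split groups,'' and in the introduction to \S\ref{s:Hrat} they flag that the perverse-filtration construction ``currently only works for split groups $G$.'' You arrive at the same conclusion: your proposal is a strategy outline, not a proof, and you correctly isolate the missing ingredient --- a support theorem (equivalently, the $\delta$-codimension estimate of Appendix~B) for the quasi-split group scheme $\cG$ over the weighted projective line --- which is exactly the bottleneck the authors identify. So as a diagnosis, your proposal agrees with the paper.

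Where I would push back is on the descent strategy you sketch for filling that gap. The morphism $\calM_{\bP}\to\wt\calM^{\mu_m}_{\bQ}$ constructed in the proof of Proposition~\ref{p:smooth}(2) is only a \emph{closed immersion} (it is the fibre of a class map to $\cohog{1}{\mu_m,L_{\bQ}}$), not an isomorphism, and Ng\^{o}'s support theorem is not a property inherited along closed immersions. Even after decomposing the pushforward for the constant group $\GG$ over $\PP^1$ and taking $\mu_m$-isotypic pieces, the simple summands live over the $\mu_m$-fixed Hitchin base, and restricting further to the closed substack $\calM_{\bP}$ can create new summands with strictly smaller support; the codimension estimate must be reproved on the quasi-split side, not imported. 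The paper's Appendix~B handles only constant groups over an orbifold curve, and the missing step is precisely to redo that argument for $\cG$, where $\Lie J_a$ is the $\mu_e$-invariants of a pushforward rather than simply $\cc^{\vee}_{\calL}\otimes\calL$. Two smaller points: assumption \eqref{assrho} cannot be ``checked case by case'' --- it genuinely fails for simply-connected $G$ --- and the paper instead uses isogeny invariance of $\Hrat_{\nu}$ and of $(q_{\nu,!}\QQ)^{\tilS}$ to pass to the adjoint group; and your ``Poincar\'e duality on $\Sp_a$'' needs the $i^!$-versus-$i^*$ argument of Proposition~\ref{prop:Frob} and Lemma~\ref{l:!*}, since $\Sp_a$ itself is singular and duality is extracted from the smooth total space $\calM_U$, not from the fibre.
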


\begin{exam}\label{ex:Cox} When $\nu=d/h_{\theta}$ with $(d,h_{\theta})=1$ (recall $h_{\theta}$ is the twisted Coxeter number of $(\GG,\theta)$), we may compute the dimension of $\spcoh{}{\Sp_{a}}$ using Proposition \ref{p:reduce to 1/m}. In the case $\nu=1/h_{\theta}$ there is only one bounded clan and it consists of a single alcove. Therefore for $\nu=d/h_{\theta}$, $\dim\spcoh{}{\Sp_{a}}=d^{r}$ for any $a\in\frc(F)^{\rs}_{\nu}$. It is easy to see directly that $S_{a}\rtimes B_{a}$ acts trivially on $\spcoh{}{\Sp_{a}}$. On the other hand, the irreducible $\Hrat_{1/m, \ep=1}$-module $\frL_{\nu}(\triv)$ also has dimension $d^{r}$, see \cite[Theorem 1.11]{BEG}. Therefore $\Gr^{C}_{*}\spcoh{}{\Sp_{a}}\cong\frL_{\nu}(\triv)$ as $\Hrat_{\nu,\ep=1}$-modules. Moreover, \cite[Proposition 1.20]{BEG} shows that $\frL_{\nu}(\triv)$ is a Frobenius algebra. Therefore, we have checked Conjecture \ref{conj qsplit} in the case the denominator of $\nu$ is the twisted Coxeter number.
\end{exam}

Combining Theorem \ref{L(triv)} with Theorem \ref{thm:dim}(3) we get a dimension formula for $\frL_{\nu}(\triv)$ as in \eqref{intro dim}. More generally, we have
\begin{cor} Let $G$ be split and $\nu=d/m>0$ be an elliptic slope in lowest terms (which is also the normal form since $e=1$). Then for any standard parahoric subgroup $\bP$ of $G$, we have
\begin{equation*}
\dim\frL_{\nu}(\triv)^{W_{\bP}}=d^{r}\sum_{\tilw\in W_{\nu}\backslash \tilW/W_{\bP}}\dim(\lambda^{\tilw}_{\bP,1/m}\cdot\calH^{W^{\tilw}_{\bP,1/m}}_{W_{1/m}}).
\end{equation*}
\end{cor}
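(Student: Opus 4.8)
The plan is to match both sides of the identity with the equivariant cohomology of a parahoric affine Springer fiber. On the geometric side, Theorem~\ref{thm:dim}(3) applied to $\bP$ evaluates
\[
\dim\spcoh{}{\Sp_{\bP,a}}^{S_{a}\rtimes B_{a}}=d^{r}\sum_{\tilw\in W_{\nu}\backslash\tilW/W_{\bP}}\dim\bigl(\lambda^{\tilw}_{\bP,1/m}\cdot\calH^{W^{\tilw}_{\bP,1/m}}_{W_{1/m}}\bigr)
\]
for any $a\in\frc(F)^{\rs}_{\nu}$ (using the clan rescaling of Proposition~\ref{p:reduce to 1/m} to pass between $\nu$ and $1/m$). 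So the whole content of the Corollary is the identity $\dim\frL_{\nu}(\triv)^{W_{\bP}}=\dim\spcoh{}{\Sp_{\bP,a}}^{S_{a}\rtimes B_{a}}$, and this is what I would prove.

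First I would reduce to $G$ simply-connected: the algebra $\Hrat_{\nu,\ep=1}$, the polynomial module $\frM_{\nu}(\triv)=\Sym(\fra^{*})$, hence its simple quotient $\frL_{\nu}(\triv)$ together with its residual $W$-action, all depend only on the based root datum of $(\GG,\theta)$, and standard parahorics with their Weyl groups $W_{\bP}$ are preserved by central isogenies, so $\frL_{\nu}(\triv)^{W_{\bP}}$ is unchanged; I may then assume $\Omega=1$, whence $S_{a}=\tilS_{a}$ by Lemma~\ref{l:local Neron}\eqref{tilL} and $\tilW=\Wa$. Next I use the Springer theory at $\bP$ already used to build the $s_{i}$-action in Construction~\ref{gr si}: the projection $\pi_{\bP,a}\colon\Sp_{a}\to\Sp_{\bP,a}$ is pulled back via the evaluation map from the Grothendieck resolution of the reductive Lie algebra $\frl_{\bP}$, so $(\pi_{\bP,a,*}\QQ)^{W_{\bP}}=\QQ_{\Sp_{\bP,a}}$ and therefore $\spcoh{}{\Sp_{\bP,a}}=\bigl(\spcoh{}{\Sp_{a}}\bigr)^{W_{\bP}}$ for the $W_{\bP}\subset\Wa\subset\tilW$-action coming from the $\Hgr_{\nu,\ep=1}$-module structure of \S\ref{ss:localHgr}. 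This $W_{\bP}$-action commutes with $S_{a}\rtimes B_{a}$ by Corollary~\ref{c:local Hgr} and preserves the Chern filtration (the hypotheses of Proposition~\ref{p:Hratmod} say precisely that all of $\Wa$ does), and on $\Sp_{\bP,a}$ the Chern filtration of Definition~\ref{def:Ch fil} is the $W_{\bP}$-invariant part of that on $\Sp_{a}$ because $\spcoh{}{\Fl^{\c}_{\bP}}=\bigl(\spcoh{}{\Fl^{\c}}\bigr)^{W_{\bP}}$. Taking $S_{a}\rtimes B_{a}$-invariants, then $W_{\bP}$-invariants, and passing to the associated graded — which commutes with invariants of a finite group in characteristic zero — gives
\[
\Gr^{C}_{*}\spcoh{}{\Sp_{\bP,a}}^{S_{a}\rtimes B_{a}}\;\cong\;\bigl(\Gr^{C}_{*}\spcoh{}{\Sp_{a}}^{S_{a}\rtimes B_{a}}\bigr)^{W_{\bP}}\;\cong\;\frL_{\nu}(\triv)^{W_{\bP}},
\]
the last step by Theorem~\ref{L(triv)}\eqref{L(triv)irr}; here I use that the lattice part of $\Wa$ acts trivially on $\Gr^{C}_{*}$, so the $W_{\bP}$-action there factors through its image in $W\subset\Hrat_{\nu,\ep=1}$, matching the invariants taken on $\frL_{\nu}(\triv)$. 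Comparing dimensions and invoking the displayed geometric identity finishes the argument.

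The main obstacle I expect is the bookkeeping around this $W_{\bP}$-action: one must check that the action assembled from the parahoric Springer data in \S\ref{ss:localHgr} is simultaneously the action cutting out $\spcoh{}{\Sp_{\bP,a}}$ inside $\spcoh{}{\Sp_{a}}$, is compatible with the $\tilS_{a}\rtimes B_{a}$-symmetry and with the Chern (equivalently perverse) filtration, and descends on $\Gr^{C}_{*}$ to the $W$-action built into the $\Hrat_{\nu,\ep=1}$-module structure by Proposition~\ref{p:Hratmod}, so that the resulting isomorphism is genuinely $W_{\bP}$-equivariant. A secondary nuisance is that $W_{\bP}$ lies in $\Wa$ rather than in the finite $W$; this is harmless since $W_{\bP}\cap\L=1$ ($W_{\bP}$ finite, $\L$ torsion-free), so $W_{\bP}$ injects into $W$ and $\frL_{\nu}(\triv)^{W_{\bP}}$ unambiguously denotes invariants under this image. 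The remaining ingredients — the Springer-theoretic splitting of $\pi_{\bP,a,*}\QQ$, commutation of $\Gr$ with finite-group invariants, and isogeny-invariance of $\Hrat_{\nu}$ — are routine.
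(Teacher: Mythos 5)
Your argument is correct, and it is the route the paper intends (the Corollary is stated without proof precisely because it is the concatenation of Theorem~\ref{thm:dim}(3), the parahoric Springer identity $\spcoh{}{\Sp_{\bP,a}}=\spcoh{}{\Sp_a}^{W_{\bP}}$ coming from diagram \eqref{ev Sp}, and Theorem~\ref{L(triv)}(2)). The two points you rightly flag — that taking $\Gr$ commutes with finite-group invariants in characteristic zero, and that $W_{\bP}$ injects into $W$ and acts on $\Gr^{C}_{*}$ through that image via the $\Hrat$-structure of Proposition~\ref{p:Hratmod} because the lattice part of any $\tilw\in W_{\bP}\subset\Wa$ acts as the identity on the associated graded — are exactly the verifications needed, and you handle them correctly.

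One small remark worth making explicit: after your reduction to simply-connected $G$ you should note that this is also what makes the right-hand side well defined as a function of the root system, since $\tilW$ is not isogeny-invariant; for simply-connected $G$ one has $\tilW=\Wa$, $\Omega=1$ and $S_a=\tilS_a$, so the indexing set $W_{\nu}\backslash\tilW/W_{\bP}$ in Theorem~\ref{thm:dim}(3) and the $\tilS_a\rtimes B_a$-invariants in Theorem~\ref{L(triv)}(2) become literally compatible, and the formula then holds for every split $G$ by isogeny invariance of $\frL_\nu(\triv)^{W_\bP}$ provided one reads the sum over $W_\nu\backslash\Wa/W_\bP$.
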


\subsection{The perverse filtration}\label{ss:perv fil} For the rest of the section we assume $G$ is split. 

For the construction of the rational \Cha action, we need to consider the version of Hitchin moduli stacks in which we also allow the point of the parahoric reduction to move on $X$. Indeed, this is the version considered in the papers \cite{GS} and \cite{GSLD}. We use $\calM_{\bP,U}$ to denote the moduli stack classifying $(x,\calE,\varphi)$ where $x\in U$, $(\calE,\varphi)$ an $\calL$-valued  $\GG$-Higgs bundle over $X$ with $\bP$-reduction at $x$. For details of the construction, see \cite{GS}.

The Hitchin fibration now reads $f_{\bP,U}:\calM_{\bP,U}\to \calA\times U$, which sends $(x,\calE,\varphi)$ to $(f_{1}(\varphi),\cdots,f_{r}(\varphi),x)$. In the sequel we shall exclusively work over the open subset $\Ah\times U$, so we abuse the notation to denote the restriction of $f_{\bP,U}$ to $\Ah\times U$ again by $f_{\bP,U}$. Let $\fa_{\bP,U}$ be the restriction of $f_{\bP,U}$ to $\Aa\times U$. Again, when $\bP=\bI$, we suppress it from subscripts.

\begin{cons}\label{gr move U} The construction of \cite[Construction 6.1.4]{GS}, as recalled in \S\ref{ss:Hgr}, gives an action of $\Hgr$ on $f_{U,!}\QQ$ as an ind-complex $D^{b}(\Ah\times U)$. We would like to upgrade this into an action in the equivariant derived category $D^{b}_{\Grot\times\Gdil}(\Ah\times U)$. In fact, the constructions given in \cite{GS} gives the action of $s_{i},\Omega_{\bI}$ and $\xch(\TT_{\KM})$ also in the $\Grot\times\Gdil$-setting. Only the action of $u$ needs a little extra care. In \cite{GS} we make $u$ act as the Chern class of $\calL$ (pull-back along $\Ah\times X\to X$). Now we are working with $U$ instead of $X$, so the $\Grot$-equivariant Chern class of $\calL|_{U}$ is trivial (since we have fixed the equivariant structure of $\calL$ so this holds). Instead, $u$ should act as the equivariant parameter of $\Gdil$ as in Construction \ref{gr global u delta}. Also note that $\delta$ acts by the equivariant parameter of $\Grot$, and $\L_{\can}$ acts by the pullback of the equivariant Chern class $c_{1}^{\Grot}(\omega_{\Bun_{\GG}})$.
\end{cons}

The same argument as in Theorem \ref{th:Hgr} shows that
\begin{theorem}\label{th:Hgr U} Construction \eqref{gr move U} gives an action of $\Hgr$ on the ind-complex $\fUQl$ in the equivariant derived category $D^{b}_{\Grot\times\Gdil}(\Ah\times U)$.
\end{theorem}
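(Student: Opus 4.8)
The plan is to run the proof of Theorem \ref{th:Hgr} essentially verbatim; the only genuinely new content is the bookkeeping of the $\Grot\times\Gdil$-equivariant structures once the point of the parahoric reduction is allowed to move on $U$. Construction \eqref{gr move U} already produces operators on the ind-complex $\fUQl$ attached to each generator of $\Hgr$: the actions of $\delta$ and $u$ as the equivariant parameters of $\Grot$ and $\Gdil$ respectively; the action of $\xi\in\xch(\AA_{\KM})$ as cup product with $c_1^{\Grot\times\Gdil}$ of the line bundle $\calL_{\Bun}(\xi)$ pulled back to $\calM_{U}$ (with $\L_{\can}$ acting via $c_1^{\Grot}(\omega_{\Bun_{\GG}})$); the action of the affine simple reflections $s_i$ through the moving-point evaluation diagram \eqref{evdiag} and the rank-one Springer involutions; and the action of $\Omega_{\bI}$ by right translation. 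What remains is to check that these operators satisfy the defining relations (GC-1)--(GC-4) as identities in $\mathrm{End}$ of $\fUQl$ inside $D^b_{\Grot\times\Gdil}(\Ah\times U)$.

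First I would dispatch the easy relations. (GC-1) is immediate: $u$ acts as the equivariant parameter of $\Gdil$, which is central in $\upH^*_{\Grot\times\Gdil}(\pt)$ and hence commutes with every operator above. (GC-2): that $\Sym(\fra^*_{\KM})$ acts through an algebra homomorphism is clear because cup products with equivariant Chern classes commute; that $s_i$ and $\Omega_{\bI}$ generate a $\tilW$-action is proved exactly as in \cite{GS} and \cite{GSSph}, using the Cartesian square \eqref{evdiag} together with the braid and quadratic relations for the rank-one Springer involutions — none of this sees the difference between $X$ and $U$. (GC-4), $\omega\xi=\leftexp{\omega}{\xi}\omega$ for $\omega\in\Omega_{\bI}$, follows from the intertwining relation between $N_{G_{\KM}}(\bI_{\KM})$ and $\AA_{\KM}$ applied to the line bundles $\calL_{\Bun}(\xi)$, as in Theorem \ref{th:localHgr}.

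The core of the argument is (GC-3): $s_i\xi-\leftexp{s_i}{\xi}s_i=\jiao{\xi,\alpha_i^\vee}u$. By linearity it suffices to treat $\xi\in\fra^*\oplus\QQ\delta$ and $\xi=\L_{\can}$. For $\xi\in\fra^*\oplus\QQ\delta$, restrict along $\pi^{\bI}_{\bP_i}:\calM_{U}\to\calM_{\bP_i,U}$ to reduce to the Levi $L_i$ of semisimple rank one, where the computation is the one in \cite{GS}; the one change is that the ``error class'' appearing there, which in \cite{GS} is $\jiao{\xi,\alpha_i^\vee}\,c_1^{\Grot}(\calL)$, becomes — after passing to $U$ and using the fixed $\Grot$-equivariant trivialization of $\calL|_U$, which kills $c_1^{\Grot}(\calL|_U)$ — equal to $\jiao{\xi,\alpha_i^\vee}$ times the $\Gdil$-equivariant parameter, i.e.\ exactly $\jiao{\xi,\alpha_i^\vee}u$ with the new normalization of $u$. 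For $\xi=\L_{\can}$ I would use the global determinant line bundle $\det_{\Bun_{\bP_i}}$ on $\calM_{\bP_i,U}$, with fiber $\det\bR\Gamma(X,\Ad_{\bP_i}(\calE))$: setting $\L_{\bP_i}=\L_{\can}+2\rho_{\bG}-2\rho_{\bP_i}+\jiao{\nu\rho^\vee+\partial,\,2\rho_{\bP_i}-2\rho_{\bG}}\ep$ as in the proof of Theorem \ref{th:localHgr}, one has $\calL_{\Bun}(\L_{\bP_i})\cong\pi_{\bP_i}^*\det_{\Bun_{\bP_i}}$, so cupping with $c_1^{\Grot\times\Gdil}(\calL_{\Bun}(\L_{\bP_i}))$ is pulled back from $\calM_{\bP_i,U}$ and therefore commutes with $s_i$; combined with $\jiao{\L_{\bP_i},\alpha_i^\vee}=-\deg(\calL(\L_{\bP_i})|C_i)=0$ from Proposition \ref{p:pairing degree}, this verifies (GC-3) for $\L_{\bP_i}$, hence for $\L_{\can}$ after subtracting the already-treated component in $\fra^*\oplus\QQ\delta\oplus\QQ\ep$. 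I expect the only real obstacle to be notational: carefully propagating the $\Grot\times\Gdil$-structure through the moving-point evaluation diagram \eqref{evdiag} and confirming that the substitution of $c_1^{\Grot}(\calL)$ by the $\Gdil$-parameter is consistent at every place it enters the \cite{GS} computation. No new geometric input beyond \cite{GS} and Theorems \ref{th:localHgr}--\ref{th:Hgr} should be required.
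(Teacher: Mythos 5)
Your proposal is correct and follows essentially the same route as the paper: the paper proves Theorem \ref{th:Hgr U} by referring to Theorem \ref{th:Hgr} (and, in turn, to Theorem \ref{th:localHgr} with the local determinant bundle $\det_{\bP}$ replaced by the global $\det_{\Bun_{\bP}}$), and your write-up fills in the expected details, including the correct point that $u$ now acts as the $\Gdil$-equivariant parameter as flagged in Construction \ref{gr move U}. The only minor imprecision is that the formula you quote for $\L_{\bP_i}$ is the $\Gnu$-equivariant one from Theorem \ref{th:localHgr} (involving $\ep$ and $\nu\rho^{\vee}$); in the $\Grot\times\Gdil$-setting the correction term should instead be $\jiao{\partial, 2\rho_{\bP_i}-2\rho_{\bG}}\delta$, but the two inputs that actually drive the argument --- $\jiao{\L_{\bP_i},\alpha_i^{\vee}}=0$ and the identification $\calL_{\Bun}(\L_{\bP_i})\cong\pi_{\bP_i}^{*}\det_{\Bun_{\bP_i}}$ --- carry over unchanged, so the conclusion is unaffected.
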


Let   $\faUQl=\fUQl|_{\Aa\times U}$ . According to the action of $\xcoch(\TT)\subset \tilW\subset\Hgr$, we may decompose $\fUQl$ into generalized eigen-complexes (see \cite[\S 2.2]{GSLD})
\begin{equation*}
\faUQl=\bigoplus_{\kappa\in\wh{\TT}\QQ)}(\faUQl)_{\kappa}
\end{equation*}
where $\kappa$ runs over finite order elements in $\wh{\TT}(\QQ)=\Hom(\xcoch(\TT),\QQ^{\times})$. 

\begin{defn} \label{d:stable}
\begin{enumerate}
\item The {\em stable part} $\fst$ of $\faUQl$ is the direct summand $(\faUQl)_{1}$. Equivalently, it is the maximal direct summand of $\faUQl$ on which $\xcoch(\TT)$ acts unipotently. 

\item For $a\in\Aa$, the {\em stable part} $\eqnust{*}{\calM_{a}}$ of $\eqnu{*}{\calM_{a}}$ is the stalk of $\fst$ at $(a,0)$. This is also the maximal direct summand of $\eqnu{*}{\calM_{a}}$ on which $\xcoch(\TT)$ acts unipotently.
\end{enumerate}
\end{defn}

For the Hitchin moduli stack without Iwahori level structures, the stable part is first defined by Ng\^o \cite{NgoHit} as the geometric incarnation of stable orbital integrals, hence the namesake. Ng\^o's original definition uses the action of $\pi_{0}(\calP_{a})$ instead of $\xcoch(\TT)$, and is in fact equivalent to our definition because of the following lemma.

\begin{lemma}\label{l:alt stable global} The stable part $\eqnust{*}{\calM_{a}}$ is also the direct summand of $\eqnu{*}{\calM_{a}}$ on which $\pi_{0}(\calP_{a})$ acts trivially.
\end{lemma}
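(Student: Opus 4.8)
The plan is to compare two direct-sum decompositions of $\eqnu{*}{\calM_{a}}$: the one coming from generalized eigenvalues of the $\xcoch(\TT)$-action (Definition \ref{d:stable}) and the one coming from the characters of the finite group $\pi_{0}(\calP_{a})$, and to show that they are compatible in the sense that the $\xcoch(\TT)$-unipotent summand agrees with the $\pi_{0}(\calP_{a})$-trivial summand. First I would recall that the Picard stack $\calP_{a}$ acts on $\calM_{a}$ (this is \S\ref{sss:Pic}), so $\pi_{0}(\calP_{a})$ acts on the cohomology $\eqnu{*}{\calM_{a}}$, giving a decomposition into isotypic pieces indexed by $\wh{\pi_{0}(\calP_{a})}$. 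The content of the lemma is that the ``trivial'' piece for this action coincides with the ``unipotent'' piece for the $\xcoch(\TT)$-action built into the $\Hgr$-module structure on $\faUQl$.

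The key step is to identify how the $\xcoch(\TT)$-action on $\eqnu{*}{\calM_{a}}$ (coming from the affine Weyl group part of $\Hgr$ acting on $f_{U,!}\QQ$) is related to the geometric monodromy action that defines $\pi_{0}(\calP_{a})$. I would invoke the analysis already carried out in \cite{GSLD}: there the operators in $\xcoch(\TT)\subset\tilW\subset\Hgr$ act on $\faUQl$ through the cap product with certain cohomology classes, and after passing to the elliptic locus these classes are controlled by the $\calP$-action. Concretely, for $\lambda\in\xcoch(\TT)$ the operator $\tilw_{\lambda}$ acts on $\eqnu{*}{\calM_{a}}$ via the translation by the image of $\lambda$ in $\calP_{a}$ under the natural map $\xcoch(\TT)\to\calP_{a}(\CC)$ (composed with some unipotent correction), so that the generalized eigenvalue of $\tilw_{\lambda}$ on a $\pi_{0}(\calP_{a})$-isotypic summand indexed by a character $\kappa\in\wh{\pi_{0}(\calP_{a})}$ is exactly $\kappa$ evaluated on the image of $\lambda$. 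This makes the two decompositions refinements of a common coarser one, and the ``$\kappa=1$'' summands on both sides literally coincide. In the split elliptic setting, by Proposition \ref{p:prod}, $\calM_{a}$ is a disjoint union of copies of $[\Sp_{a}/\tilS_{a}]$ with $\calP_{a}$ connected, which simplifies matters: $\pi_{0}(\calP_{a})$ is actually related to the component group of the global Picard stack, and one must be careful to distinguish the component-counting over $U$ versus the fiberwise statement.

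The main obstacle, I expect, will be pinning down the precise dictionary between the algebraic $\xcoch(\TT)$-action inherited from $\Hgr$ and the geometric $\pi_{0}(\calP_{a})$-action. The subtlety is that the $\Hgr$-action is defined globally on $f_{U,!}\QQ$ over $\Ah\times U$ (or $\Aa\times U$), whereas the $\calP_{a}$-action is a fiberwise phenomenon; bridging them requires the compatibility between the global affine-Springer-to-Hitchin comparison (Proposition \ref{p:prod} and Proposition \ref{p:lgHgr}) and Ng\^o's description of the $\calP$-action on the stable part. I would handle this by first establishing the statement on the open dense locus where $\calP_{a}$ acts with finite stabilizers and the cameral cover is smooth, where everything is transparent, and then extending to all of $\Aa$ by the fact that both summands are defined as direct summands of a complex that is (by the decomposition theorem, cf. Corollary \ref{c:Hess loc sys} and the support-theorem input) a sum of shifted semisimple perverse sheaves, so an equality of summands over a dense open implies equality everywhere. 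Once the dictionary is in place the lemma follows formally: ``$\xcoch(\TT)$ acts unipotently'' $\iff$ ``all $\tilw_{\lambda}$ have generalized eigenvalue $1$'' $\iff$ ``$\kappa(\mathrm{im}\,\lambda)=1$ for all $\lambda$'' $\iff$ ``$\pi_{0}(\calP_{a})$ acts trivially'', using that $\xcoch(\TT)\to\pi_{0}(\calP_{a})$ has image generating $\pi_{0}(\calP_{a})$ (which itself needs a brief justification from the structure of $\calP_{a}$ recalled in \S\ref{sss:Pic} and Lemma \ref{l:local Neron}).
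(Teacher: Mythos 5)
Your overall strategy is the right one, and you have correctly identified one of the two essential ingredients: you need the dual statement that the map $\xcoch(\TT)\to\pi_{0}(\calP_{a})$ is surjective, or equivalently that $\wh{\pi_{0}(\calP_{a})}\to\wh{\TT}$ is injective, and this is precisely Ng\^o's embedding result from \cite{NgoFL} that the paper invokes.

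However, there is a genuine gap in the dictionary step. You assert that for each individual $\lambda\in\xcoch(\TT)$, the operator $\tilw_{\lambda}$ acts on the $\kappa$-isotypic summand (for $\kappa\in\wh{\pi_{0}(\calP_{a})}$) with generalized eigenvalue $\kappa(\mathrm{im}\,\lambda)$, citing ``the analysis in \cite{GSLD}''. This is a strictly stronger statement than what is actually available in the literature, and the citation is to the wrong paper. What is established, in \cite[Theorem 1.5]{GSSph}, is only that the \emph{$\WW$-invariant} subalgebra $\QQ[\xcoch(\TT)]^{\WW}$ factors through $\QQ[\pi_{0}(\calP_{a})]$ via a canonical homomorphism; nothing is claimed about the action of individual lattice elements $\tilw_{\lambda}$, only about $\WW$-symmetric functions of them. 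Consequently the comparison between the two decompositions only holds at the level of $\WW$-orbits on $\wh{\TT}$: for a $\WW$-orbit $\Xi\subset\wh{\TT}$, the sum of generalized $\xcoch(\TT)$-eigenspaces over $\Xi$ coincides with the sum of $\pi_{0}(\calP_{a})$-isotypic pieces over $\iota^{-1}(\Xi)$, where $\iota:\wh{\pi_{0}(\calP_{a})}\to\wh{\TT}\sslash\WW$ comes from the factorization. This coarser matching already suffices: take $\Xi=\{1\}$ (which is a single $\WW$-orbit), and use Ng\^o's embedding to see that $\iota^{-1}(\{1\})=\{1\}\subset\wh{\pi_{0}(\calP_{a})}$. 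The stronger ``per-$\lambda$'' dictionary you assert may well be true, but it would need its own argument, and the lemma does not require it. Also, the dense-open-then-extend reduction is unnecessary here: \cite[Theorem 1.5]{GSSph} is already a stalkwise statement at $a$, so nothing needs to be propagated from a smaller locus.
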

\begin{proof} In \cite[Theorem  1.5]{GSSph}, we have shown that the action of $\QQ[\xcoch(\TT)]^{\WW}$ on $\cohog{*}{\calM_{a}}$ factors through the action of $\pi_{0}(\calP_{a})$ via a canonical algebra homomorphism
\begin{equation}\label{cent to pi0}
\QQ[\xcoch(\TT)]^{\WW}\to\QQ[\pi_{0}(\calP_{a})].
\end{equation}
The proof works as well in the $\Gnu$-equivariant situation. On the level of spectra,  \eqref{cent to pi0} gives a natural morphism $\iota:\wh{\pi_{0}(\calP_{a})}\to \wh{\TT}\sslash\WW$ (where $\wh{\pi_{0}(\calP_{a})}$ is the diagonalizable group $\Spec\QQ[\pi_{0}(\calP_{a})]$). Let
\begin{equation*}
\eqnu{*}{\calM_{a}}=\bigoplus_{\kappa\in\wh{\TT}}\eqnu{*}{\calM_{a}}_{\kappa}
\end{equation*}
be the decomposition of $\eqnu{*}{\calM_{a}}$ into generalized eigenspaces under the $\xcoch(\TT)$-action. We also let
\begin{equation*}
\eqnu{*}{\calM_{a}}=\bigoplus_{\psi\in\wh{\pi_{0}(\calP_{a})}}\eqnu{*}{\calM_{a}}_{\psi}
\end{equation*}
be the decomposition according to the action of $\pi_{0}(\calP_{a})$, which is a finite abelian group since $\gamma$ is elliptic. Then the global main result of \cite{GSSph} in our situation says that for any $\WW$-orbit $\Xi\subset\wh{\TT}$ (viewed as a point in $\wh{\TT}\sslash\WW$), we have
\begin{equation}\label{Xi}
\bigoplus_{\kappa\in\Xi}\eqnu{*}{\calM_{a}}_{\kappa}=\eqnu{*}{\calM_{a}}_{\iota^{-1}(\Xi)}.
\end{equation}
It is shown in \cite{NgoFL} that $\iota$ can be lifted to an {\em embedding} of groups $\wh{\pi_{0}(\calP_{a})}\incl \wh{\TT}$. Therefore, the preimage $\iota^{-1}(1)$ is set-theoretically supported at $1\in \wh{\pi_{0}(\calP_{a})}$. Applying \eqref{Xi} to $\Xi=1\in\wh{\TT}\sslash \WW$ we get the desired statement. 
\end{proof}

\begin{cor}\label{c:stable is all} For $a\in\Ah_{\nu}$ and $a$ elliptic, we have $\eqnust{*}{\calM_{a}}=\eqnu{*}{\calM_{a}}$.
\end{cor}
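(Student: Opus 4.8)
The plan is to deduce the statement formally from the two preceding results: Lemma~\ref{l:alt stable global}, which identifies $\eqnust{*}{\calM_{a}}$ with the direct summand of $\eqnu{*}{\calM_{a}}$ on which $\pi_{0}(\calP_{a})$ acts trivially, and Proposition~\ref{p:prod}(2), which asserts that the Picard stack $\calP_{\nu}$ has connected fibres as soon as $\nu$ is elliptic. Thus the entire content of the corollary is the observation that, under the hypotheses, $\pi_{0}(\calP_{a})$ is the trivial group, so that the ``$\pi_{0}(\calP_{a})$-trivial'' summand is everything.

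First I would check that Proposition~\ref{p:prod}(2) is applicable. The standing assumption of \S\ref{s:Hrat} is that $\nu$ is a $\theta$-admissible \emph{elliptic} slope, so its hypothesis ``$\nu$ elliptic'' holds; equivalently, for a homogeneous $a$ of slope $\nu$ the monodromy $\Pi_{a}$ of the cameral curve contains the local monodromy at $0$, which under the identification $e_{0}:\calA_{\nu}\isom\frc(F)_{\nu}$ of Lemma~\ref{l:e0} is conjugate to the $\theta$-regular homomorphism $\Pi:\mu_{m}\incl\WW'$ attached to $\nu$; hence global ellipticity of $a$ already forces $\tt^{\Pi(\mu_{m})}=0$, i.e.\ $\nu$ is an elliptic slope (and conversely). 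In either case Proposition~\ref{p:prod}(2) gives that $\calP_{a}$ is connected, whence $\pi_{0}(\calP_{a})=\{1\}$.

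It then remains to run the formal argument. Since $a$ is elliptic, $\calM_{a}=\calM_{\bI,a}$ is a proper Deligne--Mumford stack by Proposition~\ref{p:smooth}(2), so $\eqnu{*}{\calM_{a}}$ is a finitely generated $\QQ[\ep]$-module carrying an action of the (now trivial) group $\pi_{0}(\calP_{a})$; by Lemma~\ref{l:alt stable global} its $\pi_{0}(\calP_{a})$-trivial direct summand, which is all of $\eqnu{*}{\calM_{a}}$, is exactly $\eqnust{*}{\calM_{a}}$. I do not expect a genuine obstacle here: the only substantive inputs are Proposition~\ref{p:prod}(2) and Lemma~\ref{l:alt stable global}, both already established, and the sole point requiring a word of justification is the compatibility of the ellipticity hypothesis on $a$ with the ellipticity of the slope $\nu$, which is immediate from the description of $\Pi_{a}$ via the Kostant section together with the standing assumptions of the section.
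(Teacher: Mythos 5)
Your proposal is correct and follows exactly the paper's own argument: apply Proposition~\ref{p:prod}(2) to get that $\calP_{a}$ is connected, then invoke Lemma~\ref{l:alt stable global} to conclude that the $\pi_{0}(\calP_{a})$-trivial summand is everything. The extra remarks about properness of $\calM_{a}$ and the compatibility of the two ellipticity hypotheses are fine but not needed, since the section's standing assumption already makes $\nu$ an elliptic slope.
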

\begin{proof}
By Proposition \ref{p:prod}(2), $\calP_{a}$ is connected. Therefore the equality follows from Lemma \ref{l:alt stable global}.
\end{proof}

\subsubsection{The perverse filtration} The perverse $t$-structure on $D^{b}_{\Grot\times\Gdil}(\Aa\times U)$ gives the truncations $\ptau_{\leq i}\fst$ of the complex $\fst$. We shift the degrees of the truncation by setting
\begin{equation*}
P_{\leq i}\fst:=\ptau_{\leq i+\dim(\calA\times U)}\fst.
\end{equation*}
Let $\Gr^{P}_{i}\fst$ be the associated graded of this filtration. In other words,
\begin{equation*}
\Gr^{P}_{i}\fst=\left(\pH^{i+\dim(\calA\times U)}\fst\right)[-i-\dim(\calA\times U)].
\end{equation*}
Our convention makes sure that $\Gr^{P}_{i}\fst$ is a shifted perverse sheaf in perverse degree $i+\dim(\Aa\times U)$, and that $\Gr^{P}_{i}\fst=0$ unless $0\leq i\leq 2\dim \fa_{U}$. 

The Deligne-Mumford stack $\calM^{\textup{ell}}_{U}$ is smooth and the morphism $f^{\textup{ell}}_{U}$ is proper. The decomposition theorem implies that there is a {\em non-canonical} isomorphism
\begin{equation*}
\bigoplus_{i=0}^{2\dim \fa_{U}}\Gr^{P}_{i}\fst\isom\fst.
\end{equation*}
The above decomposition in fact holds in the equivariant derived category $D^{b}_{\Grot\times\Gdil}(\Aa\times U)$. This is a consequence of the decomposition theorem for proper morphisms between Artin stacks with affine stabilizers, proved by S. Sun in \cite[Theorem 1.2, 3.15]{Sun}.

A key property of $\Gr^{P}_{i}\fst$ is the following support theorem. Recall we have the evaluation map $\calA\times U\to\cc$ (using the trivialization of $\calL$ over $U$). Let $(\calA\times U)^{\rs}$ be the preimage of $\cc^{rs}$.

\begin{theorem}\label{th:supp} The support of each simple constituent of the shifted perverse sheaf $\Gr^{P}_{i}\fst$ is the whole $\Aa\times U$.
\end{theorem}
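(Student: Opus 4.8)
\textbf{Proof strategy for Theorem~\ref{th:supp}.} The plan is to follow Ng\^o's method for the support theorem (from \cite{NgoFL}), adapted to the stable part of the parahoric Hitchin fibration over the weighted projective line $X$, exploiting the extra variable $U$. The starting point is the decomposition theorem for $\faUQl$ in the $\Grot\times\Gdil$-equivariant derived category (S.~Sun, \cite{Sun}), which writes $\Gr^{P}_{i}\fst$ as a direct sum of shifted simple perverse sheaves $IC_{Z_{\alpha}}(\calF_{\alpha})$ supported on closed irreducible subvarieties $Z_{\alpha}\subset\Aa\times U$. The goal is to prove $Z_{\alpha}=\Aa\times U$ for all $\alpha$. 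First I would record the dimension/support estimates that constrain the $Z_{\alpha}$: the ``$\delta$-regularity'' (codimension) estimate for the fibers of $\fa_{U}$, which comes from the dimension formula for $\calP_a$ (an orbifold Riemann--Roch computation as in the proof of the displayed $\dim\calM_a$ formula in \S\ref{s:Hit}), together with properness and smoothness of $\fa^{\textup{ell}}_{U}$ from Proposition~\ref{p:smooth}. These give Ng\^o's inequality $\codim(Z_\alpha)\le \textup{(defect of the abelian part along }Z_\alpha)$, which alone already forces the supports to be large.

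The heart of the argument is the ``freeness'' input: the action of the Picard stack $\calP_U$ (family over $\Aa\times U$ of $\calP_a$'s) on $\faUQl$ makes $\faUQl$ a module over $\cohog{*}{\calP_U/(\Aa\times U)}$, and the top exterior power of the Tate module of the abelian part of $\calP_a$ acts freely on the top-degree cohomology of the fibers; combined with the $\pi_0(\calP_a)$-triviality characterization of the stable part (Lemma~\ref{l:alt stable global}, Corollary~\ref{c:stable is all}), this rules out any $Z_\alpha$ over which the abelian-part rank jumps. Here the extra curve variable $U$ plays the same role as in \cite{GSLD}: moving the parahoric point $x$ sweeps out enough of the base that a proper closed $Z_\alpha$ would be incompatible with the polarizability/weight constraints on $IC_{Z_\alpha}(\calF_\alpha)$. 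Concretely I would: (1) reduce, using $\mu_m$-equivariance and the covering $X'\to X$, to a statement for the constant group $\GG$ over the orbifold $X'$, so that Ng\^o's estimates on $\delta$-loci apply verbatim; (2) invoke the product formula (Proposition~\ref{p:prod}) to identify the fibers over $\Ah_\nu\times\{0\}$ with $[\Sp_{a}/\tilS_a]$, which have \emph{connected} $\calP_a$, hence contribute only to the ``$\kappa=1$'' part, anchoring the stable summand; (3) run Ng\^o's support inequality plus the abelian-part freeness to conclude every $Z_\alpha$ is everything.

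The main obstacle I expect is verifying Ng\^o's geometric estimates (the codimension of the $\delta_a\ge\delta$ locus in the Hitchin base, and the behavior of the cameral/spectral curves) in the \emph{orbifold} setting of $X=\PP(m,1)$ with a nontrivial parahoric structure at $0$ and an orbifold point at $\infty$: one must check that the weighted Riemann--Roch bookkeeping, the valuation-theoretic local terms at $0$ and at $\infty'$, and the global $\delta$-invariant all assemble so that the key inequality $\codim Z_\alpha \le d_\alpha$ holds with the same force as over $\PP^1$. A secondary technical point is ensuring the decomposition theorem and the support statement are compatible with the $\Grot\times\Gdil$-equivariant structure and with the operation of passing to the stable summand $(\faUQl)_1$ — i.e. that taking the generalized $1$-eigenspace under $\xcoch(\TT)$ commutes with perverse truncation and does not create new small-support summands; this follows from the freeness result of \cite{GSSph} (the homomorphism $\QQ[\xcoch(\TT)]^{\WW}\to\QQ[\pi_0(\calP_a)]$ and its spectral incarnation) once it is set up in families over $\Aa\times U$. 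Granting these, the support theorem follows by the same endgame as in \cite[\S7]{NgoFL}.
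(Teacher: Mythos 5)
Your overall strategy matches the paper's: Theorem~\ref{th:supp} is proved by invoking the support-theorem machinery of \cite{GSLD} (itself built on Ng\^o's method from \cite{NgoFL}) and isolating the codimension estimate $\codim_{\calA}\calA_{\delta}\geq\delta$ as the one ingredient that must be re-verified over the orbifold base. You correctly identify this as the crux, and your remarks about compatibility with $\Grot\times\Gdil$-equivariance and the stable summand are accurate (and handled by the same citations the paper uses).

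Where you diverge is in \emph{how} you propose to prove the codimension estimate. You suggest redoing Ng\^o's local-global bookkeeping of $\delta$-invariants (valuation-theoretic local terms at $0$ and $\infty'$, weighted Riemann--Roch for cameral curves, etc.), possibly after a reduction via the $\mu_m$-cover. The paper instead gives a much lighter argument in Appendix~\ref{s:codim}, valid for an arbitrary smooth Deligne--Mumford curve $X$ with $\deg\calL>\deg\omega_X$ in characteristic zero, and with no reduction to a smooth cover needed. The key is Lemma~\ref{l:coker T}: a duality computation identifying $\dim_{k}\coker\bigl(T\fHit|_{(\calE,\varphi)}\bigr)$ with $\dim\ker(\act_{(\calE,\varphi)})$, so that at any point of $\MHit_a$ fixed by the affine part $R_a\subset\calP_a$ the cokernel of the tangent map of $f$ has dimension $\geq\delta(a)$. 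Combining this with generic smoothness (Sard, using $\textup{char}=0$) along a component $Z\subset\calA_\delta$ gives $\codim_Z(\calA)\geq\delta$ directly, bypassing the cameral-curve $\delta$-invariant analysis entirely. The paper's route is cleaner and more robust to the orbifold modifications; your route would in principle work but requires considerably more verification.

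One more remark: your step (2) — anchoring the stable summand via the product formula and connectedness of $\calP_a$ over $\Ah_\nu\times\{0\}$ — is not part of the proof of Theorem~\ref{th:supp} and isn't needed for it; the theorem concerns supports over all of $\Aa\times U$, and Lemma~\ref{l:alt stable global} together with \cite{GSSph} already handles the identification of the stable part in families. The product formula is used elsewhere (Corollary~\ref{c:stable is all}, Definition~\ref{def:perv fil}) but not here.
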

The proof of this support theorem is the same as \cite[Corollary 2.2.4]{GSLD}, which is based on \cite[Theorem 2.1.1]{GSLD}, provided the codimension estimate $\codim\calA_{\geq\delta}$ holds (see \cite{NgoFL}). We give detailed argument for this codimension estimate in Appendix \ref{s:codim}.

Let $a\in\Aa$ be homogeneous of slope $\nu$. The inclusion $i_{a,0}:\{(a,0)\}\hookrightarrow\Aa\times U$ is $\Gnu$-equivariant, therefore the stalk  $i^{*}_{(a,0)}\fst\in D^{b}_{\Gnu}(\pt)$ calculates the part $\eqnust{*}{\calM_{a}}$ of the $\Gnu$-equivariant cohomology of $\calM_{a}$ on which $\xcoch(T)$ acts unipotently. Moreover, the natural map $P_{\leq i}\fst\to\fst$ induces a map on stalks
\begin{equation}\label{Pstalk}
i^{*}_{(a,0)}P_{\leq i}\fst\to i^{*}_{(a,0)}\fst=\eqnust{*}{\calM_{a}}
\end{equation}
which is also a direct summand since $P_{\leq i}\fst\to\fst$ is.

\subsection{The global sheaf-theoretic action of $\Hrat$}\label{ss:Hrat} A bigraded action of $\Hrat$ on $\Gr^{P}_{*}\fst$ is a bigraded algebra homomorphism
\begin{equation*}
\Hrat\to\bigoplus_{j\in\ZZ}\End^{2j}(\bigoplus_{i\in\ZZ}\Gr^{P}_{i}\fst)(j)
\end{equation*}
where the grading of the RHS is given by assigning 
\begin{equation*}
\Hom(\Gr^{P}_{i_{1}}\fst,\Gr^{P}_{i_{2}}\fst[2j](j))
\end{equation*} 
the bidegree $(j,i_2-i_1)$. We shall construct such an action by first specifying the actions of the generators of $\Hrat$.

\begin{cons}\label{rat u delta}
The action of $u$ and $\delta$ are given by the equivariant parameters of $\Gdil$ and $\Grot$ respectively, viewed as maps $\Gr^{P}_{i}\fst\to \Gr^{P}_{i}\fst[2](1)$.
\end{cons}

\begin{cons}\label{rat W} The action of the {\em finite Weyl group} $\WW$. Since $\tilW$ acts on $\fUQl$, the stable part $\fst$ is therefore stable under the finite Weyl group $\WW\subset\tilW$. Passing to the associated graded pieces under the perverse filtration, we get an action of $\WW$ on each $\Gr^{P}_{i}\fst$.
\end{cons}

\begin{cons}\label{rat lattice} The action of $\xcoch(\TT)$. By \cite[3.2.1]{GSLD}, the action of $\l\in\xcoch(T)$ on $\Gr^{P}_{i}\fst$ is the identity. Note that \cite[3.2.1]{GSLD} is applicable here because of the validity of Theorem \ref{th:supp} in our situation. Therefore $\lambda-\id$ sends $P_{\leq i}\fst$ to $P_{\leq i-1}\fst$. The action of $\l\in\xcoch(T)$ on $\Gr^{P}_{*}\fst$ is given by
\begin{equation*}
\l_{\rat}:=\lambda-\id:\Gr^{P}_{i}\fst\to \Gr^{P}_{i-1}\fst.
\end{equation*}
\end{cons}

\begin{cons}\label{rat Chern} The action of $\xch(\TT)$. Let $\calL(\xi)$ be the tautological line bundle on $\calM_{U}$ indexed by $\xi\in\xch(T)$. As in \cite[3.2.2]{GSLD}, we consider the map
\begin{equation*}
(\cup c_{1}(\calL(\xi)))_{\st}:\fst\hookrightarrow\faUQl\xrightarrow{\cup c_{1}(\calL(\xi))}\faUQl[2](1)\twoheadrightarrow\fst
\end{equation*}
where the first and the last arrow are the natural inclusion and projection of the direct summand $\fst$ of $\faUQl$. By \cite[Lemma 3.2.3]{GSLD}, the map $(\cup c_{1}(\calL_{\xi})_{\st}$ induces the zero map $\Gr^{P}_{i}\fst\to\Gr^{P}_{i+2}\fst[2](1)$. Again, \cite[3.2.1]{GSLD} is applicable here because of the validity of Theorem \ref{th:supp} in our situation. Therefore it sends $P_{\leq i}\fst$ to $P_{\leq i+1}\fst[2](1)$, and hence induces\begin{equation*}
\xi_{\rat}=\Gr^{P}_{i}(\cup c_{1}(\calL(\xi)))_{\st}:\Gr^{P}_{i}\fst\to \Gr^{P}_{i+1}\fst[2](1).
\end{equation*}
This is the action of $\xi\in\xch(\TT)$.
\end{cons}

\begin{cons}\label{rat can} The action of $\Lambda_{\can}$ is given by the cup product with $c_{1}^{\Grot}(\omega_{\Bun_{\GG}})$, viewed as a map $\Gr^{P}_{i}\fst\to \Gr^{P}_{i+2}\fst[2](1)$.
\end{cons}

Applying Proposition \ref{p:Hratmod} to $\fst$ with the perverse filtration, we get
\begin{theorem}\label{th:Hrat} Constructions \eqref{rat u delta}--\eqref{rat can} defines a bigraded action of $\Hrat$ on $\Gr^{P}_{*}\fst$ in the equivariant derived category $D^{b}_{\Grot\times\Gdil}(\Aa\times U)$ (in the sense as in the beginning of this subsection).
\end{theorem}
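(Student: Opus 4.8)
The plan is to deduce the theorem from Proposition~\ref{p:Hratmod}, applied to the filtered object $(\fst,P_{\leq\bullet}\fst)$ of the filtered triangulated category $D^{b}_{\Grot\times\Gdil}(\Aa\times U)$ equipped with the $\Hgr$-action of Theorem~\ref{th:Hgr U}. First I would record that $\fst$ is an $\Hgr$-stable direct summand of $\faUQl$: the subalgebra $\QQ[\tilW]$ preserves the generalized $1$-eigenspace of $\xcoch(\TT)$ (the lattice commutes with itself and $\WW$ normalizes it), the central elements $u,\delta$ preserve it, and for $\l\in\xcoch(\TT)$ and $\xi\in\fra^{*}$ or $\xi=\L_{\can}$ the relations (GC-3) and (GC-4) together with Lemma~\ref{l:tr action} give that $[\l,\xi]$ lies in $\QQ[\tilW]\cdot(u,\delta)$, so that $\l-\mathrm{id}$ is locally nilpotent on $\xi\cdot\fst$ and on $\L_{\can}\cdot\fst$; hence these lie in $\fst$ and $\fst$ inherits the $\Hgr$-module structure. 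I would also invoke Proposition~\ref{p:smooth} (for the $U$-version) to know that $\calM^{\textup{ell}}_{U}$ is smooth and $\fa_{U}$ is proper, whence Sun's equivariant decomposition theorem \cite{Sun} splits $\fst\cong\bigoplus_{i}\Gr^{P}_{i}\fst$ and makes each $P_{\leq i}\fst\to\fst$ a split inclusion, which is needed for the estimates below.

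The heart of the argument is the verification of the four filtration hypotheses of Proposition~\ref{p:Hratmod} for $F_{\leq i}=P_{\leq i}=\ptau_{\leq i+\dim(\calA\times U)}$. That the Weyl group and $\Omega_{\bI}$ act by automorphisms of $\fst$, hence commute with perverse truncation, is immediate, and that $u,\delta$ (acting by equivariant parameters) preserve the perverse filtration is handled exactly as in \cite[\S3.2]{GSLD}, using the contracting $\Grot\times\Gdil$-action on $\calM_{U}$ relative to the Hitchin base. The two finer estimates --- that $\l-\mathrm{id}$ sends $P_{\leq i}\fst$ to $P_{\leq i-1}\fst$ for $\l\in\xcoch(\TT)$, and that the stable cup product $(\cup c_{1}(\calL(\xi)))_{\st}$ sends $P_{\leq i}\fst$ to $(P_{\leq i+1}\fst)[2](1)$ for $\xi\in\xch(\TT)$ --- are precisely \cite[3.2.1, Lemma~3.2.3]{GSLD}, whose proofs apply verbatim here once the support theorem~\ref{th:supp} is known in the present weighted-projective-line setting. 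Finally $\L_{\can}$ acts by cup product with $c_{1}^{\Grot}(\omega_{\Bun_{\GG}})$, which by functoriality of perverse truncation automatically sends $P_{\leq i}\fst$ to $(P_{\leq i+2}\fst)[2](1)$. These operators on $\Gr^{P}_{*}\fst$ are exactly those of Constructions~\eqref{rat u delta}--\eqref{rat can}.

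With the hypotheses in place, Proposition~\ref{p:Hratmod} yields a unique bigraded action of $\Hrat$ on $\Gr^{P}_{*}\fst$ in $D^{b}_{\Grot\times\Gdil}(\Aa\times U)$, and by its uniqueness clause the generators $u,\delta,\WW,\ \xcoch(\TT)\subset\fra,\ \fra^{*}=\xch(\TT),\ \L_{\can}$ act by the operators of Constructions~\eqref{rat u delta}--\eqref{rat can}; in particular relation (RC-4) comes out with the parameter function $c$ of \S\ref{sss:param}, since that normalization is built into Proposition~\ref{p:Hratmod}. This gives the theorem.

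The step I expect to be the main obstacle is not the formal deduction above but one of its inputs: the support theorem~\ref{th:supp} in the orbifold setting. Transporting the argument of \cite[Theorem~2.1.1, Corollary~2.2.4]{GSLD} to the weighted projective line $X$ requires the codimension estimate for $\calA_{\geq\delta}$ (which is why it is postponed to Appendix~\ref{s:codim}) together with Sun's decomposition theorem for proper morphisms of Artin stacks with affine stabilizers. Once those are secured, the improvement of the $\fra^{*}$-action to $(\cup c_{1}(\calL(\xi)))_{\st}\colon P_{\leq i}\fst\to(P_{\leq i+1}\fst)[2](1)$ --- the gain of one perverse degree over the a priori bound, which is exactly what forces the rational Cherednik relations to hold on the associated graded --- follows, and the remainder of the proof is formal.
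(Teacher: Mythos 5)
Your proposal is correct and takes essentially the same route as the paper: the paper's entire proof of Theorem~\ref{th:Hrat} is the single line ``Applying Proposition~\ref{p:Hratmod} to $\fst$ with the perverse filtration, we get'' the theorem, with the hypothesis-verification distributed among Constructions~\eqref{rat u delta}--\eqref{rat can} (in particular the citations to~\cite[3.2.1, Lemma~3.2.3]{GSLD}, which rely on Theorem~\ref{th:supp}); your write-up simply makes that verification explicit, correctly identifying the support theorem (hence the codimension estimate of Appendix~\ref{s:codim} and Sun's equivariant decomposition theorem) as the nonformal input, and correctly observing that $\L_{\can}$'s weaker bound $P_{\leq i}\to(P_{\leq i+2})[2]$ is automatic from functoriality of perverse truncation while the sharper bounds for $\xcoch(\TT)-\mathrm{id}$ and $\fra^{*}$ are where the support theorem is actually used.
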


\subsubsection{The homogeneous case} Now let $\nu>0$ be an elliptic slope, then $\Ah_{\nu}\incl\Aa$.  By Corollary \ref{c:stable is all}, the stalks of $f_{\nu,!}\QQ$ along $\Ah_{\nu}$ are always stable in the sense of Definition \ref{d:stable}. In other words, $f_{\nu,!}\QQ=i_{\nu}^{*}(\fa_{!}\QQ)_{\st}$, where $i_{\nu}:\Ah_{\nu}\incl \Aa$ is the inclusion. We define the perverse filtration on $ f_{\nu,!}\QQ$ by
\begin{equation*}
P_{\leq i}f_{\nu,!}\QQ:=i_{\nu}^{*}P_{\leq i}(f_{!}\QQ)_{\st}.
\end{equation*}
For $a\in\Ah_{\nu}$, we have an induced perverse filtration $P_{\leq i}\eqnu{*}{\calM_{a}}$ on $\eqnu{*}{\calM_{a}}$ by taking the stalk of $P_{\leq i}f_{\nu,!}\QQ$ at $a$. 

\begin{cor}\label{c:Hratnu} Let $\nu>0$ be an elliptic slope. There is a bigraded action of $\Hrat_{\nu}$ on the graded complex $\Gr^{P}_{*} f_{\nu,!}\QQ$ on $\Ah_{\nu}$ (in the sense as in the beginning of this subsection). In particular, for $a\in\Ah_{\nu}$, there is a bigraded action of $\Hrat_{\nu}$ on $\Gr^{P}_{*}\eqnu{*}{\calM_{a}}$.
\end{cor}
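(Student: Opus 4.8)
\textbf{Proof proposal for Corollary \ref{c:Hratnu}.}

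The plan is to deduce this corollary directly from Theorem \ref{th:Hrat} by specialization of parameters and restriction to the homogeneous locus. First, recall from Theorem \ref{th:Hrat} that we have a bigraded action of the algebra $\Hrat$ on $\Gr^P_*\fst$ in the equivariant derived category $D^b_{\Grot\times\Gdil}(\Aa\times U)$. Since $\nu>0$ is an elliptic slope, we have $\Ah_\nu\subset\Aa$ by the discussion preceding Lemma \ref{l:a global} (ellipticity of $\nu$ forces $\nu$ to be a $\theta$-admissible elliptic slope, so $\Ah_\nu\neq\varnothing$ and sits inside the elliptic locus). Also, by Corollary \ref{c:stable is all}, for every $a\in\Ah_\nu$ the entire cohomology $\eqnu{*}{\calM_a}$ is stable, so that $f_{\nu,!}\QQ = i_\nu^*(\fa_!\QQ)_{\st}$ where $i_\nu:\Ah_\nu\incl\Aa$ is the inclusion (here I am implicitly using that $\Ah_\nu$ is contained in the image of the map $\Aa\times U\to\Aa$ at the point $x=0$, i.e.\ restricting along $\Ah_\nu\times\{0\}$). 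The first step is therefore to pull back the $\Hrat$-action along $i_\nu$ (and along the inclusion $\{0\}\incl U$), using that the truncation functors $\ptau_{\leq i}$ and hence the filtration $P_{\leq i}$ commute with $*$-restriction to a locally closed subset up to the support theorem; more precisely, Theorem \ref{th:supp} guarantees that every simple constituent of $\Gr^P_i\fst$ has full support $\Aa\times U$, so restriction to $\Ah_\nu\times\{0\}$ does not kill any constituents and the restricted graded complex $\Gr^P_* f_{\nu,!}\QQ := i_\nu^*\Gr^P_*(\fa_!\QQ)_{\st}$ genuinely computes the associated graded of the restricted perverse filtration $P_{\leq i}f_{\nu,!}\QQ := i_\nu^* P_{\leq i}(\fa_!\QQ)_{\st}$.

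The second step is to check that the $\Hrat$-action descends through the ideal $(u+\nu\delta,\ B_{\KM})$ to give an action of $\Hrat_\nu = \Hrat/(u+\nu\delta, B_{\KM})$. For the generator $u+\nu\delta$: by Construction \ref{rat u delta}, $u$ acts as the equivariant parameter of $\Gdil$ and $\delta$ as the equivariant parameter of $\Grot$; after restricting to the subtorus $\Gnu\incl\Grot\times\Gdil$, which by \S\ref{sss:Gnu} is given by the cocharacter $(m/e,-d)$ (equivalently $m\partial$ and $-d$ on the dilation factor, matching the normalization $\nu=d/m$), the two equivariant parameters become proportional and $u+\nu\delta$ acts as zero — this is exactly the computation made in the proof of Theorem \ref{th:Hgr}(2) and in Theorem \ref{th:localHgr}, and the same argument applies verbatim here. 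For the generator $B_{\KM}$: by Lemma \ref{l:B} together with the argument in Theorem \ref{th:localHgr} (and Remark \ref{r:ellHgr}, which records that in the elliptic case $B_{\KM}$ acts on $f_{\nu,!}\QQ$ as $\nu^2\BB(\rho^\vee,\rho^\vee)\ep^2$), the element $B_{\KM}$ acts as a scalar multiple of $\ep^2$; but in $\Hrat_\nu$ we also quotient by $B_{\KM}$ itself (which in the sheaf-theoretic picture corresponds to the relation defining the central charge together with the vanishing of $B_{\KM}-B$ on the polynomial representation, cf.\ the proof of Proposition \ref{p:Hrat Fl}(1)), so the compatibility to check is that the geometric action of $B_{\KM}$ agrees with the one forced by the $\sl_2$-relations. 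This is where I would invoke the local-global compatibility (Proposition \ref{p:lgHgr}) and the fact, already used in Proposition \ref{p:Hrat Fl}, that the Chern/perverse filtration is compatible with the $\Hgr_\nu$-module structure on the polynomial representation on which $B_{\KM}$ and $B$ coincide up to the scalar $\ep^2$. Granting this, the $\Hrat$-action factors through $\Hrat_\nu$ and we obtain the bigraded $\Hrat_\nu$-action on $\Gr^P_* f_{\nu,!}\QQ$.

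The final step is purely formal: taking the stalk at a point $a\in\Ah_\nu$ is an exact functor $D^b_{\Gnu}(\Ah_\nu)\to D^b_{\Gnu}(\pt)$ (the inclusion $i_{a,0}:\{(a,0)\}\incl\Aa\times U$ is $\Gnu$-equivariant, as noted before \eqref{Pstalk}), and it carries the graded complex $\Gr^P_* f_{\nu,!}\QQ$ with its $\Hrat_\nu$-action to the graded vector space $\Gr^P_*\eqnu{*}{\calM_a}$ with an induced $\Hrat_\nu$-action, where $P_{\leq i}\eqnu{*}{\calM_a}$ is the stalk of $P_{\leq i}f_{\nu,!}\QQ$ at $a$ as defined in the text preceding the corollary. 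I expect the main obstacle to be the careful bookkeeping in the second step — verifying that the sheaf-theoretic action of $B_{\KM}$ really is the scalar dictated by the abstract relations in $\Hrat_\nu$, rather than an a priori different endomorphism — since this requires matching the $\sl_2$-triple computation inside $\Hrat$ (Proposition \ref{p:Hrat Fl}(3), which gives the $\hh$-eigenvalue $(i-N)\ep$) against the geometric Lefschetz-type $\sl_2$ coming from the relative hard Lefschetz on $\fst$; the support theorem (Theorem \ref{th:supp}) and the decomposition theorem of Sun are the tools that make this comparison go through, exactly as in \cite{GSLD}. Everything else is a transcription of the constructions of \S\ref{ss:Hrat} restricted along $i_\nu$.
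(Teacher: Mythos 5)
Your overall scheme — restrict the $\Hrat$-action of Theorem \ref{th:Hrat} along $i_{\nu}$ and then check that the action factors through the ideal $(u+\nu\delta,\,B_{\KM})$ — is the same as the paper's, and the treatment of $u+\nu\delta$ is correct. Where you overcomplicate things is the $B_{\KM}$ step. You gesture at matching the sheaf-theoretic $B_{\KM}$ against an $\sl_2$-structure coming from relative hard Lefschetz and at a comparison with the Chern filtration from Proposition \ref{p:Hrat Fl}; none of that is needed, and framing it as "the main obstacle" misses the simple punchline. The paper's argument is a one-liner: by Remark \ref{r:ellHgr}, $B_{\KM}$ acts on $f_{\nu,!}\QQ$ as a scalar multiple of $\ep^{2}$. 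But $\ep$ has bidegree $(2,0)$ — it \emph{preserves} the perverse filtration rather than raising it — so $\ep^{2}$ preserves $P_{\leq i}$. Since $B_{\KM}$ has bidegree $(4,2)$, the only thing to check is that the induced map $\Gr^{P}_{i}f_{\nu,!}\QQ\to\Gr^{P}_{i+2}f_{\nu,!}\QQ[4](2)$ vanishes; and this is automatic once you know $B_{\KM}$ equals a filtration-preserving operator, because a filtration-preserving operator has zero component in strictly positive perverse degree. No $\sl_{2}$-comparison, no hard Lefschetz, and you do not even need to know the precise scalar $\nu^{2}\BB(\rho^{\vee},\rho^{\vee})$ — only that $B_{\KM}$ is \emph{some} multiple of $\ep^{2}$.

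Your first step is also slightly more elaborate than necessary: you invoke Theorem \ref{th:supp} to argue that restriction to $\Ah_{\nu}\times\{0\}$ "does not kill constituents." The support theorem is indeed what makes Constructions \ref{rat lattice}--\ref{rat Chern} work inside Theorem \ref{th:Hrat}, but once that theorem is proved, restricting an action of a fixed algebra on a filtered object along a map of spaces is formal — you simply apply $i_{\nu}^{*}$ to everything in sight, and the resulting filtration on $f_{\nu,!}\QQ$ is by definition $P_{\leq i}f_{\nu,!}\QQ := i_{\nu}^{*}P_{\leq i}(f_{!}\QQ)_{\st}$. No further appeal to the support theorem is needed at this stage. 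Your third step (taking stalks) is correct as stated.
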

\begin{proof} Most parts follow directly from Theorem \ref{th:Hrat} by restricting to $\Ah_{\nu}$. The only thing we need to check is that the degree $(4,2)$-element $B_{\KM}$ gives the zero map $\Gr^{P}_{i}( f_{\nu,!}\QQ)_{\st}\to\Gr^{P}_{i+2}( f_{\nu,!}\QQ)_{\st}$. From Remark \ref{r:ellHgr} we know that $B_{\KM}\in\Hgr_{\nu}$ acts on $ f_{\nu,!}\QQ$ as a multiple of $\ep^{2}$. Since $\ep^{2}$ preserves the perverse filtration, $B_{\KM}$ gives the zero map $\Gr^{P}_{i}( f_{\nu,!}\QQ)_{\st}\to\Gr^{P}_{i+2}( f_{\nu,!}\QQ)_{\st}$.
\end{proof}

\subsection{Proof of Theorem \ref{L(triv)}(1)}\label{ss:Hrat local} First note that if we change $G$ to a group isogenous to it, neither $\Hrat_{\nu}$ nor $(q_{\nu,!}\QQ)^{\tilS}$ change (the latter uses the fact that $\tilS_{a}$ surjects onto $\Omega$, proved in Lemma \ref{l:local Neron}\eqref{tilL}). Therefore, to prove Theorem \ref{L(triv)}, it suffices to prove it for any group isogenous to $G$. Therefore, we may assume that Assumption \eqref{assrho} holds for $G$ (for example, we may take $G$ to be adjoint).

Recall $\nu>0$ is elliptic and $a\in\frc(F)^{\rs}_{\nu}$. As in \S\ref{ss:prod}, we use the line bundle $\calL=\calO_{X}(\nu)$ to define the Hitchin moduli stack $\calM$ and the Hitchin fibration $f:\calM\to \calA$. In this case $\frc(F)^{\rs}_{\nu}=\Ah_{\nu}$ and the Hitchin fiber $\calM_{a}$ is defined. By Proposition \ref{p:prod}(2) (which is applicable since \eqref{assrho} holds), we have a canonical isomorphism between complexes on $\frc(F)^{\rs}_{\nu}=\Ah_{\nu}$
\begin{equation}\label{lg stable family}
f_{\nu,!}\QQ\isom(q_{\nu,!}\QQ)^{\tilS}.
\end{equation}
Taking stalks at $a\in\frc(F)^{\rs}_{\nu}$ we get
\begin{equation}\label{lg stable}
\eqnu{*}{\calM_{a}}=\eqnust{*}{\calM_{a}}\isom\eqnu{*}{\Sp_{a}}^{\tilS_{a}}.
\end{equation}

\begin{defn}\label{def:perv fil} Let $\nu>0$ be an elliptic admissible slope and $a\in\frc(F)^{\rs}_{\nu}$.
\begin{enumerate}
\item The {\em perverse filtration} $P_{\leq i}(q_{\nu,!}\QQ)^{\tilS}$ is the transport of the perverse filtration $P_{\leq i}f_{\nu,!}\QQ$ under the isomorphism \eqref{lg stable family}.
\item The {\em perverse filtration} $P_{\leq i}\eqnu{*}{\Sp_{a}}^{\tilS_{a}}$ is the filtration on $\eqnu{*}{\Sp_{a}}$ given by talking the stalk at $a$ of $P_{\leq i}( q_{\nu,!}\QQ)^{\tilS}$. This is also the transport of $P_{\leq i}\eqnu{*}{\calM_{a}}$ under the isomorphism  \eqref{lg stable}.
\end{enumerate}
\end{defn}

\begin{lemma} For $a\in\frc(F)^{\rs}_{\nu}$ and all $i$, $P_{\leq i}\eqnu{*}{\Sp_{a}}^{\tilS_{a}}$ is stable under the action of the braid group $B_{a}$. 
\end{lemma}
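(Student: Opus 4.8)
The plan is to exploit the fact that the perverse filtration on $\eqnu{*}{\Sp_{a}}^{\tilS_{a}}$ is, by construction, the stalk at $a$ of a filtration living over the entire base $\frc(F)^{\rs}_{\nu}$, and that the $B_{a}$-action is nothing but the monodromy of the associated local systems; once both objects are seen to be defined ``in families'' over $\frc(F)^{\rs}_{\nu}$, stability is automatic by functoriality of monodromy.

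First I would make the monodromy description precise. Since $\nu$ is elliptic, $\Sp_{a}$ is proper by Lemma \ref{l:finite type}, so $\eqnu{*}{\Sp_{a}}=\upH^{*}_{c,\Gnu}(\Sp_{a})$, and by Corollary \ref{c:Hess loc sys}(2) the complex $q_{\nu,!}\QQ$ restricted to $\frc(F)^{\rs}_{\nu}$ is a direct sum of shifted semisimple local systems; taking $\tilS$-invariants (a finite abelian group in the elliptic case, so an exact operation), the same holds for $(q_{\nu,!}\QQ)^{\tilS}$. Here $\Gnu$ acts trivially on $\frc(F)^{\rs}_{\nu}$, so its equivariant derived category over the base is that of $\QQ[\ep]$-complexes. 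The action of $B_{a}=\pi_{1}(\frc(F)^{\rs}_{\nu},a)$ on $\eqnu{*}{\Sp_{a}}^{\tilS_{a}}$ invoked in the statement is, by its definition in \S\ref{sss:pi1}, precisely the monodromy action on the stalk at $a$ of the local systems $\underline{\upH}^{*}\big((q_{\nu,!}\QQ)^{\tilS}\big)$.

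Next I would show that $P_{\leq i}(q_{\nu,!}\QQ)^{\tilS}$ is a direct summand of $(q_{\nu,!}\QQ)^{\tilS}$ in the equivariant derived category over $\frc(F)^{\rs}_{\nu}$. By Definition \ref{def:perv fil} this complex is the transport, along the local--global isomorphism of Proposition \ref{p:prod}(2), of $P_{\leq i}f_{\nu,!}\QQ=i_{\nu}^{*}P_{\leq i}(f_{!}\QQ)_{\st}$ where $i_{\nu}:\Ah_{\nu}\hookrightarrow\Aa$ is the inclusion. Invoking the equivariant decomposition theorem of S. Sun \cite[Theorem 1.2, 3.15]{Sun} together with the support theorem (Theorem \ref{th:supp}), one obtains a (non-canonical) splitting of $(f_{!}\QQ)_{\st}$ into its perverse graded pieces in the equivariant derived category, under which the perverse truncation triangle $P_{\leq i}(f_{!}\QQ)_{\st}\to(f_{!}\QQ)_{\st}\to P_{>i}(f_{!}\QQ)_{\st}\xrightarrow{+1}$ splits; hence $P_{\leq i}(f_{!}\QQ)_{\st}$ is a direct summand of $(f_{!}\QQ)_{\st}$, a property preserved by pullback along $i_{\nu}$ and by the isomorphism of Proposition \ref{p:prod}(2). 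In particular $P_{\leq i}(q_{\nu,!}\QQ)^{\tilS}\hookrightarrow(q_{\nu,!}\QQ)^{\tilS}$ is a genuine morphism of complexes over $\frc(F)^{\rs}_{\nu}$.

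Finally I would conclude by functoriality of monodromy: for any morphism $K'\to K$ of complexes on $\frc(F)^{\rs}_{\nu}$, the induced map on stalks at $a$ is equivariant for the monodromy actions of $\pi_{1}(\frc(F)^{\rs}_{\nu},a)$ on $\underline{\upH}^{*}(K')$ and $\underline{\upH}^{*}(K)$. Applying this to $K'=P_{\leq i}(q_{\nu,!}\QQ)^{\tilS}\hookrightarrow K=(q_{\nu,!}\QQ)^{\tilS}$, the image of the corresponding map on stalks at $a$ — which by Definition \ref{def:perv fil} is exactly the subspace $P_{\leq i}\eqnu{*}{\Sp_{a}}^{\tilS_{a}}\subset\eqnu{*}{\Sp_{a}}^{\tilS_{a}}$ — is $B_{a}$-stable. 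I expect no serious obstacle; the only point requiring (essentially routine) care is to verify that the monodromy action appearing in this argument agrees with the $B_{a}$-action as it was originally introduced in \S\ref{sss:pi1} through the Hessenberg decomposition after inverting $\ep$, and correspondingly that the perverse filtration is compatible with that decomposition.
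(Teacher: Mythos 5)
Your proof is correct and follows essentially the same route as the paper's: both establish that $P_{\leq i}(q_{\nu,!}\QQ)^{\tilS}$ is a direct summand of $q_{\nu,!}\QQ$ via the decomposition theorem, invoke Corollary \ref{c:Hess loc sys}(2) to see that everything is a direct sum of shifted semisimple local systems, and conclude by passing to stalks/monodromy at $a$. The compatibility check you flag at the end (that the monodromy action agrees with the $B_a$-action of \S\ref{sss:pi1}) is indeed implicit in the paper's argument, since both actions come from the same local-system structure on $\oplus_i \bR^i q_{\nu,*}\QQ$ established in Corollary \ref{c:Hess loc sys}.
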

\begin{proof}
By the decomposition theorem, each $P_{\leq i}f_{\nu,!}\QQ$ is a direct summand of $f_{\nu,!}\QQ$, therefore $P_{\leq i}( q_{\nu,!}\QQ)^{\tilS}$ is a direct summand of $ q_{\nu,!}\QQ$. By Corollary \ref{c:Hess loc sys}(2), $ q_{\nu,!}\QQ$ is a direct sum of shifted semisimple local systems on $\frc(F)^{\rs}_{\nu}$, therefore so is $P_{\leq i}( q_{\nu,!}\QQ)^{\tilS}$. Taking stalk at $a$ then gives the action of $B_{a}=\pi_{1}(\frc(F)^{\rs}_{\nu}, a)$ on each degree $P_{\leq i}\eqnu{j}{\Sp_{a}}^{\tilS_{a}}$.
\end{proof}

Corollary \ref{c:Hratnu} and \eqref{lg stable} then imply 
\begin{cor}\label{c:Hrat GrP} Let $\nu>0$ be an elliptic slope and $a\in\frc(F)^{\rs}_{\nu}$. Then there is a bigraded action of $\Hrat_{\nu}$ on $\Gr^{P}_{*}\eqnu{*}{\Sp_{a}}^{\tilS_{a}}$ commuting with the action of $B_{a}$. 
\end{cor}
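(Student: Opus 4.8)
The proof will be a transport-of-structure argument that combines Corollary \ref{c:Hratnu} with the product-formula isomorphism \eqref{lg stable family}; no new geometric input is required beyond what has already been assembled above. First I would recall that Corollary \ref{c:Hratnu} provides a bigraded action of $\Hrat_{\nu}$ on the graded complex $\Gr^{P}_{*}f_{\nu,!}\QQ$ over $\Ah_{\nu}$, realized concretely by morphisms in $D^{b}_{\Gnu}(\Ah_{\nu})$ between the perverse graded pieces. Next, I would use the canonical isomorphism \eqref{lg stable family}, $f_{\nu,!}\QQ\isom(q_{\nu,!}\QQ)^{\tilS}$, of complexes on $\frc(F)^{\rs}_{\nu}\cong\Ah_{\nu}$ --- available since we have arranged, by replacing $G$ with an isogenous group, that assumption \eqref{assrho} holds, which changes neither side by Lemma \ref{l:local Neron}\eqref{tilL} and the isogeny-invariance of $\Hrat_{\nu}$. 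By Definition \ref{def:perv fil}(1) this isomorphism is compatible with the perverse filtrations, so passing to associated graded pieces yields a canonical isomorphism $\Gr^{P}_{*}f_{\nu,!}\QQ\isom\Gr^{P}_{*}(q_{\nu,!}\QQ)^{\tilS}$, through which the $\Hrat_{\nu}$-action transports to $\Gr^{P}_{*}(q_{\nu,!}\QQ)^{\tilS}$.

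Then I would pass to stalks at $a$. By Definition \ref{def:perv fil}(2), the stalk of $\Gr^{P}_{*}(q_{\nu,!}\QQ)^{\tilS}$ at $a$ is precisely $\Gr^{P}_{*}\eqnu{*}{\Sp_{a}}^{\tilS_{a}}$ (equivalently, the transport of $\Gr^{P}_{*}\eqnu{*}{\calM_{a}}$ along \eqref{lg stable}), so the sheaf-level $\Hrat_{\nu}$-action induces the desired bigraded $\Hrat_{\nu}$-action on $\Gr^{P}_{*}\eqnu{*}{\Sp_{a}}^{\tilS_{a}}$. For the commutation with $B_{a}=\pi_{1}(\frc(F)^{\rs}_{\nu},a)$, I would invoke Corollary \ref{c:Hess loc sys}(2) together with the decomposition theorem: $\Gr^{P}_{*}(q_{\nu,!}\QQ)^{\tilS}$, being a direct summand of $q_{\nu,!}\QQ$, is a direct sum of shifted semisimple local systems on $\frc(F)^{\rs}_{\nu}$, and the $B_{a}$-action on the stalk is exactly the monodromy of these local systems. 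Since every generator of $\Hrat_{\nu}$ acts by a morphism of complexes on the base $\frc(F)^{\rs}_{\nu}$, hence by a morphism of local systems, its effect on the stalk is tautologically equivariant for the monodromy action; this gives the commutation of the two actions.

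The argument is essentially bookkeeping and contains no genuinely difficult step. The only point needing a little care --- and the closest thing here to an obstacle --- is to make sure that the a priori family-level $\Hrat_{\nu}$-action of Corollary \ref{c:Hratnu} is, on stalks, compatible with the monodromy $B_{a}$-action coming from the local-system structure of Corollary \ref{c:Hess loc sys}(2); this is exactly what the observation in the previous paragraph (a map of shifted local systems is automatically $\pi_{1}$-equivariant on fibers) takes care of.
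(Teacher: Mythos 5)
Your argument is correct and matches the paper's own approach: the paper obtains the corollary by combining Corollary \ref{c:Hratnu} (the sheaf-level $\Hrat_{\nu}$-action over $\Ah_{\nu}$) with the product-formula isomorphism \eqref{lg stable family} (after replacing $G$ by an isogenous group so that \eqref{assrho} holds), and with the preceding lemma establishing $B_{a}$-stability of the perverse filtration via Corollary \ref{c:Hess loc sys}(2). Your explicit observation that a map of shifted local systems is automatically $\pi_{1}$-equivariant on stalks is precisely the (left implicit) reason for the commutation with $B_{a}$.
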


To finish the proof of Theorem \ref{L(triv)}(1), it remains to show that the perverse filtration coincides with the Chern filtration on $\eqnu{*}{\Sp_{a}}^{\tilS_{a}\rtimes B_{a}}$ and the actions of $\Hrat_{\nu}$ also coincide there.

\begin{prop}\label{p:Ch=perv} Let $a\in\frc(F)^{\rs}_{\nu}$. 
\begin{enumerate}
\item The Chern filtration (Definition \ref{def:Ch fil}) and the perverse filtration (Definition \ref{def:perv fil}) on $\eqnu{*}{\Sp_{a}}^{\tilS_{a}\rtimes B_{a}}$ coincide. 
\item The $\Hrat_{\nu}$-module structures defined on $\Gr^{C}_{*}\eqnu{*}{\Sp_{a}}^{\tilS_{a}\rtimes B_{a}}$ (in Proposition \ref{p:Hrat GrC}) and on $\Gr^{P}_{*}\eqnu{*}{\Sp_{a}}^{\tilS_{a}\rtimes B_{a}}$ (Corollary \ref{c:Hrat GrP}) are the same under the identification of the two filtrations.
\end{enumerate}
\end{prop}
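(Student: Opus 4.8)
\textbf{Proof strategy for Proposition \ref{p:Ch=perv}.} The plan is to compare the two filtrations by reducing everything to the polynomial representation, where both filtrations are given by an explicit algebraic recipe. Concretely, recall from Proposition \ref{p:Hrat Fl}(2) and Proposition \ref{p:Hrat GrC} that $\eqnu{*}{\Fl^{\c}}$ surjects (after inverting $\ep$) onto $\eqnu{*}{\Sp_{a}}^{\tilS_{a}\rtimes B_{a}}$ as $\Hgr_{\nu}$-modules via the restriction map $\iota^{*}_{a}$, and that the Chern filtration on the target is by definition the image of the Chern filtration on $\eqnu{*}{\Fl^{\c}}$. The first step is therefore to show that the perverse filtration on $\eqnu{*}{\Sp_{a}}^{\tilS_{a}\rtimes B_{a}}$ is \emph{also} the image of the Chern filtration on $\eqnu{*}{\Fl^{\c}}$ under $\iota^{*}_{a}$. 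For the inclusion $\iota^{*}_{a}(C_{\leq i}\eqnu{*}{\Fl^{\c}})\subseteq P_{\leq i}\eqnu{*}{\Sp_{a}}^{\tilS_{a}}$, I would argue that the unit class $1\in\eqnu{0}{\Fl^{\c}}$ maps into $P_{\leq 0}\eqnu{*}{\Sp_{a}}^{\tilS_{a}}$ (it is the restriction of the fundamental class, which lives in the bottom piece of the perverse filtration on the Hitchin side, using the comparison \eqref{lg stable}), and that each Chern class generator $\xi\in\fra^{*}$ raises the perverse filtration by at most one — this is precisely Construction \eqref{rat Chern}, i.e.\ the content of \cite[Lemma 3.2.3]{GSLD} transported along \eqref{lg stable family}, valid here because Theorem \ref{th:supp} holds. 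Since the Chern filtration on $\eqnu{*}{\Fl^{\c}}$ is generated over $\QQ[\ep]$ by products of such $\xi$'s applied to $1$, applying the filtered $\Hgr_{\nu}$-module map $\iota^{*}_{a}$ gives the desired inclusion.

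The second step is the reverse inclusion $P_{\leq i}\eqnu{*}{\Sp_{a}}^{\tilS_{a}\rtimes B_{a}}\subseteq \iota^{*}_{a}(C_{\leq i}\eqnu{*}{\Fl^{\c}})$. Here the key input is that $\iota^{*}_{a}$ is surjective after inverting $\ep$ (Theorem \ref{thm:surj}), so every class in $\eqnu{*}{\Sp_{a}}^{\tilS_{a}\rtimes B_{a}}[\ep^{-1}]$ is hit from $\eqnu{*}{\Fl^{\c}}[\ep^{-1}]$; choosing a preimage of Chern-degree $\leq i$ is possible once we know the perverse degree does not exceed the Chern degree. To pin this down I would compare graded dimensions: by Theorem \ref{thm:dim} and Proposition \ref{p:reduce to 1/m}, $\dim\Gr^{C}_{i}\spcoh{}{\Sp_{a}}^{\tilS_{a}\rtimes B_{a}}$ is computed combinatorially, while the $\hh$-action (Proposition \ref{p:Hrat Fl}\eqref{hh eigen Fl}) forces the Chern grading to be an $\hh$-eigenspace decomposition shifted by $N$; since $\hh$ is intrinsic to the $\Hrat_{\nu}$-module structure (which both filtrations produce via Propositions \ref{p:Hrat GrC} and \ref{c:Hrat GrP}), and since both associated gradeds carry compatible $\hh$-actions, the two filtrations must have the same dimensions in each degree. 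Combined with the one-sided inclusion from the first step, equality of filtrations follows.

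The third step is part (2): once the filtrations agree, I would check that the two $\Hrat_{\nu}$-actions on the common associated graded coincide. Both are induced from the single $\Hgr_{\nu}$-action on $\eqnu{*}{\Sp_{a}}^{\tilS_{a}\rtimes B_{a}}$ (coming from Corollary \ref{c:local Hgr}, which is compatible with the global $\Hgr$-action on Hitchin fibers by Proposition \ref{p:lgHgr}) via the recipe of Proposition \ref{p:Hratmod}. Proposition \ref{p:Hratmod} asserts \emph{uniqueness} of the induced $\Hrat_{\nu}$-action given a filtration satisfying the four bulleted conditions; since the Chern and perverse filtrations are now literally the same filtration, and the $\Hgr_{\nu}$-action is the same, the uniqueness clause gives that the resulting $\Hrat_{\nu}$-module structures are identical. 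One must verify that the perverse filtration indeed satisfies the hypotheses of Proposition \ref{p:Hratmod} with respect to the $\Hgr_{\nu}$-action — but this was essentially done in Constructions \eqref{rat u delta}--\eqref{rat can} and Theorem \ref{th:Hrat}, transported to the local setting via \eqref{lg stable family}.

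\textbf{Main obstacle.} I expect the hard part to be the reverse inclusion in step two, i.e.\ showing the perverse filtration is no \emph{coarser} than the Chern filtration. The subtle point is that the perverse filtration is defined globally on the Hitchin base and pulled back, so a priori a class could acquire low perverse degree for global geometric reasons without being expressible in low Chern degree locally; ruling this out genuinely requires the support theorem (Theorem \ref{th:supp}) together with the dimension count, and getting the $\hh$-grading bookkeeping exactly right — in particular matching the normalization $N=\tfrac12 r(h_\theta\nu-1)$ on both sides and confirming that the $\ep$-saturation step in Definitions \ref{def:Ch fil} and \ref{def:perv fil} does not shift degrees — is where the care is needed.
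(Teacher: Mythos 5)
Your instinct that the $\hh$-eigenvalue bookkeeping is the crux of the argument is exactly right, but the way you deploy it leaves a gap, and the paper closes that gap with a sharper trick that also renders your Step~1 unnecessary.

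The imprecise point is in Step~2. You argue that "since $\hh$ is intrinsic to the $\Hrat_{\nu}$-module structure... the two filtrations must have the same dimensions in each degree." But as written, the two $\Hrat_\nu$-module structures live on two a priori different associated gradeds $\Gr^{C}_{*}M$ and $\Gr^{P}_{*}M$, each with its own $\hh$-operator. There is no obvious map between these two graded spaces (your Step~1 inclusion $C_{\leq i}\subseteq P_{\leq i}$ only gives a filtered map $\Gr^{C}_i\to \Gr^{P}_i$ that need not be injective or surjective), so "compatible $\hh$-actions" does not by itself force the graded dimensions to agree. To make this rigorous you would need to exhibit the two $\hh$'s as coming from one and the same operator.

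That is precisely what the paper does: it introduces the element $\rhh=\frac{1}{2}\sum_i\bigl(\xi_i(\tilw_i-1)+(\tilw_i-1)\xi_i\bigr)\in\Hgr_\nu$, a lift of $\hh$ that acts directly on $M=\spcoh{}{\Sp_a}^{\tilS_a\rtimes B_a}$ (not on an associated graded). Since both $\Gr^{C}_i M$ and $\Gr^{P}_i M$ are graded $\Hrat_{\nu,\ep=1}$-modules with $\hh$ acting as the scalar $(i-N)$ and $(i-N')$ respectively, and since in both cases that $\hh$-action is induced by the single operator $\rhh$ on $M$, one concludes that $C_{\leq i}M$ and $P_{\leq i}M$ are each the sum of generalized $\rhh$-eigenspaces of $M$ with eigenvalue $\leq i-N$ (resp.\ $\leq i-N'$). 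The filtrations are thus both completely determined by $\rhh$, so they coincide once $N=N'$; this last normalization check is done by computing $\rhh\cdot 1$ in both cases, as in Proposition~\ref{p:Hrat Fl}(3). This makes the one-sided inclusion of your Step~1 and the dimension count of your Step~2 superfluous. Your Step~3 for part~(2), invoking the uniqueness clause of Proposition~\ref{p:Hratmod}, matches the paper's argument exactly.

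In short: you have the right key object ($\hh$) and the right place to be careful (the normalization $N$), but you should replace the dimension-counting detour with the observation that $\hh$ lifts to $\rhh\in\Hgr_\nu$ acting on $M$ itself, so that both filtrations are simultaneously identified with the generalized eigenspace filtration of a single operator.
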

\begin{proof} (1) Let $\rhh=\frac{1}{2}\sum_{i}\xi_{i}(\tilw_{i}-1)+(\tilw_{i}-1)\xi_{i}$, $\tilw_{i}\in\xcoch(\TT^{\sc})$ is a basis (viewed as elements in $\Wa$), and $\{\xi_{i}\}$ is the dual basis for $\fra^{*}$. This is a lifting of $\hh\in\Hrat_{\nu}$ into $\Hgr_{\nu}$. Let $M=\spcoh{}{\Sp_{a}}^{\tilS_{a}\rtimes B_{a}}$. We only need to show that both $C_{\leq i}M$ and $P_{\leq i}M$ are the direct sum of generalized eigenspaces of $\rhh$ with eigenvalues $\leq i-N$, where $N=\dim\Sp_{a}=\frac{1}{2}r(\nu h-1)$.

For $C_{\leq i}M$, this follows from Proposition \ref{p:Hrat Fl}\eqref{hh eigen Fl}. For $P_{\leq i}$, since $\Gr^{P}_{*}M$ is an $\Hrat_{\nu,\ep=1}$-modules, the action of $\hh$ on $\Gr^{P}_{i}M$ is $i-N'$ for some integer $N'$. We can determine $N'$ by computing the action of $\rhh$ on the class $1\in \Gr^{P}_{0}M$. The same calculation in Proposition \ref{p:Hrat Fl}\eqref{hh eigen Fl} shows that $N'=N$, and therefore $P_{\leq i}M$ is also the direct sum of generalized eigenspaces of $\rhh$ with eigenvalues $\leq i-N$. 

(2) follows from the uniqueness part of Proposition \ref{p:Hratmod}.
\end{proof}

\subsection{Frobenius algebra structure and proof of Theorem \ref{L(triv)}(2)}\label{Frobenius}
\begin{defn} A {\em Frobenius $\QQ$-algebra} is a finite-dimensional $\QQ$-algebra $A$ equipped with a perfect pairing $\jiao{\cdot,\cdot}_{A}:A\otimes A\to \QQ$ such that $\jiao{ab,c}_{A}=\jiao{a,bc}$ for all $a,b,c\in A$. The pairing is necessarily given by $\jiao{a,b}_{A}=\ell(ab)$ for some $\QQ$-linear map $\ell:A\to \QQ$. 
\end{defn}

Let $N=\dim\Sp_{\gamma}=\frac{1}{2}r(\nu h-1)$. The finite flat commutative $\QQ[\ep]$-algebra $\eqnu{*}{\Sp_{a}}^{\tilS_{a}}$ is equipped with a $\QQ[\ep]$-linear quotient map $\ell_{\ep}:\eqnu{*}{\Sp_{a}}^{\tilS_{a}}\to\Gr^{P}_{2N}\eqnu{*}{\Sp_{a}}^{\tilS_{a}}$. Now $\Gr^{P}_{2N}\eqnu{*}{\Sp_{a}}^{\tilS_{a}}$ is a free $\QQ[\ep]$-module of rank one on which $B_{a}$ also acts trivially. We fix a generator $[\Sp_{a}]^{\Gnu}\in\Gr^{P}_{2N}\eqnu{*}{\Sp_{a}}^{\tilS_{a}}$, hence identifies $\Gr^{P}_{2N}\eqnu{*}{\Sp_{a}}^{\tilS_{a}}$ with $\QQ[\ep]$.Specializing $\ep=1$, we get a linear map
\begin{equation*}
\ell:\spcoh{}{\Sp_{a}}^{\tilS_{a}}\to \Gr^{P}_{2N}\spcoh{}{\Sp_{a}}^{\tilS_{a}}\cong\QQ.
\end{equation*}

Clearly $\Gr^{C}_{*}\spcoh{}{\Sp_{a}}^{\tilS_{a}\rtimes B_{a}}$ has an algebra structure induced from the cup product. By Proposition  \ref{p:Ch=perv}(1), $\Gr^{P}_{*}\spcoh{}{\Sp_{a}}^{\tilS_{a}\rtimes B_{a}}$ also carries an algebra structure induced by the cup product. Projecting to $\Gr^{P}_{2N}$ gives a linear function on $\Gr^{P}_{*}\spcoh{}{\Sp_{a}}^{\tilS_{a}\rtimes B_{a}}$ that we also denote by $\ell$.

\begin{lemma}\label{l:contra pairing} The pairing $(x,y):=\ell(xy)$ on $\Gr^{P}_{*}\spcoh{}{\Sp_{a}}^{\tilS_{a}\rtimes B_{a}}$ satisfies the following relations:
\begin{equation*}
(\xi x, y)= (x, \xi y), \forall \xi\in\fra^{*}; \  (\eta x, y)=-(x,\eta y), \forall \eta\in\fra \textup{ and } (wx, wy)=(x,y), \forall w\in W.
\end{equation*}
%
\end{lemma}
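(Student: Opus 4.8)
The three claimed identities should all follow from the basic compatibility of the cup product on $\eqnu{*}{\Sp_{a}}^{\tilS_{a}}$ with the three families of operators $\xi\in\fra^{*}$, $\eta\in\fra$ and $w\in W$ coming from the $\Hrat_{\nu}$-action, together with the fact that $\ell$ is, up to specialization, the projection onto the top perverse piece $\Gr^{P}_{2N}$, which is a $B_{a}$-trivial line. The overall plan is to lift each operator to a geometric operation on the full ring $\eqnu{*}{\Sp_{a}}^{\tilS_{a}}$ (or on $\eqnu{*}{\calM_{a}}$ via the local-global isomorphism \eqref{lg stable}) where the relation to the cup product is transparent, then pass to $\Gr^{P}_{*}$.

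\emph{The $\xi\in\fra^{*}$ relation.} Here the argument is cleanest: by Construction \eqref{rat Chern}, the action of $\xi$ on $\Gr^{P}_{*}$ is induced by the \emph{stable part} of the cup product with $c_{1}(\calL(\xi))$, i.e.\ $\xi$ itself acts as (a graded piece of) an honest cup-product operator on the commutative ring $\eqnu{*}{\Sp_{a}}^{\tilS_{a}}$. Since cup product is associative and (graded-)commutative, $\ell(\xi x\cdot y)=\ell(x\cdot \xi y)$ will follow immediately once we check that the cup product with $c_1(\calL(\xi))$ descends to the $\tilS_a\rtimes B_a$-invariants and that $\ell$ kills nothing relevant --- both of which are formal from Proposition \ref{p:Ch=perv}(1) (identifying the Chern and perverse filtrations, so that $\Gr^{P}_{*}$ carries the cup-product ring structure) and the definition of $\ell$ as projection to $\Gr^{P}_{2N}$.

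\emph{The $\eta\in\fra$ relation.} This is the main obstacle, because $\eta$ does \emph{not} act by cup product: by Construction \eqref{rat lattice} it acts as $\tilw-\id$ for a lift $\tilw\in\xcoch(\TT)$, i.e.\ via monodromy/translation, which lowers the perverse degree by one. The sign is the point. I would argue as follows: on $\eqnu{*}{\Sp_{a}}^{\tilS_{a}}$ (before passing to $\Gr^{P}$) the operator $\tilw$ is a ring \emph{automorphism} of the cup-product algebra, so $\ell_{\ep}(\tilw(x)\cdot \tilw(y))=\tilw(\ell_{\ep}(x\cdot y))=\ell_{\ep}(x\cdot y)$ because $\tilw$ acts trivially on the top line $\Gr^{P}_{2N}$ (it acts trivially on $\xcoch(\TT)$-fixed classes, cf.\ Construction \eqref{rat lattice}). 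Expanding $\tilw=\id+\eta_{\rat}+(\text{lower order})$ and using that $\eta_{\rat}$ drops perverse degree by $1$ while $\ell$ only sees degree $2N$, the degree-$2N$ part of $\tilw(x)\tilw(y)=xy+(\eta x)y+x(\eta y)+\dots$ yields $0=\ell(\eta x\cdot y)+\ell(x\cdot\eta y)$, i.e.\ $(\eta x,y)=-(x,\eta y)$. The care needed is to track which cross-terms survive in the top perverse degree; the key input is that $\eta_{\rat}$ strictly decreases the perverse grading (from \cite[3.2.1]{GSLD}, valid here by Theorem \ref{th:supp}), so higher-order terms in $\tilw-\id$ decrease it by $\geq 2$ and contribute nothing after projecting to $\Gr^{P}_{2N}$ of the top-graded product.

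\emph{The $w\in W$ relation.} By Construction \eqref{rat W} the finite Weyl group acts on each $\Gr^{P}_{i}$ by transport from its action on $\fst$, and this action is by ring automorphisms of the cup-product algebra (it comes from an action on the space $\Sp_{a}$, resp.\ $\calM_a$, compatibly with the Springer-type construction). Moreover $W$ fixes the fundamental class generating $\Gr^{P}_{2N}$ (it acts on an orientable manifold preserving orientation, or: it fixes $[\Sp_a]^{\Gnu}$ since that line is canonically trivial and $W$-equivariant). Hence $\ell(wx\cdot wy)=w(\ell(xy))=\ell(xy)$, giving $(wx,wy)=(x,y)$. I would spell out the compatibility of the $W$-action with cup product by noting that it is induced from an action on the sheaf $\fst$ commuting with the multiplication map $\fst\otimes\fst\to\fst$ coming from the ring structure (equivalently, the $\WW$-action of Construction \eqref{rat W} is by automorphisms of the algebra $\eqnu{*}{\calM_a}$, which descends to $\Gr^{P}_{*}$). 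The only subtlety is confirming that $W$ genuinely fixes, rather than merely preserves up to sign, the top class; this follows because $\Gr^{P}_{2N}\eqnu{*}{\Sp_{a}}^{\tilS_{a}}$ is a \emph{rank-one free $\QQ[\ep]$-module with trivial $\tilS_a\rtimes B_a$ and $W$ action} --- the $W$-triviality being part of the same package that makes $\Gr^{P}_*$ the spherical module $\frL_\nu(\triv)$, whose top piece is the trivial $W$-representation.

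In all three cases the genuinely new content beyond bookkeeping is the degree-$2N$ truncation argument in the $\eta$-case; the $\xi$- and $w$-cases are formal consequences of "$\Gr^{P}_{*}$ is a commutative ring on which $\xi$ acts by multiplication and $w$ by algebra automorphisms, and $\ell$ is projection to the top line." I would therefore organize the write-up as: (i) recall from Proposition \ref{p:Ch=perv} that $\Gr^{P}_{*}\spcoh{}{\Sp_{a}}^{\tilS_{a}\rtimes B_{a}}$ is a graded commutative ring with $\xi$ acting by multiplication; (ii) dispatch the $\xi$-identity by associativity; (iii) dispatch the $w$-identity using that $w$ is an algebra automorphism fixing the top class; (iv) do the $\eta$-identity via the automorphism $\tilw$ and the strict perverse-degree drop of $\eta_{\rat}$.
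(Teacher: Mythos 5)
Your $\xi$-identity is fine and matches the paper. The genuine gap is in the $\eta$-case. You assert that a lattice element $\tilw\in\xcoch(\TT)$ acts as a ring automorphism of the cup-product algebra $\eqnu{*}{\Sp_a}^{\tilS_a}$, calling the action ``monodromy/translation.'' That is not what this action is: for elliptic $a$ the translation action of $\L_{\gamma}$ on $\Sp_a$ is trivial (and the $\pi_0(P_a)$-action is diagonalizable), whereas the $\tilw$ appearing in Construction \ref{rat lattice} acts via the affine Springer action of Constructions \ref{gr si}--\ref{gr Omega}, i.e.\ as a product of the $s_i$-involutions and an element of $\Omega_{\bI}$. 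Those $s_i$ are not ring automorphisms: (GC-3) gives $s_i\xi - \leftexp{s_i}{\xi}s_i = \jiao{\xi,\alpha_i^\vee}u\neq 0$, whereas a ring automorphism $\sigma$ would have to satisfy $\sigma\circ(\text{cup with }\xi) = (\text{cup with }\sigma\xi)\circ\sigma$, i.e.\ zero on the right. So the identity $\ell_\ep(\tilw(x)\cdot\tilw(y)) = \ell_\ep(x\cdot y)$ that your perverse-degree truncation rests on is not available. The paper's proof avoids this entirely by a direct commutator calculation inside the $\Hrat_\nu$-module: with $x=P\cdot 1$, $y=Q\cdot 1$ ($P,Q\in\Sym(\fra^*)$, $\deg P+\deg Q=2N+1$), one has $PQ\cdot 1=0$ in $\Gr^P_*$ (top degree exceeded) and $\eta\cdot 1=0$, so $0=\eta(PQ\cdot1)=[\eta,PQ]\cdot1=\bigl([\eta,P]Q+P[\eta,Q]\bigr)\cdot1$, which reorganizes as $(\eta x)y+x(\eta y)=0$. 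Your truncation heuristic is what this Leibniz computation makes rigorous, but only once the false ring-automorphism claim is replaced by the actual $\Hrat$-relations.

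For the $W$-case you reach the right conclusion, but the multiplicativity is not justified the way you say: the $W$-action does not come from an action on the space $\Sp_a$ or $\calM_a$, nor from an automorphism of $\fst$ commuting with a multiplication map; it is the Springer-type action. What actually makes $w(xy)=(wx)(wy)$ hold on $\Gr^P_*\spcoh{}{\Sp_a}^{\tilS_a\rtimes B_a}$ is the Chern-polynomial presentation together with (RC-3) and $w\cdot1=1$: one gets $w(P\cdot1)=\leftexp{w}{P}\cdot1$, hence $(wx)(wy)=\leftexp{w}{P}\,\leftexp{w}{Q}\cdot1 = \leftexp{w}{(PQ)}\cdot1 = w(xy)$. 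Combine this with the paper's observation that $W$ fixes $\Gr^P_{2N}$ because it is spanned by $\ee^N\cdot1$ and $W$ commutes with $\ee$; this yields $(wx,wy)=(x,y)$.
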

\begin{proof}
We need to show that three equations defining the contravariance are satisfied.
The first equality is immediate since both sides are the degree $2N$ part of  $\xi xy$.

For the second equality, since $\Gr^{P}_{*}\spcoh{}{\Sp_{a}}^{\tilS_{a}\rtimes B_{a}}$ is generated by the images of the Chern polynomials, it suffices to prove it for $x=x_{1}\cdots x_{i}\cdot 1$ and $y=y_{1}\cdots y_{2N-i+1}\cdot 1$, where $x_{j}, y_{j}\in \fra^{*}$ and $1$ means the generator in $\Gr^{P}_{0}\spcoh{}{\Sp_{a}}^{\tilS_{a}\rtimes B_{a}}$. Since $xy\cdot 1=0$ and $\eta\cdot 1=0$ for degree reasons, we have 
\begin{equation*}
\sum_{j=1}^{i}x_{1}\cdots x_{j-1}[\eta, x_{j}]x_{j+1}\cdots x_{i}y+\sum_{j=1}^{2N-i+1}xy_{1}\cdots y_{j-1}[\eta, y_{j}]y_{j+1}\cdots y_{2N-i+1}=0
\end{equation*}
Now the first sum is $(\eta x)y$ and the second is $x(\eta y)$. Therefore  $(\eta x)y+x(\eta y)=0$, and the second equality holds.

The last equality follows from the fact that $W$ acts trivially on the one-dimensional space $\Gr^{P}_{2N}\spcoh{}{\Sp_{a}}^{\tilS_{a}}$. In fact, $\Gr^{P}_{2N}\spcoh{}{\Sp_{a}}^{\tilS_{a}}$ is spanned by $\ee^{N}\cdot 1$, and $W$ commutes with $\ee$.
\end{proof}

\begin{prop}\label{prop:Frob} Let $\nu$ be elliptic and $a\in\frc(F)^{\rs}_{\nu}$.
\begin{enumerate} 
\item The algebra $\spcoh{}{\Sp_{a}}^{\tilS_{a}}$ is a Frobenius algebra under the linear map $\ell$.
\item The algebra $\Gr^{P}_{*}\spcoh{}{\Sp_{a}}^{\tilS_{a}\rtimes B_{a}}$ is a Frobenius algebra under the same linear map.
\end{enumerate} 
\end{prop}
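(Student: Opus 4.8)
The plan is to deduce both statements from relative Poincar\'e--Verdier duality for the homogeneous Hitchin fibration, transported to the affine Springer side by the product formula, the one nontrivial ingredient being Ng\^o's support theorem. First I would reduce everything to a single perfectness claim. By Proposition \ref{p:prod}(2) and Definition \ref{def:perv fil} one identifies $\eqnu{*}{\Sp_{a}}^{\tilS_{a}}$, as a graded ring with its perverse filtration, with $\eqnu{*}{\calM_{a}}$ for the corresponding homogeneous Hitchin fiber, where $N=\dim\Sp_{a}=\dim\calM_{a}$; the top piece $\Gr^{P}_{2N}$ is free of rank one over $\QQ[\ep]$, spanned by $[\Sp_{a}]^{\Gnu}$, and $\ell$ is the coordinate in this basis specialized at $\ep=1$. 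Both parts of the Proposition then follow from the claim that the cup-product pairing $\Gr^{P}_{i}\spcoh{}{\Sp_{a}}^{\tilS_{a}}\times\Gr^{P}_{2N-i}\spcoh{}{\Sp_{a}}^{\tilS_{a}}\to\Gr^{P}_{2N}\spcoh{}{\Sp_{a}}^{\tilS_{a}}\cong\QQ$ is perfect for all $i$. Granting it: $\Gr^{P}_{*}\spcoh{}{\Sp_{a}}^{\tilS_{a}}$ is then a graded Artinian algebra with one-dimensional socle $\Gr^{P}_{2N}$, hence Frobenius under $\ell$, and since a filtered pairing whose associated-graded pairing is perfect is itself perfect, the filtered algebra $\spcoh{}{\Sp_{a}}^{\tilS_{a}}$ is Frobenius under $\ell$, which is (1); and since the braid group $B_{a}$ acts on $\Gr^{P}_{*}\spcoh{}{\Sp_{a}}^{\tilS_{a}}$ by graded algebra automorphisms fixing $\Gr^{P}_{2N}$ pointwise, and acts semisimply (each $\Gr^{P}_{i}(q_{\nu,!}\QQ)^{\tilS}$ being a semisimple local system by Corollary \ref{c:Hess loc sys} and Definition \ref{def:perv fil}), a $B_{a}$-invariant perfect pairing restricts to a perfect pairing on the invariant subalgebra $\Gr^{P}_{*}\spcoh{}{\Sp_{a}}^{\tilS_{a}\rtimes B_{a}}$, which is (2).

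The heart of the argument is this perfectness. Since the elliptic Hitchin moduli stack is smooth and $f^{\textup{ell}}$ is proper (Proposition \ref{p:smooth}), the complex $\fst$ is Verdier self-dual up to a shift and Tate twist, the self-duality being realized by cup product followed by the fiberwise trace; it therefore interchanges the shifted perverse truncations, so that $\Gr^{P}_{i}\fst$ is Verdier dual to $\Gr^{P}_{2N-i}\fst$ with pairing induced by $\Gr^{P}_{i}\otimes\Gr^{P}_{2N-i}\to\Gr^{P}_{2N}$. What I still need is a \emph{perfect pairing of free $\QQ[\ep]$-modules on the stalk at} $(a,0)$, and this is the main obstacle: because $\nu>0$, a homogeneous point $a$ satisfies $a(0)=0\in\cc$, so $(a,0)$ lies outside the locus over which $f^{\textup{ell}}$ is smooth and the $\Gr^{P}_{i}\fst$ are manifestly shifted local systems, and one cannot identify stalks with costalks at $(a,0)$ by formal means. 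This is exactly where Ng\^o's support theorem (Theorem \ref{th:supp}, proved here via the codimension estimate of Appendix \ref{s:codim}) is indispensable: every simple constituent of $\Gr^{P}_{i}\fst$ has full support, and $\Gr^{P}_{2N}\fst$ is the constant sheaf up to shift since the fiberwise fundamental class has trivial monodromy. From full support one deduces that the self-duality is compatible with $i^{*}_{(a,0)}$, hence that the cup-product pairing on $\eqnu{*}{\calM_{a}}=i^{*}_{(a,0)}\fst$ is perfect over $\QQ[\ep]$; here I also use that $\Sp_{a}$ is equivariantly formal, because $\cohog{*}{\Sp_{a}}$ is pure by Theorem \ref{p:paving}(4), so that all modules involved are $\QQ[\ep]$-free. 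Setting $\ep=1$ gives the displayed perfectness, and it is precisely this step that currently restricts the argument to split $G$.

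Finally, combining Proposition \ref{prop:Frob}(2) with the contravariance relations of Lemma \ref{l:contra pairing}, the algebra $\Gr^{P}_{*}\spcoh{}{\Sp_{a}}^{\tilS_{a}\rtimes B_{a}}$ is a finite-dimensional quotient of the polynomial representation $\frM_{\nu}(\triv)$ carrying a nondegenerate contravariant bilinear form extending the tautological pairing, hence is isomorphic to $\frL_{\nu}(\triv)$; this is Theorem \ref{L(triv)}(2). The crux throughout is the ``cleanness at $(a,0)$'' of the self-duality of the perverse filtration on $\eqnu{*}{\calM_{a}}$ at this highly non-generic point, which is why the global Hitchin geometry and the support theorem cannot be bypassed.
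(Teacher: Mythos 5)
Your reduction — establish the perfectness of the graded pairing $\Gr^{P}_{i}\times\Gr^{P}_{2N-i}\to\Gr^{P}_{2N}\cong\QQ$ first, then deduce (1) by the standard fact that graded perfectness implies filtered perfectness, and (2) by restricting to $B_a$-invariants using the semisimplicity of the $B_a$-action and its triviality on the socle — is logically sound, and it inverts the paper's order: there the authors prove (1) directly by a Verdier-duality diagram chase on the Hitchin side, and only afterwards derive graded perfectness on the $B_a$-invariants by a dimension count (showing $\iota: V\to V^{*}$ sends $P_{\leq i}V$ onto $(P_{\leq 2N-1-i}V)^{\bot}$). Your route is cleaner as a skeleton, and would be a genuine simplification were step 1 established.

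The gap is that your sketch of step 1 both elides the essential identification and misattributes the key technical lemma. You assert that Verdier self-duality of $\fst$, which interchanges $\ptau_{\leq i}$ and $\ptau_{\geq 2N-i}$, induces on graded pieces \emph{the} cup-product pairing followed by $\ell$. That identification is precisely what must be proved, and the paper does so by building a commutative diagram with the Verdier pairings $h_{0}$ and $h$ for the proper map $f^{\textup{ell}}_{U}$ and then tracking that the resulting pairing, after passing from $i^{!}$ to $i^{*}$, factors through $f_{\nu,*}\QQ\otimes f_{\nu,*}\QQ\xrightarrow{\cup}f_{\nu,*}\QQ\xrightarrow{\wt\ell}\QQ[-2N](-N)$, with $\wt\ell$ forced to be the projection to $\Gr^{P}_{2N}$ for perversity-degree reasons. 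You treat this as a formal consequence of ``cup product followed by the fiberwise trace,'' but no justification is given. More pointedly, the obstacle you correctly identify — that $(a,0)$ is a non-generic point, so stalks and costalks cannot be identified formally — is resolved in the paper not by Ng\^o's support theorem but by Lemma \ref{l:!*}: the $\Gnu$-action on $\calA\times U$ contracts to $\Ah_{\nu}\times\{0\}$ (Lemma \ref{l:e0}), so for any $\Gnu$-equivariant $F$ the natural map $i^{!}F\to i^{*}F$ becomes an isomorphism after inverting $\ep$, because the complement of the fixed locus has finite stabilizers and hence $\ep$-torsion equivariant cohomology. This localization/contraction argument is the engine of the proof, and your write-up does not mention it. The support theorem (Theorem \ref{th:supp}) is not cited in the paper's proof of Proposition \ref{prop:Frob}; it earns its keep earlier, in Constructions \ref{rat lattice} and \ref{rat Chern} and Proposition \ref{p:Ch=perv}, to control how $\Hgr_{\nu}$ interacts with the perverse filtration. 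Calling it ``indispensable'' for the Frobenius structure, while omitting the $i^{!}\to i^{*}$ localization, is the concrete defect to repair.
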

\begin{proof} As in the first paragraph of \S\ref{ss:Hrat local}, we may assume that \eqref{assrho} holds for $G$ and apply the local-global isomorphism in Proposition \ref{p:prod}.

(1) We switch to the global point of view and prove that $\spcoh{}{\calM_{a}}$ is a Frobenius algebra. Let $\DD$ be the dualizing complex of $\calM_{U}$, and $\DD'$ be the dualizing complex of $\Aa\times U$. Let $i:\Ah_{\nu}\times\{0\}\incl \Aa\times U$ be the inclusion of the fixed point locus of the $\Gnu$-action. We have a commutative diagram coming from the functoriality of Verdier duality pairings $h_{0}$ and $h$ (note $\fa_{U}$ is proper)
\begin{equation*}
\xymatrix{i_{*}i^{!} f^{\textup{ell}}_{U,*}\DD\otimes i_{*}i^{*}f^{\textup{ell}}_{U,*}\QQ\ar[r]^{i_{*}h_{0}}\ar@<-3ex>[d] & i_{*}i^{!}\DD'=i_{*}\DD_{\Ah_{\nu}}\ar[d]\\
f^{\textup{ell}}_{U,*}\DD\otimes f^{\textup{ell}}_{U,*}\QQ \ar[r]^{h}\ar@<-3ex>[u] & \DD'}
\end{equation*}
This implies that the composition $i^{!}f^{\textup{ell}}_{U,*}\DD\otimes i^{*}f^{\textup{ell}}_{U,*}\QQ\to i^{!}(f^{\textup{ell}}_{U,*}\DD\otimes f^{\textup{ell}}_{U,*}\QQ)\xrightarrow{i^{!}h} i^{!}\DD'=\DD_{\Ah_{\nu}}$ is the Verdier duality pairing $h_{0}$. We have another commutative diagram
\begin{equation}\label{!* diagram}
\xymatrix{i^{!} f^{\textup{ell}}_{U,*}\DD\otimes i^{*}f^{\textup{ell}}_{U,*}\QQ\ar[r]\ar@<-3ex>[d]\ar@<3ex>@{=}[d] & i^{!}(f^{\textup{ell}}_{U,*}\DD\otimes f^{\textup{ell}}_{U,*}\QQ)\ar[r]^(.7){i^{!}h}\ar[d] & i^{!}\DD'\ar[d]\\
i^{*} f^{\textup{ell}}_{U,*}\DD\otimes i^{*}f^{\textup{ell}}_{U,*}\QQ\ar[r] & i^{*}(f^{\textup{ell}}_{U,*}\DD\otimes f^{\textup{ell}}_{U,*}\QQ)\ar[r]^(.7){i^{*}h} & i^{*}\DD'}
\end{equation}
where the first row is the duality pairing $h_{0}$ and all vertical maps are induced from the natural transformation $i^{!}\to i^{*}$. By Lemma \ref{l:!*} below, inverting $\ep$ (or specializing $\ep=1$) makes all vertical maps isomorphisms. In particular, the second row above is also a Verdier duality pairing after inverting $\ep$. Choosing a fundamental class of $\calM_{U}$ allows us to identify $\DD$ with $\QQ[2\dim \calM_{U}](\dim \calM_{U})$. Then the second row above, up to a shift, factors through the cup product
\begin{equation}\label{cup complex}
f_{\nu, *}\QQ\otimes f_{\nu, *}\QQ\xrightarrow{\cup}f_{\nu, *}\QQ\xrightarrow{\wt\ell}\QQ[-2N](-N)
\end{equation}
for some map $\wt\ell: f_{\nu, *}\QQ\xrightarrow{\wt\ell}\QQ[-2N](-N)$. Since $\QQ[-2N](-N)$ lies in perverse degree $\dim \Ah_{\nu}+2N$ while $P_{\leq j}f_{\nu, *}\QQ:=i^{*}P_{\leq j}\fa_{*}\QQ$ lies in perverse degrees $\leq\dim\Ah_{\nu}+j$, the map $\wt\ell$ has to factor through the projection to $\Gr^{P}_{2N}f_{\nu, *}\QQ$, which is isomorphic to $\QQ[-2N](-N)$ (note that the whole complex $f_{\nu, *}\QQ$ is stable). Therefore, taking stalk at $a\in\Ah_{\nu}$, and specializing to $\ep=1$, the pairing \eqref{cup complex} is exactly the pairing $(u,v)\mapsto \ell(u\cup v)$ we defined on $\spcoh{}{\Sp_{a}}^{S_{a}}$. Since the second row of \eqref{!* diagram} is a duality pairing after setting $\ep=1$, and all the complexes involved there are direct sums of shifted local systems over $\Ah_{\nu}$, taking stalk at $a$ also gives a perfect pairing on $\spcoh{}{\calM_{a}}=\spcoh{}{\Sp_{a}}^{S_{a}}$. This proves (1).

(2) Taking $B_{a}$-invariants on $\spcoh{}{\Sp_{a}}^{\tilS_{a}}$ and restricting the pairing there we still get a perfect pairing since $B_{a}$ acts semisimply on $\spcoh{}{\Sp_{a}}^{\tilS_{a}}$ by Corollary \ref{c:Hess loc sys}, and acts trivially on $\Gr^{P}_{2N}\spcoh{}{\Sp_{a}}^{\tilS_{a}}$. Therefore $V=\spcoh{}{\Sp_{a}}^{\tilS_{a}\rtimes B_{a}}$ is also a Frobenius algebra under $\ell$. The pairing on $V$ gives an isomorphism $\iota: V\to V^{*}$ that sends $P_{\leq i}V$ to $(P_{\leq 2N-1-i}V)^{\bot}$. Verdier self-duality of $\spcoh{}{\Sp_{a}}^{\tilS_{a}}$, when restricted to $B_{a}$, gives the dimension equality $\dim P_{\leq i} V=\dim V-\dim P_{\leq 2N-1-i}=\dim(P_{\leq 2N-1-i}V)^{\bot}$. Therefore $\iota$ restricts to an isomorphism $P_{\leq i}V\isom(P_{\leq 2N-1-i}V)^{\bot}$ for all $i$. Taking associated graded we conclude that the pairing is also perfect on $\Gr^{P}_{*}V$. This proves (2). 
\end{proof}

\begin{lemma}\label{l:!*}
Let $i:\Ah_{\nu}\times\{0\}\incl \Aa\times U$ be the inclusion of the fixed point locus of the $\Gnu$-action. Suppose $F\in D^{b}_{\Gnu}(\Aa\times U)$ then the natural map $i^{!}F\to i^{*}F$, viewed as a map in the $\QQ[\ep]$-linear category $D^{b}_{\Gnu}(\Ah_{\nu})$ (where $\Gnu$ acts trivially), becomes an isomorphism after inverting $\ep$. 
\end{lemma}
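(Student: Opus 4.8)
\textbf{Proof strategy for Lemma \ref{l:!*}.}

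The plan is to reduce the statement to the classical localization theorem for $\Gnu$-actions. First I would recall the general setup: the inclusion $j:(\Aa\times U)\setminus(\Ah_\nu\times\{0\})\hookrightarrow \Aa\times U$ of the complement of the fixed locus is an open $\Gnu$-stable embedding, and it fits into a distinguished triangle $i_*i^!F\to F\to \bR j_*j^*F\xrightarrow{+1}$. Applying $i^*$ and using $i^*i_*\cong\id$, one gets a triangle $i^!F\to i^*F\to i^*\bR j_*j^*F\xrightarrow{+1}$. Thus the natural map $i^!F\to i^*F$ becomes an isomorphism after inverting $\ep$ precisely when $(i^*\bR j_*j^*F)[\ep^{-1}]=0$. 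Dually, using the triangle for $j_!j^*F\to F\to i_*i^*F$ and applying $i^!$, the map $i^!F\to i^*F$ is an isomorphism after inverting $\ep$ iff $(i^!j_!j^*F)[\ep^{-1}]=0$; either formulation works.

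Next I would prove the vanishing $(i^*\bR j_*j^*F)[\ep^{-1}]=0$ by a dévissage on $F$. Since $D^b_{\Gnu}(\Aa\times U)$ is generated (as a triangulated category, up to summands) by $\Gnu$-equivariant constructible sheaves, and since both $i^*\bR j_*$ and the localization functor $(-)[\ep^{-1}]$ are exact, it suffices to treat the case where $F$ is (a shift of) the constant sheaf $\QQ_Y$ on a locally closed $\Gnu$-stable subvariety $Y\subseteq \Aa\times U$; then one stratifies $Y$ into $\Gnu$-stable strata, and further reduces to the situation where $Y$ carries a free $\Gnu$-action on the part disjoint from the fixed locus, or more precisely where $Y\setminus Y^{\Gnu}$ admits (Zariski-locally, after passing to a suitable étale or ind-cover) a $\Gnu$-equivariant projection to a $\Gm$-torsor. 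On such a piece the equivariant cohomology with its $\ep$-action is the ordinary cohomology (the $\Gm$-torsor kills the equivariant parameter), so that $\bR\Gamma_{\Gnu}$ of the corresponding sheaf is annihilated by $\ep$, hence vanishes after inverting $\ep$. The point feeding the stalk at the fixed locus then also vanishes after inverting $\ep$ by proper (or smooth) base change along the relevant maps. This is exactly the mechanism of the classical Atiyah--Bott / Quillen localization theorem in the equivariant derived category, adapted to the Artin-stack setting: for any $\Gnu$-equivariant complex supported away from $Y^{\Gnu}$, its equivariant cohomology (or equivariant direct image to a point) is $\ep$-torsion.

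The main obstacle I anticipate is the stack-theoretic aspect: $\Aa\times U$ is fine (a scheme, in fact), but in the applications $F$ will be pushed forward from the Hitchin stack $\calM_U$, which has affine (but non-trivial) stabilizers, so one wants the localization theorem to hold at the level of $D^b_{\Gnu}(\Aa\times U)$ for complexes that are $\Gnu$-equivariant in the derived sense coming from a stack. However, since the statement as written concerns only $F\in D^b_{\Gnu}(\Aa\times U)$ on the base (which is a scheme with trivial $\Gnu$-action), the stack never actually enters: one only needs the fixed-point localization theorem for $\Gnu$ acting on the \emph{base} $\Aa\times U$, where $\Aa\times U$ is a smooth scheme and $\Ah_\nu\times\{0\}$ is its fixed-point subscheme. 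In that generality the localization theorem is standard (see e.g. the equivariant localization results used throughout Ng\^o \cite{NgoFL} and recalled in \cite{GSLD}), and the dévissage above goes through without difficulty. I would therefore spend the bulk of the write-up carefully stating which version of the localization theorem is being invoked and checking that $\Ah_\nu\times\{0\}$ is indeed the full $\Gnu$-fixed locus of $\Aa\times U$ — the latter being immediate from Lemma \ref{l:e0}, which identifies the $\Gnu$-action on $\calA$ (hence on $\Aa\times U$, with $\Gnu$ acting on $U$ through $\Gnu\to\Grote\to\Grot$) as contracting onto $\calA_\nu$, whose fixed points are exactly $\Ah_\nu$.
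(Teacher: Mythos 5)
Your formal setup is correct: the recollement triangle $i_*i^!F\to F\to\bR j_*j^*F$, together with $i^*i_*\cong\id$, reduces the lemma to the vanishing $(i^*\bR j_*j^*F)[\ep^{-1}]=0$. But the route you then take to establish this vanishing is where your argument diverges from the paper's and, as written, has a gap.

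You try to get the vanishing by d\'evissage plus a general appeal to the localization theorem (``equivariant cohomology of a complex supported off the fixed locus is $\ep$-torsion'') and then ``proper (or smooth) base change.'' The difficulty is that the localization theorem in that form is a statement about \emph{global} equivariant cohomology, while what you need is a statement about the object $i^*\bR j_*j^*F$ in $D^b_{\Gnu}(\Ah_\nu)$, i.e.\ a statement about stalks near the fixed locus. Since $\bR j_*$ is an open (non-proper) pushforward, proper base change does not apply to it, and smooth base change does not help because $i$ is a closed (not smooth) inclusion. So the passage from ``$\bR\Gamma_{\Gnu}$ of $j^*F$ is $\ep$-torsion'' to ``$i^*\bR j_*j^*F$ is $\ep$-torsion'' is exactly the nontrivial step, and you have not supplied an argument for it. In general this step requires some local structure near the fixed locus (a slice theorem or Bia{\l}ynicki-Birula decomposition), and it is here that the contracting nature of the $\Gnu$-action is used essentially, not just to identify the fixed locus.

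The paper's proof uses the contraction in a different and decisive way. By Lemma~\ref{l:e0} the $\Gnu$-action contracts $\calA\times U$ onto $\calA_\nu\times\{0\}$; let $c$ denote the contraction map restricted over $\Ah_\nu$. For $\Gnu$-equivariant $F$ one has the hyperbolic-localization identity $i^!F\cong c_!F$, and under this identification the natural map $i^!F\to i^*F$ becomes $c_!F\to c_!i_*i^*F$, whose cone is $c_!j_!j^*F[1]=(c\circ j)_!(j^*F)[1]$. Now the $\ep$-torsion is manifest: $c\circ j$ is a $\Gnu$-equivariant map from a space on which $\Gnu$ acts with finite stabilizers to a space with trivial action, so the $(-)_!$ pushforward of any constructible complex is killed by a power of $\ep$. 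In effect, the paper replaces your $i^*\bR j_*$ by the much better-behaved $(c\circ j)_!$, and the substitution is only possible because of the contraction. Your proposal references Lemma~\ref{l:e0} merely to identify the fixed-point locus; you should instead invoke the contraction to obtain $i^!F\cong c_!F$ and then run the torsion argument on $(c\circ j)_!$. Without that step, the proposal is incomplete.
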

\begin{proof}
By Lemma \ref{l:e0}, the action of $\Gnu$ on $\calA\times U$ is contracting to the fixed point locus $\calA_{\nu}\times\{0\}$. The contraction map $c_{\calA}:\calA\times U\to \calA_{\nu}\times\{0\}$ sends $(a,x)$ to $\lim_{\Gnu\ni s\to 0}s\cdot (a,x)$. Let $\calB\times U\subset\Aa\times U$ be the preimage of $\Ah_{\nu}\times\{0\}$ under $c_{\calA}$. Let $c:\calB\times U\to \Ah_{\nu}$ be the restriction of $c_{\calA}$, and still denote the inclusion $\Ah_{\nu}\times\{0\}\incl\calB\times U$ by $i$. Then it suffices to show that $i^{!}(F|_{\calB\times U})\to i^{*}(F|_{\calB\times U})$ becomes an isomorphism after inverting $\ep$. 

Now let $F\in D^{b}_{\Gnu}(\calB\times U)$ and consider the map $i^{!}F\to i^{*}F$. Since $F$ is $\Gnu$-equivariant, we have $i^{!}F\cong c_{!}F$, and the natural map $i^{!}F\to i^{*}F$ can be identified with the composition $i^{!}F\cong c_{!}F\to c_{!}i_{*}i^{*}F\cong i^{*}F$. The cone of the map $c_{!}F\to c_{!}i_{*}i^{*}F$ is $c_{!}j_{!}j^{*}F[1]$ where $j$ is the open inclusion of the complement of $\Ah_{\nu}\times U$. Since $\Gnu$ acts on $\calB\times U-\Ah_{\nu}\times \{0\}$ with finite stabilizers, the complex $(c\circ j)_{!}F'\in D^{b}_{\Gnu}(\Ah_{\nu})$ is $\ep$-torsion for any constructible $\Gnu$-equivariant complex $F'$ on $\calB\times U-\Ah_{\nu}\times \{0\}$. Therefore after inverting $\ep$, $c_{!}F\to c_{!}i_{*}i^{*}F$ becomes an isomorphism, hence so is $i^{!}F\to i^{*}F$.
\end{proof}

\begin{remark} The usual cohomology groups $\cohog{*}{\Sp_{a}}^{\tilS_{a}}$ and $\Gr^{P}_{*}\cohog{*}{\Sp_{a}}^{\tilS_{a}\rtimes B_{a}}$ are not necessarily  Frobenius algebras under $\ell$, as one can see from the example $\GG=\PGL_{2}$ and $\nu=3/2$.
\end{remark}

\begin{proof}[Proof of Theorem \ref{L(triv)}(2)] (Compare \cite[Proposition 1.20]{BEG}) The $\sl_{2}$-triple action on the finite-dimensional vector space $\Gr^{P}_{*}\spcoh{}{\Sp_{a}}^{\tilS_{a}\rtimes B_{a}}$ integrates to an action of $\SL_{2}$. Let $F=\mat{0}{1}{-1}{0}\in\SL_{2}$ and consider the pairing $J:\Gr^{P}_{*}\spcoh{}{\Sp_{a}}^{\tilS_{a}\rtimes B_{a}}\times \Gr^{P}_{*}\spcoh{}{\Sp_{a}}^{\tilS_{a}\rtimes B_{a}}\to\QQ$ defined by $J(a,b):=\ell(a\cdot F(b))$. Let $\frM_{\nu}(\triv)\cong \spcoh{}{\Fl^{\c}}$ be the polynomial representation of $\Hrat_{\nu,\ep=1}$ and let $\wt{J}$ be the pullback of $J$ to $\frM_{\nu}(\triv)$. By Lemma \ref{l:contra pairing}, this pairing is contravariant in the sense of \cite[Section 11.3]{ELec}, hence coincides with the pairing on $\frM_{\nu}(\triv)$ constructed in \cite{ELec} up to a scalar. Since $\wt{J}$ factors through the perfect pairing $J$ on $\Gr^{P}_{*}\spcoh{}{\Sp_{a}}^{\tilS_{a}\rtimes B_{a}}$, we conclude that $\Gr^{P}_{*}\spcoh{}{\Sp_{a}}^{\tilS_{a}\rtimes B_{a}}\cong \frL_{\nu}(\triv)$ by \cite[Lemma 11.6]{ELec}.
\end{proof}

\subsection{Langlands duality and Fourier transform}
In this subsection we show that the cohomology of homogeneous affine Springer fibers for the group $\GG$ and its Langlands dual $\dGG$ are in a certain sense Fourier transform of each other. The results in this subsection will not be used elsewhere in this paper.


\subsubsection{Fourier transform on $\Hrat$}
Let $\sHrat\subset\Hrat$ be the subalgebra generated by $u,\delta, W,\fra$ and $\fra^{*}$. In other words, we suppress $\L_{\can}$ from the generators. The relations (RC-1) through (RC-4) still hold. In the following we will consider the rational \Chas for both $\GG$ and its Langlands dual $\dGG$. We denote them by $\sHrat_{\GG}$ and $\sHrat_{\dGG}$ respectively. For $\eta\in\fra$, let us use $\eta_{\GG}$ (resp. $\eta_{\dGG}$) to denote the corresponding element in $\sHrat_{\GG}$ (resp. $\sHrat_{\dGG}$). Similarly we define $\xi_{\GG}$ and $\xi_{\dGG}$ for $\xi\in\fra^{*}$. Then the relation (RC-4) for $\sHrat_{\dGG}$ reads
\begin{equation*}
[\xi_{\dGG},\eta_{\dGG}]=\jiao{\xi,\l}\delta+\dfrac{1}{2}\left(\sum_{\alpha\in\Phi}c_{\alpha}\jiao{\xi,\alpha^\vee}\jiao{\alpha,\l}r_{\alpha}\right)u.
\end{equation*}

There is an isomorphism $\iota_{\GG\to\dGG}:\sHrat_{\GG}\isom\sHrat_{\dGG}$ called the {\em Fourier transform}. It is determined by
\begin{equation*}
u\mapsto u, \delta\mapsto\delta, w\mapsto w (w\in W), \eta_{\GG}\mapsto-\eta_{\dGG} (\l\in\fra), \xi_{\GG}\mapsto\xi_{\dGG} (\xi\in\fra).
\end{equation*}
The minus sign put in front of $\eta_{\dGG}$ makes sure that (RC-4) for $\sHrat_{\GG}$ is equivalent to (RC-4) for $\sHrat_{\dGG}$.

For Langlands dual groups $\GG$ and $\dGG$, we may identify their Cartan subalgebras $\tt\cong\tt^{\vee}$ using the Killing form, and hence their invariant quotients $\cc\cong\cc^{\vee}$. We define the parabolic Hitchin moduli stacks $\calM$ and $\calM^{\vee}$ for $\GG$ and $\dGG$using the same curve $X$ and the same line bundle $\calL=\calO_{X}(d)$. Under the identification $\cc\cong\cc^{\vee}$, the Hitchin bases in the two situations are the same, which we still denote by $\calA$. 

Similarly we have the version where the point of Borel reduction varies in $U$. We denote the parabolic Hitchin fibrations for $\GG$ and $\dGG$ by
\begin{equation*}
f_{U}:\calM_{U}\to\calA\times U; \hspace{1cm}  f^{\vee}_{U}:\calM^{\vee}_{U}\to\calA\times U.
\end{equation*}
Let $\fa_{U}$ and $\fda_{U}$ be the restrictions of $\fa$ and $\fda$ over $\Aa\times U$. 

We have a $\Gnu$-equivariant version of the main result of \cite{GSLD}.

\begin{prop}[\cite{GSLD}]\label{p:duality} There is a canonical isomorphism of shifted perverse sheaves in $D^{b}_{\Grot\times\Gdil}(\Aa\times U)$:
\begin{equation*}
D^{i}_{\GG\to\dGG}:\Gr^{P}_{i}\fst\isom\Gr^{P}_{2N-i}\fdst[2N-2i](N-i)
\end{equation*}
which intertwines the action of $\sHrat_{\GG}$ and $\sHrat_{\dGG}$ through the Fourier transform $\iota_{\GG\to\dGG}$. Here $i=0,1,\cdots, 2N$ and $N$ is the relative dimension of $f^{\textup{ell}}_{U}$ and $f^{\vee, \textup{ell}}_{U}$.
\end{prop}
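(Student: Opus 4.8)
The statement to be proved is Proposition \ref{p:duality}, the $\Gnu$-equivariant enhancement of the main Langlands-duality theorem of \cite{GSLD}. The plan is to reduce this to the non-equivariant statement of \cite{GSLD} together with the fact that both sides are built $\Grot\times\Gdil$-equivariantly out of the same geometric input (the parabolic Hitchin fibration over $\Aa\times U$), so that the duality isomorphism constructed in \cite{GSLD} is automatically compatible with the torus actions. Concretely, I would first recall from \cite{GSLD} that the duality isomorphism $D^i_{\GG\to\dGG}$ is not an ad hoc map but is constructed canonically from a Fourier--Mukai type kernel on $\calM_U\times_{\calA\times U}\calM^\vee_U$, or equivalently, from the autoduality of the Picard stacks $\calP$ and $\calP^\vee$ over the elliptic locus; the key point is that all the geometric objects involved --- $\calM_U$, $\calM^\vee_U$, the Hitchin bases, the Picard stacks, the relative dualizing complexes --- carry natural $\Grot\times\Gdil$-equivariant structures, and the various morphisms (the Hitchin maps, the evaluation maps, the Abel--Jacobi type maps used to build the kernel) are $\Grot\times\Gdil$-equivariant. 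Therefore the construction can be carried out verbatim in $D^b_{\Grot\times\Gdil}(\Aa\times U)$.

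The key steps, in order, would be: (1) record that the decomposition theorem for $\fa^{\textup{ell}}_{U,*}\QQ$ and $\fda^{\textup{ell}}_{U,*}\QQ$ and the resulting perverse filtrations $P_{\leq i}$ lift to the equivariant derived category --- this follows from S.~Sun's equivariant decomposition theorem \cite{Sun}, already invoked in \S\ref{ss:perv fil}, since $\Grot\times\Gdil$ has affine stabilizers; (2) check that the support theorem (Theorem \ref{th:supp}), which underlies the rigidity of the duality isomorphism, holds equivariantly --- but this is immediate because forgetting the equivariant structure is conservative on the question of supports, and the codimension estimate in Appendix \ref{s:codim} is purely about the Hitchin base; (3) observe that the isomorphism $D^i_{\GG\to\dGG}$ of \cite{GSLD} is characterized uniquely (up to scalar, then normalized) by a property --- e.g. compatibility with the cup-product/cap-product pairings and the $\sHrat$-actions --- that makes sense equivariantly, so the non-equivariant isomorphism, being canonical, descends from an equivariant one; (4) verify that the $\sHrat_\GG$ and $\sHrat_\dGG$ actions constructed in \S\ref{ss:Hrat} (Constructions \ref{rat u delta}--\ref{rat can}, minus $\Lambda_{\can}$) are intertwined by $D^i_{\GG\to\dGG}$ through the Fourier transform $\iota_{\GG\to\dGG}$: here one matches generators --- $u,\delta$ act by equivariant parameters on both sides and are preserved; $W$ acts geometrically and is preserved; the lattice elements $\eta\in\xcoch(\TT)\subset\Hgr$ act via $\tilw-1$ on $\calM$ and via $\tilw^\vee-1$ on $\calM^\vee$, and under autoduality of $\calP$ (which swaps the two lattices with a sign coming from inversion on the Picard stack) this produces exactly the minus sign $\eta_\GG\mapsto-\eta_\dGG$; the Chern classes $\xi\in\xch(\TT)$ are matched to the lattice on the dual side with no sign, again by the construction of the kernel. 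Most of this bookkeeping is already done non-equivariantly in \cite{GSLD}; the only new content is tracking the torus weights, which are visibly preserved since $\Grot$ acts by loop rotation and $\Gdil$ by dilation of Higgs fields, and both actions are ``the same'' on $\GG$- and $\dGG$-sides under $\cc\cong\cc^\vee$.

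Finally, restricting everything over $\Ah_\nu\times\{0\}\subset\Aa\times U$ via the $\Gnu$-fixed-point inclusion --- using Lemma \ref{l:e0} to identify $\Ah_\nu$ with $\frc(F)^{\rs}_\nu$ and Corollary \ref{c:stable is all} to see that the stable part is everything in the homogeneous elliptic case --- would give the homogeneous version, but as stated the proposition is already over $\Aa\times U$ so this last restriction is optional.

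\textbf{Main obstacle.} The genuine difficulty is not in the formal ``equivariant lift'' argument but in making precise the claim in step (3) that the isomorphism $D^i_{\GG\to\dGG}$ of \cite{GSLD} is canonical enough to carry a $\Grot\times\Gdil$-equivariant structure --- i.e. one must either (a) re-examine the construction in \cite{GSLD} and observe that every ingredient (the Poincaré-type kernel on $\calP\times\calP^\vee$, the normalization of the duality, the identification of perverse graded pieces) was already defined with equivariant structures in place, or (b) invoke a uniqueness statement: the duality isomorphism is the unique (up to a locally constant scalar on $\Aa\times U$, which is constant) isomorphism compatible with the pairings and $\sHrat$-module structures, and a canonical isomorphism between two equivariant objects is automatically equivariant because $\mathrm{Forget}:D^b_{\Grot\times\Gdil}(\Aa\times U)\to D^b(\Aa\times U)$ is faithful on the relevant $\Hom$-groups after suitable finiteness. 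Route (b) is cleaner but requires one to have the uniqueness characterization of $D^i_{\GG\to\dGG}$ stated in a form that survives the equivariant enhancement; I expect the bulk of the writing to go into setting up that characterization and checking the sign in the Fourier transform comes out right under autoduality of the Picard stack.
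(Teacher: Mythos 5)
Your reduction to the non-equivariant result of [GSLD], your appeal to Sun's equivariant decomposition theorem, and your observation that the support theorem and the actions of $u$, $\delta$, $W$ lift trivially to the equivariant setting all match what the paper does. The genuine gap is in your step (4). You treat the intertwining of $\eta\in\fra$ and of $\xi\in\fra^{*}$ as parallel claims both essentially already present in [GSLD] (``most of this bookkeeping is already done non-equivariantly''), but that is not the case: [GSLD] proves only that $D^{i}_{\GG\to\dGG}$ intertwines $\eta_{\GG}$ with $-\eta_{\dGG}$, and the intertwining of $\xi_{\GG}$ with $\xi_{\dGG}$ is genuinely new content that has to be \emph{deduced}, not merely carried over.

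The paper's argument for the $\xi$-intertwining is not a direct tracing of the kernel: one first observes that showing ``$D^{i}_{\GG\to\dGG}$ intertwines $\xi_{\GG}$ with $\xi_{\dGG}$'' is equivalent, by switching the roles of $\GG$ and $\dGG$, to showing that $D^{i}_{\dGG\to\GG}$ intertwines $\eta_{\dGG}$ with $+\eta_{\GG}$ (no minus sign). At first glance this seems to contradict the [GSLD] statement applied to $\dGG\to\GG$, which would give $\eta_{\dGG}\leftrightarrow -\eta_{\GG}$. The resolution is a subtle sign: by carefully tracing the definition in [GSLD] one finds that the round-trip $D^{2N-i}_{\dGG\to\GG}\circ D^{i}_{\GG\to\dGG}:K_{i}\to K_{i}$ is \emph{not} the identity but multiplication by $(-1)^{i}$. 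Feeding this into a three-column commutative diagram (outer square commutes up to $(-1)$, left square commutes by [GSLD], hence the right square commutes up to $(-1)$) gives exactly the sign correction needed. Your proposal contains no version of this $(-1)^{i}$ computation, and without it the $\xi$-intertwining — and hence the claim that $D$ respects the Fourier transform $\iota_{\GG\to\dGG}$ as stated — is unproved. A plausibility argument about the Chern classes ``matching the lattice on the dual side with no sign, by construction of the kernel'' is precisely the kind of statement that the $(-1)^{i}$ computation is needed to justify or correct.
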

\begin{proof} To save notation, we write \footnote{Note the difference from notations in \cite{GSLD}, where $K^{i}$ and $L^{i}$ are perverse sheaves; here $K_{i}$ and $L_{i}$ are shifted perverse sheaves in perverse degree $i+\dim(\calA\times U)$.}
\begin{equation*}
K_{i}:=\Gr^{P}_{i}\fst; \hspace{1cm} L_{i}:=\Gr^{P}_{i}\fdst.
\end{equation*}
In \cite{GSLD}, where we worked without $\Grot\times\Gdil$-equivariance, we have defined the isomorphism $D^{i}_{\GG\to\dGG}:K_{i}\isom L_{2N-i}(N-i)$ and proved that $D^{i}_{\GG\to\dGG}$ intertwines the action of $\eta_{\GG}$ and $-\eta_{\dGG}$ for $\eta\in\fra$. The proof there extends to the $\Grot\times\Gdil$-equivariant setting as well. It is also easy to verify that $D^{i}_{\GG\to\dGG}$ commutes with the action of $u, \delta$ and $W$. It remains to show that $D^{i}_{\GG\to\dGG}$ intertwines $\xi_{\GG}$ with $\xi_{\dGG}$. Switching the roles of $\GG$ and $\dGG$, it suffices to show that $D^{i}_{\dGG\to\GG}$ intertwines $\eta_{\dGG}$ with $\eta_{\GG}$ for all $i$.

Consider the following diagram
\begin{equation}\label{KLK}
\xymatrix{K_{i}\ar[rr]^(.3){D^{i}_{\GG\to\dGG}}\ar[d]^{\eta_{\GG}} && L_{2N-i}[2N-2i](N-i)\ar[rr]^(.7){D^{2N-i}_{\dGG\to\GG}}\ar[d]^{-\eta_{\dGG}} && K_{i}\ar[d]^{\eta_{\GG}}\\
K_{i-1}\ar[rr]^(.3){D^{i-1}_{\GG\to\dGG}} && L_{2N-i+1}[2N-2i+2](N-i+1)\ar[rr]^(.7){D^{2N-i+1}_{\dGG\to\GG}} && K_{i-1}}
\end{equation}
Tracing through the definition of $D^{i}_{\GG\to\dGG}$ in \cite{GSLD}, we find that the composition
\begin{equation*}
D^{2N-i}_{\dGG\to\GG}\circ D^{i}_{\GG\to\dGG}:K_{i}\isom L_{2N-i}[2N-2i](N-i)\isom K_{i}
\end{equation*} 
is not the identity but the multiplication by $(-1)^{i}$. Therefore the outer square of diagram \eqref{KLK} commutes up to $-1$. Since the left square is commutative, the right square also commutes up to $-1$. This means that $D^{2N-i}_{\dGG\to\GG}$ intertwines $\eta_{\dGG}$ with $\eta_{\GG}$. This finishes the proof.
\end{proof}

\subsubsection{Homogeneous affine Springer fiber for the dual group}\label{sss:dual Sp} Let $\nu>0$ be an elliptic slope for $\GG$ and hence for $\dGG$. Identifying $\frc(F)_{\nu}$ and $\frc^{\vee}(F)_{\nu}$ using a Killing form on $\tt$. This also allows us to identify the braid group $B_{a}$ without its counterpart for $\dGG$. Let $a\in\frc(F)^{\rs}_{\nu}$. Let $\Sp^{\vee}_{a}$ be the homogeneous affine Springer fiber for $\dGG$ associated with  $\kappa^{\vee}(a)$ (where $\kappa^{\vee}:\cc^{\vee}\to\gg^{\vee}$ is the Kostant section for $\gg^{\vee}$).  The counterpart of $\tilS_{a}$ for $\dGG$ is denoted by $\tilS^{\vee}_{a}$. Applying the Proposition \ref{p:duality} to the stalk at $(a,0)\in\Ah_{\nu}\times U$, and using the isomorphism \eqref{lg stable}, we get

\begin{cor}\label{c:duality} Let $N=\dim\Sp_{a}$. There is a canonical isomorphism for $0\leq i \leq 2N, j\geq0$
\begin{equation*}
D^{i,j}_{\GG\to\dGG,a}:\Gr^{P}_{i}\eqnu{j}{\Sp_{a}}^{\tilS_{a}}\isom\Gr^{P}_{2N-i}\eqnu{j-2(N-i)}{\Sp^{\vee}_{a}}^{\tilS^{\vee}_{a}}
\end{equation*}
that is equivariant under $B_{a}$, such that the sum of these isomorphisms (for various $i,j$) intertwines the actions of $\Hrat_{\GG,\nu}$ and $\Hrat_{\dGG,\nu}$ through the Fourier transform $\iota_{\GG\to\dGG}$.
\end{cor}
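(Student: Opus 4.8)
The statement to be proved is Corollary \ref{c:duality}, which realizes the cohomologies of homogeneous affine Springer fibers for Langlands dual groups $\GG$ and $\dGG$ as Fourier transforms of each other, compatibly with the rational Cherednik algebra actions. The plan is to \emph{specialize} the global duality of Proposition \ref{p:duality} to the stalk at the homogeneous point, and then transport everything to the affine Springer fiber side via the product formula. Concretely, I would proceed in three steps.

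First, I would take the canonical isomorphism $D^{i}_{\GG\to\dGG}:\Gr^{P}_{i}\fst\isom\Gr^{P}_{2N-i}\fdst[2N-2i](N-i)$ of shifted perverse sheaves on $\Aa\times U$ from Proposition \ref{p:duality}, which is $\Grot\times\Gdil$-equivariant and intertwines the $\sHrat_{\GG}$- and $\sHrat_{\dGG}$-actions via $\iota_{\GG\to\dGG}$. Restricting along the inclusion $i_{\nu}:\Ah_{\nu}\times\{0\}\incl\Aa\times U$ (using that $\Ah_{\nu}\subset\Aa$ since $\nu$ is elliptic, and identifying $\frc(F)_{\nu}^{\rs}$ with $\frc^{\vee}(F)_{\nu}^{\rs}$ and hence $\Ah_{\nu}$ with its dual counterpart through a fixed Killing form on $\tt$), I obtain an isomorphism of complexes on $\Ah_{\nu}$ intertwining the two $\Hrat_{\nu}$-actions. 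Here I must use the identifications $f_{\nu,!}\QQ=i_{\nu}^{*}(\fa_{!}\QQ)_{\st}$ and its dual analog from Corollary \ref{c:stable is all}, together with Definition \ref{def:perv fil} of the perverse filtration on $f_{\nu,!}\QQ$ as $i_{\nu}^{*}$ of the global one; the fact that $\xcoch(\TT)$ acts on each $\Gr^{P}_{i}\fst$ by the identity (Construction \ref{rat lattice}) ensures the stalk complexes are genuinely $\sHrat$-modules after passing to associated graded. Then I take the stalk at $a\in\frc(F)^{\rs}_{\nu}=\Ah_{\nu}$, obtaining $D^{i,j}_{\GG\to\dGG,a}$ on the level of $\Gr^{P}_{i}\eqnu{j}{\calM_{a}}$.

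Second, I would transport this to the affine Springer fiber side. By Proposition \ref{p:prod}(2), which applies because we may (as in the first paragraph of \S\ref{ss:Hrat local}) replace $\GG$ and $\dGG$ by isogenous groups for which assumption \eqref{assrho} holds, there is a canonical isomorphism $f_{\nu,!}\QQ\isom(q_{\nu,!}\QQ)^{\tilS}$ over $\frc(F)^{\rs}_{\nu}$, hence on stalks $\eqnu{*}{\calM_{a}}\isom\eqnu{*}{\Sp_{a}}^{\tilS_{a}}$ (equation \eqref{lg stable}), and likewise for $\dGG$. Since $\Hrat_{\GG,\nu}$ and $(q_{\nu,!}\QQ)^{\tilS}$ are insensitive to isogeny (the latter by Lemma \ref{l:local Neron}\eqref{tilL}), this step costs nothing. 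Combining with Step 1 yields the desired isomorphism $D^{i,j}_{\GG\to\dGG,a}:\Gr^{P}_{i}\eqnu{j}{\Sp_{a}}^{\tilS_{a}}\isom\Gr^{P}_{2N-i}\eqnu{j-2(N-i)}{\Sp^{\vee}_{a}}^{\tilS^{\vee}_{a}}$.

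Third, I would verify the $B_{a}$-equivariance. This follows from the fact that $B_{a}=\pi_1(\frc(F)^{\rs}_{\nu},a)$ acts by taking stalks of the local systems on $\frc(F)^{\rs}_{\nu}\cong\Ah_{\nu}$, and the global isomorphism $D^{i}_{\GG\to\dGG}$ is an isomorphism of complexes over $\Ah_{\nu}$ (being the restriction of one over $\Aa\times U$), hence its stalk at $a$ automatically commutes with the monodromy action; one uses Corollary \ref{c:Hess loc sys}(2) to know these are direct sums of shifted semisimple local systems so the monodromy is well-behaved. The main obstacle, I expect, is bookkeeping rather than substance: one must check carefully that the Fourier transform $\iota_{\GG\to\dGG}$ intertwines the $\Hrat_{\nu}$-module structures \emph{after} passing to the central quotient by $(u+\nu\delta, B_{\KM})$ and specializing, i.e., that the relation in $\sHrat$ involving only $u,\delta,W,\fra,\fra^{*}$ suffices (since $\L_{\can}$ is excluded from $\sHrat$), and that the degree shifts $[2N-2i](N-i)$ match the cohomological-degree shift $j\mapsto j-2(N-i)$ in the statement after accounting for the relative dimension $N=\dim\Sp_{a}$ and the identification of $N$ with the relative dimension of $f^{\textup{ell}}_{U}$ and $f^{\vee,\textup{ell}}_{U}$ via the dimension formulas of Corollary \ref{c:AFS dim}(2).
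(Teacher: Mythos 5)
Your proposal is correct and follows essentially the same route the paper takes: the paper derives Corollary \ref{c:duality} in one line by taking stalks of the global duality $D^{i}_{\GG\to\dGG}$ of Proposition \ref{p:duality} at $(a,0)\in\Ah_{\nu}\times U$ and then transporting through the isomorphism \eqref{lg stable}. Your additional remarks on isogeny-insensitivity, the $B_{a}$-equivariance via Corollary \ref{c:Hess loc sys}(2), and the degree bookkeeping are all consistent with (and implicit in) the paper's argument.
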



\section{Examples}\label{s:examples}

For quasi-split groups $G$ of rank two, the \Cha $\Hrat_{\nu}$ is the one attached to a dihedral Weyl group with possibly unequal parameters. We will review the algebraic representation theory of such \Chas in \S\ref{ss:dih}. After that, for each almost simple, simply-connected quasi-split group $G$ over $F$ of rank at most two, we study the cohomology of its homogeneous affine Springer fibers and the relevant Hessenberg varieties of various slopes $\nu=1/m_{1}$, where $m_{1}$ is a $\theta$-regular elliptic number. 

The general slopes $\nu=d_{1}/m_{1}$ can be reduced to the case $\nu=1/m_{1}$ by Proposition \ref{p:reduce to 1/m}. The case $m_{1}=h_{\theta}$, the twisted Coxeter number, is treat for all $G$ in Example \ref{ex:Cox}. Therefore we only look at cases where $m_{1}$ is not the twisted Coxeter number. For each $\nu=1/m_{1}$, we describe the $L_{\nu}$-module $\frg(F)_{\nu}$ in terms of simple linear algebra such as quadratic forms, cubic forms, etc.  Following the discussion in \S\ref{sss:clans}, we will show in pictures how to decompose the apartment $\frA$ into clans, and describe the Hessenberg varieties $\Hess^{\tilw}_{a}$ ($\gamma\in\frc(F)^{\rs}_{\nu}$) corresponding to each bounded clan. These Hessenberg varieties turn out to be familiar objects in classical projective geometry. Finally we compute the dimension of $\cohog{*}{\Sp_{a}}^{S_{a}\rtimes B_{a}}$, and verify that it is consistent with the dimension of  irreducible $\Hrat_{\nu,\ep=1}$-modules known from the algebraic theory.

In the final subsection we give computational and conjectural results on the dimension of $\frL_{\nu}(\triv)$ in general.

\subsection{Algebraic theory}\label{ss:dih} The Weyl groups of rank two quasisplit simple groups are dihedral groups $I_{2}(d)$ of order $2d$ for $d=3, 4$ or $6$.  The representation theory of the rational Cherednik algebras of dihedral type was studied by Chmutova in \cite{Chm}. Below we use the classification of the finite-dimensional representations from \cite{Chm} and for convenience of reader we explain how to match our notations for the Cherednik algebras with 
the notations from \cite{Chm}. 

We shall concentrate on the case $d$ is even because $d=3$ only appears in the case of type $A_{2}$ and $m$ is the Coxeter number. The group $I_{2}(d)$ acts on $\mathbb{R}^2=\mathbb{C}$ and the set $S$ of reflections in the group $I_{2}(d)$ consists of elements $s_j$, $1\le j\le d$, $s_j(z)=\omega^j \bar{z}$, $\omega=\exp(2\pi i /d)$,
$z=x+iy$. The dihedral group $I_2(d)$ acts on $S$ with two orbits: $S_1=\{s_{2j+1}\}, S_2= \{s_{2j}\}$. The algebra  $\Hrat_{k_1,k_2}(I_{2}(d))$ over $\RR$ is generated by the elements $g\in I_{2}(d)$ and $x,y\in \CC$ defined by the relation:
\begin{gather}
wxw^{-1}=w(x),\quad wyw^{-1}=w(y),\quad [x,y]=1-\sum_{i=1}^2 k_i \sum_{s\in S_i} \jiao{\alpha_{s},x}\jiao{y, \alpha^\vee_{s}}s.
\end{gather}
The group $I_{2}(d)$ has four one-dimensional representations $\chi_{\epsilon_1,\epsilon_2}$ defined by $\chi_{\epsilon_1,\epsilon_2}(s)=\epsilon_i$ if $s\in S_i$. Results of Chmutova \cite[Theorem 3.2.3]{Chm}
provide formulae for the dimensions of the simple modules $\frL_{\nu}(\chi_{\epsilon_1,\epsilon_2})$. Below we decompose the cohomology of  the affine Springer fibers into $\Hrat_{k_1,k_2}(I_{2}(d))$-isotypical 
components and compare the dimensions of these components with the results of \cite{Chm}.

\subsection{Type $^{2}\! A_{2}$, $m_{1}=2$} The only regular elliptic number besides the twisted Coxeter number $m_{1}=6$ is $m_{1}=2$. Let $\nu=1/2$. Let $\GG=\SL(V)$ where $V$ is a $3$-dimensional vector space $\CC$ equipped with a quadratic form $q_{0}$. Denote the associated symmetric bilinear form by $(\cdot,\cdot)$. The vector space $V\otimes_{\CC}\CC((t^{1/2}))$ is equipped with a $\CC((t^{1/2}))$-valued Hermitian form $h(x+t^{1/2}y,z+t^{1/2}w)=(x,z)-t(y,w)+t^{1/2}(y,z)-t^{1/2}(x,w)$, for $x,y,z,w\in V\otimes_{\CC}F$. Let $G$ be the special unitary group $\SU(V\otimes_{\CC}\CC((t^{1/2})), h)$.
 
\subsubsection{$L_{\nu}$ and $\frg(F)_{\nu}$} Let $\alpha$ be the simple root of $\HH=\GG^{\theta}$. The real affine roots of $G(F)$ are:
\begin{equation*}
\pm\alpha+\ZZ\delta/2, \quad\pm2\alpha+\delta/2+\ZZ\delta.
\end{equation*}

\begin{figure}
\centering
\includegraphics[trim=0cm 11cm 0cm 11cm, clip=true,height=2cm]{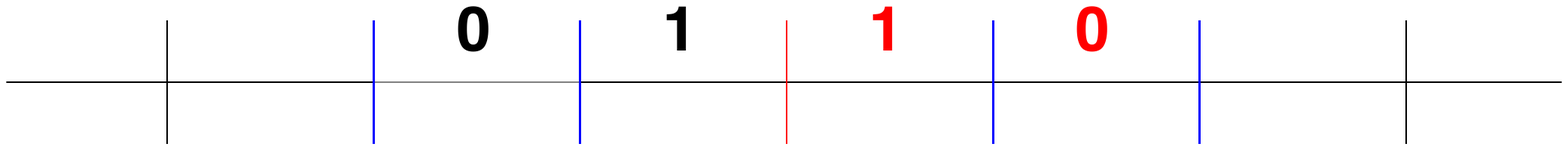}
\caption{The apartment $\frA$ for $^{2}\! A_2$, $\nu=1/2$}
\end{figure}

We have $L_{\nu}\cong\PGL_{2}$ with roots $\pm(\alpha-\delta/2)$. The affine roots appearing in $\frg(F)_{\nu}$ are $\alpha, -\alpha+\delta, 2\alpha-\delta/2, -2\alpha+3\delta/2$ and $\delta/2$. We write $L_{\nu}=\PGL(U)$ for a $2$-dimensional vector space $U$, then $\frg(F)_{\nu}\cong\Sym^{4}(U^{\vee})\otimes\det(U)^{\otimes2}$. To each $\gamma\in\frg(F)^{\rs}_{\nu}$, viewed as a binary quartic form, one can attach a curve $C_{\gamma}$ of genus one as the double cover of $\PP^{1}=\PP(U)$ ramified above the four zeros of $\gamma$. The pair $(L_{\nu}, \frg(F)_{\nu})$ has been used by Bhargava and Shankar to compute the average size of $2$-Selmer groups of elliptic curves over $\QQ$, see \cite{BS2} and \cite{Gross}. 

\subsubsection{The Hessenberg varieties} The picture above shows the apartment  for $^{2}\! A_2$ with the alcoves marked with the expected dimensions of the Hessenberg varieties. We use gray to mark the fundamental alcove and the red numbers
to mark the alcoves that represent the $W_\nu$-orbits.
The $1$-dimensional Hessenberg variety is $\PP(U)\cong\PP^{1}$. The 0-dimensional Hessenberg variety consists of four points which are zeros of the binary quartic form $\gamma$, or equivalently the Weierstrass points of $C_{\gamma}$. One can identify the stabilizer $S_{\gamma}$ with $J(C_{\gamma})[2]$ where $J(C_{\gamma})$ is the Jacobian of $C_{\gamma}$. Then $S_{\gamma}$ acts simply transitively on the 0-dimensional Hessenberg variety. 

\subsubsection{Irreducible modules} From the above discussion we conclude that for $a\in\frc(F)^{\rs}_{\nu}$,
\begin{equation*}
\spcoh{}{\Sp_{a}}^{S_{a}}=\spcoh{}{\Sp_{a}}^{S_{a}\rtimes B_{a}}\cong\cohog{*}{\PP^{1}}\oplus\cohog{*}{\pt}
\end{equation*}
is 3-dimensional.  On the other hand, $\Hrat_{\nu,\ep=1}$ in this case is isomorphic to the rational Cherednik algebra of type $A_{1}$ with parameter $3/2$, which has a unique finite-dimensional irreducible module $\frL_{\nu}(\triv)$, and $\dim \frL_{\nu}(\triv)=3$. Hence $\Gr^{C}_{*}\spcoh{}{\Sp_{a}}^{S_{a}}\cong \frL_{\nu}(\triv)$.

\subsection{Type $C_2$, $m=2$} In type $C_{2}$, the only regular elliptic number besides the Coxeter number $m=4$ is $m=2$. Let $\nu=1/2$. Let $\GG$ be the split group $\Sp(V,\omega)$ over $\CC$, where $(V,\omega)$ is a symplectic vector space over $\CC$ of dimension four. Let $G=\GG\otimes_{\CC}F$.

\subsubsection{$L_{\nu}$ and $\frg(F)_{\nu}$} The real affine roots of $G(F)$ are:
$$ \pm 2\epsilon_i+\ZZ\delta, \quad  \pm \epsilon_1\pm\epsilon_2+\ZZ\delta.$$
The roots of $L_\nu$ are $\pm(\epsilon_1+\epsilon_2-\delta)$. The affine roots appearing in $\frg(F)_{\nu}$ are: $$2\epsilon_1-\delta,\quad\epsilon_1-\epsilon_2,\quad-2\epsilon_2+\delta,\quad 2\epsilon_2,
\quad\epsilon_2-\epsilon_1+\delta,\quad -2\epsilon_1+2\delta.$$

The Levi factor $L_{\nu}$ can be identified with $\GL(U)$ where $V=U\oplus U^{\vee}$ is a decomposition of $V$ into two Lagrangian subspaces. As an $L_\nu$-representation,  $\frg(F)_{\nu}\cong\Sym^2(U)\oplus \Sym^2(U^{\vee})$, i.e., the space of pairs of quadratic forms $q_1$ and $q_2$ on $U$ and $U^{\vee}$ respectively. 

Choose a basis $e_{1}, e_{2}$ for $U$ and let $f_{1}$ and $f_{2}$ be the dual basis for $U^{\vee}$. Up to $L_{\nu}$-conjugation an element $\gamma\in\frg(F)^{\rs}_{\nu}$ takes the form $q_{1}=f_{1}^{2}+f_{2}^{2}$ and $q_{2}=\l_{1}e_{1}^{2}+\l_{2}e_{2}^{2}$ for some $\l_{1},\l_{2}\in \CC^{\times}$, $\l_{1}\neq\l_{2}$. The centralizer group $S_{\gamma}$ of  the regular element $\gamma$ is $\mu_{2}\times\mu_{2}$, and it acts by changing signs of the basis vectors of $U$ and $U^{\vee}$.

\subsubsection{The Hessenberg varieties} The picture with $\nu$-wall is given above and it indicates that there  are only three classes in $W_\nu\backslash \tilW$ such that the corresponding Hessenberg variety is non-empty. 

\begin{figure}
\centering
\includegraphics[height=5cm]{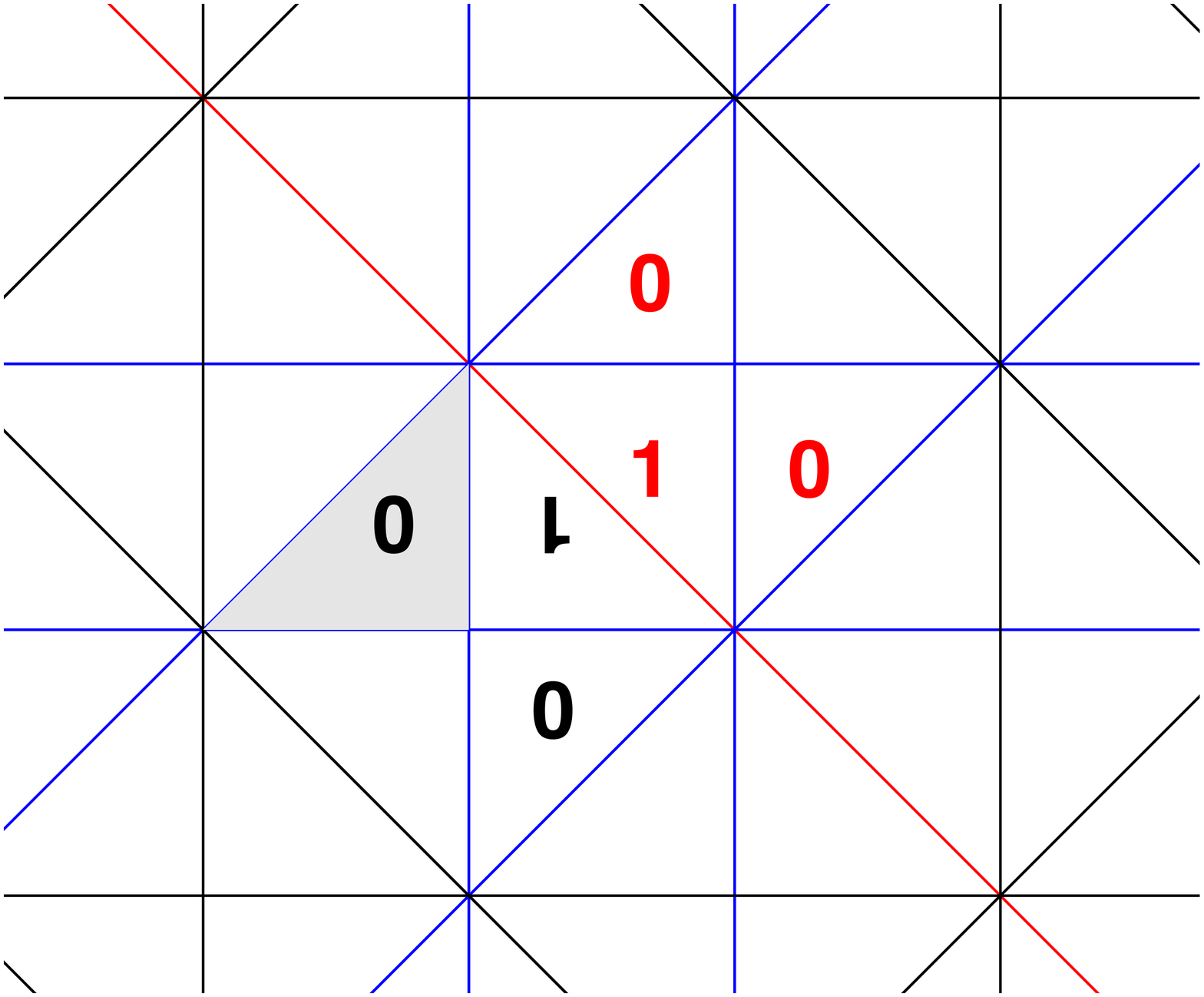}
\caption{The apartment $\frA$ for $C_2$, $\nu=1/2$}
\end{figure}

The one-dimensional Hessenberg variety is $\PP(U)\cong\PP^{1}$, the flag variety of $L_{\nu}$. There are two kinds of zero-dimensional Hessenberg varieties, each consisting of lines $\ell\subset U$ which are isotropic with respect to one of the forms $q_{1}$ and $q_{2}$. In either case the zero-dimensional Hessenberg variety consists of two points that are permuted transitively by $S_{\gamma}$.

\subsubsection{Irreducible modules}  From the above discussion we conclude that for $a\in\frc(F)^{\rs}_{\nu}$,
\begin{equation*}
\spcoh{*}{\Sp_{a}}^{S_{a}}=\spcoh{*}{\Sp_{a}}^{S_{a}\rtimes B_{a}}\cong\cohog{*}{\mathbb{P}^1}\oplus\cohog{*}{\pt}\oplus \cohog{*}{\pt}
\end{equation*}
has dimension $4$.   On the other hand, by \cite[Theorem 3.2.3(vi)]{Chm}, the algebra $\Hrat_{1/2,1/2}(I_{2}(4))$ has a unique finite-dimensional representation $\frL_{\nu}(\triv)$ and it is of dimension $4$. Hence $\Gr^{P}_{*}\spcoh{}{\Sp_{a}}^{S_{a}}\cong \frL_{\nu}(\triv)$.

\subsection{Type ${}^2\! A_3$, $m_{1}=2$}Besides the twisted Coxeter number$m_{1}=6$, the only regular elliptic number in this case is $m_{1}=2$. Let $\nu=1/2$. Let $\GG=\SL(V)$ where $V$ is a $4$-dimensional vector space $\CC$ equipped with a quadratic form $q_{0}$. As in the case of type $^{2}\! A_{2}$, we have the associated symmetric bilinear form $(\cdot,\cdot)$, which extends to a Hermitian form $h$ on $V\otimes_{\CC}\CC((t^{1/2}))$. Let $G$ be the special unitary group $\SU(V\otimes_{\CC}\CC((t^{1/2})), h)$.

\subsubsection{$L_{\nu}$ and $\frg(F)_{\nu}$} The real affine roots of $G(F)$ are:
$$2\epsilon_i+\ZZ\delta,\quad \pm \epsilon_1\pm \epsilon_2+\ZZ\delta/2,\quad i=1,2,$$
and $\nu\rho^\vee=(3/4,1/4)$ under the coordinates $(\ep_{1},\ep_{2})$.

The roots of $L_{\nu}$ are $\pm(\epsilon_1+\epsilon_2-\delta)$ and $\pm(\epsilon_1-\epsilon_2-\delta/2)$. The affine roots in $\frg(F)_{\nu}$ are:
$$\delta/2, \quad\pm (2\epsilon_1-3\delta/2)+\delta/2,\quad \pm(2\epsilon_2-\delta/2)+\delta/2,\quad \pm(\epsilon_1+\epsilon_2-\delta)+\delta/2,\quad\pm(\epsilon_1-\epsilon_2-\delta/2)+\delta/2.$$ 

The group $L_{\nu}$ may be identified with $\SO(V,q_{0})$, and $\frg(F)_{\nu}$ may be identified with the vector space of self-adjoint traceless endomorphisms of $V$ with respect to $(\cdot,\cdot)$ . 

Choose an orthonormal basis $e_{1},\cdots, e_{4}$ for $V$, we may take
the Cartan subspace $\frs$ to consist of diagonal matrices $\textup{diag}(a_1,a_2,a_3,a_4)$ with respect to this basis, with small Weyl group $S_{4}$ acting by permutations of $a_i$. The regular 
semisimple locus $\frs^{\rs}$ consists of the diagonal matrices with distinct eigenvalues. For $\gamma\in\frs^{\rs}$, the group $S_\gamma\subset\mu^{4}_{2}$ consists of diagonal elements of determinant $1$.

\subsubsection{The Hessenberg varieties} The picture contains marking by the expected dimensions of the Hessenberg varieties and the fundamental alcove is shaded grey. There are four alcoves with nonnegative 
expected dimension in the dominant chamber with respect to $W_{\nu}$, and study the relevant Hessenberg varieties below.

\begin{figure}
\centering
\includegraphics[height=5cm]{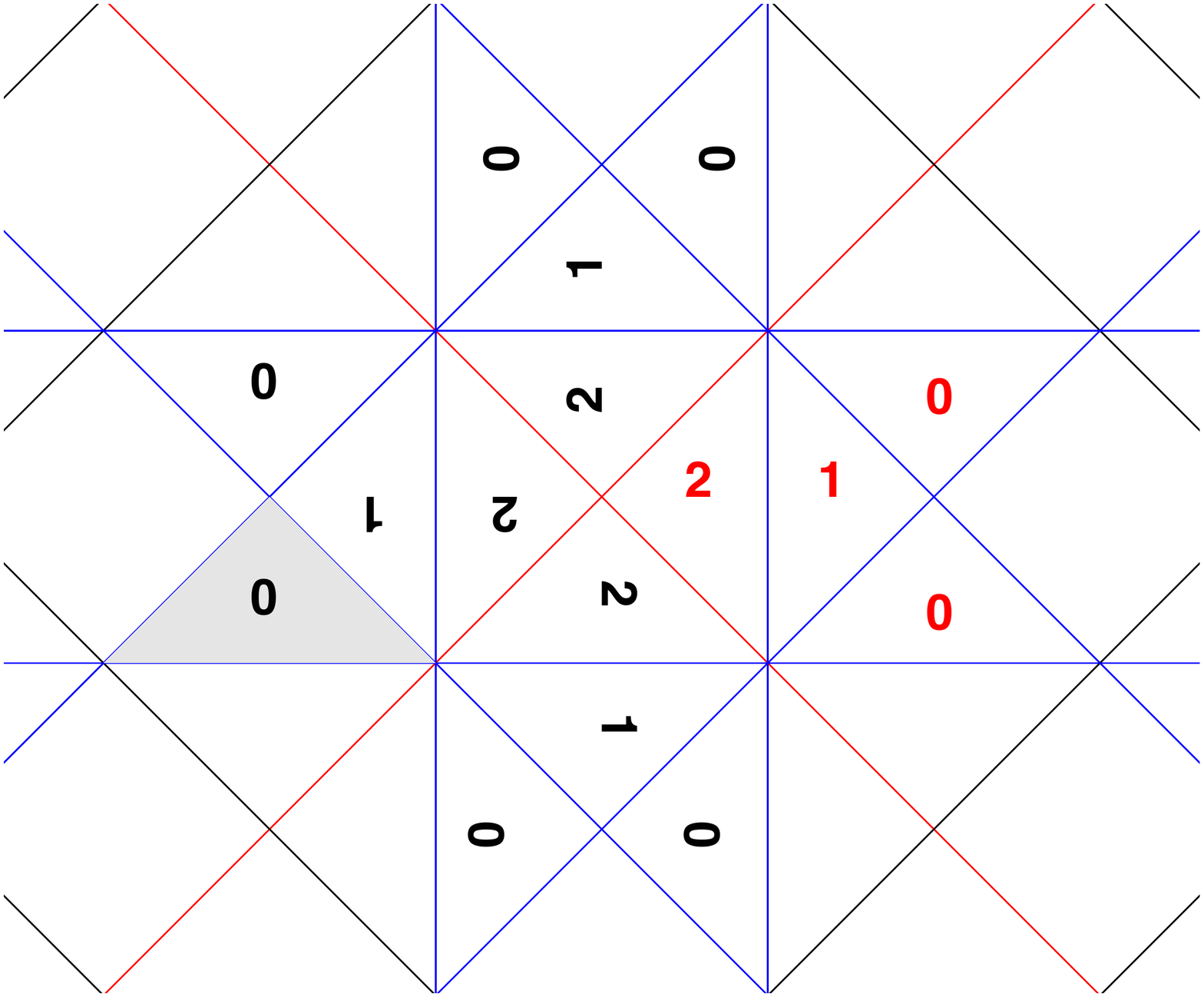}
\caption{The apartment $\frA$ for ${}^2\! A_3$, $\nu=1/2$}
\end{figure}

Thus the two-dimensional Hessenberg variety is the flag variety of $L_{\nu}$, hence isomorphic to $\mathbb{P}^1\times\mathbb{P}^1$.  Indeed, the flag variety of $L_{\nu}$ consisting of flags $0\subset V_1\subset V_3=V_{1}^{\bot}\subset V$, hence is isomorphic to the quadric $Q_{1}\subset \PP(V)$ defined by $(v,v)=0$,  which is isomorphic to $\PP^{1}\times\PP^{1}$. The space of isotropic 2-planes $V_2\subset V$ consists of two one-dimensional families of rulings of the quadric $Q_{1}\cong\mathbb{P}^1\times \mathbb{P}^1$. For each flag $0\subset V_1\subset V_3=V_{1}^{\bot}\subset V$, there are exactly two isotropic 2-planes $V_{2}$ satisfying $V_{1}\subset V_{2}\subset V_{3}$, one from each ruling. We denote the two isotropic 2-planes by $V'_{2}$ and $V''_{2}$.

The one-dimensional Hessenberg variety parametrizes flags $0\subset V_1\subset V_3=V_{1}^{\bot}\subset V$ such that $\gamma(V_{1})\subset V_{3}$. A nonzero isotropic vector $v\in V_{1}$ determines a flag $V_{1}=\jiao{v}\subset\jiao{v}^{\bot}=V_{3}$, and the condition $\gamma(V_{1})\subset V_{3}$ is equivalent to saying that $(v,\gamma v)=0$. Define another quadric $Q_{\gamma}\subset\PP(V)$ to be the locus $(v,\gamma v)=0$,  then the one-dimensional Hessenberg variety is isomorphic to the intersection of two quadrics $\Sigma_{\gamma}:=Q_{1}\cap Q_{\gamma}\subset \PP(V)$, which is a curve of genus one. 

There are two kinds of zero-dimensional Hessenberg varieties each consisting of flags satisfying the condition $\gamma V_1\subset V'_2$ or $\gamma V_1\subset V''_2$. The union of these two zero-dimensional varieties is the intersection of three quadrics $(v,v)=(v,\gamma v)=(\gamma v,\gamma v)=0$, which consists of eight points. However only four of these points belong to each zero-dimensional Hessenberg variety. In fact, let  $\pi_{1}$ and $\pi_{2}$ be the projections $\Sigma_{\gamma}\subset Q_{1}\cong\PP^{1}\times\PP^{1}\to\PP^{1}$ onto either factor $\PP^{1}$. Then the four ramification points of $\pi_{1}$ constitute one Hessenberg variety and the four ramification points of $\pi_{2}$ constitute the other. 

\subsubsection{The genus one curve}
The curve $\Sigma_{\gamma}$ is closely related to another curve $C_{\gamma}$ which we now define. Consider the pencil of quadrics spanned by $Q_{1}$ and $Q_{\gamma}$. Each non-singular member of the pencil contains two rulings, we can then form the moduli space of rulings in this pencil which gives a double cover $C_{\gamma}\to\PP^{1}$ ramified at the points where the quadric is singular (there are 4 such points). The curve $C_{\gamma}$ has genus one. It is equipped with an involution $\sigma$ over $\PP^{1}$. 

There is a map $a:C_{\gamma}\times \Sigma_{\gamma}\to\Sigma_{\gamma}$ defined as follows. A point on $c\in C_{\gamma}$ is a quadric $Q'$ in the pencil together with a ruling on $Q'$. Choosing $p\in \Sigma_{\gamma}$. On the ruling of $Q'$ there is a unique projective  line $\ell$ passing through $p$. The pair $p\in\ell$ corresponds to a line and plane$V_{1}\subset V_{2}\subset V$ both isotropic under $Q'$. Since $V_{2}$ contains exactly two lines that are isotropic for the whole pencil, let $V'_{1}$ be the other line. The map $a$ then sends  $(c,p)$ to the point $V'_{1}\in \Sigma_{\gamma}$. It is easy to see that for fixed $c\in C_{\gamma}$, $a(c,-)$ is an automorphism of $\Sigma_{\gamma}$ (with inverse given by $\sigma(c)$). Therefore $\Sigma_{\gamma}$ is isomorphic to $C_{\gamma}$ but not canonically so. 

The action of $S_{\gamma}$ on $\Sigma_{\gamma}$ factors through the quotient $\overline{S}_{\gamma}:=S_{\gamma}/\Delta(\mu_{2})$ where $\Delta(\mu_{2})$ is the subgroup of scalar matrices in $L_{\nu}$.  We claim that the action of $\overline{S}_{\gamma}$ on $\Sigma_\gamma$ is fixed point free. In fact, the fixed point locus of an involution from $\overline{S}_{\gamma}$ on $\PP(V)$ consists of the
union of two lines $\{x_i=x_j=0\} \cup \{x_{\bar{i}}=x_{\bar{j}}=0\}$, where $\{1,2,3,4\}=\{i,j,\bar{i},\bar{j}\}$.  The condition that $a_{1},\cdots,a_{4}$ are distinct implies that the intersection of $\Sigma_\gamma$ with these lines
is empty. The Jacobian $J(\Sigma_{\gamma})$ of $\Sigma_{\gamma}$ is an elliptic curve and $\Sigma_{\gamma}\cong\Pic^{1}(\Sigma_{\gamma})$ is a torsor under $J(\Sigma_{\gamma})$. Since $\overline{S}_{\gamma}$ acts freely on $\Sigma_\gamma$, it can only acts via translation by $J(\Sigma_{\gamma})[2]$, and we have a natural isomorphism $\overline{S}_{\gamma}\cong J(\Sigma_{\gamma})[2]\cong J(C_{\gamma})[2]$. In particular, $S_{\gamma}$ acts trivially on $\cohog{*}{\Sigma_\gamma}$.

Since the ramification points of both projections $\pi_{1}, \pi_{2}: \Sigma_{\gamma}\to\PP^{1}$ are torsors under $J(\Sigma_{\gamma})[2]$ and $\overline{S}_{\gamma}$ acts on $\Sigma_{\gamma}$ via an isomorphism $\overline{S}_{\gamma}\cong J(\Sigma_{\gamma})[2]$, it acts simply transitively on each of the zero dimensional Hessenberg varieties.

As $a$ varies over $\frc(F)^{\rs}_{\nu}$, we get a family of genus one curves $\pi^{C}:C\to\frc(F)^{\rs}_{\nu}$ whose fiber over $a$ is $C_{a}:=C_{\gamma}$ for $\gamma=\kappa(a)$. We have local systems $\bR^{i}\pi^{C}_{*}\QQ$ on $\frc(F)^{\rs}_{\nu}$ for $i=0,1,2$. Clearly $\bR^{0}\pi^{C}_{*}\QQ$ and $\bR^{2}\pi^{C}_{*}\QQ$ are trivial local systems.  As $a$ varies over $\frc(F)^{\rs}_{\nu}$, the ramification locus of $C_{a}\to \PP^{1}$ runs over all possible configuration of 4 distinct ordered points on $\AA^{1}$, as we have already seen as $\gamma$ runs over $\frs^{\rs}$. Therefore the braid group $B_{a}$ acts on $\cohog{1}{C_{a}}\cong \cohog{1}{\Sigma_{\kappa(a)}}$ via a surjective homomorphism $B_{a}\surj\Gamma(2)=\ker(\SL_2(\ZZ)\to\SL_{2}(\ZZ/2\ZZ))$. In particular, $\bR^{1}\pi^{C}_{*}\QQ$ is an irreducible local system over $\frc(F)^{\rs}_{\nu}$.

\subsubsection{Irreducible modules} From the above discussion we get that for $a\in\frc(F)^{\rs}_{\nu}$,
\begin{equation*}
\spcoh{}{\Sp_{a}}^{S_{a}}\cong\cohog{*}{\mathbb{P}^1\times\mathbb{P}^1}\oplus \cohog{*}{C_{a}}\oplus \cohog{*}{\pt}\oplus\cohog{*}{\pt}.
\end{equation*}
Only the direct summands $\cohog{1}{C_{a}}$ form an irreducible nontrivial local system as $a$ varies, and the other direct summands vary in a constant local system. Therefore
\begin{equation*}
\spcoh{}{\Sp_{a}}^{S_{a}}=\spcoh{}{\Sp_{a}}^{S_{a}\rtimes B_{a}}\oplus \cohog{1}{C_{a}}.
\end{equation*}
We have $\dim\spcoh{}{\Sp_{a}}^{S_{a}\times B_{a}}=8$. By \cite[Theorem 3.2.3]{Chm}, the simple modules $\frL_{\nu}(\triv)$ and $\frL_{\nu}(\chi_{+-})$ are the only finite dimensional irreducible representations of $\Hrat_{1,1/2}(I_{2}(4))$, and they have dimensions $8$ and $1$ respectively. Hence $\Gr^{C}_{*}\spcoh{}{\Sp_{a}}^{S_{a}\times B_{a}}\cong\frL_{\nu}(\triv)$ as $\Hrat_{1,1/2}(I_{2}(4))$-modules. We expect to have an action of $\Hrat_{1,1/2}(I_{2}(4))$ on $\Gr^{P}_{*}\cohog{*}{\Sp_a}^{S_{a}}$ (for some filtration $P_{\leq i}$ on $\spcoh{}{\Sp_{a}}^{S_{a}}$ extending the Chern filtration) commuting with the action of $B_{a}$. If this is true, the multiplicity space of the irreducible local system $\bR^{1}\pi^{C}\QQ$ should be the 1-dimensional module $\frL_{\nu}(\chi_{+-})$, and we should have an isomorphism of $\Hrat_{1,1/2}(I_{2}(4))\times B_{a}$-modules
$$\Gr^{P}_{*}\cohog{*}{\Sp_{a}}^{S_{a}}\stackrel{?}{=}\frL_{\nu}(\triv)\oplus \frL_{\nu}(\chi_{+-})\otimes\cohog{1}{C_{a}}.$$

\subsection{Type ${}^2\! A_4$, $m_{1}=2$} Besides the Coxeter case $m_{1}=10$, the only regular elliptic number in this case is $m_{1}=2$. Let $\nu=1/2$. As in the case of type ${}^2\! A_3$, we fix a five dimensional quadratic space $(V,q_{0})$ over $\CC$, define $\GG=\SL(V)$ and $G=\SU(V\otimes_{\CC}\CC((t^{1/2})),h)$ for the Hermitian form $h$ on $V\otimes_{\CC}\CC((t^{1/2}))$ extending $q_{0}$.

\subsubsection{The pair $L_{\nu}$ and $\frg(F)_{\nu}$} The real affine roots of  $G(F)$ are 
$$\pm \epsilon_i+\delta\ZZ/2,\quad \pm \epsilon_1\pm\epsilon_2+\delta \ZZ/2,\quad \pm 2\epsilon_i+\delta/2+\delta\ZZ,\quad i=1,2$$
and $\nu\rho^\vee=(1,1/2)$. The roots  of $L_{\nu}$ are:
$$\pm(\epsilon_1-\delta),\quad \pm (\epsilon_2-\delta/2),\quad \pm(\epsilon_1-\epsilon_2-\delta/2),\quad \pm(\epsilon_1+\epsilon_2-3\delta/2).$$
The affine roots appearing in $\frg(F)_{\nu}$ are:
\begin{gather*}
\delta/2,\quad \pm(\epsilon_1-\delta)+\delta/2,\quad \pm(\epsilon_2-\delta/2)+\delta/2,\quad \pm(\epsilon_1+\epsilon_2-3\delta/2)+\delta/2,\\
\pm(\epsilon_1-\epsilon_2-\delta/2)+\delta/2, \quad \pm(2\epsilon_1-2\delta)+\delta/2,\quad \pm(2\epsilon_2-\delta)+\delta/2.
\end{gather*}

In the picture the fundamental alcove is shaded gray and alcoves are labeled by the expected dimensions of the corresponding Hessenberg varieties. The $\nu$-walls are blue and the walls of the Weyl group
$W_\nu$ are red.
\begin{figure}
\centering
\includegraphics[height=5cm]{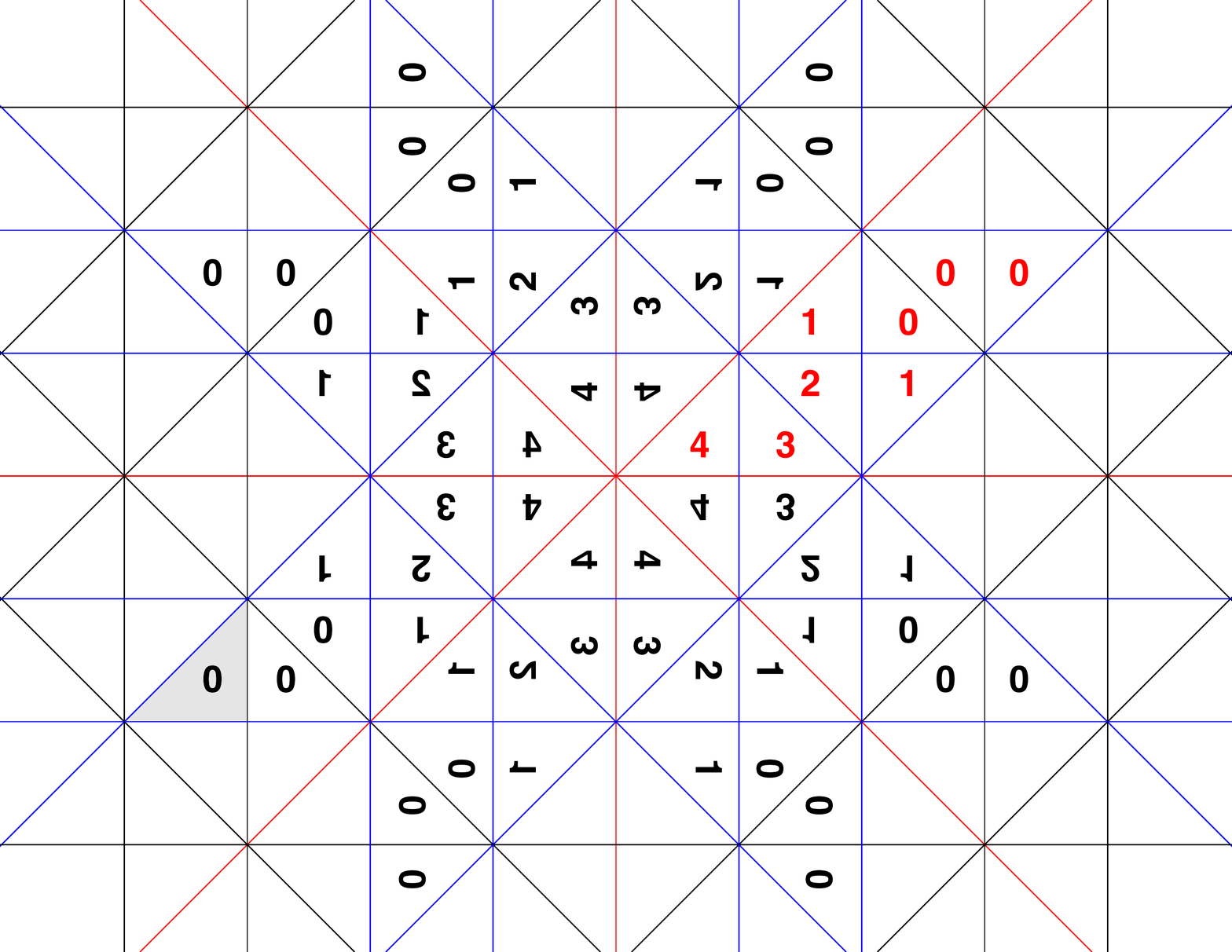}
\caption{The apartment $\frA$ for ${}^2\! A_4$, $\nu=1/2$}
\end{figure}

The group $L_\nu=\SO(V,q_{0})$ and $\frg(F)^{\rs}_{\nu}$ consists of the traceless self-adjoint endomorphisms of $V$. Choose an orthonormal  basis $e_1,\cdots,e_{5}$ of $V$, and a Cartan subspace $\frs\subset\frg(F)^{\rs}_{\nu}$ can be chosen to consist of the diagonal matrices under this basis.  The small Weyl group is $S_5$. The regular semisimple locus $\frs^{rs}$ consists of diagonal matrices with distinct eigenvalues. For $\gamma\in\frs^{\rs}$, the centralizer $S_{\gamma}\subset\mu_{2}^{5}$ consists of diagonal matrices in $L_{\nu}$.

\subsubsection{The Hessenberg varieties}
The four-dimensional Hessenberg variety is the flag variety of $\SO(V,q_{0})$ consisting of flags $0\subset V_1\subset V_2\subset V_3 \subset V_4\subset V$ with $V_1=V_4^{\perp}$, $V_2=V_3^{\perp}$.

The three-dimensional Hessenberg variety $\Hess^3_\gamma$ is defined by the condition $\gamma V_1\subset V_4$. The point $V_{1}\in\PP(V)$ then lies on the intersection of two quadrics $Q_{1}: (v, v)=0$ and $Q_{\gamma}: (v,\gamma v)=0$. The fibers of $\Hess^{3}_{\gamma}\to Q_{1}\cap Q_{\gamma}=:\Sigma_{\gamma}$ are isomorphic to $\PP^{1}$, which parametrizes choices of an isotropic line $V_2/V_1\subset V_1^\perp/V_1$, where $V_{1}^{\perp}/V_{1}$ is a three dimension space with a symmetric bilinear pairing induced from the  restriction of $(\cdot,\cdot)$ to $V_{1}^{\perp}$.

The two-dimensional Hessenberg variety $\Hess^2_\gamma$ is defined by the condition $\gamma V_1\subset V_3$ which is equivalent to the condition $\gamma V_2\subset V_4$. The projection $\eta:\Hess^{2}_{\gamma}\to \Sigma_{\gamma}$ (by remembering $V_{1}$ only) is a 2-to-1 branched cover. The fiber of $\eta$ over $V_1=\langle v_1\rangle$ consists of isotropic lines $\jiao{v_{2}}=V_2/V_1\subset V_1^\perp/V_1$ such that $(\gamma v_2, v_1)=0$. In other words, the fiber of $\eta$ at $\jiao{v_{1}}$ is the intersection of the projective line $(v_{2},\gamma v_{1})=0$ (where $\jiao{v_{2}}\in\PP(V_{1}^{\perp}/V_{1})$) with the conic $(v_{2}, v_{2})=0$ (again $\jiao{v_{2}}\in\PP(V_{1}^{\perp}/V_{1})$).

There two alcoves in the fundamental chamber of $W_\nu$ with expected dimension of the Hessenberg variety equal to $1$. The alcove that has one red side corresponds to the Hessenberg variety $\Hess^{1,line}_\gamma$ parametrizing flags satisfying $\{\gamma V_2\subset V_3\}$ which is equivalent to the vanishing of the restriction of 
the symmetric bilinear forms $(\cdot,\cdot)$ and $(\cdot,\gamma\cdot)$ on $V_2$. That is $\Hess^{1,line}_\gamma$ is a $\mathbb{P}^1$ fibration over the finite set of lines in $\Sigma_\gamma$. There are $16$ lines in $\Sigma_\gamma$.

The alcove that has no red sides corresponds to the Hessenberg variety $\Hess^{1,br}_\gamma$ parametrizing flags satisfying $\gamma V_1\subset V_2$. Therefore $\Hess^{1,br}_\gamma$ is the branch locus $B_\gamma$ of the double cover $\Hess^{2}_{\gamma}\to\Sigma_{\gamma}$. In fact, $B_{\gamma}$ consists of flags such that $V_1=\langle v_1\rangle$ such that the projective line $\PP\Span(v_{1},\gamma v_1)$ is the tangent line to the conic $\overline{Q}_{1}\subset\PP(V_1^\perp/V_1)$ defined by the restriction of $(\cdot,\cdot)$, and that happens if and only if $(\gamma v_1,\gamma v_1)=0$. On the other hand, $B_\gamma$  is an intersection of three quadrics $Q_{1}, Q_{\gamma}$ and $Q_{\gamma^{2}}$, and the adjunction formula implies that the genus of $B_\gamma$ is $5$.

Finally the zero dimensional Hessenberg varieties $\Hess^0_\gamma$ parametrizes flags satisfying $\gamma V_1\subset V_2$ and $\gamma V_2\subset V_3$. It consists of $16$ points corresponding to the 16 lines on the surface $\Sigma_\gamma$.

\subsubsection{The Kummer surface} The surface $\Sigma_\gamma$ is a del Pezzo surface anti-canonically embedded in $\mathbb{P}^4$. Since $B_{\gamma}$ is the intersection of $\Sigma_{\gamma}$ with another quadric, we have $B_\gamma\sim-2K_{\Sigma_\gamma}$ as divisor classes on $\Sigma_{\gamma}$. Therefore, $\Hess^2_\gamma$ is a surface with trivial canonical class and Euler characteristics $2\chi(\Sigma_\gamma)-\chi(B_\gamma)=24$, i.e., $\Hess^{2}_{\gamma}$ is a $K3$ surface. Below we shall see that this K3 surface comes from the well-known construction of Kummer $K3$ surfaces from a torsor of the Jacobian of a genus two curve $C_{\gamma}$. Our presentation is strongly influenced by \cite{BG}.

Let $\tilV=V\oplus\CC$ and introduce two quadrics in $\PP(\tilV)$ given by $\tilQ_{1}:(v,v)=0$ (this is degenerate) and $\tilQ_{\gamma}:(v,\gamma v)+a^{2}=0$ where $(v,a)\in\tilV$. Let $C_{\gamma}\to\PP^{1}$ be the double cover ramified at the singular locus of the pencil of quadrics spanned by $\tilQ_{1}$ and $\tilQ_{\gamma}$ together with $\infty$ (there are 6 singular points). Then $C_{\gamma}$ classifies rulings on this pencil, and is a curve of genus two. The involution $\sigma=\id_{V}\oplus(-1):\tilV\to\tilV$ fixes each member of the pencil, and induces the hyperelliptic involution on $C_{\gamma}$.

Let $F_{\gamma}$ be the Fano variety of projective lines in the base locus $\wt{\Sigma}_{\gamma}:=\tilQ_{1}\cap\tilQ_{\gamma}$. This is a torsor under $J(C_{\gamma})$. The involution $\sigma$ acts on $F_{\gamma}$ compatible with the inversion on $J(C_{\gamma})$. The fixed point locus $F_{\gamma}^{\sigma}$ consists of 16 points which corresponding to the 16 lines in $\Sigma_{\gamma}=\wt{\Sigma}_{\gamma}\cap\PP(V)$. 

The moduli space of isotropic planes $V_{2}\subset V$ under $(\cdot,\cdot)$ is isomorphic to $\PP^{3}$ (a partial flag variety of $\SO(V)$). Let $P_{\gamma}\subset\PP^{3}$ be the subvariety classifying those $V_{2}$ on which $(\cdot,\gamma\cdot)$ is degenerate. This is a singular quartic with 16 ordinary double points corresponding to the 16 projective lines in $\Sigma_{\gamma}$ (i.e., those $V_{2}$ that are also isotropic under $(\cdot,\gamma\cdot)$). We have the following diagram
\begin{equation*}
\xymatrix{\Hess^{2}_{\gamma}\ar[dr]^{f} & & F_{\gamma}\ar[dl]^{g} \\ & P_{\gamma}\cong F_{\gamma}/\sigma}
\end{equation*}
Here $f$ sends a flag $V_{1}\subset V_{2}\subset \cdots\subset V$ to $V_{2}$. The morphism $f$ is birational with 16 exceptional divisors ($\cong\PP^{1}$) over the 16 singular points on $P_{\gamma}$. The map $g$ sends a projective line, corresponding to a plane $\tilV_{2}\subset\tilV$ to the plane $V_{2}:=\textup{Im}(\tilV_{2}\to V)$. It is easy to see that $g$ realizes $P_{\gamma}$ as the GIT quotient $F_{\gamma}/\sigma$. Therefore $\Hess^{2}_{\gamma}$ is the Kummer K3 surface coming from the $J(C_{\gamma})$-torsor $F_{\gamma}$ together with the involution $\sigma$.

The group $S_{\gamma}$ acts on the Hessenberg varieties by the diagonal matrices with entries $\pm 1$ and of determinant $1$. Hence $\Sigma_\gamma/S_{\gamma}=\mathbb{P}^2$ and $\cohog{*}{\Hess^3_\gamma}^{S_{\gamma}}=\cohog{*}{\mathbb{P}^1}\otimes\cohog{*}{\mathbb{P}^2}$. Similarly, $B_\gamma/S_{\gamma}=\mathbb{P}^1$, and hence $\cohog{*}{\Hess^{1,br}_\gamma}^{S_{\gamma}}=\cohog{*}{\mathbb{P}^1}$.
There is a natural identification $S_{\gamma}\cong J(C_{\gamma})[2]$ and the action of $S_{\gamma}$ on $F_{\gamma}$ is via the translation by $J(C_{\gamma})[2]$. Therefore $S_{\gamma}$ permutes the lines on $\Sigma_\gamma$ (i.e., $F^{\sigma}_{\gamma}$) simply transitively, hence it permutes simply transitively the connected components of $\Hess^{1,line}_{\gamma}$ and $\Hess^{0}_{\gamma}$, giving $\cohog{*}{\Hess^{1,line}_{\gamma}}^{S_{\gamma}}\cong\cohog{*}{\PP^{1}}$ and $\cohog{*}{\Hess^{0}_{\gamma}}^{S_{\gamma}}\cong\cohog{*}{\pt}$. For the same reason, $S_{\gamma}$ permutes the 16 exceptional divisors of $f:\Hess^{2}_{\gamma}\to P_{\gamma}$ simply transitively, and its action on $\cohog{*}{P_{\gamma}}\cong\cohog{*}{F_{\gamma}}^{\sigma}$ is trivial (because it extends to an action of the connected group $J(C_{\gamma})$). Therefore $\cohog{*}{\Hess^{2}_{\gamma}}^{S_{\gamma}}\cong\cohog{2}{\PP^{1}}\oplus\cohog{*}{F_{\gamma}}^{\sigma}$.
 Choosing an isomorphism $J(C_{\gamma})\cong F_{\gamma}$, we may identify $\sigma$ with the inversion on $J(C_{\gamma})$, therefore $\cohog{*}{F_{\gamma}}^{\sigma}=\oplus_{i=0,2,4}\wedge^{i}\cohog{1}{C_{\gamma}}$. 

Let $\pi^{C}:C\to \frc(F)_{\nu}^{\rs}$ be the family of genus two curves $C_{a}:=C_{\kappa(a)}$. Using the $S_{5}$-cover $\frs\to\frc(F)_{\nu}$, we may identify $\frc(F)_{\nu}^{\rs}$ with the space of monic polynomials $f(x)$ of degree 5 in $\CC[x]$ with distinct roots, and $\pi^{C}$ the family of curves $y^{2}=f(x)$. By \cite[Theorem 10.1.18.3]{KS} (in fact its obvious characteristic zero analog), the monodromy of $\bR^{1}\pi^{C}_{*}\QQ$ is Zariski dense. The upshot of the above discussion is that $\wedge^{2}\bR^{1}\pi^{C}_{*}\QQ$ can be decomposed as a direct sum of {\em irreducible} local systems $\QQ(-1)\oplus(\wedge^{2}\bR^{1}\pi^{C}_{*}\QQ)_{\prim}$ where $\QQ(-1)$ restricts to the polarization class in $\wedge^{2}\cohog{1}{C_{a}}$ for each $a$ and $(\wedge^{2}\bR^{1}\pi^{C}_{*}\QQ)_{\prim}$ is its complement under the cup product.

\subsubsection{Irreducible modules}
From the above discussion we get for all $a\in\frc(F)^{\rs}_{\nu}$,
\begin{equation*}
\spcoh{}{\Sp_{a}}^{S_{a}}=\cohog{*}{\fl_{\nu}}\oplus\cohog{*}{\PP^{2}}\otimes\cohog{*}{\PP^{1}}\oplus\cohog{2}{\PP^{1}}\oplus(\oplus_{i=0,2,4}\wedge^{i}\cohog{1}{C_{a}})\oplus\cohog{*}{\PP^{1}}\oplus\cohog{*}{\PP^{1}}\oplus\cohog{*}{\pt}^{\oplus 3}.
\end{equation*}
Adding up dimensions we get $\dim \cohog{*}{ \Sp_a}^{S_a}=30$. By the above discussion, only the 5-dimensional spaces $(\wedge^{2}\cohog{1}{C_{a}}))_{\prim}$ form an irreducible local system, and other pieces are invariant under $B_{a}$. We have
\begin{equation*}
\spcoh{}{\Sp_{a}}^{S_{a}}=\spcoh{}{\Sp_{a}}^{S_{a}\rtimes B_{a}}\oplus(\wedge^{2}\cohog{1}{C_{a}}))_{\prim}.
\end{equation*}
hence $\dim\spcoh{}{\Sp_{a}}^{S_{a}\rtimes B_{a}}=30-5=25$.

On the other hand \cite[Theorem 3.2.3]{Chm} implies that the algebra $\Hrat_{1,3/2}(I_{2}(4))$ has only two finite dimensional irreducible representations $\frL_{\nu}(\triv)$ and $\frL_{\nu}(\chi_{-+})$, the first one of dimension $25$ and the 
last one is one-dimensional. Therefore $\Gr^{C}_{*}\spcoh{}{\Sp_{a}}^{S_{a}\rtimes B_{a}}\cong\frL_{\nu}(\triv)$. We expect that $\Hrat_{1,3/2}(I_{2}(4))$ acts on $\Gr^{P}_{*}\spcoh{}{\Sp_{a}}^{S_{a}}$  (for some filtration $P_{\leq i}$ on $\spcoh{}{\Sp_{a}}^{S_{a}}$ extending the Chern filtration), commuting with $B_{a}$, such that we would have an isomorphism of $\Hrat_{1,3/2}(I_{2}(4))\times B_{a}$-modules
\begin{equation*}
\Gr^{P}_{*}\spcoh{}{\Sp_{a}}^{S_{a}}\stackrel{?}{=}\frL_{\nu}(\triv)\oplus \frL_{\nu}(\chi_{-+})\otimes(\wedge^{2}\cohog{1}{C_{a}}))_{\prim}.
\end{equation*}

\subsection{Type $G_2$, $m=3$} For type $G_{2}$, the only regular elliptic numbers besides the Coxeter number $m=6$ are $m=3$ and $m=2$. We first consider the case $m=3$ and $\nu=1/3$. 

\subsubsection{$L_{\nu}$ and $\frg(F)_{\nu}$}\label{sss:G2m3} Let $\alpha_{1}$ (resp. $\alpha_{2}$) be the long (resp. short) simple root of $G_{2}$. Thus real affine roots of $G(F)$ are
$$\pm\alpha_1+\ZZ\delta,\quad \pm \alpha_2+\ZZ\delta,\quad \pm (\alpha_1+\alpha_2)+\ZZ\delta,\quad \pm(\alpha_1+2\alpha_2)+\ZZ\delta,\quad \pm(\alpha_1+3\alpha_2)+\ZZ\delta,\quad \pm(2\alpha_1+3\alpha_2)+\ZZ\delta,$$
and $\rho^\vee=5\alpha^{\vee}_1+3\alpha^{\vee}_2$.

We have $\nu\rho^{\vee}=\frac{5}{3}\alpha^{\vee}_1+\alpha^{\vee}_2$. The roots of $L_{\nu}$ are $\pm(\alpha_{1}+2\alpha_{2}-\delta)$ and the affine roots appearing in $\frg(F)_\nu$ are:
$$\alpha_{1}+3\alpha_{2}-\delta, \quad\alpha_{2}, \quad-\alpha_{1}-\alpha_{2}+\delta, \quad-2\alpha_{1}-3\alpha_{2}+2\delta, \quad\alpha_{1}.$$

The group $L_{\nu}$ is isomorphic to $\GL_{2}$, and $\frg(F)_{\nu}\cong\Sym^{3}(V^{\vee})\otimes\det(V)\oplus\det(V)$, where $V$ is the standard representation of $\GL_{2}$. An element $\gamma\in\frg(F)^{\rs}_{\nu}$ can be written as a pair $(\gamma',\gamma'')$ where $\gamma'$ is a nondegenerate binary cubic form on $V$ and $\gamma''\in\det(V)$ is nonzero. 

\subsubsection{The Hessenberg varieties}
\begin{figure}
\centering
\includegraphics[height=5cm]{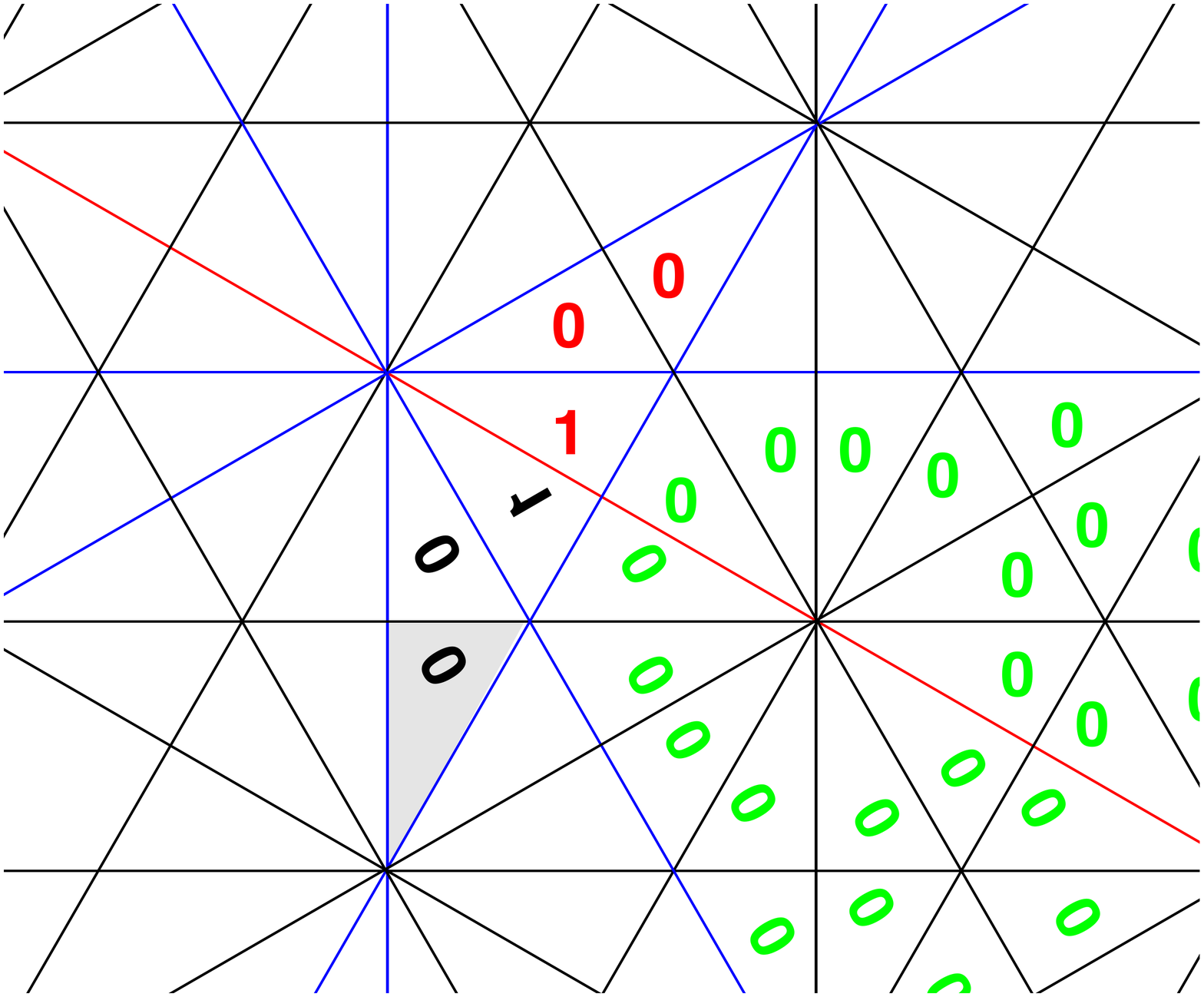}
\caption{The apartment $\frA$ for $G_2$, $\nu=1/3$}
\end{figure}

In the picture above we mark the alcoves with the expected dimensions of the Hessenberg varieties. We use green numbers for the alcoves with empty Hessenberg varieties despite having non-negative expected dimension.
The one-dimensional Hessenberg variety is $\PP^{1}=\PP(V)$. The zero-dimensional Hessenberg varieties consist of three points in $\PP(V)$ corresponding to the three zeros of $\gamma'$, and they are permuted simply transitively by the stabilizer $S_{\gamma}\cong\mu_{3}$. 

\subsubsection{Irreducible modules} From the above discussion we conclude that for $a\in\frc(F)^{\rs}_{\nu}$,
\begin{equation*}
\spcoh{}{\Sp_{a}}^{S_{a}}=\spcoh{}{\Sp_{a}}^{S_{a}\rtimes B_{a}}=\cohog{*}{\PP^{1}}\oplus\cohog{*}{\pt}^{\oplus2}
\end{equation*}
has dimension $4$. On the other hand, by \cite[Theorem 3.2.3(i)]{Chm}, $\Hrat_{1/3,1/3}(I_{2}(6))$ has only one finite dimensional irreducible representation $\frL_{\nu}(\triv)$, and it is of dimension $4$. Hence  $\Gr^{P}_{*}\spcoh{}{\Sp_a}^{S_a}\cong\frL_{\nu}(\triv)$.

\subsection{Type $G_{2}$, $m=2$} Next we consider the case $\nu=1/2$.
We keep the notation from \S\ref{sss:G2m3}. 

\subsubsection{$L_{\nu}$ and $\frg(F)_{\nu}$} We have $\nu\rho^{\vee}=\frac{5}{2}\alpha^{\vee}_1+\frac{3}{2}\alpha^{\vee}_2$. The roots of $L_{\nu}$ are $\pm(\alpha_1+\alpha_2-\delta)$ and $\pm(\alpha_1+3\alpha_2-2\delta)$ and the affine roots appearing in $\frg(F)_\nu$ are:
$$\pm(\alpha_1-\delta/2)+\delta/2,\quad \pm(\alpha_2-\delta/2)+\delta/2,\quad \pm(\alpha_1+2\alpha_2-3\delta/2)+\delta/2,\quad \pm(2\alpha_1+3\alpha_2-5\delta/2)+\delta/2,$$
The group $L_\nu$ is $\SL_2\times \SL_2/\Delta(\mu_{2})$ and $\frg(F)_\nu$ is $V\otimes \Sym^3(W)$ where $V$ and $W$ are the standard representations of the first and the second copy of $\SL_2$ in $L_\nu$.

\subsubsection{The Hessenberg varieties}
\begin{figure}
\centering
\includegraphics[height=5cm]{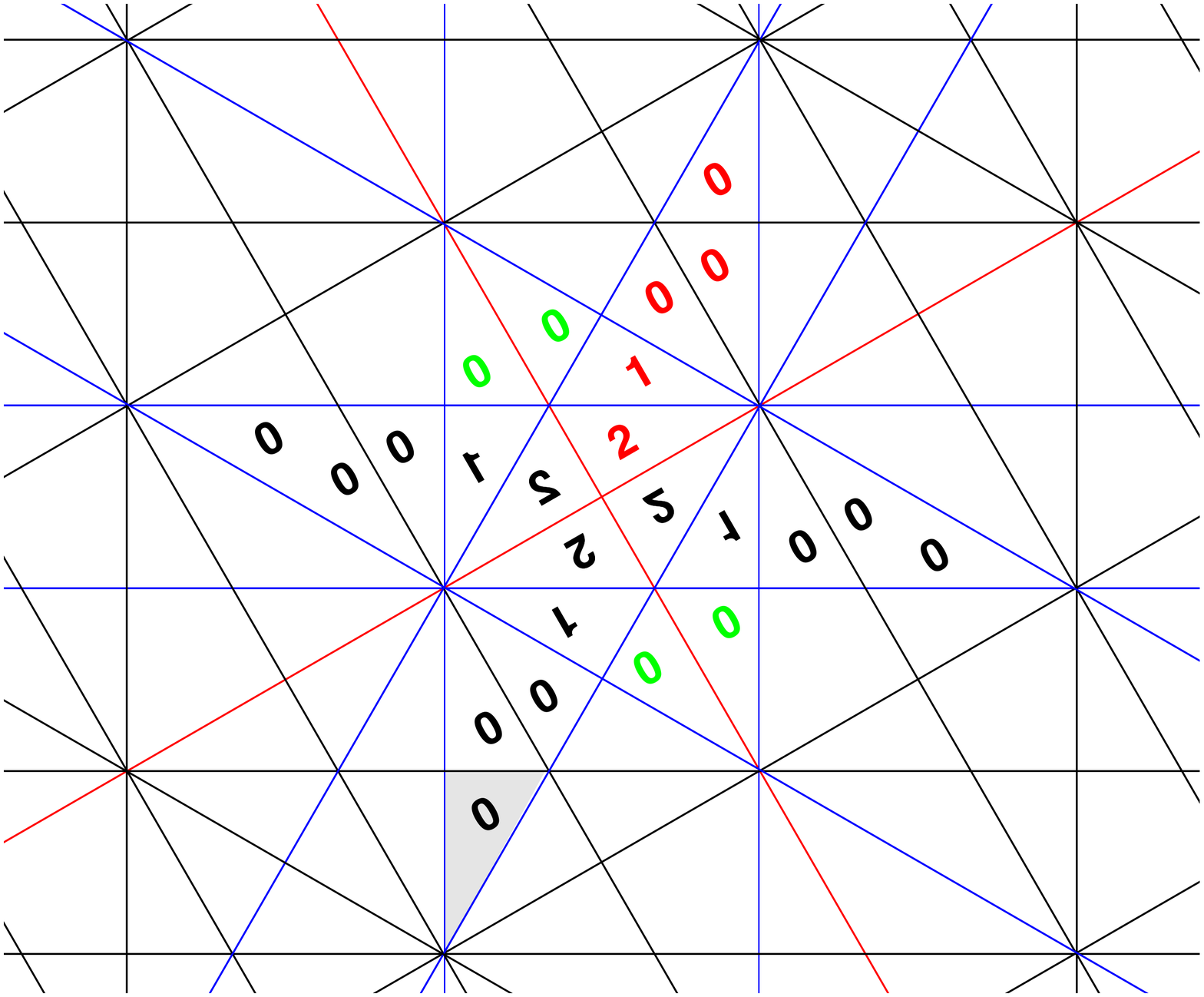}
\caption{The apartment $\frA$ for $G_2$, $\nu=1/2$}
\end{figure}

The two-dimensional Hessenberg variety is $\fl_{\nu}\cong\mathbb{P}^1\times\mathbb{P}^1$. The regular element $\gamma$ defines form $F_\gamma$ on $V\otimes W$ of the homogeneous bidegree
$(1,3)$. The one-dimensional  Hessenberg variety $\Hess^1_\gamma$ consists of pairs $(L,M)$ of one-dimensional subspaces of $V$, $W$ which are isotropic with respect to $F_\gamma$.
Other words the Hessenberg variety is $\mathbb{P}^1$ realized as the smooth $(3,1)$ curve in $\mathbb{P}^1\times\mathbb{P}^1$.

The zero-dimensional Hessenberg varieties corresponding to the alcoves in 
three alcove cluster are the branch points of the 
degree $3$ projection from $\Hess^1_\gamma$ to second $\mathbb{P}^1$ in $\mathbb{P}^1$. The degree $3$ map from $\mathbb{P}^1$ has four branch points and there are permuted by the stabilizer $S_\gamma\cong\mu_{2}^{2}$. Finally, the clan with only one alcove corresponds to the empty Hessenberg variety even though we expect a zero-dimensional variety. We use green numbers to mark these alcoves.

\subsubsection{Irreducible modules} From the above discussion we conclude that for $a\in\frc(F)^{\rs}_{\nu}$,
\begin{equation*}
\spcoh{}{\Sp_a}^{S_a}=\spcoh{}{\Sp_a}^{S_{a}\rtimes B_{a}}\cong\cohog{*}{\mathbb{P}^1\times\mathbb{P}^1}\oplus \cohog{*}{\mathbb{P}^1}\oplus \cohog{*}{\pt}^{\oplus3}
\end{equation*}
has dimension $9$. On the other hand \cite[Theorem 3.2.3(vi)]{Chm}
states that $\Hrat_{1/2,1/2}(I_{2}(6))$ has only one finite dimensional irreducible representation $\frL_{\nu}(\triv)$, and it is of dimension $9$. Hence $\Gr^{P}_{*}\spcoh{}{\Sp_a}^{S_a}\cong\frL_{\nu}(\triv)$.

\subsection{Type ${}^3\! D_4$, $m_{1}=6$} Besides the twisted Coxeter number $m_{1}=12$, the regular elliptic numbers are $m_{1}=6$ and $m_{1}=3$.  We first consider the case $m_{1}=6$ and $\nu=1/6$.

\subsubsection{$L_{\nu}$ and $\frg(F)_{\nu}$}\label{sss:D4m6} The finite part of the affine root system ${}^3\! D_4$ is the $G_2$ root system. The real affine roots of ${}^3\! D_4$ are:
\begin{gather*}
\pm\alpha_1+\ZZ\delta,\quad \pm \alpha_2+\ZZ\delta/3,\quad \pm (\alpha_1+\alpha_2)+\ZZ\delta/3,\quad \pm(\alpha_1+2\alpha_2)+\ZZ\delta/3,\\ 
\pm(\alpha_1+3\alpha_2)+\ZZ\delta,\quad
 \pm(2\alpha_1+3\alpha_2)+\ZZ\delta,
\end{gather*}
and $\rho^\vee=5\alpha^{\vee}_1+3\alpha^{\vee}_2$. 

The roots of $L_{\nu}$ are $\pm(\alpha_1+\alpha_2-\delta/3)$ and the affine roots appearing in $\frg(F)_\nu$ are
$$\alpha_2,\quad -\alpha_2+\delta/3,\quad \alpha_1+2\alpha_2-\delta/3,\quad -(\alpha_1+2\alpha_2)+2\delta/3, \quad -(2\alpha_1+3\alpha_2)+\delta,\quad\alpha_1.$$

The group $L_\nu$ is $\GL_2$ and  $\frg(F)_{\nu}=V\oplus \Sym^3(V^{\vee})\otimes\det(V)$ where $V$ is the standard representation of $\GL_2$. The element $\gamma=(\gamma_1,\gamma_2)\in V\oplus \Sym^3(V^{\vee})\otimes\det(V)$
is regular semisimple if and only if the zeros of the linear form $\gamma_{1}$ and the binary cubic form $\gamma_2$ are distinct points on $\PP^{1}$. In this case, the stabilizer $S_{\gamma}$ is trivial.

\begin{figure}\label{D4_1/6}
\centering
\includegraphics[trim=0cm 0cm 3cm 2cm, clip=true, height=5cm]{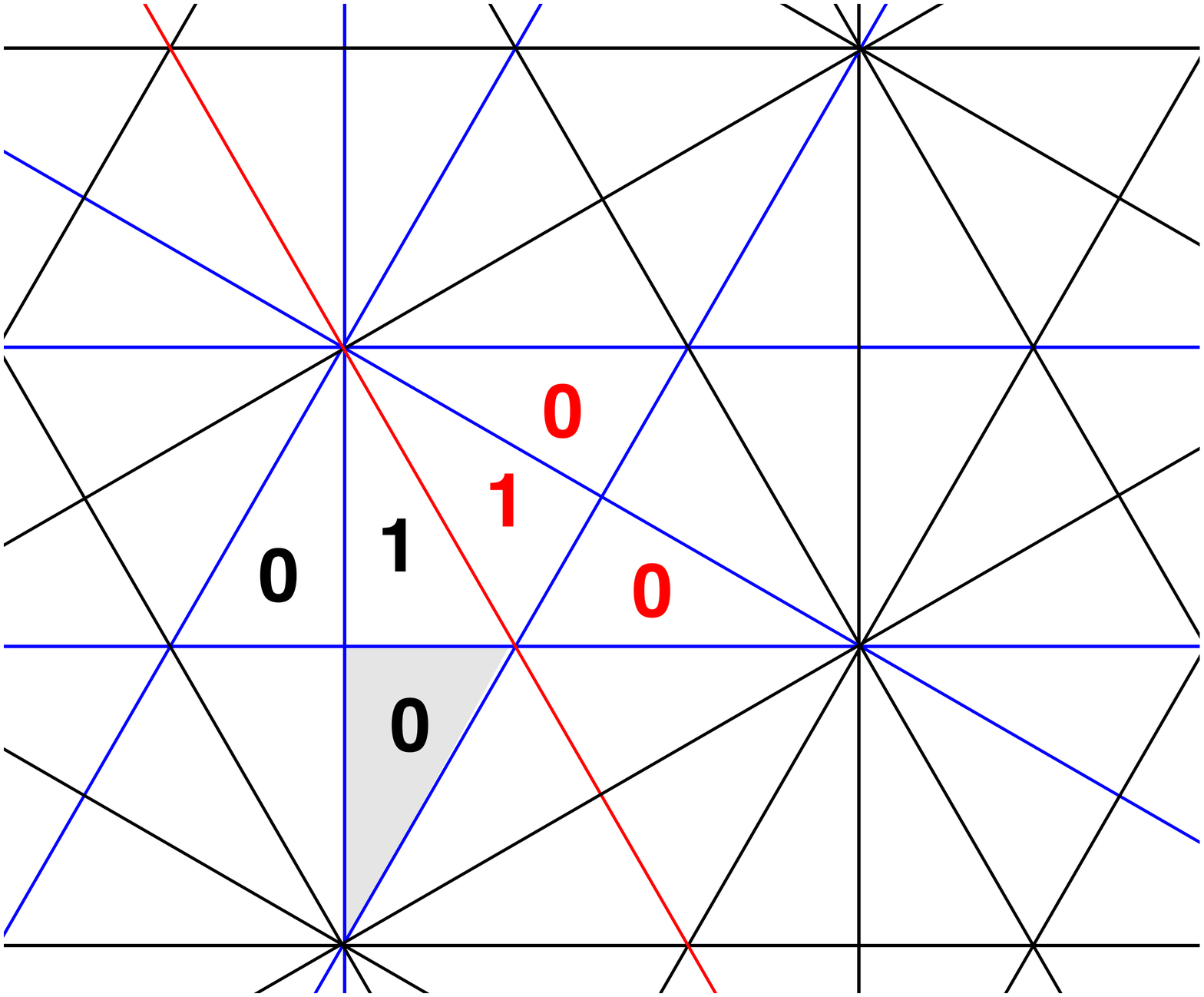}
\caption{The apartment $\frA$ for ${}^3\! D_4$, $\nu=1/6$}
\end{figure}

\subsubsection{The Hessenberg varieties}

The one-dimensional Hessenberg variety is $\PP^1=\PP(V)$. The zero-dimensional Hessenberg 
varieties are of two types.  The first type is the zero locus of $\gamma_1$ (one point) and the second is the zero locus of $\gamma_2$ (three points). Each type occurs only once in $\Sp_\gamma$. 

As $a$ runs over $\frc(F)^{\rs}_{\nu}$ and considering the corresponding Hessenberg varieties for $\gamma=\kappa(a)$, $B_{a}$ permutes the three points of the zero-dimensional Hessenberg variety of second type transitively. 
Thus the local system $\oplus_{i}\bR^{i}q_{\nu,*}\QQ$ is the sum of 
local system of two types: the trivial one and an irreducible local system $M$ of rank two (with monodromy $S_{3}$ acting as the standard 2-dimensional representation on its fibers).

\subsubsection{Irreducible modules} By the above discussion, we have that for $a\in\frc(F)^{\rs}_{\nu}$,
\begin{equation*}
\spcoh{}{\Sp_a}=\spcoh{}{\Sp_a}^{S_{a}}=\spcoh{}{\Sp_a}^{S_{a}\rtimes B_{a}}\oplus M_{a}.
\end{equation*}
where
\begin{equation*}
\spcoh{}{\Sp_a}^{S_{a}\rtimes B_{a}}\cong\cohog{*}{\PP^{1}}\oplus\cohog{*}{\pt}^{\oplus2}
\end{equation*}
has dimension 4. By \cite[Theorem 3.2.3]{Chm}, $\Hrat_{1/2,1/6}(I_{2}(6))$ has only two  finite dimensional irreducible representations: the $4$-dimensional $\frL_{\nu}(\triv)$ and the $1$-dimensional 
$\frL_{\nu}(\chi_{+-})$. Thus $\Gr^{C}_{*}\spcoh{}{\Sp_{a}}^{S_{a}\rtimes B_{a}}\cong\frL_{\nu}(\triv)$. We expect that  $\Hrat_{1/2,1/6}(I_{2}(6))$ acts on $\Gr^{P}_{*}\spcoh{}{\Sp_a}$  (for some filtration $P_{\leq i}$ on $\spcoh{}{\Sp_{a}}^{S_{a}}$ extending the Chern filtration), such that we would have an isomorphism of $\Hrat_{1/2,1/6}(I_{2}(6))\times B_{a}$-modules:
$$  \Gr^{P}_{*}\spcoh{}{\Sp_a}\stackrel{?}{=}\frL_{\nu}(\triv)\oplus \frL_{\nu}(\chi_{+-})\otimes M_{a}.$$

\subsection{Type ${}^3\! D_4$, $m_{1}=3$} Next we consider the case $m_{1}=3$ and let $\nu=1/3$. We keep the notation from \S\ref{sss:D4m6}.

\subsubsection{$L_{\nu}$ and $\frg(F)_{\nu}$}
The roots of $L_{\nu}$ are 
$$\pm(\alpha_2-\delta/3),\quad \pm(\alpha_1+\alpha_2-2\delta/3),\quad \pm(\alpha_1+2\alpha_2-\delta),$$
The affine roots that appear in $\frg(F)_{\nu}$ are 
\begin{gather*}
\delta/3, \quad \alpha_1,\quad \alpha_1+3\alpha_2-\delta,\quad -2\alpha_1-3\alpha_2+2\delta,\quad \pm(\alpha_2-\delta/3)+\delta/3,\\
 \pm(\alpha_1+\alpha_2-2\delta/3)+\delta/3,\quad \pm(\alpha_1+2\alpha_2-\delta)+\delta/3.
\end{gather*}
The group $L_\nu\cong\PGL_3$ and $\frg(F)_{\nu}\cong\Sym^3(V^{\vee})\otimes\det(V)$ where $V$ is the standard representation of $\GL_{3}$. 

\begin{figure}\label{D4_1/3}
\centering
\includegraphics[height=5cm]{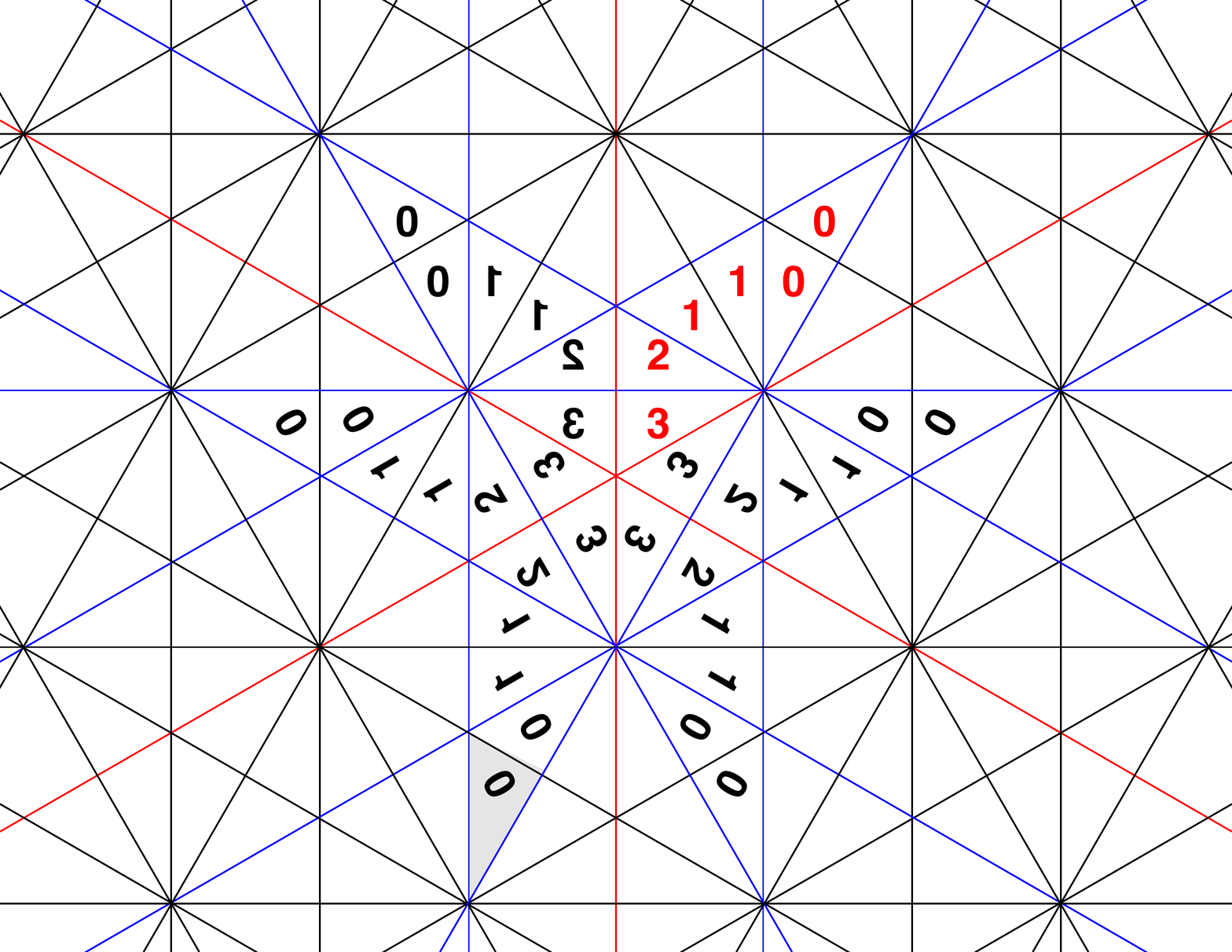}
\caption{The apartment $\frA$ for ${}^3\! D_4$, $\nu=1/3$}
\end{figure}

\subsubsection{The Hessenberg varieties}
Figure 8 presents the picture of the apartment with alcoves marked by the expected dimensions.

The three dimensional Hessenberg variety is the flag variety $\fl_{\nu}$ of $L_{\nu}$, which classifies pairs $p\in\ell\subset\PP(V)$ where $\ell$ is a projective line in $\PP(V)$ and $p$ is a point on $\ell$.

Each $\gamma\in\frg(F)_{\nu}^{\rs}\subset\Sym^3(V^{\vee})\otimes\det(V)$, viewed as a ternary cubic form, defines a smooth planar curve of genus one $C_\gamma\subset\PP(V)$. The two-dimensional Hessenberg variety $\Hess^{\tilw}_{\gamma}$ classifies triples $p\in\ell\subset\PP(V)$ where $p\in C_{\gamma}$. 
 We conclude that $\Hess^{\tilw}_{\gamma}$ is a $\PP^{1}$-bundle over $C_{\gamma}$.

The one dimensional Hessenberg varieties $\Hess^{\tilw}_{\gamma}$ classify those $p\in\ell\subset \PP(V)$ where $\ell$ is tangent to $C_{\gamma}$ at $p$. Thus $\Hess^{\tilw}_{\gamma}$ is isomorphic to the curve $C_\gamma$.

The zero dimensional Hessenberg varieties $\Hess^{\tilw}_{\gamma}$ classify those $p\in\ell\subset\PP(V)$ where $p$ is a flex point of $C_{\gamma}$ (there are 9 of them) and $\ell$ is the tangent line to $C_{\gamma}$ at $p$.

The action of $\PGL(V)$ on $\Sym^{3}(V^{\vee})\otimes\det(V)$ is used by Bhargava and Shanker to calculate the average size of the $3$-Selmer groups of elliptic curves over $\QQ$, see \cite{BS3} and also \cite{Gross} for details. In particular, it is known that when $\gamma\in\frg(F)^{\rs}_{\nu}$, the stabilizer group $S_\gamma$ acts on $C_\gamma$ by translation by $3$-torsion elements of the elliptic curve $J(C_\gamma)$. Hence $S_\gamma$ acts transitively on the zero-dimensional Hessenberg varieties and trivially on the cohomology of the Hessenberg varieties of positive dimension. 

\subsubsection{Irreducible modules} From the above discussion we conclude that for $a\in\frc(F)^{\rs}_{\nu}$,
\begin{equation*}
\spcoh{}{\Sp_a}^{S_a}=\cohog{*}{\fl_{\nu}}\oplus \cohog{*}{C_a}\otimes\cohog{*}{\mathbb{P}^1}\oplus \cohog{*}{C_a}^{\oplus2}\oplus \cohog{*}{\pt}^{\oplus2}
\end{equation*}
Here $C_{a}=C_{\kappa(a)}$ form a family of curves $\pi^{C}: C\to \frc(F)^{\rs}_{\nu}$. In particular the local system $\oplus_{i}(\bR^{i}q_{\nu,*}\QQ)^{S}$ over $\frc(F)_\nu^{rs}$ is a sum of irreducible local systems of two types: the trivial one and $\bR^{1}\pi^{C}_{*}\QQ$.
The multiplicity space of the trivial local system is the $16$-dimensional irreducible representation $\frL_{\nu}(\triv)$ of $\Hrat_{1,1/3}(I_{2}(6))$. The multiplicity space of $\bR^{1}\pi^{C}_{*}\QQ$ is $4$-dimensional. On the other hand, by \cite[Theorem 3.2.3]{Chm}, the algebra $\Hrat_{1,1/3}(I_{2}(6))$ has only two irreducible finite-dimensional representation: the $16$-dimensional $\frL_{\nu}(\triv)$ and the $4$-dimensional $\frL_{\nu}(\chi_{+-})$. Thus $\Gr^{C}_{*}\spcoh{}{\Sp_a}^{S_{a}\rtimes B_{a}}\cong\frL_{\nu}(\triv)$. We expect $\Hrat_{1,1/3}(I_{2}(6))$ acts on $\Gr^{P}_{*}\spcoh{}{\Sp_a}^{S_a}$  (for some filtration $P_{\leq i}$ on $\spcoh{}{\Sp_{a}}^{S_{a}}$ extending the Chern filtration), such that we would have an isomorphism of $\Hrat_{1,1/3}(I_{2}(6))\times B_{a}$-modules
$$ \Gr^{P}_{*}\spcoh{}{\Sp_a}^{S_a}\stackrel{?}{=}\frL_{\nu}(\triv)\oplus \frL_{\nu}(\chi_{+-})\otimes \cohog{1}{C_a}.$$

\subsection{Dimensions of $\frL_{\nu}(\triv) $: tables and conjectures}\label{ss:tables}
For a simple root system $R$, let $\frL_\nu(R)$ be the irreducible spherical representation $\frL_\nu(\triv)$ of $\Hrat_{\nu,\ep=1}$ attached to the root system $R$.  Using the formula \eqref{intro dim} and computer we have calculated the dimension of $\frL_{\nu}(R)$ in the following cases.

\begin{table}[H]
\begin{minipage}{0.2\textwidth}
\begin{tabular}{l|l}
          $m$& $\dim \frL_{1/m}(F_{4})$\\
               \hline
             12&1\\  
            8&6  \\
            6&20\\
            4&96\\
            3&256\\
            2&1620\\
\end{tabular}
\end{minipage}\quad\quad
\begin{minipage}{0.2\textwidth}
\begin{tabular}{l|l}
$m$&$\dim \frL_{1/m}(E_{6})$\\
\hline
12&1\\
9&8\\
6&92\\
3&4152
\end{tabular}
\end{minipage}\quad\quad
\begin{minipage}{0.2\textwidth}
\begin{tabular}{l|l}
$m$&$\dim\frL_{1/m}(E_{7})$\\
\hline
18&1\\
14&9\\
6&3894\\
2&?
\end{tabular}
\end{minipage}
\quad
\begin{minipage}{0.2\textwidth}
\begin{tabular}{l|l}
$m$&$\dim\frL_{1/m}(E_{8})$\\
\hline
30&1\\
24&10\\
20&54\\
15&576\\
12&3380\\
10&14769\\
8&62640\\
$\leq 6$&?\\
\end{tabular}
\end{minipage}
\end{table}

Computations also suggest the following conjecture.
\begin{conj} 
\begin{enumerate}
\item $1+\sum_{n>0} \dim \frL_{1/2n}(D_{2n}) x^n=(1-4x)^{-3/2}.$
\item $1+\sum_{n>0} \dim \frL_{1/2n}(C_{2n}) x^n=(1-4x)^{-3/2}(1+\sqrt{1-4x})^2/4.$ 
\end{enumerate}
\end{conj}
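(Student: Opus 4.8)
The plan is to reduce both identities to the dimension formula \eqref{intro dim} (equivalently Theorem \ref{thm:dim}(3)) specialized to the classical series $D_{2n}$ and $C_{2n}$ with slope $\nu=1/(2n)$, and then recognize the resulting combinatorial sum as the coefficient of a known algebraic function. First I would record that $m=2n$ is an elliptic regular number for both $W(D_{2n})$ and $W(C_{2n})$ (the corresponding regular element is the Coxeter-type element of order $2n$ in the relevant sense), so the hypotheses of Theorem \ref{thm:dim} are met and $\dim\frL_{1/2n}(\triv)=\sum_{\tilw\in W_{1/2n}\backslash\Wa}\dim\calH_{W_{1/2n}}/\Ann(\lambda^{\tilw}_{1/2n})$. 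The key geometric input is the clan description from \S\ref{sss:clans}: one determines the point $\nu\rho^\vee$ in the apartment, the Levi $L_\nu$ (hence $W_\nu$), and then enumerates the bounded $\nu$-clans in the fundamental chamber of $W_\nu$, computing for each the dimension $\dim(\lambda^{\tilw}_{1/2n}\cdot\calH_{W_\nu})$ via the Hessenberg-variety interpretation (the class $\lambda^{\tilw}_{1/2n}$ is a product of finite parts of affine roots, so this is a Schubert-calculus computation in the flag variety of $L_\nu$). For $D_{2n}$ I expect $L_\nu$ to be of type $A_{2n-1}$ (or a product of type-$A$ factors), so that $\calH_{W_\nu}$ is a cohomology ring of a product of partial flag varieties and the annihilator-quotient dimensions become counts of lattice paths / standard tableaux; the sum over clans should collapse to a binomial sum whose generating function is $(1-4x)^{-3/2}$.

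Second, I would set up the bookkeeping so that the sum over clans is organized by $n$ in a uniform way, i.e. exhibit an explicit bijection between the bounded clans for $(D_{2n},1/2n)$ and some family of combinatorial objects indexed independently of $n$ up to a size parameter, with each contributing a monomial in $x$. Concretely, the strategy is: (i) show each bounded clan contributes $\dim(\lambda^{\tilw}\cdot\calH_{W_\nu})$ equal to an explicit product of binomial coefficients; (ii) sum over clans in a fixed chamber to get a closed form $c_n$; (iii) verify $1+\sum_{n>0}c_n x^n$ equals the claimed algebraic function by checking the functional/algebraic equation it satisfies (for $(1-4x)^{-3/2}$: it is the unique power series $f$ with $f(0)=1$ solving $(1-4x)f'=6f$, or use $(1-4x)^{-3/2}=\sum_n \binom{2n+1}{n}(n+1)\,x^n/\!$ after the standard normalization — I would pin down the exact binomial identity during the calculation). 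For the $C_{2n}$ case the root system is non-simply-laced, so the parameter function $c$ from \S\ref{sss:param} is relevant only through the relative root lengths of $L_\nu$; I expect $L_\nu$ here to be of type $C_{2n-1}$ or $A_{2n-1}$ depending on where $\nu\rho^\vee$ lands, and the extra factor $(1+\sqrt{1-4x})^2/4$ relative to the $D$ case should come from an extra two-alcove "tail" of clans, reflecting the difference between the $C$ and $D$ apartments near the short-root wall. I would try to prove the $C_{2n}$ identity by relating its clan sum directly to the $D_{2n}$ one plus a correction sum, using that $(1+\sqrt{1-4x})^2/4 = (1-2x-\sqrt{1-4x}+\ldots)/\ldots$ — i.e. express the correction generating function and match it.

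An alternative (and possibly cleaner) route, which I would pursue in parallel, is to avoid the clan combinatorics entirely and instead use Theorem \ref{L(triv)}(2): $\dim\frL_{1/2n}(\triv)=\dim\spcoh{}{\Sp_a}^{\tilS_a\rtimes B_a}$ for a split group of the relevant type (valid at least in the $C$ and $D$ cases if one can reduce to split forms, which one can since $D_{2n}$ and $C_{2n}$ are split), and then compute these cohomology dimensions via the Hessenberg paving of Theorem \ref{p:paving} together with the purity statement. In low rank these Hessenberg varieties were identified with pencils of quadrics and del Pezzo surfaces in \S\ref{s:examples}; for general $2n$ I would expect the relevant Hessenberg varieties to be Schubert varieties or smooth complete intersections in flag varieties whose Poincaré polynomials are known, and the generating function would emerge from summing Euler characteristics. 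The hard part — and this is the main obstacle I anticipate — is step (i)/(ii): producing a uniform closed form for the per-clan contribution and then evaluating the sum over all bounded clans for arbitrary $n$. The clan structure is combinatorially explicit for each fixed small $n$ (as the figures in \S\ref{s:examples} illustrate) but proving a clean formula valid for all $n$ requires an induction on $n$ or a generating-function manipulation that keeps track of how the apartment and the $\nu$-walls rescale; the annihilator ideals $\Ann(\lambda^{\tilw}_{1/2n})$ in the (possibly reducible) ring $\calH_{W_\nu}$ must be controlled uniformly, which is where I expect the bulk of the work to lie. Once that formula is in hand, recognizing $(1-4x)^{-3/2}$ and $(1-4x)^{-3/2}(1+\sqrt{1-4x})^2/4$ is a routine application of the Lagrange inversion formula or of standard algebraic-function identities, so I do not expect the final identification to be the bottleneck.
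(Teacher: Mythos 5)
The statement you are attempting to prove is labelled a \emph{conjecture} in the paper: it is the final conjecture of \S\ref{ss:tables}, introduced by the sentence ``Computations also suggest the following conjecture.''\ The paper offers no proof. The authors compute $\dim\frL_{1/m}(\triv)$ for various classical and exceptional root systems by implementing the formula \eqref{intro dim} on a computer, observe that the resulting integer sequences for $D_{2n}$ and $C_{2n}$ match the Taylor coefficients of $(1-4x)^{-3/2}$ and $(1-4x)^{-3/2}(1+\sqrt{1-4x})^2/4$ respectively, and state this as a conjecture. So there is no proof in the paper to compare yours against, and any correct argument would be new mathematics strictly beyond the paper.

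Your proposal correctly identifies the tools the paper provides --- Theorem \ref{thm:dim}(3) (equivalently \eqref{intro dim}) plus the clan and Hessenberg combinatorics of \S\ref{sss:clans}, or alternatively the Hessenberg paving of Theorem \ref{p:paving} and Theorem \ref{L(triv)}(2) --- and it correctly observes that the final recognition of the generating function would be routine once a closed form for the coefficients $c_n=\dim\frL_{1/2n}$ is in hand. But it does not actually produce that closed form: the determination of $W_\nu$ and $L_\nu$ for $(D_{2n},1/2n)$ and $(C_{2n},1/2n)$, the enumeration of bounded clans in a way uniform in $n$, the per-clan dimensions $\dim(\lambda^{\tilw}_{1/2n}\cdot\calH_{W_\nu})$, and the evaluation of the resulting sum are all deferred, and you explicitly flag them as the main obstacle. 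None of the intermediate guesses (that $L_\nu$ is of type $A_{2n-1}$, that the per-clan contributions are products of binomial coefficients, that the $C_{2n}$ correction is a ``two-alcove tail'') is verified, and some of them would need care: the position of $\nu\rho^\vee$ in the apartment and hence the type of $L_\nu$ depend on the root system in a way that shifts with $n$. As written, the proposal re-derives the state of knowledge recorded in the paper --- a formula that can be evaluated case-by-case --- but the uniform-in-$n$ combinatorial identity that would promote the conjecture to a theorem is exactly the missing content. A genuine proof would need to establish, for instance, that the clan sum for $(D_{2n},1/2n)$ equals $(2n+1)\binom{2n}{n}$; proving that single identity is the actual task, and it is untouched here.
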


\appendix

\section{Dimension of affine Springer fibers for quasi-split groups} We keep the notation from \S\ref{s:gp} and \S\ref{s:Spr}. The dimension of affine Springer fibers for split groups over $F=\CC((t))$ were conjectured by Kazhdan and Lusztig \cite{KL} and proved by Bezrukavnikov \cite{Be}. In this appendix we generalize this formula to quasi-split groups $G$ in the generality considered in \S\ref{sss:gp}.

\begin{aprop}\label{p:AFS dim} The ind-scheme $\Sp_{\bP,\gamma}$ is equidimensional. Its dimension is given by
\begin{equation}\label{dim AFS general}
\dim\Sp_{\bP,\gamma}=\frac{1}{2}(\sum_{\alpha\in\PPhi}\val(\alpha(\gamma'))-r+\dim\tt^{\Pi_{a}(\hZZ(1))}).
\end{equation}
Here $\gamma'\in\tt(F_{\infty})$ is conjugate to $\gamma$, $\Pi_{a}:\hZZ(1)\to\WW'$ is defined in \S\ref{sss:ell} (for $a=\chi(\gamma)$), $\val:F^{\times}_{\infty}\to\QQ$ is normalized so that $\val(t)=1$ and recall that $\PPhi$ is the root system of $\GG$ respect to $\TT$.
\end{aprop}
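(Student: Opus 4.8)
The strategy is to reduce the quasi-split case to the known split case over $F_e$ by a Galois descent argument, exactly in the spirit of the proof of Lemma \ref{l:finite type}. Write $G$ as the $\mu_e$-descent of $\GG\otimes_\CC F_e$, and let $\Sp'_{\bP',\gamma}\subset\Fl_{\bP'}(\GG\otimes F_e)$ be the affine Springer fiber over $F_e$ associated to the $\mu_e$-stable parahoric $\bP'$ corresponding to $\bP$. Since $\GG$ is split over $F_e$, the Kazhdan--Lusztig--Bezrukavnikov formula (\cite{KL}, \cite{Be}) gives
\begin{equation*}
\dim\Sp'_{\bP',\gamma}=\frac{1}{2}\Bigl(\sum_{\alpha\in\PPhi}\val_{F_e}(\alpha(\gamma'))-\rr+\dim\tt^{\Pi_a(\hZZ(1))\cap\langle\sigma\rangle^{?}}\Bigr),
\end{equation*}
or rather the formula in the form $\dim\Sp'=\frac12(\mathrm{def}_{F_e}(\gamma)+\delta_{F_e}(\gamma))$ where $\mathrm{def}$ is the defect (split rank of the centralizer minus generic rank) and $\delta$ is the codimension of the regular centralizer orbit; the point is that this is equidimensional. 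There is a natural $\mu_e$-action on $\Sp'_{\bP',\gamma}$ and $\Sp_{\bP,\gamma}$ is the reduced fixed-point ind-scheme (more precisely a union of components of $(\Sp'_{\bP',\gamma})^{\mu_e}$, as in Lemma \ref{l:finite type}).

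First I would set up the local normalization: by Lemma \ref{l:hom in Fs}\eqref{Xtnu} — or more generally by the existence of the cameral cover — conjugate $\gamma$ over $F_\infty$ into $\tt(F_\infty)$, obtaining $\gamma'$ and the monodromy homomorphism $\Pi_a:\hZZ(1)\to\WW'$; the right-hand side of \eqref{dim AFS general} only depends on $a=\chi(\gamma)$ through $\Pi_a$ and the valuations $\val(\alpha(\gamma'))$. Next I would reduce to $\bP=\bG$ (or to $\bP=\bI$): the projection $\pi_{\bP,\gamma}:\Sp_{\bI,\gamma}\to\Sp_{\bP,\gamma}$ is surjective with fibers ordinary Springer fibers in the Levi $\frl_\bP$, and a standard count shows the dimension is independent of $\bP$ once one knows equidimensionality; alternatively one handles all $\bP$ uniformly via the affine Bruhat decomposition. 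The heart of the matter is then to compute $\dim(\Sp'_{\bP',\gamma})^{\mu_e}$. For this I would use the cell structure: $\Sp'$ carries (after possibly a further unramified base change to make $\gamma$ topologically unipotent or split-regular) an affine paving compatible with the $\mu_e$-action, refining the arguments of \cite{GKM}; the $\mu_e$-fixed locus is then paved by $\mu_e$-fixed affine subspaces, and counting dimensions reduces to a combinatorial computation over the affine roots of $G(F)$. Concretely, $\sum_{\alpha\in\PPhi}\val_{F_e}(\alpha(\gamma'))$ descends to $e\sum_{\alpha\in\PPhi}\val_F(\alpha(\gamma'))$ after renormalizing valuations, the factor $e$ being absorbed by the fact that taking $\mu_e$-invariants of an affine space of dimension $ed$ whose $\mu_e$-action has all weights equal (the ``equivalued'' situation) yields an affine space of dimension $d$; the term $-\rr$ becomes $-r$ because $\dim\tt^{\mu_e,\circ}=\dim\AA=r$; and the defect term $\dim\tt^{\Pi_a(\hZZ(1))}$ emerges from the split rank of $G_\gamma$, which by the description of $G_\gamma$ via the cameral cover equals $\dim\tt^{\Pi_a(\hZZ(1))}$ (this is the quasi-split analog of the split defect formula, and can also be seen directly: $G_\gamma\cong(\Res^{F_\infty}_F\TT_{F_\infty})^{\Gal}$-type torus whose maximal split subtorus has character lattice $\tt^{*,\Pi_a(\hZZ(1))}$).

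The main obstacle will be the equidimensionality and the precise bookkeeping of how the $\mu_e$-invariants interact with the affine paving — in particular, verifying that every stratum of the $\mu_e$-fixed paving has the expected dimension and that no component drops. For homogeneous $\gamma$ this is handled cleanly by Theorem \ref{p:paving} (the Hessenberg paving), but for general regular semisimple $\gamma$ one needs the equivalued/weak-equivalued reductions of \cite{GKM} together with Bezrukavnikov's and Ng\^o's codimension estimates; the quasi-split refinement requires checking these arguments are $\mu_e$-equivariant, which is where I would spend the most care. An alternative, perhaps cleaner route that avoids pavings: use Ng\^o's global-to-local dimension formula, realizing $\Sp_{\bP,\gamma}$ inside an appropriate (orbifold) Hitchin fiber as in Section \ref{s:Hit}, where the dimension of the Hitchin fiber is computed by Riemann--Roch (as in the Proposition computing $\dim\calM_a$) and then transferred to the affine Springer fiber via the product formula of \cite[Proposition 4.15.1]{NgoFL} and its Deligne--Mumford-curve variant used in Proposition \ref{p:prod}; this would simultaneously give equidimensionality. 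I would pursue whichever of these two routes the rest of the paper has already set up most completely — given Section \ref{s:Hit}, the global route is likely the path of least resistance.
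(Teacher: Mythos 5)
Your proposal takes a genuinely different route from the paper — and that route has a real gap that you flag but do not close. The paper does not descend the formula from $F_e$ by computing $\mu_e$-fixed points of $\Sp'_{\bP',\gamma}$. Instead, it adapts Bezrukavnikov's lattice-index computation directly to the quasi-split group: it uses Lemma~\ref{l:AFS reg} to reduce $\dim\Sp_{\bG,\gamma}$ to $\dim P_a=[\frg^\flat_\gamma:\frg_\gamma]$, and then computes this index via Killing-form duality as $\frac12\bigl([\frg^\vee_\gamma:\frg_\gamma]-[\frg^{\flat,\vee}_\gamma:\frg^\flat_\gamma]\bigr)$. The second index is $\dim\tt/\tt^{\mu_m}$ after conjugating $J_a^\flat$ into $\tt(\calO_{F_m})^{\mu_m}$; the first is evaluated by a snake-lemma computation on the filtration $\frg=\frg^{(0)}\subset\cdots\subset\frg^{(e-1)}=\frg^\vee$, using regularity of $\gamma_0=\gamma\bmod t$ to show $[\gamma,\frg^{(i)}]$ is saturated, yielding $\sum_{\alpha\in\PPhi}\val(\alpha(\gamma'))+\rr-r$. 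This avoids fixed-point dimension counting altogether.

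The concrete problem with the fixed-point approach is the passage from $\dim\Sp'_{\bP',\gamma}$ to $\dim(\Sp'_{\bP',\gamma})^{\mu_e}$. Your heuristic --- that $\mu_e$-invariants of an affine space of dimension $ed$ with "equivalued" $\mu_e$-action gives $\AA^d$ --- is false as stated: if all weights of a $\mu_e$-action on $\AA^{ed}$ are equal and nonzero, the fixed locus is a point, not $\AA^d$. What you want is that each character of $\mu_e$ appears with the same multiplicity $d$, and establishing this cell-by-cell in a $\mu_e$-equivariant paving of $\Sp'$ for a general (not homogeneous) regular semisimple $\gamma$ is itself a nontrivial claim that the paper never needs. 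There is also a bookkeeping issue you would have to face: the KLB defect over $F_e$ involves $\Pi_a$ restricted to $e\hZZ(1)=\Gal(F_\infty/F_e)$, so the "ambient" formula is $\frac12\bigl(e\sum\val_F(\alpha(\gamma'))-\rr+\dim\tt^{\Pi_a(e\hZZ(1))}\bigr)$, and $\tt^{\Pi_a(e\hZZ(1))}$ is in general strictly larger than $\tt^{\Pi_a(\hZZ(1))}$. Reconciling these requires exactly the kind of careful index computation the paper does directly, so nothing is gained. Finally, your suggested Hitchin-fiber alternative via Proposition~\ref{p:prod} only applies to homogeneous elliptic $\gamma$ (the product formula there needs ellipticity for properness), whereas Proposition~\ref{p:AFS dim} is stated for arbitrary regular semisimple $\gamma$, so that route cannot yield the full statement.
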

\begin{proof}
Kazhdan and Lusztig \cite{KL} prove that when $e=1$, $\Sp_{\gamma}$ is equidimensional and has the same dimension as $\Sp_{\bG,\gamma}$. The proof there generalizes to the quasi-split case and other parahorics $\bP$.  It remains to calculate the dimension of $\Sp_{\bG, \gamma}$. According to Lemma \ref{l:AFS reg}, it suffices to calculate the dimension of $P_{a}=J_{a}(F)/J_{a}(\calO_{F})$. Let $J_{a}^{\flat}$ be the finite-type Neron model of $J_{a}$ over $\calO_{F}$. We may take $\gamma=\kappa(a)$, then $\Lie J_{a}^{\flat}$ is an $\calO_{F}$-lattice $\frg^{\flat}_{\gamma}\subset\frg_{\gamma}(F)$ containing $\frg_{\gamma}(\calO_{F})$. We need to calculate $\dim P_{a}$, which is the same as $\dim J_{a}^{\flat}(\calO_{F})/J_{a}(\calO_{F})=[\frg^{\flat}_{\gamma}:\frg_{\gamma}]$.  Here,  for two $\calO_{F}$-lattice $\L_{1}$ and $\L_{2}$ in an $F$-vector space, the notation $[\L_{1}:\L_{2}]$ means $\dim_{\CC}\L_{1}/\L_{1}\cap\L_{2}-\dim_{\CC}\L_{2}/\L_{1}\cap\L_{2}$.

We use the strategy in \cite{Be} by considering the $F$-valued Killing form $(\cdot,\cdot)$ on $\frg_{\gamma}(F)$ (given as the restriction of the $F_{e}$-valued Killing form on $\gg(F_{e})$, which is induced from the Killing form on $\gg$). For an $\calO_{F}$-lattice $L$ in $\frg_{\gamma}(F)$ we denote its dual under the Killing form by $L^{\vee}=\{x\in\frg_{\gamma}(F)|(x,L)\subset\calO_{F}\}$, another $\calO_{F}$-lattice in $\frg_{\gamma}(F)$. Similar notation applies to lattices in $\frg(F)$. We have
\begin{equation*}
\frg_{\gamma}\subset\frg_{\gamma}^{\flat}\subset\frg_{\gamma}^{\flat,\vee}\subset\frg_{\gamma}^{\vee}
\end{equation*}
Therefore
\begin{equation}\label{pre dim Pa}
\dim P_{a}=[\frg_{\gamma}^{\flat}:\frg_{\gamma}]=\frac{[\frg^{\vee}_{\gamma}:\frg_{\gamma}]-[\frg^{\flat,\vee}_{\gamma}:\frg^{\flat}_{\gamma}]}{2}.
\end{equation}

We first calculate $[\frg^{\flat,\vee}_{\gamma}:\frg^{\flat}_{\gamma}]$. Let $m$ be the order of the homomorphism $\Pi_{a}$, then $\frg^{\flat}_{\gamma}$ is conjugate to $\tt(\calO_{F_{m}})^{\mu_{m}}$ inside $\GG(F_{m})$ (where $\mu_{m}$ is identified with the image of $\Pi_{a}:\hZZ(1)\to\WW'$). Calculating the Killing form on $\tt(\calO_{F_{m}})^{\mu_{m}}$, we see that
\begin{equation}\label{flat gamma}
[\frg^{\flat,\vee}_{\gamma}:\frg^{\flat}_{\gamma}]=\dim\tt/\tt^{\mu_{m}}.
\end{equation}

Next we calculate $[\frg^{\vee}_{\gamma}:\frg_{\gamma}]$. Note that $\frp(F):=[\gamma,\frg(F)]$ is the orthogonal complement of $\frg_{\gamma}(F)$ in $\frg(F)$ under the Killing form. We have
\begin{equation}\label{quot1}
\frg^{\vee}_{\gamma}/\frg_{\gamma}\cong\frg^{\vee}/(\frg_{\gamma}\oplus\frp(F)\cap\frg^{\vee}).
\end{equation}
We observe that the quotient
\begin{equation}\label{quot2}
\frg^{\vee}/\left(\frg_{\gamma}(F)\cap\frg^{\vee}\oplus[\gamma,\frg^{\vee}]\right)
\end{equation}
is the cokernel of the endomorphism $[\gamma,-]$ of $\frg^{\vee}/\frg_{\gamma}(F)\cap\frg^{\vee}$, hence
\begin{equation*}
[\frg^{\vee}:\frg_{\gamma}(F)\cap\frg^{\vee}\oplus[\gamma,\frg^{\vee}]]=\val\det([\gamma,-]|\frg(F)/\frg_{\gamma}(F))=\sum_{\alpha\in\PPhi}\val(\alpha(\gamma')).
\end{equation*}
Comparing the quotient in \eqref{quot2} with the right side of \eqref{quot1}, the difference between their lengths is the difference between $[\frg_{\gamma}(F)\cap\frg^{\vee}:\frg_{\gamma}]$ and $[\frp(F)\cap\frg^{\vee}:[\gamma,\frg^{\vee}]]$. Hence
\begin{equation}\label{frg gamma}
[\frg^{\vee}_{\gamma}:\frg_{\gamma}]=\sum_{\alpha\in\PPhi}\val(\alpha(\gamma'))+[\frg_{\gamma}(F)\cap\frg^{\vee}:\frg_{\gamma}]-[\frp(F)\cap\frg^{\vee}:[\gamma,\frg^{\vee}]].
\end{equation}

We need to calculate the two extra terms in the above formula. Write $\gg=\oplus_{i=0}^{e-1}\gg_{i}$ under the action of $\mu_{e}$. Then $\frg=\gg_{0}(\calO_{F})\oplus t^{1/e}\gg_{e-1}(\calO_{F})\oplus\cdots\oplus t^{(e-1)/e}\gg_{1}(\calO_{F})$, $\frg^{\vee}=\gg_{0}(\calO_{F})\oplus t^{-1/e}\gg_{1}(\calO_{F})\oplus\cdots\oplus t^{-(e-1)/e}\gg_{e-1}(\calO_{F})$. We have a filtration
\begin{equation*}
\frg=\frg^{(0)}\subset\frg^{(1)}\subset\frg^{(2)}\subset\cdots\subset\frg^{(e-1)}=\frg^{\vee}
\end{equation*}
such that $\frg^{(i)}/\frg^{(i-1)}=t^{-i/e}\gg_{i}$, $i=1,\cdots,e-1$. Let $\frg^{(i)}_{\gamma}=\frg_{\gamma}(F)\cap\frg^{(i)}$. The map $[\gamma,-]$ preserves each $\frg^{(i)}$ and induces the map $[\gamma_{0},-]:\gg_{i}\to\gg_{i}$ on the associated graded. Here $\gamma_{0}$ is the image of $\gamma\mod t$ in $\hh=\gg^{\mu_{e}}$. We consider the exact sequence $0\to\frg^{(i-1)}\to\frg^{(i)}\to t^{-i/e}\gg_{i}\to0$ and its endomorphism given by $[\gamma,-]$. The snake lemma then gives an exact sequence
\begin{equation}\label{giseq}
0\to\frg^{(i-1)}_{\gamma}\to\frg^{(i)}_{\gamma}\to t^{-i/e}\ker([\gamma_{0},-]|\gg_{i})\to\frg^{(i-1)}/[\gamma,\frg^{(i-1)}]\to\frg^{(i)}/[\gamma,\frg^{(i)}].
\end{equation}

\begin{claim} For $i=0,\cdots, e-1$, the lattice $[\gamma,\frg^{(i)}]$ is saturated in $\frg^{(i)}$.
\end{claim}
\begin{proof}[Proof of Claim]  To show that the image of the map $[\gamma,-]:\frg^{(i)}\to\frg^{(i)}$ is saturated, it suffices to show that the kernel of its reduction modulo $t$ has dimension at most $\rr$ (because it is at least $\rr$, the $F$-dimension of the kernel of $[\gamma,-]$ on $\frg(F)$). The filtration $t\frg^{(i)}\subset t\frg^{(i+1)}\subset\cdots\subset t\frg^{\vee}\subset\frg^{(0)}\subset\cdots\subset\frg^{(i)}$ is stable under $[\gamma-,]$. It induces a filtration on $\frg^{(i)}/t\frg^{(i)}=\frg^{(i)}\otimes_{\calO_{F}}\CC$ with associated graded $\gg=\oplus_{i=0}^{e-1}\gg_{i}$, on which the action $[\gamma,-]$ becomes $[\gamma_{0},-]$. Since $\gamma_{0}$ lies in the Kostant section of $\gg$ by construction, it is regular, and hence $\dim\ker([\gamma_{0},-]|\gg)=\rr$. Since the kernel dimension does not decrease when passing to the associated graded, $\ker([\gamma,-]|\frg^{(i)}\otimes_{\calO_{F}}\CC)\leq\rr$. This proves the claim.
\end{proof}

Since $[\gamma,\frg^{\vee}]$ is saturated in $\frg^{\vee}$, hence $[\frp(F)\cap\frg^{\vee}:[\gamma,\frg^{\vee}]]=0$. Also, by the Claim, the last arrow in \eqref{giseq} is injective, and the first three terms form a short exact sequence. We thus get a filtration of $\frg_{\gamma}(F)\cap\frg^{\vee}/\frg_{\gamma}=\frg^{(e-1)}_{\gamma}/\frg^{(0)}_{\gamma}$ with associated graded equal to $\oplus_{i=1}^{e-1}t^{-i/e}\ker([\gamma_{0},-]|\gg_{i})$. The dimension of the latter is $\dim\ker([\gamma_{0},-]|\oplus_{i=1}^{e-1}\gg_{i})=\dim\ker([\gamma_{0},-]|\gg)-\dim\ker([\gamma_{0},-]|\hh)=\rr-r$, again by the regularity of $\gamma_{0}$. Therefore $[\frg_{\gamma}(F)\cap\frg^{\vee}:\frg_{\gamma}]=\dim\tt/\tt^{\mu_{e}}=\rr-r$. These facts and  \eqref{frg gamma} imply that 
\begin{equation}\label{diff gamma}
[\frg^{\vee}_{\gamma}:\frg_{\gamma}]=\sum_{\alpha\in\PPhi}\val(\alpha(\gamma'))+\rr-r.
\end{equation}
Plugging \eqref{diff gamma} and \eqref{flat gamma} into \eqref{pre dim Pa}, we get the desired dimension formula \eqref{dim AFS general}.
\end{proof}

\section{Codimension estimate on the Hitchin base}\label{s:codim} We work in the following generality. Let $X$ be an irreducible smooth Deligne-Mumford curve over an algebraically closed field $k$, of which the weighted projective line in \S\ref{ss:wp} is an example. Let $\GG$ be a semisimple group. Let $\calL$ be a line bundle on $X$. We define $\MHit$ to be the moduli stack of $\calL$-valued $\GG$-Higgs bundles over $X$, and let $f:\MHit\to \calA$ be the Hitchin fibration. 

Recall from \cite{NgoFL} that we have an upper semi-continuous function $\delta:\calA\to \ZZ_{\geq0}$ which measures how far (in terms of dimensions) $\calP_{a}$ is from an abelian variety. Let $\calA_{\delta}\subset\calA$ be the locally closed subscheme consisting of $a\in\calA$ with $\delta(a)=\delta$.
The goal of this section is to show

\begin{aprop}\label{p:codim} Suppose $\textup{char}(k)=0$ and $\deg\calL>\deg\omega_{X}$ (where $\omega_{X}$ is the canonical bundle of $X$). Then $\codim_{\calA}\calA_{\delta}\geq\delta$.
\end{aprop}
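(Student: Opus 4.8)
The statement is a Deligne-Mumford-curve version of Ng\^o's codimension estimate \cite[Proposition 5.7.2 and \S5.7]{NgoFL}, and the plan is to reduce to Ng\^o's argument by the same device used throughout \S\ref{s:Hit}: pass to a smooth \emph{projective} cover of $X$, keeping track of a finite group action. Concretely, I would first produce a finite morphism $p:\tX\to X$ with $\tX$ a smooth projective curve and a finite group $\Gamma$ acting on $\tX$ with $\tX/\Gamma=X$ (for the weighted projective line $\PP(m,1)$ one takes $\tX=\PP^1$ with $\Gamma=\mu_m$, as in the proof of Proposition \ref{p:smooth}(2); in general such a cover exists since $X$ is a smooth DM curve with trivial generic stabilizer). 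Pulling back along $p$ gives a $\Gamma$-equivariant closed embedding of Hitchin bases $\calA\hookrightarrow\wt\calA^{\Gamma}$, where $\wt\calA$ is the Hitchin base for $(\tX,\GG,p^{*}\calL)$, and $\delta$ is compatible with this embedding because $\calP_a$ and its $\delta$-invariant can be computed from the cameral curve, and the normalization of the cameral curve over $X$ is the $\Gamma$-quotient of the normalization of the cameral curve over $\tX$.

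\textbf{Key steps.} (1) Recall Ng\^o's local-to-global decomposition of $\delta$: for $a\in\calA^{\h}$ one has $\delta(a)=\sum_{x\in X}\delta_x(a)$, where $\delta_x(a)$ is the local $\delta$-invariant of the spectral/cameral curve at $x$; this identity makes sense verbatim on a DM curve once one interprets the sum as running over closed points of the coarse space, since the stacky points only contribute through a finite \'etale base change that does not change local $\delta$-invariants. (2) For each closed point $x$ and each integer $\delta_0\ge 1$, bound the codimension of the locus $\{a:\delta_x(a)\ge\delta_0\}$ inside $\calA$: this is the purely local estimate, and here the hypothesis $\deg\calL>\deg\omega_X$ enters to guarantee that the evaluation maps $\calA\to \frc_{\calL}|_{\Spec\hO_x}/(\text{high order})$ are \emph{smooth and surjective} onto enough jets, so that imposing a local singularity of $\delta$-invariant $\delta_0$ cuts down the base by codimension $\ge\delta_0$. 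This is exactly \cite[Proposition 5.7.2]{NgoFL} and Ng\^o's proof (via the $\delta$-regularity and the product formula for local versal deformations of spectral curves) goes through once the evaluation maps have the stated surjectivity; the Riemann-Roch computation on $X$ in \S\ref{ss:wp} (e.g. $\deg\omega_X=-1/m-1$) is what makes $\deg\calL>\deg\omega_X$ the right hypothesis. (3) Combine: stratifying by the (finite) collection of multipoints where the cameral curve degenerates and summing the local codimension bounds gives $\codim_{\calA}\calA_{\ge\delta}\ge\delta$, hence $\codim_{\calA}\calA_{\delta}\ge\delta$.

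\textbf{Main obstacle.} The genuinely delicate point is step (2) at the stacky point $\infty\in X$ (and at the branch point $0$ in the quasi-split setting): there the local ring is not $k[[t]]$ but carries a $\mu_m$-action, the line bundle $\calL$ restricted to the formal neighborhood is a nontrivial $\mu_m$-equivariant bundle, and one must check that the local versal deformation theory of cameral curves — and in particular Ng\^o's inequality relating $\delta_x$ to the codimension of its jet locus — is insensitive to this twist. The cleanest way around this is to do the local analysis upstairs on $\tX$ at the preimage $\wt\infty$ (an honest smooth point of a smooth curve), where \cite[\S5.7]{NgoFL} applies directly, and then descend: the $\Gamma$-fixed part of a codimension-$\ge\delta_0$ locus in $\wt\calA$ still has codimension $\ge\delta_0$ in $\calA=\wt\calA^{\Gamma}$ because, in characteristic zero, taking $\Gamma$-invariants is exact and the relevant loci are $\Gamma$-stable linear (or smooth) subvarieties; one also uses that $\deg p^{*}\calL=(\deg p)\deg\calL>(\deg p)\deg\omega_X$ and $\deg\omega_{\tX}\le (\deg p)\deg\omega_X$ (Riemann-Hurwitz) to ensure the hypothesis is inherited by $(\tX,p^{*}\calL)$. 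Carrying out this descent carefully — matching the local $\delta$-invariants on $X$ with those on $\tX$ and controlling the fixed loci — is where the real work lies; everything else is a transcription of Ng\^o's argument.
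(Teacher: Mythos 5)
Your proposal takes a genuinely different route from the paper. You reduce to Ng\^o's \emph{local-to-global} analysis: the decomposition $\delta(a)=\sum_x\delta_x(a)$ together with Ng\^o's local codimension estimate for the jet loci $\{a:\delta_x(a)\ge\delta_0\}$ from \cite[\S5.7]{NgoFL}, pulled back through a finite cover $\tX\to X$ to get from the DM curve to an honest smooth projective curve. This is essentially a transcription of the original proof of the $\delta$-estimate in the fundamental lemma paper, and you correctly flag that the serious work is at the stacky/branch points of $X$, where one must match local $\delta$-invariants and codimensions up and down the cover and make sure that passing to $\Gamma$-fixed loci does not degrade the estimate.

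The paper instead follows the \emph{tangent-space} argument sketched by Ng\^o in \cite{Ngo Decomp}, which is both shorter and entirely bypasses the local analysis you worry about. The key input is Lemma~\ref{l:coker T}: the hypothesis $\deg\calL>\deg\omega_X$ is used (via \cite[Th\'eor\`eme 4.14.1]{NgoFL}) to kill $\cohog{1}{X,\calK}$, and then a duality argument identifies $\dim\coker(T\fHit|_{(\calE,\varphi)})$ with $\dim\ker(\act_{(\calE,\varphi)})$, where $\act$ is the infinitesimal $\calP_a$-action on the fiber. With this in hand, the proof of the proposition is short: on an irreducible component $Z\subset\calA_\delta$ the affine parts of $\calP_a$ assemble into a diagonalizable group scheme $R$ of relative dimension $\delta$; its fixed locus $Y\subset\calM^{\Hit}_Z$ surjects onto $Z$ and consists of points where $\ker(\act)$ has dimension $\ge\delta$, hence where $\coker(T\fHit)$ has dimension $\ge\delta$; generic smoothness (here is where $\mathrm{char}\,k=0$ enters) produces a point of $Y$ where $T(Y\to Z)$ is surjective, forcing $\codim_Z\calA\ge\dim\coker(T\fHit)\ge\delta$. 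The trade-offs are instructive: your approach would, if the descent at the stacky points were completed, yield finer information (codimension bounds for the individual strata indexed by local $\delta$-invariants), but it requires re-doing Ng\^o's local deformation theory in the orbifold setting and is considerably longer; the paper's approach proves exactly the global inequality that is needed, works uniformly on any smooth DM curve without ever looking at the stacky points locally, and isolates the role of $\deg\calL>\deg\omega_X$ (vanishing of $\cohog{1}{X,\calK}$) and of characteristic zero (generic smoothness) very cleanly. As written your proposal does have a genuine gap — the descent step is asserted but not carried out, and you yourself flag it as the hard part — so even if you wanted to pursue that route, you would still need to supply the argument matching $\delta_\infty$ on $X$ with the corresponding local invariant on $\tX$ and controlling the codimension of $\Gamma$-fixed jet loci. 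Given that, the paper's tangent-space argument is both more economical and more robust.
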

The argument is in fact sketched in \cite{Ngo Decomp}. We give a detailed proof for completeness.

We shall first study the tangent map of the Hitchin fibration  $f:\MHit\to \calA$. Let $(\calE,\varphi)\in\MHit(k)$ with image $a\in\Aa(k)$. Let
\begin{equation*}
T\fHit|_{(\calE,\varphi)}:T_{(\calE,\varphi)}\MHit\to T_{a}\calA
\end{equation*}
be the tangent map of $\fHit$ at $(\calE,\varphi)$. On the other hand, the action of  $\calP_{a}$ on $\MHit_{a}$ induces a map
\begin{equation*}
\act_{(\calE,\varphi)}:\Lie\calP_{a}\to T_{(\calE,\varphi)}\MHit.
\end{equation*}

\begin{lemma}\label{l:coker T} We have
\begin{equation}\label{tangent and ker act}
\dim_{k}\coker(T\fHit|_{(\calE,\varphi)})=\dim\ker(\act_{(\calE,\varphi)}).
\end{equation}
\end{lemma}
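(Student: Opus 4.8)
The plan is to compute both sides of \eqref{tangent and ker act} via the deformation theory of Higgs bundles, identifying the relevant complexes explicitly. Recall that the tangent complex of $\MHit$ at $(\calE,\varphi)$ is $\RR\Gamma(X,\calK)$ where $\calK = [\Ad(\calE)\xrightarrow{[\varphi,-]}\Ad(\calE)\otimes\calL]$ sits in degrees $-1,0$, so that $T_{(\calE,\varphi)}\MHit = \HH^1(X,\calK)$ (using the notation of the proof of Proposition~\ref{p:smooth}). On the other hand $\Lie\calP_a = \HH^0(X,\Lie J_a)$, and the morphism $\iota:\chi^*J\to I$ of Lemma~\ref{l:J} (globalized, or rather its Hitchin-base analog $\iota_a:J_a\to\mathrm{Aut}(\calE,\varphi)$) furnishes a map of complexes from $\Lie J_a[1]$ (placed in degree $-1$) to $\calK$, whose induced map on $\HH^1$ is precisely $\act_{(\calE,\varphi)}$. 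First I would set up a commutative diagram of distinguished triangles relating $\Lie J_a[1]\to\calK$ to the ``symbol'' map $\calK\to a^*(\text{tangent of }\calA)$ realized as a complex on $X$, namely the evaluation of the derivative of the adjoint quotient $\Ad(\calE)\otimes\calL\to a^*T\frc_\calL$.

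The key computational step is to identify the cone of $\Lie J_a[1]\to\calK$ and the cone of $\calK\to \calF_a$ (where $\calF_a$ is the sheaf on $X$ whose global sections give $T_a\calA$), and to observe that after taking hypercohomology these two cones agree up to the appropriate shift. Concretely: the complex $\calK$ has $\HH^{-1}(X,\calK) = \HH^0(X,\Ad(\calE))^\varphi = \Lie\mathrm{Aut}(\calE,\varphi)$, and one checks using the fact that $\Ad(\calE)^\varphi$ (the sheaf-theoretic centralizer) differs from $\Lie J_a$ only at finitely many points — by the local statement $\Lie J_a \hookrightarrow \Ad(\calE)^\varphi$ with finite-length cokernel controlled exactly by the discriminant, i.e. by $\delta(a)$ in the family. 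The upshot is a four-term exact sequence expressing $\coker(T\fHit|_{(\calE,\varphi)})$ and $\ker(\act_{(\calE,\varphi)})$ as, respectively, a cokernel and a kernel of the \emph{same} map between two finite-dimensional spaces built from $\HH^\bullet(X,\Lie J_a)$ and $\HH^\bullet(X,\calF_a)$. The hypothesis $\deg\calL > \deg\omega_X$ enters here to force the vanishing of the ``unwanted'' higher cohomology groups (as in the smoothness argument of Proposition~\ref{p:smooth}, via Serre duality on $X'^\flat_a$ with a negative-degree twist), so that the Euler-characteristic bookkeeping collapses to the clean statement that $\dim\coker = \dim\ker$.

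The main obstacle I anticipate is the careful local analysis at the singular points of the cameral/spectral cover: one must show that the discrepancy between $\Lie J_a$ and the honest centralizer sheaf $\Ad(\calE)^\varphi$ inside $\Ad(\calE)$, and the discrepancy between the image of $T\fHit$ and all of $T_a\calA$, are governed by the \emph{same} local invariant at each point, so that they cancel in the global count. This is essentially Ng\^o's observation in \cite{NgoFL} that $\delta(a)$ simultaneously measures the defect of $\calP_a$ from being an abelian variety and the defect of $\calM_a$ from being the expected dimension; I would reduce to the formal-disk situation, use the Kostant section to make $\varphi$ explicit, and invoke the compatibility of $\iota$ with restriction to Iwahori subalgebras (Lemma~\ref{l:J Iw}) to handle the bookkeeping uniformly. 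Once Lemma~\ref{l:coker T} is in hand, Proposition~\ref{p:codim} follows by a standard argument: along $\calA_\delta$ the generic Higgs bundle has $\dim\ker(\act) = \delta$ (this is the definition of $\delta$ together with properness of $f^{\mathrm{ell}}$ and the fact that $\calP_a$ acts with finite stabilizers on the elliptic locus, forcing $\act$ to be generically injective modulo the $\delta$-dimensional defect), hence $T\fHit$ has corank $\geq\delta$ at such points, hence the image of $f$ restricted to the preimage of $\calA_\delta$ has codimension $\geq\delta$ in $\calA$; since $f$ is dominant onto each component meeting $\calA_\delta$, this gives $\codim_\calA\calA_\delta\geq\delta$.
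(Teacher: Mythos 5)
Your overall strategy matches the paper's: both identify $\coker(T\fHit|_{(\calE,\varphi)})$ with the global sections of a torsion sheaf coming from the cone of $\calK\to\cc_\calL$, identify $\ker(\act_{(\calE,\varphi)})$ with $\cohog{0}{X,\calQ}$ where $\calQ=\ker([\varphi,-])/\Lie J_a$, and use $\deg\calL>\deg\omega_X$ to kill $\cohog{1}{X,\calK}$ so that the comparison is clean. However, the pivotal step — why the two torsion sheaves $\coker(\partial^\varphi)$ and $\calQ$ have the same length — is not actually carried out in your proposal. You gesture at a formal-disk reduction together with Lemma~\ref{l:J Iw} and ``Ng\^o's observation,'' but the paper's mechanism is much more direct and is worth knowing: the Killing form gives a self-duality of $\Ad(\calE)$, under which the inclusion $\Lie J_a\to\Ad(\calE)$ is the $\calO_X$-linear dual of $\partial^\varphi:\Ad(\calE)\otimes\calL\to\cc_\calL$ (this is \cite[4.13.2]{NgoFL}), whence $\coker(\partial^\varphi)\cong\uExt^1_{\calO_X}(\calQ,\calL)$ and the length equality is immediate. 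Without this (or an equivalently precise local identity), your ``cancel in the global count'' step is unjustified: the two discrepancies live in different sheaves and there is no a priori reason for their pointwise lengths to agree. A secondary issue is a cohomological-degree slip: $\Lie\calP_a=\cohog{1}{X,\Lie J_a}$, not $\cohog{0}$, and in the paper's convention ($\calK$ in degrees $-1,0$) the tangent space is $\cohog{0}{X,\calK}$, not $\HH^1$; you also need $\cohog{0}{X,\ker([\varphi,-])}=0$ (ellipticity, i.e.\ $a\in\Aa$) in the long exact sequence of $0\to\Lie J_a\to\ker([\varphi,-])\to\calQ\to0$ to conclude $\cohog{0}{X,\calQ}=\ker(\act_{(\calE,\varphi)})$ exactly, which your sketch omits.
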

\begin{proof}
We have $T_{(\calE,\varphi)}\MHit=\cohog{0}{X,\calK}$ where $\calK=[\Ad(\calE)\xrightarrow{[\varphi,-]}\Ad(\calE)\otimes\calL]$ placed in degrees -1 and 0. The tangent space $T_{a}\calA$ can be identified with $\calA$ itself, hence equal to $\cohog{0}{X,\cc_{\calL}}$. The tangent map is obtained by taking $\cohog{0}{X,-}$ of the map of complexes
\begin{equation}\label{adad}
\xymatrix{\Ad(\calE)\ar[d]\ar[r]^{[\varphi,-]} & \Ad(\calE)\otimes\calL\ar[d]^{\partial^{\varphi}}\\ 0\ar[r] & \cc_{\calL}}
\end{equation} 
Note that $\cc_{\calL}=\oplus_{i=1}^{r}\calL^{d_{i}}$ given by the fundamental invariants $f_{1},\cdots, f_{r}$ on $\gg$. The map $\partial^{\varphi}$ sends $\alpha\in\Ad(\calE)\otimes \calL$ to $\{\frac{d}{dt}f_{i}(\varphi+t\alpha)|_{t=0}\}_{i=1,\cdots, r}$. Since $\deg\calL>\deg\omega_{X}$, we have $\cohog{1}{X,\calK}=0$ by \cite[Th\'eor\`eme 4.14.1]{NgoFL}. Therefore the cokernel of the tangent map $T_{(\calE,\varphi)}\MHit\to T_{a}\calA$ is the same as $\cohog{0}{X,\calC}$ where $\calC$ is the cone of the map \eqref{adad}, which is represented by the three 
term complex (in degrees -2, -1 and 0)
\begin{equation*}
\calC=[\Ad(\calE)\xrightarrow{[\varphi,-]}\Ad(\calE)\otimes\calL\xrightarrow{\partial^{\varphi}}\cc_{\calL}].
\end{equation*}
By calculation at the generic point of $X$, using the fact that $\varphi$ is generically regular semisimple, we see that $\calH^{-1}\calC$ and $\calH^{0}\calC$ are both torsion. A simple spectral sequence argument shows that $\cohog{0}{X,\calC}=\cohog{0}{X,\calH^{0}\calC}$. In other words, we have
\begin{equation}\label{tangent and coker}
\dim_{k}\coker(T_{(\calE,\varphi)}\MHit\to T_{a}\calA)=\leng_{k}\coker(\partial^{\varphi}).
\end{equation}

Recall we have a natural embedding $\Lie J_{a})\incl\ker([\varphi,-])$. Next we would like to relate $\coker(\partial^{\varphi})$ and $\calQ:=\ker([\varphi,-])/\Lie J_{a}$. We have an isomorphism $\Lie (J_{a})\cong\cc_{\calL}^{\vee}\otimes\calL$ (see \cite[Proposition 4.13.2]{NgoFL}). The Killing form on $\gg$ induces a self-duality on $\Ad(\calE)$. Under this duality, the natural map $\Lie J_{a}\to\Ad(\calE)$ is dual to $\partial^{\varphi}:\Ad(\calE)\to\cc_{\calL}\otimes\calL^{-1}$, as can be checked at the generic point of $X$. Therefore $\coker(\partial^{\varphi})\cong\underline{\Ext}^{1}_{\calO_{X}}(\calQ,\calL)$, and in particular,
\begin{equation}\label{coker and Q}
\leng_{k}\calQ=\leng_{k}\coker(\partial^{\varphi}).
\end{equation}
The long exact sequence associated with the exact sequence $0\to \Lie J_{a}\to \ker([\varphi,-])\to \calQ\to0$ looks like
\begin{equation}\label{longQQ}
\cdots\to \cohog{0}{X,\ker([\varphi,-])}\to\cohog{0}{X,\calQ}\to \cohog{1}{X,\Lie J_{a}}\to \cohog{1}{X,\ker([\varphi,-])}\to\cdots.
\end{equation}
Since $a\in\Aa$,  $\Lie\Aut(\calE,\varphi)=\cohog{0}{X,\ker([\varphi,-])}=0$. From this we also know that $\cohog{1}{X,\ker([\varphi,-])}\incl\cohog{0}{X,\calK}=T_{(\calE,\varphi)}\MHit$. Moreover, the composition $\cohog{1}{X,\Lie J_{a}}\to \cohog{1}{X,\ker([\varphi,-])}\incl T_{(\calE,\varphi)}\MHit$ is the map $\act_{(\calE,\varphi)}$. Therefore, by \eqref{longQQ}, $\cohog{0}{X,\calQ}$ is exactly the kernel of the map $\act_{(\calE,\varphi)}$. Combining with \eqref{coker and Q} and \eqref{tangent and coker}, we get \eqref{tangent and ker act}.
\end{proof}

\begin{proof}[Proof of Proposition \ref{p:codim}]. Let $Z\subset\calA_{\delta}$ be an irreducible component. We have an diagonalizable group scheme $R$ over $Z$ whose fiber $R_{a}$ is the affine part of $\calP_{a}$. Fiberwise the fixed points of $R_{a}$ on $\MHit_{a}$ is nonempty. Therefore the fixed point locus of $R$ on $\MHit_{Z}$ is a closed substack $Y$ mapping surjectively to $Z$. For any $(\calE,\varphi)\in Y$ over $a\in Z$, we have $\dim\ker(\act_{(\calE,\varphi)})\geq\dim R_{a}=\delta$, therefore, by Lemma \ref{l:coker T}, we have 
\begin{equation}\label{coker delta}
\dim\coker(T\fHit|_{(\calE,\varphi)})\geq \delta.
\end{equation}
However, since $Y\to Z$ is surjective, its tangent map (at the smooth points of $Y$) must be surjective at some point $(\calE_{0},\varphi_{0})$ (since we are in characteristic zero). At this point the rank of $T\fHit|_{(\calE_{0},\varphi_{0})}$ is at least $\dim Z$, hence we have $\dim\coker(T\fHit|_{(\calE_{0},\varphi_{0})})\leq\codim_{Z}(\calA)$. Combined with \eqref{coker delta} we get $\codim_{Z}(\calA)\geq\delta$.
\end{proof}

\textbf{Acknowledgement} The authors would like to thank Roman Bezrukavnikov, Pavel Etingof, Ivan Losev and Emily Norton for useful discussions. A.O. is  supported by the NSF grant DMS-1001609 and by the Sloan Foundation.
Z.Y. is supported by the NSF grant DMS-1302071 and the Packard Fellowship.

\end{document}